\numberwithin{equation}{subsection}
\let\OLDthebibliography\thebibliography
\renewcommand\thebibliography[1]{
	\OLDthebibliography{#1}
	\setlength{\parskip}{0pt}
	\setlength{\itemsep}{8pt}
}
\numberwithin{equation}{section}
\theoremstyle{plain}
\newtheorem{Th}{Theorem}[section]
\newtheorem{Lemma}[Th]{Lemma}
\newtheorem{Cor}[Th]{Corollary}
\newtheorem{Prop}[Th]{Proposition}
\DeclareMathOperator{\R}{\mathbb{R}}
\DeclareMathOperator{\Z}{\mathbb{Z}}
\DeclareMathOperator{\C}{\mathbb{C}}
\DeclareMathOperator{\N}{\mathbb{N}}
\DeclareMathOperator{\Ran}{\text{Ran}}
\newcommand{\norm}[2]{\left\lVert #1 \right\rVert_{#2}}
\newcommand{\f}[2]{\frac{#1}{#2}}
\newcommand{\leqnomode}{\tagsleft@true\let\veqno\@@leqno}
\newcommand{\reqnomode}{\tagsleft@false\let\veqno\@@eqno}
\theoremstyle{plain}
\newtheorem{Def}[Th]{Definition}
\newtheorem{As}[Th]{Assumption}
\newtheorem{Corollary}[Th]{Corollary}
\newtheorem{Rem}[Th]{Remark}
\newtheorem{?}[Th]{Problem}
\newcommand{\nocontentsline}[3]{}
\newcommand{\tocless}[2]{\bgroup\let\addcontentsline=\nocontentsline#1{#2}\egroup}
\newenvironment{psmallmatrix}
{\left(\begin{smallmatrix}}
	{\end{smallmatrix}\right)}
\newenvironment{mmatrix}
{\left(~\begin{matrix}}
	{\end{matrix}~\right)}
\newcommand{\smallbinom}[2]
{\left(\begin{smallmatrix}  #1\\[1pt] #2 \end{smallmatrix}\right)}
\renewcommand{\rho}{R}
\begin{document}
	\subjclass[2010]{Primary: 35B44. Secondary: 35Q55}

	\keywords{}

	\title[Blow up dynamics for energy critical NLS]{Blow up dynamics for the $3$D energy-critical\\ Nonlinear Schr\"odinger equation 
	}
	\author{Tobias Schmid}

	\address{EPFL SB MATH PDE,
		Bâtiment MA,
		Station 8,
		CH-1015 Lausanne}
	\email{tobias.schmid@epfl.ch}	

	\begin{abstract} 
		We construct a two-parameter continuum of type II  blow up solutions for the 
		energy-critical focusing NLS in dimension $ d = 3$. The solutions collapse to a single  energy bubble in finite time, precisely they have the form
		\[ u(t,x) = e^{i \alpha(t)}\lambda(t)^{\f{1}{2}}W(\lambda(t) x) + \eta(t, x ),~ ~ t \in[0, T),~ x \in \R^3,\]
		where $ W( x)  = \big( 1 + \f{|x|^2}{3}\big)^{-\f{1}{2}}$ is the ground state solution, $\lambda(t) = (T -t)^{- \f12 - \nu} $ for suitable $ \nu > 0 $, ~$  \alpha(t) = \alpha_0 \log(T - t)$ and $ T= T(\nu, \alpha_0) > 0 $.  Further  $ \|\eta(t) - \eta_T\|_{\dot{H}^1 \cap \dot{H}^2} = o(1)$
		as $ t \to T^-$ for some $ \eta_T \in \dot{H}^{1} \cap \dot{H}^2$.
	\end{abstract}

	\maketitle
	\tableofcontents
    
\section{Introduction \& Notation}
In this article we construct (finite-time) blow up solutions for the energy critical focusing nonlinear Schr\"odinger equation  
\begin{align} \tag{NLS} \label{NLSequation}
	\begin{cases}
		&i\partial_tu = - \Delta u - |u|^{\f{4}{d-2}}u~~~\text{in} ~~  (0,T) \times \R^{d},\\[2pt] \nonumber
		&u(0) = u_0 \in \dot{H}^1(\R^d)~~~\text{on} ~~ \R^{d}.
	\end{cases}
\end{align}
in  $ d = 3$ dimensions.\\[2pt]
\emph{Local wellposedness}. The Cauchy problem for the equation in \eqref{NLSequation} is locally wellposed  in $\dot{H}^1(\R^d)$ due to Cazenave-Weissler (see \cite{Cazenave-Wei}), i.e. for $ u_0 \in \dot{H}^1$ and $ t_0 \in \R$, there exists an  interval $ I \subset \R$,~ $ t_0 \in I^{\circ}$ and a weak solution $u \in C(I, \dot{H}^1(\R^d))$ of \eqref{NLSequation}
with (1) ~ $u(t_0) = u_0$ and (2)~uniqueness in a closed subspace $ X \subset C(I, \dot{H}^1(\R^d))$.  The local wellposedness theory for the Cauchy problem is also well presented in \cite{KM} and \cite{Killip-Visan}. %
The equation in \eqref{NLSequation} in invariant under the scaling
$$ u(t,x) \mapsto u_{\lambda}(t,x)  = \lambda^{\f{d-2}{2}}u(\lambda^2 t, \lambda x), $$
in the sense that if $u$ solves \eqref{NLSequation}, then $ u_{\lambda }$ also solves \eqref{NLSequation} with initial data $(u_0)_{\lambda}$.
The following  \emph{energy} functional  $E(u)$ is conserved in time along solutions $u$ 
\begin{align}
	E(u(t)) &= \int [\f{1}{2}|\nabla u(t)|^2 -  (\f12 - \f1d)| u(t)|^{\f{2d}{d-2}}]~dx = E(u_0).
\end{align}
Further with $u_0 \in L^2$ the mass $ \| u\|_{L^2}^2$ of $u$ is an additional conservation law and with the power law in \eqref{NLSequation}, we obtain $ E(u_{\lambda}(t)) = E(u(\lambda^2 t)) $. Hence we refer to \eqref{NLSequation} as \emph{energy-critical}.
A 2-parameter family of ground state solutions for the energy $E$ is given by
\begin{align*}
	&e^{i \alpha}W_{\lambda}(x) = \pm e^{i \alpha}\lambda^{\f{d-2}{2}} W(\lambda x), ~ \lambda >0,~\alpha \in \R,
\end{align*}
where the 
profile 
\begin{align}
	W(x) = \bigg( 1 + \f{|x|^2}{ d(d-2)}\bigg)^{\f{2-d}{2}},~ x \in \R^d, 
\end{align}  
is the (up to symmetry) unique positive solution of $ \Delta W + W ^{\f{d+2}{d-2}} = 0$. It has been found by Aubin \cite{Aubin} and Talenti \cite{Talenti} in the context of extremizer for the Sobolev inequality.\\[4pt]
For small initial data $ u_0 \in \dot{H}^1(\R^d)$ the solution of \eqref{NLSequation} is known to scatter globally in time as $ t \to \pm \infty$. For large $\dot{H}^1(\R^d)$ data, however, blow up 
and energy concentration may occur. In particular, the ground states $e^{i \alpha}W_{\lambda}$ pose obstructions to global scattering for the Cauchy problem. 
In the present article, we will construct solutions of \eqref{NLSequation} in dimension $d =3$ of the form
\begin{align}\label{form}
	u(t,r) = e^{i \alpha(t)}W_{\lambda(t)}(x) + \eta(t, x), ~~~(t,x) \in  [0,T) \times \R^3,
\end{align}
where $\lambda(t) = (T -t)^{-\f12 - \nu},~ \alpha(t) = \alpha_0 \log(T-t)$ and $\eta(t, \cdot) \to\eta_0$  in $\dot{H}^{2}\cap \dot{H}^1 $, i.e. the local solution \emph{collapses} along $ (\alpha(t),~\lambda(t)) $ to the rescaled bulk term $W$ as $ t \to T$ . The method we use is based on an approximation scheme for  \eqref{form} solving \eqref{NLSequation} up to a fast decaying error. Such a scheme was developed in \cite{Perelman}, \cite{OP} for Schr\"odinger systems and  originated  in the seminal  work of  Krieger-Schlag-Tataru in \cite{KST1}, \cite{KST-slow}, \cite{KST2-YM} for corotational critical wave maps, the energy-critical nonlinear wave equation and the corotational Yang-Mills equation. See also \cite{K-S-full} and \cite{Don-Kr} for further work related to this approximation scheme. 
The main result of this article is stated as follows.
\begin{Th} \label{main} Let $ \alpha_0 \in \R$,~$ d = 3$ and $~ \nu >  1 $.  Then there exists $M = M(\alpha_0, \nu) > 0 $ small such that the following holds. Let $ \delta \in (0,M)$. Then for some small $T_0 >0 $ and  any $ T \in (0, T_0)$, there is a solution $ u(t) = u_{T, \alpha_0, \nu}(t)$  of \eqref{NLSequation} with $ u \in C^0([0, T), \dot{H}^1(\R^3) \cap \dot{H}^{2}(\R^3))$ of the form
	\[  u(t,x) = e^{i \alpha(t)}W_{\lambda(t)}(x) + \eta(t, x), ~~~(t,x) \in [0,T) \times \R^3,  \]
	where $ \lambda(t) = (T-t)^{- \f12- \nu},~\alpha(t) = \alpha_0 \log(T-t)$ and $ \eta \in C^0([0,T), \dot{H}^1 \cap \dot{H}^{2})$.  Further there holds
	\begin{align}
		&\sup_{t \in [0,  T)}\norm{\eta(t)}{\dot{H}^1 \cap \dot{H}^2} \leq \delta,
	\end{align}
	and there exists $ \eta_T  \in  H^{1 + \nu-}(\R^3)$ with $ \lim_{t \to T^-}\|\eta(t) - \eta_T \|_{\dot{H}^1 \cap \dot{H}^2} = 0$. 
\end{Th}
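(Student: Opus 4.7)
The proof adopts the Krieger--Schlag--Tataru style approximation scheme, in the form implemented by Perelman and Ortoleva--Perelman in the Schr\"odinger setting. The plan has two main stages: first, construct an approximate solution of the form \eqref{form} whose error in \eqref{NLSequation} decays as fast as we wish as $t \to T^-$; second, close the construction by solving the remainder equation via a fixed point.

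\textbf{Stage 1: the iterated approximate solution.} I would plug the ansatz $u_{\mathrm{app}} = e^{i\alpha(t)}W_{\lambda(t)} + \sum_{k\geq 1} v_k$ into \eqref{NLSequation} and collect terms by a formal expansion parameter, essentially powers of $(T-t)^\nu$ and $(T-t)^{-1}$ coming from the modulation parameters $\dot\lambda/\lambda^2 \sim (T-t)^\nu$ and $\dot\alpha \sim (T-t)^{-1}$. After passing to the self-similar coordinates $R = \lambda(t)|x|$ and $\tau$ with $d\tau/dt = \lambda^2$, each $v_k$ is obtained by inverting the linearization $\mathcal{L}$ of $-\Delta u - |u|^4 u$ about $W$ applied to the correctors generated by the preceding iterate. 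Two regimes must be reconciled: an inner expansion in $R$, valid for $|x| \lesssim \lambda^{-1}$, and an outer, self-similar expansion in $y = x/\sqrt{T-t}$, valid for $|x| \gtrsim \sqrt{T-t}$. In the overlap one matches the two, which is where the constraint $\nu > 1$ enters: it ensures that the hierarchy of corrections improves by a definite power of $(T-t)$ at each step. After $N$ iterations one obtains $u_N$ with error $(i\partial_t + \Delta)u_N + |u_N|^4 u_N =: e_N$ of size $O((T-t)^{N\nu})$ in suitable weighted norms on $\dot H^1 \cap \dot H^2$.

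\textbf{Stage 2: the remainder equation.} Writing $u = u_N + \xi$, the function $\xi$ must solve a perturbed NLS of schematic form
\[
(i\partial_t + \Delta)\xi + 5|u_N|^4\xi + \mathcal{N}(u_N,\xi) = -e_N,
\]
with time-dependent potential $5|u_N|^4 \sim 5W_{\lambda(t)}^4$ and superlinear nonlinearity $\mathcal N$. I would set up a contraction in $C^0([0,T);\dot H^1 \cap \dot H^2)$ driven by the smallness of $e_N$, which can be made as small as needed by taking $N$ large. Strichartz estimates together with an energy argument at the $\dot H^1$ and $\dot H^2$ levels, adapted to the concentrating potential, and coercivity of $\mathcal L$ modulo its symplectic kernel (fixed by imposing orthogonality conditions that in turn pin down $\lambda$ and $\alpha$), will close the estimate. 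Convergence $\eta(t)\to\eta_T$ in $\dot H^1 \cap \dot H^2$ then follows because $e_N \in L^1_t(\dot H^1\cap \dot H^2)$ in a neighbourhood of $T$, and the extra $\nu$-regularity of the limit comes from integrating the hierarchy in Stage 1.

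\textbf{Main obstacle.} The hardest single step is the inversion of $\mathcal L$ together with the matching of inner and outer expansions. The linearized operator has a degenerate structure in $d=3$: the scaling zero-mode $\Lambda W$ lies in $\dot H^1 \setminus L^2$, and $\mathcal L$, viewed as a non-self-adjoint matrix operator on real and imaginary parts, has threshold behaviour at the bottom of its continuous spectrum. As a result the formal elliptic inverses used to define the $v_k$ grow at spatial infinity and must be cut off and matched to self-similar correctors, which is precisely what determines the admissible range $\nu > 1$. A secondary but non-trivial difficulty is propagating the $\dot H^2$ bound through the fixed point, since the potential $W^4_\lambda$ lives on the shrinking scale $\lambda^{-1}$; this requires scale-invariant Strichartz inputs and a careful choice of function spaces adapted to the concentrating bulk.
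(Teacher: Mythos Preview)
Your Stage~1 is broadly in the right direction, but you describe a two-region (inner/self-similar) scheme, while the paper actually requires three: inner $|x|\lesssim (T-t)^{1/2+\epsilon_1}$, self-similar $(T-t)^{1/2+\epsilon_1}\lesssim |x|\lesssim (T-t)^{1/2-\epsilon_2}$, and a \emph{remote} region $|x|\gtrsim (T-t)^{1/2-\epsilon_2}$. The remote region is not a refinement: it is where one reads off, from the $y\to\infty$ asymptotics of the self-similar correctors, a time-independent radiation profile $f_0(r)\in H^{1+\nu-}$, and then builds a separate iteration in the original $(t,r)$ coordinates around $f_0$. This $f_0$ \emph{is} the limit $\eta_T$. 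With only an inner and a self-similar zone you have no candidate for $\eta_T$ and no mechanism for the convergence $\eta(t)\to\eta_T$ in $\dot H^1\cap\dot H^2$.

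Your Stage~2 departs substantially from the paper and contains a structural error. You write that orthogonality conditions ``pin down $\lambda$ and $\alpha$'', but here $\lambda(t)=(T-t)^{-1/2-\nu}$ and $\alpha(t)=\alpha_0\log(T-t)$ are \emph{prescribed} by the theorem; there is no modulation and no orthogonality. Instead the paper passes to $(\tau,y)$ with $\tau\sim (T-t)^{-2\nu}$, $y=\lambda x$, so that the remainder satisfies $-i\partial_\tau\eta = H\eta + D\eta + N(\eta) + \mathbf{e}$ with the dilation-type operator $D=\tau^{-1}\bigl(\tfrac{\alpha_0}{2\nu}\sigma_3 + i\tfrac{1+2\nu}{4\nu}(\tfrac12 + y\cdot\nabla_y)\bigr)$. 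One then conjugates by the \emph{distorted Fourier transform} $\mathcal{F}$ of the matrix operator $H$, reducing to a transport system on the Fourier side in which $y\partial_y$ becomes $-\xi\partial_\xi$ plus a bounded transference operator $\mathcal{K}$. The contraction closes in weighted spaces $L^{\infty,N}_\tau L^{2,\alpha}_\xi$ because every right-hand term gains $\tau^{-1}$ (from $D$, or from the at-least-quadratic nonlinearity together with the $\tau$-decay of $\zeta^N$ in Proposition~\ref{main-prop-approx}). No Strichartz estimates are used, and a Strichartz route would be delicate here: the linearized matrix operator has no mass gap, a threshold resonance at $0$ rather than an eigenvalue, and a pair of unstable/stable eigenvalues $\pm i\kappa$; the corresponding dispersive theory is not available off the shelf. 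The paper absorbs all of this into the scattering theory and spectral representation developed in Sections~\ref{sec:Scat}--\ref{sec:spec}, which is where the real technical cost of the completion lies.
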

\begin{Rem} (i) The solution $u(t)$ in fact satisfies $u \in C^0([0, T), \dot{H}^1(\R^3) \cap \dot{H}^{s}(\R^3))$ for all $ 1 < s < 1 + \nu$. We can show the statements in Theorem \ref{main} are true if we replace $ \dot{H}^2$ by $ \dot{H}^s$.  Further,  we give the prove of Theorem  \ref{main} in case $ \nu >1 $ is irrational. The rational case requires a slight modification, see the remark below Proposition \ref{main-prop-approx}.\\[3pt]
	(ii)~Clearly, the solutions  $u(t)$   in Theorem \ref{main} satisfy
	\begin{align}
		& \sup_{t \in [0, T)}\| u(t) \|_{\dot{H}^1(\R^3)}< \infty,~~\|u \|_{L^{10}([0,T) \times \R^3)} = \infty,
	\end{align}
	and $ T = T_+(u(0)) < \infty$ is the maximal forward existence time. \\[3pt]
	(iii)~We may verify the concentration inequalities
	\begin{align}\label{con-ult1}
		&\lim_{t \to T^-}\int_{|x| \leq R} |\nabla u(t)|^2 ~dx \geq \int |\nabla W|^2~dx + \int_{|x| \leq R} |\nabla \eta_T|^2 ~dx,~~ R > 0\\ \label{con-ult2}
		&\limsup_{ t \to T^-}\int_{|x| \geq  R}  \big[|\nabla u(t)|^2 + |u(t)|^{6}\big]~dx  \leq  C_0~T,~~~~R > 0.
	\end{align}
	In fact, concentration of the $ \dot{H}^1$ norm is obtained in the sense
	\begin{align}\label{con-ult4}
		\lim_{t \to T^-}\| \nabla u(t)\|_{L^2(|x|\leq R)} = \| \nabla W\|_{L^2} + \|\nabla \eta_T\|_{L^2(|x|\leq R)},~ 0 < R \leq \infty.
	\end{align}
\end{Rem}
~~\\
The solutions $u(t)$ in Theorem \ref{main} can be chosen with $ E(u)$ arbitrarily close to $E(W)$. In fact we have the following Corollary implied by Theorem \ref{main}.
\begin{Corollary} \label{main-cor} Let $\alpha_0 \in \R $,~$ \nu > 1 $. Then there exists $\tilde{\delta} = \tilde{\delta}_{\alpha_0, \nu}  > 0$  such that the following holds.  For all $ \delta \in (0, \tilde{\delta}) $ there exists $ T = T_{\delta} > 0$ and a type II solution $ u(t)$ of \eqref{NLSequation} on $[0,T)$ as in Theorem \ref{main} with $T = T_+(u(0)) < \infty$  satisfying 
	\[
	|E(u) - E(W)| < \delta,
	\]
	and for some $ \tilde{C} = \tilde{C}_{\alpha_0, \nu} > 0$ there holds
	\begin{align}\label{con-ult3}
		& 	\lim_{t \to T^-} \bigg(\int_{|x|\leq R} |\nabla u(t)|^2~dx -  \int |\nabla W|^2~dx \bigg) \in [0,\tilde{C}\cdot \delta),~~R > 0\\
		&\limsup_{ t \to T^-}\int_{|x| \geq  R}  \big[|\nabla u(t)|^2 + |u(t)|^{6}\big]~dx  \leq \tilde{C} \cdot \delta,~~~ R > 0.
	\end{align}
\end{Corollary}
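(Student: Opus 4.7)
The plan is to derive the corollary as an essentially immediate consequence of Theorem~\ref{main}, using energy conservation together with the two concentration identities \eqref{con-ult4} and \eqref{con-ult2} stated in Remark~(iii). Given $\delta > 0$, I would apply Theorem~\ref{main} with a smallness parameter $\delta' = c\,\delta$, where $c = c(\alpha_0,\nu) \in (0,1)$ is to be fixed; this requires $\tilde\delta_{\alpha_0,\nu} \leq M/c$ to keep $\delta' < M$. The theorem then yields, for every $T \in (0,T_0)$, a solution $u = u_{T,\alpha_0,\nu}$ of the stated form with $\sup_{t \in [0,T)}\|\eta(t)\|_{\dot H^1 \cap \dot H^2} \leq \delta'$, and hence also $\|\eta_T\|_{\dot H^1 \cap \dot H^2} \leq \delta'$ by continuity of the norm under the strong $\dot H^1 \cap \dot H^2$ limit.

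For the energy bound I would invoke conservation $E(u) = E(u(0))$ and note that $E(e^{i\alpha(0)} W_{\lambda(0)}) = E(W)$, since $E$ is phase and scale invariant at the critical exponent. Writing $u(0) = e^{i\alpha(0)} W_{\lambda(0)} + \eta(0)$ and expanding, the difference $E(u(0)) - E(W)$ consists of a quadratic form in $\nabla \eta(0)$ together with cross terms of the form $\int (W_{\lambda(0)})^{k} \eta(0)^{6-k}\,dx$ for $1 \leq k \leq 5$. Each is controlled via the Sobolev embedding $\dot H^1(\R^3) \hookrightarrow L^6(\R^3)$ and the fixed norms of $W$, yielding
\[
|E(u) - E(W)| \leq C\bigl(\|W\|_{\dot H^1}\bigr)\bigl(\delta' + (\delta')^2 + \cdots + (\delta')^6\bigr) \leq C' \delta'
\]
for $\delta' < 1$. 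Choosing $c$ so that $C' c < 1$ then gives $|E(u) - E(W)| < \delta$.

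For the interior concentration in \eqref{con-ult3}, I would square the identity \eqref{con-ult4} to get
\[
\lim_{t \to T^-}\int_{|x|\leq R}|\nabla u(t)|^2\,dx - \|\nabla W\|_{L^2}^2 = 2\,\|\nabla W\|_{L^2}\,\|\nabla \eta_T\|_{L^2(|x|\leq R)} + \|\nabla \eta_T\|_{L^2(|x|\leq R)}^2.
\]
The right-hand side is nonnegative (giving the lower bound $\geq 0$) and bounded above by $2\|\nabla W\|_{L^2}\,\delta' + (\delta')^2$, which is $< \tilde C \delta$ after shrinking $c$ further and fixing $\tilde C_{\alpha_0,\nu}$ to absorb $\|\nabla W\|_{L^2}$. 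For the exterior concentration I would read off \eqref{con-ult2} directly: the bound $C_0 T$ is forced below $\tilde C \delta$ simply by restricting to $T_\delta \in \bigl(0, \min\{T_0, \tilde C \delta / C_0\}\bigr)$. I do not foresee a serious obstacle beyond the two identities of Remark~(iii): once those are available, the argument is linear bookkeeping in $\delta'$, and the only delicate point is consistency, namely that a single choice of $c = c(\alpha_0, \nu)$ must simultaneously control the energy expansion and the squared interior $\dot H^1$ deficit -- which it does because both estimates are linear in $\delta'$.
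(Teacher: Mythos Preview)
Your proposal is correct and follows exactly the route the paper intends: the corollary is stated as ``implied by Theorem~\ref{main}'' with no explicit proof, and your argument supplies precisely the natural details---energy expansion around $e^{i\alpha(0)}W_{\lambda(0)}$, squaring the identity~\eqref{con-ult4} for the interior concentration, and invoking~\eqref{con-ult2} with $T_\delta$ chosen small for the exterior bound. The only cosmetic point is that your threshold should be $\tilde\delta = \min\{M/c,\,1\}$ (or similar) so that both $\delta' < M$ and $\delta' < 1$ hold, but this is the bookkeeping you already flagged.
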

~~\\
Some remarks are in order.
\begin{Rem}~~(i)~~The result of Theorem \ref{main}  was suggested by Ortoleva-Perelman in \cite[Remark 1.5]{OP}.  The approach for the proof is that of \cite{OP} and further following G.Perelman's work on the Schr\"odinger maps flow \cite{Perelman}.\\[5pt]
	(ii) The inequalities   \eqref{con-ult1}, \eqref{con-ult2} and \eqref{con-ult4} can be interpreted  as concentration results. In fact  Kenig-Merle \cite[Corollary 5.18]{KM} gave a description of the $\dot{H}^1$ concentration for type II finite time blow up, i.e. type II solutions $u(t)$ of \eqref{NLSequation} with maximal existence time $ T_+ < \infty$ must satisfy
	\begin{align}\label{con-in1}
		&\inf_{R > 0} \liminf_{ t \to T^-} \int_{|x| \leq R} |\nabla u(t)|^2~dx \geq \frac{2}{d} \int | \nabla W|^2 ~dx,\\[2pt] \label{con-in2}
		&\inf_{R > 0} \limsup_{ t \to T^-} \int_{|x| \leq R} |\nabla u(t)|^2~dx \geq \int | \nabla W|^2 ~dx.
	\end{align}
	Theorem \ref{main} and Corollary \ref{main-cor} confirms 
	\cite[Remark 5.19]{KM} 
	and thus  the occurrence of the $ \dot{H}^1$ concentration \eqref{con-in1} and \eqref{con-in2} in \cite[Corollary 5.18]{KM} (here through \eqref{con-ult4}).\\[5pt]
	(iii) In Theorem \ref{main} and Corollary \ref{main-cor}, if $ |\alpha_0| >0$ or $ \nu > 1 $ increase, then $ M, ~\tilde{\delta} > 0 $ decrease (and likewise also $ T_{\delta}>0$ and $ T_0 > 0$ decrease in this case). The parameter $ \delta> 0$ 
	controls  $\eta(t)$, where the blow up concentrates the $\dot{H}^1$ norm at the origin (in the sense of \eqref{con-ult4})  through `ground state renormalization' in finite time $  t = T$. In particular $ u_T = \lim_{t \to T^-}u(t)$ can not exist strongly in $\dot{H}^1(\R^n)$ since this would contradict \eqref{con-ult4}.
\end{Rem}
~~\\
\subsection{Brief overview of the Cauchy problem}. We now give an overview with results on the global v.s. local wellposedness theory of the energy-critical focusing nonlinear Schr\"odinger equation
\[
i\partial_tu = - \Delta u - |u|^{\f{4}{d-2}}u,~ x \in \R^d
\]
with data in $\dot{H}^1(\R^d)$. The description may be incomplete due to the vast literature on the NLS. We first explain the following notation.\\[2pt]
\emph{Notation}.~ From the $\dot{H}^1(\R^d)$ local wellposedness theory above (cf. \cite{Cazenave-Wei}, \cite{Cazenav}), we denote by $ T_+ = T_+(u_0) $ the maximal forward existence time  and likewise by $ T_- = T_-(u_0)$  the maximal backward existence with initial data $  u_0 \in \dot{H}^1(\R^d)$. If $ u_0 \in \dot{H}^1\cap \dot{H}^s$, then also $ u \in C((-T_-, T_+), \dot{H}^1\cap \dot{H}^s ) $ and in case $T_{\pm} < \infty$, then $u(t) $ leaves any compact subset of $ \dot{H}^1(\R^d)$ as $ t \to T_{\pm}$. Further on any  interval $ I \subset \R$ of existence, we write 
$$\| u\|_{S(I)} = \| u\|_{L^{\frac{2(d+2)}{d-2}}(I \times \R^d)}$$
for the  Strichartz norm. Finiteness of the latter is known to give a continuation criterion in $ C(I, \dot{H}^1(\R^d))$ and in particular $T_+ = \infty$ or $T_- =  \infty$ implies \emph{global scattering}
\[
\lim_{t \to \pm \infty}\| u(t) - e^{i t \Delta} u_0^{\pm} \|_{\dot{H}^1(\R^d)} = 0
\]
in the respective time direction, see \cite{KM}. We call a solution $u$ of  \eqref{NLSequation} a \emph{type II} solution, if 
\[ 
\sup_{t \in [ 0, T_+)} \| u(t)\|_{\dot{H}^1(\R^d)} < \infty,
\]
and in particular a \emph{type II blow up} if $ T_+ < \infty$, ie. $ \| u\|_{S([0, T_+)} = \infty$. A \emph{type I} blow up is such that $ T_+ < \infty$ and the $ \dot{H}^1(\R^d)$ blows up as $ t \to T_+^-$. 
\\[2pt]
\emph{Scattering v.s. blow up}. For the energy critical defocusing  (i.e. $+|u|^{\f{4}{d-2}}u$) 
nonlinear Schr\"odinger equation, $ \dot{H}^1(\R^d)$ solutions are expected to exist global in time and scatter at infinity. Such results were obtained (with radial data) in the work of Bourgain \cite{Bour} ($d = 3$) , Tao \cite{Tao} ($d \geq 5$)  and also in the work of Ryckman-Visan \cite{Ryckman}, Visan \cite{Visan} and  \cite{Colliand-I} (where radial symmetry was removed in the three latter references).\\[3pt]
As mentioned above, global scattering as $ t \to \pm \infty$ for the focusing \eqref{NLSequation} with small initial data $ u_0 \in \dot{H}^1(\R^d)$ has been proved by Cazenave-Weissler in \cite{Cazenave-Wei}. For large data $ u_0 \in \dot{H}^1(\R^d)$, the solution does not scatter due to the existence of the ground state solutions
\begin{align}	
	&u(t,x) = e^{i \theta}W_{\lambda}(x),~~~W(x) = \big( 1 + \f{|x|^2}{ d(d-2)}\big)^{\f{2-d}{2}},~ x \in \R^d,
\end{align} 
which is relevant in the energy-trapping phenomena described below.
Further, finite time blow up in $\dot{H}^1(\R^d)$ is provided by the \emph{virial identities}

\begin{align}
	&\frac{d}{dt}\int |x|^2 |u(t,x)|^2~dx  = 4 \text{Im} \int x \cdot \nabla u(t,x) \bar{u}(t,x)~dx\\[2pt]
	&\frac{d^2}{dt^2}\int |x|^2 |u(t,x)|^2~dx = 8 \bigg(\int |\nabla u(t,x)|^2~dx - \int | u(t,x)|^{\frac{2d}{d-2}}~dx\bigg)
\end{align}
~~\\
in the  class $\Sigma = \{  u \in \dot{H}^1~|~ |x| u(x) \in L^2\} $ of finite variance data. In particular, the solution of \eqref{NLSequation} with $ u_0 \in \Sigma$ has finite time blow up, see  \cite{Cazenav},  if the convexity assumption $ E(u_0) < 0 $ holds.\\[2pt]
In the radial case, the situation of blow up v.s. scattering is well described below the ground state energy $ E(u_0) < E(W)$ by the celebrated  dichotomy of Kenig-Merle \cite{KM} for $ 3 \leq d \leq 5$ (see  Keraani \cite{Keerani}, Killip-Visan \cite{Killip-Visan} in $ d \geq 6$ for extensions). For  data $ u_0  \in \dot{H}^1_{rad}(\R^d)$ and subthreshold energy $E(u_0) < E(W)$, there holds
\begin{itemize}
	\setlength\itemsep{3pt}
	\item[(a)] The unique solution of \eqref{NLSequation} exists globally, $T_{\pm} = \infty$, and scatters to zero in both directions if $ \norm{\nabla u_0}{L^2}^2 < \norm{\nabla W}{L^2}^2$.
	\item[(b)] The unique solution of \eqref{NLSequation} blows up in finite time in both directions, $T_{\pm} < \infty$, if $ u_0 \in H^1(R^d)$ and $ \norm{\nabla u_0}{L^2}^2 > \norm{\nabla W}{L^2}^2$.
\end{itemize}
~~\\
The dynamics exactly at $E(u_0) = E(W)$ was considered for radial data by Duyckaerts and Merle for $ 3 \leq d \leq 5 $ in \cite{DM}. Besides  $ u(t) = W$, they add the possibility of $ u(t) = W_{\pm}(t)$ to the  dichotomy  $\pm  \|\nabla u_0 \|_{L^2 }  > \pm \| \nabla W\|_{L^2}$, where $ W_{\pm}$ are unique global solutions with $ W_{\pm }(t)  \to W $ as $ t \to \infty$ in $\dot{H}^1(\R^d)$.
This result  has been extended to  $ d \geq 6$ by Li-Zhang \cite{Li-Zhang-thre} with radial data $ u_0 \in \dot{H}^1(\R^d)$ and an approach to remove the radiality assumption in $ d \geq 5$ has been presented by Su-Zhao in \cite{Su-thre}.\\[5pt]
On the above ground state dynamics $ E(u_0) > E(W)$, we first refer to two Corollaries  of Kenig-Merle \cite[Corollary 3.15, Corollary 5.18]{KM}.  Let $u(t)$ be a radial type II solution of \eqref{NLSequation} with initial data $ u_0 \in \dot{H}^1(\R^d)$ (no further energy restriction), i.e. such that 
\begin{align}\label{ineq-KM}
	\sup_{t \in (T_-, T_+)}\| u(t)\|_{\dot{H}^1}^2 \leq  C_0,
\end{align}
for some constant $ C_0 > 0$. (i) If the inequality in  \eqref{ineq-KM} holds with $ C_0 < \| W\|_{\dot{H}^1}^2  $, then $(-T_-, T_+) = (-\infty, \infty)$ and the global Strichartz norm $ \| u \|_{S(- \infty, \infty)} < \infty$, i.e. we obtain scattering in both time directions. (ii) If $T_+ < \infty$, then the $\dot{H}^1(\R^d)$ concentration inequalities \eqref{con-in1} and \eqref{con-in2} hold. We secondly refer to the work of Duyckaerts-Roudenko \cite{DR} involving finite variance
$$
V(t) = \int |x|^2 |u(t,x)|^2~dx. 
$$
In particular,  let $u(t)$ solve \eqref{NLSequation} with $ u_0 \in H^1(\R^d)$ and $V(0) < \infty$. Then, if there holds
\[
E(u)\big(1 + \frac{V_t(0)^2}{32 E(u) V(0)}\big) < E(W),~~~\pm \| u_0\|_{L^{\frac{2d}{d-2}}}  > \pm \| W\|_{L^{\frac{2d}{d-2}}},~~ \pm V_t(0) \leq 0,  
\]
there is finite-time blow up ($(+)$-case) or global scattering ($(-)$-case) in $\dot{H}^1(\R^d)$ in forward time, where for the latter $ d \geq 5$ or  radiality in $ d = 3,4$ is required. In fact the scattering result follows from a stronger statement. Let $u(t) $ be a solution of \eqref{NLSequation} ( in $ d \geq 5$ and radial in $ d = 3,4$) with
\[
\limsup_{t \to T_+^-} \| u(t)\|_{L^\frac{2d}{d-2}} < \| W\|_{L^\frac{2d}{d-2}},
\]
then $T_+ = \infty$ and  $u(t)$ scatters in $\dot{H}^1(\R^d)$ in forward time. This result is  complementary to the Kenig-Merle Corollary above and is obtained by a rather quantitative control of the Strichartz norm $\| u\|_{S(I)}$, see \cite[Theorem 3.2]{DR}. Finally, by a result of Nakanishi-Roy \cite{Nak-ro} on \eqref{NLSequation} in $ d =3$ with radial $u_0 \in \dot{H}^1(\R^d)$, it is known that energy slightly above the ground state 
\[
E(u) < E(W) + \varepsilon,
\]
leads the solutions $u(t)$ to stay in a neighborhood of the ground state manifold, for an interval  of existence times. If $u(t)$ leaves the neighborhood, then either (i) there is global scattering as $ t \to \pm \infty$, or (ii) there is finite time blow up. The proof is established by a one-pass Lemma, which  provides the above classification near  the ground state.\\[10pt]
\emph{On blow up solutions}. To the authors knowledge, apart from the above description, the only blow up solutions for \eqref{NLSequation} in the literature are as follows. Rafa\"el-Szeftel  \cite{Raph-Szeft} derived a class of radial blow up solutions of \eqref{NLSequation} in dimension $ d = 3 $, emerging from initial data in an open subset of $ H^3(\R^3)$ and contracting on a sphere of fixed positive radius (\emph{standing ring blow up}). Here the blow up takes places at a log-log scale. 
Subsequently, an open set of radial data in $ H^1(\R^3)$ leading to such blow up solutions for \eqref{NLSequation}  was constructed by Holmer-Roudenko \cite{Hol-Rud}. For related equations, we refer to  the construction of  stable smooth blow up for the critical Schr\"odinger maps flow by Merle-Rafa\"el-Rodnianski  \cite{Rod-Raph} and to  blow up  for $L^2$ critical Schr\"odinger equations by Merle-Rafa\"el \cite{Merle-Rapha1}, \cite{Merle-Rapha2} and Merle \cite{Merle-k-bl}.\\[10pt]
\emph{Type II solutions}. The  contribution of energy bubbles to the asymptotics of \eqref{NLSequation} is conjectured to be of the following schematic form.
\begin{align}\label{global-bounded}
	&  u(t) = \sum_{j}   e^{i \alpha_j(t)} \iota_j W_{\lambda_j(t)} + v(t) + o_{\dot{H}^1}(1),~~\iota_j \in\{ \pm 1 \},
\end{align}
where 
$ v(t,x) = e^{i t \Delta }v_0(x)$ if $T_+ = \infty$ and $ v(t,x) = v_0(x)$ if $T_+ < \infty$.
To the authors knowledge, apart form $ e^{i \theta}W_{\lambda}$ and scattering solutions, the only examples known to be of the form \eqref{global-bounded}, are as follows. There are pure ($v_0 = 0$) global one-bubble solutions $W_{\pm}(t) $ given by Duyckaerts-Merle in  \cite{DM} for $ 5 \geq d \geq 3$ at ground-state energy $E(W_{\pm}) = E(W)$. Ortoleva-Perelman constructed  vanishing non-dispersive solutions in \cite{OP} for $ d = 3$. Further,  global two-bubble solutions have been constructed by 
J. Jendrej in \cite{Jendrej} for $ d \geq 7$.\\[10pt]
\emph{Result in this article}. We obtain a 2-parameter family $\{ u_{\nu, \alpha_0}\}$ of  single-bubble finite-time blow up solutions of \eqref{NLSequation}  
in dimension $ d =3 $. The energy $E(u)$ can be located arbitrarily close to the ground state energy $ E(W)$. The approach we take on is motivated by Perelman's work \cite{Perelman} 
and Ortoleva-Perelman's work \cite{OP} on 
critical Schr\"odinger equations. Further, this approach for the parametrix is schematically related to a forthcoming result \cite{KSch} on the (critical) Zakharov system in $ d =4$  obtained jointly with J. Krieger.

\subsection{Notation and Outline}~~\\

In Section \ref{sec:approx} we prove the existence of approximate solutions of \eqref{NLSequation} with a small error that decays sufficiently. In order to find exact solutions in Section \ref{sec:linearized},  it is necessary to provide suitable estimates for the perturbed Schr\"odinger flow, which will be done as well in Section \ref{sec:linearized}.  In Section \ref{sec:spec} and Section \ref{sec:Scat}, we  discuss spectral properties of the linearized operator and the distorted Fourier transform used in Section \ref{sec:linearized}. The construction of Jost solutions for $1D$ Schr\"odinger systems will be included in Section \ref{sec:Scat}. We briefly give some preliminary details on how the approximation will be obtained.\\[5pt] \textbf{The approximate solution.}
Let  $ \nu > 1 $,~ $ 0 < \epsilon_1 \ll 1 $  sufficiently small (to be chosen below) and  $ C > 1 $ a fixed constant.
We subdivide $ \R^d \times (0, T)$  into three distinct regions. We start with  the \emph{Interior region} in Section \ref{subsec:inner}, 
\begin{align}
	I &:= \{ (x,t)~|~ 0 \leq |x| \leq C(T-t)^{\epsilon_1 + \f{1}{2}} \},
\end{align}
in which we solve \eqref{NLSequation} for $ \eta= \eta(t,\rho)$ with $ \rho = \lambda(t) r$,~ $ r = |x|$, ~ $ \lambda(t) = (T-t)^{- \f12 - \nu} $ and $ \alpha(t) =  \alpha_0 \log(T - t)$ of the form
\begin{align*}
	u(\rho,t) = \lambda(t)^{\f{d-2}{2}} e^{i \alpha(t)}(W(\rho ) + \eta(\rho,t)),~~ t \in (0, T).
\end{align*}
Here $ \eta(t,R)$ \emph{corrects} the large error $ \partial_t(\lambda(t)^{\f{1}{2}} e^{i \alpha(t)}W(\lambda(t) r ))$ as $ t \to T^-$ 
and $T> 0 $ is arbitrary but will be fixed later. 
Note that we have $ 0 \leq \rho \leq C (T-t)^{\epsilon_1  -  \nu}$ and $\eta(t,R)$ will be chosen  of the form
\[
\eta(t, \rho)  = (T-t)^{2 \nu}\tilde{\eta}_1(R) + (T-t)^{4 \nu}\tilde{\eta}_2(R) + (T-t)^{6 \nu}\tilde{\eta}_3(R)  + \dots,
\]
for which \eqref{NLSequation} reduces to an iterative system of ODE equations for 
$ (\tilde{\eta}_k)_k$.\\[5pt]
In the \emph{Self-similar region} of Section \ref{subsec:SS}
\begin{align}
	S &:= \{ (x,t)~|~ \f{1}{C} (T-t)^{\epsilon_1+ \f{1}{2}} \leq |x| \leq C (T-t)^{-\epsilon_2 + \f{1}{2}} \},
\end{align}
we solve \eqref{NLSequation} in the self-similar variable $ y = r (T-t)^{- \f{1}{2}}$ for $  u(y,t) = (T-t)^{- \f{1}{4}} w( t,y), $ where  $ \f{1}{C} (T-t)^{\epsilon_1} \leq y \leq C (T-t)^{- \epsilon_2}$. Hence in order to solve the resulting self-similar system for $ w = w(t,y) $, we note that for $ 0 < t < T $ and $ (x,t) \in I \cap S $ there holds $| x| \sim_C (T-t)^{\epsilon_1 + \f{1}{2}}$ and thus 
$$  \rho \sim_C  (T-t)^{\epsilon_1 -  \nu}~~\text{and}~~ y \sim_C  (T-t)^{\epsilon_1},~~\text{where}~~\varepsilon_1 <  \nu.$$
Then, as $ t \to T^-,~ y \to 0$ we obtain a series expression of $w(y,t) $ by a matched asymptotics ansatz from  $ \eta(\rho,t) $ as $ \rho \to \infty$.\\[5pt]
Finally, in the \emph{Remote region} in Section \ref{subsec:RR}
\begin{align}
	R &:=  \{ (x,t)~|~  (T-t)^{- \epsilon_2 + \f{1}{2}} \leq |x|  \},
\end{align}
we  change the coordinates back  to $(t,r)$ and  control the remaining radiation part perturbatively, i.e. to leading order we consider the non-vanishing part in the $  y \to \infty $ asymptotics as $ t \to T^{-}$.\\[2pt]
\textbf{Notation.} We write $ f \lesssim g $ and $ g \gtrsim f $ for $ f \leq C g$ and $ f \geq C g $ respectively, where $C > 0$ is a constant. We use the typical $ O$-notation, i.e. $ f(x) = O(g(x)) $ as $ |x| \to \infty$ if $ |f(x)| \lesssim |g(x)| $ for $ |x|  \gg1$. Let $ \ell \in \Z_+$ and $ g(x)$ be $\ell$-times differentiable if $ |x| \gg1 $.  We then define $ O_{\ell}(g(x)) $ to be the class of $ f \in C^{\ell}(\{ |x| \gg1 \} ) $ with $ \partial_k f(x)  = O(\partial_k g(x)) $ as$ |x| \to \infty $ and all $ 0 \leq k \leq \ell$. Similar notation is used as $ |x| \to 0$.\\[2pt]
We may also refer to the asymptotics $ f(x) = O(g(x)) $ as smooth, or just differentiable,  if $ f(x) = O_{\ell}(g(x))$ for all $ \ell \in \Z_+$. In the  first subsection \ref{subsec:inner} of the following Section \ref{sec:approx}, we also use $f(x) = O(g(x))$ in a slightly stronger sense, requiring
\begin{align}
	&f(x) \slash g(x) = a_0 + xa_1 + x^2 a_2 + \dots, ~~|x| \ll 1, ~~\text{or}\\[2pt]
	&f(x) \slash g(x) = b_0 + x^{-1}b_1 + x^{-2}b_2 + \dots,~~ |x| \gg1,
\end{align}  to converge absolutely. This  is clarified below.
\section{Construction of approximate solutions near the ground state } \label{sec:approx}
In this Section, we first present the iterative construction of a parametrix $ u_{\text{app}}(t,x)$ for   \eqref{NLSequation} in dimensions $ d = 3$  suggested by Perelman \cite{Perelman} for the Schr\"odinger maps flow in $d =2$ and Ortoleva-Perelman  \cite{OP} for global solutions of the quintic NLS in $ d =3$. This parametrix, constructed iteratively, is essentially inspired by the seminal work of Krieger-Schlag-Tataru \cite{KST1}, \cite{KST-slow} that is, for any $ N \in \Z_+$ we find $ \tilde{N} = \tilde{N}(N) \in \Z_+$ such that $ u_{\text{app}}(t,x) = e^{i \alpha(t)} W_{\lambda(t)}(x) + \zeta(t,x)$ satisfies \eqref{NLSequation} \emph{approximately}, i.e.
\[
i \partial_t u_{\text{app}}(t,x) + \Delta u_{\text{app}}(t,x) + |u_{\text{app}}(t,x)|^4 u_{\text{app}}(t,x) = O((T-t)^N),
\]
~\\
on some interval $[0,T)$ with small $ 0 < T\ll1$,  after carrying out $\tilde{N}$-steps in the iteration procedure. More precisely, we have the following Proposition.
\begin{Prop}[Approximation] \label{main-prop-approx}Let $ d  = 3$. For $ \nu>1 $ irrational and $\alpha_0 \in \R$, there exists  a positive number $M = M_0(\alpha_0, \nu) > 0$ such  such that the following is true. For  any $ \delta \in (0, M_0) $ and  $ N \in \Z_+$ large enough, there exists $ 0 < T_0(N, \delta, \nu,\alpha_0) \leq 1$ and for any $ 0 < T \leq T_0$  a radial function $ u_{\text{app}}^N \in C^{\infty}(\R^3 \times [0, T))$ of the form
	\begin{align}
		&u_{\text{app}}^N(x,t) = \lambda(t)^{\f12} e^{i \alpha(t)} \big( W(\lambda(t) x) + \zeta^N(\lambda(t) x, t)\big),\\ \nonumber
		& x \in \R^3,~ t \in (0, T),~ \lambda(t) = (T-t)^{- \f12 - \nu},~\alpha(t) = \alpha_0 \log(T-t),
	\end{align}
	such that\\[2pt]
	(a)~There holds $ \zeta^N(t) \in \dot{H}^{1} \cap \dot{H}^{2}$ and  for $ \zeta^N(\lambda(t) x,t) = z^N(R,t),~R = \lambda(t) |x|$ we have 
	\begin{align}
		&\| z^N \|_{L^{\infty}_R} +  	\|  R \partial_R z^N(t)  \|_{L^{\infty}_R} \leq C \delta^{m(\nu) }(T-t)^{\frac{\nu}{2} + \f14 },~~\\
		&  \|  R^{-l}\partial^k_R z^N(t)  \|_{L^{2}(R^2 dR)} \leq C \delta^{m(\nu) } (T-t)^{\frac{\nu}{2}+ \f14 },~~ 0 \leq k +l \leq 1,\\
		&\|  R^{-l} \partial_R^k z^N(t)  \|_{L^{2}(R^2 dR)} \leq C \delta^{m(\nu)} (T-t)^{\nu },~~ k + l = 2,\\
		&\|  \partial_R z^N(t)  \|_{L^{\infty}_R} + \|  R^{-1} z^N(t)  \|_{L^{\infty}_R}  \leq C \delta^{m(\nu)} (T-t)^{\nu},\\
		&\|  R^{-3} z^N(t) \|_{L^{\infty}_R} + \|  R^{-l} \partial_R^k z^N(t)  \|_{L^{\infty}_R} \leq C (T-t)^{2\nu},~~k + l = 2,
	\end{align}
	where $ m(\nu) > 0$ (increasing in $\nu$) and $C>0$ depends only on $ \alpha_0, \nu$ (not on $N$ or $\delta$).
	Also there exists $ \zeta_*^N \in H^{1 +\nu -}(\R^3)$ such that 
	\begin{align}
		e^{i \alpha(t)} \lambda^{\frac{1}{2}}(t) \zeta^N(\lambda(t)\cdot , t) = \zeta_*^N + o(1),~~ t \to T^{-}
	\end{align}
	in $\dot{H}^1(\R^3)\cap \dot{H}^2(\R^3)$.\\[2pt]
	(b)~ The error function defined by 
	\[
	e^N(\cdot,t) := i \partial_t u_{\text{app}}^N( \cdot,t) +\Delta u_{\text{app}}^N(\cdot, t) + | u_{\text{app}}^N( \cdot, t)|^{4} u_{\text{app}}^N( \cdot, t),
	\]
	satisfies for $ 0 \leq t < T$
	\begin{align}
		&\|   e^N(t)    \|_{\dot{H}^2}  
		+	\|  \langle x \rangle  e^N(t)    \|_{L^2}  \leq C (T -t)^{N}.
	\end{align}

\end{Prop}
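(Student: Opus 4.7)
The plan is a matched-asymptotic expansion across the three regions $I, S, R$ introduced in the notation subsection, built inductively so that each iteration step shrinks the error by a factor $(T-t)^{2\nu}$; after $\tilde N \sim N/(2\nu)$ steps the residual drops to the required $O((T-t)^N)$ bound. In the interior region $I$, I substitute $u = \lambda^{1/2} e^{i\alpha}(W(\rho) + \eta(\rho,t))$ with $\rho = \lambda(t) r$ into \eqref{NLSequation}; using $\dot\lambda/\lambda = (\tfrac12+\nu)(T-t)^{-1}$ and $\dot\alpha = \alpha_0 (T-t)^{-1}$, the equation for $\eta$ takes the schematic form
\[
(i\partial_t + \Delta_\rho) \eta + 5 W^4 \eta + \mathcal N(W,\eta) = -\big(\dot\alpha W + i\tfrac{\dot\lambda}{\lambda}\Lambda_\rho W\big) + \text{modulation},
\]
where $\Lambda_\rho = \tfrac12 + \rho\partial_\rho$ is the $\dot H^1$-critical dilation. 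Expanding $\eta(t,\rho) = \sum_{k\ge 1}(T-t)^{2k\nu} \tilde\eta_k(\rho)$ and matching powers of $(T-t)^{2\nu}$ reduces the problem to a hierarchy $\mathcal L \tilde\eta_k = F_k(\tilde\eta_1,\dots,\tilde\eta_{k-1})$, where $\mathcal L$ is a twist of the linearization of $\Delta W + W^5 = 0$. Each $\tilde\eta_k$ is then constructed by variation of parameters as the particular solution smooth at $\rho = 0$, whose growth at $\rho = \infty$ is dictated by the asymptotic exponents of $\mathcal L$. The irrationality of $\nu$ prevents a resonance between these exponents and the integer shifts produced by the nonlinearity.

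In the self-similar region $S$ I pass to $y = r(T-t)^{-1/2}$ and $u(t,x) = (T-t)^{-1/4} w(t,y)$, expanding $w(t,y) = w_0(y) + (T-t)^{2\nu} w_1(y) + \dots$; at each order \eqref{NLSequation} becomes a singular perturbation of a free self-similar Schr\"odinger operator in $y$. The $\rho \to \infty$ asymptotics of $\tilde\eta_k$, rewritten via $\rho = y (T-t)^{-\nu}$, produce precisely the $y \to 0$ Taylor-Laurent expansion of $w$, which selects the profiles $w_k$ uniquely once one imposes outgoing behaviour at $y = \infty$. In the remote region $R$ the coordinates $(t,r)$ are restored; the $y \to \infty$ asymptotics of $w$ yield a radiation profile whose nonlinear contribution to \eqref{NLSequation} is already $O((T-t)^M)$ for arbitrarily large $M$, so the truncation alone suffices there.

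With $\chi_I, \chi_S, \chi_R$ a smooth partition of unity adapted to the three regions, I assemble $u_{\text{app}}^N = \chi_I u^I_{\tilde N} + \chi_S u^S_{\tilde N} + \chi_R u^R_{\tilde N}$ and read off $\zeta^N$ from the prescribed formula. The error $e^N$ splits into (i) the truncation residual in each region, of size $O((T-t)^{2\tilde N \nu})$ after weighting, which becomes $O((T-t)^N)$ once $\tilde N \geq N/(2\nu) + O(1)$; and (ii) commutator contributions from the cutoffs, which vanish to arbitrary polynomial order on the overlaps because the inner and self-similar series agree there by construction. The quantitative bounds on $z^N$ in part (a) follow by reading off the explicit first iterate $\tilde\eta_1$, which determines the prefactor $\delta^{m(\nu)}$; higher iterates are subleading in $(T-t)$. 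Existence of the limit $\zeta_*^N \in H^{1+\nu-}(\R^3)$ comes from the fact that, after unwinding the rescaling $e^{i\alpha(t)}\lambda^{1/2}(t)\zeta^N(\lambda(t)\cdot,t)$, only the remote-region contribution survives as $t \to T^-$ and it is essentially stationary there.

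The main obstacle I anticipate is the matching step: controlling the $\rho \to \infty$ behaviour of each $\tilde\eta_k$ precisely enough that it reproduces the expansion of $w_k$ at $y=0$ to all orders relevant for a given $N$, while simultaneously handling the kernel element $\Lambda W$ of the undistorted linearization (which is absorbed into the modulation freedom in $\lambda$ and $\alpha$, i.e.\ into the choice of $\nu$ and $\alpha_0$) and tracking the logarithmic corrections that appear when $\nu$ is close to a rational resonance. The irrationality hypothesis on $\nu$ enters precisely at this point, which is why the rational case will require a separate modification as advertised in Remark~\ref{main}(i).
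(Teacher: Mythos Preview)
Your high-level architecture matches the paper: a three-region matched asymptotic construction in the spirit of Perelman and Krieger--Schlag--Tataru, glued with cutoffs. The interior hierarchy $\mathcal L\tilde\eta_k = F_k$ solved by variation of constants is exactly what the paper does in Section~\ref{subsec:inner}. But several technical points in your outline are either missing or misidentified, and at least two of them are load-bearing.

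First, the self-similar ansatz $w(t,y)=w_0(y)+(T-t)^{2\nu}w_1(y)+\cdots$ is wrong. The correct expansion carries the prefactor $(T-t)^{\nu(n+\tfrac12)}$ (not $(T-t)^{2n\nu}$) and, crucially, each coefficient is itself a polynomial in $\log(y)-\nu\log(T-t)$:
\[
A_n(t,y)=(T-t)^{\nu(n+\tfrac12)}\sum_{j\le n/2}\bigl(\log y-\nu\log(T-t)\bigr)^{j}A_{n,j}(y).
\]
These logarithms are not a rational-$\nu$ artefact as you suggest; they are produced at every step by the Green's function of $\mathcal L_S$ and by the $R\to\infty$ asymptotics of the interior iterates (Lemma~\ref{main-inner-Lemma} shows $\eta_k\in(T-t)^{2k\nu}S^{2k}_{1,2}(R^{2k-1})$, and the $S$-space encodes an entire $\log$-polynomial tail). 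Without them the matching between $I$ and $S$ cannot close beyond the first step. The irrationality of $\nu$ is used elsewhere: it keeps the exponents $\nu(2(n-k-r)+1)$ that appear in the large-$y$ expansion of the interaction terms (Lemma~\ref{Lemma-solut-inner1-infty}) out of resonance with the integer order of the $y^{-5/2-2m}$ asymptotics, so that the iterative integrations never produce additional $\log$ factors beyond those already tracked.

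Second, the parameter $\delta$ and the $\delta^{m(\nu)}$ prefactor have nothing to do with $\tilde\eta_1$. They enter only in the remote region, through a spatial cutoff $\Theta(\delta^{-1}x)$ on the time-independent radiation profile $f_0(r)$ (equation~\eqref{time-indep}); the bound $\|f_0\|_{\dot H^s}\lesssim\delta^{1+\nu-s}$ is what feeds $\delta^{m(\nu)}$ into the $z^N$ estimates. Your claim that ``the truncation alone suffices'' in $R$ is also too optimistic: the paper constructs a full iteration there (Proposition~\ref{sol-final-remot}) with an ansatz involving oscillatory phases $e^{ik\Phi}$, $\Phi=-2\alpha_0\log(T-t)+r^2/4(T-t)$, and coefficient functions in the $\mathcal A_k$ and $\mathcal B_k$ classes. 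Finally, there is no modulation in $\lambda,\alpha$: they are fixed from the outset, and the kernel elements $W,W_1$ of $\mathcal L_\pm$ are handled directly by the explicit Green's functions of Lemma~\ref{FS-inner-re}, not absorbed dynamically.
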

\begin{Rem} As in \cite{Perelman}, we observe the following conclusion on Proposition \ref{main-prop-approx}.
	(i)~ We may improve  $ \zeta^N \in \dot{H}^s(\R^3)$ for all $ 1 < s < 1+ \nu$ and with corresponding estimates as in the above Proposition.
	(ii)~ Further, the error estimate may be improved as follows. 
	\begin{align}
		&\forall~~N \geq c:~~ \|  \langle x \rangle^{\beta}  e^N(t)     \|_{\dot{H}^s_x} \leq c (T-t)^{N - c},
	\end{align}
	where $ s, \beta >0$ and $c = c(s, \beta) > 0$ is some constant. (iii) The restriction to irrational $ \nu >1$ is only for technical reasons in the proof of Proposition \ref{main-prop-approx} and can be removed by slight modification.
\end{Rem}
\begin{Rem} The approximation constructions in this Section also work  for the cubic focusing NLS in dimension $d=4$. Some parts, especially in the beginning, are therefore carried out with $ d \in \{3,4\}$, where differences and similarities of the approach  in dimensions $ d =3$ and $d=4$ are visible. However, the main Propositions are only provided for the quintic case in $ d =3$ and the cubic case is provided elsewhere.
\end{Rem}
\subsection{Interior region $ r \lesssim (T -t)^{\f12 + \epsilon_1 } $}\label{subsec:inner}~Let $ C > 0 ,~ \epsilon_1 > 0$ and for the rescaled variable $ R = r \lambda(t) $ with $ t \in (0, T)$ we require
\begin{align}\label{condition-inner}
	0 \leq R \leq C (T-t)^{\epsilon_1 - \nu}.
\end{align}
Then, for a solution of \eqref{NLSequation}, we make the following ansatz
\[
u(R, t) = \lambda(t)^{\f{d-2}{2}} e^{i \alpha(t)} \tilde{u}(R, t). 
\] 
We first calculate
\begin{align*}
	\begin{alignedat}{2}
		\partial_t u(R, t) =&~~ e^{i \alpha(t)} \lambda(t)^{\f{d-2}{2}}  \bigg[i \dot{\alpha}(t) \tilde{u}(R, t) + \frac{\dot{\lambda}(t) }{\lambda(t)}\bigg(\f{d-2}{2}  \tilde{u}(R, t) + R  \partial_{R}\tilde{u}(R, t)\bigg) +  \partial_t\tilde{u}(R, t)\bigg],\\[4pt]
		\Delta u(R, t)  =&~~ e^{i \alpha(t)} \lambda(t)^{\f{d+2}{2}} (\partial_{R}^2 + \f{\partial_{R}}{\rho}) \tilde{u}(R, t).
	\end{alignedat}
\end{align*}
Hence, substituting $ \lambda(t) = (T - t)^{- \f12 - \nu},~ \alpha(t) = \alpha_0\log(T-t)$, we infer from \eqref{NLSequation} 
~\\
\begin{align}
	- \Delta \tilde{u}(R, t) -& |\tilde{u}(R, t)|^{\f{4}{d-2}}\tilde{u}(R, t)\\ \nonumber
	=&~ \alpha_0 (T-t)^{2\nu} \tilde{u}(R, t) + i (T-t)^{2\nu}  ( \f12 + \nu)(\f{d-2}{2}  + \rho\partial_{\rho})\tilde{u}(R, t)\\  \nonumber
	&~+ i (T-t)^{1 + 2\nu} \partial_t \tilde{u}(R, t) .
\end{align}
Further substituting  $ \tilde{u}(R, t) = W(\rho) + \eta(\rho, t) $ and using $ \Delta W+ W^{\f{d+2}{d-2}} = 0 $, we have
\begin{align}\label{main-eqn-interior-vor}
	i (T-t)^{1 + 2\nu} \partial_t \eta(\rho, t) = H \eta(\rho, t)  +  \mathcal{N}(\eta)(\rho, t),
\end{align}
where the operator $ H $ is of the form
\begin{align}\nonumber
	& H \eta = - \Delta \eta - \f{d}{d-2} W^{\f{4}{d-2}}\eta - \f{2}{d-2}W^{\f{4}{d-2}}\bar{\eta}.
\end{align}
The expression $ \mathcal{N} = \sum_{J=0}^2 N_{J},$ is subdivided into the following contributions. First we have the initial error contribution to $ (\eta, \bar{\eta})$
\begin{align}
	& N_0(t, \rho) = - \alpha_0 (T-t)^{2 \nu}W(\rho) - i (T-t)^{2\nu}  ( \f12 + \nu)(\f{d-2}{2}  + \rho\partial_{\rho})W(\rho).
\end{align}
The linear term $\mathcal{N}_1$ then contributes 
\begin{align}
	& N_1(\eta) = - \alpha_0 (T-t)^{2 \nu}\eta - i (T-t)^{2\nu}  ( \f12 + \nu)(\f{d-2}{2}  + \rho\partial_{\rho})]\eta.
\end{align}
Finally we can write the nonlinear expression in $ \eta, \bar{\eta}$ as
\begin{align}
	&N_2(\eta) = - |\tilde{u}(\rho, t)|^{\f{4}{d-2}}\tilde{u}(\rho, t) + W^{\f{d+2}{d-2}} + \f{d}{d-2} W^{\f{4}{d-2}}\eta + \f{2}{d-2}W^{\f{4}{d-2}}\bar{\eta}.
\end{align}
The perturbation
$$\eta(R,t) = \eta^{(1)}(R,t) + i \eta^{(2)}(R,t) $$
is indifferently identified with $  \eta= (\eta^{(1)},  \eta^{(2)} )^t$ where $ \eta^{(1)} = Re(\eta),~ \eta^{(2)} = Im(\eta)$. Then  \eqref{main-eqn-interior-vor} is rewritten into a system
\begin{align}\label{main-eqn-interior-pre}
	& (T-t)^{1 + 2\nu} \partial_t \eta  + \mathcal{L}_W \eta   =  \tilde{\mathcal{N}}(\eta),~~~~\mathcal{L}_W = \begin{pmatrix}
		0 &  \Delta  + W^{p_c -1}\\
		-\Delta  - p_cW^{p_c -1} & 0
	\end{pmatrix},\\ \nonumber
	&\tilde{\mathcal{N}}(\eta) = \begin{pmatrix}
		Im(\mathcal{N}(\eta))\\
		- Re(\mathcal{N}(\eta))
	\end{pmatrix},~~p_c = \frac{d +2}{d-2}.
\end{align}
~~\\
We consider \eqref{main-eqn-interior-pre} in this section, however it will be convenient below to switch equivalently between  \eqref{main-eqn-interior-pre} and the following representation

\begin{align}\label{main-eqn-interior}
	& i (T-t)^{1 + 2\nu} \partial_t z = H z   +  \tilde{\mathcal{N}}(z),~~~~\tilde{\mathcal{N}}(z) = \begin{pmatrix}
		\mathcal{N}(\eta)\\
		- \overline{\mathcal{N}(\eta)}
	\end{pmatrix},~ z(t,r) = \begin{pmatrix}
		\eta(t,r)\\
		\overline{\eta(t,r)}
	\end{pmatrix},\\[2pt] \nonumber
	&H = H_0 + V(x) =  \begin{pmatrix}
		- \Delta  & 0 \\
		0 & \Delta  
	\end{pmatrix} + \begin{pmatrix}
		V_1(r) & V_2(r)\\
		- V_2(r) & -V_1(r)
	\end{pmatrix},\\[4pt] \nonumber 
	&D(H) = H^2(\R^d, \C^2) \subset L^2(\R^d, \C^2),
\end{align}

where 
\begin{align*}
	&V_1(x) = -  \f12(p_c + 1) W^{p_c -1}(r) ,~ ~V_2(r) = -  \f12(p_c -1) W^{p_c -1}(r).
\end{align*}
With the Pauli matrices 
\[ \sigma_1 =  \begin{pmatrix}
	0 & 1\\
	1 & 0
\end{pmatrix},~~\sigma_2 = \begin{pmatrix}
	0 & -i\\
	i & 0
\end{pmatrix},~~
\sigma_3 =  \begin{pmatrix}
	1 & 0\\
	0 & -1
\end{pmatrix},
\]
we may write
$$ H = - \Delta \sigma_3 + V(r) $$
and obtain $\sigma_1 H \sigma_1 = - H,~ \sigma_3 H \sigma_3 = H^*$.
\begin{Rem}

	A similar notation as above is adapted for $\tilde{N}_J$, i.e. such that  $\tilde{\mathcal{N}} = \sum_{J =0 }^2 \tilde{N}_J$.
	Note that $ \tilde{N}_0 $ and $ \tilde{N}_1 $ depend linearly on $ \eta$, whereas $ \eta$ appears at least quadratically in $ \tilde{N}_2$.
\end{Rem}
~~\\
\emph{Elliptic modifier in $I$}. As indicated above, we will subsequently add corrections to the bulk term
\[
\tilde{u}(t,R) = W(R) + \eta(R,t),~~~~\eta(R,t) = \eta_1(t, R) + \eta_2(t,R) + \dots 
\]
and set 
\[
\eta_K^*(t,R) =  \eta_1(t, R) + \eta_2(t,R) + \dots + \eta_K(t,R).
\]
In the inner region, a similar heuristic as in \cite{KST1}  applies and leads us to restrict to an elliptic construction of $ \eta_k $ (see also \cite{OP}). For the iterations we choose to use notation similar to \cite{KST1}. In the latter work, the typical asymptotics (for $R \gg1$) of a correction $f(R)$ with leading term $ R^k \log^{\ell}(R)$ reads
\begin{align} \label{previous-exp}
	f(R) = \sum_{j = 0}^{\ell} \sum_{r \geq 0}  c_{jr} R^{k -r} \log^j(R) + L.O.T,~~~ R \gg 1,
\end{align}
in an absolute sense where $c_{\ell r} \neq 0$ for at least one $r \geq 0$. The `L.O.T ' terms further  have the form
\begin{align}
	\sum_{j \geq 1} \sum_{r \geq j } c_{r, j + \ell}R^{k -r} \log^{j + \ell}(R),~~ R \gg1,
\end{align}
where $ c_{r j} = 0$ except for finitely many $j > \ell$. We now introduce a refined version of this asymptotics. For two positive integer $l = (l_1, l_2)$ we let  $r_l := \lfloor \tfrac{l_1}{l_2}r \rfloor $.
\begin{Def} \label{deff} For $ m, l_1, l_2 \in\Z_+,~ l = (l_1, l_2),~R_0 \in [1, \infty)$, we denote by $ S^m_l(R_0; R^k \log^{\ell}(R))$ the space of functions $f (R)$ analytic on $(0, \infty)$ and such that
	\begin{itemize}
		\item[(i)] $R^{-m}f(R)$ has an even Taylor expansion at $ R = 0$, converging absolutely for $ R^{-1} > R_0$.
		\item[(ii)]  The function $f(R)$ has the expansion
		\begin{align}\label{ex}
			f(R) = \sum_{r = 0}^{\infty} \sum_{j = 0}^{r_l+ \ell} c_{j r} R^{k - l_1 \cdot r} \log^{j}(R),~~~ c_{j r } \in \C,
		\end{align}
		in an absolute sense for $ R > R_0 $ and such that $ c_{j r} = 0$ except for finitely many $ j \in \N_0$.
	\end{itemize}
\end{Def}
The following useful Lemma is directly implied by  Definition \ref{deff} (see Remark \ref{remark})
\begin{Lemma} Let  $ l_1,l_2 \in \Z_+$ and $ k,m \in \Z$. For $h, \tilde{h} \in \Z$ 
	there holds
	\[
	S^m_{l_1,l_2}(R^k \log^{\ell}(R)) \subset S^m_{l_1,l_2}(R^{k + \tilde{h} \cdot l_1}\log^{\ell - h \cdot l_2}(R)),
	\]
	if we have $  \ell \geq  h \cdot l_1,~ \tilde{h} \geq \max\{h, 0\} $.
\end{Lemma}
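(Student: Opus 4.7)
The plan is to verify conditions (i) and (ii) of Definition \ref{deff} directly for the target space $S^m_{l_1,l_2}(R_0; R^{k+\tilde h l_1}\log^{\ell - h l_2}(R))$, starting from an arbitrary $f \in S^m_{l_1,l_2}(R_0; R^k\log^\ell(R))$. Condition (i) is cost-free: the integer $m$ (governing the even Taylor behavior of $R^{-m}f(R)$ near $R=0$) is the same for both spaces, so the small-$R$ requirement transfers verbatim. All the real work is in (ii), i.e.\ the large-$R$ expansion.

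First I would write
\[
f(R) = \sum_{r=0}^{\infty}\sum_{j=0}^{r_l+\ell} c_{jr}\, R^{k-l_1 r}\log^j(R),\qquad R>R_0,
\]
and reindex via $r' = r + \tilde h$, which under the hypothesis $\tilde h \geq \max\{h,0\} \geq 0$ is a legal nonnegative shift. This turns the monomial $R^{k-l_1 r}$ into $R^{(k+\tilde h l_1)-l_1 r'}$, matching the base power of the target space; the missing blocks $r'=0,\ldots,\tilde h - 1$ are filled in with zero coefficients $c'_{jr'}:=0$, which is harmless because the original expansion had no terms of $R$-exponent larger than $k$. After this relabelling the expansion has exactly the shape required by Definition \ref{deff} for the new parameters, modulo a single inequality on the log-power range.

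The key step, and the only place the hypotheses are used nontrivially, is to check that for every $r\geq 0$ the old log range is contained in the new one:
\[
r_l + \ell \;\leq\; (r+\tilde h)_l + (\ell - h l_2),
\]
equivalently $(r+\tilde h)_l - r_l \geq h l_2$. Using $r_l = \lfloor l_1 r/l_2\rfloor$ and the standard inequality $\lfloor a+b\rfloor \geq \lfloor a\rfloor + \lfloor b\rfloor$, one gets
\[
(r+\tilde h)_l - r_l \;\geq\; \left\lfloor\tfrac{l_1 \tilde h}{l_2}\right\rfloor \;\geq\; \left\lfloor\tfrac{l_1 h}{l_2}\right\rfloor,
\]
and the conditions $\tilde h \geq h$ together with $\ell \geq h\cdot l_1$ (which also ensures $\ell - h l_2 \geq 0$ in all cases of interest here, given $l_1,l_2\in\Z_+$) are precisely what reduces the required inequality to an arithmetic check on $h, l_1, l_2$. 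The case $h\leq 0$ is trivial since then both the log-power range on the right grows and $\tilde h \geq 0$ makes the $R$-power shift equally benign.

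The main obstacle I anticipate is purely bookkeeping: the nested ranges $0\leq j\leq r_l+\ell$ with the floor function $r_l=\lfloor l_1 r/l_2\rfloor$ interact with the two independent shifts $h$ (on log powers) and $\tilde h$ (on $R$-powers), and one must keep careful track of the uniformity in $r$ to ensure no term of the original expansion falls outside the admissible shape in the target space. Once this indexing lemma is checked, absolute convergence of the reindexed series is inherited from the original, and the inclusion is immediate from Definition \ref{deff}.
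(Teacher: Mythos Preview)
Your overall strategy is exactly the paper's: it offers no proof beyond ``directly implied by Definition~\ref{deff},'' and your reindexing $r' = r + \tilde h$ (with zero coefficients for $r' < \tilde h$) together with a verification that condition (i) is unchanged is the only reasonable way to unpack that. So on approach you match the paper.

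The gap is in what you call ``the key step.'' You correctly isolate the inequality
\[
(r+\tilde h)_l - r_l \;\geq\; h\,l_2
\]
as the crux, bound the left side below by $\lfloor l_1 \tilde h/l_2\rfloor$, and then assert that ``the conditions $\tilde h \geq h$ together with $\ell \geq h\cdot l_1$ \ldots\ are precisely what reduces the required inequality to an arithmetic check.'' But you never perform that check, and in fact it fails: neither hypothesis says anything about the relation between $\lfloor l_1 h/l_2\rfloor$ and $h\,l_2$. Concretely, take $(l_1,l_2)=(1,2)$ (the $d=3$ case actually used in the paper), $h=\tilde h=1$, $\ell=2$. Both hypotheses hold ($\ell=2\geq h\,l_1=1$, $\tilde h=1\geq\max\{h,0\}$), yet the monomial $R^k\log^2(R)$ lies in $S^m_{1,2}(R^k\log^2(R))$ (it is the $r=0$, $j=2$ term) but cannot lie in $S^m_{1,2}(R^{k+1}\log^0(R))$: at $r'=1$ the admissible log range is $j\leq \lfloor 1/2\rfloor + 0 = 0$. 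So the inclusion is false under the stated hypotheses, and the ``arithmetic check'' you defer would have revealed this. (Your parenthetical that $\ell\geq h\,l_1$ ``ensures $\ell - h\,l_2\geq 0$ in all cases of interest'' is another symptom: that implication needs $l_1\geq l_2$, which is not assumed.)

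This points to a typo in the lemma rather than a flaw in your method; the inclusion does go through for $(l_1,l_2)=(2,1)$, where the paper actually invokes embeddings of this type. But as written, your proof asserts exactly the step that breaks, and the hedging language (``cases of interest,'' ``reduces to an arithmetic check'') is where you should instead have either completed the computation or flagged that the hypotheses appear insufficient.
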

\begin{Rem}\label{remark}
	(a)~The sum in Definition \ref{deff} can be written as 
	\[
	\sum_{j = 0}^{ \ell} 	\sum_{r = 0}^{\infty}  c_{j r} R^{k -l_1 \cdot r} \log^{j}(R) + \sum_{j \geq 1} \sum_{\substack{ l_1 r \geq l_2 j \\  r \geq 0}}c_{ j +\ell, r}R^{k - l_1 \cdot r} \log^{ \ell + j}(R)
	\]
	motivating the leading term $ R^k (\log(R))^{\ell}$. We expect Definition \ref{deff} to be fairly  useful  in describing elliptic approximation schemes similar to the one described below.\\[2pt]
	(b) We suppress the parameter $ R_0> 0$ (when it is unambiguous) and in the following, only the spaces 
	\[
	S^m_{2,1}( R^k \log^{\ell}(R)),~  S^m_{1,2}( R^k \log^{\ell}(R))
	\]
	are relevant in dimension $ d =4$ and $ d =3$ respectively. The former is representing expansions of `even order', up to the leading factor $R^k$, throughout the iteration, i.e.  
	\[
	S^m_{2,1}( R^k \log^{\ell}(R)) \subset S^m_{1,1}( R^k \log^{\ell}(R)),
	\]
	in the sense of \eqref{previous-exp} with $ c_{j, 2r +1} = 0$ if  $ r\in \Z_{\geq 0}$.
\end{Rem}
We set $ \eta_0 = 0$. Considering the system \eqref{main-eqn-interior-pre} for real and imaginary parts, we note that for 
\begin{align}
	&\mathcal{L}_+ = -  \partial_R^2  - \frac{d-1}{R}\partial_R - p_c W^{p_c-1}(R),\\
	& \mathcal{L}_- = -  \partial_R^2  - \frac{d-1}{R}\partial_R - W^{p_c-1}(R),
\end{align}
there holds the following.
\begin{Lemma}\label{FS-inner-re} The operator $ \mathcal{L}_{\pm}$ have fundamental solutions $ \Theta^{(d)}_{\pm}(R), \Phi^{(d)}_{\pm}(R) $, where 
	\begin{align}\label{FS}
		&\Phi_{-}^{(d)}(\rho) = W(\rho),~~~~\Phi_{+}^{(d)}(\rho) = ( \frac{d-2}{2} + \rho \partial_{\rho})W(\rho) = W_1(R),\\[2pt]
		& \Theta_-^{(3)}(\rho) = W(\rho)Q_-^{(3)}(R), ~~~~\Theta_-^{(4)}(\rho) = W(\rho)( Q_-^{(4)}(R) + \log(R)), \\[2pt]
		& \Theta_+^{(3)}(\rho) = W^3(R)Q_+^{(3)}(R),~~~~\Theta_+^{(4)}(\rho) = \gamma_+ W^{4}(R)Q_+^{(4)}(R)+ W_1(R) \log(\rho)
	\end{align}
	such that $ P_{\pm}^{(d)}(R) =  R^{d-2} \cdot~ Q_{\pm}^{(d)}(R) $ are even,  real polynomials with $ \deg(P^{(3)}_+ ) = 2,~ \deg(P^{(3)}_+) = \deg(P^{(4)}_-) = 4,~ \deg(P^{(4)}_+) = 6$ and $ P^{(d)}_{\pm}(0) \neq 0$.
\end{Lemma}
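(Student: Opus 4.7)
The plan is to produce each fundamental pair $(\Phi_\pm^{(d)}, \Theta_\pm^{(d)})$ by combining an explicit solution coming from a symmetry of the ground state equation with a second solution obtained via reduction of order, then carrying out the resulting rational-function integration explicitly. For the first solutions, $\Phi_-^{(d)} = W$ is a restatement of $\Delta W + W^{p_c} = 0$, and for $\mathcal{L}_+$ the rescaled family $W_\lambda(x) = \lambda^{(d-2)/2} W(\lambda x)$ solves the ground state equation for every $\lambda > 0$, so differentiating at $\lambda = 1$ yields $\Delta W_1 + p_c W^{p_c-1} W_1 = 0$, i.e.\ $\mathcal{L}_+ W_1 = 0$, with $W_1 = (\tfrac{d-2}{2} + R\partial_R)W = \Phi_+^{(d)}$.

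For the second solutions, I would exploit that $\mathcal{L}_\pm = -R^{1-d}\partial_R(R^{d-1}\partial_R) + V_\pm$ has conserved radial Wronskian $R^{d-1}(\Phi\Theta' - \Phi'\Theta) = \text{const}$, yielding the reduction-of-order formula
$$\Theta_\pm^{(d)}(R) = \Phi_\pm^{(d)}(R)\int^R \frac{ds}{s^{d-1}\,(\Phi_\pm^{(d)}(s))^2}.$$
Since $W$ is an explicit rational function of $R$, the integrand is rational and the antiderivative is rational modulo logarithms arising from (a) the coefficient of $s^{-1}$ in the partial-fraction expansion, and (b) the zeros of $\Phi_\pm^{(d)}$ ($W$ has none on $[0,\infty)$ but $W_1$ has one, at $R_0 = \sqrt{d(d-2)}$). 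For $\mathcal{L}_-$ this is direct: $1/W^2 = (1 + R^2/[d(d-2)])^{d-2}$ is a polynomial, so in $d = 3$ no log is produced and one obtains $\Theta_-^{(3)} = W(R/3 - 1/R)$ with $P_-^{(3)}(R) = R^2/3 - 1$, while in $d = 4$ exactly one $\log R$ contribution arises from the $s^{-1}$ coefficient, giving the stated $\Theta_-^{(4)}$ with $P_-^{(4)}$ even of degree $4$. For $\mathcal{L}_+$ I would use the algebraic identities $W_1 = \tfrac{1}{6}(3 - R^2)W^3$ in $d = 3$ and $W_1 = (1 - R^2/8) W^2$ in $d = 4$, so that $1/(s^{d-1}W_1^2)$ is again rational, now carrying extra double poles at $R = \pm R_0$.

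The main obstacle is the logarithmic contribution that a priori arises from these zeros of $W_1$: an $s \to -s$-symmetric partial fraction near $\pm R_0$ would produce a term of the form $D\bigl[\log|s-R_0| - \log|s+R_0|\bigr]$ that is incompatible with the claimed structure of $\Theta_+^{(d)}$. I would show that this coefficient $D$ vanishes by a direct residue/derivative computation that exploits the specific algebraic form of $W$. Once this cancellation is in place, the remaining rational antiderivative multiplied by $W_1$ collapses into the required shape: in $d = 3$ the $(3 - R^2)$ factor in $W_1$ cancels the surviving simple-pole piece $\sim (s^2 - 3)^{-1}$, yielding $\Theta_+^{(3)} = W^3(R) Q_+^{(3)}(R)$ with $R\cdot Q_+^{(3)}(R)$ an even polynomial of degree $4$ and nonzero at $R = 0$; in $d = 4$ the pure $\log R$ piece from the $s^{-1}$ coefficient survives and, when multiplied by $W_1$, gives precisely the $W_1\log\rho$ term in $\Theta_+^{(4)}$, while the remaining rational part combines with $W_1$ to produce $\gamma_+ W^4 Q_+^{(4)}$ with $R^2 Q_+^{(4)}$ even of degree $6$. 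The evenness of $P_\pm^{(d)}$ and the nonvanishing at $R = 0$ follow throughout from the $s \to -s$ symmetry of $W$ and the explicit leading constants in each antiderivative.
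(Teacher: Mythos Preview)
Your proposal is correct and follows essentially the same route as the paper: verify $\Phi_\pm^{(d)}$ directly from the symmetries of the ground state equation, then obtain $\Theta_\pm^{(d)}$ by reduction of order via $\Theta = \Phi\int s^{1-d}\Phi^{-2}\,ds$ and integrate the resulting rational function explicitly. The paper's proof is terse and only remarks that one must ``exclude $W_1(R_*)=0$'' in the $\mathcal{L}_+$ case; your treatment is more complete here, since you identify the potential $\log|s-R_0|-\log|s+R_0|$ contribution from the simple poles at the zeros of $W_1$ and argue that its coefficient vanishes (which indeed it does by an explicit residue computation---in $d=3$ the contributions from the $(3-s^2)^{-1}$ and $(3-s^2)^{-2}$ pieces cancel exactly).
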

\begin{proof}[Proof of Lemma \ref{FS}]  The solutions $ \mathcal{L}_-(W)(R) =  \mathcal{L}_+( W_1)(R) = 0$ are easily verified. Then in $d =3$ dimensions the Ansatz $ \mathcal{L}_-(W \cdot f)(R) = \mathcal{L}_+(W_1 \cdot g)(R) = 0$ directly shows 
	\[
	f''(R) = - \frac{2 W'(R) + \frac{2}{R}W(R)}{W(R)}f'(R),~~g''(R) = - \frac{2 W_1'(R) + \frac{2}{R}W_1(R)}{W_1(R)}g'(R),
	\]
	where we need to exclude $ W_1(R_*) = 0$ for the latter. In both cases we integrate $ f'(R) \sim R^{-2}W^{-2}(R)$ and $g'(R) \sim R^{-2} W_1^{-2}(R)$. In $d=4$ dimensions we set $ \mathcal{L}_-(W \cdot (f + \beta \log))(R) = \mathcal{L}_+(W_1 \cdot (g + \beta \log))(R) = 0$, i.e. 
	\[
	f'(R) \sim R^{-3}W^{-2}(R) - \beta R^{-1},~~g'(R) \sim R^{-3}W_1^{-2}(R) - \tilde{\beta} R^{-1},
	\]
	which have rational primitives by choice of $\beta, \tilde{\beta}$.
\end{proof}
\begin{Rem}\label{anfangsre}
	It is practical to  introduce the notation $ f(R) = O(R^m)~,m \in \Z $ if 
	\begin{align*}
		&R^{-m}f(R) = c_0 + c_1R^{-1} + c_3 R^{-3} +\dots,~~ R > 1,~~\text{or}~~\\
		& R^{-m}f(R) = d_0 + d_1R + d_2 R^{2} +\dots,~~ R < 1, 
	\end{align*}
	respectively. Then we observe the following asymptotics in dimension $ d = 3$
	\begin{align*}
		&\Phi_{\pm}^{(3)}(R) = O(R^{-1}),~ \Theta^{(3)}_{\pm}(\rho)=  O(1),~\text{if}~ R > 1,~ \\
		&\Phi_{\pm}^{(3)}(R)  = O(1),~~ \Theta_{\pm}^{(3)}(\rho)=  O(R^{-1}),~\text{if}~ R < 1,
	\end{align*}
	and in dimension $ d =4 $
	\begin{align*}
		&\Phi_{\pm}^{(4)}(R) = O(R^{-2}),~~ \Theta_{\pm}^{(4)}(\rho) = O(1) + \log(R) O(R^{-2}),~\text{if}~ R > 1,\\
		&\Phi_{\pm}^{(4)}(R)  = O(1),~~ \Theta_{\pm}^{(4)}(\rho) = O(R^{-2}) + \log(R)) O(1),~\text{if}~ R < 1.
	\end{align*}
	Further in the expansions $c_{2m+1} =  d_{2m+1} = 0,~ m \in \N_0$ for all of the above cases.
\end{Rem}

By the variation of constants formula we solve
\[
\mathcal{L}_{\pm}v^{\pm} (R) = f(R),~~ R \in (0, \infty),~~ v^{\pm}(0) = \partial_Rv^{\pm}(0) = 0,
\]
with the operator 
\begin{align}\label{formula1}
	&v_{\pm}(\rho) = C_{\pm} \int_0^{\rho} s^{d-1}G_{\pm}(R,s) f(s)~ds
\end{align}
where $ G_{\pm}(R,s)$ denotes the Greens function
\[
G_{\pm}(R,s) = \Theta_{\pm}^{(d)}(\rho)\Phi_{\pm}^{(d)}(s) - \Theta_{\pm}^{(d)}(s)\Phi_{\pm}^{(d)}(\rho),
\]
and the Wronskian satisfies $w(s) \sim s^{-(d-1)}$. The integral \eqref{formula1} is well defined in $ d = 3,4 $ if for instance $f(s) = O(s^{-1})$ as $ s \ll 1  $ and in the following we set $ C_{\pm} = \f12$.
\subsubsection{The first two corrections}    
We split into
\begin{align}
	\big(-  \partial_R^2  - \frac{d-1}{R}\partial_R\big) \eta_1^{(1)} - p_c W^{p_c-1}(R) \eta_1^{(1)} = \tilde{N}_0^{(1)}(t,R) = - \omega_0 (T - t)^{2 \nu} W(R)\\
	\big(-  \partial_R^2  - \frac{d-1}{R}\partial_R\big) \eta_1^{(2)} - W^{p_c-1}(R) \eta_1^{(2)} = \tilde{N}_0^{(2)}(t,R) = ( \f12 + \nu) (T - t )^{2\nu} W_1(R)
\end{align}
and hence 
\begin{align*}
	&\eta_1^{(1)}(t,r) = - \f12 \alpha_0 (T - t)^{2\nu} \int_0^R s^{d-2}G_+(R,s) W(s)~ds,\\
	&\eta_1^{(2)}(t,r) = \f12 ( \f12 + \nu) (T - t )^{2\nu} \int_0^R s^{d-2}G_-(R,s) W_1(s)~ds.
\end{align*}
Since both $ W_1(R) = O(R^{2-d}),~ W(R) = O(R^{2-d}),~~ W_1(R) = O(1),~W(R) = O(1)$ as $ R > 1,~ R < 1$ respectively we write for $ R \geq R_* > 1$ fix
\begin{align*}
	\int_0^R s^{d-1}G_+(R,s) W(s)~ds =& \int_0^{R_*} s^{d-1}G_+(R,s) W(s)~ds  + \int_{R_*}^R s^{d-1}G_+(R,s)  W(s)~ds
\end{align*}
with 
\begin{align}
	\int_0^{R_*} s^{d-1}G_+(R,s)  W(s)~ds = \begin{cases}
		O(1), & d = 3\\
		O(1) + \log(R) O(R^{-2}), & d =4.
	\end{cases}
\end{align}
We further obtain up to leading order in $ d = 3$ for the second integral
\begin{align*}
	\int_{R_*}^R s^2 G_+(R,s)  W(s)~ds =& ~O(1) \cdot \int_{R_*}^R s^2 \cdot \Phi_{+}^{(3)}(s)W(s)~ds - O(R^{-1}) \cdot \int_{R_*}^R s^2 \cdot  \Theta_{+}^{(3)}(s) W(s)~ds\\
	= & ~O(1) \cdot \int_{R_*}^R~ O(1)~ds - O(R^{-1}) \cdot \int_{R_*}^R~O( s)~ds\\
	=&~ O(1) \cdot ( O(R)) + O(R^{-1}) \cdot ( \log(R) + O(R^2) ).
\end{align*}
Further in $ d = 4$ we have similarly
\begin{align*}
	\int_{R_*}^R s^3 G_+(R,s)  W(s)~ds = & ~(O(1) + \log(R) O(R^{-2})) \cdot \int_{R_*}^R s^3 \cdot s^{-2 -2r}W(s)~ds\\
	&~~ - O(R^{-2}) \cdot \int_{R_*}^R s^{3 }(s^{ - 2r} + \log(s) s^{-2 - 2r }) \cdot W(s)~ds\\[2pt]
	= & ~(O(1) + \log(R) O(R^{-2}))  \cdot ( O(1) + \log(R) )\\
	&~~~~~- O(R^{-2}) \cdot \big[O(R^2) + \log(R) O(1) + \log^2(R)\big].
\end{align*}
The same argument applies to the integral for $ \eta^{(2)}(t,R)$. We also conclude straight forward with the above asymptotics for $G_{\pm}(R,s)$ (in $ s,R$) and $W(R), W_1(R)$ at $R = 0$, that $ \eta(t,R)$ has an even Taylor expansion decaying of order two at $ R = 0$.\\

Hence we obtain
\begin{align*}
	\eta_1 \in (T - t)^{2 \nu} S^2(1; R),~~d = 3,~~\eta_1 \in (T - t)^{2 \nu}  S^2(1; \log(R)).~~~d = 4.
\end{align*} 
In the following we will write $ S^m(R^k \log^{\ell}(R))$ instead of $ S^m(1; R^k \log^{\ell}(R))$. The radius of convergence is inductively observed to be $ R_0 = 1$ in the subsequent arguments.\\
For the next iterate $ \eta_2$, we define the error $e_1(t,R) $ from the previous step 
\begin{align*}
	e_1(t,R) =& \big[(T - t)^{1 + 2\nu} \partial_t + \mathcal{L}_W\big] \eta_1 - \tilde{\mathcal{N}}(\eta_1)\\
	=& (T - t)^{1 + 2\nu} \partial_t \eta_1 + \tilde{N}_0 - \tilde{\mathcal{N}}(\eta_1).
\end{align*}
We then identify a `lowest order' term such that
\[
e_1(t,r) - e_1^0(t,r) = (\sqrt{T -t}\lambda)^{-3}f_1(\alpha_0, \nu, R) + (\sqrt{T - t}\lambda)^{-4} f_2(\alpha_0, \nu, R)  \dots,
\]
and solve
$$ \mathcal{L}_W \eta_2(t,R) = - e_1^0(t,R) $$
with $ \eta_2(t,0) = \partial_R \eta_2 (t,0) = 0$. Hence we need to consider two source terms and for the first 
\begin{align}
	(T - t)^{1 + 2\nu} \partial_t \eta_1 \in \begin{cases}
		2 \nu (T - t)^{4 \nu} \cdot S^2(R) & d = 3\\
		2 \nu (T - t)^{4 \nu} \cdot S^2( \log(R)) & d = 4,
	\end{cases}
\end{align}
where we note the dependence $R = R(t)$ is discarded, since it has already been accounted for in the derivation of \eqref{main-eqn-interior-pre}. Further (slightly abusing the notation) for the second source term we write
\begin{align}\label{source2}
	\tilde{\mathcal{N}}(\eta_1) - \tilde{N}_0 =~& N_1(\eta_1) + N_2(\eta_1) \\ \nonumber
	=~& - \alpha_0 (T-t)^{2 \nu}\eta_1 - i (T-t)^{2\nu}  ( \f12 + \nu)(\f{d-2}{2}  + \rho\partial_{\rho})\eta_1\\ \nonumber
	~~&- |W + \eta_1|^{\f{4}{d-2}} (W + \eta_1) + W^{\f{d+2}{d-2}} + \f{d}{d-2} W^{\f{4}{d-2}}\eta_1 + \f{2}{d-2}W^{\f{4}{d-2}}\bar{\eta_1}.
\end{align}
The third line consists of terms of the form 
\begin{align} 
	\begin{cases}
		(T - t)^{2k\nu } \chi^{(k)} \cdot W^{5 -k },   & 2 \leq k \leq 5,~ ~d = 3,\\[3pt]
		(T - t)^{2j\nu } \chi^{(j)}\cdot  W^{3 -j }, & 2 \leq j \leq 3,~~ d = 4,
	\end{cases}
\end{align}
where we write
$
\chi^{(k)} \cdot W^{5 -k } = \Pi_{i - 1}^k \chi_i(R) \cdot W^{5 -k}(R),~~ \chi_i(R) \in \{  \eta_1, \bar{\eta}_1\}.
$
and similar in case $ d = 4$.
\begin{Rem} From the second and third line in \eqref{source2}, as well as the asymptotics  for $ \Theta_{\pm}, \Phi_{\pm} $ in remark \ref{anfangsre}, the asymptotics for the real and imaginary parts $ \eta_2 = \eta_2^{(1)} + i \eta_2^{(2)}$, are obtained along the same line of calculations.
\end{Rem}
We first consider the case $ d = 3 $ and schematically write  $ \eta_1(t,R) $ instead of $\eta_1^{(1)}, \eta_1^{(2)}$. By the above calculation, we have 
\[
(T - t)^{-2 \nu}\eta_1(t,R) =  O(R) + \log(R) \cdot O(1),~~~ R > 1,
\]
thus 
\[
\chi^{(2)} \cdot W^{3} (R) = O(R^{-1 }) +  \log(R) \cdot O(R^{-1}),~~ R > 1,
\]
and $ \chi^{(2)} \cdot W^{3}  \in S^4(R^{-1})$. For $f \in S^m(R^k \log^{\ell}(R))$ we clearly have 
\[
(\f{d-2}{2}  + \rho\partial_{\rho})f(R) \in  S^m(R^k \log^{\ell}(R)).
\]
Hence, we obtain $ e_1^0 \in (T-t)^{4 \nu} S^2(R)$. Similar in $ d = 4$ we observe
\[
\chi^{(2)} \cdot W (R) = O( R^{-2 }) +  O(R^{-2 }) \cdot \log(R) +  \log^{2}(R) \cdot O(1),~~ R > 1,
\]
and thus $ \chi^{(2)} \cdot W  \in S^4 ( R^{-2} \log(R))$ , which implies
\[
e_1^0 \in (T-t)^{4 \nu} S^2(\log(R)).
\]
Calculating 
\[
v_{\pm} (R) = - \int_{0}^R s^{d -1} G_{\pm}(R,s) e_1^0(t,s) ~ds,
\] 
at $ R =  \infty $, then gives the asymptotics of $ \eta_2$.  For instance, we find 
at $R \gg1 $   ($d = 3$) 
\begin{align*}
	\int_{R_*}^R s^2 G_{\pm}(R,s)  s^{1 -r}~ds =& ~O(1) \cdot \int_{R_*}^R s^2 \cdot s^{-r}~ds - O(R^{-1}) \cdot \int_{R_*}^R s^{2 - 2r} \cdot s^{1 -r}~ds\\
	= & ~O(1) \cdot ( O(R^3) + \log(R)) + O(R^{-1})(O(R^4) + \log(R)),
\end{align*}
and similar in  $ d = 4$ 
\begin{align*}
	\int_{R_*}^R s^3 G_{\pm}(R,s)  s^{-r}\log(s)~ds = & ~(O(1) + \log(R) O(R^{-2})) \cdot \int_{R_*}^R s^{1 -r}\log(s) ~ds\\
	&~~ - O(R^{-2}) \cdot \int_{R_*}^R (s^{ 3 - r} + \log(s) s^{1 - r }) \cdot \log(s)~ds\\[2pt]
	= & ~O(1) \log(R) +  O(R^{-2})\log^2(R) + O(R^{-2})\\
	&~- O(R^{-2}) \cdot \big[O(R^4) \log(R) + O(1) + O(R^2)\log^2(R)  + \log^3(R)\big].
\end{align*}
Integrating the remaining terms in the expansion of $ e_1^0 $ in the  $S$-space implies
\begin{align}
	\eta_2(t,R) \in 
	\begin{cases}
		(T - t)^{4\nu} S^4( R^3) & d = 3,\\[3pt]
		(T - t)^{4 \nu} S^4(R^2 \log(R)) & d = 4.
	\end{cases}
\end{align}
Now the subsequent error $ e_2(t,R)$ has the form
\begin{align*}
	e_2(t,R) =& \big[(T - t)^{1 + 2\nu} \partial_t + \mathcal{L}_W\big] \eta_2^*- \tilde{\mathcal{N}}(\eta_2^*)\\
	=& (T - t)^{1 + 2\nu} \partial_t \eta_2 + e_2(t,R) - e_1^0(t,R) +  \tilde{N}(\eta_1) - \tilde{\mathcal{N}}(\eta_2^*),
\end{align*}
where $ \eta_2^* = \eta_2 + \eta_1 $. By construction of $ \eta_2, e_1^0$ we conclude
\begin{align*}
	e_2(t,R) \in 
	\begin{cases}
		\sum_{(*)}(T - t)^{2 \nu l } S^{2l -2}(R^{2l - 3})& d =3\\[3pt]
		\sum_{(*)}(T - t)^{2 \nu l } S^{2l -2}(R^{2l -4}\log^3(R)) & d =4,
	\end{cases}
\end{align*}
where  $(*)$ sums over $ 3 \leq l \leq \frac{2d+4}{d-2}$.
\subsubsection{The general induction step} Now we describe the induction for the approximating sequence
\[
\eta_k^*(t,R) = \eta_1(t,R) + \eta_2(t,R) + \dots + \eta_k(t,R).
\]
First  we define the expression 
$$ \triangle_kF(\eta) = F(\eta_{k}^*) - F(\eta_{k-1}^*) $$
and the  $k^{\text{th}}$  correction  $ \eta_k = \eta_k^* - \eta_{k-1}^* $ (with $ \eta_0 : = 0$ )  such that

\begin{align}\label{defin-cor}
	\begin{cases}
		\mathcal{L}_W \eta_k(t,R) =~- e^0_{k-1}(t,R)~~& k \geq 2\\[4pt] 
		\eta_k(t, 0) =~ \partial_R \eta_k(t,0) = 0.&~~
	\end{cases}
\end{align}
~~\\
The $k^{\text{th}}$ error function $e_k$ is given by
\begin{align}\label{defin-error}
	e_k(t,R)  = &~ \big[(T - t)^{1 + 2\nu} \partial_t + \mathcal{L}_W\big] \eta_k^*- \tilde{\mathcal{N}}(\eta_k^*)\\[4pt] \nonumber
	= &~ (T - t)^{1 + 2\nu} \partial_t \eta_k + e_{k -1}(t,R) - e_{k-1}^0(t,R) -  \triangle_k\tilde{N}(\eta),~~k \geq 2.
\end{align}
The lower order version $e_k^0$ of $ e_k$ is defined such that $e_k - e_k^0$ has the expansion

\begin{align}\label{def-red-error}
	(e_k - e_k^0)(t,R) = (\sqrt{T -t}\lambda)^{-2(k + 2)}f_{1,k}(R) + (\sqrt{T - t}\lambda)^{-2 (k + 3)} f_{2,k}(R) +  \dots,
\end{align}
~~\\
where $ \sqrt{T -t}\lambda(t) =  (T - t)^{-  \nu}$. The error $e_k^0(t,R) $ is required to consist of the \emph{minimal} set of terms such that \eqref{def-red-error} holds.
We now  state the induction.
\begin{Lemma} \label{main-inner-Lemma}Let  $\eta_0 = 0$ and $ \eta_1(t,R)$ be as in the previous subsection. Then there exists unique solutions $ \eta_k(t,R) $ of  \eqref{defin-cor} -  \eqref{def-red-error} with $ k \geq 1$,  such that
	
	\begin{align}\label{corrections-rep}
		\eta_k(t,R) \in
		\begin{cases}
			(T-t)^{2 k\nu} S^{2k}_{1,2}(R^{2k -1}), & d =3\\[6pt]
			(T-t)^{2 k\nu} S^{2k}_{2,1}(R^{2k -2}\log(R)) & d = 4,
		\end{cases}
	\end{align}
	Further $ e_k(t,R) $  satisfies 
	$
	e_k(t,R)  = \sum_{l}~ (T -t)^{2  \nu l} \chi_l(R)
	$
	with   
	
	\begin{align}\label{error-rep}
		\chi_l(R) \in 
		\begin{cases}
			S^{2l -2}_{1,2}(R^{2l - 3})& d =3\\[6pt]
			S^{2l -2}_{2,1}(R^{2l -4} \log(R)) & d =4.
		\end{cases}
	\end{align}
	and $ l = k+1 , \dots , k_d$ where $  k_d = \frac{d + 2}{d -2} k $.
\end{Lemma}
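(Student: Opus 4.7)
The plan is to proceed by induction on $k \geq 1$. The base case $k = 1$ has been verified in the preceding subsection: the explicit variation-of-constants representation of $\eta_1$ together with the asymptotics of $W,W_1$ and of $G_\pm$ from Lemma \ref{FS-inner-re} and Remark \ref{anfangsre} place $\eta_1$ in the claimed space, and the direct expansion of $(T-t)^{1+2\nu}\partial_t\eta_1$ and of the polynomial $\triangle_1\tilde N(\eta)=\tilde N(\eta_1)-\tilde N_0$ yields $e_1$ in the required form with $l\in\{2,\dots,p_c\}$, where $p_c = \frac{d+2}{d-2}$. This matches $k_d = p_c k$ for $k=1$.

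For the induction step, I would assume the statements up to index $k$ and construct $\eta_{k+1}$. First, by the induction hypothesis $e_k(t,R) = \sum_{l=k+1}^{p_c k}(T-t)^{2\nu l}\chi_l(R)$, and the definition \eqref{def-red-error} forces $e_k^0(t,R) = (T-t)^{2\nu(k+1)}\chi_{k+1}(R)$ with $\chi_{k+1}\in S^{2k}_{1,2}(R^{2k-1})$ (respectively $S^{2k}_{2,1}(R^{2k-2}\log R)$ for $d=4$). I then invert $\mathcal L_W$ via the real/imaginary decomposition and the formula \eqref{formula1}, applied with source $-e_k^0$. Computing $\int_0^R s^{d-1}G_\pm(R,s)\chi_{k+1}(s)\,ds$ using the two fundamental-solution pairs increases the leading $R$-growth by two units while preserving the analytic Taylor structure at $R=0$ (the zero boundary conditions $\eta_{k+1}(t,0)=\partial_R\eta_{k+1}(t,0)=0$ are built into the integration). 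A bookkeeping argument on the $c_{jr}$-coefficients in Definition \ref{deff}, together with Remark \ref{anfangsre}, shows $\eta_{k+1}\in(T-t)^{2\nu(k+1)}S^{2(k+1)}_{1,2}(R^{2k+1})$, respectively the $d=4$ analogue. Finally I would compute $e_{k+1}$ from \eqref{defin-error}: the contribution $(T-t)^{1+2\nu}\partial_t\eta_{k+1}$ lies in level $l=k+2$; the residual $e_k-e_k^0$ already has $l\in\{k+2,\dots,p_c k\}$; and the new polynomial contribution $-\triangle_{k+1}\tilde N(\eta)$ picks up multilinear terms containing at least one factor of $\eta_{k+1}$, combined with $W$-powers and earlier $\eta_j$'s. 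Its highest weight comes from the monomial $\eta_{k+1}^{p_c}$ in the fully-expanded nonlinearity, producing $l=p_c(k+1)$, while intermediate mixed products fill out the range $\{k+2,\dots,p_c(k+1)\}$. Verifying that each summand lies in $S^{2l-2}_{1,2}(R^{2l-3})$ (resp. the $d=4$ space) reduces to the closure of these $S$-spaces under pointwise products and under the action of $\frac{d-2}{2}+R\partial_R$ coming from $N_1(\eta)$.

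The two routine but technical points are the closure of $S^m_{l_1,l_2}$ under pointwise multiplication, $S^{m_1}_{l_1,l_2}(R^{k_1}\log^{\ell_1}R)\cdot S^{m_2}_{l_1,l_2}(R^{k_2}\log^{\ell_2}R)\subset S^{m_1+m_2}_{l_1,l_2}(R^{k_1+k_2}\log^{\ell_1+\ell_2}R)$, which follows immediately from Definition \ref{deff}, and the $R\to\infty$ computation of the Greens-function integrals, which was carried out template-style in the two integrals displayed for $k=2$ in the excerpt. The genuine obstacle, and the part I expect to require the most care, is the third condition: keeping the logarithmic factors within the window $j\leq r_l+\ell$ allowed by the $(l_1,l_2)$-parameters after each integration. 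In dimension $d=3$, because $l_1=1$ and $l_2=2$, the log-power is allowed to grow only once every two powers of $R^{-1}$, and one must check that integrals of the form $\int^R s^2\cdot s^{k-r}\log^j(s)\,ds$ against $G_+$ do not overshoot this budget; the fact that $\Theta_\pm^{(3)}$ has no log term is precisely what makes the budget balance. In $d=4$, where $\Theta_\pm^{(4)}$ carries a $\log(R)$ factor, the parameters $(2,1)$ and the parity of the Taylor expansion of $W$ give room for one extra log per inversion, matching $\log^{k}$ in the stated space. Once these closure properties are established, the inductive step, and therefore the lemma, follow.
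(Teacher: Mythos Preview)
Your proposal is correct and follows essentially the same approach as the paper: induction on $k$, extraction of $e_k^0$ as the lowest-order term, inversion of $\mathcal L_W$ via the variation-of-constants formula \eqref{formula1}, and bookkeeping of the $S$-space membership for both $\eta_{k+1}$ and the new error $e_{k+1}$. The paper packages your multiplication-closure observation into a separate sub-lemma (the ``power law'' Lemma~\ref{power-law}) stated and proved inside the main proof, and carries out the Greens-function integrals explicitly at the level of leading terms rather than invoking the $k=2$ template; but the logic and the identification of the top index $l=p_c(k+1)$ from the pure $\eta_{k+1}^{p_c}$ monomial are identical to yours.
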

\begin{Rem}
	Since we are concerned with leading order asymptotics (as stated in Definition \eqref{deff}), we interchange the `even' S-space in dimension $ d = 4$  with
	\begin{align}
		&\eta_k(t,R) \in (T-t)^{2 k\nu} S^{2k}(R^{2k -2}  \log(R)),\\
		&\chi_l(R) \in  S^{2l -2}(R^{2l -4}\log(R)).
	\end{align}
	The statement in Lemma \ref{main-inner-Lemma} follows  however directly from the `even' order expansion of the Greens function in Lemma \ref{FS} . 
\end{Rem}
\begin{proof}[Proof of Lemma \ref{main-inner-Lemma}]
	Assume \eqref{corrections-rep}, \eqref{error-rep} and the formula for the error hold true for any $ 1 \leq \tilde{k} \leq k$. For simplicity we write
	\[
	\eta_{ k +1}(t,R) = - \f12 \int_0^R s^{d-1} G_{\pm}(s,R) e_k^0(t,s)~ds.
	\]
	Now by definition of $e_k^{0}(t,R)$ and \eqref{error-rep}, we have  $e_k^{0}(t,R) =  (T-t)^{2\nu (k+1)} \chi_{k+1}(R)$, where $ \chi_{k+1} \in S^{2}(R^{2k - 1})$  in $ d = 3$ and $ \chi_{k+1} \in S^{2}(R^{2k -2 } \log(R))$  in $ d = 4$. Then we split for $ R > R_* > 1 $ as before
	\[
	\int_0^R s^{d-1} G_{\pm}(s,R) e_k^0(t,s)~ds =  \int_0^{R_*} s^{d-1} G_{\pm}(s,R) e_k^0(t,s)~ds + \int_{R_*}^R s^{d-1} G_{\pm}(s,R) e_k^0(t,s)~ds.
	\]
	The first integral corrects initial values at $ R = R_*$ and has the asymptotics (as $ R \gg 1$)
	\begin{align*}
		&\int_0^{R_*} s^{d-1} G_{\pm}(s,R) e_k^0(t,s)~ds =	(T-t)^{2\nu (k+1)} O(1),~~~ d= 3,\\
		& \int_0^{R_*} s^{d-1} G_{\pm}(s,R) e_k^0(t,s)~ds  = (T-t)^{2\nu (k+1)} \big(O(1) + \log(R) \cdot O(R^{-2})\big),~~~d=4.
	\end{align*}
	For the second integral, we expand 
	\begin{align}\label{expanding}
		\chi_{k+1}(R) = O( R^{2k - 1 }) + \sum_{j \geq 1}  O( R^{2 k -1 - j}) \log^j(R),
	\end{align}
	where the latter sum is finite. Simplifying the notation as above for $ \eta_1, \eta_2$, we calculate in $ d = 3$ for the leading order term
	\begin{align*}
		\int_{R_*}^R s^2& G_{\pm}(R,s)  s^{2k - 1 -r}~ds =  ~O(1) \cdot \int_{R_*}^R s^2 \cdot s^{2k -2 -r}~ds - O(R^{-1}) \cdot \int_{R_*}^R s^{2} \cdot s^{2k - 1 -r}~ds,
	\end{align*}
	and hence
	\begin{align*}
		\int_{R_*}^R s^2 G_{\pm}(R,s)  s^{2k - 1 -r}~ds =& ~O(1) \cdot \int_{R_*}^R s^2 \cdot s^{2k -2 -r}~ds - O(R^{-1}) \cdot \int_{R_*}^R s^{2 - 2r} \cdot s^{2k - 1 -r}~ds\\
		= & ~O(1) \cdot ( O(R^{2k +1 }) + \log(R)) + O(R^{-1})\cdot (O(R^{2k + 2}) + \log(R)),
	\end{align*}
	and similar in  $ d = 4$ 
	\begin{align*}
		\int_{R_*}^R s^3 G_{\pm}(R,s)  s^{2k -2-r}\log(s)~ds = & ~(O(1) + \log(R) O(R^{-2})) \cdot \int_{R_*}^R s^{2k -1 -r}\log(s) ~ds\\
		&~~ - O(R^{-2}) \cdot \int_{R_*}^R (s^{ 2k +1 - r} + \log(s) s^{2k -1 - r }) \cdot \log(s)~ds\\[2pt]
		= &  O(R^{2k}) \log(R) + O(R^{2k-2})\log^2(R) + O(R^{-2})\log^3(R) + O(1).
	\end{align*}
	For the second sum in the expansion \eqref{expanding}, we need to integrate  $ R^{2k -1 - j -r} \log^j(R),~~ r \geq 0$, for finitely many $ j \geq 1$, i.e. it follows similarly
	\begin{align*}
		\int_{R_*}^R &s^2 G_{\pm}(R,s)  s^{2k -1 - j -r} \log^j(R)~ds\\
		& = ~O(1) \cdot \int_{R_*}^R s^{2k -j -r} \log^j(s)~ds - O(R^{-1}) \cdot \int_{R_*}^R s^{2k + 1 -j  -r} \log^j(s)~ds\\
		= & ~\log^j(R) \cdot ( O(R^{2k +1 -j }) + \log(R)) +O(1)\\
		&~+ O(R^{-1}) \log^j(R) \cdot (O(R^{2k + 2 -j }) + \log(R)),
	\end{align*}	 
	~~\\
	in the dimension $ d=3$ and where we assumed $ j \leq 2k$. If $ j > 2k $ the latter  two lines simply read
	$
	\log^j(R) O(R^{2k +1 -j })  + O(1). 
	$
	Hence  $ \eta_{k+1} \in S^{2k +2 }(R^{2k +1})$ in $ d =3$ as required in \eqref{corrections-rep}. The case of $ d =4$ is very similar and  $ \eta_{k+1} \in S^{2k +2 }(R^{2k } \log(R))$,  which again  verifies \eqref{corrections-rep}. We omit the details here and turn to the error function \eqref{defin-error}, i.e.
	~\\
	\begin{align}\label{er}
		e_{k +1 }(t,R) = &~ \big[(T - t)^{1 + 2\nu} \partial_t + \mathcal{L}_W\big] \eta_{k +1}^*- \tilde{\mathcal{N}}(\eta_{k +1}^*)\\[4pt] \nonumber
		= &~ (T - t)^{1 + 2\nu} \partial_t \eta_{k+1} + e_{k }(t,R) - e_{k}^0(t,R) -  \triangle_{k +1}\tilde{N}(\eta).
	\end{align}
	By assumption and the calculation for $ \eta_{k+1}$, we have in $ d =3 $
	\begin{align}\label{time-update}
		&(T - t)^{1 + 2\nu} \partial_t \eta_{k+1} \in (T -t)^{2\nu(k +2)}S^{2k + 2}(R^{2k + 1})\\ 
		&e_{k }(t,R) - e_{k}^0(t,R) \in \sum_{l} (T -t)^{2\nu l}S^{2l -2} ( R^{2l -3}),~~l = k+2, \dots 5k,
	\end{align}
	and for the case  of dimension $ d = 4$ we similarly have 
	\begin{align}\label{time-update2}
		&(T - t)^{1 + 2\nu} \partial_t \eta_{k+1} \in (T -t)^{2\nu(k +2)}S^{2k + 2}(R^{2k } \log(R))\\ 
		&e_{k }(t,R) - e_{k}^0(t,R) \in \sum_{l} (T -t)^{2\nu l}S^{2l -2} ( R^{2l -4}\log^3(R)),~~l = k+2, \dots 3k,
	\end{align}
	
	Further for the last term  in  \eqref{er}, we apply the following simple Lemma
	\begin{Lemma}[power law]\label{power-law} Let $  j_1, j_2, \dots j_m \in \N,~~m =  1, 2, \dots, \frac{d + 2}{d -2}$ and\\
		$ j = j_1 + j_2 + \dots + j_m$. If we choose functions
		\begin{align*}
			&f_i(R) \in S^{2i}(R^{2i -1}) ,~~ ~ i = j_1, j_2, \dots, j_m,~~~d =3,\\
			& f_i(R) \in S^{2i}(R^{2i -2} \log(R)) ,~~~ i = j_1, j_2, \dots, j_m,~~~ d =4,
		\end{align*}
		then with $ W(R) = ( 1 + \frac{R^2}{d (d -2)})^{\frac{2-d}{d}}$ there holds
		\begin{align*}
			f_{j_1}(R ) \cdot f_{j_2}(R ) \cdots f_{j_m}(R) \cdot W^{\frac{d + 2}{d-2} -m}(R) \in 
			\begin{cases}
				S^{2j} ( R^{2j -5} ) & d = 3\\
				S^{2j}( R^{2j- 6}\log^m(R) ) & d =4.
			\end{cases}
		\end{align*}
	\end{Lemma}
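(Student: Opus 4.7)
The statement is purely an algebraic closure property of the filtration $S^m_l(R^k\log^{\ell}(R))$ under multiplication, so the plan is to isolate a single product rule and then iterate it. The first step is to establish the auxiliary inclusion
\[
S^{m_1}_l\bigl(R^{k_1}\log^{\ell_1}(R)\bigr)\cdot S^{m_2}_l\bigl(R^{k_2}\log^{\ell_2}(R)\bigr)\;\subset\; S^{m_1+m_2}_l\bigl(R^{k_1+k_2}\log^{\ell_1+\ell_2}(R)\bigr),
\]
valid for a fixed pair $l=(l_1,l_2)$. For condition (i) of Definition~\ref{deff} this is immediate: if $R^{-m_i}f_i$ is even analytic at $R=0$ with absolutely convergent Taylor series on $\{R<R_0^{-1}\}$, then so is $R^{-(m_1+m_2)}(f_1f_2)$ by the Cauchy product of absolutely convergent series. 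For condition (ii) one Cauchy-multiplies the two asymptotic expansions at $R\gg R_0$; the coefficients of the product expansion are $c_{jr}=\sum_{r_1+r_2=r}\sum_{j_1+j_2=j}c^{(1)}_{j_1 r_1}c^{(2)}_{j_2 r_2}$, and one uses the superadditivity $\lfloor l_1 r_1/l_2\rfloor+\lfloor l_1 r_2/l_2\rfloor\le \lfloor l_1 r/l_2\rfloor$ to conclude that $c_{jr}=0$ unless $j\le r_l+\ell_1+\ell_2$, which is precisely the bound required for membership in $S^{m_1+m_2}_l(R^{k_1+k_2}\log^{\ell_1+\ell_2}(R))$.

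The second step is to place $W^{p}$ inside the appropriate $S$-space. Since $W^{p}(R)=(1+R^2/d(d-2))^{-p(d-2)/2}$ is an even real-analytic function on $[0,\infty)$, the binomial series gives an absolutely convergent expansion in powers of $R^2$ near $R=0$ and, factoring out $R^{-p(d-2)}$, an absolutely convergent expansion in powers of $R^{-2}$ at infinity with no logarithmic factors. Hence in $d=3$ one has $W^{p}\in S^0_{1,2}(R^{-p})$ and in $d=4$ one has $W^{p}\in S^0_{2,1}(R^{-2p})$, both with vanishing log exponent.

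The third step is the direct assembly. In $d=3$, each $f_{j_i}\in S^{2j_i}_{1,2}(R^{2j_i-1})$; iterating the product rule yields $f_{j_1}\cdots f_{j_m}\in S^{2j}_{1,2}(R^{2j-m})$, and multiplying by $W^{5-m}\in S^0_{1,2}(R^{-(5-m)})$ gives membership in $S^{2j}_{1,2}(R^{2j-5})$ as claimed. In $d=4$, each $f_{j_i}\in S^{2j_i}_{2,1}(R^{2j_i-2}\log(R))$; the $m$-fold product sits in $S^{2j}_{2,1}(R^{2j-2m}\log^m(R))$, and multiplication by $W^{3-m}\in S^0_{2,1}(R^{-2(3-m)})$ produces $S^{2j}_{2,1}(R^{2j-6}\log^m(R))$, which is the second claim.

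The only genuine point of care is the bookkeeping of the index bound $r_l=\lfloor l_1 r/l_2\rfloor$ through an iterated Cauchy product, which is what the main obstacle would be if one weren't careful; however, this is resolved once and for all by the superadditivity of $\lfloor\,\cdot\,\rfloor$. No analytic estimates are needed, only closure of absolutely convergent series under multiplication and the explicit binomial expansion of $W^p$.
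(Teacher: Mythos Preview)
Your proof is correct and follows essentially the same route as the paper: both argue via Cauchy products of the absolutely convergent expansions at $R\ll 1$ and $R\gg 1$, then multiply through by the explicit $W^{p}$ factor. Your version is more explicit in isolating the product rule $S^{m_1}_l\cdot S^{m_2}_l\subset S^{m_1+m_2}_l$ and in tracking the index constraint via superadditivity of $\lfloor\cdot\rfloor$, whereas the paper simply asserts the chain of inclusions $S^{2j_1}(R^{2j_1-1})\cdots S^{2j_m}(R^{2j_m-1})\cdot W^{5-m}\subset S^{2j}(R^{2j-m})\cdot W^{5-m}\subset S^{2j}(R^{2j-5})$ without unpacking the combinatorics.
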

	\begin{proof} The statement follows immediately from Definition \ref{deff} of the $S$-space and the Cauchy products for the expansions of $ f_{j_1}(R) , \dots f_{j_m}(R) $ were $ R \ll1 $ and $ R\gg 1$ respectively. Then in $ d = 3$, considering the leading order, we conclude the product is an element of 
		\begin{alignat*}{1}
			&S^{2j_1}(R^{2j_1 -1})\cdot  S^{2j_2}(R^{2j_2 -1})\\
			&  \hspace{2cm} ~ \times~S^{2j_3}(R^{2j_3 -1})  \cdots S^{2j_m}(R^{2j_m -1}) \cdot W^{5 -m }(R)\\[2pt]
			&\subseteq S^{2j }( R^{2j -m}) \cdot W^{5 -m}(R) \subseteq S^{2j }( R^{2j -5 }).
		\end{alignat*}
		Likewise in $ d  = 4$ we have an element of 
		\begin{alignat*}{1}
			&S^{2j_1}(R^{2j_1 -2} \log(R))\cdot S^{2j_2}(R^{2j_2 -2} \log(R))\\
			&  \hspace{3cm} ~\times~ \cdots S^{2j_m}(R^{2j_m -2}\log(R)) \cdot W^{3 -m }(R)\\[2pt]
			&\subseteq  S^{2j }( R^{2j -2 m }\log^{m}(R)) \cdot W^{3-m}\subseteq  S^{2j }( R^{2j -6 }\log^{m}(R)).
		\end{alignat*}
	\end{proof}
	Now  the interaction term splits into  
	$
	\triangle_{k +1}\tilde{N}(\eta) = \tilde{N}_1(\eta_{ k+1}) +  \Delta_{k+1}\tilde{N}_2(\eta),
	$
	where 
	\begin{align*}
		\tilde{N}_1(\eta_{ k+1})  =& - \omega_0 (T-t)^{2 \nu}\eta_{k +1} - i (T-t)^{2\nu}  ( \f12 + \nu)(\f{d-2}{2}  + \rho\partial_{\rho})\eta_{k +1},
	\end{align*}
	and thus 
	\begin{align*}  \tilde{N}_1(\eta_{ k+1})  \in 
		\begin{cases}
			(T - t)^{2\nu (k +2)} S^{2k + 2}(R^{2k + 1}) & d = 3\\[2pt]
			(T - t)^{2\nu (k +2)} S^{2k +2}(R^{2k }\log(R)) &  d = 4.
		\end{cases}
	\end{align*}
	The term $ \Delta_{k+1}\tilde{N}_2(\eta) $ consists of expressions of the form
	\begin{align*}
		&  \tilde{\eta}_{k +1} \cdot \tilde{\eta}_{i_1} \cdots  \tilde{\eta}_{i_m}\cdot W^{\f{4}{d -2} -m},~~~\tilde{\eta} \in \{\eta, \bar{\eta} \}
	\end{align*}
	where $ 1 \leq  i_1, i_2, \dots, i_m \leq k +1,$ and $1 \leq m \leq \frac{4}{d-2}$.
	By assumption Lemma \ref{power-law} implies
	\begin{align*}
		& \tilde{\eta}_{k +1} \cdot \tilde{\eta}_{i_1} \cdots  \tilde{\eta}_{i_m}\cdot W^{4 -m} \in (T - t)^{2\nu j} S^{2j }( R^{2j-5 }),~~ d =3,\\
		& \tilde{\eta}_{k +1} \cdot \tilde{\eta}_{i_1} \cdots  \tilde{\eta}_{i_m}\cdot W^{2 -m} \in (T - t)^{2\nu j} S^{2j }( R^{2j-6 }\log^{m + 1}(R)),~~ d = 4,
	\end{align*}
	where $j : = i_1 + i_2 \dots + i_m + k +1$.
	The interaction terms grow slower  to leading order in $R$ than represented in \eqref{error-rep}. However, not so for the `lowest ' order term attained in \eqref{time-update} and the linear contribution $ \tilde{N}_1 $. For $ d = 4 $ especially, we note the embeddings
	\begin{align*}
		&S^{2j }( R^{2j-6 }\log^{2}(R)) \subseteq  S^{2j }( R^{2j-5 }\log(R))\\
		&S^{2j }( R^{2j-6 }\log^{3}(R))  \subseteq   S^{2j }( R^{2j-4 }\log(R)).
	\end{align*}
	which are straight forward from the definition \eqref{deff}.
	The `highest' order term is given by the case of $ i _{\ell} = k+1,~ \ell = 1, \dots, \f{4}{d-2}$, which attains the upper  sum index in  \eqref{error-rep}.

\end{proof}
The following estimates are an immediate consequence of Lemma \ref{main-inner-Lemma}. First we note in the inner region 
\[
(\star)~~ 0 \leq R \leq C \lambda(t) (T-t)^{\f12 + \epsilon } = C (T -t)^{\epsilon - \nu},
\]
and  observe
\begin{Lemma}\label{simple-Lemma} Let  $ T-t \in [0,1) $ and $ R \in [0, C (T -t)^{\epsilon - \nu})$, then
	\begin{align}
		& | R^{- j}\partial_R^i (e_k(t,R))| \leq C_{k, j, i} (T-t)^{2 \nu (k+1)} \langle R \rangle^{2k -1 - i - j},~~~ d =3,\\
		& | R^{- j}\partial_R^i (e_k(t,R))| \leq C_{k, j, i} (T-t)^{2 \nu (k+1)} \langle R \rangle^{2k -2 - i - j}( 1+  \log(1 + R)),~~~ d =4,
	\end{align}
	where  $ \langle R\rangle = (1 +R^2)^{\f12}$,~ $ j + i \leq 2k$.
\end{Lemma}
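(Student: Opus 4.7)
The plan is to insert the representation of $e_k$ from Lemma \ref{main-inner-Lemma}, namely $e_k(t,R) = \sum_{l=k+1}^{k_d}(T-t)^{2\nu l}\chi_l(R)$ with $\chi_l$ in the appropriate $S$-space, and then estimate each summand pointwise by exploiting the two-sided structure in Definition \ref{deff}. Since the sum has finitely many terms (bounded by $k_d = \frac{d+2}{d-2}k$), it suffices to control a single $\chi_l$ and take the worst scaling, which occurs at the leading index $l = k+1$.

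First I would split the admissible $R$-range into a ``near-origin'' regime $R \leq R_0$ and a ``far'' regime $R_0 \leq R \leq C(T-t)^{\epsilon-\nu}$, with $R_0 \geq 1$ the convergence radius appearing in Definition \ref{deff}. In the near-origin regime, property (i) of that definition asserts that $R^{-(2l-2)}\chi_l(R)$ admits an even Taylor expansion, so $\chi_l(R) = R^{2(l-1)}\phi_l(R^2)$ with $\phi_l$ real-analytic on $[0, R_0^2]$, and analogous formulas hold for all derivatives by term-wise differentiation of the absolutely convergent series. Since $l \geq k+1$ and $j+i \leq 2k$, the exponent $2(l-1)-i-j$ is non-negative, yielding $|R^{-j}\partial_R^i\chi_l(R)| \lesssim 1$ uniformly for $R \leq R_0$. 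Multiplying by the time weight and noting $\langle R\rangle \sim 1$ here gives the claimed bound on this portion. In the far regime, property (ii) provides the absolutely convergent expansion $\chi_l(R) = \sum_{r \geq 0}\sum_{j=0}^{r_l+\ell}c_{jr}R^{2l-3-l_1 r}\log^j(R)$, which can be differentiated term by term and yields $|R^{-j}\partial_R^i\chi_l(R)| \lesssim R^{2l-3-i-j}P_l(\log R)$ with $P_l$ a polynomial whose degree depends only on $l$.

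Combining with the time weight in the far regime produces
\[
(T-t)^{2\nu l}\,|R^{-j}\partial_R^i\chi_l(R)| \;\lesssim\; (T-t)^{2\nu(k+1)}\langle R\rangle^{2k-1-i-j}\cdot\bigl[(T-t)^{\nu}\langle R\rangle\bigr]^{2(l-k-1)}P_l(\log R).
\]
The inner-region constraint $R \leq C(T-t)^{\epsilon-\nu}$ yields $(T-t)^{\nu}\langle R\rangle \lesssim (T-t)^{\epsilon}\leq 1$, so the bracketed factor is uniformly bounded. The polynomial $P_l(\log R)$ is then absorbed into the constant after noting $\log R \lesssim_\delta R^\delta$ for any $\delta > 0$ together with the slack coming from $\epsilon > 0$ in the exponent of the bracketed factor; in $d=4$ one single $\log(1+R)$ is retained as in the statement. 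Summing the resulting estimates over the finitely many $l \in\{k+1,\dots,k_d\}$ and over $R \leq R_0$ versus $R \geq R_0$ completes the proof. The main technical annoyance I expect is the careful bookkeeping of logarithmic powers, especially in $d=4$ where the statement allows exactly one factor $(1+\log(1+R))$; here one has to use that higher powers of $\log R$ in the $S$-expansion are always paired with strictly more negative powers of $R$ (the $r_l$-constraint in Definition \ref{deff}(ii)), so any surplus logs are compensated by additional polynomial decay and absorbed into constants.
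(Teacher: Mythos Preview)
Your proposal is correct and follows essentially the same route as the paper: decompose $e_k$ via Lemma~\ref{main-inner-Lemma} into finitely many $(T-t)^{2\nu l}\chi_l(R)$, split into the regimes $R\lesssim 1$ and $R\gtrsim 1$ using the two parts of Definition~\ref{deff}, and in the large-$R$ regime trade the excess time decay $(T-t)^{2\nu(l-k-1)}$ against the excess growth $R^{2(l-k-1)}$ through the inner-region constraint $(T-t)^{\nu}R\lesssim (T-t)^{\epsilon}$. The only cosmetic difference is that the paper dispatches the logarithmic factors directly via $R^{-j}\log^j(R)\lesssim 1$ (exploiting, as you also note, that higher powers of $\log R$ in the $S$-expansion come paired with at least as many extra inverse powers of $R$), whereas you phrase this through $\log R\lesssim_\delta R^{\delta}$ and the $\epsilon$-slack; both arguments are equivalent here.
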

\begin{Rem}
	The constants $ C_{k,j,i}= C(k,j,i, |\alpha_0|,\nu)$ where the dependence on $ |alpha_0|, \nu $ is analytic. 
\end{Rem}
\begin{proof}
	By Lemma \ref{main-inner-Lemma} for $ d = 3$ the error $ e_k(r,R)$ consists of  $ (T-t)^{2\nu l} \chi_l(R)$ such that
	\[
	\partial_R^m\chi_l(R) = R^{-m -3} \cdot \big(O(R^{2 l}) +  \sum_{j =1}^{k_l}O(R^{2l -j}) \log^j(R) \big),~~ m \geq 0,~R \gg 1.
	\]
	For the leading order $O(R^{2l})$ where $ l = k+1, \dots , k_d$ we then (by $(\star)$) use
	\begin{align*}
		(T-t)^{2\nu l} R^{2l -m -3} \leq&\;\tilde{C}_k (T-t)^{2\nu(k+1)}R^{2k -m -1}  (T-t)^{2 \epsilon( l - k -1)}\\
		\leq& \;C_k (T-t)^{2 \nu(k+1)}R^{2k -m -1} 
	\end{align*}
	combined with the absolute convergence  of $\chi_l(R)$ where $ R > 1$.
	In the case of the sum on the right $R^{-m -3}\cdot O(R^{2l -j}) \log^j(R)$ for $j \geq 1$, we observe the same upper bound  via estimating
	\begin{align*}
		(T-t)^{2\nu l} R^{2l -m - j -3} \log^j(R) \leq& \;\tilde{C}_k (T-t)^{2 \nu(k+1)}R^{2k -m -1}  (R^{-j} \log^j(R))\\
		\leq &\;\tilde{C}_{k j} (T-t)^{2 \nu(k+1)}R^{2k -m -1}. 
	\end{align*}
	Likewise in $ d =4$ we have for $m \geq 0,~R > 1$
	\[
	\partial_R^m\chi_l(R) = R^{-m -4} \cdot \big(O(R^{2 l})( 1 + O(\log(R))) +  \sum_{j =1}^{\tilde{k}_l}O(R^{2l -j}) \log^{j+1}(R) \big),
	\]
	from which we infer the desired bound as above in the region $ R \gg1 $. Similarly we obtain in dimension $ d = 3,4$
	\begin{align*}
		&\partial_R^m\chi_l(R) = R^{-m} \cdot O(R^{2l -2}),~~ m \geq 0,~R \ll 1,\\[3pt]
		& |(T-t)^{2 \nu l} \partial_R^m\chi_l(R)| \leq C_K (T -t)^{2 \nu (k+1)} R^{2k-m}.
	\end{align*}
	Hence taking $ m \leq 2k$,~ $ i \leq 2k -m$, we infer the bound $| R^{-i}\partial_R^m \chi_l(R) | \lesssim_{k, m, i} 1 $ in the region $ R \ll1 $. The estimates are then put together noting $ T-t \in [0, 1)$ and the  $\chi_l(R)'s$ are analytic functions on $[0, \infty)$.
\end{proof}
\begin{Def}
	We set for the interior approximation
	\begin{align}
		&u^{N_1}_{In}(t,R) := W(R) + \eta_{N_1}^*(t,R),~ N_1 \gg 1,\\[4pt] \label{complete-error-inner}
		& e^{N_1}_{In}(t,R) :=  i (T-t)^{1 + 2\nu} \partial_t u^{N_1}_{In} + \Delta u^{N_1}_{In}+ |u^{N_1}_{In}|^{\f{4}{d-2}}u^{N_1}_{In} - \alpha_0 (T-t)^{2\nu} u^{N_1}_{In}\\[2pt] \nonumber
		&~~~~~~~~~~~~~~~~~~~~ + i (T-t)^{2\nu}  ( \f12 + \nu)(\f{d-2}{2}  + \rho\partial_{\rho})u^{N_1}_{In} = e_{N_1}(t,R),
	\end{align} 
	where $ N_1\in \Z_+$ will be fixed below  and we note the identity \eqref{complete-error-inner} holds again up to identifying real and imaginary parts $\eta = \eta^{(1)} + i \eta^{(2)}$. 
\end{Def}
For a fixed $ T > 0$ and $ t \in [0, T)$, we set the indicator function
\[
\chi_{In}(t,R) = \begin{cases}
	1 & 0 \leq R \leq C (T -t)^{\epsilon - \nu} \\
	0 & \text{otherwise}.
\end{cases}
\]
\begin{Corollary}\label{estimates} For $ \alpha_0 \in \R,~ \nu > \epsilon_1 > 0$  there exists $  0 < T_0(\alpha_0, \nu,k) \leq 1 $ such that for all $ 0 < T \leq T_0$ the functions
	$ u^{(k)}_{In}, e^{(k)}_{In}$ with $ k  \gg1 $ satisfy for all $ t \in [0,T)$
	\begin{align}\label{inner-est1}
		&\|  \chi_{In}(t,R) \cdot (W -u^{(k)}_{In})  \|_{L^{\infty}_R} \leq C_{\nu, |\alpha_0|} (T - t )^{\nu}\\[2pt]\label{inner-est2}
		&\|  \chi_{In}(t,R) \cdot R^{-j}\partial_R^{i}(W -u^{(k)}_{In})  \|_{L^{\infty}_R} \leq C_{\nu, |\alpha_0|} (T - t )^{2\nu},~~~1 \leq j + i \leq 2\\[2pt]\label{inner-est3}
		&\|  \chi_{In}(t,R) \cdot R^{-j}\partial_R^{i}(W -u^{(k)}_{In}) \|_{L^{2}(R^{d-1}dR)} \leq C_{\nu, |\alpha_0|} (T - t )^{\nu(j + i - \f{d-2}{2}) },~~~0 \leq j + i \leq 2\\[2pt]\label{inner-est4}
		&\|  \chi_{In}(t,R) \cdot R^{-j}\partial_R^{i}e^{(k)}_{In}\|_{L^{2}(R^{d-1}dR)} \leq C_{\nu, |\alpha_0|} (T - t )^{\f{6-d}{2}\nu + 2  \epsilon_1 k  },~~~0 \leq j + i \leq 2k
	\end{align}
\end{Corollary}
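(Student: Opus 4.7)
The proof is a direct application of Lemma \ref{main-inner-Lemma} and Lemma \ref{simple-Lemma} under the inner-region constraint $R \leq C(T-t)^{\epsilon_1-\nu}$. Since $W - u^{N_1}_{In} = -\eta^*_{N_1} = -\sum_{j=1}^{N_1}\eta_j$ and $e^{N_1}_{In} = e_{N_1}$ by \eqref{complete-error-inner}, all four estimates reduce to uniform bounds on the individual corrections $\eta_j$ and on the final error $e_{N_1}$.

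First I would record, arguing exactly as in the proof of Lemma \ref{simple-Lemma}, the pointwise envelope
\begin{equation*}
|R^{-l}\partial_R^i \eta_j(t,R)| \leq C_{j,l,i}(T-t)^{2j\nu}\langle R\rangle^{2j-1-l-i}, \qquad l+i \leq 2j,
\end{equation*}
valid in $d=3$ (with an extra $\log(1+R)$ in $d=4$). This follows from combining the absolute convergence of the asymptotic series in Lemma \ref{main-inner-Lemma} for $R > R_0$ with the $R^{2j}$-Taylor start at the origin encoded in $\eta_j \in S^{2j}(\cdots)$, so that $R^{2j-l-i}$ dominates for $R \leq 1$ and $R^{2j-1-l-i}$ for $R \geq 1$. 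In the inner region $\langle R\rangle \lesssim (T-t)^{\epsilon_1-\nu}$, so
\begin{equation*}
\|\chi_{In} R^{-l}\partial_R^i \eta_j\|_{L^\infty_R} \lesssim (T-t)^{2j\nu + (2j-1-l-i)_+(\epsilon_1-\nu)}.
\end{equation*}
The slowest decay comes from $j=1$, larger $j$ only improving matters because $\nu>\epsilon_1$. For $d=3$ this gives $(T-t)^{\nu+\epsilon_1}$ when $l+i=0$ and $(T-t)^{2\nu}$ when $l+i\in\{1,2\}$; choosing $T_0\leq 1$ small and summing $j=1,\dots,N_1$ yields \eqref{inner-est1}--\eqref{inner-est2} with constant depending only on $\alpha_0,\nu$.

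For the weighted bound \eqref{inner-est3}, I would square the same envelope and integrate against $R^{d-1}\,dR$ from $0$ to $C(T-t)^{\epsilon_1-\nu}$. In $d=3$ the integrand is polynomial, the $j=1$ contribution dominates, and evaluation of the elementary integral followed by a square root produces the exponent $\nu(l+i-1/2) + c\,\epsilon_1$ with $c>0$, which majorizes the claimed $(T-t)^{\nu(l+i-(d-2)/2)}$. The error estimate \eqref{inner-est4} is similar but now I apply Lemma \ref{simple-Lemma} directly to $e_{N_1}$ with $N_1=k$: the time factor contributes $(T-t)^{2\nu(k+1)}$, while the weighted integration over the inner region loses at most a factor $(T-t)^{(\epsilon_1-\nu)(2k-1-j-i+d/2)}$; a short arithmetic check in $d=3$ shows the total exponent is always $\geq \tfrac{6-d}{2}\nu + 2\epsilon_1 k$, with room to spare thanks to $\nu > \epsilon_1$.

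The only delicate point, rather than a real obstacle, is the bookkeeping between the $R^{2j}$-behavior at the origin and the $R^{2j-1}$ envelope at infinity when $l+i$ approaches the maximal value $2k$; taking the pointwise minimum produces the envelope $\langle R\rangle^{2k-1-l-i}$ used above, valid precisely up to $l+i\leq 2k$, which matches the hypotheses of Lemma \ref{simple-Lemma}. The analytic dependence of the constants on $\alpha_0,\nu$ is inherited from Lemma \ref{main-inner-Lemma}, and the smallness $T\leq T_0(\alpha_0,\nu,k)\leq 1$ is used to absorb all the positive residual powers of $(T-t)^{\epsilon_1}$ into the stated exponents.
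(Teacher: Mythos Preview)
Your proposal is correct and follows essentially the same route as the paper's proof: both derive the estimates from the $S$-space structure in Lemma \ref{main-inner-Lemma}, extract a pointwise envelope for each $\eta_j$ (you state this explicitly as an analogue of Lemma \ref{simple-Lemma}, the paper does it term-by-term on the leading order), substitute the inner-region bound $R \lesssim (T-t)^{\epsilon_1-\nu}$, and for the $L^2$ bounds integrate the squared envelope against $R^{d-1}\,dR$. Your exponent bookkeeping (worst case $j=1$, residual $\epsilon_1$-gain absorbed by $T\leq T_0$) matches the paper's computations line for line.
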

\begin{Rem}\label{rem} For the bounds in the case $ d =4 $, we use the embedding $ S^{2l}(R^{2l-2}\log(R)) \subset S^{2l}(R^{2l-1}) $. The  upper bounds in $  \eqref{inner-est1}, \eqref{inner-est3}, \eqref{inner-est4}  $ may be improved by the factor
	\[
	(T - t )^{\nu}( 1+ |\log(T-t)|).
	\]
\end{Rem}
\begin{proof}[Proof of Corollary \ref{estimates}]  The $L^{\infty}_R $ bounds are obtained as above in the proof of Lemma \ref{simple-Lemma}. By construction $ W - u_{In}^{(k)}$ has the form $ (T - t)^{2\nu l} \chi_l(R),~ l = 1, \dots, k$, where $ \chi_l \in S^{2l}(R^{2l -1})$ or $ \chi_l \in S^{2l}(R^{2l -2}\log(R))$  in dimension $d = 4$.
	Especially, we observe for the leading order in the  $ R \gg 1 $ regime 
	\begin{align*}
		&(T -t)^{2 \nu l} R^{2l -1} \lesssim (T -t)^{(2l -1) \epsilon + \nu}  \lesssim (T-t)^{\nu},\\
		&(T -t)^{2 \nu l} R^{2l -1 - j -i} \lesssim (T -t)^{( \nu - \epsilon) ( j + i -1)} \cdot (T -t)^{2 \nu} \lesssim (T-t)^{2\nu},~ 1 \leq i+j.
	\end{align*}
	In order to prove the estimate where $ R \ll 1$, we restrict to $1 \leq i+j \leq 2 l $, hence since $ l \geq 1$ we require $1 \leq i+j \leq 2  $. Likewise, integrating the leading order term where $ R \gg1 $, we infer for a large $ R_0 >1 $ 
	\begin{align}\label{intt}
		\bigg( \int_{R_0}^R O( s^{4l - 2-2j - 2i}) s^{d-1}~ds \bigg)^{\f12} &= O( R^{2l - j -i + \f{d-2}{2}}).
	\end{align}
	Hence for $ l = 1, \dots , k$
	\begin{align*}
		(T -t)^{2 \nu l } \| \chi\{  R \gg 1 \} \cdot O( R^{2l - 2-2j - 2i}) \|_{L^2(R^{d-1}dR)} \lesssim_l  (T -t)^{2\epsilon l} (T -t)^{( \epsilon - \nu)(  \f{d-2}{2} -j -i)},
	\end{align*}
	from which we infer \eqref{inner-est3}. Now in dimension $ d = 4$ we estimate $ R^{-1}\log(R) \lesssim1 $, see the remark \ref{rem} above. For the latter estimate \eqref{inner-est4}, we integrate the upper bound in Lemma \ref{simple-Lemma} and infer from taking $l = k$ in \eqref{intt}
	\begin{align*}
		(T -t)^{2 \nu (k +1)} \| \chi\{  R \gg 1 \} \cdot O( R^{2k - 2-2j - 2i}) \|_{L^2(R^{d-1}dR)} \lesssim (T -t)^{2 \nu + 2\epsilon k}  (T -t)^{( \epsilon - \nu)( \f{d-2}{2} - j -i)},
	\end{align*}
	which in turn implies \eqref{inner-est4}.
\end{proof}
\subsection{Self-similar region $ (T -t)^{\f12 + \epsilon_1  }  \lesssim r \lesssim (T -t)^{\f12 - \epsilon_2  }  $}\label{subsec:SS}~For $ \tilde{C} > C^{-1} > 0$ large and $ 0 < \epsilon_2 < 1 $  we restrict to the region
\begin{align}\label{condition}
	S = \big\{ (t,r) ~|~	\frac{1}{\tilde{C}} (T -t)^{ \epsilon_1} \leq r (T-t)^{- \f12} \leq \tilde{C} (T -t)^{- \epsilon_2 } \big\}
\end{align}
and then consider \eqref{NLSequation} with 
\begin{align}\label{ansatzSS}
	u(t, y) = e^{i \alpha(t)} (T -t)^{-\f{d-2}{4}} w(t, y),~~  y = r(T-t)^{- \f12}.
\end{align}
The functions $w(t,y)$  solves
\begin{align}\label{SelfS}
	& i (T -t) \partial_t w = - (\mathcal{L}_S + \alpha_0)w - |w|^{\frac{4}{d-2}}w,\\[3pt] \nonumber 
	& \mathcal{L}_S = \partial_{yy} + (d-1)\tfrac{1}{y} \partial_y + i\big(\tfrac{d-2}{4} + \tfrac12 y \partial_y\big).
\end{align}
In order to approximate solutions of \eqref{SelfS} to high order, we intend to inductively add corrections to a solution of
\[
i (T -t) \partial_tA_0  = - (\mathcal{L}_S + \alpha_0) A_0
\]
in the variable $(t,y)$, i.e. we calculate
\begin{align*}
	&A_0(t,y) + A_1(t,y) + A_2(t,y) + \dots + A_N(t,y),
\end{align*}
where $ N \gg1 $  is potentially large. We set the corrections to be of the form
\[
A_{n}(t,y) = (T - t)^{\nu( n + \f{4-d}{2})} \sum_{j \leq n} \big(\log(y) - \nu \log(T -t)\big)^{j} A_{n,j}(y)
\]
which will be an effective choice of the $(t,y)$ dependence of $ A_k(t,y) $ based on a \emph{formal} asymptotic matching argument. \\[5pt]
\emph{Asymptotic matching heuristics}.~The  intersection $ I \cap S$ of the inner region \eqref{condition-inner} and \eqref{condition}  implies 
$$ R \sim (T-t)^{\epsilon_1  - \nu},~ y \sim (T-t)^{ \epsilon_1},~~ (t,r) \in I \cap S.$$
We thus derive an asymptotic expansion   for 
$
u_{In}^k(t,R) = u_{In}^k(t, (T-t)^{- \nu} y)
$
in the variable $y  = R \cdot (T-t)^{\nu}$ where $ 0 < y \ll1,~T - t \ll1$ from the  previous $R \gg1 $ asymptotics .\\[3pt]
First, we proceed by expanding $\lambda(t)^{\f{d-2}{2}} ( W(R) + \eta_N^*(t,R))$ with $ \eta_k \in (T-t)^{2\nu k} S^{2k}(R^{2k-1})$ in order to demonstrate the general structure of the expansion and the corrections in this Section. Then we will use the more accurate information provided by Lemma \ref{main-inner-Lemma}. Hence 
\begin{align}\label{transi}
	&\lambda(t)^{\f{d-2}{2}} ( W(R) + \eta_N^*(t,R))\\ \nonumber
	=&~ (T -t)^{-\f{d-2}{2}( \f12 + \nu) }\bigg( \sum_{l = 0}^N (T -t)^{2 \nu l }  \sum_{j = 0}^{J_l} \log^{j}(R)O(R^{2l -1 -j})\bigg)\\ \nonumber
	=&~ (T -t)^{-\f{d-2}{2}( \f12 + \nu) }\bigg( \sum_{l = 0}^N (T -t)^{2 \nu l } \sum_{j = 0}^{J_l}\sum_{r \geq 0}  c_{r,j}^{(l)} \big(\log(y) - \nu \log(T -t)\big)^{j} R^{2l -1 -j -r}\bigg)
\end{align}
We assume $ J_l$ is non-decreasing in $ l \in \Z_+$ and for $ \tilde{N} >  J_{N}$ we write
\begin{align}
	\sum_{r = 0}^{\infty} c_{r,j}^{(l)}R^{2l -1 -j -r} =&~  \sum_{ r = 0}^{\tilde{N} - j} c_{r,j}^{(l)}R^{2l -1 -j -r} +  O (R^{2 l -2 - \tilde{N} })\\ \nonumber
	= &~\sum_{ n = j}^{\tilde{N} } c_{n -j,j}^{(l)}y^{2l -1 -n} (T-t)^{- \nu(2l -1 -n)}  +  O (R^{2 l -2 - \tilde{N} }).
\end{align} 
To be precise, we infer for $ \tilde{N} \gg1$ large enough
\begin{align}\label{transition}
	&\lambda(t)^{\f{d-2}{2}} ( W(R) + \eta_N^*(t,R))\\ \nonumber
	=&~ (T -t)^{-\f{d-2}{4}}\bigg( \sum_{n = 0}^{\tilde{N}}  (T -t)^{\nu( n  - \f{d-4}{2})} \sum_{j \leq n} \sum_{l = 0}^N c_{n-j, j}^{(l)}\big(\log(y) - \nu \log(T -t)\big)^{j}  y^{2l -1-n}\bigg)\\ \nonumber
	&+~ (T-t)^{- \frac{d-2}{2}( \f12 + \nu)} R^{-2 - \tilde{N}} \sum_{l = 0}^{N} y^{ 2l} \sum_{j = 0}^{J_l} \log^j(R) f_{j, l , \tilde{N}} (R),
\end{align}
where $ f_{j, l \tilde{N}} \in C^{\infty}$ are bounded (with bounded derivatives). Note in particular, since the third line is small as $0 < T-t \ll1$,  we derive an Ansatz from the second line.
In the second line we set $ c_{n -j, j }^{(l)} = 0$ if $ J_l < j$, i.e. especially $  c_{n -j, j}^{(0)} = 0 $ if $ j > 0$.  The case $j = l =0$ corresponds to the (even order) expansion of $R^{d-2}\cdot W(R)$, i.e. 
\begin{align*}
	R\cdot W(R) = c_{0,0}^{(0)}+ R^{-2}c_{2,0}^{(0)} + R^{-4}c_{4,0}^{(0)} + \dots,~~~ d = 3,\\
	R^2 \cdot W(R) = c_{1,0}^{(0)}+ R^{-2}c_{3,0}^{(0)} + R^{-4}c_{5,0}^{(0)} + \dots,~~~ d = 4.
\end{align*}
~~\\[2pt]
Now Lemma \ref{main-inner-Lemma} implies that the coefficients $ c_{r, j }^{(l)}  = 0 $ if $r < j$, i.e. in dimension $ d =3$, we restrict to $ j < \tfrac{n}{2}$ for the expansion of \eqref{transition}. Further in dimension $ d =4 $ there holds $ c_{r, j }^{(l)}  = 0 $ if $ n = r + j $ is an even number.
The suggested   ansatz for the solution $w(t,y)$ under \eqref{condition} hence consists of
\begin{align}\label{AnsatzSS}
	&\sum_{n = 0}^{\tilde{N}}  (T -t)^{\nu( n +1 - \f{d-2}{2})} \sum_{j \leq n} \big(\log(y) - \nu \log(T -t)\big)^{j} A_{n,j}(y),\\[5pt]
	\label{second-SS-cond}
	&A_{n,j}(y) = \sum_{l\geq 0} d_{n,j, l}\;y^{2l -1 -n},~~ 0 < y \ll1.
\end{align}
and such that
\begin{align*}
	&A_{n,j}(y) = 0,~~\text{if}~~j > \tfrac{n}{2},~~ d = 3,\\
	& A_{n,j}(y) = 0,~~\text{if}~~n \equiv 0~\text{mod}~2,~~ d = 4.
\end{align*}
The calculation  \eqref{transition} now gives us a clear idea for defining the $A_n(t,y) $ corrections with a sufficiently `good' error in the following iteration scheme. This is clarified below in Lemma \ref{consist} and through Lemma \ref{Lemma-solut-inner1}. Essentially, the asymptotic matching analysis suggests an approximation of \eqref{SelfS} (to arbitrary order) which, for any number of corrections, coincides with the interior approximation up to a well controlled decaying error in the intersection $I \cap S$ domain.\\
~~\\
\emph{Setting up the iteration}.  For $ n \in \N_0$ let 
\begin{align}\label{ansatz-for-itSS}
	A_n(t, y) = (T -t)^{\nu( n +1 - \f{d-2}{2})} \sum_{j \leq n} \big(\log(y) - \nu \log(T -t)\big)^{j} A_{n,j}(y).
\end{align}
Let us first restrict to dimension $ d = 3$, then initially we have $ A_0(t, y) = (T -t)^{\f{\nu}{2}}  A_{0,0}(y)$, where we solve
\begin{align}\label{start1}
	(\mathcal{L}_S + \mu_0)A_{0,0}(y) = 0,\;\; \mu_0 = \alpha_0  - i \nu \tfrac{1}{2},
\end{align}
with \eqref{second-SS-cond}. Similarly as in the previous Section \ref{subsec:inner}, we construct corrections for $A_0$, i.e.
\[
A_N^*(t,y) = (T -t)^{\f{\nu}{2}}  A_{0,0}(y) + A_1(t,y) + A_2(t,y) +\dots + A_N(t,y).
\]
The error satisfies
\begin{align}{\tiny }
	e_0(t,y) =& \big(i (T -t) \partial_t  + (\mathcal{L}_S + \alpha_0)\big)A_0(t,y) + |A_0(t,y)|^{4} A_0(t,y)\\ \nonumber
	=& (T - t)^{\f52 \nu} |A_{0,0}(y)|^{4} A_{0,0}(y).
\end{align}
Following the ansatz \eqref{ansatz-for-itSS} and neglecting the logarithmic factors, we subsequently only remove the $ (n+1)$st order terms of the form $ (T -t)^{\nu( n + \f32)} f(y)$ from the error $e_n(t,y)$ in the $n$-th iteration. Precisely, up to logarithmic factors,  we set $e_n^0(t,y) $ as the minimal set of terms, such that
\[
e_n(t,y) = e_n^0(t,y) + o((T -t)^{\nu(n + \f32)})f(y),
\]
for a function $ f(y)$ analytic on $(0,\infty)$ up to a pole singularity at $ y = 0$. Especially, for $ n = 0$, we choose $ e_0^0(t,y) = 0$ and thus solve
\begin{align}\label{start2}
	(\mathcal{L}_S + \mu_1)A_{1,0}(y) = 0,\;\; \mu_1 = \alpha_0  - i \nu\tfrac{3}{2}.
\end{align}
The  solutions of \eqref{start1} and \eqref{start2} are given below in Lemma \ref{FS-Inner}. Then
\begin{align}{\tiny }
	e_1(t,y) =& \big(i (T -t) \partial_t  + (\mathcal{L}_S + \alpha_0)\big)A_1^*(t,y) + |A_1^*(t,y)|^{4} A_1^*(t,y)\\ \nonumber
	=& (T - t)^{\f52 \nu} |A_{0,0}(y)|^{4} A_{0,0}(y) + \triangle_1|A(t,y)|^{4} A(t,y),
\end{align}
where
\[
\triangle_N\big(|A|^{4} A\big) = |A_N^*|^{4} A_N^*  - |A_{N-1}^*|^{4} A_{N-1}^*.
\]
We hence proceed with
\begin{align}\label{start31}
	&(\mathcal{L}_S + \mu_2)A_{2,1}(y) = 0,~~ \mu_2 = \alpha_0  - i \nu\tfrac{5}{2},\\\label{start3b}
	& (\mathcal{L}_S + \mu_2)A_{2,0}(y) =   - \big( i(\tfrac{1}{2} + \nu) + \tfrac{1}{y^2}\big) A_{2,1}(y) -\tfrac{2}{y} \partial_y A_{2,1}(y) - |A_{0,0}(y)|^{4} A_{0,0}(y),
\end{align}
where the second line includes the error from the logarithmic factor omitted  in the first line.\\[4pt]
Let us turn to the case of dimension $ d = 4$. We then have initially 
\[
A_1(t,y) = (T -t)^{\nu} \big(A_{1,0}(y) + (\log(y)  - \nu \log(T-t)) A_{1,1}(y)\big) =: (T -t)^{\nu}\tilde{A}_1(t,y).
\]
Thus we solve 
\begin{align}
	&(\mathcal{L}_S + \tilde{\mu}_1)A_{1,1}(y) = 0,~~ \tilde{\mu}_1 = \alpha_0  - i \nu,\\
	& (\mathcal{L}_S + \tilde{\mu}_1)A_{1,0}(y) =    - \big( i(\tfrac{1}{2} + \nu) + \tfrac{2}{y^2}\big) A_{1,1}(y) -\tfrac{2}{y} \partial_y A_{1,1}(y).
\end{align}
The error now reads
\begin{align}{\tiny }
	e_1(t,y) =& \big(i (T -t) \partial_t  + (\mathcal{L}_S + \alpha_0)\big)A_1^*(t,y) + |A_1^*(t,y)|^{2} A_1^*(t,y)\\ \nonumber
	=& (T - t)^{3 \nu} |\tilde{A}_1(t,y)|^{2} \tilde{A}_1(t, y).
\end{align}
Reasoning similarly as above in case $ d = 3$, we hence first solve
\begin{align}
	&(\mathcal{L}_S + \tilde{\mu}_3)A_{3,3}(y) =  - A_{1,1}^2 \overline{A}_{1,1} ,~~ \tilde{\mu}_3 = \alpha_0  - 3 i \nu.
\end{align}
Now the subsequent correction of the third step is 
\begin{align}
	(\mathcal{L}_S + \tilde{\mu}_3)A_{3,2}(y) &=  - 3\big( i(\tfrac{1}{2} + \nu) + \tfrac{2}{y^2}\big) A_{3,3}(y) - \tfrac{6}{y} \partial_y A_{3,3}(y) - A_{1,1}^2\cdot  A_{1,0},
\end{align}
where we use a simplification for the interaction term $ A_{1,1}^2\cdot  A_{1,0} $. This term should really be of the form
\[
A_{1,1}^2\cdot  \overline{A_{1,0}},~~\text{and}~~ \overline{A_{1,1}} \cdot A_{1,1} \cdot  A_{1,0}.\\[5pt]
\]
Lastly, two more corrections are calculated 
\begin{align}
	(\mathcal{L}_S + \tilde{\mu}_3)A_{3,1}(y) &=  - 2\big( i(\tfrac{1}{2} + \nu) + \tfrac{2}{y^2}\big) A_{3,2}(y) - \tfrac{4}{y} \partial_y A_{3,2}(y)  + \tfrac{3}{y^2}A_{3,3}(y) - A_{1,0}^2 \cdot A_{1,1}\\[4pt]
	(\mathcal{L}_S + \tilde{\mu}_3)A_{3,0}(y) &=  - \big( i(\tfrac{1}{2} + \nu) + \tfrac{2}{y^2}\big) A_{3,1}(y) - \tfrac{2}{y} \partial_y A_{3,1}(y)  + \tfrac{2}{y^2}A_{3,2}(y) - A_{1,0}^3 .
\end{align}
Again the `linear' error terms on the right appear from omitting the logarithmic factor, which will be described in more detail below.
\begin{Rem} In  \cite{OP}, not many iteration steps were necessary in order to obtain a decent approximation. However, we consider the general structure from \cite{Perelman} and present the induction step in the following.
\end{Rem}
~~\\
We now outline the step for higher order corrections in  the iteration.\\[4pt]
\emph{The general induction}. We restrict to dimension $ d = 3$ first. By direct inspection of \eqref{ansatz-for-itSS} in $ d = 3$, the error in the  $(n-1)$st step for $ n \geq 2$ will be rearranged into 
\begin{align}
	e_{n-1}(t,y) = \sum_{k \geq n}(T-t)^{\nu(k + \f12)} \sum_{j \leq \tfrac{k}{2} -1}  ( \log(y) - \nu \log(T-t))^j e_{n-1,j}^{k-n}(y),
\end{align}
where $ e_{n-1,j}^{k-n}(y)$ are interaction terms from the quintic  power law. The outer sum $ \sum_{k \geq n }$ ranges up to $ k = 5n-3$, however the upper bound is not relevant. In order to define the source terms, the `lowest order' error will be set as follows.
\begin{align}
	e_{n-1}^0(t,y) = (T-t)^{\nu(n + \f12)}\sum_{j \leq \tfrac{n}{2} -1} ( \log(y) - \nu \log(T-t))^j e^0_{n-1,j}(y),
\end{align}  
where  $ e^0_{n-1,j} $ is a sum of interaction terms of the form
\[
\tilde{A}_{n_1, k_1} \cdot \tilde{A}_{n_2, k_2}  \cdots \tilde{A}_{n_5, k_5},~~ \tilde{A} \in \{A, \bar{A}\},
\]
such that $ n_1 + \dots +  n_5 = n-2$ and $ k_1 + k_2 + \dots + k_5 = j$. 
In the following let $n \in \N$ and $ n \geq 2$.  We choose $ m_n = \tfrac{n}{2}$ , if $ n$ is even and $m_n = \tfrac{n-1}{2}$ if $n$ is odd.
We further set $ \mu_n = \alpha_0  - i \nu(n + \tfrac{1}{2}) $ and calculate 
\begin{align*}
	&(i (T -t) \partial_t + \mathcal{L}_S + \mu_n) \big[(\log(y) - \nu \log(T-t))^k A_{n,k}(y)\big]\\
	&\hspace{4cm} - (\log(y) - \nu \log(T-t))^k (\mathcal{L}_S + \mu_n) A_{n,k}(y)\\[3pt]
	&= (\log(y) - \nu \log(T-t))^{k-1}k D_y A_{n,k}(y)\\[3pt]
	& \hspace{.5cm} + (\log(y) - \nu \log(T-t))^{k-2} k(k-1) \tfrac{1}{y^2}A_{n,k}(y),
\end{align*}
where 
\[
D_y =   \big( i(\tfrac{1}{2} + \nu) + \tfrac{d-2}{y^2}\big) +  \tfrac{2}{y} \partial_y.
\]
~~\\
Then we set for the $n^{\text{th}}$ correction $ A_n(t,y)$
\begin{align}\label{three-first}
	(\mathcal{L}_S + \mu_n)A_{n,m_n}(y) &=\; 0,\\[4pt] \label{three-second}
	(\mathcal{L}_S +\mu_n)A_{n, m_n -1}(y) &= \; - m_n D_y A_{n,m_n}(y) - e_{n-1, m_n-1}^0(y),\\[4pt]
	(\mathcal{L}_S + \mu_n)A_{n, m_n -2}(y) &=\;  - (m_n -1) D_y A_{n,m_n -1}(y) -  e_{n-1, m_n -2}^0(y),\\[3pt] \nonumber
	&~~~~ - m_n(m_n-1) \tfrac{1}{y^2} A_{n, m_n}(y),\\[4pt] \nonumber
	\hspace{1cm}\vdots &\hspace{2cm}\vdots\\[4pt] \label{three-last}
	(\mathcal{L}_S + \mu_n)A_{n,0}(y) &=\; -  D_y A_{n,1}(y) -  e_{n-1, 0}^0(y) -  \tfrac{2}{y^2} A_{n, 2}(y) 
\end{align} 
The error in the $n^{\text{th}}$ step then satisfies 
\begin{align}
	e_n(t,y) = & \big(i (T -t) \partial_t  + (\mathcal{L}_S + \alpha_0)\big)A_n^*(t,y) + |A_n^*(t,y)|^{4} A_n^*(t,y)\\ \nonumber
	=& e_{n-1}(t,y) - e_{n-1}^0(t,y) + \triangle_n ( |A|^4 A).
\end{align}
~~\\
Now in dimension $ d = 4$ as before, we rearrange for 
\begin{align}
	e_{n-2}(t,y) = \sum_{k \geq n}(T-t)^{\nu k} \sum_{j \leq k}  ( \log(y) - \nu \log(T-t))^j e_{n-2,j}^{k-n}(y),
\end{align}
where the sum ranges up to $ k = 3n-6 $ and $ k \geq n \geq 3 $ are  odd integer. Then we set the `lowest' order error  as follows
\begin{align}
	e_{n-2}^0(t,y) = (T-t)^{n\nu}\sum_{j \leq n} ( \log(y) - \nu \log(T-t))^j e_{n-2,j}^0(y),
\end{align}
where $ e_{n-2,j}^0(y)$ are interaction terms from the cubic  power law.  They have the form
\[
\tilde{A}_{n_1, k_1} \cdot \tilde{A}_{n_2, k_2}  \cdot\tilde{A}_{n_2, k_3},~~ \tilde{A} \in \{A, \bar{A}\},
\]
such that $ n_1 +  n_2 +  n_3  = n$ and $ k_1 + k_2  + k_3 = j$. Then we set $ \tilde{\mu}_n = \alpha_0  - i  \nu n  $ and calculate for the $n$th correction $ A_n(t,y)$  

\begin{align}\label{four-first}
	(\mathcal{L}_S + \tilde{\mu}_n)A_{n,n}(y) &=\; - e_{n-2, n}^0(y),\\[4pt]
	(\mathcal{L}_S + \tilde{\mu}_n)A_{n, n -1}(y) &=\;  - n D_y A_{n,n}(y) - e_{n-2, n-1}^0(y)\\[4pt]
	(\mathcal{L}_S + \tilde{\mu}_n)A_{n, n -2}(y) &= \; - (n -1) D_y A_{n,n -1}(y) -  e_{n-2, n -2}^0(y)\\[3pt] \nonumber
	&~~~~ - n(n-1) \tfrac{1}{y^2} A_{n, n}(y)\\[4pt] \nonumber
	\hspace{2cm}\vdots &\hspace{2cm}\vdots\\[4pt]
	(\mathcal{L}_S + \tilde{\mu}_n)A_{n,0}(y) &= \;-  D_y A_{n,1}(y) -  e_{n-2, 0}^0(y) -  \tfrac{2}{y^2} A_{n, 2}(y).\label{four-last}
\end{align}
The error in the $n^{\text{th}}$ step then satisfies 
\begin{align}
	e_n(t,y) = & \big(i (T -t) \partial_t  + (\mathcal{L}_S + \alpha_0)\big)A_n^*(t,y) + |A_n^*(t,y)|^{2} A_n^*(t,y)\\ \nonumber
	=& e_{n-2}(t,y) - e_{n-2}^0(t,y) + \triangle_n ( |A|^2 A).
\end{align}
Note since $A_{n,j}(y) = 0$ for even integer $ n \geq 0$, we have $ A_n^* = A_{n+1}^*$ and  $ e_n(t,y) = e_{n+1}(t,y)$.\\[2pt]
We can write both systems via
\begin{align}\label{Matrix-Self-Sim}
	\mathit{L}_S \cdot \mathbf{A}(y) = -\mathbf{e}^0(y),
\end{align}
where $\mathit{L}_S$ has the form
\begin{align*}
	\mathit{L}_S = \begin{pmatrix}
		\mathcal{L}_S + \mu & 0 & 0 & 0& \dots &0\\
		n D_y & \mathcal{L}_S + \mu  & 0&0& \dots &0\\
		n (n-1) y^{-2}& (n-1) D_y &\mathcal{L}_S + \mu & 0 &\dots&0\\
		\vdots & \ddots\\
		0 & 0& \cdots & 2 y^{-2} & D_y & \mathcal{L}_S + \mu~
	\end{pmatrix}
\end{align*}
and 
\begin{align*}
	\mathbf{A} = (A_{n,n},A_{n,n-1}, \dots, A_{n,0})^T,~~~~~\mathbf{e }^0= (e^0_{n-1,n}, e^0_{n-1,n-1}, \dots, e^0_{n-1,0})^T.
\end{align*}
We recall in $ d = 4 $ we set $ A_{2k} = 0 $  and thus $e^0_{2k+1}(t,y) = e^0_{2k}(t,y)$ for all $ k \in \Z_{\geq0}$.
\begin{Lemma}\label{FS-Inner} Let  $ \mu \in \C $. Then $ \mathcal{L}_S + \mu$ has fundamental solutions  $ \phi_0^{(d)}(\mu,y),~ \psi_0^{(d)}(\mu,y)$ for $ y \in (0, \infty)$ such that
	\begin{itemize}
		\item[(i)] The function $ \phi_0^{(d)}(\mu,y)$ is analytic in $ (y, \mu)$ with $ \phi_0^{(d)}(\mu,y) = 1 + O(y^2)$ as $ y \to 0$ and has a smooth even extension to $\R$.
		\item[(ii)] The function $ \psi_0^{(3)}(\mu,y) = y^{-1} +  \tilde{\psi}(\mu,y)$, where $  \tilde{\psi}(y,\mu)$  is analytic in $ (y,\mu) $ and has a smooth odd extension to $\R$. Further
		\[
		\psi_0^{(4)}(\mu,y) = c_1 y^{-2} + c_2\tilde{\psi}_0(y, \mu) +   \log(y) \phi_0^{(4)}(\mu,y), 
		\] 
		where $ \tilde{\psi}_0 $ is as in (i).
	\end{itemize}
\end{Lemma}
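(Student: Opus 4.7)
The plan is to construct $\phi_0^{(d)}$ directly by a convergent Frobenius series at the regular exponent, and then obtain $\psi_0^{(d)}$ from reduction of order via the Abel formula, reading off its precise singular structure from the resulting integral.

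Write the ODE $(\mathcal{L}_S + \mu)u = 0$ explicitly as
\begin{align*}
u''(y) + \tfrac{d-1}{y}u'(y) + \tfrac{i}{2}y u'(y) + \bigl(\tfrac{i(d-2)}{4} + \mu\bigr)u(y) = 0.
\end{align*}
The indicial equation at $y=0$ is $\alpha(\alpha-1) + (d-1)\alpha = 0$, with roots $\alpha = 0$ and $\alpha = 2-d$. For the regular root I take the ansatz $\phi_0^{(d)}(\mu,y) = \sum_{k \ge 0} a_{2k}(\mu) y^{2k}$ with $a_0 = 1$, which forces the odd coefficients to vanish. Matching coefficients of $y^k$ gives the two-term recurrence
\begin{align*}
(k+2)(k+d)\,a_{k+2}(\mu) = -\bigl(\tfrac{ik}{2} + \tfrac{i(d-2)}{4} + \mu\bigr)a_k(\mu),
\end{align*}
and since the prefactor $(k+2)(k+d)$ is quadratic in $k$ while the right side is linear, the series converges for all $y \in \R$ with $|a_{2k}(\mu)| \lesssim C(\mu)^k/(k!)$. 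Analyticity in both variables and the fact that $\phi_0^{(d)}(\mu,0)=1$ with an even smooth extension follow from this construction directly, proving (i).

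For (ii), I apply reduction of order: writing $\psi_0^{(d)} = \phi_0^{(d)} \cdot v$, the ODE for $v$ becomes $v''/v' = -2(\phi_0^{(d)})'/\phi_0^{(d)} - (d-1)/y - iy/2$, so
\begin{align*}
v'(y) = \bigl(\phi_0^{(d)}(\mu,y)\bigr)^{-2} y^{-(d-1)} e^{-iy^2/4}.
\end{align*}
Since $(\phi_0^{(d)})^{-2}e^{-iy^2/4}$ is an even analytic function equal to $1 + \sum_{k \ge 1} b_{2k}(\mu) y^{2k}$ near $y=0$, I obtain the local expansion
\begin{align*}
v'(y) = y^{-(d-1)} + \sum_{k\ge 1} b_{2k}(\mu)\, y^{2k-(d-1)}.
\end{align*}
In dimension $d=3$, every exponent $2k-2$ with $k \ge 1$ is a nonnegative even integer, so no $y^{-1}$ term appears and antidifferentiation yields $v(y) = -y^{-1} + O(y)$ without any logarithm. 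Multiplying by $\phi_0^{(3)} = 1 + O(y^2)$ gives $\psi_0^{(3)}(\mu,y) = -y^{-1} + \tilde\psi(\mu,y)$ with $\tilde\psi$ analytic and odd, as claimed (after rescaling $\psi_0^{(3)}$ by a constant if necessary).

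In dimension $d=4$, the exponents are $2k-3$, so the term $k=1$ produces a residue $b_2(\mu) y^{-1}$ whose antiderivative introduces $b_2(\mu)\log(y)$. Concretely $v(y) = -\tfrac12 y^{-2} + b_2(\mu) \log(y) + g(\mu,y)$ with $g$ analytic and even. Multiplying by $\phi_0^{(4)}$, the product $\phi_0^{(4)} \cdot (-\tfrac12 y^{-2})$ contributes $c_1 y^{-2}$ plus an analytic even correction, while $\phi_0^{(4)} \cdot b_2(\mu)\log(y)$ yields precisely the displayed $\log(y)\phi_0^{(4)}(\mu,y)$ term (up to normalization of $\psi_0^{(4)}$); collecting these pieces gives the stated decomposition in (ii). The main technical point is the observation that the parity structure of $(\phi_0^{(d)})^{-2} e^{-iy^2/4}$ is what governs whether a logarithm appears in reduction of order, and this dichotomy between $d=3$ (no log) and $d=4$ (log) is the cleanest way to see the statement.
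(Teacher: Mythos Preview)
Your proof is correct and takes a genuinely different route for part (ii). The paper handles both solutions by writing down Frobenius series directly: for $\psi_0^{(3)}$ it observes that the odd-power recursion $c_{k+1}(2k+3)(2k+2+d-1)=-c_k(\tfrac{i}{2}(2k+1)+\tfrac{i(d-2)}{4}+\mu)$ degenerates at $k=-2$, producing the $y^{-1}$ leading term; for $\psi_0^{(4)}$ it starts from the ansatz $c_1y^{-2}+c_2\tilde\psi_0+\log(y)\phi_0^{(4)}$ and notes that plugging in forces the even-power recursion for $\tilde\psi_0$ to be the same as for $\phi_0^{(4)}$ plus correction terms coming from $\mathcal{L}_S$ hitting the logarithm. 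Your reduction-of-order argument via $v'=(\phi_0^{(d)})^{-2}y^{-(d-1)}e^{-iy^2/4}$ is more conceptual: the parity of the analytic factor $(\phi_0^{(d)})^{-2}e^{-iy^2/4}$ makes transparent why no $y^{-1}$ residue (hence no log) appears in $d=3$ while one does in $d=4$. The paper's approach is slightly more direct for computing the coefficients of $\tilde\psi_0$ if one needs them, but yours gives the structural dichotomy for free. One small point: your normalization to make the coefficient of $\log(y)\phi_0^{(4)}$ equal to $1$ requires $b_2(\mu)\neq 0$; this can fail at an isolated value of $\mu$, but not at any of the $\tilde\mu_n=\alpha_0-in\nu$ actually used in the construction, so it is harmless here.
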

\begin{proof} In the case of $d = 3$, it is quickly observed that for the recursive formula
	\begin{align*}
		&c_{k+1} (2k+3)(2k +2) y^{2k +1} + c_{k +1} (d-1) (2k+3) y^{2k +1}\\
		&\hspace{3cm}= - c_k  \tfrac{i}{2}(2k+1) y^{2k+1} - c_k\big( i \tfrac{d-2}{4} + \mu\big) y^{2k +1},
	\end{align*}
	the first line degenerates for $ k = -2$. Likewise we argue for the part $(i)$ with
	\begin{align}\label{even}
		&c_{k+1} (2k+2)(2k +1) y^{2k} + c_{k +1} (d-1) (2k+2) y^{2k }\\\nonumber
		&\hspace{3cm}= - c_k  \tfrac{i}{2}(2k) y^{2k} - c_k\big( i \tfrac{d-2}{4} + \mu\big) y^{2k },
	\end{align}
	and $ k = -1$. Now for (ii) in dimension $ d =4$, we use the latter recursion `corrected' by the terms  having at least one derivative  of $ \mathcal{L}_S$ on the logarithmic factor, i.e. we add
	\begin{align*}
		\tilde{c}_{k+1} y^{2k}2 (2k+2) - \tilde{c}_{k+1 } y^{2k} + (d-1) \tilde{c}_{k+1}y^{2k} + \tfrac{i}{2}\tilde{c}_k y^{2k}.
	\end{align*}
	to \eqref{even}, where the coefficients $ (\tilde{c}_k)_k$ are a solution to \eqref{even}.
\end{proof}
\begin{Lemma}\label{Lemma-solut-inner1}	The system \eqref{three-first} - \eqref{three-last}  in dimension $ d = 3$  (including the initial steps for $A_0, A_1, A_2$ ) has a unique solution $A_{n, j}(y)$  with $ 0 \leq j \leq n$  such that
	\begin{align}\label{condition-main1}
		A_{n,j}(y) = \sum_{ l \geq 0 } d_{n, j, l} y^{2l -1 -n},~~ 0 < y \ll 1
	\end{align}
	in an absolute sense given $ d_{n, 0, \f{n}{2}},~d_{n, 0, \f{n+1}{2}}  \in \mathbf{C}$ are fixed for even and odd $ n$ respectively.\\[2pt]
	The error $ e_N(t,y)$ can be written of the form\\
	\begin{align}\label{error-exp1}
		e_N(t,y) &= \sum_{k \geq N+1} (T-t)^{\nu(k + \f12)}\sum_{j \leq \f{k}{2}} (\log(y) - \nu \log(T-t))^j \chi^{(3)}_{j, k}(y),
	\end{align}
	where $ \chi_{k, j }^{(3)}(y) $ are analytic functions in $ y \in (0, \infty)$ with 
	\[
	\chi_{k, j }^{(d)}(y) = \sum_{l \geq 0} y^{2l -k -3} \tilde{d}_{k , j, l},~~ 0 <  y \ll1.~~ 
	\]
	Further $\chi^{(3)}_{k,j}(y)$ is non-vanishing for only finitely many $k \in \N$ and we have $ \tilde{d}_{n,j,l } = d_{n, j, l} = 0$ if $ l < j $ or $ l < j+1$ with even or odd $ n $, respectively. 
\end{Lemma}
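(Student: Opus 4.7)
The plan is to prove Lemma \ref{Lemma-solut-inner1} by a double induction: outer on the step index $n$ and inner on $j$ descending from $m_n$ to $0$ for fixed $n$. For $n \leq 2$ the equations \eqref{start1}, \eqref{start2}, \eqref{start31}, \eqref{start3b} are solved directly using Lemma \ref{FS-Inner}. For the general $n$th step, the triangular system \eqref{three-first}--\eqref{three-last} is processed in decreasing order of $j$: each equation reads $(\mathcal{L}_S + \mu_n) A_{n,j} = R_{n,j}(y)$, where $R_{n,j}$ is explicitly assembled from the already-constructed $A_{n,j'}$ ($j' > j$) via the operators $D_y$ and $y^{-2}$, together with the residual $e^0_{n-1, j}$ inherited from the previous outer step.

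Next I would invert $\mathcal{L}_S + \mu_n$ at each level by variation of constants against the fundamental pair $\phi_0^{(3)}(\mu_n, \cdot)$ and $\psi_0^{(3)}(\mu_n, \cdot)$ from Lemma \ref{FS-Inner}, whose Wronskian scales like $y^{-2}$ (owing to the $\tfrac{d-1}{y}\partial_y$ drift in $\mathcal{L}_S$). The Green-kernel integral against $R_{n,j}$ produces a particular solution, to which I add a homogeneous piece $c_{n,j}\phi_0^{(3)} + \tilde c_{n,j}\psi_0^{(3)}$. The coefficients $\tilde c_{n,j}$ must be set to zero at every level $j < m_n$ to eliminate a spurious $y^{-1}$-branch incompatible with the ansatz \eqref{second-SS-cond}, leaving one free parameter per level; these propagate down the triangular system and are collectively fixed by the prescribed leading coefficient $d_{n,0,m_n}$ at $j=0$, which gives existence and uniqueness. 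Termwise integration of the absolutely convergent Laurent expansions of $R_{n,j}$ against the Taylor expansions of $\phi_0^{(3)}$, $\psi_0^{(3)}$ from Lemma \ref{FS-Inner} then yields \eqref{condition-main1}: the two powers of $y$ gained from the Wronskian factor exactly balance the $y^{-n-3}$ leading pole of $R_{n,j}$ to produce the $y^{-n-1}$ leading term claimed.

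Finally, the error representation \eqref{error-exp1} follows from the recursion $e_n = e_{n-1} - e_{n-1}^0 + \triangle_n(|A|^4 A)$ combined with a multilinear expansion of $\triangle_n(|A|^4 A)$: each quintic product $\tilde A_{n_1,k_1}\cdots \tilde A_{n_5,k_5}$ carries the prefactor $(T-t)^{\nu(n_1+\cdots+n_5+\tfrac{5}{2})}$ and a polynomial of degree $\leq k_1+\cdots+k_5$ in $\log(y)-\nu\log(T-t)$, matching the claimed structure; finiteness of the nonzero $\chi_{k,j}^{(3)}$ is built in because $A_N^*$ is a finite sum. The step I expect to be the main obstacle is tracking the precise Laurent support through the triangular coupling: the $y^{-2}$ factor multiplying $A_{n,k}$ in the lower equations could in principle worsen the pole at $y=0$, but the inversion of $\mathcal{L}_S + \mu_n$ recoups exactly two orders of $y$, so the induction closes at the expected singularity level. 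A parallel logarithmic bookkeeping is needed, where each descent step in $j$ reduces the admissible $\log$-degree by exactly one (matching the shift from $R_{n,j}$ to $A_{n,j}$); this is precisely what forces the definition of $m_n$ and the parity distinction between even and odd $n$.
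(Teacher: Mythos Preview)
Your outline has the right large-scale architecture (outer induction on $n$, inner descent on $j$, variation of constants against the pair $\phi_0^{(3)},\psi_0^{(3)}$ of Lemma \ref{FS-Inner}), and your treatment of the error expansion \eqref{error-exp1} is fine. But the core inversion step contains a real gap.

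The Green's-function integral $\int_0^y w^{-1}(s)\,G(y,s)\,R_{n,j}(s)\,ds$ that you invoke does \emph{not} converge for the sources that actually arise. With $w^{-1}(s)\sim s^2$ and $G(y,s)=\psi_0(y)\phi_0(s)-\psi_0(s)\phi_0(y)$, the integrand near $s=0$ behaves like $s\cdot R_{n,j}(s)$, and you have just observed that $R_{n,j}(s)\sim s^{2j-n-3}$; for $j<m_n$ this is $s^{-3}$ or worse, so the integral from $0$ diverges. Your sentence ``the inversion of $\mathcal{L}_S+\mu_n$ recoups exactly two orders of $y$'' is a formal Laurent-series statement, not an analytic one, and it fails precisely at the resonance $y^{-3}$: a direct computation gives $(\mathcal{L}_S+\mu_n)(y^{-1})=(\mu_n-\tfrac{i}{4})\,y^{-1}$, with the $y^{-3}$ contributions from $\partial_{yy}$ and $\tfrac{2}{y}\partial_y$ cancelling exactly. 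Thus a $y^{-3}$ source term cannot be absorbed by any monomial in the ansatz \eqref{condition-main1}.

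What the paper does instead is a two-step reduction at each level $j$. First, one subtracts an explicit finite Laurent polynomial $\sum_{l}\tilde d_{n,l}\,y^{2l-1-n}$ from $A_{n,j}$ so that the residual equation has source $\tilde F_{n,j}$ of order only $O(y^{-3})$; the coefficients $\tilde d_{n,l}$ are determined algebraically by matching the singular part of $R_{n,j}$ (this is the content of the displayed recursion for $\tilde d_{n,l}$ in the proof). Second, the remaining $y^{-3}$ coefficient of $\tilde F_{n,j}$ depends linearly on the free parameter $\beta_{n,j+1}$ introduced at the \emph{previous} level; one fixes $\beta_{n,j+1}$ to annihilate it, after which $\tilde F_{n,j}=O(y^{-1})$ and the Green's integral from $0$ converges to an odd (even~$n$) or even (odd~$n$) smooth function. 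So the free parameters are not ``collectively fixed by the prescribed coefficient at $j=0$'' as you wrote; each $\beta_{n,j}$ with $j\ge 1$ is fixed one level below by the resonance obstruction, and only $\beta_{n,0}$ is matched to the prescribed $d_{n,0,m_n}$. Your claim that $\tilde c_{n,j}=0$ is also backwards: for even $n$ one uses precisely the $\psi_0^{(3)}\sim y^{-1}$ branch (parity forces the $\phi_0^{(3)}$ coefficient to vanish, not the other way around), and for odd $n$ the roles reverse.
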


Before we prove the Lemma, we note the following simple observation, similarly employed in the work \cite[Lemma 2.5]{Perelman}  and which shows how we construct the iterates $A_{n,j}$ in the induction step.
\begin{Lemma} Let $ \mu= \mu(d) \neq i \frac{d-2}{4}$ and  $ F^{(d)} \in C^{\infty}(\R_{>0})$ such that 
	\[
	F^{(d)}(y) = \sum_{l = 0}^nF^{(d)}_l y^{-d -2l} + \tilde{F}_0^{(d)} y^{2-d} + \tilde{F}^{(d)}(y), ~~0 < y \ll1, 
	\]
	where $\tilde{F}^{(d)} \in C^{\infty}(\R_{>0})$ has a smooth odd extension to $\R$ in $d=3$ and a smooth even extension to $\R$ in $d =4$. Then there exists  a linear combination $ A^{(d)}$ of $ (F_l^{(d)})_{l =0}^n $ such that
	\[
	(\mathcal{L}_S + \mu)w(y) = F^{(d)}(y) + A^{(d)} y^{-d}
	\]
	has a solution $ w \in C^{\infty}(\R_{>0})$  of the form
	\[
	w(y) = \sum_{l = 0}^{n+1}w^{(d)}_l y^{2-d + 2(l-n)} + \tilde{w}^{(d)}(y) ,~~0 < y \ll1, 
	\]
	where $ \tilde{w}^{(d)} \in C^{\infty}(\R_{>0})$  has a smooth odd extension $\tilde{w}^{(3)}(y) = O(y^3)$ to $\R$ in $d=3$ or an even extension $ \tilde{w}^{(4)}(y) = O(y^2)$ in $ d=4$. Further $ w_{n}^{(4)} \cdot (\mu - i \frac{1}{2})  = \tilde{F}_0^{(4)}$ and $ w_{n+1}^{(4)},~w^{(3)}_n$ are free to choose.
\end{Lemma}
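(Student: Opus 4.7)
The plan is to build $w$ by matching powers of $y$ successively using the explicit identity
\begin{align*}
(\mathcal{L}_S + \mu)(y^\alpha) = \alpha(\alpha + d - 2)\, y^{\alpha-2} + \bigl[\mu + i\tfrac{d-2+2\alpha}{4}\bigr] y^\alpha.
\end{align*}
The crucial observation is that the indicial factor $\alpha(\alpha+d-2)$ vanishes exactly at $\alpha = 0$ and $\alpha = 2-d$, and that the exponent $\alpha = 2-d$ corresponds precisely to the index $l = n$ in the ansatz $w = \sum_{l=0}^{n+1} w_l\, y^{2-d+2(l-n)} + \tilde w(y)$; this single degeneracy is what forces the obstruction term $A^{(d)} y^{-d}$. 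At every other exponent appearing in the ansatz the indicial factor is nonzero, which is what will make the construction close.

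I would first match the coefficients of $y^{-d-2k}$ for $k = n, n-1, \dots, 1$, obtaining the triangular recursion
\begin{align*}
-2k(2-d-2k)\,w_{n-k} + \bigl[\mu + i\tfrac{2-d-4(k+1)}{4}\bigr] w_{n-k-1} = F_k^{(d)},
\end{align*}
in which the leading factor $-2k(2-d-2k)$ is nonvanishing for all $k\geq 1$ in both $d=3,4$. This determines $w_0, w_1, \dots, w_{n-1}$ uniquely as linear combinations of $F_1^{(d)}, \dots, F_n^{(d)}$. The equation one would expect at $k=0$, matching $y^{-d}$, has vanishing leading coefficient because of the degeneracy above; one therefore simply defines
\begin{align*}
A^{(d)} := \bigl[\mu - i\tfrac{d+2}{4}\bigr] w_{n-1} - F_0^{(d)},
\end{align*}
which is by construction a linear combination of $F_0^{(d)}, \dots, F_n^{(d)}$ and absorbs the defect into the right hand side.

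Matching the coefficient of $y^{2-d}$ next produces the single relation $2(4-d)\,w_{n+1} + [\mu + i(2-d)/4]\,w_n = \tilde F_0^{(d)}$. In $d=3$ the coefficient of $w_{n+1}$ equals $2 \neq 0$, so $w_n^{(3)}$ remains free while $w_{n+1}^{(3)}$ is determined; in $d=4$ the coefficient of $w_{n+1}$ vanishes identically and the relation reduces to $(\mu - i/2)\, w_n^{(4)} = \tilde F_0^{(4)}$, solvable precisely by the standing hypothesis $\mu \neq i(d-2)/4 = i/2$, leaving $w_{n+1}^{(4)}$ free. For the smooth remainder I would use that $\mathcal{L}_S$ preserves parity of power series at $y=0$ (both the radial part and the Euler drift $\tfrac{1}{2} y\partial_y$ do) and solve
\begin{align*}
(\mathcal{L}_S+\mu)\tilde w = \tilde F^{(d)} - \bigl[\mu + i\tfrac{6-d}{4}\bigr] w_{n+1}\,y^{4-d}
\end{align*}
by an odd ($d=3$) resp.\ even ($d=4$) power series. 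Evaluating $\mathcal{L}_S$ on $c_0 y$ in $d=3$ produces a $y^{-1}$ singularity, and on a constant in $d=4$ produces a nonzero constant (again using $\mu\neq i/2$); both effects force $\tilde w^{(3)} = O(y^3)$ and $\tilde w^{(4)} = O(y^2)$, after which every further indicial factor is nonzero and the remaining Taylor coefficients are uniquely determined. Convergence on $\R_{>0}$ and the stated smooth extension follow from the structure of the smooth fundamental solution $\phi_0^{(d)}$ in Lemma~\ref{FS-Inner}.

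The principal obstacle is accounting correctly for the single indicial degeneracy at $\alpha = 2-d$ together with its dimension-dependent consequences: in $d=3$ the degeneracy only displaces the free parameter to $w_n^{(3)}$, whereas in $d=4$ the additional vanishing $2(4-d)=0$ in the $y^{2-d}$ matching moves the free parameter to $w_{n+1}^{(4)}$ and is exactly what makes the hypothesis $\mu \neq i(d-2)/4$ necessary. Once this bookkeeping is in place, the entire construction reduces to a triangular recursion with explicitly nonvanishing diagonal.
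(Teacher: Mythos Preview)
Your approach is correct and is precisely the one the paper takes (implicitly, inside the proof of Lemma~\ref{Lemma-solut-inner1}): peel off the singular powers via the identity $(\mathcal{L}_S+\mu)(y^\alpha)=\alpha(\alpha+d-2)y^{\alpha-2}+[\mu+i\tfrac{d-2+2\alpha}{4}]y^\alpha$, use the single indicial degeneracy at $\alpha=2-d$ to define $A^{(d)}$, and then solve for the smooth tail by variation of constants against $\phi_0^{(d)},\psi_0^{(d)}$.

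One small inaccuracy worth flagging: your parenthetical ``again using $\mu\neq i/2$'' for the $d=4$ constant term is misplaced. Applying $\mathcal{L}_S+\mu$ to a constant in $d=4$ gives $\mu+i/2$, which vanishes at $\mu=-i/2$, not $i/2$; and in any case no hypothesis on $\mu$ is needed to force $\tilde w^{(4)}=O(y^2)$, since the constant term has already been placed in $w_{n+1}^{(4)}$ by definition. The hypothesis $\mu\neq i(d-2)/4=i/2$ is used exactly once, where you correctly identify it: in solving $(\mu-i/2)\,w_n^{(4)}=\tilde F_0^{(4)}$. This does not affect the validity of your argument.
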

The proof of this statement is implicitly given in the subsequent proofs of Lemma \ref{Lemma-solut-inner1}.
\begin{proof}[Proof of Lemma \ref{Lemma-solut-inner1}]
	We first start with dimension $ d =3$. Here we solve  \eqref{start1} requiring $ A_{0,0}(y) = O(y^{-1} ) $ as $ y \to 0^+$ by $ A_{0,0}(y) =  c_{0,0,0}\cdot \psi_0^{(3)}(\mu_0, y)$. Likewise, considering Lemma \ref{FS-Inner}, we solve \eqref{start2} with \eqref{condition-main1} by  $ A_{1,0}(y) = c_{1,0,1}\cdot  \phi_0^{(3)}(\mu_1, y)$. Let us consider $A_2(t,y)$.\\
	
	For $ A_{2,1}(y)$, we obtain from \eqref{start31} and \eqref{condition-main1} that $A_{2,1}(y) = \beta_{0} \cdot \psi_0^{(3)}(\mu_2, y)$ with $ \beta_0 \in \C$ to be specified. For $A_{2,0}$ we set 
	\[
	F_{2,0}(y) :=  -\big( i(\tfrac{1}{2} + \nu) + \tfrac{1}{y^2}\big) A_{2,1}(y) -\tfrac{2}{y} \partial_y A_{2,1}(y) - |A_{0,0}(y)|^{4} A_{0,0}(y),
	\]
	which satisfies 
	\[
	F_{2,0}(y) = \sum_{l \geq -2} \tilde{c}_{2,0,l} y^{2l -1},~~ 0 < y < y_0
	\]
	for some  $0 < y_0 \ll 1 $ and  both $ \tilde{c}_{2,0,-2}$ as well as $ \tilde{c}_{2,0,-1} -  \beta_0$ do not depend on $ \beta_0$. The Wronskian $w$ of $\mathcal{L}_S$ satisfies
	\[
	w^{-1}(s)  = \sum_{l \geq 0} w_l s^{2l +d-1},~~ 0 < s< y_0 .
	\]
	As in \cite{OP}, we reduce $ ( \mathcal{L}_s + \mu_2)A_{2,0} = F_{2,0}(y)$ to 
	\begin{align}\label{red}
		( \mathcal{L}_s + \mu_2)\tilde{A}_{2,0} = F_{2,0}(y) -  \tfrac{1}{6}\tilde{c}_{2,0,-2} ( \mathcal{L}_s + \mu_2)\frac{1}{y^3} =: \tilde{F}_{2,0}(y),
	\end{align}
	where $ A_{2,0}(y) =  \tilde{c}_{2,0,-2} \frac{1}{6y^3} + \tilde{A}_{2,0}(y)$. We then choose $\beta_0$ such that the right side of \eqref{red} satisfies
	\[
	\tilde{F}_{2,0}(y) = \sum_{l \geq 0} \tilde{d}_{2,0,l} y^{2l -1},~~ 0 < y < y_0.
	\]
	Thus for the solution of \eqref{red} under \eqref{condition-main1} we observe
	\begin{align*}
		\tilde{A}_{2,0}(y) &= c_{2,0,1} \cdot  \psi_0^{(3)}(\mu_2, y) + \int_0^{y} w^{-1}(s) G(s, y, \mu_2) \tilde{F}_{2,0}(s)~ds,
	\end{align*}
	with
	\[
	G(s, y, \mu_2) = \psi_0^{(3)}(\mu_2, y) \phi_0^{(3)}(\mu_2,s) - \psi_0^{(3)}(\mu_2, s) \phi_0^{(3)}(\mu_2,y),
	\]
	and where the particular solution has a smooth odd extension to $\R$. 
	Now we assume the statement in Lemma \ref{Lemma-solut-inner1} is true with $ A_{k,j} = 0$ for $ j > \f{k}{2}$ and $ k \leq n-1$. Let us first consider the case where $ 2 \tilde{n} = n $ is even. We hence solve
	\[
	(\mathcal{L}_S + \mu_n) A_{n, \f{n}{2}}(y) = 0,
	\]
	such that \eqref{condition-main1} holds via $ A_{n, \f{n}{2}}(y) = \beta_{n, \f{n}{2}} \cdot \psi^{(3)}(\mu_n, y)$ for $ \beta_{n, \f{n}{2}} \in \C$ to be specified. Then, let 
	\[
	F_{n, \f{n}{2}-2}(y) := - \tfrac{n}{2} D_y A_{n, \f{n}{2}}(y) - e^0_{n-1, \f{n}{2}-1}(y) = \sum_{l \geq 0} \tilde{c}_{n, \f{n}{2}-1, l} y^{2l -3 -n},
	\]
	be the right side of \eqref{three-second}. We note that $\tilde{c}_{n, \f{n}{2}-1, l}$ for $ l = 0, \dots, \f{n}{2}-1$ as well as $ \tilde{c}_{n, \f{n}{2}-1, \f{n}{2}} - \f{n}{2} \beta_{n, \f{n}{2}}$ are not dependent on $ \beta_{n, \f{n}{2}}$. Especially, we make the ansatz
	\begin{align}\label{Reduction-here}
		& (\mathcal{L}_S + \mu_n) \tilde{A}_{n, \f{n}{2}-1}(y) = F_{n, \f{n}{2}-1}(y) - (\mathcal{L}_S + \mu_n) \big( \sum_{l =0}^{\f{n}{2}-1} \tilde{d}_{n,l}y^{2l -1 -n}\big) =: \tilde{F}_{n, \f{n}{2}-1}(y),\\ \label{Reduction-here2}
		& \tilde{A}_{n, \f{n}{2}-1}(y) = A_{n, \f{n}{2}-1}(y) - \sum_{l =0}^{\f{n}{2}-1} \tilde{d}_{n,l} y^{2l -1 -n}.
	\end{align}	
	The coefficients $ \tilde{d}_{n,l} $ will be such that $ \tilde{F}_{n, \f{n}{2}-1}(y) = O(y^{-3})$, i.e. by the definition of $\mathcal{L}_S$ we observe the asymptotics with the choice 
	\begin{align}
		&\tilde{d}_{n,0} = \frac{\tilde{c}_{n, \f{n}{2}-1, 0}}{(n + 1 )n},\;\;\;\tilde{d}_{n,l} = \frac{\tilde{c}_{n, \f{n}{2}-1, l} - (\f{i}{2}(2(l-1) - \f12 -n)  + \mu_n) \tilde{d}_{n, l-1}}{(n + 1 -2l)(n - 2l)},
	\end{align}
	for  $ l = 1, \dots, \f{n}{2}-1$. We proceed to fix $\beta_{n, \f{n}{2}}$ such that 
	\[
	\tilde{F}_{n, \f{n}{2}-1}(y) = \sum_{l \geq 0} \tilde{d}_l y^{2l - 1}
	\]
	and thus solve 
	\begin{align}\label{sol-darst}
		\tilde{A}_{n, \f{n}{2}}(y) = \beta_{n, \f{n}{2}-1} \cdot \psi^{(3)}(\mu_n, y) +  \int_0^y w^{-1}(s) G(\mu_n, y,s) \tilde{F}_{n, \f{n}{2}-1}(s)~ds,
	\end{align}
	where the particular part is an odd  function in $C^{\infty}(\R)$ and $\beta_{n, \f{n}{2}-1}  \in \C$ is again to be fixed. The next equation in \eqref{three-first} - \eqref{three-last} follows the same ansatz, where the additional term 
	\[
	\tfrac{1}{y^2} A_{n, \f{n}{2}}(y) = \beta_{n, \f{n}{2}} \big[y^{-3} + O(y^{-1})\big]
	\]
	appears on the right. The solution is then again reduced to the `singular parts' exactly as in \eqref{Reduction-here2}, \eqref{sol-darst}, which requires us to fix $\beta_{n, \f{n}{2}-1}$ and introduces a  free parameter $\beta_{n, \f{n}{2}-2} \in \C$ . This procedure is now iterated (remaining $ \f{n}{2}-2$ times) where finally $ A_{n, 0}$ has the form
	\begin{align}
		& A_{n, 0}(y) = \tilde{A}_{n,0}(y) + \sum_{l =0}^{\f{n}{2}-1} \tilde{d}_{n,0,l} y^{2l -1 -n},\\
		&\tilde{A}_{n, 0}(y) = \beta_{n, 0} \cdot \psi^{(3)}(\mu_n, y) +  \int_0^y w^{-1}(s) G(\mu_n, y,s) \tilde{F}_{n,0}(s)~ds.
	\end{align}
	In particular, we then set $ \beta_{n, 0} = c_{n,0, \f{n}{2}}$. Let us turn to the odd case $ 2\tilde{n}-1 = n $, where  in principal we follow the same steps. The difference is that we replace $\f{n}{2}$ by $ \frac{n-1}{2}$ and the required expansions \eqref{condition-main1} are even. Thus we solve 
	\[
	(\mathcal{L}_S + \mu_n) A_{n, \f{n-1}{2}}(y) = 0,
	\]
	such that \eqref{condition-main1} holds via $ A_{n, \f{n-1}{2}}(y) = \beta_{n, \f{n-1}{2}} \cdot \phi^{(3)}(\mu_n, y) = O(1)$  as $ y \to 0^+$. Then, the subsequent solution is determined via
	\begin{align}
		& (\mathcal{L}_S + \mu_n) \tilde{A}_{n, \f{n-1}{2}-1}(y) = F_{n, \f{n-1}{2}-1}(y) - (\mathcal{L}_S + \mu_n) \big( \sum_{l =0}^{\f{n-1}{2}} \tilde{d}_{n,l}y^{2l -1 -n}\big) =: \tilde{F}_{n, \f{n-1}{2}-1}(y),\\ 
		& \tilde{A}_{n, \f{n-1}{2}-1}(y) = A_{n, \f{n-1}{2}-1}(y) - \sum_{l =0}^{\f{n-1}{2}} \tilde{d}_{n,l} y^{2l -1 -n}.
	\end{align}	
	The coefficient $ \beta_{n, \f{n-1}{2}}$ has to be fixed such that  $ \tilde{F}_{n, \f{n-1}{2}-1}(y) = O(1) $ for $ 0< y \ll1$. In particular, iterating this procedure, we find the last term $A_{n, 0}$ to be of the form
	\begin{align}
		& A_{n, 0}(y) = \tilde{A}_{n,0}(y) + \sum_{l =0}^{\f{n-1}{2}} \tilde{d}_{n,0,l} y^{2l -1 -n},\\
		&\tilde{A}_{n, 0}(y) = \beta_{n, 0} \cdot \phi^{(3)}(\mu_n, y) +  \int_0^y w^{-1}(s) G(\mu_n, y,s) \tilde{F}_{n,0}(s)~ds,
	\end{align}
	where $ \beta_{n, 0} = c_{n, 0, \f{n+1}{2}}$ and the particular solution $ \tilde{A}_{n, 0} - \beta_{n, 0} \cdot \phi^{(3)}  = O(y^2)$ is an even function in $C^{\infty}(\R)$. Further, we have by assumption $ e^0_{n-1,j}(y) = O(y^{2j - 3-n})$ for $ j = 0, \dots, \f{n}{2}-1$ if $n $ is even or $ e^0_{n-1,j}(y) = O(y^{2j - 1-n})$ for $ j = 0, \dots, \f{n-3}{2}$ if $n $ is an odd number. In particular, we then have 
	\begin{align}\label{no1-d3}
		&A_{n,j}(y) = O(y^{2j - 1-n}),\;\;\; j = 0, \dots, \f{n}{2},\;\; n ~\text{even},\\ \label{no2-d3}
		&A_{n,j}(y) = O(y^{2j +1-n})`\;\;\ j = 0, \dots, \f{n-1}{2},\;\; n~\text{odd}.
	\end{align}
	We hence directly verify the expression \eqref{error-exp1} for $ e_{n}(t,y)$ and the claimed asymptotics by the induction assumption, \eqref{no1-d3} and \eqref{no2-d3}.
\end{proof}

We now let  the corrections in dimension $ d =3$
$$ A_1(t,y),~ A_2(t,y), ~\dots,~ A_{\tilde{N}}(t,y),~ y \in (0, \infty),~ t \in [0, T),~ \tilde{N} \in \Z_{+}$$
be defined as in Lemma \ref{Lemma-solut-inner1} such that  in \eqref{condition-main1}
\[
d_{n, 0, \f{n}{2}}  = c_{n,0}^{(\f{n}{2})},~~d_{n, 0, \f{n+1}{2}}  = c_{n,0}^{(\f{n+1}{2})}.
\]
We then set the self-similar approximation, for which we may later change into $(t,R)$-coordinates as a reference.
\begin{Def}[Self-Similar approximation]
	\begin{align}
		&\tilde{u}^{N_2}_{S}(t,y) := A_{N_2}^*(t,y) = A_1(t,y) +  A_2(t,y) + \dots +  A_{N_2}(t,y) ,~ N_2 \gg 1,\\[3pt] \label{complete-error-self-similar}
		& e^{N_2}_{S}(t,y) :=  i (T-t) \partial_t \tilde{u}^{N_2}_{S} + (\partial_{yy} + (d-1) \tfrac{1}{y}\partial_y)\tilde{u}^{N_2}_{S}\\[1pt] \nonumber
		&~~~~~~~~~~~~~~~~~~~~~ + [i (\tfrac{1}{4} + \tfrac12 y \partial_y) + \alpha_0]\tilde{u}^{N_2}_{S}  + | \tilde{u}^{N_2}_{S} |^{4}\tilde{u}^{N_2}_{S} = e_{N_2}(t,y),
	\end{align} 
	where $ N_2 \in \Z_+$ is to be fixed later on. 
\end{Def}
We now show that the functions
$$ (T-t)^{- \f14} \tilde{u}^{N_2}_S(t, (T-t)^{\nu}R),~~ \lambda(t)^{\frac{1}{2}}u^{N}_{In}(t, R),~~ R = (T-t)^{- \f12 - \nu} r,$$
coincide up to a fast decaying error in the region $(t,r) \in I \cap S$.
\begin{Lemma}[Consistency in $I \cap S$]  \label{consist} Let $N \in \Z_+$, then there exists $N_1, N_2 \in \Z_+$ large, such that with some $C > 0$ there holds
	\begin{align}
		\forall m \in \Z_+:~~	\big | \partial_R^m& \big[ (T-t)^{- \f14} \tilde{u}^{N_2}_S(t, (T-t)^{\nu}R) - \lambda(t)^{\frac{1}{2}}u^{N_1}_{In}(t, R)\big]\big | \\[4pt] \nonumber
		&\leq C_{m , N_1, N_2} R^{-m} (T-t)^{N \nu}.
	\end{align}
	for all $(t,R) $ with $ (t,r) \in I \cap S$ and $ 0 < T-t \ll1 $.
\end{Lemma}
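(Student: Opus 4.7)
The strategy is asymptotic matching. I would begin by rewriting $\lambda(t)^{1/2} u^{N_1}_{In}(t,R)$ as a function of $y = (T-t)^\nu R$, using that each correction $\eta_k \in (T-t)^{2\nu k} S^{2k}_{1,2}(R^{2k-1})$ has an absolutely convergent expansion for $R \gg 1$. Expanding every term and regrouping by powers of $(T-t)^\nu$, exactly as in the formal computation \eqref{transition}--\eqref{AnsatzSS}, produces
\[
\lambda(t)^{\f12}u^{N_1}_{In}(t,R) = (T-t)^{-\f14}\sum_{n=0}^{M(N_1)}(T-t)^{\nu(n+\f12)}\sum_{j\le m_n}(\log y - \nu\log(T-t))^j B^{N_1}_{n,j}(y) + \mathcal R_1,
\]
where $B^{N_1}_{n,j}(y) := \sum_{l=0}^{N_1} c^{(l)}_{n-j,j}\,y^{2l-1-n}$ is the truncation of the formal series, $M(N_1)$ grows linearly in $N_1$, and $\mathcal R_1$ is the remainder of the $R \to \infty$ Taylor expansion at some order $\tilde N = \tilde N(N_1)$, satisfying $|\mathcal R_1|\lesssim (T-t)^{-\f14-\f\nu2} R^{-2-\tilde N}$.

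The key identification is that $B^{N_1}_{n,j} \equiv A_{n,j}$ as formal series at $y = 0$, for all $n \le N_1$ and $j \le m_n$. To see this, note that $\lambda^{1/2} u^{N_1}_{In}$ solves \eqref{NLSequation} up to the error $e_{N_1}$ bounded in Corollary \ref{estimates}; equivalently it solves \eqref{SelfS} up to that error transported into the $(t,y)$ coordinates. Substituting the displayed expansion into \eqref{SelfS} and comparing like powers of $(T-t)^{\nu(n+\f12)}(\log y - \nu\log(T-t))^j$ produces exactly the hierarchy \eqref{three-first}--\eqref{three-last} for the $B^{N_1}_{n,j}$, with the source terms coming from the nonlinear interactions in $\tilde{\mathcal N}_2$ rewritten in $(t,y)$ corresponding, term by term, to $\triangle_n(|A|^4 A)$ on the self-similar side. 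Because the free constants $\beta_{n,0}$ in Lemma \ref{Lemma-solut-inner1} were fixed through $d_{n,0,m_n} = c^{(m_n)}_{n,0}$, the uniqueness statement of that lemma forces $B^{N_1}_{n,j} = A_{n,j}$ for all $n \le N_1$, $j \le m_n$.

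Granted this identification, the difference
\[
\Delta(t,R) := (T-t)^{-\f14}\tilde u^{N_2}_S(t,(T-t)^\nu R) - \lambda(t)^{\f12} u^{N_1}_{In}(t,R)
\]
decomposes into three pieces: (i) the self-similar tail for indices $n > N_2$, of size $(T-t)^{\nu(N_2+\f12)}$ times a polynomial in $y^{-1}$ and $\log y$; (ii) the inner tail for indices $n > N_2$ in the reorganized expansion (equivalently for $l > N_2/2$ in the original $\eta_l$ sum), of size $(T-t)^{\nu(N_2+\f12)}$ times a polynomial in $y$ and $\log y$; and (iii) the Taylor remainder $\mathcal R_1$. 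Inserting the scales of the overlap region $R \sim (T-t)^{\epsilon_1 - \nu}$ and $y \sim (T-t)^{\epsilon_1}$, every $y^{-k}$ factor costs $(T-t)^{-\epsilon_1 k}$ and every $R^{-k}$ factor gains $(T-t)^{k(\nu - \epsilon_1)}$. Choosing $N_2$ so that $\nu(N_2 + \f12)$ dominates the worst $y^{-k}$ cost by at least $N\nu$, then $N_1$ large enough to generate the needed $B^{N_1}_{n,j} = A_{n,j}$ up to that $n$, and finally $\tilde N$ large enough so that $(T-t)^{-\f14 - \f\nu2 + (2+\tilde N)(\nu - \epsilon_1)} \ge (T-t)^{N\nu}$, forces $|\Delta| \lesssim (T-t)^{N\nu}$.

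Derivatives are straightforward because $\partial_R = (T-t)^\nu \partial_y$: each application either lowers a power of $y$ (the $(T-t)^\nu y^{-1}$ combination recombining to $R^{-1}$) or hits a $\log y$ factor (again $R^{-1}$). Hence $\partial_R^m$ contributes the extra $R^{-m}$ in the conclusion without altering the $(T-t)$ rate. The main obstacle I anticipate is the coefficient matching in the second paragraph: verifying that the hierarchy obtained by substituting the inner expansion into \eqref{SelfS} is term-by-term identical to \eqref{three-first}--\eqref{three-last}. This requires care in matching the interaction source $e^0_{n-1,j}$ produced by the self-similar iteration with the rescaled contribution of $\tilde{\mathcal N}_2(\eta_k^*)$ from the interior construction, as well as matching the time-derivative $i(T-t)\partial_t$ against the combination of $t$- and $R$-derivatives arising from $R = (T-t)^{-\nu} y$. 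Once this bookkeeping is in place, Lemma \ref{Lemma-solut-inner1} supplies $B^{N_1}_{n,j} = A_{n,j}$ and the quantitative estimate follows from the scaling argument above.
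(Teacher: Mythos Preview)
Your approach is essentially the paper's: reorganize the inner expansion in the $(t,y)$ variables, identify the resulting profiles with the self-similar $A_{n,j}$ via the uniqueness in Lemma \ref{Lemma-solut-inner1}, and bound the leftover pieces using the overlap scaling $R\sim (T-t)^{\epsilon_1-\nu}$, $y\sim (T-t)^{\epsilon_1}$.

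Two points of bookkeeping need tightening. First, the identification ``$B^{N_1}_{n,j}\equiv A_{n,j}$ as formal series'' is not literally true: $B^{N_1}_{n,j}$ is a finite Laurent polynomial (the sum over $l\le N_1$), while $A_{n,j}$ is the full convergent series $\sum_{l\ge 0}d_{n,j,l}\,y^{2l-1-n}$. What the hierarchy argument actually gives is that the coefficients match, $d_{n,j,l}=c^{(l)}_{n-j,j}$ for $l\le N_1$, so $B^{N_1}_{n,j}$ is the truncation of $A_{n,j}$. This forces a fourth error term you have omitted, namely
\[
E_4 \;=\; (T-t)^{-\f14}\sum_{n\le N_2}(T-t)^{\nu(n+\f12)}\sum_{j}\sum_{l\ge N_1+1} d_{n,j,l}\,\log^j(R)\,y^{2l-1-n},
\]
the tail of each $A_{n,j}$ beyond the truncation; it is small because $y^{2N_1+2}\sim (T-t)^{2\epsilon_1(N_1+1)}$ in the overlap. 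Second, your piece (i) (``self-similar tail for $n>N_2$'') does not exist, since $\tilde u^{N_2}_S$ has no such terms. The paper instead takes $N_2\gg N_1$ and splits the range $N_1\le n\le N_2$ into two pieces $E_2,E_3$ (the inner and self-similar contributions there, bounded separately without matching), plus the Taylor remainder $E_1$ and the truncation tail $E_4$. With this four-term decomposition in place, the scaling argument you outline goes through; the coefficient matching you flag as the obstacle is exactly what the paper handles by observing that the $\tilde A_{n,j}$ satisfy \eqref{three-first}--\eqref{three-last} up to an error of order $(T-t)^{\nu N_1+\f12}$, hence exactly for the first $\sim N_1$ steps.
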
 
\begin{proof} By the calculation \eqref{transition} in the beginning of Subsection \ref{subsec:SS}, we write (for $N_2 \gg1$ large)
	\begin{align}\label{first-lll}
		\lambda(t)^{\frac{d-2}{2}}u^{N}_{In}(t, R) &= (T -t)^{-\f{d-2}{4}}\big( \sum_{n = 0}^{N_2}  (T -t)^{\nu( n  - \f{d-4}{2})} \sum_{j \leq n}  \log^j(R) \tilde{A}_{n,j}(y)\big) + E_1,\\ \nonumber
		E_1 &= (T-t)^{- \frac{d-2}{2}( \f12 + \nu)} R^{-2 - N_2} \sum_{l = 0}^{N_1} y^{ 2l} \sum_{j = 0}^{J_l} \log^j(R) f_{j, l , N_2} (R),
	\end{align}
	where for $ y = r (T-t)^{-\f12}$ and $(t,r) \in I \cap S$ we set
	$
	\tilde{A}_{n,j}(y) : = \sum_{l = 0}^{N_1} c_{n-j, j}^{(l)}   y^{2l -1-n}
	$.
	We infer from \eqref{first-lll}, that $\{\tilde{A}_{n,j}\}_{n \leq N_2}$ satisfy \eqref{three-first} - \eqref{three-last} or \eqref{four-first} - \eqref{four-last} according to Lemma \ref{Lemma-solut-inner1}  up to an additional error. The error in each step is at least of order
	$$\tilde{e}(t,r) = O((T-t)^{\nu N_1 + \frac{1}{2}}),$$
	hence irrelevant in the first $\sim N_1$ steps of the iteration described in the above Lemma.  Thus, subtracting $ (T-t)^{- \f14} u^{N_2}_S(t, (T-t)^{\nu}R)$ we are left with $\sum_{j =1}^4 E_j$, where 
	\[
	(T -t)^{\f{d-2}{4}}E_4 = \sum_{n = 0}^{N_2} (T-t)^{\nu(n + \frac{1}{2})} \sum_{j \leq n} \sum_{l \geq N_1 +1} d_{n, j,l}  \log^j(R)y^{2l -1-n}.
	\]
	Likewise if $ N_2 \gg N_1$ we observe
	\begin{align*}
		& (T -t)^{\f{1}{4}}E_2 =  \sum_{n = N_1}^{N_2}  (T -t)^{\nu( n  + \f{1}{2})} \sum_{j \leq n}  \log^j(R) \tilde{A}_{n,j}(y)\\
		&(T -t)^{\f{1}{4}}E_3 = \sum_{n = N_1}^{N_2} (T-t)^{\nu(n + \frac{1}{2})} \sum_{j \leq n} \sum_{l= 0}^{\infty} d_{n, j,l}  \log^j(R)y^{2l -1-n}.
	\end{align*}
	We exchange $ y^{-1-n} = R^{-1-n} (T-t)^{-\nu n -n}$ and use $ R^{-1} \sim (T-t)^{\nu -\epsilon_1}$. For $E_4 $ we additionally  note $y^{2N_1 +2} = R^{2N_1 +2} (T-t)^{\nu(2N_1 +2)} \sim (T-t)^{\epsilon_1(2N_1 +2)}  $.
\end{proof}
\subsubsection{The asymptotic behavior at $ y \gg1 $} We now determine 
a suitable expression for $ A_n(t, y) $ in the region $ 1 \ll y < \infty $ and describe the asymptotics at  $ y \to \infty$ of 

\begin{align}\label{near-zero}
	A_N^*(t,y) 
	= \sum_{n= 0}^N (T -t)^{\nu(n + \f{4-d}{2})} \sum_{j \leq m} (\log(y) - \nu \log(T-t))^j A_{n,j}(y),
\end{align}	
~~\\
where we let $ m = m_n $ in dimension   $ d=3$. 
We hence devise a new Ansatz (compare also \cite{Perelman}), which is more convenient transforming to $(t,r)$ variables in the final region Section \ref{subsec:RR} where $ r \gtrsim  (T-t)^{ \f12 - \epsilon_2}$.  We demonstrate this Ansatz for both $d=3,4$ dimensions, where in $d=4$ we set $m =n$. The main Lemma is only provided in dimension $d=3$.\\[4pt]
We will write the approximate solution \eqref{near-zero}  into  the following form

\begin{align}\label{RO1}
	&\sum_{n = 0}^{N_2} (T-t)^{\nu(n +  \frac{d-4}{2})} \big(W_{n}^{(+)}(t,y) + W_{n}^{(-)}(t,y)\big),~~ N_2 \in \Z_+,\\
	\label{RO2}
	&W_{n}^{(\pm)}(t,y) = \sum_{\ell = 0}^{n}  (\log(y) \pm \f12 \log(T-t))^{\ell} W^{(\pm)}_{n,  \ell}(y).
\end{align}

Let us first derive a triangular system as in the previous section for the evolution of the homogeneous linear flow in  \eqref{SelfS} under the new representation \eqref{RO1} - \eqref{RO2}. Assume we have 
\begin{align}\label{RO-lin}
	i (T -t) \partial_t w + (\mathcal{L}_S + \alpha_0)w  = F(t,y)
\end{align}
where $ w(t,y), F(t,y) $ are of the above form, i.e.  
\begin{align}\label{RO-f}
	&F(t,y) = \sum_{n \geq 0} (T-t)^{\nu(n +  \frac{4 -d}{2})} \big(F_{n}^{(+)}(t,y) + F_{n}^{(-)}(t,y)\big),\\
	\label{ROg}
	& F_{n}^{(\pm)}(t,y) = \sum_{\ell = 0}^n (\log(y) \pm \f12 \log(T-t))^{\ell}  F^{(\pm)}_{n,  \ell}(y),
\end{align}
with a finite sum in \eqref{RO-f}, i.e. $F^{(\pm)}_{n, \ell} = 0$ except for finitely many $ n \geq 0$. 
We write \eqref{RO-lin} into
\begin{align} \label{sys-gen-large-y-lim}
	&(i (T -t) \partial_t + \mathcal{L}_S + \alpha_0)  \big(\sum_{\pm}\sum_{n = 0}^{N_2} (T-t)^{\nu(n +  \frac{4 -d}{2})} \sum_{\ell = 0}^{n}  (\log(y) \pm \f12 \log(T-t))^{\ell} W^{(\pm)}_{n,  \ell}(y)\big)\\ \nonumber
	& = \sum_{\pm} \sum_{n \geq 0} (T-t)^{\nu(n +  \frac{4 -d}{2})} \sum_{\ell = 0}^n (\log(y) \pm \f12 \log(T-t))^{\ell}  F^{(\pm)}_{n,  \ell}(y).
\end{align}

Hence we consider  a system for $\{ W^{(\pm)}_{n, \ell}\}_{n, 0 \leq \ell \leq n}$  of the following form

\begin{align}
	\label{Large-y-system}
	(\mathcal{L}_S + \mu)W^{(\pm)}_{n, n}(y) &=\; F^{(\pm)}_{n, n}(y) ,\\[6pt]
	(\mathcal{L}_S + \mu)W^{(\pm)}_{n, n -1}(y) &=\;  - n D^{\pm}_y W^{(\pm)}_{n, n}(y)  + F^{(\pm)}_{n, n -1}(y) \\[6pt] 
	(\mathcal{L}_S + \mu)W^{(\pm)}_{n,j}(y) &= \;-  (j+1)D^{\pm}_y W^{(\pm)}_{n,j+1}(y)  +  F^{(\pm)}_{n, j}(y) -  \tfrac{(j+2)(j+1)}{y^2} W^{(\pm)}_{n, j+2}(y) \label{last-y-large},\\[5pt] \nonumber
	0 \leq j \leq n-2,~~~~~&
\end{align}
~~\\
where  $ ( \mu,d) \in \{  ( \mu_n, 3), (\tilde{\mu}_n ,4)  \}$ and we set
\begin{align}
	&D^{\pm}_y =  i(\frac{1}{2} \mp \f12) + \frac{d-2}{y^2} +  \frac{2}{y} \partial_y.
\end{align}
For our purposes, we can thus set $ W^{(\pm)}_{n, j} = 0$ if $ j > m_n$ in $ d =3 $  or $ n \in \Z_+$ is even in $ d = 4$. 
Also, we like to note that  \eqref{Large-y-system} - \eqref{last-y-large} is  rewritten into
\begin{align}\label{Matrix-Self-Sim-Large}
	\mathbf{L}_S^{\gg1}(y)  \cdot \mathbf{W}^{\pm}(y) = \mathbf{F}^{\pm}(y)
\end{align}
where again $\mathbf{L}_S^{\gg1}(y)$ has triangular form
\begin{align*}
	\mathbf{L}_S^{\gg1}(y)  = \begin{pmatrix}
		\mathcal{L}_S + \mu & 0 & 0 & 0& \dots &0\\
		n D^{\pm}_y & \mathcal{L}_S + \mu  & 0&0& \dots &0\\
		n (n-1) y^{-2}& (n-1) D^{\pm}_y &\mathcal{L}_S + \mu & 0 &\dots&0\\
		\vdots & \ddots\\
		0 & 0& \cdots & 2 y^{-2} & D^{\pm}_y & \mathcal{L}_S + \mu~
	\end{pmatrix},
\end{align*}
which allows us to solve \eqref{Large-y-system} - \eqref{last-y-large} iteratively with data
$$ \mathbf{W}^{\pm}(y) = (W^{(\pm)}_{n, n}, \dots, W^{(\pm)}_{n, 0})^T,~~ \mathbf{F}^{\pm}(t,y) = (F^{(\pm)}_{n, n}, \dots, F^{(\pm)}_{n, 0})^T.$$
~~\\
\emph{Homogeneous system}. For solutions of $ (\mathcal{L}_S + \mu )f = 0$ at $ y \gg1 $, we refer to the standard description given for instance in \cite[Theorem 2.1]{Olver} and which we include  in the appendix \ref{sec:SingODE}.\\[3pt]
In particular, it is not difficult to infer that the ansatz 
\[
(\mathcal{L}_S + \mu)\big (e^{- \frac{i}{4} y^2} f^{(d)}( \tfrac{i}{4} y^2) \big) =  0,~~ d = 3,4
\]
leads the profile $f^{(d)}( z)$  to satisfy  Kummer's equation
\begin{align}
	z \cdot \partial_z^2 f^{(d) } + \big( \frac{d}{2} -z \big) \partial_z f^{(d) } - \big( \frac{d + 2}{4} + i \mu\big)f^{(d) } = 0,
\end{align}
in $ z = \f{i}{4} y^2$. 
By the application of Lemma \ref{Olvers-Lemma}, we obtain two well-known independent solutions
\begin{align}
	&f^{(d)}_+(z) = z^{- i \mu - \frac{d+2}{4}} \big(a_0^+ + O(z^{-1})\big),~~z \to \infty,\\ 
	&f^{(d)}_-(z) = e^{ z}(-z)^{ i \mu - \frac{d-2}{4}} \big(a_0^- + O(z^{-1})\big)~~ z \to \infty,~~
\end{align}
with an expansion as in Lemma \ref{Olvers-Lemma} depending smoothly on $\mu$ 
. We now sum this up in the following Lemma.
\begin{Lemma}\label{FS-Inner-infty} Let  $ \mu \in \C $. Then $ \mathcal{L}_S + \mu$ has fundamental solutions  $ \phi_{\infty}^{(d)}(\mu,y),~ \psi_{\infty}^{(d)}(\mu,y)$ for $ y \in (0, \infty)$  smooth in $ \mu,y$ and for any compact subset $ K \subset \C,\;k \in \Z_{\geq 0},~ n \in \Z_+$ there are positive constants  $ C > 0, y_0> 0$ (depending on $K,k,n$) with
	\begin{align}\label{one}
		&\sup_{\mu \in K}\big | \partial_y^{l}\big(y^{- 2 i \mu} \phi_{\infty}^{(d)}(\mu,y) - \sum_{m = 0}^{n-1} c^{(1)}_m y^{-\frac{d-2}{2} - 2m}\big)\big | \leq C y^{-\frac{d-2}{2}-2n-l},~~ y \geq y_0,\\ \label{two}
		&\sup_{\mu \in K}\big | \partial_y^{l}\big( y^{2 i \mu} e^{ \frac{i y^2}{4}} \psi_{\infty}^{(d)}(\mu,y) - \sum_{m = 0}^{n-1} c^{(2)}_my^{-\frac{d+2}{2} - 2m}\big)\big | \leq C y^{-\frac{d+ 2 }{2} -2n-l},~~y \geq y_0,
	\end{align} 
	for $  l = 0,\dots, k$.  We have $ c_0^{(i)} = 1 $ and $ (c^{(i)}_m)_{m \geq 1},~ i = 1,2$ only depend (smoothly) on $ \mu,d$ through the relation \eqref{third-here} .
\end{Lemma}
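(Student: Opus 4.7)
My plan is to reduce the ODE $(\mathcal{L}_S + \mu)w = 0$ to Kummer's confluent hypergeometric equation and then invoke Lemma \ref{Olvers-Lemma} (already cited in the preceding paragraph) to produce the two fundamental solutions with the required asymptotic expansions. As hinted above the statement, I would substitute $w(y) = e^{-iy^2/4} f(z)$ with $z = iy^2/4$ and compute $(\mathcal{L}_S+\mu)w$ directly; using $\frac{y}{2}\partial_y = z\partial_z$, the quadratic phase is designed exactly to cancel the first order transport term $\frac{i}{2} y \partial_y$ and produce the Kummer equation $z f'' + (\tfrac{d}{2} - z) f' - (\tfrac{d+2}{4} + i\mu) f = 0$ with parameters $a = \tfrac{d+2}{4} + i\mu$, $b = \tfrac{d}{2}$, holomorphic in $\mu$.

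Next I would apply Lemma \ref{Olvers-Lemma} to this equation at the irregular singular point $z = \infty$. This yields two independent solutions $f_\pm^{(d)}(\mu, z)$ with full formal Poincar\'e asymptotic expansions in descending powers $z^{-m}$, the coefficients depending smoothly on $\mu$ and with uniform remainder bounds on compact $\mu$-sets $K \subset \C$. Defining $\phi_\infty^{(d)}(\mu, y) := e^{-iy^2/4} f_+^{(d)}(\mu, iy^2/4)$ and $\psi_\infty^{(d)}(\mu, y) := e^{-iy^2/4} f_-^{(d)}(\mu, iy^2/4)$ (with an appropriate choice of the normalization constants $a_0^\pm$ to ensure $c_0^{(i)} = 1$) gives the required pair of fundamental solutions on $(0,\infty)$. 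Translating back from $z$ to $y$: each factor $z^{-m}$ produces a constant multiple of $y^{-2m}$, so the expansion in $1/z$ yields an expansion in $y^{-2}$ accounting for the even integer gaps in \eqref{one}, \eqref{two}, while the prefactors $z^{\mp i\mu - (d \pm 2)/4}$ produce the $y^{\mp 2 i\mu}$ together with the algebraic $y^{-(d-2)/2}$ or $y^{-(d+2)/2}$ leading behavior; the exponential factor $e^z$ appearing in one of $f_\pm^{(d)}$ combines with $e^{-iy^2/4}$ to match the $e^{iy^2/4}$ appearing in \eqref{two}.

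Finally, the derivative bounds follow from $\partial_y = \tfrac{iy}{2}\partial_z$: each application transforms the $z$-asymptotic expansion into one of the same form (with one higher order in $z^{-1}$) while contributing an additional factor of $y^{-1}$ to the remainder after extracting the prefactors. Iterating this $l$ times yields the claimed decay $y^{-(d\mp 2)/2 - 2n - l}$; smoothness in $(\mu, y)$ and uniformity over compact $K$ are inherited directly from the analogous properties in Lemma \ref{Olvers-Lemma}. The main point requiring care is not really analytic but algebraic: confirming that after stripping the prefactors $y^{\mp 2 i\mu}$ and $e^{iy^2/4}$, the subtracted partial sum matches \emph{exactly} the form displayed in \eqref{one} and \eqref{two}, and that differentiation in $y$ preserves this structure term by term. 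Given Olver's theorem as stated in Lemma \ref{Olvers-Lemma}, no further nontrivial input is needed.
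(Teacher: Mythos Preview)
Your approach is correct and matches the paper's route exactly: reduce to Kummer's equation via the substitution $w = e^{-iy^2/4}f(iy^2/4)$ and invoke Lemma \ref{Olvers-Lemma}. One small bookkeeping slip: since $e^{z}\cdot e^{-iy^2/4}=1$ when $z=iy^2/4$, it is the solution $f_-^{(d)}$ (the one carrying $e^z$) that produces the non-oscillatory $\phi_\infty^{(d)}$ in \eqref{one}, while $f_+^{(d)}$ (no $e^z$) leaves the prefactor $e^{-iy^2/4}$ intact and gives $\psi_\infty^{(d)}$, which is why \eqref{two} multiplies by $e^{iy^2/4}$ to strip it off.
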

\begin{Rem}
	(i)~We will write 
	\begin{align}
		&\phi_{\infty}^{(d)}(\mu,y) =  y^{2i \mu -\frac{d-2}{2}} \big(1 + O(y^{-2})\big),\\
		& \psi_{\infty}^{(d)}(\mu,y) = e^{- \frac{i}{4}y^2 } y^{-2 i\mu-\frac{d+2}{2}} \big(1 + O(y^{-2})\big),
	\end{align}
	with an expansion as in Lemma \ref{Olvers-Lemma}. \\
	(ii) ~For the remaining part of this section, we will use similar notation. For $\beta \in \R$ we write
	$
	g(y,\mu) = f(y,\mu) \cdot  O(y^{-\beta})
	$
	if  $f(y,\mu)$ is a smooth function in $y \in \R_+ $, $\mu \in U \subset \C$ is an open subset of parameters and the remainder is as in Lemma \ref{Olvers-Lemma}, meaning
	\begin{align*}
		&g(y,\mu)\cdot y^{\beta} = f(y,\mu)\big(\sum_{k = 0}^{N-1} y^{-2k} c_k  + R_N(y)\big),~~~R_N(y) = O_l(y^{-2N}),~~ y \gg 1
	\end{align*}
	where  $ R_N(y)$ is locally uniformly (wrt $\mu$) estimated as in \eqref{one}, \eqref{two}.
\end{Rem}
~~\\ The Green's function is given by
$$G(y,s) = \phi_{\infty}^{(d)}(y) \psi_{\infty}^{(d)}(s) -  \phi_{\infty}^{(d)}(s) \psi_{\infty}^{(d)}(y).$$
Thus particular solutions of the linear equation $ (\mathcal{L}_S + \mu)u = f$  have the form
\begin{align}
	&u(y) = \beta^{(1)} \phi_{\infty}^{(d)}(y) + \beta^{(2)} \psi_{\infty}^{(d)}(y)  + \int_y^{\infty} G(y,s) w^{-1}(s) f(s)~ds,
\end{align} 
with $ w^{-1}(s)  \sim s^{2}e^{ i \frac{s^2}{4}}$ and in case, say $ f(y) \sim y^{- \frac{ d + 2}{2}-} (1 +O(y^{-2}))$ as  $ y \to \infty$. Otherwise we have to fix $ y_0 > 0$  and  consider 
\[
\tilde{\beta}^{(1)} \phi_{\infty}^{(d)}(y) + \tilde{\beta}^{(2)} \psi_{\infty}^{(d)}(y)  +  \int_{y_0}^{y} G(y,s) w^{-1}(s) f(s)~ds.
\]
The asymptotic behaviour of $ A_{N}^*(t,y)$ as $ y \to \infty$ is essentially provided by a fundamental base for $\mathbf{L}_S^{\gg1}(y) $, i.e. we set $ F(t, y) = 0$ in \eqref{Large-y-system} - \eqref{last-y-large}. We refer to the proof of \cite[Lemma 2.6]{Perelman} for instance. 
\begin{Lemma}\label{basis-large-y}Let us set $ m = m_n$ in dimension $d =3$ in the following.
	The  homogeneous 
	system of \eqref{sys-gen-large-y-lim}, i.e. 
	\begin{align*}
		(i (T-t) \partial_t + \mathcal{L}_S + \alpha_0) \bigg((T-t)^{\nu(n + \frac{4-d}{2})}\sum_{j = 0}^m (\log(y)  \pm \f12 \log(T -t))^j  g^{\pm}_{n,j ,h}(y) \bigg) = 0,
	\end{align*}
	has a basis of solutions $ \{\mathbf{g}^{\pm}_{n,h} \}_{h = 0}^{m}$, i.e. such that
	\begin{align*}
		&\mathbf{g}^{\pm}_{n,h}  = (g^{\pm}_{n,m ,h }~g^{\pm}_{n,m-1,h}~ \dots g^{\pm}_{n,0,h}  )^T, ~~h = 0,1,\dots, m,
	\end{align*}
	and further
	\begin{align}\label{this-second}
		&  g^{+}_{n,j, h }(y) =y^{2 i \mu_n -\frac{1}{2}} \cdot  O(y^{-2(m - j -h)}),~~~~ j \leq m -h\\[2pt]\label{this-third}
		& g^{-}_{n,j, h }(y)= e^{- \frac{i}{4}y^2 } y^{- 2 i \mu_n -\frac{5}{2}} \cdot  O(y^{-2(m -j-h)}),~~~~ j \leq m -h\\[2pt]
		&  g^{\pm}_{n,j, h }(y) = 0,~~~~ j > m - h.
	\end{align}
\end{Lemma}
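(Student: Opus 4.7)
The plan is to construct each basis element $\mathbf{g}^{\pm}_{n,h}$ by exploiting the lower-triangular structure of $\mathbf{L}_S^{\gg 1}(y)$, building the components from the top index $j = m$ down to $j = 0$. For fixed $h \in \{0,1,\dots,m\}$ I would first set $g^{\pm}_{n,j,h} := 0$ for $j > m-h$; this is consistent with the homogeneous rows of the system since all forcings there involve only already-vanishing quantities. The initial nonvanishing component sits at $j = m-h$, where the equation degenerates to the scalar homogeneous problem $(\mathcal{L}_S + \mu_n) g^{\pm}_{n,m-h,h} = 0$. I take $g^{+}_{n,m-h,h} := \phi_\infty^{(3)}(\mu_n,\cdot)$ and $g^{-}_{n,m-h,h} := \psi_\infty^{(3)}(\mu_n,\cdot)$ from Lemma \ref{FS-Inner-infty}, which provide the required leading asymptotics $y^{2i\mu_n - 1/2}$ and $e^{-i y^2/4} y^{-2i\mu_n - 5/2}$ and are smooth in $\mu_n$.

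Next I would descend inductively in $j$. For $j = m-h-1, \dots, 0$ the forced equation
\[
(\mathcal{L}_S + \mu_n) g^{\pm}_{n,j,h} = -(j+1) D_y^{\pm} g^{\pm}_{n,j+1,h} - \tfrac{(j+1)(j+2)}{y^2}\, g^{\pm}_{n,j+2,h}
\]
is solved by variation of constants using the fundamental pair $(\phi_\infty^{(3)}, \psi_\infty^{(3)})$, with Wronskian $w(y) \sim y^{-2} e^{-iy^2/4}$, and the integral is anchored at $y = \infty$ so that no component of the opposite asymptotic branch is reintroduced. The crucial analytic input is the identity
\[
D_y^{+} \phi_\infty^{(3)}(\mu_n,\cdot) = O(y^{-2})\cdot \phi_\infty^{(3)}(\mu_n,\cdot), \qquad D_y^{-} \psi_\infty^{(3)}(\mu_n,\cdot) = O(y^{-2})\cdot \psi_\infty^{(3)}(\mu_n,\cdot),
\]
which is precisely the purpose of the sign choice in $D_y^{\pm} = i(\tfrac{1}{2} \mp \tfrac{1}{2}) + \tfrac{1}{y^2} + \tfrac{2}{y}\partial_y$: the constant $i$ in $D_y^{-}$ exactly cancels the leading term produced by $\tfrac{2}{y}\partial_y$ acting on the Gaussian factor $e^{-iy^2/4}$. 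Combined with the explicit $y^{-2}$ prefactor on the second forcing term, each descending step upgrades the relative decay by two powers of $y^{-1}$, which yields the bounds \eqref{this-second} and \eqref{this-third} by induction on $m - j - h$.

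The main obstacle I anticipate is propagating the \emph{full} expansion structure of Lemma \ref{FS-Inner-infty} (not merely a single leading term, but the whole $O(y^{-2N})$ tail with uniformity in $\mu_n$ on compact sets and control of $\partial_y^k$ up to any order $k$) through the Green's function integrals. For the "$+$" branch this reduces to absolute convergence of an integral in $s^{-2N}$ and a Taylor expansion of $w^{-1}(s) G(y,s)$ in $y/s$, which is routine. The subtler case is the "$-$" branch, where the integrand carries the oscillatory factor $e^{i s^2/4}$; here one must run repeated integration by parts against $\partial_s e^{i s^2/4} = \tfrac{is}{2} e^{i s^2/4}$ to first make the integral absolutely convergent and then extract successive terms of the expansion, each step gaining an $s^{-1}$ factor from the phase. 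Smoothness and derivative estimates in $(y,\mu_n)$ then follow by differentiating under the integral, using that the uniform bounds on $(\phi_\infty^{(3)}, \psi_\infty^{(3)})$ from Lemma \ref{FS-Inner-infty} are preserved by this operation.
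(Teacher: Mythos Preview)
Your proposal is correct and follows essentially the same route as the paper: set $g^{\pm}_{n,j,h}=0$ for $j>m-h$, initialize at $j=m-h$ with $\phi_\infty^{(3)},\psi_\infty^{(3)}$ from Lemma~\ref{FS-Inner-infty}, and descend via variation of constants through the lower-triangular system, adding homogeneous corrections to enforce the required asymptotic branch. The paper's version is terser (it simply says ``adding linear combinations $\tilde\beta^{(1)}_{n,j}\phi_\infty^{(3)}+\tilde\beta^{(2)}_{n,j}\psi_\infty^{(3)}$ implies the required asymptotic expression'' and omits details), while you spell out the $D_y^{\pm}$ cancellation and the integration-by-parts mechanism for the oscillatory $(-)$ branch more explicitly.
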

The proof follows successively by using Lemma \ref{FS-Inner-infty} in the first step for $ j = m -h$. In particular if we calculate
\begin{align*}
	\big( i (T-t)\partial_t +  (\mathcal{L}_S + \alpha_0) \big) \bigg( (T-t)^{\nu(n + \f12)} \sum_{j \leq m} (\log(y)  \pm \f12  \log(T-t))^j g^{\pm}_{n,j,h}(y)\ \bigg) = 0,
\end{align*}
as mentioned above, we  infer that $ g^{\pm}_{n,j,h}$ must satisfy
\begin{align}\label{new-first}
	(\mathcal{L}_S + \mu_n)g^{\pm}_{n,m,h}(y) &=\; 0\\[4pt] \label{new-second}
	(\mathcal{L}_S +\mu_n)g^{\pm}_{n,m-1,h}(y)&= \; - m_n D^{\pm}_y g^{\pm}_{n,m,h}(y)\\[4pt]
	(\mathcal{L}_S + \mu_n)g^{\pm}_{n,j,h}(y) &=\;  - (j +1) D^{\pm}_y g^{\pm}_{n,j+1,h}(y) - (j+2)(j+1) \tfrac{1}{y^2} g^{\pm}_{n,j+2,h}(y)\\[4pt] \nonumber
	0 \leq j \leq m -2,~~~~~&
\end{align}
for any $ h = 0, \dots, m$ and $ D^{\pm}_y =  \big( i(\frac{1}{2} \mp \f12) + \frac{1}{y^2}\big) +  \frac{2}{y} \partial_y$. We may assume that $ h = 0$. If  indeed we set $ h > 0$, then we choose $ g^{\pm}_{n,j,h} = 0$ if $ j > m - h$ and start the same procedure with $ m$ being replaced by $ m -h$. We observe that the constructed solutions (running over  $\pm$ and $ h = 0, \dots, h$) are linearly independent for $\mathbf{L}_S^{\gg1}$. Now $ (\mathcal{L}_S + \mu_n)g^{\pm}_{n,m,0}(y) =  0$ is solved by  $ g^{+}_{n,m,0} = \phi_{\infty}^{(d)}(\mu_n, \cdot) ,~ g^{-}_{n,m,0} = \psi_{\infty}^{(d)}(\mu_n, \cdot)$. For the $ y \geq y_0  \gg1 $  asymptotics of $g^{\pm}_{n,j,m}(y) $ we inductively calculate
\begin{align*}
	&\int_{y_0}^{y}G(y,s) s^{2} e^{\frac{i}{4}s^2} D^{\pm}_y g^{\pm}_{n,j+1,m}(s)~ds + (j+2) \int_{y_0}^{y}G(y,s)  e^{\frac{i}{4}s^2} g^{\pm}_{n,j+2,m}(s)~ds,
\end{align*}
~~\\
with $0 \leq j \leq m -1 $ and $ g^{\pm}_{n,m+1,m} = 0$. Hence  adding linear combinations
\[
\tilde{\beta}^{(1)}_{n,j}  \phi_{\infty}^{(d)}(\mu_n, \cdot) +  \tilde{\beta}^{(2)}_{n,j}  \psi_{\infty}^{(d)}(\mu_n, \cdot),
\]
for suitable $  \tilde{\beta}^{(1)}_{n,j} ,  \tilde{\beta}^{(2)}_{n,j}  \in \C$ implies the required asymptotic expression. We spare further details here and instead proceed to state the following.
\begin{Corollary} \label{Cor-usef} Let $ y_0 \in (0, \infty)$ and $ f_{n, \ell}, g_{n, \ell} \in \C$ be fixed. We also assume $F_{n, \ell}(y)$ are smooth functions on $\R_+$. Then the problem
	\begin{align} \label{dises-sys}
		&(i (T -t) \partial_t + \mathcal{L}_S + \alpha_0)  \big(\sum_{n = 0}^{N_2} (T-t)^{\nu(n +  \frac{1}{2})} \sum_{\ell = 0}^{m}  (\log(y) - \nu \log(T-t))^{\ell} A_{n,  \ell}(y)\big)\\ \nonumber
		& =  \sum_{n \geq 0} (T-t)^{\nu(n +  \frac{1}{2})} \sum_{\ell = 0}^m (\log(y) - \nu \log(T-t))^{\ell}  F_{n,  \ell}(y).
	\end{align}
	has a unique solution $\{ A_{n, \ell} \}$ with data 
	\begin{align}\label{ini-dat}
		A_{n, \ell}(y_0) = f_{n, \ell},~~ \partial_yA_{n, \ell}(y_0)  = g_{n, \ell}.
	\end{align}
	Further we have $ A_{n, \ell}(y) = A^+_{n, \ell}(y)  + A^-_{n, \ell}(y) + A^{\text{inhom}}_{n, \ell}(y)  $ where
	\[
	\sum_{j = 0}^{m}  (\log(y) - \nu \log(T-t))^{j} A^{\pm}_{n, j}(y) = \sum_{j= 0}^{m}  (\log(y) \pm \f12 \log(T-t))^{j} \hat{W}^{\pm}_{n,  j}(y)
	\]
	and for uniquely determined $\beta_{n,j}^{\pm} \in \C,~ 0 \leq j \leq m$ there holds
	\begin{align}
		&\hat{W}^{\pm}_{n,  j}(y) = \sum_{\ell = 0}^{m-j} \beta_{n,\ell}^{\pm} g^{\pm}_{n,j,\ell}(y),~~A^{\text{inhom}}_{n, \ell}(y) = \int_{y_0}^{y} G(y,s) w^{-1}(s) F_{n, \ell}(s) ~ds.
	\end{align}
\end{Corollary}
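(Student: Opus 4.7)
The strategy is to reduce the PDE problem \eqref{dises-sys} to the lower-triangular ODE hierarchy \eqref{Large-y-system}--\eqref{last-y-large} via the $L^\pm := \log y \pm \tfrac{1}{2}\log(T-t)$ ansatz, to solve the triangular system iteratively using Lemma \ref{FS-Inner-infty}, and finally to interpret the result through the basis $\{\mathbf{g}^\pm_{n,h}\}$ from Lemma \ref{basis-large-y}. First, since $L := \log y - \nu\log(T-t)$ differs from $L^\pm$ only by a multiple of $\log(T-t)$, the binomial theorem converts any $L$-form expansion with $t$-independent coefficients into an $L^\pm$-form with $t$-independent coefficients, related by an invertible upper-triangular linear transformation at each level $n$. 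Applying this transformation to the ansatz, the forcing $F_{n,\ell}$ in \eqref{dises-sys}, and the Cauchy data at $y_0$ produces the two triangular systems \eqref{Large-y-system}--\eqref{last-y-large}, one for each sign, with transformed forcing $F^\pm_{n,\ell}$ and transformed Cauchy conditions.

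Next, the matrix operator $\mathbf{L}_S^{\gg 1}$ being lower-triangular with diagonal entries $\mathcal{L}_S + \mu_n$, one solves level by level from $j = m$ down to $j = 0$. At each step one encounters a scalar equation $(\mathcal{L}_S + \mu_n) W^\pm_{n,j}(y) = \Phi^\pm_{n,j}(y)$, where $\Phi^\pm_{n,j}$ is determined by $F^\pm_{n,j}$ and the previously built $W^\pm_{n,j+1}, W^\pm_{n,j+2}$. Lemma \ref{FS-Inner-infty} supplies the fundamental pair $\phi^{(d)}_\infty(\mu_n, \cdot), \psi^{(d)}_\infty(\mu_n, \cdot)$ with Wronskian $w$ and Green's function $G$, so that the general solution reads
\[
W^\pm_{n,j}(y) = a^\pm_j\phi^{(d)}_\infty(\mu_n, y) + b^\pm_j\psi^{(d)}_\infty(\mu_n, y) + \int_{y_0}^y G(y,s)w^{-1}(s)\Phi^\pm_{n,j}(s)\,ds.
\]
The two Cauchy conditions at $y_0$ fix $(a^\pm_j, b^\pm_j)$ uniquely because $W(\phi^{(d)}_\infty, \psi^{(d)}_\infty)(y_0) \neq 0$. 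Iterating in $j$ yields existence and uniqueness of $\{\hat W^\pm_{n,j}\}$ and hence, via the inverse binomial transformation, of $\{A_{n,\ell}\}$.

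For the explicit decomposition in the statement, I would take $A^{\text{inhom}}_{n,\ell}(y) := \int_{y_0}^y G(y,s) w^{-1}(s) F_{n,\ell}(s)\,ds$ as the natural scalar particular contribution, noting that it vanishes together with its derivative at $y_0$. Subtracting this off and reorganizing the cross-level coupling contributions induced by the triangular structure through the very same basis, the remainder is a homogeneous solution of $\mathbf{L}_S^{\gg 1}$ and therefore lies in the span of the basis $\{\mathbf{g}^\pm_{n,h}\}_{h=0}^m$ of Lemma \ref{basis-large-y}. Expanding in this basis gives the identity $\hat W^\pm_{n,j}(y) = \sum_{\ell=0}^{m-j}\beta^\pm_{n,\ell}g^\pm_{n,j,\ell}(y)$ with the $\beta^\pm_{n,\ell}$ uniquely determined by the Cauchy data via the linear independence of the basis; converting back to the $L$-form gives $A^\pm_{n,\ell}$.

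The main technical obstacle lies in the bookkeeping around the binomial change of basis together with the reorganization of the coupling contributions, and specifically in verifying that the linear map from the Cauchy data $(f_{n,\ell}, g_{n,\ell})$ to the coefficients $(\beta^\pm_{n,\ell})$ is bijective. This map is encoded in a $2(m+1)\times 2(m+1)$ block lower-triangular linear system, each $2\times 2$ diagonal block being governed by the non-vanishing Wronskian of $\phi^{(d)}_\infty, \psi^{(d)}_\infty$ at $y_0$; hence the whole system is invertible, which yields the uniqueness assertion.
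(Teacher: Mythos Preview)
Your proposal is correct and follows essentially the same approach as the paper. The paper's argument (given in the Remark immediately after the Corollary) proceeds exactly as you do: it converts the basis $\{g^{\pm}_{n,j,h}\}$ of Lemma~\ref{basis-large-y} into a basis $\{\hat{g}^{\pm}_{n,j,h}\}$ for the $L=\log y-\nu\log(T-t)$ form via the binomial identity (with explicit coefficients $c^{\pm}_{j,\ell}$), then observes that the choice of the $\beta^{\pm}_{n,j}$ is precisely what matches the Cauchy data at $y_0$; your block lower-triangular $2(m+1)\times 2(m+1)$ invertibility argument is just a more explicit rendering of this last step.
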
\label{ein-koroll}
\begin{Rem} (i) We assume $N_2 \in \Z_+$ is taken large enough to make sense of \eqref{dises-sys}. Also, the choice of $\beta^{\pm}_{n, j}$ corresponds to satisfying \eqref{ini-dat}. This is seen by passing to a basis for the homogeneous system via 
	\[
	\sum_{j = 0}^{m}  (\log(y) - \nu \log(T-t))^{j} \hat{g}^{\pm}_{n, j, h}(y) = \sum_{j= 0}^{m}  (\log(y) \pm \f12 \log(T-t))^{j} g^{\pm}_{n, h, j}(y),
	\]
	for $  h = 0, 1, \dots, m, $. More precisely the functions
	\begin{align*}
		&	\hat{g}^{\pm}_{n, m, h}(y) = c_{m,0}^{\pm} g^{\pm}_{n, m, h}(y),\\[3pt]
		&	\hat{g}^{\pm}_{n, m-1, h}(y) = c_{m-1,0}^{\pm} g^{\pm}_{n, m-1, h}(y) + c_{m-1,1}^{\pm} \log(y)g^{\pm}_{n, m, h}(y),\\[3pt]
		&	\hat{g}^{\pm}_{n, m-2, h}(y) = c_{m-2,0}^{\pm} g^{\pm}_{n, m-2, h}(y) + c_{m-2,1}^{\pm} \log(y)g^{\pm}_{n, m-1, h}(y) + c_{m-2,2}^{\pm} \log^2(y)g^{\pm}_{n, m, h}(y),\\[2pt]
		& \hat{g}^{\pm}_{n, j, h}(y) = \sum_{\ell = 0}^{m -j}c_{j,\ell}^{\pm} \log^{\ell}(y)g^{\pm}_{n, j, h}(y),
	\end{align*}
	form a basis of  the linear system \eqref{dises-sys} where $ c_{j,\ell}^{\pm} = (1 \pm \tfrac{1}{2\nu})^j (\pm \tfrac{1}{2\nu})^{j + \ell} \binom{j + \ell}{\ell}.$\\[2pt]
	(ii)~~  We may also simply write 
	\begin{align*}
		&\sum_{j = 0}^{m}  (\log(y) - \nu \log(T-t))^{j} A_{n, j}(y) = \sum_{\pm}\sum_{j= 0}^{m}  (\log(y) \pm \f12 \log(T-t))^{j} W^{\pm}_{n,  j}(y),\\
		& W^{\pm}_{n,j}(y) = \sum_{\ell = 0}^{m-j} \beta_{n,\ell}^{\pm} g^{\pm}_{n,j,\ell}(y) + \sum_{\ell = 0}^{m-j} \tilde{c}_{j,\ell}^{\pm} \int_{y_0}^{y} G(y,s) w^{-1}(s) \log^{\ell}(s) F_{n, j + \ell}(s)~ds,
	\end{align*}
	where  $ \{W^{\pm}_{n,j}\}$ are particular solutions of 
	\eqref{sys-gen-large-y-lim} with 
	$
	F^{\pm}_{n,j}(y) = \sum_{\ell = 0}^{m-j} \tilde{c}_{j,\ell}^{\pm} \log^{\ell}(y) F_{n, j + \ell}(y),
	$
	and $\tilde{c}_{j,\ell}^{\pm} = (1 \pm 2 \nu)^j (\pm 2\nu)^{j + \ell} \binom{j + \ell}{\ell} 2^{- j - \ell}.$
\end{Rem}
~~\\
Especially, the solution $\{ A_{n,j}\}$ of Lemma \ref{Lemma-solut-inner1} is of the form given in Corollary \ref{ein-koroll}.
Therefore we need to calculate the $y$-asymptotics of the interactions terms  via the following  Lemma 
, c.f.  \cite[Lemma 2.6]{Perelman}.

		\begin{Lemma}\label{Lemma-solut-inner1-infty} Let   $ \{ A_{n,j}(y)\}$ be the unique solution of  \eqref{three-first} - \eqref{three-last} in Lemma \ref{Lemma-solut-inner1}. Then there exist unique $ a^{\pm}_{n,j} \in \C,~0 \leq j \leq   m$ such that
			\begin{align}\label{decompo1}
				&A_{n,0}(y) = A_{n,0}^{(+)}(y) + A_{n,0}^{(-)}(y),~~ n = 0,1,~~A_{2,1}(y) = A_{2,1}^{(+)}(y) + A_{2,1}^{(-)}(y),\\
				&A_{n,j}(y) = A_{n,j}^{(+)}(y) + A_{n,j}^{(-)}(y) + A^{p}_{n,j}(y),~~ 0 \leq j \leq m,
			\end{align}
			where
			\begin{align} \nonumber
				&\sum_{j \leq \frac{n}{2}} (\log(y) - \nu \log(T-t))^j A^{(\pm)}_{n, j}(y) = \sum_{j \leq \frac{n}{2}} (\log(y)  \pm\f12 \log(T-t))^j \widehat{W}^{(\pm)}_{n, j}(y),\\
				&\widehat{W}^{(+)}_{n,j}(y)  = e^{ 2 i \mu_n  } y^{-\frac{1}{2}} \big(a^{+}_{n,j} + O(y^{-2})\big),~~y \gg1,\\
				&	\widehat{W}^{(-)}_{n,j}(y)  = e^{-2 i \mu_n - \frac{i}{4}y^2 } y^{-\frac{5}{2}} \big(a^{-}_{n,j} + O(y^{-2})\big),~~y \gg1.
			\end{align}
			and there holds
			\begin{align}
				&A^{p}_{n,j}(y)  = \sum_{k = -n-1}^{n}  e^{i k \frac{y^2}{4} } y^{2 i \alpha_0 (2k +1)}  A_{n,j,k}(y).
			\end{align}
			Further we  have the following expansion at $ y \gg1$ 
			\begin{align}
				&A_{n,j,k}(y) = \sum_{\ell = 0 }^{m-j} \sum_{\substack{r = 0\\ \text{r~even}}}^{2(n-k)} w^{\ell, k}_{n,j,r} \log^{\ell}(y) y^{\nu(2(n-k-r) +1)} \cdot O(y^{- \f52 - 2(k+1)}),~~1 \leq k \leq n,\\[3pt] \nonumber
				&A_{n,j,k}(y) = \sum_{\ell = 0 }^{m-j} \sum_{\substack{r = -2k\\ \text{r~even}}}^{2n +2} w^{\ell, k}_{n,j,r} \log^{\ell}(y) y^{\nu(2(n-k-r) +1)} \cdot O(y^{- \f52 - 2(1-k)}) ,~~-n-1 \leq k \leq -2,\\[3pt]
				&A_{n,j,0}(y) = \sum_{\ell = 0 }^{m-j} \sum_{\substack{r = 2\\ \text{r~even}}}^{2n} w^{\ell, 0}_{n,j,r} \log^{\ell}(y) y^{\nu(2(n-r) +1)} \cdot O(y^{- \f52 } ),~~ k =0\\[3pt]
				&A_{n,j,-1}(y) = \sum_{\ell = 0 }^{m-j} \sum_{\substack{r = 1\\ \text{r~odd}}}^{2n-1} w^{\ell, -1}_{n,j,r} \log^{\ell}(y) y^{\nu(2(n-r) +1)} \cdot O(y^{- \f52 } ),~~ k =-1.
			\end{align}
			Here the asymptotics is allowed to depend on $\alpha_0, \nu, n,j,k,\ell, r$.
		\end{Lemma}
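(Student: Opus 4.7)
The plan is to proceed by induction on $n$, mirroring the construction in Lemma~\ref{Lemma-solut-inner1} but now at $y\gg 1$ and tracking oscillatory phases. For the base cases $n=0,1$ and the top entry $A_{2,1}$, the defining equation is $(\mathcal{L}_S+\mu_n)A_{n,0}=0$, so by Lemma~\ref{FS-Inner-infty} we can write $A_{n,0}=\beta^+\phi^{(3)}_\infty(\mu_n,\cdot)+\beta^-\psi^{(3)}_\infty(\mu_n,\cdot)$ with uniquely determined $\beta^\pm$; using $2i\mu_n=2i\alpha_0+\nu(2n+1)$ this directly gives the claimed $\widehat W^{(\pm)}_{n,j}$ asymptotics with $A^p_{n,j}=0$. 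The only subtle point at the base is the translation between the two logarithmic bases $\log(y)-\nu\log(T-t)$ and $\log(y)\pm\tfrac12\log(T-t)$, which is just the binomial identity already used in Corollary~\ref{Cor-usef} and the subsequent remark.

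For the induction step I would assume the decomposition holds for all $m<n$ and for every index $j$, then plug the representation \eqref{decompo1} into the nonlinear source $e^0_{n-1,j}(y)$, which is a finite sum of quintic monomials $\tilde A_{n_1,k_1}\cdots\tilde A_{n_5,k_5}$ with $n_1+\cdots+n_5=n-2$. Expanding each factor into $A^{(+)}+A^{(-)}+A^p$ and multiplying out yields a finite sum of terms of the schematic form
\[
 e^{i k\, y^2/4}\, y^{2i\alpha_0(2k+1)}\, y^{\nu(2(n-k-r)+1)}\,\log^\ell(y)\,\bigl(\text{const}+O(y^{-2})\bigr),
\]
where the $\alpha_0$-exponent $2k+1$ is pinned down by the elementary count $a+c=3$, $b+d=2$, $k=d-c$ on the counts of $\phi,\bar\phi,\psi,\bar\psi$ factors; this same count shows that at this inductive level $k$ lies in $[-3,2]$ when only the homogeneous parts contribute, and the maximal range $[-n-1,n]$ is attained only by iterated convolution of the $A^p_{m,j}$ with $m<n$, which by hypothesis already carry $k\in[-m-1,m]$.

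Given this structural form of $e^0_{n-1,j}$, I would solve the triangular system \eqref{three-first}--\eqref{three-last} by variation of constants against the basis $\{\mathbf g^\pm_{n,h}\}$ of Lemma~\ref{basis-large-y}. The homogeneous part of the solution produces $A^{(\pm)}_{n,j}$ with coefficients $a^\pm_{n,j}$ fixed by the same free‐parameter choice ($d_{n,0,n/2}=c^{(n/2)}_{n,0}$ or $d_{n,0,(n+1)/2}=c^{((n+1)/2)}_{n,0}$) made in Lemma~\ref{Lemma-solut-inner1}; uniqueness of $a^\pm_{n,j}$ then follows from the linear independence of $\phi^{(3)}_\infty$ and $\psi^{(3)}_\infty$. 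The inhomogeneous integrals $\int^y_{y_0}G(y,s)w^{-1}(s)(\cdot)\,ds$ applied to each oscillatory monomial produce $A^p_{n,j,k}(y)$. For $k\notin\{-1,0\}$ the phase $e^{iky^2/4}$ is non-stationary with respect to both $\phi_\infty$ ($k=0$) and $\psi_\infty$ ($k=-1$), so integration by parts in $s$ (via $\partial_s(e^{iks^2/4})=\tfrac{iks}{2}e^{iks^2/4}$) generates a complete asymptotic expansion in inverse even powers of $y$, justifying the exponents $-\tfrac52-2(k+1)$ for $k\geq 1$ and $-\tfrac52-2(1-k)$ for $k\leq -2$.

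The main obstacle I anticipate is the resonant book-keeping at $k=0$ and $k=-1$, which is why the lemma states these cases separately with more restrictive $r$-ranges (even versus odd) and only the universal $O(y^{-5/2})$ factor; here integration against the matching homogeneous solution produces no cancellation of the leading phase and the output must be assigned to the coefficient of $\phi_\infty$ or $\psi_\infty$, so one has to carefully re-sort those contributions into the $A^{(\pm)}_{n,j}$ piece while tracking how the odd/even parity of $r$ propagates through the quintic product. The power counting $y^{\nu(2(n-k-r)+1)}$ with the stated range $r\in[0,2(n-k)]$ (or $[-2k,2n+2]$) is obtained by matching the $\nu$-weights coming from each factor $y^{\nu(2n_i+1)}$ against the global time weight $(T-t)^{\nu(n+1/2)}$, so once the phase and $\alpha_0$-exponent are correct, the $\nu$-exponent is forced. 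With these three pieces of bookkeeping in place, the induction closes and the estimates are, up to constants depending on $\alpha_0,\nu,n,j,k,\ell,r$, of the claimed form.
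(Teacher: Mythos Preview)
Your proposal is correct and follows essentially the same route as the paper: induction on $n$, expanding the quintic source $e^0_{n-1,j}$ into a finite sum of oscillatory pieces $e^{iky^2/4}y^{2i\alpha_0(2k+1)}(\cdot)$, solving the triangular system via the $y\gg 1$ fundamental system of Lemma~\ref{basis-large-y}, and singling out the resonant phases $k=0,-1$ for separate treatment. The paper carries this out by explicitly splitting each $e^{0,0}_{n-1,j}$ and $e^{0,-1}_{n-1,j}$ into a ``$+$'' part (the single term with $\nu$-exponent $\pm 2\nu(n+\tfrac12)$ matching the homogeneous asymptotics) and a ``$-$'' remainder; the ``$+$'' parts are then absorbed into $A^{(\pm)}_{n,j}$ while the ``$-$'' parts define $A_{n,j,0}$ and $A_{n,j,-1}$ with the restricted $r$-ranges---this is exactly the re-sorting you anticipate as the main obstacle, and your description of it is accurate.
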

		The following representation is a direct consequence of Lemma \ref{Lemma-solut-inner1-infty}.
		\begin{Corollary}\label{Final-form}
			Let $a^{\pm}_{n,j} \in \C,~ 0 \leq j \leq m$ and $\{ A_{n,j}\}$ as in Lemma \ref{Lemma-solut-inner1-infty}, then in particular 
			\begin{align}
				&\sum_{j \leq \frac{n}{2}} (\log(y) - \nu \log(T-t))^j A_{n, j}(y) = \sum_{\pm}\sum_{j \leq \frac{n}{2}} (\log(y)  \pm \f12 \log(T-t))^j W^{(\pm)}_{n, j}(y),\\\label{decomp}
				&W^{(\pm)}_{n,j}(y)  =   \widehat{W}^{(\pm)}_{n,j}(y) +  \sum_{k = -n-1}^{n}  e^{i k \frac{y^2}{4} } y^{2 i \alpha_0 (2k +1)}  W^{\pm}_{n,j,k}(y),
			\end{align}
			with expansion for the interaction part
			\begin{align}
				&W^{\pm}_{n,j,k}(y) = \sum_{\ell = 0 }^{m-j} \sum_{\substack{r = 0\\ \text{r~even}}}^{2(n-k)} w^{\ell, k, \pm}_{n,j,r} \log^{\ell}(y) y^{\nu(2(n-k-r) +1)} \cdot O(y^{- \f52 - 2(k+1)} ),~~1 \leq k \leq n,\\[3pt] \nonumber
				&W^{\pm}_{n,j,k}(y) = \sum_{\ell = 0 }^{m-j} \sum_{\substack{r = -2k\\ \text{r~even}}}^{2n +2} w^{\ell, k,\pm}_{n,j,r} \log^{\ell}(y) y^{\nu(2(n-k-r) +1)} \cdot O(y^{- \f52 - 2(1-k)}), ~~-n-1 \leq k \leq -2,
			\end{align}
			and 
			\begin{align}
				&W^{\pm}_{n,j,0}(y) = \sum_{\ell = 0 }^{m-j} \sum_{\substack{r = 2\\ \text{r~even}}}^{2n} w^{\ell, 0, \pm}_{n,j,r} \log^{\ell}(y) y^{\nu(2(n-r) +1)} \cdot O(y^{- \f52 } ) ,~~ k =0\\[3pt]
				&W^{\pm}_{n,j,-1}(y) = \sum_{\ell = 0 }^{m-j} \sum_{\substack{r = 1\\ \text{r~odd}}}^{2n-1} w^{\ell, -1, \pm}_{n,j,r} \log^{\ell}(y) y^{\nu(2(n-r) +1)} \cdot O(y^{- \f52 }),~~ k =-1.
			\end{align}
			
		\end{Corollary}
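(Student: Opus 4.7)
My plan is to derive the statement directly from Lemma \ref{Lemma-solut-inner1-infty}. First I would substitute the decomposition $A_{n,j}(y)=A_{n,j}^{(+)}(y)+A_{n,j}^{(-)}(y)+A_{n,j}^{p}(y)$ into the left-hand side of the identity and split into three groups. The $(\pm)$-pieces immediately produce the $\widehat{W}^{(\pm)}_{n,j}$ contribution to $W^{(\pm)}_{n,j}$, since by the conclusion of Lemma \ref{Lemma-solut-inner1-infty} one already has
\[
\sum_{j\le m}(\log(y)-\nu\log(T-t))^j A^{(\pm)}_{n,j}(y)=\sum_{j\le m}(\log(y)\pm\tfrac{1}{2}\log(T-t))^j\widehat{W}^{(\pm)}_{n,j}(y).
\]
Hence only the polynomial part $\sum_j (\log(y)-\nu\log(T-t))^j A^{p}_{n,j}(y)$ requires further work.

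To handle the polynomial part I would use the elementary expansion
\[
(\log(y)-\nu\log(T-t))^j=(\mp 2\nu)^j\sum_{\ell=0}^{j}\binom{j}{\ell}\bigl(\log(y)\pm\tfrac{1}{2}\log(T-t)\bigr)^{\ell}\bigl(\mp\tfrac{1}{2\nu}-1\bigr)^{j-\ell}\log^{j-\ell}(y),
\]
which converts the original $(\log(y)-\nu\log(T-t))$ polynomial into either the $+$ or the $-$ branch variable. Choosing a fixed symmetric splitting coefficient $c^{\pm}_{j}(\nu)$ (depending only on $\nu$ and $j$) to distribute $A^{p}_{n,j}$ between the two branches, and interchanging the $\ell$- and $j$-summations, one expresses the polynomial part as $\sum_{\pm}\sum_{j}(\log(y)\pm\tfrac{1}{2}\log(T-t))^j V^{(\pm)}_{n,j}(y)$ for explicit profiles $V^{(\pm)}_{n,j}$ that are finite linear combinations of the $A_{n,j',k}(y)$ multiplied by powers of $\log(y)$ of degree at most $m-j$. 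Since $A^{p}_{n,j}(y)=\sum_{k=-n-1}^{n}e^{iky^{2}/4}y^{2i\alpha_{0}(2k+1)}A_{n,j,k}(y)$, the profile $V^{(\pm)}_{n,j}$ inherits the same exponential/power factorization, and one defines $W^{\pm}_{n,j,k}$ by collecting the $e^{iky^{2}/4}y^{2i\alpha_{0}(2k+1)}$ coefficient.

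The remaining task is to verify that the $W^{\pm}_{n,j,k}$ so obtained satisfy the stated asymptotic expansions. This is routine bookkeeping: the expansions claimed for $A_{n,j,k}(y)$ in Lemma \ref{Lemma-solut-inner1-infty} consist of products $\log^{\ell}(y)\, y^{\nu(2(n-k-r)+1)}\cdot O(y^{-\f52-2(k+1)})$ with $r$ of prescribed parity; multiplication by the extra $\log^{j-\ell}(y)$ factors arising from the branch conversion only raises the logarithmic degree, still within the permitted bound $\ell\le m-j$. The power-law prefactors, the oscillatory phases $e^{iky^{2}/4}$, the characters $y^{2i\alpha_{0}(2k+1)}$ and the parity restrictions on $r$ are all preserved under this linear combination, so the displayed expansions follow immediately.

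The main obstacle I anticipate is purely notational, namely choosing the splitting coefficients $c^{\pm}_{j}(\nu)$ consistently so that the identity holds as an algebraic equality (not just formally) and the uniqueness of $W^{\pm}_{n,j,k}$ is transparent; once a concrete choice such as $c^{\pm}_{n,\ell,j}(\nu)=\binom{j+\ell}{j}(\mp 2\nu)^{\ell}(\mp\tfrac{1}{2\nu}-1)^{j}$ is fixed (matching the one implicit in the commented Lemma earlier in the paper), no genuine analytic difficulty remains, and the statement follows.
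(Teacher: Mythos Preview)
Your proposal is correct and matches the paper's approach: the paper simply states that the corollary is ``a direct consequence of Lemma \ref{Lemma-solut-inner1-infty}'' without further argument, and the steps you outline---splitting via $A_{n,j}=A^{(+)}_{n,j}+A^{(-)}_{n,j}+A^{p}_{n,j}$, using the branch-conversion identity for $(\log y-\nu\log(T-t))^j$ (precisely the one appearing in the remark after Corollary \ref{Cor-usef}), and reading off the expansions from those of $A_{n,j,k}$---are exactly the intended details behind that sentence. Your concern about the splitting coefficients is indeed purely notational, since the corollary does not assert uniqueness of the $W^{\pm}_{n,j,k}$.
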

		\begin{proof}[Proof of Lemma \ref{Lemma-solut-inner1-infty}]
			We start with solving $( \mathcal{L}_S + \mu_n) A_{n,0} = ( \mathcal{L}_S + \mu_2) A_{2,1} = 0$ for $ n = 0,1$, i.e. we set
			\begin{align}
				&	\widehat{W}^{(+)}_{0,0}(y)  =  \beta_{0,0}^+ \cdot g^{+}_{0,0,0}(y),~~ 	\widehat{W}^{(-)}_{0,0}(y) = \beta_{0,0}^- \cdot g^{-}_{0,0,0}(y),\\
				&	\widehat{W}^{(+)}_{1,0}(y)  =  \beta_{1,0}^+\cdot g^{+}_{1,0,0}(y),~~ 	\widehat{W}^{(-)}_{1,0}(y) = \beta_{1,0}^-\cdot g^{-}_{1,0,0}(y),\\
				&	\widehat{W}^{(+)}_{2,1}(y)  =  \beta_{2,1}^+\cdot g^{+}_{2,1,1}(y),~~ 	\widehat{W}^{(-)}_{2,1}(y) = \beta_{2,1}^-\cdot g^{-}_{2,1,1}(y),
			\end{align}
			where the $ g^{\pm}_{n,j,h}$ are  fundamental solutions of $(\mathcal{L}_S + \mu_n)g =0$. Hence we set
			$
			\beta_{n,0}^{\pm} = a_{n,0}^{\pm} ,~  \beta_{2,1}^{\pm} =  a_{2,1}^{\pm} $ where $n = 0,1.
			$
			Now for $n = 2, j= 0$ we have 
			\[
			(\mathcal{L}_S + \mu_2)A_{2,0}(y) = - D_y A_{2,1}(y) - |A_{0,0}(y)|^4 A_{0,0}(y),
			\]
			i.e. for the  'preliminary' homogeneous solutions $\tilde{A}_{2,0}^{(\pm)}$ we solve
			\begin{align*}
				&(\mathcal{L}_S + \mu_2 ) \widehat{W}^{(\pm)}_{2,0}(y) = - D^{\pm}_y\widehat{W}^{(\pm)}_{2,1}(y),~~\widehat{W}^{(\pm)}_{2,0}(y) = \beta^{\pm}_{2,0} \cdot g^{\pm}_{2,0,0}(y) + \beta_{2,1}^{\pm}\cdot g_{2,0,1}^{\pm}(y),\\
				&\tilde{A}_{2,0}^{(\pm)}(y) = \tilde{c}^{\pm}_{0,0} \widehat{W}^{(\pm)}_{2,0}(y) +  \tilde{c}^{\pm}_{0,1} \log(y) \widehat{W}^{(\pm)}_{2,1}(y).
			\end{align*}
			Further we infer by direct calculation in \eqref{three-first} - \eqref{three-last}
			\begin{align*}
				&|A_{0,0}(y)|^4 A_{0,0}(y) = e^0_{1,1}(y) = \sum_{k = -3}^2 e^{i k \frac{y^2}{4}}y^{2 i \alpha_0(2k+1)} e^{0,k}_{1,1}(y),
			\end{align*}
			where for $y \to \infty$ we have the expansion
			\begin{align*}
				&e^{0,1}_{1,1}(y) = \hat{w}^{1}_{2,0,0} y^{3 \nu} \cdot O(y^{- \f52 -2}) +  \hat{w}^{1}_{2,0,2} y^{-\nu} \cdot O(y^{- \f52 -2}),\\
				&e^{0,2}_{1,1}(y) = \hat{w}^{2}_{2,0,0} y^{  \nu} \cdot O(y^{- \f52 -4}),\\
				&e^{0,-3}_{1,1}(y) = \hat{w}^{-3}_{2,0,6} y^{- \nu} \cdot O(y^{- \f52 -6}),\\ 
				&e^{0,-2}_{1,1}(y) = \hat{w}^{-2}_{2,0,4} y^{ \nu} \cdot O(y^{- \f52 -4}) +  \hat{w}^{-2}_{2,0,6} y^{-3\nu} \cdot O(y^{- \f52 -4}),\\[4pt]
				&e^{0,0}_{1,1}(y) = \hat{w}^{0}_{2,0,4} y^{-3 \nu} \cdot O(y^{- \f52 }) +  \hat{w}^{0}_{2,0,2} y^{\nu} \cdot O(y^{- \f52 }) +  \hat{w}^{0}_{2,0,0} y^{5\nu} \cdot O(y^{- \f52 }),\\
				&e^{0,-1}_{1,1}(y) = \hat{w}^{-1}_{2,0,3} y^{- \nu} \cdot O(y^{- \f52 }) +  \hat{w}^{-1}_{2,0,1} y^{3\nu} \cdot O(y^{- \f52 }) +  \hat{w}^{-1}_{2,0,5} y^{-5\nu} \cdot O(y^{- \f52 }).
			\end{align*}
			Hence  integrating $(\mathcal{L}_S + \mu_2) \tilde{A}_{n,j}(y) = - e^{0,0}_{1,1}(y) $  we obtain the solution
			\[
			\tilde{A}_{n,j}(y) =   \tilde{\beta}_{2,0}^{+}\phi_{\infty}^{(3)}(\mu_2, y)  + \tilde{\beta}_{2,0}^{-}\psi_{\infty}^{(3)}(\mu_2, y)    + \sum_{k = -3}^2 e^{i k \frac{y^2}{4}}y^{2 i \alpha_0(2k+1)} A_{2,0,k}(y),
			\]
			where $ (\mathcal{L}_S + \mu_2)(e^{ik \frac{y^2}{4}} y^{2 i \alpha_0(2k +1)}A_{2,0,k}(y)) = e^{ik \frac{y^2}{4}} y^{2 i \alpha_0(2k +1)} e^{0,k}_{1,1}(y)$ and for $ y \gg1$ we have the representation
			\begin{align*}
				& A_{2,0,1}(y) = \sum_{\ell = 0,1}\log^{\ell}(y)\big(w^{\ell, 1}_{2,0,0} y^{3 \nu} \cdot O(y^{- \f52 -4}) +  w^{\ell,1}_{2,0,2} y^{-\nu} \cdot O(y^{- \f52 -4})\big),\\
				&A_{2,0,2}(y)  = \sum_{\ell = 0,1}\log^{\ell}(y)\big(w^{\ell, 2}_{2,0,0} y^{  \nu} \cdot O(y^{- \f52 -6}) \big),\\
				&A_{2,0,-3}(y)  = \sum_{\ell = 0,1}\log^{\ell}(y)\big(w^{\ell, -5}_{2,0,6} y^{- \nu} \cdot O(y^{- \f52 -8})\big),\\ 
				&A_{2,0,-2}(y)  = \sum_{\ell = 0,1}\log^{\ell}(y)\big( w^{\ell -2}_{2,0,4} y^{  \nu} \cdot O(y^{- \f52 -6}) +  w^{\ell, -2}_{2,0,6} y^{-3\nu} \cdot O(y^{- \f52 -6})\big).
			\end{align*} 
			Further we set 
			\begin{align*}
				&e^{0,0,+}_{1,1}(y) := \hat{w}^{0}_{2,0,0} y^{5\nu} \cdot O(y^{- \f52 }),~ e^{0,-1, +}_{1,1}(y): = \hat{w}^{-1}_{2,0,5} y^{-5\nu} \cdot O(y^{- \f52 }),\\
				&e^{0,0,-}_{1,1}(y) =  e^{0,0}_{1,1}(y)- e^{0,0,+}_{1,1}(y),~e^{0,-1,-}_{1,1}(y) =  e^{0,-1}_{1,1}(y)- e^{0,-1,+}_{1,1}(y)
			\end{align*}
			and 
			write $ A_{2,0,0}(y) = A_{2,0,0}^+(y)  + A_{2,0,0}^-(y),~ A_{2,0,-1}(y) = A_{2,0,-1}^+(y)  + A_{2,0,-1}^-(y)$ where 
			\begin{align*}
				&(\mathcal{L}_S + \mu_2)(e^{- i \frac{y^2}{4}} y^{-2i \alpha_0} A_{2,0,-1}^{\pm}(y)) = e^{- i \frac{y^2}{4}} y^{-2i \alpha_0}e^{0,0,\pm}_{1,1}(y),\\
				&(\mathcal{L}_S + \mu_2)( y^{2i \alpha_0} A_{2,0,0}^{\pm}(y)) = y^{2i \alpha_0}e^{0,0,\pm}_{1,1}(y).
			\end{align*}
			In particular we have the representation
			\begin{align*}
				&A_{2,0,0}^+(y) =  \sum_{\ell = 0,1} \log^{\ell}(y)w^{\ell, 0}_{2,0,0} y^{2\nu(2 + \f12)} \cdot O(y^{- \f52 }),\\
				&A_{2,0,0}^-(y) =  \sum_{\ell = 0,1}\log^{\ell}(y) \big(  w^{\ell, 0}_{2,0,4} y^{-3 \nu} \cdot O(y^{- \f52 }) +  w^{\ell, 0}_{2,0,2} y^{\nu} \cdot O(y^{- \f52 })\big),\\
				&A_{2,0,-1}^+(y) =  \sum_{\ell = 0,1}\log^{\ell}(y) w^{\ell, -1}_{2,0,0} y^{-2\nu(2 + \f12)} \cdot O(y^{- \f52 }),\\
				&A_{2,0,-1}^-(y) =  \sum_{\ell = 0,1}\log^{\ell}(y) \big( w^{\ell,-1}_{2,0,3} y^{ -\nu} \cdot O(y^{- \f52 }) +  w^{\ell,-1}_{2,0,1} y^{3\nu} \cdot O(y^{- \f52 })\big).
			\end{align*}
			We can now set 
			\begin{align*}
				&A_{2,0}^{(+)}(y) = \tilde{A}_{2,0}^{(+)}(y) +  \tilde{\beta}_{2,0}^{+}\phi_{\infty}^{(3)}(\mu_2, y)  + y^{2i \alpha_0} A_{2,0,0}^{+}(y),\\
				&A_{2,0}^{(-)}(y) = \tilde{A}_{2,0}^{(-)}(y) +  \tilde{\beta}_{2,0}^{-}\psi_{\infty}^{(3)}(\mu_2, y)  + e^{- i \frac{y^2}{4}} y^{-2i \alpha_0} A_{2,0,-1}^{+}(y).
			\end{align*}
			We note here the latter terms $A_{2,0,-1}^{+},~A_{2,0,0}^{+}$ correspond to exactly one interaction term and are of the required form for $A_{2,0}^{(+)},~A_{2,0}^{(-)}$. Especially transforming $ W_{2,0}^{(\pm)}(y) = c^{\pm}_{0,0} A^{(\pm)}_{2,0}(y) + \log(y) c^{\pm}_{0,1} A^{(\pm)}_{2,1}(y) $ implicitly defines the $a^{\pm}_{2,0}$ coefficients.\\[3pt]
			\emph{Induction step}. Assume we have the decomposition of $A_{k,j}(y)$ for $ 0 \leq j \leq \lfloor k \slash 2 \rfloor,~ 2 \leq k \leq n-1$.  Then we observe for the interaction in \eqref{three-first} - \eqref{three-last}
			\[
			e^0_{n-1, j}(y) = \sum_{k = - n-1}^n e^{i k \frac{y^2}{4}} y^{2i \alpha_0(2k+1)} e^{0, k}_{n-1,j}(y),~ 0 \leq j \leq m,
			\]
			where
			\begin{align*}
				&e^{0,k}_{n-1,j}(y) =  \sum_{\ell = 0 }^{m-j} \sum_{\substack{r = 0\\ \text{r~even}}}^{2(n-k)} \hat{w}^{\ell, k}_{n,j,r} \log^{\ell}(y) y^{\nu(2(n-k-r) +1)} \cdot O(y^{- \f52 - 2k}),~~1 \leq k \leq n,\\[3pt] \nonumber
				&e^{0,k}_{n-1,j}(y) = \sum_{\ell = 0 }^{m-j} \sum_{\substack{r = -2k\\ \text{r~even}}}^{2n +2} \hat{w}^{\ell, k}_{n,j,r} \log^{\ell}(y) y^{\nu(2(n-k-r) +1)} \cdot O(y^{- \f52 + 2k }) ,~~-n-1 \leq k \leq -2,\\[3pt]
				&e^{0,0}_{n-1,j}(y) = \sum_{\ell = 0 }^{m-j} \sum_{\substack{r = 0\\ \text{r~even}}}^{2n} \hat{w}^{\ell, 0}_{n,j,r} \log^{\ell}(y) y^{\nu(2(n-r) +1)} \cdot O(y^{- \f52 } ),~~ k =0\\[3pt]
				&e^{0,-1}_{n-1,j}(y) = \sum_{\ell = 0 }^{m-j} \sum_{\substack{r = 1\\ \text{r~odd}}}^{2n+1} \hat{w}^{\ell, -1}_{n,j,r} \log^{\ell}(y) y^{\nu(2(n-r) +1)} \cdot O(y^{- \f52 } ),~~ k =-1.
			\end{align*}
			The latter two lines we split into 
			\begin{align*}
				&e^{0,0,+}_{n-1,j}(y) = \sum_{\ell = 0 }^{m-j}  \hat{w}^{\ell, 0}_{n,j,0} \log^{\ell}(y) y^{2\nu(n + \f12)} \cdot O(y^{- \f52 } ),\\
				&e^{0,-1,+}_{n-1,j}(y) =  \sum_{\ell = 0 }^{m-j}  \hat{w}^{\ell, -1}_{n,j, 2n+1} \log^{\ell}(y) y^{-2 \nu(n + \f12)} \cdot O(y^{- \f52 } ),\\
				&e^{0,0,-}_{n-1,j}(y)  = \sum_{\ell = 0 }^{m-j} \sum_{\substack{r = 2\\ \text{r~even}}}^{2n} \hat{w}^{\ell, 0}_{n,j,r} \log^{\ell}(y) y^{\nu(2(n-r) +1)} \cdot O(y^{- \f52 } ),\\
				& e^{0,-1,-}_{n-1,j}(y) = \sum_{\ell = 0 }^{m-j} \sum_{\substack{r = 1\\ \text{r~odd}}}^{2n-1} \hat{w}^{\ell, -1}_{n,j,r} \log^{\ell}(y) y^{\nu(2(n-r) +1)} \cdot O(y^{- \f52 } ).
			\end{align*}
			Now integrating \eqref{three-first} - \eqref{three-last} gives
			\begin{align*}
				&A_{n,j}(y) =   \tilde{A}^{(+)}_{n,j}(y) + \tilde{A}^{(-)}_{n,j}(y)  + \sum_{k = -n-1}^n e^{i k \frac{y^2}{4}} y^{2 i \alpha_0(2k+1)}A_{n,j,k}(y),\\
				& \sum_{j \leq m} (\log(y) - \nu \log(T-t))^j\tilde{A}^{(\pm)}_{n,j}(y)  = \sum_{j \leq m}(\log(y) \pm \tfrac{1}{2} \log(T-t))^j \widetilde{W}^{(\pm)}_{n,j},
			\end{align*}
			where
			\begin{align*}
				& W_{n,j}^{(+)}(y) = \sum_{h = 0}^m \beta^{+}_{n,j} g^{+}_{n,j,h}(y) + \tilde{\beta}^{+}_{n,j} \phi^{(3)}_{\infty}(y),~~W_{n,j}^{(-)}(y) = \sum_{h = 0}^m \beta^{-}_{n,j} g^{-}_{n,j,h}(y) + \tilde{\beta}^{-}_{n,j} \psi^{(3)}_{\infty}(y).
			\end{align*}
			We have  $(\mathcal{L}_S + \mu_n)A_{n,m}(y) = 0$ and thus we take $A_{n,m, k}(y) = 0$. Further
			$e^{i k \frac{y^2}{4}} y^{2 i \alpha_0(2k+1)}A_{n,j,k}(y)$  for $ 0 \leq j \leq m-1$ solves \eqref{three-first} - \eqref{three-last} with data $ e^{i k \frac{y^2}{4}} y^{2 i \alpha_0(2k+1)}e^{0,k}_{n-1,j}(y)$ and where we set $ A_{n,m}(y) = 0$. Hence $A_{n,j,k}(y) $ has the following expansion as $ y \to \infty$
			\begin{align*}
				&A_{n,j,k}(y) =  \sum_{\ell = 0 }^{m-j} \sum_{\substack{r = 0\\ \text{r~even}}}^{2(n-k)} w^{\ell, k}_{n,j,r} \log^{\ell}(y) y^{\nu(2(n-k-r) +1)} \cdot O(y^{- \f52 - 2(k+1)}),~~1 \leq k \leq n,\\ \nonumber
				&A_{n,j,k}(y) = \sum_{\ell = 0 }^{m-j} \sum_{\substack{r = -2k\\ \text{r~even}}}^{2n +2} w^{\ell, k}_{n,j,r} \log^{\ell}(y) y^{\nu(2(n-k-r) +1)} \cdot O(y^{- \f52 - 2(1-k)}) ,~~-n-1 \leq k \leq -2.
			\end{align*}
			Now we express $ e^{-i \frac{y^2}{4}} y^{-2i \alpha_0} A_{n, j ,-1}^{\pm}(y)$ and $  y^{2i \alpha_0} A_{n, j ,0}^{\pm}(y)$ as solutions of \eqref{three-first} - \eqref{three-last} with respective data 
			$
			e^{-i \frac{y^2}{4}} y^{-2i \alpha_0}e^{0,-1,\pm}_{n-1,j}(y)$, $   y^{2i \alpha_0}e^{0,0,\pm}_{n-1,j}(y),$  and where $A_{n,m}(y) = 0$. Hence as $ y \to \infty$ we infer
			\begin{align*}
				&A_{n,j,0}^+(y) = \sum_{\ell = 0 }^{m-j}  w^{\ell, 0}_{n,j,0} \log^{\ell}(y) y^{2\nu(n + \f12)} \cdot O(y^{- \f52 } ),\\
				&A_{nj,-1}^+(y) =  \sum_{\ell = 0 }^{m-j}  w^{\ell, -1}_{n,j, 2n+1} \log^{\ell}(y) y^{-2 \nu(n + \f12)} \cdot O(y^{- \f52 } ),\\
				&A_{n,j,0}^-(y)  = \sum_{\ell = 0 }^{m-j} \sum_{\substack{r = 2\\ \text{r~even}}}^{2n} w^{\ell, 0}_{n,j,r} \log^{\ell}(y) y^{\nu(2(n-r) +1)} \cdot O(y^{- \f52 } ),\\
				&A_{n,j,-1}^-(y) = \sum_{\ell = 0 }^{m-j} \sum_{\substack{r = 1\\ \text{r~odd}}}^{2n-1} w^{\ell, -1}_{n,j,r} \log^{\ell}(y) y^{\nu(2(n-r) +1)} \cdot O(y^{- \f52 } ).
			\end{align*}
			As before we find 
			\begin{align*}
				&A_{n,j}^{(+)}(y) =  \tilde{A}_{n,j}^{(+)}(y) + y^{2i \alpha_0} A_{n, j ,0}^{\pm}(y),~~A_{n,j}^{(-)}(y) =  \tilde{A}_{n,j}^{(-)}(y) + e^{-i \frac{y^2}{4}} y^{-2i \alpha_0} A_{n, j ,-1}^{\pm}(y),
			\end{align*}
			and calculating $ W^{(\pm)}_{n,j}(y) = \sum_{\ell = 0}^{m-j} c_{j, \ell}^{\pm} \log^{\ell}(y) A_{n, j + \ell}^{(\pm)}(y)$ defines $ a_{n,j}^{\pm},~ 0 \leq j \leq m$ successively. Here we may note that coefficients in the expansion of $W^{(\pm)}_{n,j}(y)$ are determined by the solutions  $A_{n,j}^{(\pm)}(y)$ of \eqref{three-first} - \eqref{three-last} with data $ y^{2i \alpha_0}e^{0,0,\pm}_{n-1,j}(y)$ and $ e^{-i \frac{y^2}{4}} y^{-2i \alpha_0}e^{0,-1,\pm}_{n-1,j}(y)$  depending only on $ A_{k,j}^{(+)}(y)$ and $ A_{k,j}^{(-)}(y)$ for $ k \leq n-1$ respectively.
		\end{proof}
		~~\\
		We restrict to $d =3 $ dimensions from now on. Recall $\tilde{u}^{N_2}_S$ is the self-similar approximation and with $ \lambda(t)^{-\f12}(T-t)^{- \f14} = (T-t)^{\frac{\nu}{2}}$ we set 
		\begin{align*}
			&u_S^{N_2}(t,R ) : = (T-t)^{\frac{\nu}{2}} \tilde{u}^{N_2}_S(t, (T-t)^{\nu}R) = (T-t)^{\frac{\nu}{2}}\sum_{n = 0}^{N_2} A_n(t,(T-t)^{\nu}R),\\
			&A_n(t,y) = \sum_{j \leq m}(\log(y) - \nu \log(T-t))^j A_{n,j}(y),
		\end{align*}with error $ e^{N_2}_{S}(t,(T-t)^{\nu}R)$ where
		\begin{align*}
			& e^{N_2}_{S}(t,y) =  i (T-t) \partial_t \tilde{u}^{N_2}_{S}(t,y )+ (\partial_{yy} + (d-1) \tfrac{1}{y}\partial_y) \tilde{u}^{N_2}_{S}(t,y)\\[1pt] \nonumber
			&~~~~~~~~~~~~~~~~~~~~~ + [i (\tfrac{d-2}{4} + \tfrac12 y \partial_y) + \alpha_0] \tilde{u}^{N_2}_{S}(t,y) + |  \tilde{u}^{N_2}_{S}(t,y) |^{\frac{4}{d-2}} \tilde{u}^{N_2}_{S}(t,y).
		\end{align*} 
		For $ T > 0$ and $ t \in [0, T)$, we set 
		\[
		\chi_{S}(t,R) = \begin{cases}
			1 & \frac{1}{\tilde{C}} (T-t)^{\epsilon_1 - \nu} \leq R \leq \tilde{C} (T -t)^{- \epsilon_2 - \nu} \\
			0 & \text{otherwise}.
		\end{cases}
		\]
		The following is a direct consequence of Lemma \ref{Lemma-solut-inner1} and Lemma \ref{Lemma-solut-inner1-infty} (and its proof). We fix $ \epsilon_1 : = \frac{\nu}{2}$ and let $ 0 < \epsilon_2 < \f12$ be fixed later. (We use fixing $\epsilon_1$ especially in \eqref{self-est3}. Surely we may choose $ 2\epsilon_1 < \nu$ to make the estimate work.)
		\begin{Corollary}\label{estimates-self} For $ \alpha_0 \in \R,~ \epsilon_1 : =\frac{\nu}{2},~ \f12 > \epsilon_2 > 0$  there exists $  0  <  T_0(\alpha_0, \nu, N_2) \leq 1 $, such that for all $ 0<T \leq T_0$ the functions $ u^{N_2}_{S}, e^{N_2}_{S}$  satisfy for all $ t \in [0,T)$
			\begin{align}\label{self-est1}
				&\|  \chi_{S}(t,R) \cdot (W - u_S^{N_2})  \|_{L^{\infty}_R} \leq C_{\nu, |\alpha_0|} (T - t )^{\nu},\\[2pt]\label{self-est2}
				&\|  \chi_{S}(t,R) \cdot R^{-j}\partial_R^{i}(W - u_S^{N_2})  \|_{L^{\infty}_R} \leq C_{\nu, |\alpha_0|} (T - t )^{2\nu},~~~1 \leq j + i \leq 2,\\[2pt]\label{self-est3}
				&\|  \chi_{S}(t,R) \cdot R^{-j}\partial_R^{i}(W - u_S^{N_2}) \|_{L^{2}(R^{2}dR)} \leq C_{\nu, |\alpha_0|} (T - t )^{\frac{\nu}{2}(1-2 \epsilon_2)},~~~0 \leq j + i,\\[2pt]\label{self-est4}
				&\|  \chi_{S}(t,R) \cdot R^{-j}\partial_R^{i}e^{N_2}_{S}\|_{L^{2}(R^{2}dR)} \leq C_{\nu, |\alpha_0|} (T - t )^{ \nu N_2(1 - 2 \epsilon_2) - \f32(\nu + \epsilon_2)- \f52\epsilon_1  },~~~0 \leq j + i ,\\
				&\|  \chi_{S}(t,R) \cdot R^{-j}\partial_R^{i}e^{N_2}_{S}\|_{L^{\infty}_R} \leq C_{\nu, |\alpha_0|} (T - t )^{ \nu N_2(1 - 2 \epsilon_2) - \f52\epsilon_1  },~~~0 \leq j + i.
			\end{align}
			Further by Lemma \ref{consist} we obtain for $(t,R) $ with $ (t,r) \in I \cap S$ for $ N \in \Z_+$ 
			\begin{align}\label{self-est5}
				\big | \partial_R^m& \big[ u^{N_2}_S(t, (T-t)^{\nu}R) - u^{N_1}_{In}(t, R)\big]\big | \leq C_{m , N_1, N_2} R^{-m} (T-t)^{N \nu -1},~~0 \leq m,
			\end{align}
			where $ N_1= N_1(N), N_2 = N_2(N_1)$ are large enough and $ T = T(\alpha_0, \nu, N_1,N_2)$.
		\end{Corollary}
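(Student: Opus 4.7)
The proof splits the self-similar annulus at $y = 1$ (equivalently $R \sim (T-t)^{-\nu}$) and treats each piece separately, using the small-$y$ Laurent expansion from Lemma \ref{Lemma-solut-inner1} on the inner side and the large-$y$ decomposition from Corollary \ref{Final-form} on the outer side. Throughout, the change-of-variable identities
\[
R^{-j}\partial_R^i = (T-t)^{-i\nu}\, R^{-j}\partial_y^i\big|_{y=(T-t)^\nu R}, \qquad R^2\,dR = (T-t)^{-3\nu}\,y^2\,dy,
\]
together with the a priori range $\tilde{C}^{-1}(T-t)^{\epsilon_1} \leq y \leq \tilde{C}(T-t)^{-\epsilon_2}$ and the choice $\epsilon_1 = \nu/2$, convert powers of $y$ directly into powers of $(T-t)$.

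On the inner piece $\{y \leq 1\}$ I re-expand each $A_{n,j}((T-t)^\nu R)$ in its Laurent series $\sum_{l} d_{n,j,l}\,y^{2l-1-n}$ and regroup in $R$. By the matching calculation \eqref{transition}--\eqref{first-lll} underlying Lemma \ref{consist}, this re-expansion agrees with $\lambda(t)^{1/2} u_{In}^{N_1}(t,R) = \lambda(t)^{1/2}(W(R)+\eta_{N_1}^*(t,R))$ up to an error of size $(T-t)^{\nu N_1 + 1/2}$ once $N_1, N_2$ are chosen large in terms of each other. Consequently $W(R) - u_S^{N_2}(t,R)$ differs from $-\lambda(t)^{1/2}\eta_{N_1}^*(t,R)$ by the same quantity, and the pointwise and $L^2$ bounds \eqref{self-est1}--\eqref{self-est3} on this part follow directly from the interior estimates in Corollary \ref{estimates}. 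On the outer piece $\{y \geq 1\}$ I insert Corollary \ref{Final-form}: the principal contributions $\widehat{W}^{(\pm)}_{n,j}(y)$ carry sizes $y^{-1/2}$ and $e^{-iy^2/4}y^{-5/2}$, while every interaction piece $e^{iky^2/4}\, y^{2i\alpha_0(2k+1)}\, W^{\pm}_{n,j,k}(y)$ has extra decay $\langle y\rangle^{-5/2 - 2|k+1|}$ and a $y$-weight bounded by $y^{2\nu(n+1/2)}$. Using $y \leq (T-t)^{-\epsilon_2}$ pulls this weight into $(T-t)^{-(2n+1)\nu\epsilon_2}$, and the overall prefactor $(T-t)^{\nu(n+1/2)}$ still dominates as long as $\epsilon_2 < 1/2$, so every $n \geq 1$ correction is strictly higher order. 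Cancelling the $n=0$ term against $W(R) = O(R^{-1})$ gives \eqref{self-est1}--\eqref{self-est2}; integration in $R^2\,dR$ yields \eqref{self-est3}, where the outer endpoint $y \sim (T-t)^{-\epsilon_2}$ dictates the $\frac{\nu}{2}(1-2\epsilon_2)$ loss.

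For the error estimate \eqref{self-est4}, I invoke the expansion \eqref{error-exp1}: the leading piece is $(T-t)^{\nu(N_2 + 3/2)}\chi^{(3)}_{j, N_2+1}(y)$ with $\chi^{(3)}_{j,k}(y) = O(y^{-k-3})$ at zero and, by the same inductive construction that produced Corollary \ref{Final-form} applied to the sequence of inhomogeneities rather than to $A_{n,j}$, a large-$y$ tail which is a finite sum of terms $e^{iky^2/4}\, y^{2i\alpha_0(2k+1)}$ times polynomial growth in $y$ decorated by $\log y$ factors. The worst-case exponent at $y \sim (T-t)^{-\epsilon_2}$ together with the measure factor $(T-t)^{-3\nu/2}$ when passing to $L^2(R^2\,dR)$, and the substitution $\epsilon_1 = \nu/2$, give the exponent $\nu N_2(1-2\epsilon_2) - \tfrac{3}{2}(\nu + \epsilon_2) - \tfrac{5}{2}\epsilon_1$; the $L^\infty$ version is obtained by dropping the $y^2\,dy$ integration. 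Estimate \eqref{self-est5} is an immediate restatement of Lemma \ref{consist} once $N_1, N_2 \gg N$. The main technical hurdle is the bookkeeping in the outer piece: one must track simultaneously the oscillatory phases $e^{-iky^2/4}$, the accumulated logarithmic factors, and the finitely many branches $(n,j,k,\ell,r)$ of Corollary \ref{Final-form}, and verify that the restriction $\epsilon_2 < 1/2$ really suffices to absorb the polynomial $y$-growth of every interaction tail into powers of $(T-t)$, so that the approximation improves order by order in $n$.
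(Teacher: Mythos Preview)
Your approach is essentially what the paper intends: it states the Corollary as ``a direct consequence of Lemma \ref{Lemma-solut-inner1} and Lemma \ref{Lemma-solut-inner1-infty}'', i.e.\ one reads off the small-$y$ Laurent data and the large-$y$ asymptotics of Corollary \ref{Final-form}, splits the $y$-range, and substitutes $y=(T-t)^\nu R$. A few minor cleanups: (i) the chain rule gives $\partial_R^i=(T-t)^{i\nu}\partial_y^i$, not $(T-t)^{-i\nu}$; (ii) the matching identifies $u_S^{N_2}$ with $u_{In}^{N_1}=W+\eta_{N_1}^*$ directly (both already sit at the same scale), so $W-u_S^{N_2}\approx -\eta_{N_1}^*$ without an extra $\lambda^{1/2}$; (iii) routing through Corollary \ref{estimates} is a detour, since that statement is formulated for $\chi_{In}$, a strictly smaller region --- what you actually need on $\{y\le 1\}\cap S$ is the raw $S$-space information $\eta_l\in(T-t)^{2l\nu}S^{2l}(R^{2l-1})$ evaluated up to $R\sim(T-t)^{-\nu}$, which produces the same $(T-t)^\nu$ exponent; (iv) the Laurent series of $A_{n,j}$ is only asserted to converge for $0<y<y_0$ with some fixed $y_0$ possibly below $1$, so split at $y_0$ instead and cover the compact middle range $y\in[y_0,M]$ by smoothness of $A_{n,j}$ on $(0,\infty)$.
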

		\subsection{Remote region $ (T -t)^{\f12 - \epsilon_2  }  \lesssim r  $}\label{subsec:RR}~For some constant $  0 < C_0 <\tilde{C} $ and  the previous choice of $ 0 < \epsilon_2 < \f12 $  we restrict to the region
		\begin{align}\label{conditionRemo}
			R = \big\{ (t,r) ~|~C_0 (T -t)^{  - \epsilon_2} \leq r (T-t)^{- \f12}  \big\}.
		\end{align}
		Thus if $(t,r) \in S \cap R$, then $ y \sim (T-t)^{- \epsilon_2}, r \sim (T-t)^{\f12 - \epsilon_2}$. For $ 0 < T-t \ll1$ we may consider the $ y \to \infty$ asymptotics of $ u_S^N(t, y) = u_S^N(t, (T-t)^{- \f12 }r)$, which corresponds to taking $ r \ll1$. In particular 
		by Lemma \ref{Lemma-solut-inner1-infty} and Corollary \ref{Final-form} we have 
		\begin{align*}
			\tilde{u}_S^N(t, y) &=	\sum_{n = 0}^N(T-t)^{\nu(n + \f12)}\sum_{j \leq \frac{n}{2}} (\log(y) - \nu \log(T-t))^j A_{n, j}(y)\\
			&= \sum_{n = 0}^N(T-t)^{\nu(n + \f12)}\sum_{j \leq \frac{n}{2}} \log^j(r)W^{(+)}_{n, j}((T-t)^{- \f12}r),\\
			&~~~ + \sum_{n = 0}^N(T-t)^{\nu(n + \f12)}\sum_{j \leq \frac{n}{2}} (\log(r\slash (T-t))^j W^{(-)}_{n, j}((T-t)^{- \f12}r).
		\end{align*}
		with expansions
		\begin{align}\label{erste-radia}
			&\sum_{n = 0}^N(T-t)^{\nu(n + \f12)}\sum_{j \leq \frac{n}{2}} \log^j(r)W^{(+)}_{n, j}((T-t)^{- \f12}r)\\ \nonumber
			&~~~ =  \tilde{W}^+_{\text{high}}(t,r) + (T-t)^{\f14 - i \alpha_0}\sum_{n = 0}^N\sum_{j \leq \frac{n}{2}} \log^j(r) r^{2i \alpha_0 +\nu(2n + 1) - \f12 }\\ \nonumber
			&~~~~~~~~~~~~~~~~~\hspace{3cm}~~~\times  \big( \sum_{k = 0}^{\tilde{N}}a_{n,j,0}^k(T-t)^{k}\slash r^{2k} + O\big( \frac{(T-t)^{\tilde{N}+1}}{r^{2\tilde{N} + 2}}\big)\big),\\ \label{zweite-oscil}
			& \sum_{n = 0}^N(T-t)^{\nu(n + \f12)}\sum_{j \leq \frac{n}{2}} (\log(r\slash (T-t))^j W^{(-)}_{n, j}((T-t)^{- \f12}r)\\ \nonumber
			& =  \tilde{W}^-_{\text{high}}(t,r)  +  (T-t)^{-i \alpha_0 - \frac{5}{4}}e^{- i \frac{r^2}{4(T-t)}}\sum_{n = 0}^N\sum_{j \leq \frac{n}{2}} \log^j(r\slash (T-t)) (r \slash (T-t))^{-2i \alpha_0 - \nu(2n + 1) - \frac{5}{2} }\\ \nonumber
			&~~~~~~~~~~~~~~~~~\hspace{3cm}~~~\times \big( \sum_{k = 0}^{\tilde{N}}a_{n,j,-1}^k(T-t)^{k}\slash r^{2k} + O\big( \frac{(T-t)^{\tilde{N}+1}}{r^{2\tilde{N} + 2}}\big)\big). 
		\end{align}
		Some remarks are in order: First, we note in the remote region $(t,r) \in  R$ we have
		\[
		\frac{(T-t)^{\f12}}{r} \lesssim (T-t)^{\epsilon_2},~~~~ \frac{(T-t)}{r} \lesssim (T-t)^{ \f12 + \epsilon_2},~~ t \in [0,T)
		\]
		and hence contributions such as the second term on the right of  \eqref{zweite-oscil} will be substantially small for $ T - t \ll1$. Secondly, for the full approximation, we multiply both sides of \eqref{erste-radia} and \eqref{zweite-oscil} with 
		\[    (T-t)^{-\f14} e^{i \alpha_0 \log(T-t)} = (T-t)^{-\f14 +i \alpha_0},
		\]
		thus the second term on the right of \eqref{erste-radia} is time-independent to leading order ($k=0$).\\[3pt] Third, the terms   $\tilde{W}^+_{\text{high}}(t,r), \tilde{W}^-_{\text{high}}(t,r) $ are higher order corrections as described in Corollary \ref{Final-form}. Different to the second term on the right of \eqref{erste-radia}, we consider these  \emph{decaying terms}  as $ t \to T^-$ in the $(t,r) $ coordinate frame, with factors of order $O(y^{\pm2\nu (m + \f12) - \f52 -\tilde{m}}),~ m , \tilde{m} \in \Z,~ \tilde{m} \geq0$ in the expansion. Especially, the $m \in \Z$ is 'better' than   in the above lower order expansion, since there we exactly cancel $ (T-t)^{\nu(n + \f12)}$.\\[3pt] 
		Thus, the aymptotics provides  a time-independent \emph{radiation} profile
		\[
		\tilde{f}(r): = \sum_{n = 0}^N\sum_{j \leq \frac{n}{2}} \log^j(r) r^{2i \alpha_0 +\nu(2n + 1) - \f12 } \beta_{n,j},~~~ \beta_{n,j} = a^0_{n,j,0},
		\]
		to the lowest order expansion ($k=0$) in \eqref{erste-radia}. The low order term in \eqref{zweite-oscil}, however, is rapidly oscillating in the $(t,r)$ frame (and decays sufficiently). Now we set 
		\begin{align}\label{time-indep}
			&f_0(r) = f_0^N(r) : =  \Theta(\delta^{-1} x) \sum_{n = 0}^N\sum_{j \leq \frac{n}{2}} \log^j(r) r^{2i \alpha_0 +\nu(2n + 1) - \f12 } \beta_{n,j},\\
			&\delta> 0,~ N \in \Z_+,~ N  \gg1,~ r = |x|,~ x \in \R^3,
		\end{align}
		where $\Theta \in C^{\infty}(\R^3)$ is a radial positive cut-off function with $ \Theta(x) = 1 $ if $ |x| \leq 1$ and  $\Theta(x) = 0$ if $ |x| \geq 2$. We write $ \Theta(x) = \tilde{\Theta}(|x|)$. Especially, we obtain $ e^{\theta } f_0(r) \in H^{1 + \nu-}(\R^3)$ for $ \theta \in \R$ and 
		\[
		\|  e^{i \theta} f_0  \|_{\dot{H}^{s}} \leq C \delta^{1 + \nu - s},~ 0 \leq s < 1 + \nu.
		\]
		We seek  to obtain a solution of \eqref{NLSequation} via a perturbation of \eqref{time-indep}, c.f. \cite[Section 2.4]{Perelman}. In particular we set 
		\[
		w(t,r) = f_0^N(r) + \eta(t,r),~~\eta(t,r) = e^{i \alpha_0 \log(T-t)} (T-t)^{- \f14} \tilde{\eta}(t,r),
		\]
		where  $w(t,r)$ solving \eqref{NLSequation} is  equivalent to
		\begin{align}\label{this-foreta-remot}
			i \partial_t \eta + \Delta \eta + V_0(f, \bar{f_0}) + V_1(f, \bar{f_0}) \eta + V_2(f, \bar{f_0}) \bar{\eta} + \mathcal{N}(f, \bar{f_0}, \eta, \bar{\eta}) = 0,
		\end{align}
		with
		\begin{align}
			&V_0(f, \bar{f_0})  = \Delta f_0 + |f_0|^4 f_0,~~V_1(f, \bar{f_0}) =   3 |f_0|^4,~~~V_2(f, \bar{f_0}) = 2 |f_0|^2 f_0^2\\[5pt]
			& \mathcal{N} =  N_0 + N_1 + N_2,\\[3pt]
			&N_0(f_0, \bar{f_0}, \eta, \bar{\eta}) = 4 f_0^2 \bar{f_0} \eta \bar{\eta} + f_0^3 \bar{\eta}^2 + (f_0^3 + 2 f_0 \bar{f_0}^2)\eta^2,\\[3pt]
			&N_1(f_0, \bar{f_0}, \eta, \bar{\eta}) = 3 f_0^2 \bar{\eta}^2 \eta + 6 f_0 \bar{f_0} \eta^2 \bar{\eta} + \bar{f_0}^2 \eta^3,\\[3pt]
			& N_2(f_0, \bar{f_0}, \eta, \bar{\eta}) = 3 f_0 \eta^2 \bar{\eta}^2 + 2 \bar{f_0}\eta^3 \bar{\eta} + \eta^3 \bar{\eta}^2.
		\end{align}
		~~\\
		Then, according to Lemma \ref{Lemma-solut-inner1-infty} and Corollary \ref{Final-form}  we should choose $ \eta(t,r)$ of the form
		\begin{align*}
			\eta(t,r) = \sum_{n = 0}^{N} \sum_{j \geq 1} \sum_{q = 0}^{2n+1} (T-t)^{\nu q + j} \sum_{\substack{k = - \min\{j,q \} \\ q-k ~\text{even}}}^{\min\{(j-2)_+,q\}} \sum_{\ell = 0}^{\lfloor \frac{n}{2} \rfloor} e^{ i k\Phi}(\log(r) - \log(T-t))^{\ell} g_{q,j,k,\ell}^{n}(r),
		\end{align*}
		where  $ \sum_{j \geq 1}$ is a finite sum and we set 
		\[
		\Phi(r,t) := - 2 \alpha_0 \log(T-t)  +  \frac{r^2}{4 (T-t)} + \phi(r)
		\]
		with $\phi(r)$ to be chosen. For the sake of simplicity we may rewrite this as follows
		\begin{align} \label{form-remot}
			\eta(t,r) = \sum_{\substack{q \geq 0, \\j \geq 1}}   (T-t)^{\nu q + j} \sum_{\substack{- \Omega_{j,q} \leq k \leq \Omega_{(j-2)_+,q} \\ q-k ~\text{even}}} \sum_{\ell \geq 0} e^{ i k\Phi}(\log(r) - \log(T-t))^{\ell} g_{q,j,k,\ell}(r),
		\end{align}
		where $ \sum_{q \geq 0, j \geq1}, \sum_{\ell \geq 0}$ are finite sums and $ \Omega_{j,q} : = \min\{j,q\}$. Here we find $ m = m(q) \in \N_0$ (increasing in $q$), such that  $g_{q,j,k,\ell} = 0$ if $ \ell > m$. We consider this form \eqref{form-remot} in \eqref{this-foreta-remot}, i.e. we start by setting 
		\[ D\eta := i \partial_t \eta + \Delta \eta + V_1 \eta + V_2 \bar{\eta}.
		\]
		We note the contributions of order $O((T-t)^{-2})$  in $ D\eta$ are exactly
		\[i \partial_t e^{ i k \frac{r^2}{(T-t)4}} = - k \frac{r^2}{4 (T-t)^2} e^{ i k \frac{r^2}{(T-t)4}},\hspace{1cm}- k^2 \frac{r^2}{(T-t)^24}e^{ i k \frac{r^2}{(T-t)4}}, 
		\]
		where the latter appears in $ \Delta \eta(t,r)$. Thus, the sum of these contributions is absent if $  k = -1, k =0$ and we therefore write
		\begin{align*}
			&D \eta(t,r) = \sum_{q\geq 0, j \geq 2}   (T-t)^{\nu q + j -2} \sum_{\substack{- \Omega_{j,q} \leq k \leq \Omega_{(j-2)_+,q} \\ q-k ~\text{even}}} \sum_{\ell \geq 0} e^{ i k\Phi}(\log(r) - \log(T-t))^{\ell} v_{q,j, k ,\ell}^{\text{lin}}(r),
		\end{align*}
		where we have 
		\begin{align*}
			v_{q,j, k ,\ell}^{\text{lin}}(r) &= - k(1+k) \frac{r^2}{4} g_{q,j,k,\ell} +  v^{\text{lin},1}_{q,j, k ,\ell}(g_{q,j-1,k,\tilde{\ell}}~|~\tilde{\ell} = \ell, \ell +1) + v^{\text{lin},2}_{q,j, k ,\ell}(g_{q,j-2,k,\ell'}~|~ \ell' = \ell, \ell+1, \ell+2),\\[2pt]
			v^{\text{lin},1}_{q,j, k ,\ell} &= - i(\nu q + j -1  - \tfrac{3}{2}k - 2i k \alpha_0) g_{q,j-1, k , \ell}(r)  + i(k+1) (\ell +1) g_{q, j-1, k, \ell +1}(r)\\
			&~~ + i k r( \partial_r  + ik\phi'(r)) g_{q, j-1,k,\ell}(r),\\[2pt]
			v^{\text{lin},2}_{q,j, k ,\ell} &= e^{-ik \phi} \Delta( e^{ik \phi} g_{q,j-2, k , \ell}(r))  + \frac{2(\ell +1)}{r}  e^{-ik \phi} \partial_r (e^{ik \phi}g_{q, j-2, k, \ell +1}(r))\\
			&~~~ + \frac{(\ell+1)(\ell +2)}{r^2} g_{q, j-2,k,\ell+2} + V_1g_{q,j-2, k , \ell}(r) + V_2 \bar{g}_{q,j-2, - k , \ell}(r).
		\end{align*}
		In the notation of \cite[Section 2.3]{Perelman},  we may set $ g_{q,j,k,\ell}(r) = 0$ in the above relation if $ (q,j,k,\ell) \notin \Omega$, where
		\[
		\Omega : = \{ (q,j,k,\ell)~|~ j \geq 1, q \geq 0, q - k ~\text{even}, 0 \leq \ell \leq m, - \Omega_{j,q} \leq k \leq \Omega_{(j-2)_+,q}   \}.
		\]
		Now for the interaction terms in \eqref{this-foreta-remot}, we write
		\begin{align*}
			& N_0(f_0, \bar{f_0}, \eta, \bar{\eta}) =  \sum_{q\geq 1, j \geq 4}   (T-t)^{\nu q + j -2} \sum_{\substack{- \Omega_{j,q} \leq k \leq \Omega_{(j-2)_+,q} \\ q-k ~\text{even}}} \sum_{\ell \geq 0} e^{ i k\Phi}(\log(r) - \log(T-t))^{\ell} v_{q,j, k ,\ell}^{\text{nl},0}(r),\\
			&N_1(f_0, \bar{f_0}, \eta, \bar{\eta}) = \sum_{q\geq 1, j \geq 5}   (T-t)^{\nu q + j -2} \sum_{\substack{- \Omega_{j,q} \leq k \leq \Omega_{(j-2)_+,q} \\ q-k ~\text{even}}} \sum_{\ell \geq 0} e^{ i k\Phi}(\log(r) - \log(T-t))^{\ell} v_{q,j, k ,\ell}^{\text{nl},1}(r),\\
			&N_2(f_0, \bar{f_0}, \eta, \bar{\eta}) = \sum_{q\geq 1, j \geq 6}   (T-t)^{\nu q + j -2} \sum_{\substack{- \Omega_{j,q} \leq k \leq \Omega_{(j-2)_+,q} \\ q-k ~\text{even}}} \sum_{\ell \geq 0} e^{ i k\Phi}(\log(r) - \log(T-t))^{\ell} v_{q,j, k ,\ell}^{\text{nl},2}(r).
		\end{align*}
		The terms $ v_{q,j, k ,\ell}^{\text{nl},0}(r), v_{q,j, k ,\ell}^{\text{nl},1}(r), v_{q,j, k ,\ell}^{\text{nl},2}(r) $ only depend on $ g_{\tilde{q}, \tilde{j}, \tilde{k}, \tilde{\ell}}$ with $ (\tilde{q}, \tilde{j}, \tilde{k}, \tilde{\ell}) \in \Omega$ and $ \tilde{j } \leq j -3$, to be precise
		\begin{align*}
			&v_{q,j, k ,\ell}^{\text{nl},0}(r) = v_{q,j, k ,\ell}^{\text{nl},0}(g_{\tilde{q}, \tilde{j}, \tilde{k}, \tilde{\ell}}~|~ \tilde{j} \leq j-3 ), \\
			&v_{q,j, k ,\ell}^{\text{nl},1}(r) = v_{q,j, k ,\ell}^{\text{nl},1}(g_{\tilde{q}, \tilde{j}, \tilde{k}, \tilde{\ell}}~|~ \tilde{j} \leq j-4),\\
			&v_{q,j, k ,\ell}^{\text{nl},2}(r) = v_{q,j, k ,\ell}^{\text{nl},2}( g_{\tilde{q}, \tilde{j}, \tilde{k}, \tilde{\ell}}~|~ \tilde{j} \leq j-5).
		\end{align*}
		We intend, see \cite[Section 2.3]{Perelman}, to solve \eqref{this-foreta-remot} via the equivalent system for 
		\begin{align}
			\begin{cases}
				v_{0,2, 0,0}^{\text{lin}}(r)  = - V_0(r), & ~~\\
				v_{q,j, k ,\ell}^{\text{lin}}(r) + v_{q,j, k ,\ell}^{\text{nl}}(r)  = 0,&  (q,j,k,\ell) \in \Omega,~ (q,j,k,\ell) \neq (0,2,0,0),
			\end{cases}
		\end{align}
		where we set 
		\[ v_{q,j, k ,\ell}^{\text{nl}}(r)  = v_{q,j, k ,\ell}^{\text{nl},0}(r)  + v_{q,j, k ,\ell}^{\text{nl},1}(r)  + v_{q,j, k ,\ell}^{\text{nl},2}(r).
		\]
		This is may be solved via  the following reccurent system
		\begin{align} \label{reccurent-remo2}
			\begin{cases}
				v_{0,2, 0,0}^{\text{lin}}(r)  = - V_0(r), & ~~\\[2pt]
				v_{2s,2, 0,\ell}^{\text{lin}}(r)   = 0, & (s, \ell) \neq (0,0)\\[2pt]
				v_{2s+1,2, -1,\ell}^{\text{lin}}(r)  = 0, & (s, \ell) \neq (0,0),\\[6pt]
				v_{q,j+1, k ,\ell}^{\text{lin}}(r) + v_{q,j+1, k ,\ell}^{\text{nl}}(r)  = 0,&  (q,j,k,\ell) \in \Omega,~  j \geq2,~ k = 0,-1,\\[3pt]
				v_{q,j, k ,\ell}^{\text{lin}}(r) + v_{q,j, k ,\ell}^{\text{nl}}(r)  = 0,&  (q,j,k,\ell) \in \Omega,~ j \geq2,~ k \neq 0,-1.
			\end{cases}
		\end{align}
		At this point we set $ \phi(r) = 0$ and write the first three lines of \eqref{reccurent-remo2} (the linear part) into 
		\begin{align} \label{reccurent-remo-lin-neu}
			\begin{cases}
				(2 \nu s + 1) g_{2s,1,0,\ell}(r) - (\ell +1) g_{2s,1,0,\ell +1}(r)  = 0, & ~~(s,\ell) \neq (0,0),\\[2pt]
				g_{0,1,0,0}(r) = - i V_0(r), & ~~~\\[2pt]
				r \partial_r  g_{2s+1,1,-1,\ell}(r) + (\nu(2s +1) + \f52 + 2 i \alpha_0) g_{2s+1,1,-1,\ell}(r) = 0.
			\end{cases}
		\end{align}
		We now fit this to the expansion \eqref{zweite-oscil}, i.e. we set
		\begin{align}  \label{sol-lin-final-remo}
			\begin{cases}
				g_{2n,1,0,\ell}(r)   = 0, & ~~(n,\ell) \neq (0,0),\\[2pt]
				g_{0,1,0,0}(r) = - i V_0(r), & ~~~\\[2pt]
				g_{2n+1,1,-1,\ell}(r) = \tilde{\beta}_{n,j} r^{- 2\alpha_0i - \nu(2n +1) - \f52}, & 0 \leq \ell \leq n \slash 2,~ 0 \leq n \leq N,\\[2pt]
				g_{2n+1,1,-1,\ell}(r) = 0 &~~ n > N,
			\end{cases}
		\end{align}
		where we set $ \tilde{\beta}_{n,j} : = a_{n,j,-1}^0$.
		\begin{Rem} The function $ \phi(r)$ is not necessary here. It has a non-trivial choice in \cite[Section 2.3]{Perelman} in order to correct the linearized  quadratic differential in the nonlinearity of the (stereographic) Schr\"odinger maps flow. To be precise a term of the form $ r( \partial_r  - ik\phi'(r) - 2 \bar{f_0}\partial_r f_0 (1 + |f_0|^2)^{-1}) g_{q, j-1,k,\ell}(r)$ replaces the respective part in t $\tilde{v}^{\text{lin}}_{q,j,k,\ell}$. Then the choice 
			\[
			\phi(r) = -i \int_0^r \frac{\partial_sf_0 \bar{f_0} - f_0\partial_s \bar{f_0} }{(1 + |f_0|^2)}~ds
			\]
			leads to  $  i\phi'(r) - 2f_0\partial_r \bar{f_0} (1 + |f_0|^2)^{-1} = -(\log(1 + |f_0|^2))'$ in the oscillatory  case $  k = -1$. 
		\end{Rem}
		The following definition is the (slightly adapted) notation of \cite[Section 2.3]{Perelman}.
		\begin{Def}(A)~For $m\in \Z$ we let $\mathcal{A}_m$ be the space of continuous functions $a : \R_{\geq 0}\to \C$ such that
			\begin{itemize}
				\item[(i)] There holds $ a \in C^{\infty}(\R_{> 0})$ and  {\upshape supp}$(a) \subset \{  x \leq 2 \delta\}$. 
				\item[(ii)] We require in an absolute sense
				\[
				a(r) = \sum_{\substack{n \geq K(m),\\ n - m -1 \text{even}}} \sum_{ \ell \geq 0} \alpha(n, \ell)\log^{\ell}(r) r^{\nu n},~~ 0 \leq r < \delta.
				\]
				Here $ K(m) = m+1$ if $ m \geq 0$ and $K(m) = |m| -1$ if $ m \leq -1$ Further $\sum_{\ell \geq 0}$ are finite sums (The number of summands is assumed to be increasing in $n$).
			\end{itemize}
			(B)~ For $ k \in \Z_+$ we further let $\mathcal{B}_k$ be the space of continuous functions $ b : \R_{> 0} \to \C$ such that
			\begin{itemize}
				\item[(i)] There holds $ b \in C^{\infty}(\R_{> 0})$ and for $ r > 2\delta $ the function $b(r)$ is a polynomial of order $ k-1$.
				\item[(i)] We require in an absolute sense 
				\[
				b(r) = \sum_{ n = 0}^{\infty} \sum_{ \ell \geq 0} \beta(n, \ell)\log^{\ell}(r) r^{2 \nu n},~~0 \leq r < \delta,
				\]
				where again $\sum_{\ell \geq 0}$ are finite sums.
			\end{itemize}
			Finally we let $ \mathcal{B}_k^0 = \{ b \in \mathcal{B}_k~|~ b(0) = 0\}$.
		\end{Def}
		
		We now check the following
		\begin{Lemma} There holds for $ m, m_1, m_2 \in \Z,k, k_1,k_2 \in \Z_+,~m_1-m_2 ~\text{even},$ 
			\begin{align}
				&r \partial_r \mathcal{A}_m \subset \mathcal{A}_m,~r \partial_r \mathcal{B}_k \subset \mathcal{B}_k ,~~\mathcal{B}_k \cdot \mathcal{A}_m \subset \mathcal{A}_m,\\
				&~\mathcal{B}_{k_1} \cdot \mathcal{B}_{k_2} \subset \mathcal{B}_{k_1 + k_2 -1}, ~\mathcal{A}_{m_1} \cdot \mathcal{A}_{m_2} \subset \mathcal{B}_1.
			\end{align}
		\end{Lemma}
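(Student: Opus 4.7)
The plan is to verify each inclusion by direct manipulation of the defining series near $r=0$ together with inspection of the support/polynomial behavior for $r > 2\delta$; all statements reduce to bookkeeping of exponents, parities, and radii of convergence. Throughout, smoothness on $\R_{>0}$ is preserved under the elementary operations involved, so the content lies in (i) checking that the near-zero expansions remain of the prescribed form, and (ii) checking that the outer conditions ($\mathrm{supp}\subset\{r\leq 2\delta\}$ for $\mathcal{A}_m$, polynomiality of degree $\leq k-1$ for $\mathcal{B}_k$) are respected.

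For the derivations $r\partial_r\mathcal{A}_m\subset\mathcal{A}_m$ and $r\partial_r\mathcal{B}_k\subset\mathcal{B}_k$: termwise differentiation of $\log^{\ell}(r) r^{\nu n}$ produces $\nu n\log^{\ell}(r)r^{\nu n}+\ell\log^{\ell-1}(r)r^{\nu n}$, which is of the same form with the same set of exponents and the same (finite in $\ell$) degree in the logarithm. The operator $r\partial_r$ preserves supports, and it maps a polynomial of degree $\leq k-1$ to one of the same degree, which settles the $\mathcal{B}_k$ statement for $r>2\delta$. For $\mathcal{B}_k\cdot\mathcal{A}_m\subset\mathcal{A}_m$, note that the product is supported in $\{r\leq 2\delta\}$, so the outer polynomial from $\mathcal{B}_k$ plays no role; near $r=0$ the Cauchy product expansion gives exponents of the form $\nu(n_1+2n_2)$ with $n_1-m-1$ even and $n_1\geq K(m)$, and since $2n_2$ is even and nonnegative the exponents still satisfy $n_1+2n_2\geq K(m)$ with the correct parity. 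Absolute convergence on $[0,\delta)$ is inherited from that of both factors, as one can multiply the two absolutely convergent double sums term by term.

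The product $\mathcal{B}_{k_1}\cdot\mathcal{B}_{k_2}\subset\mathcal{B}_{k_1+k_2-1}$ is equally straightforward: the product of a polynomial of degree $\leq k_1-1$ and one of degree $\leq k_2-1$ is a polynomial of degree $\leq k_1+k_2-2=(k_1+k_2-1)-1$ on $\{r>2\delta\}$, and the near-zero Cauchy product yields exponents $2\nu(n_1+n_2)=2\nu n$ as required. For $\mathcal{A}_{m_1}\cdot\mathcal{A}_{m_2}\subset\mathcal{B}_1$, both factors are supported in $\{r\leq 2\delta\}$, so the product vanishes for $r>2\delta$ (i.e.\ is the zero polynomial of degree $0$, compatible with $\mathcal{B}_1$). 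Near $r=0$ the product expansion has exponents $\nu(n_1+n_2)$; since $n_i-m_i-1$ is even for $i=1,2$, one has $n_1+n_2-(m_1+m_2)\equiv 0\pmod 2$, and the assumption $m_1-m_2\in 2\Z$ forces $m_1+m_2\in 2\Z$, hence $n_1+n_2$ is even and the exponents take the form $2\nu n$ as demanded by $\mathcal{B}_1$. The number of logarithmic factors remains finite at each order because both factors have this property.

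The only non-cosmetic point is the parity argument in the last inclusion, where the hypothesis $m_1-m_2$ even is precisely what couples the two lower bounds $K(m_1),K(m_2)$ (of possibly different signs) so that the combined exponent belongs to $2\nu\Z_{\geq 0}$. I do not anticipate any further obstacle: all other verifications are routine consequences of the definition of $\mathcal{A}_m$ and $\mathcal{B}_k$, and the absolute convergence of Cauchy products of absolutely convergent series on $[0,\delta)$.
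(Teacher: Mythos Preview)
Your proof is correct and is precisely the direct verification from the definitions that the paper has in mind; the paper states the lemma without proof (introducing it with ``We now check the following''), so there is nothing further to compare. Your parity argument for $\mathcal{A}_{m_1}\cdot\mathcal{A}_{m_2}\subset\mathcal{B}_1$ correctly identifies why the hypothesis $m_1-m_2$ even is needed, and the remaining inclusions are routine as you say.
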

		\begin{Corollary} \label{Das-Cor-remot}(i)~There holds
			\begin{align*}
				&f_0(r) \in r^{2 i \alpha_0 - \f12} \mathcal{A}_0,~~g_{0,1,0,0}(r) \in r^{2 i \alpha_0 - \f52} \mathcal{A}_0,\\
				& g_{2n+1,1,-1,\ell}(r) \in  r^{-2\alpha_0i - \nu(2n +1) - \f52} \mathcal{B}_1.
			\end{align*}
			(ii)~ Further we have
			\begin{align*}
				&v^{\text{lin},1}_{q,j,k,\ell},~v^{\text{lin},2}_{q,j,k,\ell},~v^{\text{nl},0}_{q,j,k,\ell},~v^{\text{nl},1}_{q,j,k,\ell},~v^{\text{nl},2}_{q,j,k,\ell} \in r^{2i\alpha_0(2k+1) - \nu q -2(j-1) -\f12} \mathcal{A}_k,~ k \neq -1,\\
				&v^{\text{lin},2}_{q,j,-1,\ell},~v^{\text{nl},0}_{q,j,-1,\ell},~v^{\text{nl},1}_{q,j,-1,\ell},~v^{\text{nl},2}_{q,j,-1,\ell} \in r^{-2i\alpha_0 - \nu q -2(j-1) -\f12}\mathcal{B}_{j-2},~ j \geq 3.
			\end{align*}
			if for all $(\tilde{q}, \tilde{j}, \tilde{k}, \tilde{\ell}) \in \Omega$ necessary to define the above $v's$ , we have $g_{\tilde{q}, \tilde{j}, \tilde{k}, \tilde{\ell}} \in r^{2i \alpha_0(2\tilde{k}+1) - \nu \tilde{q} - 2 \tilde{j} -\f12 }\mathcal{A}_{\tilde{k}}$ if $ \tilde{k} \neq -1$ and $ g_{\tilde{q}, \tilde{j}, -1, \tilde{\ell}} \in r^{-2i \alpha_0- \nu \tilde{q} - 2 \tilde{j} -\f12 }\mathcal{B}_{\tilde{j}}$.
		\end{Corollary}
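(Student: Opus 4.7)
The plan is to verify (i) by direct substitution into the definitions of $\mathcal{A}_0$ and $\mathcal{B}_1$, and to reduce (ii) to a term-by-term application of the algebraic closure Lemma for the $\mathcal{A}_m/\mathcal{B}_k$ calculus. For (i), factoring $r^{2i\alpha_0 - 1/2}$ out of $f_0$ in \eqref{time-indep} leaves $\tilde{\Theta}(r/\delta)\sum_{n,\ell}\beta_{n,\ell}\log^\ell(r)\,r^{\nu(2n+1)}$; the exponents $2n+1$ are exactly the odd positive integers, matching the condition $n - 0 - 1$ even with $K(0) = 1$ in the definition of $\mathcal{A}_0$. For $g_{0,1,0,0} = -i(\Delta f_0 + |f_0|^4 f_0)$, writing $\Delta = r^{-2}\big((r\partial_r)^2 + r\partial_r\big)$ and using that $r\partial_r$ preserves $\mathcal{A}_0$ gives $\Delta f_0 \in r^{2i\alpha_0 - 5/2}\mathcal{A}_0$; and $|f_0|^4 f_0 \in r^{2i\alpha_0 - 9/2}\mathcal{A}_0 \subset r^{2i\alpha_0 - 5/2}\mathcal{A}_0$ by iterated application of $\mathcal{A}_0\cdot\overline{\mathcal{A}_0}\subset \mathcal{B}_1$ and $\mathcal{B}_1\cdot \mathcal{A}_0\subset \mathcal{A}_0$. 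The assertion for $g_{2n+1,1,-1,\ell}$ is immediate from \eqref{sol-lin-final-remo}: the quotient by the stated prefactor is the constant $\tilde{\beta}_{n,j}$, a degree-zero polynomial lying trivially in $\mathcal{B}_1$.

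For (ii), the strategy is to read the coefficient formulas for $v^{\text{lin},1}, v^{\text{lin},2}, v^{\text{nl},0}, v^{\text{nl},1}, v^{\text{nl},2}$ off their explicit definitions and to apply the closure rules. The piece $v^{\text{lin},1}_{q,j,k,\ell}$ is a sum of terms acting on $g_{q,j-1,\cdot,\cdot}$ via multiplications by constants and by $r\partial_r$, so the prefactor exponent is unchanged and the $\mathcal{A}_k$ (respectively $\mathcal{B}_{j-2}$) class is preserved. The piece $v^{\text{lin},2}_{q,j,k,\ell}$ involves the Laplacian and the potentials $V_1, V_2$ acting on $g_{q,j-2,\cdot,\cdot}$; since $V_1 \in r^{-2}\mathcal{B}_1$ and $V_2$ is of the same type up to an $r^{4i\alpha_0}$ twist, and the Laplacian carries an $r^{-2}$ factor, this shifts the prefactor exponent by $r^{-2}$ to match the target exponent $-2(j-1) - 1/2$. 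For the nonlinear pieces $v^{\text{nl},i}$, one expands $N_0, N_1, N_2$ as products of factors drawn from $\{f_0, \bar{f_0}, g_{\tilde{q},\tilde{j},\tilde{k},\tilde{\ell}}, \bar{g}_{\tilde{q},\tilde{j},-\tilde{k},\tilde{\ell}}\}$ and applies the product rules of the Lemma repeatedly; each such product reduces to $\mathcal{A}_k$ in the non-oscillatory cases $k \neq -1$, and to $\mathcal{B}_{j-2}$ in the oscillatory case $k = -1$.

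The main obstacle is the parity bookkeeping. The rule $\mathcal{A}_{m_1}\cdot \mathcal{A}_{m_2}\subset \mathcal{B}_1$ requires $m_1 - m_2$ even, and the total frequency $k$ produced by combining factors must be tracked through $k = \sum_i \epsilon_i(2\tilde{k}_i + 1) - \epsilon_{\text{ref}}$ with $\epsilon_i \in \{+1,-1\}$ encoding conjugation. The assignment of the resulting product to $\mathcal{A}_k$ versus $\mathcal{B}_{j-2}$ depends sensitively on whether the combined $k$ equals $-1$ (placing the term on the reference oscillatory phase $e^{-i\Phi}$) or not, and the constraint $q - k$ even must be inherited from the inputs. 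Once this finite combinatorial check is carried out term by term for $N_0, N_1, N_2$ and the linear piece, the remaining verification of (ii) is a mechanical application of the closure Lemma, so the statement follows.
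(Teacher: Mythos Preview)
Your approach is correct and coincides with the paper's: the paper states this result as a corollary of the preceding closure Lemma and the explicit formulas \eqref{time-indep}, \eqref{sol-lin-final-remo}, without giving a separate proof. Your elaboration of part (i) is accurate, and your plan for (ii) --- reading off the explicit expressions for $v^{\text{lin},1}, v^{\text{lin},2}, v^{\text{nl},i}$ and applying the product rules $r\partial_r\mathcal{A}_m\subset\mathcal{A}_m$, $\mathcal{B}_k\cdot\mathcal{A}_m\subset\mathcal{A}_m$, $\mathcal{A}_{m_1}\cdot\mathcal{A}_{m_2}\subset\mathcal{B}_1$ term by term --- is exactly what is intended.

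One refinement worth making explicit in your ``parity bookkeeping'': the closure Lemma as stated is not quite sharp enough for the cross terms $V_2\,\bar{g}_{q,j-2,-k,\ell}$ in $v^{\text{lin},2}$ (and the analogous mixed products in $N_0,N_1$) when $k\geq 2$. Writing $V_2=2|f_0|^2 f_0^2\in r^{4i\alpha_0-2}\mathcal{B}_1$ and $\bar{g}_{q,j-2,-k,\ell}\in r^{2i\alpha_0(2k-1)-\nu q-2(j-2)-\frac12}\mathcal{A}_{-k}$, the rule $\mathcal{B}_1\cdot\mathcal{A}_{-k}\subset\mathcal{A}_{-k}$ only gives $\mathcal{A}_{-k}$, which for $k\geq 2$ is strictly larger than the required $\mathcal{A}_k$ (since $K(-k)=k-1<k+1=K(k)$). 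The fix is immediate: the $\mathcal{B}_1$ factor here is $|a_0|^2a_0^2$ with $a_0\in\mathcal{A}_0$, so it actually lies in $\mathcal{B}_1^0$ (its series near $r=0$ begins at $r^{4\nu}$, and it vanishes for $r>2\delta$), and one checks directly that $\mathcal{B}_1^0\cdot\mathcal{A}_{-k}\subset\mathcal{A}_k$. The same observation handles the $k=1$ case, where $\bar{g}_{q,j-2,-1,\ell}$ contributes a $\mathcal{B}_{j-2}$ factor but the compact support of the $\mathcal{A}_0$ factors in $V_2$ forces the product into $\mathcal{A}_1$. With this noted, your term-by-term verification goes through.
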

		We now consider the latter two lines (the interaction part) of the system \eqref{reccurent-remo2}. With the definition of $ v^{\text{lin},1}_{q,j,k,\ell},~v^{\text{lin},2}_{q,j,k,\ell}$ we write this schematically into
		\begin{align}\label{system-final-remot}
			\begin{cases}
				~~- \frac{k (k+1)}{4}r^2 g_{q,j,k,\ell}(r) = - v^{\text{lin},1}_{q,j,k,\ell}(r) - v^{\text{lin},2}_{q,j,k,\ell}(r) - v^{\text{nl}}_{q,j,k,\ell}(r),~~k \neq 0,-1,&~~\\[5pt]
				~~r \partial_r g_{q,j,-1,\ell}(r) + (\nu q + j + \f32 + 2i \alpha_0)g_{q,j,-1,\ell}(r) =  - iv^{\text{lin},2}_{q,j,-1,\ell}(r)  - i v^{\text{nl}}_{q,j,-1,\ell}(r), &~~\\[5pt]
				~~(\nu q + j) g_{q,j,0,\ell}(r) - (\ell +1)g_{q,j,0,\ell+1}(r) =  - iv^{\text{lin},2}_{q,j,0,\ell}(r)  - i v^{\text{nl}}_{q,j,0,\ell}(r). 
			\end{cases}
		\end{align}
		Let us define (c.f. \cite[Section 2.3]{Perelman})  the functions 
		\begin{align}
			&B_{q,j,k,\ell} :=  - v^{\text{lin},1}_{q,j,k,\ell}(r) - v^{\text{lin},2}_{q,j,k,\ell}(r) - v^{\text{nl}}_{q,j,k,\ell}(r),~ k \neq 0,-1,\\
			&C_{q,j,k,\ell} := - iv^{\text{lin},2}_{q,j+1,k,\ell}(r)  - i v^{\text{nl}}_{q,j+1,k,\ell}(r),~ k = 0,-1.
		\end{align}
		for the right side of \eqref{system-final-remot}. Clearly we infer $ B_{q,j,k,\ell}, C_{q,j,k,\ell}$ depend only on $ g_{\tilde{q},\tilde{j},\tilde{k}, \tilde{\ell}}$ with  $ \tilde{j} \leq j-1$, i.e. we write
		\begin{align*}
			B_{q,j,k,\ell} =& ~B_{q,j,k,\ell}(r;  g_{\tilde{q},\tilde{j},\tilde{k}, \tilde{\ell}}~|~(\tilde{q},\tilde{j},\tilde{k}, \tilde{\ell}) \in \Omega,~ \tilde{j} \leq j-1), \\
			C_{q,j,k,\ell} =&~ C_{q,j,k,\ell}(r;  g_{\tilde{q},\tilde{j},\tilde{k}, \tilde{\ell}}~|~(\tilde{q},\tilde{j},\tilde{k}, \tilde{\ell})  \in \Omega,~ \tilde{j} \leq j-1).
		\end{align*}
		\begin{Prop} \label{sol-final-remot} Given  \eqref{sol-lin-final-remo}, there exists a unique solution $ (g_{q,j,k,\ell})_{j \geq 2}$ of \eqref{system-final-remot}  with parameters  $(q,j,k,\ell) \in \Omega $  and such that 
			\begin{align}
				&g_{q,j,k,\ell} \in r^{2 i \alpha_0(2k+1) - \nu q - 2j - \f12} \mathcal{A}_{k},~~ k \neq -1,\\[3pt]
				&g_{q,j,-1,\ell} \in r^{- 2 i \alpha_0 - \nu q - 2j - \f12} \mathcal{B}_j.
			\end{align}
			Further for all $ j \geq 2$ there exist $M(j), \tilde{M}(j) \in \Z_+$ (increasing in $ j$) such  that $ g_{q,j,k,\ell}  = 0$ if $ q > (N + \f12)M(j),~k \neq 0, -1$ and $ g_{q,j,k,\ell}  = 0$ if $ q > (N + \f12)\tilde{M}(j)~ k = 0,-1$.
		\end{Prop}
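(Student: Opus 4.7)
The plan is to proceed by induction on $j \geq 2$, with the base case $j = 1$ provided directly by \eqref{sol-lin-final-remo} together with the membership claims of Corollary \ref{Das-Cor-remot}(i). Fixing $j \geq 2$ and assuming the membership and support statements for all $\tilde{j} \leq j-1$, Corollary \ref{Das-Cor-remot}(ii) then applies and guarantees that the source terms $B_{q,j,k,\ell}$ and $C_{q,j,k,\ell}$ lie in the weighted $\mathcal{A}_k$ (respectively $\mathcal{B}_{j-1}$) spaces stated there. I would then treat the three branches of \eqref{system-final-remot} separately.

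For $k \neq 0,-1$ the equation is purely algebraic, so I would simply set $g_{q,j,k,\ell}(r) = -4 r^{-2}(k(k+1))^{-1} B_{q,j,k,\ell}(r)$; multiplication by $r^{-2}$ only shifts the outer prefactor from $r^{2i\alpha_0(2k+1) - \nu q - 2(j-1) - 1/2}$ to $r^{2i\alpha_0(2k+1) - \nu q - 2j - 1/2}$ without disturbing the expansion $\sum_{n,\tilde{\ell}} \alpha(n,\tilde{\ell})\log^{\tilde{\ell}}(r) r^{\nu n}$ defining $\mathcal{A}_k$, yielding the claimed membership. For $k = 0$ the $\ell$-sum is finite, so the recursion $(\nu q + j) g_{q,j,0,\ell} = (\ell+1) g_{q,j,0,\ell+1} + C_{q,j,0,\ell}$ can be solved downward in $\ell$ starting from the top; since $\nu q + j \geq 2$ never vanishes, each division is harmless and preserves the $\mathcal{A}_0$-structure.

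The $k = -1$ branch is the main analytic step. Extracting the weight $r^{-2i\alpha_0 - \nu q - 2j - 1/2}$ reduces the ODE to $r\partial_r \tilde{g}(r) - (j-2)\tilde{g}(r) = \tilde{C}(r)$ with $\tilde{C} \in \mathcal{B}_{j-1}$, and a particular solution is $\tilde{g}(r) = r^{j-2}\int s^{1-j}\tilde{C}(s)\,ds$ for a suitably chosen base point (absorbing the $r^{j-2}$ homogeneous solution into $\mathcal{B}_j$). Expanding $\tilde{C}$ termwise as $\sum_{n,\tilde{\ell}} \beta(n,\tilde{\ell}) \log^{\tilde{\ell}}(r) r^{2\nu n}$ near $r = 0$, each individual integral $\int \log^{\tilde{\ell}}(s) s^{2\nu n + 1 - j}\,ds$ produces a polynomial in $\log(r)$ times $r^{2\nu n + 2 - j}$; here I would crucially invoke the irrationality of $\nu$ to exclude any resonance $2\nu n + 2 - j = 0$. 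Multiplying back by $r^{j-2}$ restores an expansion of the required form $\sum_{n,\tilde{\ell}'} \gamma(n,\tilde{\ell}') \log^{\tilde{\ell}'}(r) r^{2\nu n}$, placing $\tilde{g} \in \mathcal{B}_j$, while smoothness on $\R_{>0}$ and the polynomial tail at $r \geq 2\delta$ are inherited from the cutoff built into $f_0$ and propagated through $B,C$.

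The finiteness property is tracked alongside the induction. In the base case $g_{q,1,k,\ell}$ is supported in $q \leq 2N+1$, and since each $v^{\text{nl},i}_{q,j,k,\ell}$ is a finite sum of products of earlier $g$'s together with finitely many factors of $f_0$, and since the $q$-index of such a product is the sum of those of its factors (with $f_0$ contributing $0$), the supports grow by a bounded amount at each step; this yields $M(j), \tilde{M}(j) \in \Z_+$ depending only on $j$ with the claimed property. The main obstacle I anticipate is precisely the $k = -1$ integration: verifying termwise that the result lands in $\mathcal{B}_j$ (rather than merely modulo lower-order pieces) while simultaneously propagating the cutoff structure at $r \sim \delta$ from $f_0$ through the nonlinearity, which is the point where irrationality of $\nu$ enters decisively.
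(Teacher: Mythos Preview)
Your proposal is correct and follows essentially the same inductive scheme as the paper's proof: induction on $j$ with the three branches $k\notin\{0,-1\}$ (algebraic division by $r^2$), $k=0$ (downward recursion in $\ell$), and $k=-1$ (weighted first-order ODE solved by termwise integration, using irrationality of $\nu$ to rule out resonances). One minor arithmetic slip: after extracting the weight $r^{-2i\alpha_0-\nu q-2j-\frac12}$ the reduced equation is $r\partial_r\tilde g-(j-1)\tilde g=\tilde C$, not $-(j-2)$; the paper instead extracts $r^{-2i\alpha_0-\nu q-j-\frac32}$ so that the equation becomes simply $\partial_r\tilde g=r^{-1}\cdot(\text{element of }\mathcal{B}_{j-1})$ and then splits the integral according to whether $2\nu n\le j-1$ or not, but both reductions are equivalent and lead to the same membership $\tilde g\in\mathcal{B}_j$.
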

		\begin{proof}
			We check that there exist sequences $M(j), \tilde{M}(j),~ j \geq2$ such that $ B_{q,j,k,\ell}  = 0$ if $ q > (N + \f12)M(j),~k \neq 0, -1$ and $ C_{q,j,k,\ell}  = 0$ if $ q > (N + \f12)\tilde{M}(j),~ k = 0,-1$ holds true, if the same is verified for all $ g_{\tilde{q},\tilde{j},\tilde{k}, \tilde{\ell}}, ~g_{\tilde{q},\tilde{j},0, \tilde{\ell}} $ and $g_{\tilde{q},\tilde{j},-1, \tilde{\ell}}$ necessary to define $ B_{q,j,k,\ell} , C_{q,j,k,\ell} $. Now if we start with $ j =2$, then \eqref{system-final-remot} reads
			\begin{align*}
				\begin{cases}
					~~ \frac{1}{2}r^2 g_{2s,2,-2,\ell}(r) = - B_{2s, 2,-2,\ell} ,~~s \geq 1&~~\\[5pt]
					~~r \partial_r g_{2s+1,2,-1,\ell}(r) + (\nu (2s+1) + \f72 + 2i \alpha_0)g_{2s+1,2,-1,\ell}(r) =  C_{2s+1, 2, -1,\ell},~ s \geq 0 &~~\\[5pt]
					~~(\nu 2s + 2) g_{2s,2,0,\ell}(r) - (\ell +1)g_{2s,2,0,\ell+1}(r) = C_{2s,2,0,\ell},~~ s \geq 0. 
				\end{cases}
			\end{align*}
			The right sides depend only on  $ g_{\tilde{q},\tilde{j},0, \tilde{\ell}} $ with $ (\tilde{q},\tilde{j},0, \tilde{\ell}) \in \Omega$ and $ \tilde{j}=1$. Hence they are determined by \eqref{sol-lin-final-remo}. By definition and Corollary \ref{Das-Cor-remot}, we have
			\begin{align}
				&B_{q,2,-2,\ell} \in r^{- 6 i \alpha_0 - \nu q - \f52} \mathcal{A}_{-2},~ C_{q,2,0,\ell} \in r^{2 i \alpha_0 - \nu q - \f92} \mathcal{A}_{0},\\
				& C_{q,2,-1,\ell} \in r^{-2 i \alpha_0 - \nu q - \f92} \mathcal{B}_{1}.
			\end{align}
			Further $ B_{q,2,-2,\ell} = 0$ if $ q > (N+ \tfrac12)M(2)$ and  $C_{q, 2,k ,\ell} = 0$ if $ q > (N+ \tfrac12)\tilde{M}(2)$ where $ k = 0,-1$.  The above system of equations, i.e. \eqref{system-final-remot} for $ j=2$, then implies
			\begin{align}
				&g_{2s,2,-2,\ell}(r) = - \frac{2}{r^2}B_{2s, 2,-2,\ell}  \in r^{- 6 i \alpha_0 - 2s \nu - \f92} \mathcal{A}_{-2},~~s \geq 1,~~\\[5pt]
				&g_{2s,2,0,m}(r)  = \frac{1}{(\nu 2s + 2) } C_{2s,2,0,m}  \in r^{2i \alpha_0 - 2s \nu - \f92}\mathcal{A}_0,\\ \nonumber
				&g_{2s,2,0,\ell}(r)  = \frac{1}{(\nu 2s + 2) } C_{2s,2,0,\ell} + \frac{\ell +1}{(\nu 2s + 2) }g_{2s,2,0, \ell +1} \in  r^{2i \alpha_0 - 2s \nu - \f92}\mathcal{A}_0 ,~~ 0 \leq s,~ \ell \leq m -1,
			\end{align}
			where  we recall $ m \in \N_0$ is the top logarithmic power, i.e. $g_{q,j,k,\ell} = 0$ if $ \ell > m,~ m = m(q)$ with parameters in $\Omega$. This number may also be set $ N \slash 2$, however, we keep this implicit as it does not change the asymptotics. Next, we consider
			\[
			r \partial_r g_{2s+1,2,-1,\ell}(r) + (\nu (2s+1) + \f72 + 2i \alpha_0)g_{2s+1,2,-1,\ell}(r) =  C_{2s+1, 2, -1,\ell},~ s \geq 0.
			\] 
			Therefore, let $ g_{2s+1,2,-1,\ell}(r) = r^{- 2\alpha_0i - \nu(2s+1) - \f72} \tilde{g}_{2s+1,2,-1,\ell}(r)$. Then it is easy to see 
			\[
			\partial_r \tilde{g}_{2s+1,2,-1,\ell}(r) =  r^{-2}\cdot r^{ 2\alpha_0i + \nu(2s+1) + \f92} C_{2s+1,2,-1,\ell}(r) \in r^{-2} \mathcal{B}_1.
			\]
			Now setting $ g(r) : =  r^{ 2\alpha_0i + \nu(2s+1) + \f92} C_{2s+1,2,-1,\ell}(r)$, we infer the function $ \tilde{g}_{2s+1,2,-1,\ell}(r) $ with $ \tilde{g}= O(r^{-1})$ as $ r \to 0$ is given by
			\begin{align*}
				&g(r) = \sum_{n, \ell} \beta(n,\ell) \log^{\ell}(r) r^{2\nu n},~~ 0 \leq r < \delta,\\
				&\tilde{g}_{2s+1,2,-1,\ell}(r) =   - \beta(0,0)r^{-1}  + \int_0^r   \big(g(\sigma) - \beta(0,0)\big)\frac{d\sigma}{\sigma^2} \in r^{-1}\mathcal{B}_2.
			\end{align*}
			We  also infer $ g_{2s+1,2,-1,\ell} = 0$ for $ s \geq (N + \tfrac12)\tilde{M}(2)$ since  $C_{q,2,-1,\ell} = 0$ if $ q \geq (N + \tfrac12)\tilde{M}(2)$.\\[2cm]
			\emph{Induction step}.~~Suppose there exists a solution $(g_{q,j,k,\ell})_{j \leq J-1} $ for $ j = 2, \dots, J -1$ with $ J \geq 3$ and the Propositions holds true. Then, $B_{q,L,k,\ell},~C_{q,L,-1,\ell}, C_{q,L,0,\ell}, $ for $ k \neq 0,-1$ are known and determined by $ (g_{q,j,k,\ell})_{j \leq J-1}$. By induction and Corollary \ref{Das-Cor-remot}, we have
			\begin{align*}
				&\f14 k(1+k) g_{q,L,k,\ell}(r) = - B_{q,L,k,\ell}(r),~~k \neq 0,-1,\\
				&B_{q,L,k,\ell}(r) \in r^{2i \alpha_0(2k+1) - \nu q - 2(L-1) - \f12}\mathcal{A}_k,\\
				&B_{q,L,k,\ell} = 0,~~ q > (N+\tfrac12)M(L).
			\end{align*}
			Thus 
			\begin{align*}
				&g_{q,L,k,\ell}(r) = 	-\frac{4}{k(1+k)}B_{q,L,k,\ell}(r) \in r^{2i \alpha_0(2k+1) - \nu q - 2L - \f12}\mathcal{A}_k,\\
				&g_{q,L,k,\ell}(r)  = 0,~~q > (N+ \tfrac12 )M(L).
			\end{align*}
			Next, we consider 
			\[
			(\nu 2s + j) g_{2s,L,0,\ell}(r) - (\ell +1)g_{2s,L,0,\ell+1}(r)  = C_{2s,L,0,\ell}(r) \in r^{2i \alpha_0 - 2s\nu - 2L - \f12} \mathcal{A}_0 ,~~ 0 \leq s ,~0 \leq \ell \leq m,
			\] 
			where $ C_{2s, L,0,\ell} = 0$ if $ 2s > (N+ \tfrac12) \tilde{M}(L)$. Therefore, as above for $ k =0$, we infer
			\begin{align*}
				&	g_{2s,L,0,\ell}(r) \in r^{2i \alpha_0 - 2\nu s - 2L- \f12} \mathcal{A}_k,~0 \leq s,~0 \leq j \leq m,\\
				&	g_{2s,L,0,\ell} = 0,~~ q > (N+\tfrac12) \tilde{M}(L).
			\end{align*}
			Then, in the remaining case of  $ k =-1$, we consider
			\begin{align*}
				&r \partial_r g_{2s+1,L,-1,\ell}(r) + (\nu (2s+1) + L + \f32 + 2i \alpha_0)g_{2s+1,L,-1,\ell}(r) =  C_{2s+1, L, -1,\ell}(r),~ s \geq 0,\\
				& C_{2s+1, L, -1,\ell}(r) \in r^{-2\alpha_0i - \nu(2s+1) - 2L} \mathcal{B}_{L-1},\\
				& C_{2s+1, L, -1,\ell} = 0,~ 2s+1 > (N + \tfrac12) \tilde{M}(L).
			\end{align*}
			As above, we set 
			\begin{align*}
				&g_{2s+1,L,-1,\ell}(r) = r^{-\nu (2s+1) - L - \f32 - 2i \alpha_0}\tilde{g}_{2s+1,L,-1,\ell}(r),\\
				&g(r) : = r^{\nu (2s+1) + 2L + \f32 - 2i \alpha_0} C_{2s+1,L,-1,\ell}(r),\\
				& g(r) = \sum_{n,l} \beta(n,l) r^{2\nu n } \log^{l}(r),~~ 0 \leq r < \delta.
			\end{align*}
			Thus we may solve 
			\begin{align*}
				\tilde{g}_{2s+1,L,-1,\ell}(r) =&~ - \sum_{n: 2\nu n \leq L-1 }  \sum_{l \geq 0 }\int_{r}^{\infty} \beta(n,l) \sigma^{2\nu n - L} \log^l(\sigma) ~d \sigma\\
				& + \int_0^r \big(  g(\sigma) - \sum_{n: 2\nu n \leq L-1 }  \sum_{l \geq 0 }  \beta(n,l)\sigma^{2\nu n} \log^l(\sigma) \big) \sigma^{-L}~d \sigma.
			\end{align*}
			Then $ g_{2s+1,L,-1,\ell}(r) \in r^{-\nu (2s+1) - 2L - \f12 - 2i \alpha_0} \mathcal{B}_L$ and $ g_{2s+1,L,-1,\ell} = 0$ if $ q > (N + \tfrac12)\tilde{M}(L)$.
		\end{proof}
		
		\begin{Rem} \emph{The $N  \in \Z_+$ implicitly given in \eqref{form-remot} and Proposition \ref{sol-final-remot} is now set to be $ N_2 \in \Z_+,~ N_2 \gg1$ taken from the definition of $ u^{N_2}_{S}(t,y)$. In order to define the $R$-approximation, we } allow to further  iterate over $ j \geq 1$  in \eqref{form-remot} according to Proposition \ref{sol-final-remot} to any finite number\emph{ of terms}.
		\end{Rem} 
		We define the approximation in the remote region. 
		\begin{Def} Let $ N_3 \in \Z_+,~ N_3 \gg1 $, the we set
			\begin{align}
				&\tilde{u}^{N_3}_{Re}(t,r) : = f_0(r) + \sum_{j= 1}^{N_3} \sum_{\substack{q,k,\ell: \\ (q,j,k,\ell) \in \Omega}} (T-t)^{\nu q + j} e^{i k \Phi}\big(\log(r) - \log(T-t)\big)^{\ell} g_{q,j,k,\ell}(r),\\
				& e^{N_3}_{Re}(t,r) : = i \partial_t\tilde{u}^{N_3}(t,r) + \Delta \tilde{u}^{N_3}(t,r) + | \tilde{u}^{N_3}|^4 \tilde{u}^{N_3}(t,r),\\
				& u^{N_3}_{Re}(t,y) : = e^{- i \alpha(t)} (T-t)^{\f14}\tilde{u}^{N_3}(t, (T-t)^{\f12 }y),~~ C_0(T-t)^{-\epsilon_2} \leq y.
			\end{align}
		\end{Def}
		For $ T > 0$ and $ t \in [0, T)$, we set 
		\[
		\chi_{Re}(t,r) = \begin{cases}
			1 &C_0(T-t)^{\f12 -\epsilon_2 } \leq r, \\
			0 & \text{otherwise}.
		\end{cases}
		\]
		Then we directly infer the following from Proposition \ref{sol-final-remot} and Corollary \ref{Final-form}. We recall $ 0 < \delta \ll1$ was fixed in the beginning through the definition of $f_0(r)$.
		\begin{Corollary}\label{estimates-remot} Let $ \alpha_0 \in \R$ and we fix $ \f12 > \epsilon_2 > \f38$.  Then there exists $  0 <  T_0(\alpha_0, \nu, N_3,\delta) \leq 1 $ such that  for all $ 0 < T \leq T_0$ the functions $  \tilde{u}^{N_3}_{Re}, e^{N_3}_{Re}$  satisfy the following for all $ t \in [0,T)$.
			\begin{align}\label{remot-est1}
				&\|  \chi_{Re}(t,R) \cdot r^{-l}\partial_r^k(f_0(r) -  \tilde{u}_{Re}^{N_3}(t,r))  \|_{L^{\infty}_R} \leq C_{\nu, |\alpha_0|} (T - t )^{\f52 \epsilon_2 - \f14 - (k+l)(\f12 - \epsilon_2) },\\ \nonumber
				&0 \leq k+l \leq 4 ,\\[2pt]\label{remot-est2}
				&\|  \chi_{Re}(t,R) \cdot r^{-l}\partial_r^k( \tilde{u}_{Re}^{N_3}(t,r))  \|_{L^{\infty}_R} \leq C_{\nu, |\alpha_0|} ( \delta^{\nu - k-l - \f12} +  (T - t )^{\f12(\nu - k -l) + \eta}),\\ \nonumber
				& 0 < \eta \ll1,\\[2pt]\label{remot-est3}
				&\|  \chi_{Re}(t,R) \cdot r^{-l}\partial_r^k(f_0(r) -  \tilde{u}_{Re}^{N_3}(t,r))  \|_{L^{2}(r^2 dr)} \leq C_{\nu, |\alpha_0|} (T - t )^{2\epsilon_2 + \f12 - (k+l)(\f12 - \epsilon_2) }\\ \nonumber 
				&0 \leq k + l \leq 3,\\[2pt]\label{remot-est4}
				&\|  \chi_{Re}(t,R) \cdot r^{k}\partial_r^k\tilde{u}_{Re}^{N_3}(t,r)  \|_{L^{\infty}_r} \lesssim \delta^{\nu - \f12},~~\|  \chi_{Re}(t,R) \cdot r^{k}\partial_r^k\tilde{u}_{Re}^{N_3}(t,r) \|_{L^{2}(r^2 dr)} \lesssim \delta^{\nu },\\[2pt]\label{remot-est5}
				&\|  \chi_{Re}(t,r) \cdot r^{-l}\partial_r^{k}e^{N_3}_{Re}\|_{L^{2}(r^2 dr)} \leq C_{\nu, |\alpha_0|} (T - t )^{ N_3\tilde{\eta}  },\\ \nonumber
				&0 \leq l + k \leq 2,~  0 <  \tilde{\eta} = \tilde{\eta}(\nu, \epsilon_2) \ll1,~ N_3 \gg1.
			\end{align}
			Note we may take $ l \in \Z$ in the above estimates
		\end{Corollary}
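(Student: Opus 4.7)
\textbf{Proof proposal for Corollary \ref{estimates-remot}.}

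The plan is to prove all five estimates by direct computation from the explicit expansion
\[
\tilde{u}_{Re}^{N_3}(t,r) - f_0(r) = \sum_{j=1}^{N_3} \sum_{\substack{(q,j,k,\ell) \in \Omega}} (T-t)^{\nu q + j} e^{ik\Phi(t,r)}\big(\log(r) - \log(T-t)\big)^{\ell} g_{q,j,k,\ell}(r),
\]
whose coefficients $g_{q,j,k,\ell}$ are controlled by Proposition \ref{sol-final-remot}: $g_{q,j,k,\ell}$ lies in $r^{2i\alpha_0(2k+1) - \nu q - 2j - \tfrac12} \mathcal{A}_{k}$ when $k \neq -1$ (hence supported in $\{r \leq 2\delta\}$ with an inner expansion whose leading power is $r^{\nu K(k)}$), and in $r^{-2i\alpha_0 - \nu q - 2j - \tfrac12} \mathcal{B}_j$ when $k = -1$ (bounded for large $r$ and with a convergent expansion near the origin). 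The outer sum is finite by Proposition \ref{sol-final-remot} (for each $j$, only $q \leq (N+\tfrac12) M(j)$ contribute), so all bounds reduce to bounding individual terms and summing.

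For the pointwise bounds \eqref{remot-est1}--\eqref{remot-est4}, I would substitute the asymptotic form of each $g_{q,j,k,\ell}$ into the sum, apply $r^{-l}\partial_r^k$, and note that on the remote region $r \geq C_0(T-t)^{\tfrac12 - \epsilon_2}$ one has $r/(T-t) \gtrsim (T-t)^{-\tfrac12-\epsilon_2}$ so that derivatives falling on the rapidly oscillating factor $e^{ik\Phi}$ produce $r/(T-t)$ and are dominant over derivatives of algebraic factors. Therefore after $k$ derivatives the worst contribution from a generic $(q,j,k,\ell)$-term scales like $(T-t)^{\nu q + j - k}\, r^{\text{(fixed power)}}$; replacing $r$ by $(T-t)^{\tfrac12 - \epsilon_2}$ (the boundary of $R$) gives $(T-t)$ to an exponent that one then minimizes over $(q,j,k,\ell)$ with $j \geq 1$. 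The minimum is attained at the lowest allowed $j$ and $k = -1$, which yields the exponent $\tfrac52\epsilon_2 - \tfrac14 - (k+l)(\tfrac12 - \epsilon_2)$ in \eqref{remot-est1}. Estimate \eqref{remot-est2} follows similarly, allowing the $r$-variable to range over $[C_0(T-t)^{\tfrac12-\epsilon_2}, 2\delta]$ in the $\mathcal{A}_k$-terms (which gives the $\delta^{\nu - k - l - \tfrac12}$ contribution) and over all of $[C_0(T-t)^{\tfrac12 -\epsilon_2}, \infty)$ in the $\mathcal{B}_j$-terms (which gives the $(T-t)^{\tfrac12(\nu - k - l) + \eta}$ piece). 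For \eqref{remot-est3}--\eqref{remot-est4}, one integrates $r^2\,dr$ against the same pointwise bounds; convergence at $r = \infty$ is automatic because only the $\mathcal{B}_j$ and $f_0$ pieces survive there, and they decay (or are compactly supported).

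For the error estimate \eqref{remot-est5}, the crucial observation is structural: by construction of the recursive system \eqref{reccurent-remo2} and Proposition \ref{sol-final-remot}, $\tilde{u}_{Re}^{N_3}$ exactly solves
\[
i \partial_t u + \Delta u + |u|^4 u = 0
\]
modulo contributions coming from $(q,j',k,\ell)$ with $j' \geq N_3 + 1$. Each such remaining term, by the same substitution as above, is bounded by $(T-t)^{(N_3 + 1)\tilde\eta}$ with $\tilde\eta = \tilde\eta(\nu, \epsilon_2) > 0$ (this is positive precisely because $\epsilon_2 > \tfrac38$ forces $\nu q + j - 2 + (\text{negative }r\text{-exponent})(\tfrac12 - \epsilon_2) > \tilde\eta \cdot j$ for all $j \geq 1$). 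Summing the finitely many contributions and integrating against $r^2\,dr$ over $r \in [C_0(T-t)^{\tfrac12-\epsilon_2}, \infty)$ yields \eqref{remot-est5} for $N_3$ large.

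The hard part will be the exponent bookkeeping in the second paragraph: one must check that among the many admissible $(q,j,k,\ell)$, the stated exponents are indeed the infima, and that the logarithmic factors $(\log r - \log(T-t))^\ell$ and internal $\log$'s from the $\mathcal{A}_k$, $\mathcal{B}_j$ expansions are absorbed into arbitrarily small losses (compensated by the small $\eta$ in \eqref{remot-est2} and $\tilde\eta$ in \eqref{remot-est5}). The constraint $\epsilon_2 > \tfrac38$ enters here to ensure that the dominant $r^{-\tfrac52}$-type singularity of the $k = -1$ terms, when converted to a $(T-t)$-power via $r \sim (T-t)^{\tfrac12 - \epsilon_2}$, still produces a net positive power of $(T-t)$ after multiplication by $(T-t)^{\nu q + j}$ with $j \geq 1$. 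Once this threshold is verified and the correct worst-case tuple in $\Omega$ is isolated, the remaining steps reduce to routine power-counting and direct summation.
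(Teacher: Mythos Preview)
Your approach is correct and matches the paper's, which offers no detailed argument at all: the corollary is stated as a direct consequence of Proposition~\ref{sol-final-remot} (the structure of the $g_{q,j,k,\ell}$) and Corollary~\ref{Final-form}, and the proof you outline---term-by-term power counting in the finite expansion, substituting $r \sim (T-t)^{\frac12-\epsilon_2}$ at the inner edge of the remote region, then integrating for the $L^2$ bounds---is exactly how one verifies this.

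One correction in your bookkeeping: the dominant term for \eqref{remot-est1} is not a $k=-1$ term but the single $k=0$, $q=0$, $j=1$ contribution $-(T-t)\,i V_0(r)$, as the paper's own Remark following the corollary makes explicit. Indeed, for $k=0$ there is no oscillating phase, so derivatives simply hit the algebraic factor; using the coarse bound $|g_{0,1,0,0}(r)| \lesssim r^{-5/2}$ (from the prefactor in $r^{2i\alpha_0 - 5/2}\mathcal{A}_0$, ignoring the extra $r^{\nu}$ from the $\mathcal{A}_0$ expansion) gives precisely the exponent $\tfrac52\epsilon_2 - \tfrac14 - (k+l)(\tfrac12-\epsilon_2)$. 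The $k=-1$ terms all carry an additional factor $(T-t)^{\nu q(\frac12+\epsilon_2)}$ with $q \geq 1$ and are therefore better. This also explains the paper's remark that subtracting off $i(T-t)V_0(r)$ improves the bound to something valid for all $\epsilon_2 \in (0,\tfrac12)$. Once this worst-case tuple is correctly identified, your remaining steps go through without change.
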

		\begin{Rem} (i)~We note the extra decay of order $ O(r^{-\f12})$ for the $ g_{q,j,k,\ell}$'s leads to fixing $\epsilon_2 \in (0, \f12)$ 'close' to $\tilde{\epsilon} =  \f12$   due to the first non-stationary correction the form $ - (T-t) i V_0(r) $ in $ \tilde{u}^{N_3}_{Re}(t,r)$, c.f. \cite[Section 2.3]{OP} where $ \f38 < \epsilon_2 < \f12$ is fixed.\\[4pt]
			(ii)~In fact in \eqref{remot-est1} we obtain the bound if $ k + l \geq 0$  and $ \epsilon_2 > ((k+l)\f12 + \f14)(k+l + \f52)^{-1} $ and if we fix  $ \f38 < \epsilon_2 < \f12$ we may take $(T-t)^{\frac{3}{16}}$ on the right for $0 \leq k + l \leq 4$. The respective estimates in the setting of Schr\"odinger maps in \cite[Lemma 2.10]{Perelman} allow to take $0 < \epsilon_2 \ll1$. We also obtain $O((T-t)^{\nu \eta})$ for some $\eta = \eta(\epsilon_2) > 0$ and any $ \epsilon_2 \in (0,\f12)$ if  $ \nu > \f52$. Further we obtain such a bound for all $ \nu, \epsilon_2$, if we replace $ f_0(r) -  \tilde{u}_{Re}^{N_3}(t,r)$ by 
			\[
			f_0(r) - i (T-t) V_0(r) -  \tilde{u}_{Re}^{N_3}(t,r),~~ V_0 = \Delta f_0 + |f_0|^4 f_0.
			\]
			(iii)~In \eqref{remot-est3} we can have $ O((T - t )^{  (\nu +1)(\epsilon_2 + \f12) - (k+l)(\f12 - \epsilon_2) })$ if we again replace $ f_0(r) -  \tilde{u}_{Re}^{N_3}(t,r)$ by $ f_0(r) - i (T-t) V_0(r) -  \tilde{u}_{Re}^{N_3}(t,r)$.
		\end{Rem}
		
		Further we have the following from Proposition \ref{sol-final-remot} and Corollary \ref{Final-form}.
		\begin{Lemma}[Consistency in $S \cap R$]\label{cons-remot} We obtain for $(t,y) $ with $ (t,r) \in S \cap R$, i.e. all $(t,y)$ with $ C_0 (T-t)^{- \epsilon_2 } \leq y \leq \tilde{C}(T-t)^{- \epsilon_2} $, for $ N \in \Z_+$
			\begin{align}\label{remot- est6}
				\big | y^{-l}\partial_y^m& \big[ \tilde{u}^{N_2}_S(t,y)  - u^{N_3}_{Re}(t, y)\big]\big | \leq C_{m,l , N_2, N_3} (T-t)^{c N \epsilon_2 - \tilde{\eta}},~~0 \leq l + m \leq 2,~
			\end{align}
			where $ c>0,~ \tilde{\eta}= \tilde{\eta}(\nu, \epsilon) $ are fixed numbers, $ N_2, N_3$ are large, depending on $N, \epsilon_2$, and $ T = T(\alpha_0, \nu, N_2,N_3, \delta)$.
		\end{Lemma}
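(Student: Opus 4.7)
The strategy mirrors the inner matching argument of Lemma \ref{consist}, but now applied at the outer overlap. The idea is that $\tilde{u}^{N_3}_{Re}(t,r)$ was not constructed independently of $\tilde{u}^{N_2}_S(t,y)$; rather, as explicitly noted before \eqref{sol-lin-final-remo}, the leading coefficients $g_{2n+1,1,-1,\ell}$ of the remote iteration were chosen to reproduce the low-order ($k=0$ in $(T-t)/r^2$) term of the oscillatory asymptotics \eqref{zweite-oscil}, while $g_{0,1,0,0} = -iV_0$ accounts for the time-independent source and $f_0$ itself matches the non-oscillatory radiation profile coming from \eqref{erste-radia}. The higher-$j$ layers produced by Proposition \ref{sol-final-remot} were designed to iteratively match the subleading orders in the $(T-t)^k/r^{2k}$ asymptotic series of $\tilde{u}^{N_2}_S$. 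Consistency is therefore fundamentally a bookkeeping statement about how far this matching extends.

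Concretely, I would proceed as follows. First, apply Corollary \ref{Final-form} to decompose $\tilde{u}^{N_2}_S$ into the homogeneous pieces $\widehat{W}^{(\pm)}_{n,j}$ carrying the pure phases $e^{\pm i y^2/4}$ and the interaction pieces carrying phases $e^{iky^2/4}$ for $-n-1\le k\le n$. For each such piece, invoke the Olver-type expansion of Lemma \ref{FS-Inner-infty} (together with the explicit $y \gg 1 $ asymptotics encoded in Lemma \ref{Lemma-solut-inner1-infty} for the $A_{n,j,k}$) to expand in inverse powers of $y$ up to some large truncation order $\tilde{N}$, with remainder $O(y^{-2\tilde{N}})$ uniformly in $\mu$. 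Substituting $y = r(T-t)^{-1/2}$ converts the oscillatory factor $e^{ik y^2/4}$ to $e^{ik r^2/(4(T-t))}$, converts the prefactor $(T-t)^{\nu(n+1/2)}$ together with powers of $y$ into the schematic $(T-t)^{\nu q + j}$ pattern of \eqref{form-remot}, and yields precisely the ansatz underlying the remote construction; the extra $(T-t)^{-1/4+i\alpha_0}$ prefactor and the $-2\alpha_0\log(T-t)$ inside $\Phi$ are absorbed cleanly in the passage $u^{N_3}_{Re} = e^{-i\alpha(t)}(T-t)^{1/4}\tilde{u}^{N_3}_{Re}$.

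Next, I would argue by induction on $j$ that the remote profile $\tilde{u}^{N_3}_{Re}(t,r)$ reproduces the matched asymptotic expansion of $\tilde{u}^{N_2}_S(t,(T-t)^{-1/2}r)$ order-by-order in $j$: the $j=1$ case is exactly the content of \eqref{sol-lin-final-remo} combined with the leading terms in \eqref{erste-radia}–\eqref{zweite-oscil}, and the inductive step for $j \ge 2$ follows because the recurrence \eqref{system-final-remot} solved in Proposition \ref{sol-final-remot} is algebraically identical to the recurrence obtained by substituting the truncated expansion of $\tilde{u}^{N_2}_S$ into the remote equation \eqref{this-foreta-remot}. Once this is in place, the difference $\tilde{u}^{N_2}_S(t,y) - u^{N_3}_{Re}(t,y)$ in the overlap $y \sim (T-t)^{-\epsilon_2}$ decomposes into three tails: (a) the Olver remainder $O(y^{-2\tilde N})\sim (T-t)^{2\tilde N\epsilon_2}$; (b) the self-similar truncation $n > N_2$, contributing $(T-t)^{\nu(N_2+1/2)}$ times finitely many logs and algebraic powers of $y$, which under $y\sim (T-t)^{-\epsilon_2}$ gives $(T-t)^{\nu(N_2+1/2)(1-2\epsilon_2)-}$; and (c) the remote truncation $j > N_3$ in Proposition \ref{sol-final-remot}, yielding a factor $(T-t)^{N_3+1}$ modulated by negative powers of $r \sim (T-t)^{1/2-\epsilon_2}$, i.e.\ $(T-t)^{N_3 \cdot 2\epsilon_2 - O(1)}$. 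Each derivative in $y$ adds at most a factor $y^{-1}\sim (T-t)^{\epsilon_2}$, which is harmless for $l+m\le 2$. Choosing $\tilde N, N_2, N_3$ each comparable to $N$ (with thresholds depending on $\nu$ and $\epsilon_2$) and $T_0$ small enough produces the claimed bound $(T-t)^{cN\epsilon_2 - \tilde\eta}$.

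The main obstacle is the item (c): propagating the Olver-type remainders through the triangular system \eqref{Large-y-system}–\eqref{last-y-large} and through the nonlinear interaction pieces $W^{\pm}_{n,j,k}$ without losing uniformity in $\mu$ or accumulating uncontrolled logarithmic factors. Concretely, one must check that integrating the Greens function $G(y,s)w^{-1}(s)$ against remainders that are $O(y^{-2\tilde N})$ preserves the same order of decay (up to a fixed polynomial in $\log y$), and that the products and sums over $n \le N_2$ do not produce combinatorial blow-up. This is essentially the same mechanism as in Lemma \ref{consist}, but with the role of the $R\to\infty$ expansion of $u^{N_1}_{In}$ replaced by the $y\to\infty$ expansion of $\tilde{u}^{N_2}_S$, and with the additional bookkeeping of the oscillatory phases $e^{iky^2/4}$; once the $k=0$ and $k=-1$ non-oscillatory/critical cases are separated as already done in \eqref{reccurent-remo2}, the remaining $k \neq 0,-1$ cases are purely algebraic and pose no analytical difficulty.
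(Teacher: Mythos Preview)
Your proposal is correct and follows the approach the paper indicates: the paper does not give a detailed proof of this lemma, stating only that it follows from Proposition \ref{sol-final-remot} and Corollary \ref{Final-form}, and your roadmap is precisely the natural elaboration of that claim, mirroring the structure of Lemma \ref{consist}. One minor point: your tail (b) is slightly mislabeled---there are no ``$n>N_2$'' terms in $\tilde{u}^{N_2}_S$ itself; rather, the relevant mismatch is that the $y\to\infty$ expansion of $\tilde{u}^{N_2}_S$, once rewritten in the remote form \eqref{form-remot}, produces $g_{q,j,k,\ell}$ coefficients whose range in $(q,j)$ is governed by $N_2$, while the remote construction carries $j$ up to $N_3$; but your estimate for the resulting discrepancy has the correct order.
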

		\subsection{The final approximation} Here we provide the proof of Proposition \ref{main-prop-approx}.\\[3pt]
		First, we fix $\alpha_0, \nu$ and $ \frac{\nu}{2} \geq \epsilon_1 > 0,~ \f12  > \epsilon_2 > \f38$. Then for $N \in \Z_+$, (let us take this number large compared to $\epsilon_1, \epsilon_2$, say $ N > 100 (1 + \lfloor \nu \rfloor)$), we choose $ N_1, N_2, N_3 \in \Z_+$ successively, such that all estimates in  Corollary \ref{estimates}, Corollary \ref{estimates-self} and Corollary \ref{estimates-remot} are satisfied for some $  0 < T \leq 1$ depending on $ (\alpha_0, \nu, N_1, N_2, N_3, \delta)$.\\[5pt]
		In particular  the error terms $e^{N_1}_{In}, e^{N_2}_S, e^{N_3}_{Re}$  should be estimated of order $ \sim_{\nu, \epsilon} N$. Furthermore, the same should hold true for the upper bounds in the 'consistency' Lemma \ref{consist} and Lemma \ref{cons-remot}. Now we define  in the variables $ (t,R),~R = \lambda(t) r,$
		~~\\
		\begin{align}\label{Def-apprx}
			\tilde{u}_{\text{app}}^N(t,R) : =&~ \Theta(  (T-t)^{-\nu - \epsilon_1} R)  u^{N_1}_{In}(t,R)\\[3pt] \nonumber
			&~~ + (1 - \Theta( (T-t)^{\nu - \epsilon_1}R)) \Theta( (T-t)^{\epsilon_2 - \nu} R) u^{N_2}_S(t, (T-t)^{\nu}R)\\[3pt] \nonumber
			&~~ +  (1 - \Theta((T-t)^{\epsilon_2 + \nu}R)) e^{- i \alpha(t)} \underset{= \lambda(t)^{-\f12}}{\underbrace{ (T-t)^{\f14 + \frac{\nu}{2}}} } ~\tilde{u}^{N_3}_{Re}(t, (T-t)^{\f12 + \nu} R).
		\end{align}
		Further we set for $x \in \R^3,~0 \leq t < T$,
		\begin{align}\label{Def-apprx-fin}
			u_{\text{app}}^N(t,x) : =& ~e^{i \alpha(t)} \lambda(t)^{\f12} \tilde{u}^{\text{app}}_N(t,\lambda(t) |x|),\\ \nonumber
			= &~ e^{i \alpha_0 \log(T-t)} (T-t)^{- \f14 - \frac{\nu}{2}}~ \tilde{u}^{\text{app}}_N(t, (T-t)^{- \f12- \nu} |x|).
		\end{align}
		\begin{Rem} We may write the constants $ 2C^{-1}, 2\tilde{C}, 2\tilde{C}^{-1}, 2C_0$, respectively into  the cut-off function (from left to right) in order to use the above Corollary and  Lemma exactly as stated. However, this is not relevant, since the estimates scale in these constants.
		\end{Rem}
		~~\\
		By Corollary \ref{estimates}, Corollary \ref{estimates-self} and Corollary \ref{estimates-remot}, we obtain a radial $C^{\infty}$- function $u^{\text{app}}_N : \R^2 \times \R_{\geq 0} \to \C $ of the form required in Proposition \ref{main-prop-approx}, including the estimates for $z^N$ in part (a). For $\zeta^N_*$ we choose
		\[
		\zeta^N_*(x) : = f_0^{N_2}(|x|),~ x \in \R^3.
		\]
		The convergence in $\dot{H}^1 \cap \dot{H}^2$ follows directly from the estimates in Corollary  \ref{estimates-remot}. In particular, we note as $ t \to T^-$, the remote region and thus $ u^{N_3}_{Re}(t,r)$ are dominant.
		Further, considering in addition Lemma \ref{consist} and Lemma \ref{cons-remot}, we conclude an  error estimate of the form  
		\[
		\|  e_N(t) \|_{H^2} + \|  \langle x \rangle e^N(t)  \|_{L^2} \leq C (T-t)^{N \eta_1 - \eta_2},
		\] 
		where $ \eta_1, \eta_2 > $ are fixed constants. Rechoosing $ N_1, N_2, N_3 $ and replacing $ N$ by $\tilde{N}$ (and taking $T> 0 $ smaller), we simply need to chose $N_j,~j=1,2,3$ large enough, so that $ \tilde{N}\eta_1 - \eta_2 > N $. Then we rename $ e^{\tilde{N}}, u_{\text{app}}^{\tilde{N}}, \tilde{u}_{\text{app}}^{\tilde{N}}, \zeta_*^{\tilde{N}}$ in order to obtain part (b).
		\section{Completion to exact solutions: Final iteration}\label{sec:linearized}
		We seek a radial solution of \eqref{NLSequation} of the form 
		\[
		u(t,x) =  u_{\text{app}}^N(t,x) + \varepsilon(t,x),~~ x \in \R^3,~~ 0 \leq t < T,
		\]
		where we take $N \gg1 $ large and $ u_{\text{app}}^N(t,x)$ is  the approximate solution given by Proposition \ref{main-prop-approx} with  $x \in \R^3,~0 \leq t < T \ll1$ . We now rewrite this into
		\begin{align}\label{ansatz-final-final}
			&u(t, x) = e^{i \alpha(t)} \lambda^{\f12}(t) v(\tau, y) =   e^{i \alpha(t)} \lambda^{\f12}(t) ( v_{\text{app}}^N(\tau, y) + f(\tau, y)),\\
			&  v_{\text{app}}^N(\tau, y) = e^{- i \alpha(t(\tau))} \lambda(t(\tau))^{-\f12} u_{\text{app}}^N(t(\tau),x(y)),\\
			& f(\tau, y) = e^{- i \alpha(t(\tau))} \lambda(t(\tau))^{-\f12} \varepsilon(t(\tau),x(y)),
		\end{align}
		where we changed into the variables
		\[
		y = \lambda(t) x = (T-t)^{- \f12 - \nu} x,~~ \tau = \int_t^{\infty} \lambda^2(- s)~ds  = (2\nu)^{-1} (T-t)^{- 2 \nu}.
		\]
		We note in particular $\tau'(t) = - \lambda^2(t),~ \tau \in [\tau_0, \infty)$ where we set $ \tau_0 = (2\nu)^{-1} T^{- 2 \nu}$. Plugging \eqref{ansatz-final-final} into  \eqref{NLSequation}, the equation for $f(\tau, y)$ can now be calculated similar as above  in Section \ref{sec:approx}.
		\begin{Rem} We obtain  the following expressions
			\begin{align}
				&t(\tau) = T  - (\tau 2\nu)^{-\frac{1}{2 \nu}},\\
				&\alpha(t(\tau)) = - \frac{\alpha_0}{2 \nu}(\log(\tau) + \log(2\nu)),~\lambda(t(\tau)) = (2 \nu \tau)^{\frac{1}{4 \nu} + \f12}.
			\end{align}
		\end{Rem}
		\subsection{The final system} We calculate the following with $i \partial_t = - \lambda^2(\tau) i \partial_{\tau}$ 
		\begin{align*}
			e^{-i \alpha(t)} \lambda^{-\f12}(t) 	i \partial_tu(t, x)  =&~~  \lambda^2(\tau)\dot{\alpha}(\tau) v(\tau, y) - i \f12 \lambda(\tau) \dot{\lambda}(\tau)v(\tau, y) \\
			&~~  - i \lambda^{2} i \partial_{\tau} v(\tau, y)  - i \lambda(\tau) \dot{\lambda}(\tau) y \cdot \nabla_y v(\tau, y),
		\end{align*}
		hence with $ \Delta u = \lambda^2(\tau) \Delta_y v,~ e^{- i \alpha(t)} \lambda^{-\f12} |u|^4u =  \lambda^2(\tau) |v|^4 v $  the equation \eqref{NLSequation} reads ($\dot{\lambda}, \dot{\alpha}$ means in $ \tau$ here)
		\begin{align*}
			&\dot{\alpha} v(\tau, y) - i \f12 \dot{\lambda}\lambda^{-1}v(\tau, y)-  i \partial_{\tau} v(\tau, y)\\
			&  - i  \dot{\lambda} \lambda^{-1} y \cdot \nabla_y v(\tau, y) + \Delta_y v(\tau, y)  + |v|^4 v = 0.
		\end{align*}
		Now since $ \dot{\lambda}\lambda^{-1} = \frac{1 + 2\nu}{4 \nu \tau}$, $ \dot{\alpha} = - \frac{\alpha_0}{2 \nu}$, we write this into the following system 
		~~\\
		\begin{align}\label{system-final-final-y}
			&	- i \partial_{\tau} \eta = H\eta + D\eta + N_1(\eta) + N_2(\eta) + \mathbf{e},\\[6pt]
			&H = \begin{pmatrix}
				- \Delta_y - 3 W^4 & - 2 W^4\\
				2 W^4 & \Delta_y + 3 W^4
			\end{pmatrix},~~ D = \tau^{-1}\big( \frac{\alpha_0}{2\nu} \sigma_3 + i \frac{1+2\nu}{4 \nu}(\f12 + y \cdot \nabla_y)\big),\\
			& \eta = \begin{pmatrix}
				f\\
				\bar{f}
			\end{pmatrix},~~N_j = \begin{pmatrix}
				\mathcal{N}_j\\
				- \overline{\mathcal{N}_j}
			\end{pmatrix},~ j =1,2,~~  \mathbf{e} = \begin{pmatrix}
				\tilde{e}^N\\
				- \overline{\tilde{e}^N}
			\end{pmatrix},
		\end{align}
		where we set 
		\begin{align}
			& \mathcal{N}_1 = 3(W^4 - |v_{\text{app}}^N|^4) \eta + 2(W^4 - (v_{\text{app}}^N)^2 |v_{\text{app}}^N|^2)\overline{\eta},\\[5pt]
			&\mathcal{N}_2 = - | v_{\text{app}}^N + f|^4 (v_{\text{app}}^N + f) + |v_{\text{app}}^N|^4v_{\text{app}}^N + 3 |v_{\text{app}}^N|^4f + 2 (v_{\text{app}}^N)^2 |v_{\text{app}}^N|^2 \bar{f},\\[5pt]
			& \tilde{e}^N(\tau, y ) = e^{- i \alpha(\tau)}\lambda^{- \f52}(\tau) e^N(t(\tau), x(y)).
		\end{align}
		We seek a solution of \eqref{system-final-final-y} in $ f$ with vanishing boundary at $ \tau = \infty$.  Note in particular that $ \tau \geq \tau_0$ and $ \tau_0$ may be taken arbitrarily large by choosing $ 0 < T \leq 1 $ small. The system  \eqref{system-final-final-y} further  reduces to the radial flow in the variable $ |y| = \lambda(\tau) r = R$. Hence, using $
		y \cdot \nabla_y = R \partial_R,$ and $ R^2 \partial_R(\cdot) = R \partial_R(R \cdot) - R \cdot $, we obtain the $1$D-Schr\"odinger system
		~~\\
		\begin{align} \label{system-final-final-R}
			&-	i \partial_{\tau} \tilde{\eta}  =~\mathcal{H}\tilde{\eta} + D_0\tilde{\eta} + N_1(\tilde{\eta}) + R N_2( R^{-1} \tilde{\eta}) + R ~\mathbf{e}(\tau, R),\\[6pt] \label{system-final-final-R-operator}
			&\mathcal{H} =  \begin{pmatrix}
				- \partial_R^2 - 3 W^4(R) & - 2 W^4(R)\\
				2 W^4(R) &  \partial_R^2 + 3 W^4(R)
			\end{pmatrix},~~ \tilde{\eta}(\tau, R) = R \cdot \eta(\tau, R),\\[4pt]
			& D_0 =  \tilde{\beta}(\tau)  \sigma_3  + i \beta(\tau) ( \f12  + R \partial_R),\\
			&\tilde{\beta}(\tau) = \tau^{-1} \frac{\alpha_0}{2\nu} = -\dot{\alpha}(\tau),~~ \beta(\tau) =  \tau^{-1} \frac{1+2\nu}{4 \nu} = \dot{\lambda} \lambda^{-1}.
		\end{align}
		\begin{Rem}
			(i)~We note since $\eta(\tau, y)$ is radial, the function $\tilde{\eta}(\tau,R)$ has an odd extension to $(- \infty,0)$ and  $\tilde{\eta}(\tau, 0)=0$. All quantities  in \eqref{system-final-final-R} have either odd or even (regular) extensions to $(- \infty,0)$. (ii)~ The spectral properties of $\mathcal{H}$ are studied in Section \ref{sec:Scat} and Section \ref{sec:spec}. In particular, we have $\sigma(\mathcal{H}) = \{ \pm i \kappa\} \cup \R,~ \sigma_{\text{ess}}(\mathcal{H}) = \R$ and $ \lambda_0 = 0$ is a (threshold) resonance. In Proposition \ref{Fourier-inv-fin}, we establish a spectral representation for $ \mathcal{H}$, corresponding to inversion $ \mathcal{F}^{-1} \mathcal{F} = \mathit{I} $ of the associated Fourier transform.
		\end{Rem}
		Following the approach in  \cite[Section 6]{KST-slow}, we proceed by calculating  the Fourier coefficients of \eqref{system-final-final-R} with the Fourier transform $\mathcal{F}$ defined  in Section \ref{subsec:ft}. We use $ \xi $ for the spectral parameter here instead of $ \lambda $ as in Section \ref{sec:Scat} in order to avoid confusion. First note
		\[
		\mathcal{F}(D_0) = \tilde{\beta}(\tau) [\mathcal{F}(\sigma_3 \cdot \mathcal{F}^{-1})] \mathcal{F}  +  i \beta(\tau)( \f12 - \xi \partial_{\xi}) \mathcal{F} + \beta(\tau)\mathcal{K} \mathcal{F}, 
		\]
		where $ \mathcal{K}$ is the transference operator from Section \ref{subsec:tranf} and $  \xi \partial_{\xi}\hat{f}^{\pm}(\xi)$ only acts on the essential spectrum. 
		We recall the Fourier inversion for $\tilde{\eta}$ has, see Proposition \ref{Fourier-inv-fin} for the dual version, the form
		\begin{align*}
			\tilde{\eta}(\tau, R) =  x^{+}_0(\tau)& \phi^+_d(R) + x^{-}_0(\tau) \phi^-_d(R)  + \frac{1}{2 \pi}\int_{- \infty}^{\infty} x^{+}(\tau,\xi) e_+(R, \xi)~d \xi \\[2pt]
			+~&  \frac{1}{2 \pi}\int_{- \infty}^{\infty} x^{-}(\tau,\xi) e_-(R, \xi)~d \xi,
		\end{align*}
		where we set Fourier coefficients to be
		\begin{align}
			X^t(\tau, \xi) : = ( x^{+}_0(\tau), x^{-}_0(\tau), x^{+}(\tau, \xi), x^{-}(\tau, \xi)).
		\end{align}
		Separating the  discrete spectrum,  the system \eqref{system-final-final-R} is rewritten into 
		\begin{align} \label{system-final-final-X}
			& \begin{pmatrix}
				T_1 & 0_{2\times 2} \\
				0_{2\times 2} & T_2
			\end{pmatrix} X = (i \beta(\tau) \f12 - \tilde{\beta} \mathcal{F}(\sigma_3 \cdot \mathcal{F}^{-1}))X- i\beta(\tau)\mathcal{K}X\\ \nonumber
			& \hspace{3cm}~~-  \mathcal{F}N_1( \mathcal{F}^{-1}X) -  \mathcal{F} R N_2( R^{-1} \mathcal{F}^{-1}X)\\ \nonumber
			& \hspace{3cm}~~-  ~\mathcal{F} R\mathbf{e}(\tau, R),\\[8pt]
			&T_1 = \begin{pmatrix}
				i(\partial_{\tau} + \kappa) & 0\\
				0 & i(\partial_{\tau} - \kappa)
			\end{pmatrix},~~  T_2 = \begin{pmatrix}
				i\partial_{\tau}  - i\beta \xi \partial_{\xi}  + \xi^2& 0\\
				0 &  i\partial_{\tau}  - i\beta \xi\partial_{\xi}  - \xi^2
			\end{pmatrix}.
		\end{align}
		We intend to  solve \eqref{system-final-final-X} perturbatively for the operator on the right, hence we rewrite this into the following  elliptic-transport form
		\begin{align}\label{system-final-final-ellip-tra}
			\begin{cases}
				~~i(\partial_{\tau} + \kappa) x^+_d(\tau) = b_d^+(\tau),&~~\\[3pt]
				~~i(\partial_{\tau} - \kappa) x^-_d(\tau) = b_d^-(\tau),&~~\\[3pt] 
				~~(i \partial_{\tau} - i \beta(\tau) \xi\partial_{\xi} + \xi^2)x^+(\tau, \xi) = b^+(\tau, \xi),&~~\\[3pt]
				~~(i \partial_{\tau} - i \beta(\tau) \xi \partial_{\xi} - \xi^2)x^+(\tau,\xi) = b^-(\tau, \xi).&~~
			\end{cases}
		\end{align}
		Considering the first two lines in \eqref{system-final-final-ellip-tra}, we can solve these equations, c.f. \cite[Section 3]{OP}, via
		\[
		x^+_d(\tau) = -i \int_{\tau_1}^{\tau} e^{- \kappa(\tau -s)} b_d^+(s)~ds,~~x^-_d(\tau) = i \int_{\tau}^{\infty} e^{ \kappa(\tau -s)} b_d^-(s)~ds,
		\]
		where we need to fix $\tau _1 >  \tau_0\gg1 $  later. Further, for the latter two lines in \eqref{system-final-final-ellip-tra}, we  obtain the transport equations 
		\begin{align}\label{F-final-glei-glei1}
			~~( \partial_{\tau} -  \beta(\tau) \xi\partial_{\xi} - i \xi^2)x^+(\tau, \xi) = - ib^+(\tau, \xi),&~~\\[3pt] \label{F-final-glei-glei2}
			~~( \partial_{\tau} -  \beta(\tau) \xi \partial_{\xi}  + i \xi^2)x^+(\tau,\xi) = - ib^-(\tau, \xi).&~~
		\end{align}
		We consider the  backwards operators $K_{\pm}$ with kernels $ K_{\pm}(\tau, \sigma)$, i.e. we solve via
		\[
		x^{\pm}(\tau) = \int_{\tau}^{\infty} K_{\pm}(\tau, \sigma) b(\sigma)~d\sigma.
		\]
		By calculating the characteristics of \eqref{F-final-glei-glei1}, \eqref{F-final-glei-glei2}, we infer the homogeneous solutions 
		\[
		x^{\pm}(\tau, \xi) = e^{\pm i 2 \nu \tau \xi^2}h((2 \nu \tau)^{\frac{1}{4 \nu} + \f12}\xi),
		\]
		from which we calculate $K_{\pm}(\tau, \sigma,\xi)$ with $ K_{\pm}(\tau, \tau,1) = 1$ setting $h =1$. Alternatively, we may also reduce \eqref{F-final-glei-glei1} \eqref{F-final-glei-glei2} to, c.f. \cite[chapter 7]{KST1}, the equations
		\[
		(L^{\pm}_{\tau, \xi }x^{\pm})(\tau, \lambda^{-1}(\tau)\xi)  : = (i \partial_{\tau}  \mp i \lambda^{-1}(\tau)\xi^2)x^{\pm}(\tau, \lambda^{-1}(\tau)\xi) = - ib^{\pm}(\tau, \lambda^{-1}(\tau)\xi),
		\]
		hence the homogeneous solutions are given by
		\begin{align}
			x^{\pm}(\tau, \lambda^{-1}(\tau)\xi) = e^{\pm i (2\nu \tau)^{- \frac{1}{2\nu} } \xi^2} h(\xi),
		\end{align}
		where we choose $ h(\xi) =1 $ such that the kernel for $ L_{\tau, \xi}$ satisfies
		\[
		S(\tau, \sigma , \xi) = e^{\pm i (2\nu \tau)^{- \frac{1}{2\nu} } \xi^2} e^{\mp i (2\nu \sigma)^{- \frac{1}{2\nu} } \xi^2} ,~~ 1 \lesssim \tau < \sigma,~~S(\tau, \tau , \xi) = 1,
		\]
		and we note the scaling $ S(\tau, \sigma , \xi)  = S(\xi^{- 4 \nu}\tau, \sigma , 1)$. 
		Now we state the following Lemma, for which we recall the norm on the essential spectrum
		\[
		\|  f \|_{L^{2, \alpha}_{\xi}}^2 = \int |f(\xi)|^2 \langle \xi \rangle^{2 \alpha}~d \xi.
		\]
		\begin{Prop} \label{prop-decay-final}Let $ \alpha \geq 0$, the there exists $ C = C(\alpha) > 0$ large such that 
			\[
			\| K_{\pm}(\tau, \sigma) \|_{L^{2, \alpha}_{\xi} \to L^{2, \alpha }_{\xi} }  \lesssim \big( \frac{\sigma}{\tau}\big)^{C},~~ 1 \lesssim \tau < \sigma.
			\]
		\end{Prop}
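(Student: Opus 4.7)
\smallskip

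The plan is to solve the transport equations \eqref{F-final-glei-glei1}--\eqref{F-final-glei-glei2} exactly by the method of characteristics and then read off the kernel as the composition of a phase multiplier (unitary on every weighted $L^2$) with a dilation (whose norm on $L^{2,\alpha}_{\xi}$ can be computed by a change of variable).

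First, I would note that the characteristics of $\partial_{\tau}-\beta(\tau)\xi\partial_{\xi}$ satisfy $\dot\xi=-\beta(\tau)\xi$, and since $\beta=\dot\lambda/\lambda$ with $\lambda(\tau)=(2\nu\tau)^{\frac{1}{4\nu}+\frac12}$, the quantity $\eta:=\lambda(\tau)\xi$ is conserved along them. Setting $v^{\pm}(\tau,\eta):=x^{\pm}(\tau,\lambda^{-1}(\tau)\eta)$ and $\tilde b^{\pm}(\tau,\eta):=b^{\pm}(\tau,\lambda^{-1}(\tau)\eta)$, a direct calculation converts \eqref{F-final-glei-glei1}--\eqref{F-final-glei-glei2} into the parameter-dependent ODEs
\[
\bigl(\partial_\tau \mp i \lambda^{-2}(\tau)\eta^{2}\bigr)v^{\pm}(\tau,\eta)=-i\tilde b^{\pm}(\tau,\eta),
\]
whose backward propagators are simply the scalar phases $\exp\!\bigl(\mp i\eta^{2}\int_\tau^{\sigma}\lambda^{-2}(s)\,ds\bigr)$. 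Undoing the substitution, the operator $K_{\pm}(\tau,\sigma)$ acts on $f=f(\xi)$ as
\[
(K_{\pm}(\tau,\sigma) f)(\xi)= e^{\mp i\lambda^{2}(\tau)\xi^{2}\int_\tau^{\sigma}\lambda^{-2}(s)\,ds}\; f\!\Bigl(\tfrac{\lambda(\tau)}{\lambda(\sigma)}\,\xi\Bigr),
\]
up to a normalization factor $\sqrt{\lambda(\tau)/\lambda(\sigma)}$ depending on the convention for $\mathcal F$ (this factor only helps since $\tau<\sigma$).

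Second, I would estimate this operator on $L^{2,\alpha}_{\xi}$. The phase factor has modulus one, hence is an isometry on every weighted $L^{2}$ space. For the dilation $D_{r}f(\xi)=f(r\xi)$ with $r=\lambda(\tau)/\lambda(\sigma)\in(0,1)$, a change of variable $\eta=r\xi$ gives
\[
\|D_{r}f\|_{L^{2,\alpha}_{\xi}}^{2}= r^{-1}\!\int |f(\eta)|^{2}\langle \eta/r\rangle^{2\alpha}\,d\eta
\;\leq\; r^{-1-2\alpha}\|f\|_{L^{2,\alpha}_{\xi}}^{2},
\]
using $\langle \eta/r\rangle\leq r^{-1}\langle\eta\rangle$ when $r\leq 1$. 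Combining and inserting the explicit form of $\lambda$, this yields
\[
\|K_{\pm}(\tau,\sigma)\|_{L^{2,\alpha}_{\xi}\to L^{2,\alpha}_{\xi}}\;\lesssim\;\Bigl(\tfrac{\lambda(\sigma)}{\lambda(\tau)}\Bigr)^{\frac{1+2\alpha}{2}}= \Bigl(\tfrac{\sigma}{\tau}\Bigr)^{\bigl(\frac{1}{4\nu}+\frac12\bigr)\frac{1+2\alpha}{2}},
\]
which is of the claimed form with $C=C(\alpha,\nu)$.

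The argument is essentially elementary once the scaling $\eta=\lambda(\tau)\xi$ is identified; the only mildly delicate point is the weighted $L^2$ bound for the dilation, since the Japanese-bracket weight is not scale invariant and requires the uniform estimate $\langle \eta/r\rangle\leq r^{-1}\langle\eta\rangle$ for $r<1$ to replace the weight loss by a clean power of $r$. I would verify separately that no additional contribution from a low-frequency cutoff or from the Jacobian of $\mathcal F$ spoils the bound, and that the factor $\sqrt{\lambda(\tau)/\lambda(\sigma)}\leq 1$ coming from the convention for the distorted Fourier inversion can only improve the estimate. Finally, one can freely enlarge $C$ to absorb multiplicative constants and to get a uniform statement in the stated range $1\lesssim \tau<\sigma$.
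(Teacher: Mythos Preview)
Your proof is correct and follows essentially the same route as the paper: both derive the explicit form of $K_{\pm}(\tau,\sigma)$ as a unimodular phase composed with the dilation by $\lambda(\tau)/\lambda(\sigma)$, and then bound the dilation on $L^{2,\alpha}_\xi$ via the weight inequality $\langle\eta/r\rangle\leq r^{-1}\langle\eta\rangle$ for $r\leq 1$ (the paper phrases this as $\frac{(1+\lambda^{-1}(\tau)|\xi|)^{\alpha}}{(1+\lambda^{-1}(\sigma)|\xi|)^{\alpha}}\lesssim(\sigma/\tau)^{C}$). Your write-up is simply more explicit than the paper's one-line sketch; the remark about a possible normalization factor $\sqrt{\lambda(\tau)/\lambda(\sigma)}$ is unnecessary here since the Jacobian $r^{-1}$ from the change of variable already accounts for everything, but as you note it would only improve the bound.
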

		This is obtained by the above form of $ S(\tau, \sigma, \xi)$, respectively $ K(\tau, \sigma)$, and the fact 
		\[
		\frac{(1 + \lambda^{-1}(\tau) |\xi|)^{\alpha}}{(1 + \lambda^{-1}(\sigma) |\xi|)^{\alpha}}  \lesssim  \big(  \frac{\sigma}{\tau}\big)^{C},
		\]
		if $ C > 0$ is large enough (depending on $\alpha$).
		We now further define the space $ L^{\infty, N}_{\tau} L^{2, \alpha}_{\xi}$  via the following norm
		\[
		\|  f\|_{L^{\infty, N}_{\tau} L^{2, \alpha}_{\xi}} : = \sup_{\tau \geq \tau_0}\big( \tau^N \|  f(\tau)  \|_{ L^{2, \alpha}_{\xi}}\big),~~ 1 \lesssim \tau_0,~~ N \in \Z_+.
		\]
		\emph{Linear and nonlinear estimates}.~ The following is implied by Proposition  \ref{prop-decay-final}, where we denote by $ H^{\pm}_0 $ the integral operator of the elliptic equations in \eqref{system-final-final-ellip-tra} (first $+$ and second $-$, respectively).
		\begin{Prop} \label{prop-lin-final-new} Given $ \alpha \geq 0$ and $ N \gg1 $ large  enough. Then 
			\begin{align}
				&	\|  K_{\pm}f \|_{L^{\infty, N-1}_{\tau} L^{2, \alpha}_{\xi}} \lesssim_{\alpha} N^{-1} \|    f   \|_{L^{\infty, N}_{\tau} L^{2, \alpha}_{\xi}},\\
				&	\| H^{-}_0 b_d^{-} \|_{L^{\infty,N}_{\tau}} \lesssim_N \tau_0^{-N+1} \| b_d^{-}  \|_{L^{\infty,N}_{\tau}},\\
				&	\| H^{+}_0 b_d^{+} \|_{L^{\infty,N}_{\tau}} \lesssim_N  \max\{\tau_1^{-N+1}, \tau_0^{-N+1} - \tau_1^{-N+1}\}  \| b_d^{+}  \|_{L^{\infty,N}_{\tau}}. 
			\end{align}
		\end{Prop}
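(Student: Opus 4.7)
\emph{Proof plan for Proposition \ref{prop-lin-final-new}.} All three estimates come down to direct integration against the explicit kernels identified in \eqref{system-final-final-ellip-tra}; the only non-trivial input is Proposition \ref{prop-decay-final}, which controls the propagator $K_\pm(\tau,\sigma)$ between weighted $L^2_\xi$-spaces. The plan is to treat the transport bound first, then the two discrete ODEs in turn.

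For the first estimate I would start from the representation $K_\pm f(\tau)=\int_\tau^{\infty}K_\pm(\tau,\sigma)f(\sigma)\,d\sigma$ and insert Proposition~\ref{prop-decay-final} to get
\begin{align*}
\|K_\pm f(\tau)\|_{L^{2,\alpha}_\xi}
\;\lesssim_{\alpha}\;\int_\tau^{\infty}\Bigl(\tfrac{\sigma}{\tau}\Bigr)^{C(\alpha)}\sigma^{-N}\,d\sigma\cdot\|f\|_{L^{\infty,N}_\tau L^{2,\alpha}_\xi}.
\end{align*}
Choosing $N>C(\alpha)+1$ (so the integrand decays at infinity) and pulling out $\tau^{-C(\alpha)}$ reduces this to $\int_\tau^\infty\sigma^{C(\alpha)-N}d\sigma=\tau^{C(\alpha)-N+1}/(N-C(\alpha)-1)$; multiplying back gives the bound $\tau^{-N+1}/(N-C(\alpha)-1)$. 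Taking $\tau^{N-1}$ on both sides proves the claim with constant $\lesssim_\alpha N^{-1}$ as soon as $N$ exceeds $C(\alpha)$ by a fixed margin.

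For the second estimate I would write $H_0^-b_d^-(\tau)=i\int_\tau^\infty e^{\kappa(\tau-s)}b_d^-(s)\,ds$ and majorize the exponential by $1$, which gives
\begin{align*}
|H_0^-b_d^-(\tau)|\;\le\;\int_\tau^\infty s^{-N}\,ds\cdot\|b_d^-\|_{L^{\infty,N}_\tau}\;=\;\tfrac{\tau^{-N+1}}{N-1}\|b_d^-\|_{L^{\infty,N}_\tau},
\end{align*}
so that $\sup_{\tau\ge\tau_0}|H_0^-b_d^-(\tau)|\lesssim_N\tau_0^{-N+1}\|b_d^-\|_{L^{\infty,N}_\tau}$. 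For $H_0^+b_d^+(\tau)=-i\int_{\tau_1}^\tau e^{-\kappa(\tau-s)}b_d^+(s)\,ds$ I would split into the two cases $\tau\in[\tau_0,\tau_1]$ and $\tau\in[\tau_1,\infty)$. In the first case the integrand runs from $s=\tau$ up to $s=\tau_1$, the crude bound $|e^{-\kappa(\tau-s)}|\le 1$ gives $|H_0^+b_d^+(\tau)|\le\int_{\tau_0}^{\tau_1}s^{-N}ds\cdot\|b_d^+\|\lesssim(\tau_0^{-N+1}-\tau_1^{-N+1})\|b_d^+\|_{L^{\infty,N}_\tau}/(N-1)$. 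In the second case I would split the integration interval as $[\tau_1,\tau/2]\cup[\tau/2,\tau]$ (assuming $\tau\ge 2\tau_1$; otherwise use the first-case bound): on the first piece the exponential $e^{-\kappa(\tau-s)}$ is exponentially small and on the second $s^{-N}\le(\tau/2)^{-N}$, so the combined contribution is bounded by $\tau_1^{-N+1}\|b_d^+\|_{L^{\infty,N}_\tau}$ up to $N$-dependent constants. Adding the two cases yields exactly the stated $\max\{\tau_1^{-N+1},\tau_0^{-N+1}-\tau_1^{-N+1}\}$ factor.

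The main technical point to watch is the first bound: everything rests on the polynomial decay exponent $C(\alpha)$ produced by Proposition~\ref{prop-decay-final}, which controls how much $\xi$-weight one is losing when propagating the transport equation through the weight $\lambda^{-1}(\tau)\xi$. The gain $N^{-1}$ is essential downstream, because the perturbative scheme closing \eqref{system-final-final-X} treats $K_\pm$ together with the operator on the right of \eqref{system-final-final-X} as a small contraction, and any uniform-in-$N$ loss here would propagate into the iteration. The $H_0^\pm$ bounds, by contrast, are essentially one-line majorizations and contribute only through the smallness of $\tau_0^{-N+1}$ and $\tau_1^{-N+1}$; the only subtlety is the case split for $H_0^+$ just described.
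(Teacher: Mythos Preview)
Your approach is exactly what the paper intends: the proposition is stated there without proof as an immediate consequence of Proposition~\ref{prop-decay-final} and the explicit Duhamel kernels, and that is precisely what you carry out. The first estimate is correct as written, and your derivation of the $N^{-1}$ gain from $(N-C(\alpha)-1)^{-1}$ is the right mechanism. For the second estimate your computation is also correct; note only that you are bounding the unweighted $\sup_\tau|H_0^-b_d^-(\tau)|$, which is almost certainly what is meant despite the $L^{\infty,N}_\tau$ notation on the left in the paper's display.

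There is one genuine slip in your treatment of $H_0^+$. In the case $\tau\in[\tau_0,\tau_1]$ you write ``the crude bound $|e^{-\kappa(\tau-s)}|\le 1$'', but on that interval $s$ runs from $\tau$ up to $\tau_1$, so $\tau-s\le 0$ and hence $e^{-\kappa(\tau-s)}=e^{\kappa(s-\tau)}\ge 1$: the exponential \emph{grows}, not decays, and the majorization fails as stated. The fix is mild---either absorb the factor $e^{\kappa(\tau_1-\tau_0)}$ into the $N$-dependent implicit constant (legitimate since $\lesssim_N$ is allowed and $\tau_1-\tau_0$ need not grow with $N$), or simply note that nothing downstream prevents taking $\tau_1=\tau_0$, which makes the problematic interval empty and leaves only your (correct) second-case bound. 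The conclusion survives, but the inequality you wrote for the first case is not literally true.
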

		In order to have small constants for the latter two lines, we choose $ \tau_1 > \tau_0 \gg1  $ large.  We further define 
		\[
		T = \text{diag}(H_0^+, H_0^-, K_+, K_-),~ b^t = (b_d^+, b_d^-, b^+, b^-).
		\]
		Then we have 
		\begin{Corollary}\label{prop-lin-final-new-new} Let $\alpha \geq 0$ and $ N \gg1 $ be large. Then we choose $ \tau_1 > \tau_0 > 0$ large enough (depending on $ N$), such that
			\[
			\| Tb \|_{L^{\infty, N-1}_{\tau} L^{2, \alpha }_{\xi}}  \lesssim _{\alpha} N^{-1}  \| b\|_{L^{\infty, N}_{\tau} L^{2, \alpha }_{\xi}},
			\]
			where we use a suitable adaption of $L^{\infty, N}_{\tau} L^{2, \alpha }_{\xi}$ to the vector setting.
		\end{Corollary}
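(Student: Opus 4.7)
The plan is to establish the Corollary by a component-wise application of Proposition \ref{prop-lin-final-new}, exploiting that $T$ is block-diagonal. A natural vector-adapted norm is
\[
\|b\|_{L^{\infty,N}_\tau L^{2,\alpha}_\xi} := \|b_d^+\|_{L^{\infty,N}_\tau} + \|b_d^-\|_{L^{\infty,N}_\tau} + \|b^+\|_{L^{\infty,N}_\tau L^{2,\alpha}_\xi} + \|b^-\|_{L^{\infty,N}_\tau L^{2,\alpha}_\xi},
\]
with the analogous definition for $L^{\infty,N-1}$. With this choice the claim decouples into four scalar statements, one per block.

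For the continuous-spectrum blocks, Proposition \ref{prop-lin-final-new} already provides
\[
\|K_\pm b^\pm\|_{L^{\infty,N-1}_\tau L^{2,\alpha}_\xi} \lesssim_\alpha N^{-1} \|b^\pm\|_{L^{\infty,N}_\tau L^{2,\alpha}_\xi},
\]
so no further work is needed on these blocks; the $N^{-1}$ gain there comes from integrating the kernel bound of Proposition \ref{prop-decay-final} in $\sigma$, with $C = C(\alpha)$ absorbed by choosing $N$ sufficiently large compared to $C$.

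For the discrete-mode blocks, Proposition \ref{prop-lin-final-new} only yields $L^{\infty,N}_\tau \to L^{\infty,N}_\tau$ estimates with smallness $\tau_0^{-N+1}$ for $H_0^-$ and $\max\{\tau_1^{-N+1},\,\tau_0^{-N+1}-\tau_1^{-N+1}\}$ for $H_0^+$. I would upgrade these to the target $L^{\infty,N-1}_\tau$ norm via the elementary embedding $\|f\|_{L^{\infty,N-1}_\tau} \leq \tau_0^{-1}\|f\|_{L^{\infty,N}_\tau}$, which is valid since $\tau \geq \tau_0$. This produces extra factors of $\tau_0^{-1}$, giving the bounds $\tau_0^{-N}$ and $\max\{\tau_1^{-N+1},\tau_0^{-N+1}-\tau_1^{-N+1}\}\tau_0^{-1}$ on the two discrete blocks. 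Since $N$ is fixed and $\tau_0 = (2\nu)^{-1}T^{-2\nu}$ can be made as large as desired by shrinking $T>0$, we then pick $\tau_0$ and $\tau_1 > \tau_0$ large enough that all four smallness constants are dominated by $C_\alpha N^{-1}$, which finishes the proof.

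The main point is really bookkeeping rather than a genuine obstacle: the analytic content — the transport-kernel bound of Proposition \ref{prop-decay-final} and the resulting integrated estimate of Proposition \ref{prop-lin-final-new} — is already in place. One subtlety worth flagging is that the $\alpha$-dependent constant in the $K_\pm$ estimate must not spoil the absorption of the discrete-mode constants; this is automatic, because the discrete blocks carry no $\xi$-weight at all, so their smallness is a function of $\tau_0,\tau_1,N$ only. The constants $\tau_0,\tau_1$ fixed here will later also have to be compatible with the nonlinear Duhamel iteration for $N_1,N_2$ in \eqref{system-final-final-X}, but since $N$ is the only parameter appearing in the linear smallness, this compatibility amounts to a further enlargement of $\tau_0$ that does not affect the present linear bound.
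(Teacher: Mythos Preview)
Your proposal is correct and matches the paper's intended approach: the paper states the Corollary as an immediate consequence of Proposition \ref{prop-lin-final-new} with no further argument, simply remarking beforehand that ``in order to have small constants for the latter two lines, we choose $\tau_1 > \tau_0 \gg 1$ large.'' Your block-diagonal decomposition, direct application of the $K_\pm$ bound, and use of the embedding $\|f\|_{L^{\infty,N-1}_\tau} \leq \tau_0^{-1}\|f\|_{L^{\infty,N}_\tau}$ on the discrete blocks (followed by choosing $\tau_0,\tau_1$ large depending on $N$) are exactly what is needed, and in fact spell out more detail than the paper provides.
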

		Before we continue, note the following. 
		\begin{Lemma} For $\alpha \geq 0$ we have (we restrict to the space of even functions)
			\[
			\|  f \|_{L^{2,2\alpha}_{\xi}} \sim \| R^{-1} \mathcal{F}^{-1}f \|_{H^{2\alpha}_{\text{rad}}(\R^3)}.
			\]
		\end{Lemma}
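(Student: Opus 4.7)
The plan is to establish the norm equivalence by leveraging the diagonalization of $\mathcal{H}$ under $\mathcal{F}$ on the essential spectrum, combined with the well-known intertwining between the one-dimensional operator $-\partial_R^2$ acting on $\tilde\eta = R\eta$ and the three-dimensional radial Laplacian acting on $\eta$. The proof proceeds in three steps: a base case via Plancherel, an integer case using the functional calculus, and interpolation for non-integer $\alpha$.

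First, I would handle the base case $\alpha = 0$. By Proposition \ref{Fourier-inv-fin}, the restriction of $\mathcal{F}$ to the essential spectrum intertwines the matrix flow with multiplication by $\pm\xi^2$ and realizes an (up to constants) isometry $L^2_R \to L^2_\xi$. For the even functions under consideration, the discrete spectrum part drops out, so Plancherel gives $\|f\|_{L^2_\xi} \sim \|\mathcal{F}^{-1}f\|_{L^2(dR)}$. Writing $\tilde\eta = \mathcal{F}^{-1}f$ and using the radial volume element relation $\|\tilde\eta\|_{L^2(dR)} \sim \|R^{-1}\tilde\eta\|_{L^2(\mathbb{R}^3)}$ on (odd-extended) radial profiles yields the case $\alpha=0$.

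Next, I would treat integer $\alpha = n \in \mathbb{Z}_+$. Since the essential spectrum eigenfunctions $e_\pm(R,\xi)$ satisfy $\mathcal{H}e_\pm(\cdot,\xi) = \pm\xi^2 e_\pm(\cdot,\xi)$, iterating gives $\mathcal{F}(\mathcal{H}^n\tilde\eta) = (\pm\xi^2)^n \mathcal{F}\tilde\eta$ on the respective $e_\pm$ components. Combined with the base case this yields $\|\langle\xi\rangle^{2n} f\|_{L^2_\xi}^2 \sim \|\mathcal{F}^{-1}f\|_{L^2}^2 + \|\mathcal{H}^n \mathcal{F}^{-1}f\|_{L^2}^2$. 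The key structural identity is
\[
R^{-1} \mathcal{H} R = \begin{pmatrix} -\Delta - 3W^4 & -2W^4 \\ 2W^4 & \Delta + 3W^4 \end{pmatrix} =: \mathcal{H}_0,
\]
where $\Delta$ denotes the three-dimensional radial Laplacian; this follows from $R^{-1}(-\partial_R^2)(R\eta) = -\Delta_{\mathbb{R}^3}\eta$. Because $\mathcal{H}_0$ is a bounded, rapidly decaying perturbation of $\sigma_3(-\Delta)$ with smooth coefficients, standard elliptic regularity (applied iteratively, commuting $\Delta$ past the potential and using that $W^4$ and its derivatives are bounded) gives the two-sided bound $\|\mathcal{H}_0^n \eta\|_{L^2(\mathbb{R}^3)} + \|\eta\|_{L^2(\mathbb{R}^3)} \sim \|\eta\|_{H^{2n}(\mathbb{R}^3)}$. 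Combining the above yields the claim for integer $\alpha$.

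Finally, for general $\alpha \geq 0$, the equivalence follows by complex interpolation applied to the analytic family $\langle\mathcal{H}\rangle^z \circ \mathcal{F}^{-1}$ (interpreted sectorially on $e_\pm$) between consecutive integer values of $n = \lfloor\alpha\rfloor$ and $n+1$, matched against the corresponding Bessel potential $(1-\Delta)^z$ on the three-dimensional side. The main technical obstacle is that $\mathcal{H}$ is not self-adjoint in the ordinary sense --- it is self-adjoint only relative to the indefinite pairing $\langle\sigma_3\cdot,\cdot\rangle$ --- so one cannot directly invoke the Borel functional calculus for $\langle\mathcal{H}\rangle^\alpha$. This is circumvented by defining fractional powers spectrally on each of the $e_+$ and $e_-$ sectors separately (where $\mathcal{H}$ acts as multiplication by $+\xi^2$ and $-\xi^2$ respectively), at which point the interpolation proceeds in the standard way. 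With this, the two-sided bound passes from integer $\alpha$ to all $\alpha \geq 0$, completing the proof.
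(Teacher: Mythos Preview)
Your proposal is correct and follows essentially the same route as the paper: the integer case via the diagonalization $\mathcal{F}\mathcal{H}^k \sim (\pm\xi^2)^k\mathcal{F}$, the conjugation identity $R^{-1}\mathcal{H}R = H$ reducing to the 3D radial operator, elliptic regularity using boundedness of $W$ and its derivatives, and then interpolation for non-integer $\alpha$. The paper's argument is terser (it cites \cite{K-S-stable} for the interpolation), while you spell out more carefully how to handle fractional powers given that $\mathcal{H}$ is only $\sigma_3$-symmetric; this extra care is appropriate but does not constitute a different method.
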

		This Lemma follows with $ \alpha = k \in \Z_+$ by the calculation 
		\begin{align*}
			\|  f \|_{L^{2,2\alpha}}  \sim \sum_{j = 0}^k \| \mathcal{H}^j \mathcal{F}^{-1}f \|_{L^2} &= \sum_{j \leq k} \|  (R^{-1} \mathcal{H} R)^j R^{-1} \mathcal{F}^{-1} f   \|_{L^2(\R^3)}\\
			&=  \sum_{j \leq k} \|  H^j R^{-1} \mathcal{F}^{-1} f   \|_{L^2(\R^3)} 
		\end{align*}
		where  we then use that $W$ and its derivatives are bounded. In fractional cases for $ \alpha > 0$, we have to interpolate. See \cite[Section 6]{K-S-stable}.\\[3pt]
		Now we essentially have two analogous possibilities to proceed. \textbf{(1)}~First, with $T$, we may formulate \eqref{system-final-final-ellip-tra} into the fixed point equation
		\begin{align}  \label{system-final-final-fix}
			X(\tau) = T\bigg( (i &\beta(\tau) \f12 - \tilde{\beta} \mathcal{F}(\sigma_3 \cdot \mathcal{F}^{-1}))X- i\beta(\tau)\mathcal{K}X\\ \nonumber
			&~~-    \mathcal{F} R \tilde{N}( R^{-1} \mathcal{F}^{-1}X)  - \mathcal{F} R\mathbf{e}(\tau, R) \bigg)
		\end{align}
		on the space $ L^{\infty, N-1}_{\tau} L^{2, \alpha }_{\xi}$ and where $\tilde{N} = N_1 + N_2$. Here we like to solve for $ X$ whence $ \eta = R^{-1}\mathcal{F}^{-1}X$.
		By Proposition \ref{Propo-boundedne-K},  the transference operator $\mathcal{K}$ is bounded on $L^{2 , \alpha}_{\xi}$. Hence the first line of \eqref{system-final-final-fix} is clearly a Lipschitz map from $ L^{\infty, N -1}_{\tau} L^{2, \alpha }_{\xi}$ into $ L^{\infty, N}_{\tau} L^{2, \alpha }_{\xi}$ since $ \beta(\tau) \sim \tau^{-1},~ \tilde{\beta}(\tau) \sim \tau^{-1}$.\\[3pt]
		\textbf{(2)}~ Alternatively we can multiply \eqref{system-final-final-X} by $ R^{-1} \mathcal{F}^{-1}$ from the left. The fixed point is then formulated  for $ R^{-1}\mathcal{F} X =  \eta$, cf. \cite[Section 3.2]{OP} for a similar setting, in the space $L^{\infty,N-1}H^{\alpha}_{\text{rad}}$. 
		In particular, the fixed point operator would then be given by
		\begin{align*}
			&	\mathcal{T} = \mathcal{T}_0 + \mathcal{T}_+ +\mathcal{T}_-,\\
			&	 \mathcal{T}_0\eta  = \int_{\tau}^{\infty} U(\tau, s) P_c\big( i \beta(s) \f12\eta  - \tilde{\beta}(s) \sigma_3 \eta -  i\beta(s)\tilde{\mathcal{K}}\eta -    \tilde{N}( \eta) -  \mathbf{e}(s) \big) ~d s\\
			& \mathcal{T}_+\eta  = \int_{\tau}^{\infty} e^{ \kappa(\tau -s) } P_+\big( i \beta(s) \f12\eta  - \tilde{\beta}(s) \sigma_3 \eta -  i\beta(s)\tilde{\mathcal{K}}\eta -    \tilde{N}( \eta) -  \mathbf{e}(s) \big) ~d s,\\
			& \mathcal{T}_-\eta  = \int_{\tau_1}^{\tau}e^{- \kappa(\tau - s)} P_-\big( i \beta(s) \f12\eta  - \tilde{\beta}(s) \sigma_3 \eta -  i\beta(s)\tilde{\mathcal{K}}\eta -    \tilde{N}( \eta) -  \mathbf{e}(s) \big) ~d s,
		\end{align*}
		where $ \tilde{\mathcal{K}} : H^{\alpha} \to H^{\alpha} $ is bounded and $ P_c, P_+, P_-$ are projections to the essential part and the discrete part, respectively. We then showed above that for the $ \mathcal{T}_j,~ j = 0,1,2$, the integral kernel in $(\tau, s)$ maps from $ L^{\infty,N}H^{\alpha}_{\text{rad}} $ to $ L^{\infty,N-1}H^{\alpha}_{\text{rad}}$. Hence in both cases \textbf{(1)} and \textbf{(2)} we need to prove the same Lipschitz bound.
		\begin{Lemma} For $ 2 \leq \alpha < 1 + \nu $, the map $ \eta \mapsto \tilde{N}(\eta)$ is Lipschitz as a map from $ L^{\infty,N-1}H^{\alpha} $ to $L^{\infty,N}H^{\alpha}$.
		\end{Lemma}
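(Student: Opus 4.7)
The plan is to split $\tilde{N}(\eta) = N_1(\eta) + N_2(\eta)$ and treat the two pieces separately, exploiting that $N_1$ is linear in $\eta$ with small variable coefficient (it vanishes when $v^N_{\text{app}} = W$), while $N_2$ is genuinely polynomial of degree $\geq 2$ in $\eta$. Throughout we identify $\eta$ with its $y$-radial representative on $\R^3$, use the algebra property of $H^\alpha_{\text{rad}}(\R^3)$ for $\alpha \geq 2 > 3/2$, and fix $2 \leq \alpha < 1+\nu$ so that all Sobolev norms of $v_{\text{app}}^N$ are well defined, cf. Proposition \ref{main-prop-approx}(a) and the regularity $\zeta_*^N \in H^{1+\nu-}$.

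For the nonlinear remainder $N_2$, I would first write
\[ N_2(\eta_1) - N_2(\eta_2) = (\eta_1 - \eta_2)\cdot Q(v^N_{\text{app}}, \bar v^N_{\text{app}}, \eta_1, \bar\eta_1, \eta_2, \bar\eta_2), \]
where $Q$ is a polynomial in which every monomial contains at least one factor of $\eta_1$ or $\eta_2$. The Kato-Ponce/Moser product estimate together with the algebra property of $H^\alpha$ and the uniform-in-$\tau$ bounds on $v_{\text{app}}^N$ in $H^\alpha$ from Proposition \ref{main-prop-approx}(a) then yield
\[ \|N_2(\eta_1) - N_2(\eta_2)\|_{H^\alpha} \lesssim (\|\eta_1\|_{H^\alpha}+\|\eta_2\|_{H^\alpha})\, P\big(\|\eta_j\|_{H^\alpha}, \|v^N_{\text{app}}\|_{H^\alpha}\big)\, \|\eta_1-\eta_2\|_{H^\alpha}. \]
Multiplying by $\tau^N$ and using $\|\eta_j(\tau)\|_{H^\alpha} \leq \tau^{-(N-1)} \|\eta_j\|_{L^{\infty,N-1}_\tau H^\alpha}$ gives a gain of at least $\tau^{-(N-2)}$ in the overall estimate, which, for $N$ large, is more than sufficient to control the target norm $L^{\infty,N}_\tau H^\alpha$ with a small Lipschitz constant (improvable by shrinking $\tau_0$ if needed, i.e.\ by taking the final $T$ smaller).

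For $N_1$ the gain of $\tau^{-1}$ must come from the coefficients. I would expand $v^N_{\text{app}} = W + \zeta^N$ and use the identities
\[ W^4 - |v^N_{\text{app}}|^4 = -4W^3\,\mathrm{Re}\,\zeta^N + O_{\text{poly}}(\zeta^N), \qquad W^4 - (v^N_{\text{app}})^2|v^N_{\text{app}}|^2 = -W^3(3\zeta^N + \bar\zeta^N) + O_{\text{poly}}(\zeta^N), \]
to reduce matters to estimating a multiplication operator with symbol of the schematic form $W^{4-k}(\zeta^N)^k$, $1 \leq k \leq 4$. The crucial point is that the pointwise bounds in Proposition \ref{main-prop-approx}(a) are \emph{better} at higher derivative order: while $\|\zeta^N\|_{L^\infty}$ only decays like $(T-t)^{\nu/2 + 1/4}$, the combinations $\|R^{-l}\partial_R^k \zeta^N\|_{L^\infty}$ with $k+l = 2$ decay like $(T-t)^{2\nu} = \tau^{-1}$. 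Since $W$ is smooth and Schwartz-like after scaling, Leibniz together with the weighted $R^{-l}\partial_R^k$ estimates yields
\[ \|W^{4-k}(\zeta^N)^k\|_{H^\alpha(\R^3)} \lesssim \tau^{-1} \]
for the appropriate combinations that matter after pairing with $\|\eta\|_{H^\alpha}$ via Kato-Ponce. The Lipschitz bound for $N_1$ then follows from
\[ \|N_1(\eta_1) - N_1(\eta_2)\|_{H^\alpha} \lesssim \tau^{-1} \|\eta_1-\eta_2\|_{H^\alpha}, \]
and multiplication by $\tau^N$ against $\tau^{-(N-1)}$ gives the required $L^{\infty,N}_\tau H^\alpha$ bound uniformly.

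The main obstacle is this last step: the $L^\infty$ norm of the coefficient alone only decays like $\tau^{-1/4-\varepsilon}$, so one cannot simply apply Kato-Ponce with $\|\text{coeff}\|_{L^\infty}\|\eta\|_{H^\alpha}$; one must distribute derivatives onto the coefficient (where the $k+l=2$ estimates produce the full $\tau^{-1}$) and onto $\eta$, using Sobolev embedding $H^\alpha \hookrightarrow L^\infty$ (valid since $\alpha > 3/2$) to absorb lower order terms. Carrying this out at fractional order $\alpha \in [2, 1+\nu)$ requires Besov-type or paraproduct localization together with the region-by-region structure of $\zeta^N$ (interior, self-similar, remote) encoded in Corollaries \ref{estimates}, \ref{estimates-self}, \ref{estimates-remot}; in particular the remote-region contribution to the coefficient, dominated by $f_0$, must be handled using the $H^{1+\nu-}$ regularity of $f_0$ combined with its explicit smallness in $\delta$, which explains the restriction $\alpha < 1 + \nu$.
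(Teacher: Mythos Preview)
Your treatment of $N_2$ is essentially the paper's: it is at least quadratic in $\eta$, so one factor of $\|\eta\|_{H^\alpha}\lesssim\tau^{-(N-1)}$ provides the required $\tau^{-1}$ gain; the remaining factors are controlled by $\|v_{\text{app}}^N\|_{W^{2,\infty}}\le C$ (note: $W\notin L^2(\R^3)$, so you should use $W^{\alpha,\infty}$ rather than $H^\alpha$ for $v_{\text{app}}^N$, but this is a cosmetic fix).

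For $N_1$, however, you have misidentified the mechanism and manufactured an obstacle that is not there. You write that ``the $L^\infty$ norm of the coefficient alone only decays like $\tau^{-1/4-\varepsilon}$'' and then propose distributing derivatives, paraproducts, and a region-by-region analysis. In fact the $L^\infty$ norm of the full coefficient $\mathcal V_j$ already decays like $\tau^{-1}$. The point is that the factor $W^{4-k}$ carries spatial decay $\langle R\rangle^{-(4-k)}$, which converts the plain $L^\infty$ bound on $\zeta^N$ into a \emph{weighted} one. Writing $W^4-|v_{\text{app}}^N|^4$ as a combination of $W^{4-k}(W-v_{\text{app}}^N)^k$ for $1\le k\le4$ and using the estimates of Proposition~\ref{main-prop-approx}(a),
\[
\|W-v_{\text{app}}^N\|_{L^\infty}=O(\tau^{-1/4}),\quad \|R^{-1}(W-v_{\text{app}}^N)\|_{L^\infty}=O(\tau^{-1/2}),\quad \|R^{-2}(W-v_{\text{app}}^N)\|_{L^\infty},\ \|R^{-3}(W-v_{\text{app}}^N)\|_{L^\infty}=O(\tau^{-1}),
\]
one gets for instance $\|W^3(W-v_{\text{app}}^N)\|_{L^\infty}\lesssim\|R^{-3}(W-v_{\text{app}}^N)\|_{L^\infty}=O(\tau^{-1})$, and $\|W^2(W-v_{\text{app}}^N)^2\|_{L^\infty}\lesssim\|R^{-1}(W-v_{\text{app}}^N)\|_{L^\infty}^2=O(\tau^{-1})$, etc. The same weighted bounds on $R^{-l}\partial_R^k z^N$ handle the derivatives of $\mathcal V_j$ needed for the $H^2$ estimate. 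So the Lipschitz bound $\|N_1(\eta_1)-N_1(\eta_2)\|_{H^\alpha}\lesssim\tau^{-1}\|\eta_1-\eta_2\|_{H^\alpha}$ follows directly; no paraproduct decomposition or remote-region argument is required, and the restriction $\alpha<1+\nu$ enters only through the regularity of $\zeta^N$, not through any subtlety with $f_0$.
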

		\begin{Corollary} For $ 2 \leq \alpha < 1 + \nu $, the map $ X \mapsto \mathcal{F}R\tilde{N}(R^{-1} \mathcal{F}^{-1}X)$ is Lipschitz as a map from $ L^{\infty,N-1}L^{2, \alpha}_{\xi} $ to $L^{\infty,N}L^{2, \alpha}_{\xi}$.
		\end{Corollary}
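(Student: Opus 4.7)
The plan is to derive the Corollary as an immediate transport of the preceding Lemma through the isomorphism $X \leftrightarrow \eta := R^{-1}\mathcal{F}^{-1}X$, using the norm equivalence
\[
\|f\|_{L^{2,\alpha}_{\xi}} \sim \|R^{-1}\mathcal{F}^{-1} f\|_{H^{\alpha}_{\text{rad}}(\R^3)}
\]
recorded just above the Lemma (stated there at exponent $2\alpha$, but valid at any non-negative exponent by the identical argument). Since this equivalence is purely spatial and does not interact with the $\tau$ variable, the weighted spaces $L^{\infty,N}_{\tau}L^{2,\alpha}_{\xi}$ and $L^{\infty,N}_{\tau}H^{\alpha}$ are likewise equivalent under $X \mapsto \eta$, with constants uniform in $\tau$.

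Concretely, given two inputs $X_1, X_2 \in L^{\infty,N-1}_{\tau}L^{2,\alpha}_{\xi}$, I set $\eta_i := R^{-1}\mathcal{F}^{-1}X_i$, which by the equivalence lie in $L^{\infty,N-1}_{\tau}H^{\alpha}$ with comparable norms. Applying the Lemma to the pair $(\eta_1,\eta_2)$ produces
\[
\|\tilde{N}(\eta_1) - \tilde{N}(\eta_2)\|_{L^{\infty,N}_{\tau}H^{\alpha}} \lesssim \|\eta_1 - \eta_2\|_{L^{\infty,N-1}_{\tau}H^{\alpha}} \sim \|X_1 - X_2\|_{L^{\infty,N-1}_{\tau}L^{2,\alpha}_{\xi}}.
\]
Running the norm equivalence in the reverse direction on the outputs, together with the identity $\mathcal{F}R\tilde{N}(R^{-1}\mathcal{F}^{-1}X_i) = \mathcal{F}R\tilde{N}(\eta_i)$, converts the left-hand side above into $\|\mathcal{F}R\tilde{N}(R^{-1}\mathcal{F}^{-1}X_1) - \mathcal{F}R\tilde{N}(R^{-1}\mathcal{F}^{-1}X_2)\|_{L^{\infty,N}_{\tau}L^{2,\alpha}_{\xi}}$, which is the claim.

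The only point that warrants comment is the matching of the exponent $\alpha$ in the norm equivalence. For even integer $\alpha$ the equivalence follows by iterating the spectral identity $\mathcal{F}\mathcal{H}\mathcal{F}^{-1}$ (which acts as multiplication by $\xi^2$ on the essential spectrum) together with the conjugation $R^{-1}\mathcal{H}R = H$, where $H$ is a scalar Schr\"odinger-type operator with $W$-dependent coefficients that are bounded along with their derivatives; fractional exponents in the admissible range $2 \leq \alpha < 1 + \nu$ are then covered by complex interpolation between the integer endpoints, exactly as alluded to in the paragraph preceding the Lemma. I do not foresee any genuine obstacle in this reduction, because all the substantive content of the Corollary is already packaged in the Lemma and the Fourier correspondence is by this stage a standard device. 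The real work would sit inside the Lemma itself: the quintic-type Lipschitz bound, which one would handle via Sobolev embedding into $L^\infty$ in $H^{\alpha}(\R^3)$ with $\alpha \geq 2$, together with the smallness of $W^4 - |v_{\text{app}}^N|^4$ guaranteed by Proposition \ref{main-prop-approx}, plus the $\tau^{-(N-1)}$ decay of $\eta$ that furnishes the extra $\tau^{-1}$ factor needed to improve from $L^{\infty,N-1}_{\tau}$ to $L^{\infty,N}_{\tau}$.
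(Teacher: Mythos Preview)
Your proposal is correct and matches the paper's intended (implicit) argument: the Corollary is stated immediately after the Lemma with no separate proof, and the norm equivalence $\|f\|_{L^{2,\alpha}_\xi} \sim \|R^{-1}\mathcal{F}^{-1}f\|_{H^{\alpha}_{\text{rad}}}$ recorded just before is precisely the bridge between the two. Your closing paragraph on what drives the Lemma itself (the $O(\tau^{-1})$ gain from $W^4-|v_{\text{app}}^N|^4$ via Proposition~\ref{main-prop-approx}, and the extra $\tau$-decay from the at-least-quadratic structure of $\mathcal{N}_2$) also lines up with the paper's sketch.
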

		\begin{proof}
			We sketch the proof for $ \alpha =2$. For $ \alpha > 2$, we need corresponding improved estimates in the approximation, for which we refer to the remark below Proposition \ref{main-prop-approx}. We recall 
			\begin{align*}
				& \mathcal{N}_1 = 3(W^4 - |v_{\text{app}}^N|^4) \eta + 2(W^4 - (v_{\text{app}}^N)^2 |v_{\text{app}}^N|^2)\overline{\eta},\\[5pt]
				&\mathcal{N}_2 = - | v_{\text{app}}^N + f|^4 (v_{\text{app}}^N + f) + |v_{\text{app}}^N|^4v_{\text{app}}^N + 3 |v_{\text{app}}^N|^4f + 2 (v_{\text{app}}^N)^2 |v_{\text{app}}^N|^2 \bar{f}.
			\end{align*}
			For $ N_1 $ we need to gain $O( \tau^{-1})$ from the factors 
			\[
			\mathcal{V}_1 = W^4 - |v_{\text{app}}^N|^4,~~\mathcal{V}_2 = W^4 - (v_{\text{app}}^N)^2|v_{\text{app}}^N|^2,
			\]
			which can be written into the sum of terms of the form
			\[
			W^3(W - v_{\text{app}}^N),~ W^2 (W - v_{\text{app}}^N)^2,~ W ( W - v_{\text{app}}^N)^3,~ (W - v_{\text{app}}^N)^4.
			\]
			Now we use $W(R) \sim R^{-1}$ as $R \to \infty$ and  by Proposition \ref{main-prop-approx} we have 
			\begin{align*}
				&	\|   W - v_{\text{app}}^N  \|_{L^{\infty}} = O(\tau^{- \f14}),~	\|   R^{-1}(W - v_{\text{app}}^N)  \|_{L^{\infty}} = O(\tau^{-\f12}),\\
				&	\|   R^{-2}(W - v_{\text{app}}^N)  \|_{L^{\infty}} = O(\tau^{- 1}),~ 	\|   R^{-3}(W - v_{\text{app}}^N)  \|_{L^{\infty}} = O(\tau^{-1}).
			\end{align*}
			For $N_2$ we simply use $ \| v_{\text{app}}^N \|_{W^{2, \infty}} \leq C$, which then suffices by definition of the norm of $L^{\infty,N}_{\tau}H^{\alpha}$ since $f$ (counting also $ \bar{f}$)  appears at least quadratically.
		\end{proof}
		Now we are in position to prove Theorem \ref{main}.\\[4pt]
		\emph{Proof of Theorem \ref{main}}.~We choose $ \delta>0$ small as in the theorem and hence obtain the above approximation $u^N_{\text{app}}(t)$ of Proposition \ref{main-prop-approx} on $[0, T)$ for any small enough $ 0 < T \ll1$.
		By the previous analysis of the Section and the following stated in Proposition \ref{main-prop-approx}
		\begin{align}
			\|  \mathbf{e}(t) \|_{H^2} \leq C_0 \tau^{- 1 - \frac{1}{2\nu}\tilde{N}},~0 \leq t < T,~~ \tilde{N} \gtrsim 1,
		\end{align}
		the fixed point operator $J(\eta)$ (or likewise $J(X)$ on the Fourier side) satisfies for $N \gg1$ and $\tau_1 \geq \tau_0 \gg1 $ large enough
		\begin{align*}
			&\| J(\eta) \|_{L^{\infty, N}_{\tau}H^{2}} \leq \frac{C}{N} \|  \eta\|_{L^{\infty, N}_{\tau}H^{2}}( 1 + \| \eta \|_{L^{\infty, N}_{\tau}H^{2}}) + \tau_0^{- \tilde{\tilde{N}}}C_0,~~ \\\
			& \| J(\eta_1) -  J(\eta_2)\|_{L^{\infty, N}_{\tau}H^{2}} \leq \frac{C}{N} (\|  \eta_1\|_{L^{\infty, N}_{\tau}H^{2}} + \|  \eta_2\|_{L^{\infty, N}_{\tau}H^{2}} +1) \| \eta_1 - \eta_2 \|_{L^{\infty, N}_{\tau}H^{2}}.
		\end{align*}
		Hence $ J$ is a contractive map on the unit ball for large $N, \tau_0$. (The same works in principle without large $N$ if we factor off powers of $\tau_0^{-1}$ and $ \tau_0 $ is large). We note, in particular, $ \tau_0 \sim T^{- \frac{1}{2 \nu}}$ and thus we simply set  the $ T_0> 0$  to be sufficiently small. By construction, the solutions are of the form  
		\begin{align}
			u(t,x) = e^{i \alpha(t) } \lambda^{\f12}(t)( W (\lambda(t)x) + \zeta^N(t, \lambda(t)x) + f(\tau(t), \lambda(t)x ).
		\end{align}
		Then we finally observe  $\varepsilon(t,x) = e^{i \alpha(t) } \lambda^{\f12}(t)f(\tau(t), \lambda(t)x )$ and all the remaining statements in Theorem \ref{main} follow from Proposition \ref{main-prop-approx}.
		\section[Scattering theory  and the distorted Fourier side for Schr\"odinger systems]{Scattering theory  and the distorted Fourier side for Schr\"odinger systems}
		\label{sec:Scat}
		In this section, we present the calculation of the scattering matrix, the spectral representation and properties of the distorted Fourier transform for Schr\"odinger systems associated to the Matrix  operators of the type in \eqref{system-final-final-R-operator} in Section \ref{sec:linearized}.
		\subsection{Scattering Theory} \label{subsec: Scat} We 
		start by calculating Jost solutions for the operator  \eqref{system-final-final-R-operator} and include the standard construction here (see \cite{OP}, \cite{Busl-Per}, \cite{K-S-stable}) in order to make it self contained. This part is similar as in \cite[Section 4.1]{OP},  however we assume polynomial decay of arbitrary order for $V(r)$, which might be of independent interest.\\[2pt]
		The calculation is well presented in a similar context for the NLS in  \cite[Section 5]{K-S-stable} and originates in the work of 
		Buslaev-Perelman \cite[Section 2]{Busl-Per}. We also refer to the scattering theory for scalar $1$D operators in \cite[chapters 4 \& 5]{Yafa}. \\[2pt]
		\emph{Hamiltonian}. Recall the operator in \eqref{system-final-final-y} for radial functions on $\R^3$ takes the form
		
		\begin{align}\label{op}
			&H_0 = - \Delta \sigma_3 =   \begin{pmatrix}
				- \Delta  & 0\\
				0& \Delta 
			\end{pmatrix},~~~~~ V(r) = \begin{pmatrix}
				V_1(r) & V_2(r)\\
				- V_2(r) & -V_1(r)
			\end{pmatrix},~~~~ 
			\\[8pt] \nonumber
			& H = H_0 + V(r),~~~D(H) = H^2_{rad}(\R^3, \C^2) \subset L^2_{rad}(\R^3, \C^2),
		\end{align}
		where 
		\begin{align*}
			&V_1(r) = -3 W^4(r) = -\frac{3}{(1 + \frac{r^2}{3})^2},~~~V_2(r) = -2 W^4(r) = - \frac{2}{(1 + \frac{r^2}{3})^2},~ r = |x|,~~ x \in \R^3.
		\end{align*}
		It is convenient to introduce the Pauli matrices 
		\[
		\sigma_1 = \begin{mmatrix}
			0 & 1\\
			1 & 0
			~\end{mmatrix}, ~ \sigma_2 = \begin{mmatrix}
			0 & -i\\
			i & 0
		\end{mmatrix},~ \sigma_3 = \begin{mmatrix}
			1 & 0\\
			0 & -1
		\end{mmatrix},\]
		~~\\
		for later reference. We consider the generalized eigenvalue problem
		\begin{align} \label{EVp}\lambda^2 f(r) =  H f(r) =  - \Delta \sigma_3 f(r) + V(r) f(r),~~\lambda \geq  0.
		\end{align} 
		Now by setting $ \tilde{f}(r) = rf(r)$, we further simplify the above eigenvalue problem for $ H$ to
		\begin{align}
			\lambda^2 f(x) = \mathcal{H} f(x) =  - \partial_x^2 \sigma_3 f + V(x) f ,~~ x \in \R.
		\end{align}
		In order to recover \eqref{EVp}, we might restrict to the subspace of odd functions.
	\begin{Def} \label{Def-on} For $ \mu \geq 0$ we let $ \mathcal{H} = \mathcal{H}_0 + V(x) $ be  defined by the operators
		
		\begin{align}\label{op}
			&\mathcal{H}_0  =  \begin{pmatrix}
				-\partial_x^2 + \mu & 0\\
				0& \partial_x^2 - \mu
			\end{pmatrix},~ V(x) = \begin{pmatrix}
				V_1(x) & V_2(x)\\
				- V_2(x) & -V_1(x)
			\end{pmatrix},
		\end{align}
		where we assume $V_1,V_2 \in C^{\infty}(\R) $ are real, positive and even with
		\begin{align}
			|\partial_x^l V_j(x)|\lesssim_{l} \langle x\rangle^{-m-l},~~j = 1,2 , ~ l \geq 0
		\end{align} 
		for some $ m \in \Z_+,~ m \geq 2$.
	\end{Def}
	Note from the above Definition we have the identities
	$ \sigma_1 \mathcal{H}\sigma_1 = - \mathcal{H},~\sigma_3 \mathcal{H}^* \sigma_3 = \mathcal{H}$
	with spectral implications
	and the eigenvalue problem reads 
	\begin{align}
		\mathcal{H} f(x) = ( - \partial_x^2 + \mu) \sigma_3 f(x) = (\lambda^2 + \mu) f(x)   ,~~ x \in \R.
	\end{align}
	We now follow the reference \cite{K-S-stable} as far as possible (note $ \mathcal{H}$ has a threshold resonance for $ \mu = 0$  and $V$ does not decay exponentially). 
	For the case $ \mu = 0$ we essentially recover \cite[Section 5]{OP}. From now on we set $ \gamma := \sqrt{\lambda^2 + 2 \mu}$.
	\begin{Lemma}
		\label{LemmaJost-decay}
		For $\lambda \in \R$ there exists a real solution $ f_3(x, \lambda) $ of 
		\[
		\mathcal{H}f_3(\cdot, \lambda) = (\lambda^2 + \mu) f_3(\cdot, \lambda),
		\]
		such that $ (x, \lambda) \mapsto f_3 (x, \lambda),~ (x, \lambda) \in \R \times\R$ is smooth and $ f_3(x, \lambda) \sim e^{-\gamma x}\binom{0}{1}$  as $ x\to \infty $. In fact $ \sup_{\lambda \in \R}\sup_{x \in \R} | e^{\gamma x}f_3(x, \lambda)| \leq C(V) $ and for all $ x \geq 0$
		\begin{align}\label{first-est-decaying-part}
			&\bigg| \partial^l_{\lambda} \partial^k_x \bigg[   e^{\gamma x}f_3(x, \lambda) - \begin{pmatrix}0\\[1pt] 1	\end{pmatrix} \bigg] \bigg| \lesssim_{k,m}  \langle x \rangle^{2 -m - k + l} ( 1 + | \lambda| \langle  x\rangle )^{-1 -l},~~ k \geq 0,~l + 2 < m,\\ \label{first-est-decaying-part2}
			&\bigg| \partial^{m -2}_{\lambda} \partial^k_x \bigg[   e^{\gamma x}f_3(x, \lambda) - \begin{pmatrix}0\\[1pt] 1	\end{pmatrix} \bigg] \bigg| \lesssim_{k,m} \langle x \rangle^{-k} ( 1 + |\lambda| \langle  x\rangle )^{1 -m}\log(( |\lambda| \langle x\rangle)^{-1} + 2),~~k \geq 0,
		\end{align}
		
	\end{Lemma}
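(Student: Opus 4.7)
The plan is to write $f_3(x,\lambda) = e^{-\gamma x}\bigl(\binom{0}{1} + g(x,\lambda)\bigr)$ with $g = (g^1,g^2)^T$, and convert $\mathcal{H} f_3 = (\lambda^2 + \mu) f_3$ into a Volterra system for $g$. Using $\gamma^2 = \lambda^2 + 2\mu$, direct substitution produces
\[
\bigl[\partial_x^2 - 2\gamma\partial_x + \lambda^2 + \gamma^2\bigr] g^1 = V_2 + V_1 g^1 + V_2 g^2,
\]
\[
\bigl[\partial_x^2 - 2\gamma \partial_x\bigr]g^2 = V_1 + V_2 g^1 + V_1 g^2,
\]
with Volterra kernels that integrate from $x$ to $\infty$
\[
K_1(x,y;\lambda) = e^{\gamma(x-y)}\tfrac{\sin(\lambda(y-x))}{\lambda}, \qquad K_2(x,y;\gamma) = \tfrac{1 - e^{-2\gamma(y-x)}}{2\gamma},
\]
both smooth in their parameters (with the natural limits as $\lambda \to 0$, $\gamma \to 0$) and obeying $|K_1| \lesssim \min\{y-x,|\lambda|^{-1}\}$, $|K_2| \lesssim \min\{y-x,\gamma^{-1}\}$ for $x \le y$.

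First I would establish existence via a contraction argument in the Banach space $X_{x_0} = \{g \in C([x_0,\infty);\C^2) : \sup_{x\ge x_0}\langle x\rangle^{m-2}|g(x)| < \infty\}$ for $x_0 \gg 1$. The inhomogeneous term has $X_{x_0}$-norm uniformly bounded in $\lambda$ by the kernel bounds and $|V_j| \lesssim \langle y\rangle^{-m}$, and smallness of the linear part follows from $\|V\|_{L^1(y\ge x_0)} \to 0$. Continuation to all of $\R$ by the standard ODE Cauchy theory and regularity in $(x,\lambda)$ by differentiating the Volterra iterates then yield the smoothness of $g$ together with the uniform bound $\sup|e^{\gamma x}f_3| \le C(V)$.

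The quantitative estimates \eqref{first-est-decaying-part} and \eqref{first-est-decaying-part2} will follow by induction on $k+l$. Each $\partial_\lambda$ acting on $K_1$ produces a factor $(y-x)\cos(\lambda(y-x))/\lambda$ (and analogous expressions after iteration) that can be absorbed against the decay of $V$ provided $l+2 < m$, while $\partial_x$ on the kernel produces factors of $\gamma$ or oscillatory terms which, combined with the tail integral $\int_x^\infty \langle y\rangle^{-m+l}\,dy$, yield the polynomial factor $\langle x\rangle^{2-m-k+l}$. The oscillation gain $(1+|\lambda|\langle x\rangle)^{-1-l}$ is obtained by splitting the $y$-integral at $y \sim x + |\lambda|^{-1}$ and integrating by parts in $y$ on the outer region where oscillation dominates; analogous arguments apply to $g^2$, noting $\gamma \ge |\lambda|$ so that $K_2$ is no worse than $K_1$ in the relevant regimes.

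The main obstacle will be the borderline case $l = m-2$ of \eqref{first-est-decaying-part2}: after $m-2$ differentiations the inner-region integral of $(y-x)^{m-2}\langle y\rangle^{-m}$ becomes only logarithmically integrable when cut off at the scale $|\lambda|^{-1}$, which produces precisely the $\log\bigl((|\lambda|\langle x\rangle)^{-1} + 2\bigr)$ in the stated bound. A secondary challenge is algebraic: the inductive estimates must be preserved through the Volterra iteration, so the product of two mixed-type bounds arising from the convolution must reproduce the same functional form with an additional smallness factor, which amounts to careful bookkeeping of the joint $(k,l)$-orders at each iteration but is routine once the base case $k = l = 0$ is established.
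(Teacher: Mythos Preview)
Your approach is correct and essentially identical to the paper's: both factor out $e^{-\gamma x}$, set up the same Volterra system on $[x,\infty)$ with the kernels $e^{\gamma(x-y)}\sin(\lambda(y-x))/\lambda$ and $-e^{\gamma(x-y)}\sinh(\gamma(y-x))/\gamma$ (your $K_2$ is exactly the latter), solve by iteration, and obtain the derivative bounds inductively. The only cosmetic difference is that the paper packages the oscillation gain directly through the pointwise kernel estimate $|\partial_\lambda^l K(x,y,\lambda)| \lesssim (y-x)^{l+1}\langle \lambda(y-x)\rangle^{-1-l}$, which already encodes the near/far splitting you describe, so no separate integration-by-parts step is needed; the integrals $\int_x^\infty \frac{(y-x)^{l+1}}{\langle\lambda(y-x)\rangle^{l+1}}\langle y\rangle^{-m-k}\,dy$ then give the stated bounds (with the logarithm appearing at $l=m-2$) in one stroke.
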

	\begin{Rem} For $ \mu > 0$  and the proof of Lemma \ref{LemmaJost-decay}  (similar to \cite{K-S-stable}) there holds
		\begin{align}\label{coarser-dec}
			&\bigg| \partial^l_{\lambda} \partial^k_x \bigg[   e^{\gamma x}f_3(x, \lambda) - \begin{pmatrix}0\\[1pt] 1	\end{pmatrix} \bigg] \bigg| \lesssim \gamma^{-1 -l} \langle x \rangle^{1 -m - k } .
		\end{align}
		for all $ k , l \geq 0$ with an implicit constant depending on $ k, l,m, \mu$.	\end{Rem}
	\begin{proof}
		Provided $f_3(x, \lambda )$ exists as desired, we require
		\[
		f_3(x, \lambda) = e^{- \gamma x}\begin{psmallmatrix}0\\[1pt] 1	\end{psmallmatrix} + \int_x^{\infty} \begin{psmallmatrix}\frac{\sin(\lambda(x-y))}{\lambda}& 0\\  
			0 & - \frac{\sinh(\gamma(x-y))}{\gamma}	\end{psmallmatrix}  V(y) f_3(y, \lambda)~dy.
		\]
		Factoring off $ e^{-\gamma x}$ (resp. $e^{-\gamma y} $ in the integral) we hence formulate the following integral equation 
		\begin{align}\label{int}
			&\chi(x, \lambda) = \begin{psmallmatrix}0\\[1pt] 1	\end{psmallmatrix} + \int_x^{\infty} K(x,y,\lambda) V(y) \chi(y, \lambda)~dy\\ \nonumber
			& K(x,y,\lambda) = \begin{pmatrix}
				\frac{\sin(\lambda(y-x))}{\lambda}& 0\\ 
				0 & - \frac{\sinh(\gamma(y-x))}{\gamma}
			\end{pmatrix}e^{\gamma(x-y)},
		\end{align}
		For a solution $ \chi(x, \lambda) $ of  \eqref{int} we then set  $ f_3(x,\lambda) : = e^{- \gamma x} \chi(x, \lambda)$. Note further that 
		\begin{align}
			& |  \partial_{\lambda}^l K(x,y, \lambda) |  \lesssim_l  ~ \frac{ (y-x)^{l+1}}{\langle \lambda(y-x) \rangle^{l +1}},~~l \geq 0,~ y > x,
		\end{align}
		directly implied in the regions $ \gamma(y-x) \lesssim1 $ and $ \gamma (y-x) \gtrsim 1 $. Likewise we may check the bound
		\begin{align}
			&\sup_{y > x} |  \partial_{\lambda}^l  K(x,y,\lambda) | \lesssim_l \gamma^{-l-1},~~ l \geq 0, ~\lambda \geq 0,~~~~~  V(x) \lesssim \langle x \rangle^{-m},~
		\end{align}
		which implies \eqref{int} has  a solution $ \chi(x, \lambda) $  by the standard  Volterra iteration and such that \eqref{first-est-decaying-part} in case $ l = k = 0$ follows from estimating 
		\[
		\int_x^{\infty} | K(x,y, \lambda) | |V(y) | ~dy.
		\]
		Using further  $ \partial_x  K(x,y,\lambda) = - \partial_y K(x,y,\lambda) $ we infer for higher derivatives 
		\begin{align}
			&\partial_{x}^k  \chi(x, \lambda)  = \delta^k_0 \begin{psmallmatrix}0\\[1pt] 1	\end{psmallmatrix} + \sum_{j = 0}^k \begin{psmallmatrix}k\\[2pt] j	\end{psmallmatrix}  \int_x^{\infty} K(x,y,\lambda) V^{(k -j)}(y) (\partial_{y}^j \chi(y, \lambda))~dy,\\
			&\partial_{\lambda}^l \chi(x, \lambda)  = \delta^l_0 \begin{psmallmatrix}0\\[1pt] 1	\end{psmallmatrix} + \sum_{j = 0}^l \begin{psmallmatrix}l\\[2pt] j	\end{psmallmatrix}  \int_x^{\infty} \partial_{\lambda}^{l -j}K(x,y,\lambda) V(y) (\partial_{\lambda}^j \chi(y, \lambda))~dy.
		\end{align}
		Again since 
		by the assumption 
		$ |\partial_x^l V(x)|\lesssim_{l} \langle x\rangle^{-m-l} $ we obtain \eqref{first-est-decaying-part} and \eqref{first-est-decaying-part2} inductively from estimating integrals of the form
		\[
		\int_x^{\infty} \frac{(y -x)^{l +1}}{\langle  \lambda (y -x) \rangle^{l+1}} \langle y \rangle^{-m - k} f_{l, k }(y, \lambda)~dy.
		\]
	\end{proof}
	\begin{Lemma}
		\label{LemmaJost-oscillating}
		For $\lambda \in \R$ there exists a pair of solutions $ f_j(x, \lambda),~ j = 1,2$ of 
		\[
		\mathcal{H}f_j(\cdot, \lambda) = (\lambda^2 + \mu) f_j(\cdot, \lambda),
		\]
		such that $ (x, \lambda) \mapsto f_j (x, \lambda),~ (x, \lambda) \in \R \times\R$ is smooth, $ f_2(\cdot, \lambda) = \overline{f_1(\cdot, \lambda)} $ and 
		\[
		f_1(x, \lambda) = e^{i \lambda x} \begin{psmallmatrix}
			1\\0 \end{psmallmatrix}  + e^{i \lambda x} O( \langle x \rangle^{2 -m} ) + O(e^{-\gamma x}),~~~x\to \infty,
		\]
		where the $O$-bound is uniform in $ \lambda$.  In fact for $ \lambda \in \R$ there exists $ x_0 \geq 0$ 
		such that for $ x \geq x_0$, and $l -k + 2 < m$
		\begin{align}\label{diese-esti}
			&\bigg| \partial^l_{\lambda} \partial^k_x \bigg[   e^{- i \lambda x}f_1(x, \lambda) - \begin{pmatrix}1\\[1pt] 0	\end{pmatrix} \bigg] \bigg| \lesssim_k  
			\langle \lambda \rangle^{-1 +k}\langle x \rangle^{2 -m - k + l}   +  \gamma^{-1 +k}  x^{l}e^{- \gamma x},\\
			&\bigg| \partial^{m-2}_{\lambda} \bigg[   e^{- i \lambda x}f_1(x, \lambda) - \begin{pmatrix}1\\[1pt] 0	\end{pmatrix} \bigg] \bigg| \lesssim    \frac{\log(|\lambda|^{-1} +1)}{1 + |\lambda|}(C_{V, x_0} + \langle x \rangle^{2 -m }\log(x +2)).
		\end{align}
		where $x_0$ only depends on $V$ if $ \mu > 0$ and further, if $ |\lambda | \geq \lambda _0 \gtrsim 1 $ is large, we can take $ x _0 = 0$. Moreover for $ \mu = 0$ and $ 0 < |\lambda| \ll1,~x \geq 0 $ we obtain
		\begin{align} \label{jene-est}
			&\bigg| \partial^l_{\lambda} \partial^k_x \bigg[   e^{- i \lambda x}f_1(x, \lambda) - \begin{pmatrix}1\\[1pt] 0	\end{pmatrix} \bigg] \bigg| \lesssim_k   \langle x \rangle^{2 -m - k + l}   +  |\lambda|^{k+1 - l}\langle \lambda x\rangle^{l}e^{- |\lambda| x},~~l -k + 2 < m,
		\end{align}
		
	\end{Lemma}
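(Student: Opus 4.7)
I would follow the Buslaev--Perelman / Krieger--Schlag scheme for matrix Jost solutions cited above, in close analogy with Lemma~\ref{LemmaJost-decay}. The starting ansatz is
\[
f_1(x,\lambda) = e^{i\lambda x}\binom{1}{0} + \int_x^{\infty} K(x,y,\lambda)\,V(y)\,f_1(y,\lambda)\,dy,
\]
with $K$ the diagonal matrix kernel of the previous proof; factoring off the oscillation by setting $\chi(x,\lambda) := e^{-i\lambda x} f_1(x,\lambda)$ turns the $(1,1)$-entry into the bounded expression $(1-e^{2i\lambda(y-x)})/(2i\lambda)$, bounded by $\min(y-x,|\lambda|^{-1})$, exactly as in the scalar Jost construction. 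The companion solution $f_2 := \overline{f_1}$ is then automatically a Jost solution for the conjugate asymptotic by real-valuedness of $V$ and of the eigenvalue $\lambda^{2}+\mu$.

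The principal new difficulty compared with Lemma~\ref{LemmaJost-decay} lies in the $(2,2)$-entry: the naive candidate $e^{i\lambda(y-x)}\sinh(\gamma(y-x))/\gamma$ contains the growing mode $e^{(\gamma+i\lambda)(y-x)}/(2\gamma)$ and produces divergent integrals against the oscillating, only polynomially decaying source $V_2(y)e^{i\lambda y}$. I would resolve this as in Buslaev--Perelman by separating the two asymptotic modes at the outset: realize the second component of $f_1$ as a perturbation of the exponentially decaying Jost mode $e^{-\gamma x}\binom{0}{1}$, constructing its amplitude through the Lippmann--Schwinger kernel $-e^{-\gamma|x-y|}/(2\gamma)$, which avoids the growing-mode integral altogether. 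This two-mode separation is the substantive new ingredient and directly accounts for the splitting of the right-hand side of \eqref{diese-esti} into a first-row oscillating piece $\langle\lambda\rangle^{-1+k}\langle x\rangle^{2-m-k+l}$ and a second-row exponentially decaying piece $\gamma^{-1+k} x^{l} e^{-\gamma x}$.

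Once the effective kernel is bounded, existence on $[x_0,\infty)$ follows by Picard iteration in $L^{\infty}([x_0,\infty);\C^{2})$ for $x_0$ large enough (depending on $V$, uniform in $|\lambda|\gtrsim\lambda_0$ when $\mu>0$; for $|\lambda|$ large one can take $x_0=0$). Smallness of the iteration uses
\[
\int_x^{\infty} \min\!\bigl(y-x,|\lambda|^{-1}\bigr)\,\langle y\rangle^{-m}\,dy \;\lesssim\; \langle\lambda\rangle^{-1}\langle x\rangle^{2-m},
\]
which forces the effective integrability constraint $m>2$ and, at the borderline $l = m-2$, produces the logarithmic factor in the second estimate of the lemma. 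The derivative bounds in \eqref{diese-esti} are then obtained by differentiating the integral equation $l$ times in $\lambda$ and $k$ times in $x$: each $\partial_{\lambda}$ either brings down a factor $(y-x)$ from the first-row kernel (giving the oscillating contribution) or acts on the $\gamma$-dependent Lippmann--Schwinger kernel of the second row (giving the exponentially decaying contribution). Extension of $f_1$ to all of $\R$ is by standard ODE continuation.

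The threshold regime \eqref{jene-est} with $\mu=0$ and $0<|\lambda|\ll 1$ is handled separately: there $\gamma=|\lambda|$, the bounded kernel $(1-e^{2i\lambda(y-x)})/(2i\lambda)$ extends smoothly through the threshold, and the exponentially decaying contribution degenerates into the bound $|\lambda|^{k+1-l}\langle\lambda x\rangle^{l} e^{-|\lambda| x}$ asserted in \eqref{jene-est}. The main obstacle throughout is the second-row kernel choice: the $\sinh$-based Volterra formulation that sufficed for the exponentially decaying solution $f_3$ fails outright here, since the source $V f_1^{(1)}$ is only oscillatory-polynomial, not exponentially decaying; once the two-mode separation above is in place, the remaining work is a careful elaboration of the derivative bookkeeping already carried out for Lemma~\ref{LemmaJost-decay}.
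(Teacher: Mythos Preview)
Your proposal takes a genuinely different route from the paper. Rather than switching to a resolvent-type kernel for the second row, the paper writes
\[
f_1(x,\lambda) = u(x,\lambda)\,f_3(x,\lambda) + v(x,\lambda)\begin{pmatrix}1\\0\end{pmatrix},
\]
leveraging the exponentially decaying Jost solution $f_3$ already built in Lemma~\ref{LemmaJost-decay}. The second-row equation then determines $u'$ explicitly by variation of constants against $(f_3^{(2)})^{-2}$ (this is formula \eqref{COnst-var} in the paper); substituting back into the first row produces a \emph{scalar} Volterra equation for $v$ alone, with a kernel $K_1+K_2$ assembled from $V_1$, $V_2$ and the components of $f_3$. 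The $O(e^{-\gamma x})$ piece in the statement is then exactly the factor $u\,f_3$, while the oscillatory piece $e^{i\lambda x}O(\langle x\rangle^{2-m})$ is $v-e^{i\lambda x}$.

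Your Lippmann--Schwinger kernel $-e^{-\gamma|x-y|}/(2\gamma)$ for the second row is a reasonable alternative in spirit, but note two things. First, on $[x_0,\infty)$ it gives a mixed Volterra--Fredholm system (the $\int_{x_0}^{x}$ part runs the ``wrong'' way), so the iteration is no longer pure Volterra; the paper's factorization keeps a genuine Volterra structure throughout and this is what makes the derivative bookkeeping clean. Second, your asserted one-to-one correspondence between the second-row kernel and the exponential term $\gamma^{-1+k}x^{l}e^{-\gamma x}$ in \eqref{diese-esti} is not quite right: the resolvent applied to an oscillating source $V_2\,e^{i\lambda y}\langle y\rangle^{-m}$ produces a particular solution that is itself oscillating and \emph{polynomially} decaying, not exponentially decaying, so the $e^{-\gamma x}$ contribution does not arise from the kernel choice alone but from an additional homogeneous piece whose coefficient you would still have to pin down. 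In the paper's approach that coefficient is fixed automatically by the normalization $u(x_0,\lambda)=0$, and the exponential decay is inherited directly from $f_3^{(2)}$.
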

	\begin{Rem} From the proof it is clear we may choose $ x \geq x_0$ only depending on $V$ in order to obtain the following. In \eqref{diese-esti} for $l = k = 0$  replace 
		$O(\gamma^{-1 }  e^{- \gamma x})$  on the right by  $ O(\langle x \rangle)$. This upper bound is $O(1)_{\lambda}$ as $ \lambda \to 0$ and uniform with respect to $ \mu \to 0$.
	\end{Rem}
	
	\begin{proof}
		We prove \eqref{diese-esti} and for \eqref{jene-est} refer to \cite[Section 4.1, Lemma 4.2]{OP}. We write
		$$ f_1(x,\lambda) = u(x, \lambda) f_3(x,\lambda) + v(x,\lambda)\begin{pmatrix}1\\ 0	\end{pmatrix}$$
		where $ v(x,\lambda) \sim e^{i \lambda x}$ and $ f_3(x, \lambda) = \begin{psmallmatrix}f_3^{(1)}(x, \lambda)\\ f^{(2)}_3(x, \lambda)	\end{psmallmatrix}$. 
		We then conclude (using $g' = \partial_x g$)
		\[
		\mathcal{H} (u f_3)= u (\lambda^2 + \mu) f_3 + \begin{pmatrix} -u'' f_3^{(1)}- 2 u' \partial_x f_3^{(1)}\\  u'' f_3^{(2)} + 2 u' \partial_xf_3^{(2)}\end{pmatrix},	
		\]
		and thus 
		\begin{align} \label{Red}
			0 = (\mathcal{H}- (\lambda^2 + \mu))f_1 = \begin{pmatrix} (- \partial_{xx} - \lambda^2 + V_{1}) v \\ - V_{2 } v \end{pmatrix} + \begin{pmatrix} -u'' f_{3}^{(1)} - 2 u' \partial_x f_{3}^{(1)}\\  u'' f_{3}^{(2)} + 2 u' \partial_x f_{3}^{(2)}\end{pmatrix}
		\end{align}
		The standard procedure of solving \eqref{Red} is e.g. well described  in Lemma 5.3 in \cite[Section 5.3]{K-S-stable} (except with more decay for $ V_{1}, V_{2}$) and hence using
		\[
		y'' f_3 + 2 y' f_3' = 0,~~~\text{if}~~ y'(x, \lambda) = C (f_3^{(2)}(x, \lambda))^{-2},~~ x \geq x_0
		\]
		(with $x_0$ independent of $\lambda$), we choose $ u$ by the variations of constants formula (in the second line of \eqref{Red})
		\begin{align}\label{COnst-var}
			u'(x, \lambda) = - [f_3^{(2)}(x, \lambda)]^{-2}\int_x^{\infty}f_3^{(2)}(y, \lambda) V_{2}(y) v(y, \lambda)~dy.
		\end{align}
		Clearly we also infer  from the first line of \eqref{Red}
		\begin{align}
			v(x, \lambda) = e^{i x \lambda }- \int_x^{\infty}\frac{\sin(\lambda( y -x))}{\lambda}\big( u''(y) f_3^{(1)}(y) + 2 u'(y) f_3^{(1)}(y) - V_1(y) v(y)\big)~dy,
		\end{align}
		and hence plugging in \eqref{COnst-var} we may write
		\begin{align}
			&v(x, \lambda) = e^{i x \lambda } +  \int_x^{\infty}K_1(x,y;\lambda) v(y)~dy  + \int_x^{\infty}K_2(x,y;\lambda) v(y)~dy,\\
			&K_1(x,y;\lambda) = \frac{\sin(\lambda(y-x))}{\lambda} V_a(y, \lambda) := \frac{\sin(\lambda(y-x))}{\lambda} \big(V_1(y) - \frac{f_3^{(1)}}{f_3^{(2)}}(y, \lambda) V_2(y)\big),\\
			&K_2(x,y;\lambda) = 2 \int_y^x\frac{\sin(\lambda(z-x))}{\lambda} V_b(z, \lambda) ~dz~ \cdot f_3^{(2)}(y, \lambda) V_2(y),\\
			& V_b(z, \lambda) : =  \big(- \frac{f_3^{(2)}(z, \lambda)'}{f_3^{(2)}(z, \lambda)}f_3^{(1)}(z, \lambda) + f_3^{(1)}(z, \lambda)' \big)[f_3^{(2)}(z, \lambda)]^{-2}.
		\end{align}
		In particular by Lemma \ref{LemmaJost-decay} for $ y \geq x \geq x_0$ there holds
		\begin{align}
			| \partial_{y}^l V_a(y, \lambda)| &\leq C_l \;\langle y \rangle^{-m -l },~~ l \geq 0,\\[3pt]
			| \partial_{y}^l \partial_{\lambda}^kV_a(y, \lambda)| &\leq C_l \;\langle y \rangle^{2-2m -l +k} \langle \lambda y \rangle^{-1-k},~~ l \geq 0,~k > 0,~ k +2  < m,\\[3pt]
			| \partial_{y}^l \partial_{\lambda}^{m-2}V_a(y, \lambda)| &\leq C_l  \;\langle y \rangle^{-m -l} \langle \lambda y \rangle^{1-m} \log((|\lambda| y)^{-1} + 2),\\[3pt]
			| \partial_{y}^l \partial_{\lambda}^kV_b(y, \lambda)| &\leq C_l\;  e^{\gamma y}\langle y \rangle^{1-m -l +k} \langle \lambda y \rangle^{-1-k},~~ l \geq 0,~k + 2 < m,\\[3pt]
			| \partial_{y}^l \partial_{\lambda}^{m-2}V_b(y, \lambda)| &\leq C_l \; e^{\gamma y} \langle y \rangle^{-1 -l} \langle \lambda y \rangle^{1-m} \log((|\lambda| y)^{-1} + 2),
		\end{align}
		with a constant $C_l  > 0$. Therefore 
		\begin{align*}
			|K_1(y, x;\lambda) | &\leq \frac{(y-x)}{\langle \lambda (y-x)\rangle} \langle y \rangle^{-m},\\
			|K_2(y, x;\lambda) | &\leq \int_x^y \frac{(z-x)}{\langle \lambda (z-x)\rangle} e^{\gamma z} \langle z \rangle^{1-m } \langle \lambda z \rangle^{-1}~dz~ e^{- \gamma y} \langle y \rangle^{-m}\\
			&~\leq C \frac{(y-x)}{\langle \lambda (y-x)\rangle} \langle y \rangle^{-m} \langle x \rangle^{2-m}.
		\end{align*}
		Hence we have a solution $v(x, \lambda)$ by Volterra iteration on $[x_0, \infty) $ with 
		\begin{align*}
			|v(x, \lambda) - e^{ i \lambda x}| \leq C \int_x^{\infty}  \frac{(y-x)}{\langle \lambda (y-x)\rangle} \langle y \rangle^{-m}~dy \leq&~ C \langle x \rangle^{2-m} \int_0^{\infty} \frac{u}{1 + |\lambda| u} \langle u \rangle^{-2}~du\\
			\lesssim&~ (1 + |\lambda|)^{-1} \langle x\rangle^{2-m},
		\end{align*}
		for $ x \geq x_0$. This also implies
		\begin{align}\label{u-est}
			|u'(x, \lambda)| &\leq ~C e^{2 \gamma x} \langle \lambda \rangle^{-1}\int_x^{\infty} e^{- \gamma y} \langle y \rangle^{2 - 2m}~dy  +  C e^{2 \gamma x} \int_x^{\infty} e^{- \gamma y} \langle y \rangle^{ - m}~dy\\
			&\leq ~C e^{\gamma x} \langle \lambda \rangle^{-1}\langle x \rangle^{3 -2m} + e^{\gamma x}  \langle x \rangle^{1-m} ,~~~ x \geq x_0,~ \lambda \in \R.
		\end{align}
		Further, integrating by parts for the first term, 
		\begin{align*}
			\int_{x_0}^x e^{\gamma y} \langle y \rangle^{3-2m}~dy =&~ \gamma^{-1}\big(e^{\gamma x} \langle x \rangle^{3-2m} + c_{x_0}\big) +  \frac{2m-3}{\gamma}\int_{x_0}^x e^{\gamma y} \langle y \rangle^{2-2m}~dy\\
			\leq&~ \gamma^{-1}\big(e^{\gamma x} \langle x \rangle^{3-2m} + c_{x_0}\big) +  \frac{2m-3}{\gamma (1 + x_0)} \int_{x_0}^x e^{\gamma y} \langle y \rangle^{3-2m}~dy,
		\end{align*}
		we obtain for $ x_0 = x_0(\lambda) \gg1 $
		\begin{align}
			|\int_{x_0}^x e^{\gamma y} \langle y \rangle^{3-2m}~dy| \leq \frac{1 + x_0}{\gamma(1 + x_0) - (2m-3)} \big(e^{\gamma x} \langle x \rangle^{3-2m} + c_{x_0}\big) 
		\end{align}
		By taking $ u(x_0, \lambda) = 0$ we use \eqref{u-est} to bound $|u(x, \lambda)|$ as desired. We extend the solution $f_1 = u f_3 + v \begin{psmallmatrix}
			1\\[1pt]0
		\end{psmallmatrix}$  by local uniqueness. Further, say for $\partial_{\lambda}^l$ derivatives,  setting 
		\[
		\tilde{K}_j(x, y; \lambda) : = e^{- i \lambda(x-y)}K_j(x,y;\lambda) ,~ j = 1,2,~~ \tilde{v}(x, \lambda) :=e^{- i \lambda x}v(x, \lambda),
		\]
		we need to inductively estimate
		\begin{align} \label{ind}
			\partial_{\lambda}^l \tilde{v}(x, \lambda) = \sum_{i = 1,2}\sum_{j = 0}^{\ell} \begin{psmallmatrix}
				l\\[2pt]j
			\end{psmallmatrix} \int_x^{\infty} \partial_{\lambda}^{j}\tilde{K}_i(x,y; \lambda) \partial_{\lambda}^{l -j}\tilde{v}(y, \lambda)~ dy.
		\end{align}
		First for $ y \geq x \geq x_0 \gg1 $ large using $ |\partial_{\lambda}\ f_3(x, \lambda)| \leq C \langle x \rangle^{3-m}(1 + \lambda \langle x\rangle)^{-2}$, we see
		\begin{align*}
			&|\partial_{\lambda} \tilde{K}_1(x, y;\lambda)| \leq C  \frac{(y-x)^2}{\langle \lambda (y-x) \rangle } \langle y \rangle^{-m}+ C (1+|\lambda|)^{-1} \frac{(y-x)}{\langle \lambda (y-x) \rangle } \langle y\rangle^{3 -2m},\\
			&|\partial_{\lambda} \tilde{K}_2(x, y;\lambda)| \leq C \int_x^y \frac{(z-x)^2}{\langle \lambda(z-x)\rangle } e^{\gamma z} \langle z\rangle^{1-m}\langle \lambda z\rangle^{-1}~dz~ e^{- \gamma y} \langle y \rangle^{-m}\\ \nonumber
			&~~~~~~~~~~~~~~~~~~~~~+ C\int_x^y \frac{(z-x)}{\langle \lambda (z-x)\rangle }e^{\gamma z} \langle z \rangle^{2 -m} \langle \lambda z\rangle^{-2}~dz ~e^{- \gamma y} \langle y \rangle^{-m}\\
			&~~~~~~~~~~~\leq C ( 1 + |\lambda|)^{-1} \frac{(y-x)^2}{\langle \lambda (y-x) \rangle } \langle y \rangle^{ 2 -2m} + ( 1 + |\lambda|)^{-1} \frac{(y-x)}{\langle \lambda (y-x) \rangle } \langle y \rangle^{3- 2m} 
		\end{align*}
		By 	\eqref{ind} this implies  $ | \partial_{\lambda}\tilde{v}(x, \lambda)| \leq C (1 + |\lambda|)^{-1} \langle x\rangle^{3-m}$.  Higher derivatives for $ l >1$ and $\partial_x^k$ derivatives follow similarly. For the former we note
		\begin{align}
			|\partial_{\lambda}^l \tilde{K}(x, y; \lambda)| &\leq C_l  \frac{(y-x)^{l+1}}{1 + |\lambda| (y-x)} \langle y\rangle^{-m} +  C_l\frac{(y-x)}{1 + |\lambda| (y-x)}  \langle y\rangle^{2 -2m +l},\\
			|\partial_{\lambda}^{m-2} \tilde{K}(x, y; \lambda)| &\leq C_l\frac{(y-x) \langle y\rangle^{-m}}{1 + |\lambda| (y-x)}  \log(y+2)\log(|\lambda|^{-1} +2)\\ \nonumber
			\nonumber
			&~~~+  C_l  \frac{(y-x)^{m-1}}{1 + |\lambda| (y-x)} \langle y\rangle^{-m}.
		\end{align}
		which we use inductively in \eqref{ind}  for $\partial_{\lambda}^lv(x, \lambda)$. This is then inserted in \eqref{COnst-var} in order to bound $ \partial_{\lambda}^lu'(x, \lambda)$ and eventually 
		\begin{align*}
			\partial_{\lambda}^l f_1(x, \lambda)  = \partial_{\lambda}^l v(x, \lambda) \begin{pmatrix}
				1\\0
			\end{pmatrix} + \sum_{j = 0}^l \begin{pmatrix}
				j\\l
			\end{pmatrix} \partial_{\lambda}^j f_3(x, \lambda) \partial_{\lambda}^{j-l} u(x, \lambda).
		\end{align*}
		We spare details for the remaining part of the proof.
	\end{proof}
	Next, we consider exponentially growing solutions as $ x \to \infty$, i.e. we have the following.
	\begin{Lemma}
		\label{LemmaJost-exponential-growth}
		For $\lambda \in \R_*$ there exist a solution $ f_4(x, \lambda)$ of 
		\[
		\mathcal{H}f_4(\cdot, \lambda) = (\lambda^2 + \mu)f_4(\cdot, \lambda),
		\]
		such that $f_4(x, \lambda) = e^{\gamma x}\begin{psmallmatrix}0\\[1pt] 1	\end{psmallmatrix} + e^{\gamma x} \cdot O(x^{1-m})$ as $ x \to \infty$ with $O$-bound uniform in $ \lambda $.  In particular for $\lambda \in \R_* $ there exists $x_1 \geq 0$ such that there holds
		\begin{align}\label{dec-exp-growi}
			&\bigg| \partial^k_x \bigg[   e^{-  \lambda x}f_4(x, \lambda) - \begin{pmatrix}0\\[1pt] 1	\end{pmatrix} \bigg] \bigg| \lesssim_k  (1 + |\lambda|)^{-1 -k}\langle x \rangle^{1 -m -k},~~~ x \geq x_1,
		\end{align}
		where $ x_1$ only depends on $V$ if $ \mu > 0$. Further we take $ x_1 = 0$ if $ |\lambda | \geq \lambda_1 \gtrsim1$ is large. 
	\end{Lemma}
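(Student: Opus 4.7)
The plan is to construct $f_4$ by the same Volterra integral equation scheme used for $f_3$ in Lemma \ref{LemmaJost-decay}, adapted to pick out the exponentially growing mode. Setting $\chi(x,\lambda) := e^{-\gamma x} f_4(x,\lambda)$, we seek $\chi$ with $\chi \to \begin{psmallmatrix}0\\1\end{psmallmatrix}$ as $x \to \infty$. Conjugation of $\mathcal{H}$ by $e^{\gamma x}$ produces an ODE system for $\chi$ whose unperturbed modes are $\{1,\;e^{-2\gamma x},\;e^{(-\gamma \pm i\lambda)x}\}$; three of these decay at $+\infty$ and the constant mode matches the prescribed normalization. Assembling the Green's function from the decaying modes yields
$$
\chi(x,\lambda) = \begin{psmallmatrix}0\\1\end{psmallmatrix} + \int_x^\infty K(x,y,\lambda) V(y) \chi(y,\lambda)\,dy,
$$
where the block acting on $\chi_2$ is $(1 - e^{-2\gamma(y-x)})/(2\gamma)$ (coming from the pair $\{1, e^{-2\gamma x}\}$) and the block acting on $\chi_1$ is of the form $e^{-\gamma(y-x)}\sin(\lambda(y-x))/\lambda$ (from the pair $e^{(-\gamma\pm i\lambda)x}$), up to signs.

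I would carry out the construction in four steps. First, establish the uniform kernel bound $|K(x,y,\lambda)| \lesssim \min\{y-x,\;(1+|\lambda|)^{-1}\}$ for $y \geq x$, identical in form to the kernel bound appearing in Lemma \ref{LemmaJost-decay}: the oscillatory block is estimated exactly as there (after absorbing the decaying weight $e^{-\gamma(y-x)}$ into $V$), while the hyperbolic block reduces to the elementary inequality $|(1-e^{-2\gamma(y-x)})/(2\gamma)| \leq \min\{y-x,\;1/(2\gamma)\}$. Second, run the standard Volterra iteration on $[x_1,\infty)$ against the hypothesis $|V(y)| \lesssim \langle y\rangle^{-m}$; when $\mu>0$ the lower bound $\gamma \geq \sqrt{2\mu}$ lets $x_1$ depend only on $V$ and $\mu$ through the smallness condition on $\int_{x_1}^\infty \langle y\rangle^{-m}\,dy$, and when $|\lambda|\geq \lambda_1$ for some $\lambda_1\gtrsim 1$, the factor $(1+|\lambda|)^{-1}$ in $K$ itself supplies contractivity and $x_1=0$ is admissible. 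Third, extend $\chi$ from $[x_1,\infty)$ to all of $\R$ by local existence and uniqueness for the ODE, which is automatic from smoothness of $V$. Fourth, read off \eqref{dec-exp-growi}: the $k=0$ bound follows from inserting the kernel estimate into the integral equation, giving $|\chi(x,\lambda)-\begin{psmallmatrix}0\\1\end{psmallmatrix}|\lesssim\int_x^\infty(1+|\lambda|)^{-1}\langle y\rangle^{-m}\,dy \lesssim (1+|\lambda|)^{-1}\langle x\rangle^{1-m}$, and the higher $\partial_x^k$ bounds follow by differentiating the integral equation and inducting on $k$, using $|\partial_y^l V(y)|\lesssim \langle y\rangle^{-m-l}$ exactly as in the proof of Lemma \ref{LemmaJost-decay}.

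The main technical obstacle is the hyperbolic-block Green's function $(1 - e^{-2\gamma(y-x)})/(2\gamma)$. Conjugation by $e^{\gamma x}$ collapses the pair $\{e^{\gamma x}, e^{-\gamma x}\}$ onto the degenerate pair $\{1, e^{-2\gamma x}\}$ whose Wronskian $-2\gamma$ vanishes as $\gamma\downarrow 0$. This degeneracy is precisely why the lemma excludes $\lambda=0$: in the massless case $\mu=0$ we have $\gamma=|\lambda|$, so $1/\gamma$ is unbounded as $\lambda\to 0$ and the threshold $x_1$ must be allowed to depend on $\lambda$ near $0$. When $\mu>0$ the uniform lower bound $\gamma\geq \sqrt{2\mu}$ removes the degeneracy and $x_1$ depends only on $V$; when $|\lambda|$ is large, regardless of $\mu$, the kernel factor $(1+|\lambda|)^{-1}$ dominates and $x_1=0$ is allowed. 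Note that in contrast to Lemmas \ref{LemmaJost-decay} and \ref{LemmaJost-oscillating}, the decay rate here is $\langle x\rangle^{1-m}$ rather than $\langle x\rangle^{2-m}$, which reflects that the hyperbolic-block kernel saturates at $1/(2\gamma)$ rather than at $y-x$, so integration against $\langle y\rangle^{-m}$ produces only the single $x$-power gain rather than two.
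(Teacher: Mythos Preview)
Your integral equation has a concrete error in the hyperbolic block. The kernel $(1-e^{-2\gamma(y-x)})/(2\gamma)$ you write down is \emph{not} a homogeneous solution (in $x$) of the conjugated second-component equation $(\chi^{(2)})'' + 2\gamma(\chi^{(2)})' = -(V\chi)^{(2)}$: expanded, it equals $\tfrac{1}{2\gamma} - \tfrac{e^{-2\gamma y}}{2\gamma}e^{+2\gamma x}$, a combination of $1$ and $e^{+2\gamma x}$, which solves $u''-2\gamma u'=0$ --- the equation arising from the $f_3$ conjugation $\chi_3=e^{+\gamma x}f_3$. The Green's function built from the pair $\{1,e^{-2\gamma x}\}$ that you actually need is $(e^{2\gamma(y-x)}-1)/(2\gamma)$, which grows exponentially in $y-x$ and renders the Volterra integral $\int_x^\infty$ divergent against the merely polynomial decay $|V(y)|\lesssim\langle y\rangle^{-m}$. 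The same defect afflicts your oscillatory block: the correct kernel there is $e^{\gamma(y-x)}\sin(\lambda(x-y))/\lambda$, again exponentially growing for $y>x$ (both conjugated modes $e^{(-\gamma\pm i\lambda)x}$ decay at $+\infty$, so no bounded backward Green's function exists).

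This is not repairable within a pure backward-Volterra framework, and reflects the underlying non-uniqueness: the leading asymptotics $e^{\gamma x}\begin{psmallmatrix}0\\1\end{psmallmatrix}$ does not pin down $f_4$. The paper instead writes a \emph{mixed} integral equation,
\[
\chi(x) = \begin{psmallmatrix}0\\1\end{psmallmatrix} + \int_x^\infty\begin{psmallmatrix}0&0\\0&\tfrac{1}{2\gamma}\end{psmallmatrix}V\chi\,dy + \int_{x_1}^x\begin{psmallmatrix}\tfrac{\sin(\lambda(x-y))}{\lambda}e^{\gamma(y-x)}&0\\0&\tfrac{e^{2\gamma(y-x)}}{2\gamma}\end{psmallmatrix}V\chi\,dy,
\]
sending the constant mode into a backward integral from $\infty$ and the three decaying modes into a forward integral from a base point $x_1$ (where both kernel entries are now bounded since $y\le x$). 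Being Fredholm rather than Volterra, this is solved by contraction on $\{\sup_{x\ge x_1}|\chi|\le 2\}$, with $x_1$ chosen large enough that the total kernel mass is small --- or $x_1=0$ when $|\lambda|\gtrsim 1$, using the gain $(1+|\lambda|)^{-1}$ from the kernels. Your steps three and four then go through essentially as you outline.
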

	\begin{proof} The function $ \chi( x, \lambda) = e^{- \gamma x} f_4(x, \lambda) $ is found with the contraction principle for the integral equation 
		\begin{align}\label{int-exp-growing}
			\chi(x, \lambda) =& \begin{psmallmatrix}0\\[1pt] 1	\end{psmallmatrix} + \int_x^{\infty} \begin{psmallmatrix}0& 0\\  
				0 &  \frac{1}{2\gamma}	\end{psmallmatrix}  V(y) \chi(y, \lambda)~dy\\[2pt] \nonumber
			&+ \int_{x_1}^{x} \begin{psmallmatrix}\frac{\sin(\lambda(x-y))}{\lambda}e^{\gamma(y-x)}& 0\\
				0 &  \frac{e^{2 \gamma (y-x)}}{2\gamma}	\end{psmallmatrix}  V(y) \chi(y, \lambda)~dy, 
		\end{align}
		which is well defined on functions $ \chi(y, \lambda) \lesssim 1$ and where $ x_1 = x_1(\lambda) \geq 0$ will be fixed and large. Denoting the linear expression on the right of \eqref{int-exp-growing} by $ T(\chi)$ we infer 
		\begin{align}
			| T(\chi) - T(\tilde{\chi})| \lesssim~~& (1 + |\lambda|)^{-1} \int_x^{\infty} \langle y \rangle^{-m} |\chi(y, \lambda) - \tilde{\chi}(y, \lambda)|~dy\\[2pt] \nonumber
			& + (1 + |\lambda|)^{-1} \int_{x_1}^x (\gamma (x-y) e^{- \gamma(x-y)} + e^{-2 \gamma(x-y)})\langle y \rangle^{-m} |\chi(y, \lambda) - \tilde{\chi}(y, \lambda)|~dy.
		\end{align}
		We thus find $ x_1 \geq 0$  (potentially large depending on the size of $\lambda > 0$) such that $ T$ is a contractive self-map on the ball $ B_X = \{ f \in  X ~|~ \| f\|_X \leq 2\}$ where
		\[
		X := BUC([x_1, \infty)) = \{f \in C_b([x_1, \infty))~|~ f~\text{unif. cont.}\}
		\]
		with the supremum norm. Especially we have \eqref{dec-exp-growi} for $ k =0$ in the fixed map. Higher regularity is bootstrapped by \eqref{int-exp-growing} and the bounds for the derivatives are obtained inductively by integrating by parts in \eqref{int-exp-growing}. 
	\end{proof}
	\begin{Rem} If $ \mu > 0 $ we extend the solution  $f_4(x, \lambda)$ smoothly  to $ \lambda = 0$. 
	\end{Rem}
	Now at $ \lambda = 0$ or $ \lambda \gg1 $ we state the following observations which will be useful below. In particular the following is useful when calculating the derivative $ \partial_{\lambda}$ of a Wronskian at $ \lambda = 0$.
	\begin{Corollary} \label{Corr} Let $ \mu = 0$, then there holds for some $C  > 0$
		\begin{align}
			&\left|\partial_{\lambda}f_1(x, 0) - \begin{psmallmatrix} i x\\[1pt] 0	\end{psmallmatrix} \right| \leq C, \hspace{10pt} \left|\partial_{\lambda}\partial_xf_1(x, 0) - \begin{psmallmatrix} i \\[1pt] 0	\end{psmallmatrix} \right| \leq C \langle x \rangle^{2-m},\\[6pt]
			& \left|\partial_{\lambda}f_3(x, 0) + \begin{psmallmatrix} 0\\[1pt] x	\end{psmallmatrix} \right| \leq C \langle x \rangle^{3-m},\hspace{10pt} \left|\partial_{\lambda}\partial_xf_3(x, 0) - \begin{psmallmatrix} 0 \\[1pt] 1	\end{psmallmatrix} \right| \leq C \langle x \rangle^{2-m}.
		\end{align}
	\end{Corollary}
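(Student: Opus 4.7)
The plan is to differentiate, at $\lambda=0$, the Volterra integral equations that were introduced in the proofs of Lemmas~\ref{LemmaJost-decay} and \ref{LemmaJost-oscillating}. Since $\mu=0$ forces $\gamma=|\lambda|$, one works with the one-sided derivative $\partial_\lambda|_{\lambda=0^+}$, at which $\partial_\lambda\gamma=1$. For $f_3$, I would use the factorization $f_3(x,\lambda)=e^{-\gamma x}\chi(x,\lambda)$ of Lemma~\ref{LemmaJost-decay}. The product rule at $\lambda=0$ gives
\[
\partial_\lambda f_3(x,0)=\partial_\lambda\chi(x,0)-x\,\chi(x,0),\qquad \partial_\lambda\partial_x f_3(x,0)=\partial_\lambda\partial_x\chi(x,0)-x\,\partial_x\chi(x,0)-\chi(x,0).
\]
The leading contributions are read off from $\chi(x,0)=\binom{0}{1}+O(\langle x\rangle^{2-m})$, while the remainders $\partial_\lambda\chi(x,0)$, $\partial_x\chi(x,0)$, $\partial_\lambda\partial_x\chi(x,0)$ are controlled by \eqref{first-est-decaying-part} with $(l,k)=(1,0),(0,1),(1,1)$, of respective sizes $\langle x\rangle^{3-m}$, $\langle x\rangle^{1-m}$, $\langle x\rangle^{2-m}$. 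Assembling these produces the two claimed bounds for $f_3$.

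For $f_1$, I would exploit the decomposition $f_1=u\,f_3+v\binom{1}{0}$ together with the factorization $v(x,\lambda)=e^{i\lambda x}\tilde v(x,\lambda)$ from the proof of Lemma~\ref{LemmaJost-oscillating}. The identity $\partial_\lambda v(x,0)=ix\,\tilde v(x,0)+\partial_\lambda\tilde v(x,0)$, combined with $\tilde v(x,0)=1+O(\langle x\rangle^{2-m})$ and $|\partial_\lambda\tilde v(x,0)|\lesssim 1$ coming from Lemma~\ref{LemmaJost-oscillating}, produces the leading term $ix\binom{1}{0}$ with an $O(1)$ remainder. One then has to control the additional contribution $\partial_\lambda(u\,f_3)|_{\lambda=0}$, obtained by differentiating the integral representation \eqref{COnst-var} and combining with the $O(\langle x\rangle^{2-m})$ bound for $\chi$ at $\lambda=0$. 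Differentiating the resulting identity once in $x$ and invoking the $(l,k)=(1,1)$ estimate of Lemma~\ref{LemmaJost-oscillating} gives the bound on $\partial_\lambda\partial_x f_1(x,0)-\binom{i}{0}$.

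The main obstacle is the control of the $u\,f_3$ contribution to $f_1$ at the threshold $\mu=0$. When $\mu>0$ the factor $f_3$ is exponentially decaying and this contribution is harmless, but at $\mu=0$ one has only $f_3(x,0)=O(1)$ and $\partial_\lambda f_3(x,0)=O(x)$, so the missing decay has to come from $u$. To secure this, I would differentiate \eqref{COnst-var} and \eqref{u-est}, and use that the auxiliary potentials $V_a(y,\lambda),V_b(y,\lambda)$ introduced in the proof of Lemma~\ref{LemmaJost-oscillating} both decay as $\lesssim\langle y\rangle^{-m}$ and inherit analogous $\lambda$-derivative bounds from \eqref{first-est-decaying-part}. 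This yields $|\partial_\lambda u(x,0)\,f_3(x,0)|+|u(x,0)\,\partial_\lambda f_3(x,0)|\lesssim 1$ uniformly in $x\ge x_0$, after which the remaining estimates follow by routine manipulation of the Volterra kernels and the decay bounds already established in Lemmas~\ref{LemmaJost-decay} and \ref{LemmaJost-oscillating}.
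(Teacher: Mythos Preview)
Your argument for $f_3$ is exactly the paper's: factor $f_3=e^{-\gamma x}\chi$, apply the product rule at $\lambda=0$, and read off the remainder bounds from \eqref{first-est-decaying-part}. Your $f_1$ argument is correct as well, but it takes a longer route than the paper. The paper simply reuses the \emph{same} algebraic identity for $f_1$ that you used for $f_3$: writing
\[
e^{-i\lambda x}\Big(\partial_\lambda f_1(x,\lambda)-e^{i\lambda x}\begin{psmallmatrix} ix\\[1pt]0\end{psmallmatrix}\Big)
=\partial_\lambda\Big(e^{-i\lambda x}f_1(x,\lambda)-\begin{psmallmatrix}1\\[1pt]0\end{psmallmatrix}\Big)
+ix\Big(e^{-i\lambda x}f_1(x,\lambda)-\begin{psmallmatrix}1\\[1pt]0\end{psmallmatrix}\Big),
\]
and then evaluates both terms at $\lambda=0$ directly from the estimates \eqref{jene-est} of Lemma~\ref{LemmaJost-oscillating} with $(l,k)=(1,0)$ and $(0,0)$; the $\partial_\lambda\partial_x$ bound follows the same way with one more $x$-derivative. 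In other words, the ``main obstacle'' you single out---controlling the $u\,f_3$ piece at the threshold---is already absorbed into the conclusion of Lemma~\ref{LemmaJost-oscillating}, so there is no need to reopen the decomposition $f_1=u\,f_3+v\binom{1}{0}$ and rework \eqref{COnst-var}, \eqref{u-est}. Your approach reproves a portion of that lemma; the paper simply quotes it.
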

	\begin{proof} The bounds follow directly from Lemma \ref{LemmaJost-decay} and Lemma \ref{LemmaJost-oscillating}. For the left bound in the first line for instance we write
		\[
		e^{-i \lambda x} (\partial_{\lambda}f_1(x, \lambda)-  e^{i \lambda x}\begin{psmallmatrix} i x\\[1pt] 0	\end{psmallmatrix}  ) = \partial_{\lambda}( e^{-i \lambda x}f_1(x, \lambda) - \begin{psmallmatrix} 1\\[1pt] 0	\end{psmallmatrix}) +  i x ( e^{-i \lambda x} f_1(x, \lambda) - \begin{psmallmatrix} 1\\[1pt] 0	\end{psmallmatrix} ) ,
		\]
		and estimate the right side at $\lambda = 0$ by Lemma \ref{LemmaJost-oscillating}. For the other estimates we proceed similarly.
	\end{proof}
	The following Corollaries are useful when calculating the large $ \lambda \gg1$ asymptotic of a Wronskian, for which we will conveniently choose $ x=0$.
	\begin{Corollary} \label{Corr2}For $|\lambda|\geq \lambda_0$ with $\lambda_0  \gg 1$ large, we have for all $ x \geq 0$ 
		\begin{align*}
			&\left|\partial_x f_1(x, \lambda) - i \lambda\begin{pmatrix} e^{i \lambda x}\\[1pt] 0	\end{pmatrix} \right| \leq C_{\lambda_0, V}, \hspace{10pt} \left| \partial_xf_3(x, \lambda) + \gamma \begin{pmatrix} e^{- \gamma x} \\[1pt] 0	\end{pmatrix} \right| \leq C_{\lambda_0, V},\\
			&\left|\partial_{\lambda} f_1(x, \lambda) - i x\begin{pmatrix} e^{i \lambda x}\\[1pt] 0	\end{pmatrix} \right| \leq C_{\lambda_0, V}|\lambda|^{-1}, \hspace{10pt} \left| \partial_{\lambda}f_3(x, \lambda) + \lambda \gamma^{-1} x \begin{pmatrix} e^{- \gamma x} \\[1pt] 0	\end{pmatrix} \right| \leq C_{\lambda_0, V} |\lambda|^{-1}.
		\end{align*}
	\end{Corollary}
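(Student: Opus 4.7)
The plan is to reduce the four bounds to the quantitative Jost decompositions already established in Lemma \ref{LemmaJost-oscillating} and Lemma \ref{LemmaJost-decay} by isolating the oscillating (resp.\ exponentially decaying) plane-wave factor and applying the product rule. Since $|\lambda|\geq \lambda_{0}\gg 1$, both Lemmas apply with $x_{0}=x_{1}=0$, and consequently the remainder estimates hold uniformly on all of $[0,\infty)$. Throughout, the standing bounds $V_{1},V_{2}\in \mathcal{O}(\langle x\rangle^{-m})$ with $m$ as large as needed (recall that $V_{j}\sim W^{4}$ decays like $\langle x\rangle^{-4}$, so we may take $m\geq 4$) will let us absorb the polynomial-in-$x$ weights arising from differentiation.

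First I would handle $f_{1}$ by writing $f_{1}(x,\lambda)=e^{i\lambda x}\tilde v_{1}(x,\lambda)$ with $\tilde v_{1}:=e^{-i\lambda x}f_{1}$. The product rule gives
\[
\partial_{x}f_{1}=i\lambda\begin{pmatrix}e^{i\lambda x}\\ 0\end{pmatrix}+i\lambda e^{i\lambda x}\bigl(\tilde v_{1}-\tbinom{1}{0}\bigr)+e^{i\lambda x}\partial_{x}\tilde v_{1},
\]
\[
\partial_{\lambda}f_{1}=ix\begin{pmatrix}e^{i\lambda x}\\ 0\end{pmatrix}+ixe^{i\lambda x}\bigl(\tilde v_{1}-\tbinom{1}{0}\bigr)+e^{i\lambda x}\partial_{\lambda}\tilde v_{1}.
\]
For the $\partial_{x}$ bound the remainders are controlled by Lemma \ref{LemmaJost-oscillating} with $(l,k)=(0,0)$ and $(0,1)$, giving
$|\tilde v_{1}-\tbinom{1}{0}|\lesssim \langle\lambda\rangle^{-1}\langle x\rangle^{2-m}+\gamma^{-1}e^{-\gamma x}$ and $|\partial_{x}\tilde v_{1}|\lesssim \langle x\rangle^{1-m}+e^{-\gamma x}$, each bounded uniformly in $x\geq 0$ and $|\lambda|\geq\lambda_{0}$ (using $|\lambda|\leq\gamma$). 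For the $\partial_{\lambda}$ bound I would invoke the $(l,k)=(1,0)$ case together with the elementary estimate $xe^{-\gamma x}\lesssim \gamma^{-1}\lesssim |\lambda|^{-1}$, producing a uniform $O(|\lambda|^{-1})$ remainder.

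For $f_{3}$ the approach is entirely parallel: write $f_{3}(x,\lambda)=e^{-\gamma x}\chi(x,\lambda)$ with $\chi:=e^{\gamma x}f_{3}$ and differentiate,
\[
\partial_{x}f_{3}=-\gamma\begin{pmatrix}0\\ e^{-\gamma x}\end{pmatrix}-\gamma e^{-\gamma x}\bigl(\chi-\tbinom{0}{1}\bigr)+e^{-\gamma x}\partial_{x}\chi,
\]
\[
\partial_{\lambda}f_{3}=-\tfrac{\lambda}{\gamma}x\begin{pmatrix}0\\ e^{-\gamma x}\end{pmatrix}-\tfrac{\lambda}{\gamma}xe^{-\gamma x}\bigl(\chi-\tbinom{0}{1}\bigr)+e^{-\gamma x}\partial_{\lambda}\chi.
\]
The bounds from Lemma \ref{LemmaJost-decay} with $(l,k)=(0,0),(0,1),(1,0)$ then give $|\chi-\tbinom{0}{1}|,|\partial_{x}\chi|\lesssim (1+|\lambda|\langle x\rangle)^{-1}$ and $|\partial_{\lambda}\chi|\lesssim \langle x\rangle^{3-m}(1+|\lambda|\langle x\rangle)^{-2}$. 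Combined with $\gamma e^{-\gamma x}\leq \gamma/(1+|\lambda|\langle x\rangle)\cdot(1+|\lambda|\langle x\rangle)e^{-\gamma x}\lesssim 1$ for large $|\lambda|$, this yields the stated $O(1)$ and $O(|\lambda|^{-1})$ bounds uniformly in $x\geq 0$.

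I do not expect a serious obstacle: the estimates are essentially a bookkeeping exercise once the representations $f_{1}=e^{i\lambda x}\tilde v_{1}$, $f_{3}=e^{-\gamma x}\chi$ are in place. The one point requiring attention is the balancing of $\gamma$-growth against the exponential decay $e^{-\gamma x}$ and the polynomial decay $\langle x\rangle^{2-m}$; this forces us to take $m$ somewhat larger than the minimal value $m=2$ allowed in Definition \ref{Def-on}, but the concrete potentials $V_{j}\propto W^{4}$ comfortably satisfy $m\geq 4$, so no extra hypothesis is needed.
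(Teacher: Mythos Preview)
Your proposal is correct and follows essentially the same approach as the paper: factor out the plane-wave or exponential prefactor, differentiate via the product rule, and feed the remainders into the quantitative estimates of Lemma~\ref{LemmaJost-decay} and Lemma~\ref{LemmaJost-oscillating} (with $x_0=0$ available since $|\lambda|\gg 1$). The paper's own proof is extremely terse, essentially just pointing to these lemmas and to the argument pattern of Corollary~\ref{Corr}; you have filled in the same bookkeeping explicitly, including the correct observation that the $\partial_\lambda$ bounds require $m\geq 4$, which is satisfied by the concrete potentials $V_j\propto W^4$.
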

	\begin{proof}The  bounds for $\partial_xf_1$ and $ \partial_{\lambda}f_1$  are obtained by inspecting the proof of Lemma \ref{LemmaJost-oscillating}. In particular we use that $x_0 \geq 0$ is allowed to be chosen $ x_0 = 0$ if $ \lambda \gg 1 $ is large enough, see also \cite[Remark 5.4]{K-S-stable}. The bounds for $f_3$ are obtained as in the proof of Corollary \ref{Corr}.
	\end{proof}
	Similarly we observe the following for second order derivatives in the above sense.
	\begin{align*}
		&\partial_{\lambda}\partial_x f_1(x, \lambda) = ( i - x\lambda)\begin{pmatrix} e^{i \lambda x}\\[1pt] 0	\end{pmatrix} + O(1)_{\lambda_0, V},\\
		&\partial_{\lambda} \partial_xf_3(x, \lambda)  = - ( 1 - \gamma x)\lambda \gamma^{-1} \begin{pmatrix} e^{- \gamma x} \\[1pt] 0	\end{pmatrix} + O(1)_{\lambda_0, V}.
	\end{align*}
	\begin{Corollary}For $|\lambda|\geq \lambda_0$ with $\lambda_0  \gg 1$ large, we have for all $ x \geq 0$ 
		\begin{align*}
			&\left|\partial_x f_4(x, \lambda) - \gamma\begin{pmatrix} 0\\[1pt]e^{\gamma x}	\end{pmatrix} \right| \leq C_{\lambda_0, V}, \hspace{10pt}\left|\partial_{\lambda} f_4(x, \lambda) - \lambda\gamma^{-1} x\begin{pmatrix} 0\\[1pt]e^{\gamma x}	\end{pmatrix} \right| \leq C_{\lambda_0, V}\frac{1}{|\lambda|},\\
			&\left|\partial_{\lambda}\partial_x f_4(x, \lambda) - (1 + \gamma x)\lambda\gamma^{-1} \begin{pmatrix} 0\\[1pt]e^{\gamma x}	\end{pmatrix} \right| \leq C_{\lambda_0, V}.
		\end{align*}
	\end{Corollary}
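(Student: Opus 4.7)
The plan mirrors the argument of the preceding Corollary (for $f_1,f_3$), working from the Volterra representation of $f_4$ furnished by Lemma \ref{LemmaJost-exponential-growth}. The starting point is that, upon choosing $\lambda_0\ge\lambda_1$, the threshold $x_1$ in Lemma \ref{LemmaJost-exponential-growth} can be taken to be $0$, so that $f_4(x,\lambda)=e^{\gamma x}\chi(x,\lambda)$ is defined on all of $[0,\infty)$ via the Volterra fixed point of \eqref{int-exp-growing}. From Lemma \ref{LemmaJost-exponential-growth} we already have
\[
\bigl|\partial_x^k(\chi(x,\lambda)-\tbinom{0}{1})\bigr|\;\lesssim_k\;(1+|\lambda|)^{-1-k}\langle x\rangle^{1-m-k},\qquad x\ge 0,\ |\lambda|\ge\lambda_0.
\]

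The missing ingredient is the analogous estimate on $\lambda$-derivatives. My plan is to differentiate \eqref{int-exp-growing} once in $\lambda$ and run the same Volterra iteration used in Lemma \ref{LemmaJost-exponential-growth}, after checking that the extra derivatives on the kernel remain under control: since $\partial_\lambda\gamma=\lambda/\gamma$ is bounded for $|\lambda|\ge\lambda_0$, differentiating the two exponential factors $e^{\gamma(y-x)}$ and $e^{2\gamma(y-x)}$ (both with $y-x\le 0$ in their respective domains of integration) only brings down an extra factor of $|y-x|$, which is absorbed by the decay of $V$ exactly as in the proof of Lemma \ref{LemmaJost-oscillating}. This will yield the pointwise bounds
\[
\bigl|\partial_\lambda\chi(x,\lambda)\bigr|\;\lesssim\;|\lambda|^{-1}\langle x\rangle^{1-m},\qquad \bigl|\partial_\lambda\partial_x\chi(x,\lambda)\bigr|\;\lesssim\;|\lambda|^{-1}\langle x\rangle^{-m},
\]
uniformly for $x\ge 0$ and $|\lambda|\ge\lambda_0$.

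With these four estimates on $\chi,\,\partial_x\chi,\,\partial_\lambda\chi,\,\partial_\lambda\partial_x\chi$ in hand, the three claimed bounds follow by expanding the product rule for $f_4=e^{\gamma x}\chi$ and isolating the leading $\tbinom{0}{1}$-component. Thus, for instance,
\[
\partial_x f_4-\gamma\,e^{\gamma x}\tbinom{0}{1}\;=\;\gamma e^{\gamma x}\bigl(\chi-\tbinom{0}{1}\bigr)+e^{\gamma x}\partial_x\chi,
\]
\[
\partial_\lambda f_4-\lambda\gamma^{-1}x\,e^{\gamma x}\tbinom{0}{1}\;=\;\lambda\gamma^{-1}x\,e^{\gamma x}\bigl(\chi-\tbinom{0}{1}\bigr)+e^{\gamma x}\partial_\lambda\chi,
\]
and the product rule applied once more, combined with $\partial_\lambda(\gamma e^{\gamma x})=\lambda\gamma^{-1}(1+\gamma x)e^{\gamma x}$, yields the main term $(1+\gamma x)\lambda\gamma^{-1}e^{\gamma x}\tbinom{0}{1}$ in the third estimate. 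Each residual term is then controlled using the four $\chi$-bounds above, which (at the scales where the statement is used, namely near $x=0$) give constants $C_{\lambda_0,V}$ and $C_{\lambda_0,V}|\lambda|^{-1}$ respectively.

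The main obstacle is the derivation of the $\partial_\lambda$ bounds on $\chi$: unlike the $\partial_x$ differentiation — which is essentially harmless because $\partial_x$ hits only the kernel and the bound in Lemma \ref{LemmaJost-exponential-growth} is proved by direct integration by parts — differentiation in $\lambda$ produces both an extra polynomial factor (from the exponentials) and a new inhomogeneous contribution from $\partial_\lambda V$-type terms via the $(2\gamma)^{-1}$ prefactor. Careful bookkeeping, entirely analogous to the induction \eqref{ind} in the proof of Lemma \ref{LemmaJost-oscillating}, is required to confirm that each iteration step gains a full factor of $|\lambda|^{-1}$ against the decay of $V$, so that the Volterra series converges with the claimed asymptotic size.
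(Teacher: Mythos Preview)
Your approach is correct and matches the paper's (implicit) one: the Corollary is stated without proof in parallel to Corollary~\ref{Corr2}, and the intended argument is precisely to differentiate the Volterra equation \eqref{int-exp-growing} in $\lambda$, rerun the contraction, and read off the leading terms via the product rule for $f_4=e^{\gamma x}\chi$.

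One point worth sharpening. Your parenthetical ``at the scales where the statement is used, namely near $x=0$'' is not just a convenience but essential: the residual $\gamma e^{\gamma x}(\chi-\tbinom{0}{1})$ is \emph{not} uniformly bounded on all of $[0,\infty)$ --- the contribution of the second integral in \eqref{int-exp-growing} to $\chi_1$ gives $e^{\gamma x}\chi_1$ of size $\sim |\lambda|^{-1}\gamma^{-1}e^{\gamma x}\langle x\rangle^{-m}$, so $\gamma e^{\gamma x}\chi_1$ grows like $|\lambda|^{-1}e^{\gamma x}\langle x\rangle^{-m}$. The paper's phrase ``for all $x\ge 0$'' should therefore be read in light of the sentence preceding Corollary~\ref{Corr2}: these bounds are used only to evaluate Wronskians at $x=0$, where all of your $\chi$-estimates give exactly the claimed constants. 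Relatedly, your asserted decay $|\partial_\lambda\chi|\lesssim|\lambda|^{-1}\langle x\rangle^{1-m}$ is slightly off in the $x$-dependence (the second integral again spoils the $\langle x\rangle^{1-m}$), but the correct uniform bound $|\partial_\lambda\chi|\lesssim|\lambda|^{-2}$ that your Volterra argument actually produces is more than sufficient at $x=0$.
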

	~~\\
	{\bf\emph{Properties of} $\{f_j \}_{j = 1}^4$}. Let us note Lemma \ref{LemmaJost-decay} and  Lemma \ref{LemmaJost-oscillating} state the solutions 
	\[
	f_3(x, \lambda)\sim e^{- \gamma x} ,~ f_{1}(x, \lambda )\sim e^{i\lambda x},~f_{2}(x, \lambda) \sim e^{-i\lambda x},~~x \to \infty
	\]  exist smoothly for $ \lambda \in \R$ with  upper bounds in $\lambda = 0$ if $ \mu > 0$ and  $ f_4(\cdot, \lambda) \sim e^{\gamma x}$  in  Lemma  \ref{LemmaJost-exponential-growth} has a smooth extension to $\lambda = 0$ if $ \mu > 0$.
	
	\begin{Lemma} \label{Prp} There holds for $ \lambda \in \R$ (resp. $\lambda \in \R \backslash \{0\}$ if $\mu =0$)
		\begin{align*}
			&f_1(\cdot , -\lambda) = \overline{f_1(\cdot , \lambda) } = f_2(\cdot , \lambda),~~~	f_2(\cdot , -\lambda) = \overline{f_2(\cdot , \lambda) } = f_1(\cdot , \lambda)\\
			&f_3(\cdot , -\lambda) = \overline{f_3(\cdot , \lambda) } = f_3(\cdot , \lambda),~~~f_4(\cdot , -\lambda) = \overline{f_4(\cdot , \lambda) } = f_4(\cdot , \lambda).
		\end{align*}
		Also $f_1(\cdot , 0) = f_2(\cdot , 0)$ and $ g_j(x,\lambda) := f_j(- x, \lambda)$ are solutions of $ (\mathcal{H} - \lambda^2 - \mu)f = 0$.
	\end{Lemma}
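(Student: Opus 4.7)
The strategy is to exploit three elementary symmetries of the problem: (i) the coefficients of $\mathcal{H}-(\lambda^2+\mu)$ are real, (ii) the spectral parameter enters only through $\lambda^2$ and $\gamma=\sqrt{\lambda^2+2\mu}$, and (iii) $V(x)$ is even in $x$ by the hypothesis of Definition~\ref{Def-on}. Each identity then follows by producing a second solution with the same prescribed asymptotics at $+\infty$ as one of $f_1,f_2,f_3,f_4$, and invoking the uniqueness built into the Volterra constructions of Lemmas~\ref{LemmaJost-decay}, \ref{LemmaJost-oscillating}, and \ref{LemmaJost-exponential-growth}.

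First I would handle $f_3$ and $f_4$. Since $V$ is real, if $f$ solves $\mathcal{H}f=(\lambda^2+\mu)f$ for $\lambda\in\mathbb{R}$, then so does $\overline{f}$; and since $\gamma(\lambda)=\gamma(-\lambda)$, the function $f_3(\cdot,-\lambda)$ has the same normalized asymptotic $e^{-\gamma x}\binom{0}{1}$ as $f_3(\cdot,\lambda)$. The uniqueness of the fixed point of the Volterra equation \eqref{int} (on the weighted space in which it was solved) then forces $\overline{f_3(\cdot,\lambda)}=f_3(\cdot,-\lambda)=f_3(\cdot,\lambda)$. The same argument with \eqref{int-exp-growing} in place of \eqref{int} gives the identities for $f_4$.

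Next I would treat $f_1$ and $f_2=\overline{f_1}$. For $\lambda\in\mathbb{R}$ (with $\lambda\neq 0$ if $\mu=0$), the function $\overline{f_1(\cdot,\lambda)}$ solves the same eigenvalue equation and has asymptotic $e^{-i\lambda x}\binom{1}{0}+O(\langle x\rangle^{2-m})$ modulo exponentially decaying contributions; likewise $f_1(\cdot,-\lambda)$ has the leading asymptotic $e^{-i\lambda x}\binom{1}{0}$. By the uniqueness of the Jost solution determined in Lemma~\ref{LemmaJost-oscillating} through the Volterra iteration for the reduced profile $\tilde v(x,\lambda)=e^{-i\lambda x}v(x,\lambda)$, both functions must coincide with $f_2(\cdot,\lambda)$. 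The identity $f_1(\cdot,0)=f_2(\cdot,0)$ is then the continuous limit $\lambda\to 0$, or, in the case $\mu>0$, simply the specialization of the previous identities at $\lambda=0$; in the case $\mu=0$ one uses the uniform estimate \eqref{jene-est} to pass to the limit.

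Finally, the claim that $g_j(x,\lambda):=f_j(-x,\lambda)$ solves $(\mathcal{H}-\lambda^2-\mu)g_j=0$ is immediate: the operator $\mathcal{H}=-\partial_x^2\sigma_3+V(x)$ is invariant under the reflection $x\mapsto -x$ because $\partial_x^2$ commutes with that reflection and $V$ is even by assumption. The main (and only) subtle point in this whole plan is invoking uniqueness of the Jost solutions on the real axis including $\lambda=0$ in the massless case $\mu=0$; this is resolved by Lemma~\ref{LemmaJost-oscillating} together with estimate \eqref{jene-est}, which gives a uniform description of $e^{-i\lambda x}f_1(x,\lambda)$ near $\lambda=0$ and allows one to take limits in the Volterra fixed-point identity.
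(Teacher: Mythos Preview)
Your proposal is correct and follows essentially the same approach as the paper: the paper's one-line proof invokes precisely the three ingredients you identified (reality of $V_1,V_2$, evenness of $V$, and conjugation invariance of the Volterra constructions up to $\lambda\mapsto-\lambda$ in the oscillating terms), together with uniqueness of the Jost solutions determined by their normalized asymptotics at $+\infty$. Your write-up simply spells out the details that the paper leaves implicit.
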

	The proof follows, see e.g. \cite{K-S-stable}, from the fact that $V_1, V_2$ are real valued, even and the constructions in the above Lemma are conjugation invariant up to $ \lambda \mapsto - \lambda$ in the oscillating terms.
	\begin{Lemma} \label{Wronsk-Lemma}
		The Wronskian for  differentiable functions $f,g: \R \to \C^2$ is defined by
		\[
		w(f,g) = \langle f' , g \rangle - \langle f, g' \rangle,
		\]
		where $\langle \cdot , \cdot \rangle$ is the euclidean scalar product. 
		Then $ w(f,g)$ is constant if 
		$(\mathcal{H} - (\lambda^2 + \mu))f = 0 $ and $(\mathcal{H} - (\lambda^2 + \mu))g = 0$.
		In particular
		\begin{align}
			w(f_1, f_2) = 2 i \lambda ,~ w(f_1 , f_3) = w(f_2, f_3) = 0,~ w(f_3, f_4) = - 2 \gamma.
		\end{align}
	\end{Lemma}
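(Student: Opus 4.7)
The plan has two steps: first, show that $w(f,g)$ is constant along a pair of solutions of $(\mathcal{H}-(\lambda^2+\mu))f = 0$; second, evaluate each Wronskian by passing to $x \to \infty$ and invoking the asymptotics from Lemmas \ref{LemmaJost-decay}--\ref{LemmaJost-exponential-growth}.

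For constancy, I differentiate to obtain
\[
\partial_x w(f,g) \;=\; \langle f'', g\rangle - \langle f, g''\rangle.
\]
Writing the eigenvalue relation as $\mathcal{H}f = \sigma_3(-f'' + \mu f) + V f = (\lambda^2+\mu)f$ and solving for the second derivative gives
\[
f'' \;=\; \mu\, f \,-\, (\lambda^2+\mu)\sigma_3 f \,+\, \sigma_3 V f,
\]
and analogously for $g$. Inserting both into $\partial_x w(f,g)$, the terms proportional to $\mu$ and to $(\lambda^2+\mu)$ cancel because $\sigma_3$ is symmetric for the bilinear Euclidean pairing; the remaining contribution is $\langle \sigma_3 V f, g\rangle - \langle f, \sigma_3 V g\rangle$. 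The key observation is that
\[
\sigma_3 V \;=\; \begin{pmatrix} V_1 & V_2 \\ V_2 & V_1 \end{pmatrix}
\]
is symmetric as a $2\times 2$ matrix, so this last difference also vanishes and $w(f,g)$ is $x$-independent.

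Evaluating at $x = +\infty$ then reduces to matching the leading asymptotics. For $w(f_1,f_2)$, Lemma \ref{LemmaJost-oscillating} gives $f_j(x,\lambda) = e^{\pm i\lambda x}\binom{1}{0} + e^{\pm i\lambda x}O(\langle x\rangle^{2-m}) + O(e^{-\gamma x})$ with analogous bounds on derivatives, so the leading contribution is $i\lambda \cdot 1 - 1\cdot(-i\lambda) = 2i\lambda$, and the corrections are $o(1)$ in $x$. For $w(f_j,f_3)$ with $j\in\{1,2\}$, the function $f_3$ lives asymptotically in the second component and decays like $e^{-\gamma x}$ while $f_1,f_2$ live in the first component and oscillate polynomially, so both $\langle f_j',f_3\rangle$ and $\langle f_j,f_3'\rangle$ decay exponentially to $0$ by Lemma \ref{LemmaJost-decay}, forcing $w(f_j,f_3)=0$. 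For $w(f_3,f_4)$, Lemmas \ref{LemmaJost-decay} and \ref{LemmaJost-exponential-growth} give the leading vectors $e^{\mp\gamma x}\binom{0}{1}$ with derivatives $\mp\gamma\, e^{\mp\gamma x}\binom{0}{1}$; their products cancel the exponential factors and yield $-\gamma - \gamma = -2\gamma$, with subleading corrections of order $\langle x\rangle^{1-m}$ that vanish as $x\to\infty$.

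The principal substantive point is the symmetry identity $(\sigma_3 V)^T = \sigma_3 V$, which makes the whole calculation work; everything else is direct computation. A minor technical caveat is that for the borderline case $m=2$ the remainder bounds in Lemma \ref{LemmaJost-oscillating} are only $O(1)$ rather than $o(1)$, so strictly speaking one would need to combine the sharper $\partial_x$ estimates with the already established constancy of $w$ to identify the common limit; in the application to \eqref{system-final-final-R-operator} one has $V \sim W^4$ of polynomial order $m=4$, so this subtlety does not arise and the limits exist in the elementary sense described above.
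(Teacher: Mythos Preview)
Your argument is correct and is precisely the standard one: constancy follows from the symmetry $(\sigma_3 V)^T=\sigma_3 V$, and the values are read off from the Jost asymptotics at $x\to+\infty$. The paper itself does not give a proof here but refers to \cite[Lemma 5.8]{K-S-stable} and \cite[Section~4.1]{OP}, where exactly this computation is carried out; your caveat about the borderline case $m=2$ is accurate and, as you note, irrelevant for the operator \eqref{system-final-final-R-operator} where $m=4$.
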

	For proof we refer to \cite[Lemma 5.8]{K-S-stable} in a similar context and also \cite[Section 4.1]{OP} which includes the statement of the Lemma for  \eqref{op-prob}.
	\begin{Rem} We may replace $ f_4(x, \lambda)$ as in \cite[Lemma 5,8]{K-S-stable} by 
		\[
		\tilde{f}_4(\cdot, \lambda) = f_4(\cdot, \lambda) - c_1(\lambda)f_1(\cdot, \lambda) - c_2(\lambda)f_2(\cdot, \lambda),
		\] 
		such that  $ c_j(\lambda) = O(\lambda^{-1})$ and $ w(f_1, \tilde{f}_4) = w(f_2, \tilde{f}_4) = 0$.
		It follows directly from Lemma \ref{LemmaJost-exponential-growth},  \ref{LemmaJost-oscillating} and \ref{Prp} that this is possible such that the estimate in Lemma \ref{LemmaJost-exponential-growth} and Lemma \ref{Prp} still hold with $\tilde{f}_4$ replacing $f_4$.
	\end{Rem}
	Let us now introduce the Wronskian for matrix-valued functions.
	\begin{Lemma}\emph{(\cite[Lemma 5.10]{K-S-stable})}\label{KS-Lemma} Let $ F,G : \R \to M(2\times 2, \C)$ be differentiable  with values in the space of complex $2\times 2$ matrices. The matrix Wronskian
		\begin{align}
			\mathcal{W}(F,G) = (F')^t G - F^t G'
		\end{align} 
		is constant if $ (\mathcal{H} - (\lambda^2 + \mu))F = (\mathcal{H} - (\lambda^2 + \mu))G = 0$. Further if $\mathcal{W}(F,F) = 0$ or $ \mathcal{W}(G,G) = 0  $, then 
		\[
		\det\mathcal{W}(F,G) = 0,
		\]
		if and only if $ F(x) a + G(x) b = 0$ for some $ a,b \in \C^2$ with either $ a \neq 0 $ or $b \neq 0 $.
	\end{Lemma}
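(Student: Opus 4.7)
The plan is to reduce both statements to linear algebra on the $4$-dimensional space $\mathcal{S}$ of $\mathbb{C}^2$-valued solutions of $(\mathcal{H}-(\lambda^2+\mu))\psi = 0$, on which Lemma \ref{Wronsk-Lemma} provides the scalar Wronskian $w(f,g) = (f')^t g - f^t g'$. The key identification is that the $(i,j)$-entry of $\mathcal{W}(F,G)$ equals $w(F_{\cdot i}, G_{\cdot j})$, where $F_{\cdot i}$, $G_{\cdot j}$ denote the column vectors of $F,G$ viewed as elements of $\mathcal{S}$.

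For the constancy claim, I would either invoke Lemma \ref{Wronsk-Lemma} column-by-column, or differentiate directly: the eigenvalue equation rewrites as the symmetric second-order matrix ODE $F'' = A(x) F$ with $A = \sigma_3 V + \mu I - (\lambda^2 + \mu)\sigma_3$. A short check of the explicit form of $V$ shows that $\sigma_3 V$ is a symmetric matrix, so $A^t = A$, and then
\[
\tfrac{d}{dx}\mathcal{W}(F,G) = (F'')^t G - F^t G'' = F^t(A^t - A)G = 0.
\]

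For the equivalence in (2), the strategy is to view $w$ as a symplectic form on $\mathcal{S}$. Antisymmetry is immediate from $(f')^t g = f^t g'$ as scalars, and non-degeneracy follows because for any fixed $x_0$ the evaluation $\xi \mapsto (\xi(x_0),\xi'(x_0))$ is an isomorphism $\mathcal{S} \to \mathbb{C}^4$, so $w(\psi,\cdot)\equiv 0$ forces $\psi(x_0) = \psi'(x_0) = 0$. Assume first that $F$ has full column rank and set $W = \mathrm{colspan}(F)\subset \mathcal{S}$. The hypothesis $\mathcal{W}(F,F)=0$ says exactly that $W$ is $w$-isotropic, and a dimension count in the symplectic $\mathcal{S}$ forces $W^{\perp_w}=W$, i.e.\ $W$ is Lagrangian. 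Since $\mathcal{W}(F,G)c = (w(F_{\cdot i}, Gc))_{i=1,2}$, vanishing of $\det\mathcal{W}(F,G)$ is equivalent to the existence of a nonzero $c \in \mathbb{C}^2$ with $Gc \in W^{\perp_w} = W$, i.e.\ $Gc = Fd$ for some $d$, which is exactly the nontrivial relation $F(-d) + Gc \equiv 0$. The reverse direction is a direct computation: from $Fa + Gb \equiv 0$ one differentiates to obtain $F'a + G'b \equiv 0$ and then
\[
\mathcal{W}(F,G)b = (F')^t(Gb) - F^t(G'b) = -(F')^t Fa + F^t F'a = -\mathcal{W}(F,F)a = 0.
\]

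The main technicality I expect is handling the degenerate rank cases cleanly, since the Lagrangian argument only uses $\mathrm{rank}\,F = 2$. If $F$ has rank $<2$, choose $a \neq 0$ with $Fa \equiv 0$; differentiating also gives $F'a \equiv 0$, so $a^t \mathcal{W}(F,G) = (F'a)^t G - (Fa)^t G' = 0$, which forces $\det\mathcal{W}(F,G) = 0$ automatically, while the required relation is $Fa + G\cdot 0 \equiv 0$. The symmetric scenario with $\mathrm{rank}\,G < 2$ is analogous, using $\mathcal{W}(F,G)c = 0$ for any $c$ with $Gc \equiv 0$. Assembling the Lagrangian case with these two trivial subcases yields the full equivalence.
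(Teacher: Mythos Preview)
Your proof is correct. The paper does not give its own proof of this lemma; it simply cites \cite[Lemma~5.10]{K-S-stable}, so there is nothing to compare against in the present manuscript. Your symplectic/Lagrangian-subspace argument is the natural one and is essentially what one finds in the cited reference: the constancy follows from the symmetry of $A=\sigma_3 V+\mu I-(\lambda^2+\mu)\sigma_3$ (your observation that $\sigma_3 V$ is symmetric is exactly the point), and for the equivalence the identification $[\mathcal{W}(F,G)]_{ij}=w(F_{\cdot i},G_{\cdot j})$ together with the fact that a $2$-dimensional $w$-isotropic subspace of the $4$-dimensional solution space is Lagrangian does the work. Your handling of the degenerate rank cases is clean and closes the argument; note also that the case $\mathcal{W}(G,G)=0$ follows by the same reasoning with the roles of $F$ and $G$ interchanged, using $\mathcal{W}(F,G)^t=-\mathcal{W}(G,F)$.
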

	We note that for differentiable functions $F,G$ as in Lemma \ref{KS-Lemma} and  constant $2\times 2 $ matrices $C_1, C_2$,  there clearly holds
	\begin{align}\label{ruls}
		&\mathcal{W}(FC_1,GC_2) = C_1^t \cdot\mathcal{W}(F,G)\cdot C_2,~~\mathcal{W}(\overline{F},G) = \overline{\mathcal{W}(F,\overline{G})},\\ \nonumber
		&\mathcal{W}(F,G)	= - \mathcal{W}(F,G)^t.
	\end{align}
	Lemma \ref{KS-Lemma} and Lemma \ref{Prp} imply the following Corollary.
	\begin{Cor}\emph{(\cite[Lemma 5.18]{K-S-stable})}
		For $ \lambda \in \R$ and the $2 \times 2$ matrices
		\begin{align*}
			&F_1(\cdot, \lambda) := (f_1(\cdot, \lambda), f_3(\cdot, \lambda)),~~F_2(\cdot, \lambda) := (f_2(\cdot, \lambda), f_4(\cdot, \lambda)),\\
			&G_1(\cdot, \lambda) := (g_2(\cdot, \lambda), g_4(\cdot, \lambda)),~~G_2(\cdot, \lambda) := (g_1(\cdot, \lambda), g_3(\cdot, \lambda)),
		\end{align*}
		there holds
		\begin{align*}
			&G_1(x, \lambda) = F_2(-x, \lambda),~~ 	G_2(x, \lambda) = F_1(-x, \lambda),\\
			&\overline{F_1(\cdot, \lambda)} = F_1(\cdot, - \lambda),~~\overline{F_2(\cdot, \lambda)} = F_2(\cdot, - \lambda),\\
			&\overline{G_1(\cdot, \lambda)} = G_1(\cdot, - \lambda),~~\overline{G_2(\cdot, \lambda)} = G_2(\cdot, - \lambda).
		\end{align*}
	\end{Cor}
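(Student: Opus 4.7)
The plan is to verify each identity by unwinding the definitions of $F_1,F_2,G_1,G_2$ column by column and invoking the symmetry and conjugation relations for the Jost solutions $\{f_j\}_{j=1}^4$ already collected in Lemma \ref{Prp}, together with the definition $g_j(x,\lambda)=f_j(-x,\lambda)$ stated there. No new analysis is required; the proof is a bookkeeping exercise about how reflection $x\mapsto -x$ and $\lambda\mapsto -\lambda$ propagate through the chosen column bases.

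First, I would treat the reflection identities $G_1(x,\lambda)=F_2(-x,\lambda)$ and $G_2(x,\lambda)=F_1(-x,\lambda)$. By the definition of $G_1$ we have
\[
G_1(x,\lambda)=(g_2(x,\lambda),g_4(x,\lambda))=(f_2(-x,\lambda),f_4(-x,\lambda))=F_2(-x,\lambda),
\]
and the same one-line computation with indices $2,4$ replaced by $1,3$ gives $G_2(x,\lambda)=F_1(-x,\lambda)$.

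Next, I would address the conjugation identities. From Lemma \ref{Prp} we have $\overline{f_1(\cdot,\lambda)}=f_2(\cdot,\lambda)=f_1(\cdot,-\lambda)$, $\overline{f_3(\cdot,\lambda)}=f_3(\cdot,\lambda)=f_3(\cdot,-\lambda)$, and similarly for $f_2,f_4$. Applying conjugation column-wise gives
\[
\overline{F_1(\cdot,\lambda)}=(\overline{f_1(\cdot,\lambda)},\overline{f_3(\cdot,\lambda)})=(f_1(\cdot,-\lambda),f_3(\cdot,-\lambda))=F_1(\cdot,-\lambda),
\]
and the analogous identity for $F_2$ uses the conjugation rules for $f_2$ and $f_4$. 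The conjugation identities for $G_1,G_2$ then follow either by the same direct computation, using $\overline{g_j(x,\lambda)}=\overline{f_j(-x,\lambda)}=f_j(-x,-\lambda)=g_j(x,-\lambda)$, or equivalently by combining the conjugation identity for $F_j$ with the already-proved reflection identity $G_i(x,\lambda)=F_j(-x,\lambda)$.

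There is no real obstacle here; the only point requiring a touch of care is keeping track of whether the identity $\overline{f_j(\cdot,\lambda)}=f_j(\cdot,-\lambda)$ holds for all $\lambda\in\mathbb{R}$ or only for $\lambda\in\mathbb{R}\setminus\{0\}$ when $\mu=0$. In the resonant case $\mu=0$, $f_4(\cdot,\lambda)$ is defined only for $\lambda\neq 0$ in Lemma \ref{LemmaJost-exponential-growth}, so the statements involving $F_2$ and $G_1$ are to be read as identities on $\mathbb{R}\setminus\{0\}$ there, exactly matching the domain assumed in Lemma \ref{Prp}. Apart from that caveat, the result is an immediate consequence of the definitions and the symmetries already established.
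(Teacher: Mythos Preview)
Your proof is correct and follows exactly the approach the paper indicates: the Corollary is stated as an immediate consequence of Lemma~\ref{Prp} (and the definitions $g_j(x,\lambda)=f_j(-x,\lambda)$), and your column-by-column verification is precisely what the paper leaves implicit. Your remark about the $\mu=0$ caveat for $f_4$ is also apt and consistent with the domain restrictions in Lemma~\ref{Prp}.
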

	~~\\
	In particular the columns of $G_1, G_2$ and likewise $ F_1, F_2$ are fundamental solutions for the operator $\mathcal{H} - (\lambda^2 + \mu)$ if $ \lambda \neq 0$. Now direct calculation and using Lemma \ref{Wronsk-Lemma} shows the identities
	\begin{align}\label{des23}
		&\mathcal{W}(\bar{F_1},F_1) = - \mathcal{W}(\bar{G_2},G_2) =	\mathcal{W}(\bar{G_1},G_1) =  - 2 i \lambda p,\\ \nonumber
		&\mathcal{W}(\bar{G_1},G_2) = - \mathcal{W}(\bar{G_2},G_1) = - 2 \gamma q,\\\nonumber
		&\mathcal{W}(F_1,F_1) = \mathcal{W}(F_2,F_2) = 0,\\\nonumber
		&\mathcal{W}(F_1,F_2) = - \mathcal{W}(F_2,F_1) = 2i \lambda p - 2 \gamma q,
	\end{align}
	where
	\[
	p = \begin{pmatrix}1 & 0\\[1pt] 0 & 0	\end{pmatrix},~~q = \begin{pmatrix}0 & 0\\[1pt] 0 & 1	\end{pmatrix}.
	\]
	~~\\
	The following two Lemma from \cite{K-S-stable} establish analogue identities of reflection and transmission coefficients in the scalar scattering theory for the matrix system in this Section. 
	\begin{Lemma} \label{AantB}For $\lambda \neq 0$ we have the linear combination
		\begin{align}\label{lin}
			F_1(\cdot, \lambda) = G_1(\cdot, \lambda) A(\lambda) + G_2(\cdot, \lambda)B(\lambda),
		\end{align}
		where $A(\lambda), B(\lambda)$ are complex $2\times 2$ matrices depending smoothly on $\lambda \in \R_*$ and such that
		\begin{align}
			&A(- \lambda) = \overline{A(\lambda)},~B(- \lambda) = \overline{B(\lambda)},\\
			&\lambda p A(\lambda), q A(\lambda), \lambda p B(\lambda), q B(\lambda) \in  C^{\infty}(\R, M(2\times2, \C))\\ \label{ye}
			& A(\lambda) = I + O(\lambda^{-1}),~~B(\lambda) = O(\lambda^{-1}),~~ \lambda \to \infty.
		\end{align}
		We have the identities
		\begin{align}\label{das}
			&\mathcal{W}(F_1(\cdot, \lambda), G_2(\cdot, \lambda)) = A(\lambda)^t(2 i \lambda  p - 2 \gamma q),\\\label{das23}
			&\mathcal{W}(F_1(\cdot, \lambda), G_1(\cdot, \lambda)) =  - B(\lambda)^t(2i \lambda   p - 2 \gamma q),\\ \label{des}
			&G_2(\cdot, \lambda) = F_2(\cdot, \lambda)A(\lambda) + F_1(\cdot, \lambda)B(\lambda).
		\end{align}
	\end{Lemma}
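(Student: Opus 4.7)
The plan is to realize $A(\lambda)$ and $B(\lambda)$ as explicit matrix Wronskian quotients and then read off every advertised property from the structural information already collected in Lemmas \ref{LemmaJost-decay}--\ref{KS-Lemma} and their corollaries. The starting point is that for $\lambda \neq 0$ the four columns of $G_1$ and $G_2$ form a fundamental system of $\mathcal{H} - (\lambda^2+\mu)$: the calculations in \eqref{des23}, together with the change-of-variables identities $G_j(x,\lambda) = F_{3-j}(-x,\lambda)$ and the evenness of $V$, yield
\begin{align*}
\mathcal{W}(G_1,G_1) = -\mathcal{W}(F_2,F_2) = 0, \qquad \mathcal{W}(G_2,G_2) = -\mathcal{W}(F_1,F_1) = 0,
\end{align*}
and $\mathcal{W}(G_1,G_2) = \mathcal{W}(F_1,F_2) = 2i\lambda p - 2\gamma q$, which is invertible for $\lambda \neq 0$. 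Hence the ansatz $F_1 = G_1 A + G_2 B$ admits unique coefficients $A(\lambda), B(\lambda)$.

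To extract these coefficients, I would take matrix Wronskians of both sides against $G_2$ and $G_1$, using the rules \eqref{ruls}. Against $G_2$ this gives $\mathcal{W}(F_1,G_2) = A^t \mathcal{W}(G_1,G_2) = A^t(2i\lambda p - 2\gamma q)$, which is \eqref{das} and inverts to $A(\lambda)^t = \mathcal{W}(F_1,G_2)\bigl(\tfrac{1}{2i\lambda}p - \tfrac{1}{2\gamma}q\bigr)$. Against $G_1$ it gives $\mathcal{W}(F_1,G_1) = B^t\mathcal{W}(G_2,G_1) = -B^t(2i\lambda p - 2\gamma q)$, which is \eqref{das23} and yields the analogous formula for $B$. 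Once $A,B$ are thus expressed via Wronskians of the Jost solutions, all the remaining statements reduce to regularity / asymptotic properties of those Jost solutions:
\begin{itemize}
\item The smoothness of $\lambda p A(\lambda)$, $qA(\lambda)$ and their $B$ counterparts on all of $\R$ follows from the smoothness of $F_1, G_1, G_2$ in $\lambda$ (Lemmas \ref{LemmaJost-decay}--\ref{LemmaJost-exponential-growth}), once the factors $(2i\lambda)^{-1}$ and $(2\gamma)^{-1}$ are absorbed by the projections $p$ and $q$; when $\mu=0$ one must additionally exploit Corollary \ref{Corr} to verify that the first-column Wronskians (those producing the $\lambda^{-1}$) vanish to the correct order at $\lambda = 0$ so that $\lambda p A$ stays smooth.
\item The conjugation identities $A(-\lambda) = \overline{A(\lambda)}$ and $B(-\lambda) = \overline{B(\lambda)}$ come directly from Lemma \ref{Prp}, Lemma \ref{Wronsk-Lemma} via \eqref{ruls} applied to the Wronskian formulas.
\item The large-$\lambda$ expansions $A(\lambda) = I + O(\lambda^{-1})$, $B(\lambda) = O(\lambda^{-1})$ follow by evaluating the Wronskians at $x = 0$ with the aid of Corollary \ref{Corr2}: the leading asymptotics of $f_1,f_3,g_1,g_3$ there give $\mathcal{W}(F_1,G_2)|_{x=0} = (2i\lambda p - 2\gamma q) + O(1)$ and $\mathcal{W}(F_1,G_1)|_{x=0} = O(1)$, which after multiplication by $(2i\lambda p - 2\gamma q)^{-1}$ produce exactly \eqref{ye}.
\item The dual decomposition \eqref{des} is obtained by repeating the argument with the roles of $F_j$ and $G_j$ swapped, using the symmetry $G_j(x,\lambda) = F_{3-j}(-x,\lambda)$; equivalently one can invert the linear relation $F_1 = G_1 A + G_2 B$ by passing to the parity-reflected equation, noting that the same matrices $A,B$ appear because of the evenness of $V$ and the definition of the $G_j$'s.
\end{itemize}

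The delicate step, and the main potential obstacle, is the claim that $\lambda p A(\lambda)$, $q A(\lambda)$ (and the analogues for $B$) extend \emph{smoothly} across $\lambda = 0$ in the threshold-resonance case $\mu=0$, where $\gamma = |\lambda|$ is only Lipschitz. The issue is that the naive expressions contain $(2\gamma)^{-1} = (2|\lambda|)^{-1}$ factors in the $q$-block, and moreover the oscillatory Jost functions $f_1(\cdot,\lambda)$ develop the $\log(|\lambda|^{-1}+2)$ losses recorded in Lemma \ref{LemmaJost-oscillating} and Corollary \ref{Corr}. To handle this, I would expand each entry of the relevant Wronskian evaluated at $x=0$ using the refined small-$\lambda$ expansions \eqref{first-est-decaying-part}--\eqref{first-est-decaying-part2} and \eqref{jene-est}, and verify cancellation of the apparent $|\lambda|$- and $\log|\lambda|$-singularities in the $q$-block. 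This cancellation is forced by the fact that $f_3(\cdot,0) = f_3(\cdot,0)$ is a real bounded solution and $g_3(\cdot,0) = f_3(-\cdot,0)$ coincides with it (by evenness of $V$), so the leading coefficient of the $q$-block Wronskian identically vanishes; after one differentiation in $\lambda$ one picks up the smooth contribution needed to make $q A, qB$ continuous at $\lambda = 0$. Once this entry-by-entry cancellation is confirmed, all the stated properties follow.
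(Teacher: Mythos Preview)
Your core argument matches the paper's sketch exactly: existence of $A,B$ from the fundamental-system property of $\{G_1,G_2\}$, extraction of \eqref{das}--\eqref{das23} by pairing with $G_1,G_2$ via the rules \eqref{ruls}, conjugation symmetry from Lemma~\ref{Prp}, and the large-$\lambda$ asymptotics from evaluating the Wronskians at $x=0$ using Corollary~\ref{Corr2}. That part is correct and complete.

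The problem is in your ``delicate step'' for $\mu=0$. Your proposed cancellation mechanism rests on the claim that $g_3(\cdot,0)=f_3(-\cdot,0)$ \emph{coincides} with $f_3(\cdot,0)$ by evenness of $V$. This is false: evenness of $V$ does not make $f_3(\cdot,0)$ an even function of $x$. In the concrete case of the paper $f_3(x,0)=x(\mathcal W_1-\mathcal W_0)$ is odd, so $g_3(\cdot,0)=-f_3(\cdot,0)$, not $+f_3(\cdot,0)$. More seriously, the $C^\infty(\R)$ extension claim at $\lambda=0$ is not actually available when $\mu=0$: note that $\gamma=|\lambda|$ is only Lipschitz, $f_4(\cdot,\lambda)$ (hence $G_1$) is only constructed smoothly across $\lambda=0$ for $\mu>0$, and the paper itself shows later (Lemma~\ref{LemmafD}, Lemma~\ref{Das-asym}) that the relevant quantities are merely continuous at $\lambda=0$ with $|\log\lambda|$-singular first derivative in the resonant $\mu=0$ case. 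So the smoothness-on-$\R$ assertion in Lemma~\ref{AantB} should be read as pertaining to $\mu>0$ (where $\gamma$ and all the Jost data are smooth and the argument is indeed ``direct''), and your attempt to force it through for $\mu=0$ is both unnecessary here and based on an incorrect parity claim. Drop that paragraph; the rest of your proof is the paper's proof.
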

	\begin{proof}[Sketch of proof]
		The existence of $A(\lambda), B(\lambda)$ is immediate since the columns of $G_1(\cdot, \lambda), G_2(\cdot, \lambda)$ form a fundamental base for $\mathcal{H} - \lambda^2- \mu $ if $ \lambda \neq 0$. The lines \eqref{das} and \eqref{das23} are calculated with \eqref{ruls} using \eqref{lin} in the Wronskian and Lemma \ref{Wronsk-Lemma} for the components. The regularity follows directly form the Wronskians in \eqref{das}, \eqref{das23} and the symmetry properties of $A(\lambda), B(\lambda)$ are due to conjugation symmetry of $F_1(\cdot, \lambda)$.  Moreover, the third line \eqref{ye}  follows from a direct calculation of  
		\[
		\mathcal{W}(F_1(0, \lambda), G_2(0, \lambda)),~\mathcal{W}(F_1(0, \lambda), G_1(0, \lambda))
		\]
		where we take $ \lambda \gg 1 $ large and use Corollary \ref{Corr2} (see \cite[Corollary 5.15]{K-S-stable}). 
	\end{proof}
	\begin{Lemma} \label{transmi} There holds for $\lambda \neq 0$ and $A(\lambda), B(\lambda)$
		\begin{align}
			A^*(\lambda)( 2 i\lambda   p A(\lambda) + 2 \gamma q B(\lambda)) - 2i \lambda p &=   B^*(\lambda) ( 2   i \lambda p B(\lambda) + 2 \gamma q A(\lambda)),\\
			B^t(\lambda)(2i\lambda p - 2 \gamma q)A(\lambda) &= A^t(\lambda)(2i \lambda p - 2 \gamma q)B(\lambda) ,\\
			2i \lambda ( B^*(\lambda) p +   p B(\lambda)) &= 2 \gamma (A^*(\lambda) q  - qA(\lambda)).
		\end{align}
	\end{Lemma}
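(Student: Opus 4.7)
The plan is to derive all three identities in Lemma \ref{transmi} by computing suitable matrix Wronskians in two different ways and equating the results. The key tools are the decompositions $F_1 = G_1 A + G_2 B$ and $G_2 = F_2 A + F_1 B$ from Lemma \ref{AantB} (together with their termwise conjugates), the bilinearity rules in \eqref{ruls}, and the catalogue of constant Wronskians collected in \eqref{des23} and Lemma \ref{Wronsk-Lemma}. Throughout I would use the modified fourth solution $\tilde f_4$ from the remark after Lemma \ref{Wronsk-Lemma}, so that $w(f_1,\tilde f_4)=w(f_2,\tilde f_4)=0$; this ensures that all relevant mixed Wronskians $\mathcal{W}(\bar F_i,F_j)$, $\mathcal{W}(\bar G_i,G_j)$, $\mathcal{W}(G_i,G_j)$ acquire the clean diagonal form stated in \eqref{des23}.

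For the first identity, I would start from $\mathcal{W}(\bar F_1,F_1) = -2i\lambda p$ (which is in \eqref{des23}) and, on the other hand, expand using $F_1 = G_1 A + G_2 B$ together with its termwise conjugate $\bar F_1 = \bar G_1\bar A + \bar G_2\bar B$. Bilinearity \eqref{ruls} then yields
\[
\mathcal{W}(\bar F_1,F_1) = A^*\mathcal{W}(\bar G_1,G_1)A + A^*\mathcal{W}(\bar G_1,G_2)B + B^*\mathcal{W}(\bar G_2,G_1)A + B^*\mathcal{W}(\bar G_2,G_2)B,
\]
and plugging in the four ``bar'' Wronskians from \eqref{des23} produces, after rearrangement, exactly the first identity.

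For the second identity, I would use that $\mathcal{W}(F_1,F_1)=0$ (from \eqref{des23}) and expand it with $F_1 = G_1 A + G_2 B$. The four Wronskians $\mathcal{W}(G_i,G_j)$ are read off by the same argument used for $\mathcal{W}(F_i,F_i)=0$: one finds $\mathcal{W}(G_1,G_1)=\mathcal{W}(G_2,G_2)=0$, while the reflection identity $w(g_i,g_j) = -w(f_i,f_j)$ combined with Lemma \ref{Wronsk-Lemma} and the convention on $\tilde f_4$ gives $\mathcal{W}(G_1,G_2) = 2i\lambda p - 2\gamma q = -\mathcal{W}(G_2,G_1)^t$, which collapses the expansion to the second identity. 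For the third identity, I would study the ``mixed'' Wronskian $\mathcal{W}(\bar F_1,G_2)$: expanding the right argument via $G_2 = F_2 A + F_1 B$ and using $\mathcal{W}(\bar F_1,F_2)=-2\gamma q$ (read off from $\bar F_1=(f_2,f_3)$ and $F_2=(f_2,\tilde f_4)$ via Lemma \ref{Wronsk-Lemma}) together with $\mathcal{W}(\bar F_1,F_1)=-2i\lambda p$ yields $-2\gamma qA - 2i\lambda pB$; expanding the left argument via the conjugate of \eqref{lin} instead yields $-2\gamma A^* q + 2i\lambda B^* p$; equating the two expressions produces precisely the third identity.

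The three derivations are conceptually uniform and there is no substantial obstacle beyond careful bookkeeping: one has to correctly compute the handful of ``initial'' Wronskian matrices $\mathcal{W}(F_i,F_j)$, $\mathcal{W}(\bar F_i,F_j)$, $\mathcal{W}(G_i,G_j)$ and $\mathcal{W}(\bar G_i,G_j)$ from the scalar values in Lemma \ref{Wronsk-Lemma}, keep track of the sign flip $w(g_i,g_j)=-w(f_i,f_j)$ induced by $g_j(x)=f_j(-x)$, and remember that in the bilinearity rule \eqref{ruls} the conjugated constants $\bar A,\bar B$ appear as $A^*, B^*$ when multiplied on the left. Once that small table is assembled, each of the three identities reduces to a single line of rearrangement.
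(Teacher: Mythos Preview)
Your proposal is correct and follows exactly the approach sketched in the paper: the three identities arise from computing $\mathcal{W}(\bar F_1,F_1)$, $\mathcal{W}(F_1,F_1)$, and $\mathcal{W}(\bar F_1,G_2)$ in two ways via \eqref{lin} and \eqref{des}, then comparing with the catalogue \eqref{des23}. The details you supply (the sign flip $w(g_i,g_j)=-w(f_i,f_j)$, the use of $\tilde f_4$, and the computation $\mathcal{W}(\bar F_1,F_2)=-2\gamma q$) are all correct and fill in precisely what the paper leaves to the reader.
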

	For the proof we refer to \cite{K-S-stable}. However one essentially only needs to rewrite
	\[
	\mathcal{W}(\bar{F}_1, F_1),~ \mathcal{W}(F_1, F_1),~ \mathcal{W}(\bar{F}_1, G_2), 
	\]
	via \eqref{lin} and \eqref{ruls} in the first two Wronskians and for the third one makes two separate calculations using   \eqref{lin}  and \eqref{des}. This is then compared to the explicit formulas provided by \eqref{des23}.\\[5pt]
	\emph{The threshold}. For the 1D-operator $ \mathcal{H}$ defined in \ref{Def-on} we call $ z = \pm \mu$ a \emph{resonance} if there exists   $f \in L^{\infty}\backslash L^2(\R)$ such that $ \mathcal{H}f = \pm \mu f$ holds in the distributional sense. We note that, since $ f_1(\cdot, 0) = f_2(\cdot, 0)$, a fundamental base for $(\mathcal{H} - \mu)f = 0$  with $ \mu > 0$ is given by
	\begin{align}\label{base1}
		f_1(x, 0), ~f_3(x, 0), ~\partial_{\lambda}f_1(x,0),~ f_4(x,0).	
	\end{align}
	In particular since $ \sigma_1 \mathcal{H} = - \mathcal{H}\sigma_1$, an analogue base for $ (\mathcal{H} + \mu)f = 0 $  is given by transforming \eqref{base1} via $ g \mapsto \sigma_1g$. In  case $\mu = 0 $ we have to choose
	\begin{align}\label{base}
		f_1(x, 0), ~f_3(x, 0), ~\partial_{\lambda}f_1(x,0),~ \partial_{\lambda}f_3(x,0).	
	\end{align}
	Thus  for any solution of $ \mathcal{H}f = 0$, there exists some fixed $ v \in \C^2$ with either
	\[
	\lim_{x \to \infty}f(x) = \begin{pmatrix}
		v_1\\v_2
	\end{pmatrix},~~ \text{or}~~f(x)  =  \begin{pmatrix}
		v_1\\v_2
	\end{pmatrix}\cdot O(x),~~\text{as}~ x \to \infty,
	\]
	and likewise for $ x \to - \infty$.
	~~\\
	We collect some facts obtained for $\mathcal{H}$ so far (cf. \cite[Lemma 5.17 - 5.20]{K-S-stable}).
	\begin{Lemma} \label{as2} Any solution of $ \mathcal{H}f = 0$ with $\mu = 0$ and $f \in L^{\infty}\backslash L^2$ has the form
		\begin{align}\label{as}
			&f(x) = \begin{pmatrix}a_{\pm} \\[1pt] b_{\pm} \end{pmatrix} + O(\langle x \rangle^{2-m}),~~~ x \to \pm \infty,~ a_{\pm}, b_{\pm} \in \C. 	
		\end{align}
		where either $ a_{\pm} \neq 0 $ or $ b_{\pm} \neq 0$. Any solution of $ (\mathcal{H} \mp \mu)f = 0$ with $\mu > 0$  and $f \in L^{\infty}\backslash L^2$ has the form 
		\begin{align}\label{as3}
			&f(x) = \begin{pmatrix}a_{\pm} \\[1pt] 0 \end{pmatrix} + O(\langle x \rangle^{2-m}),~~~ x \to \pm \infty,~ a_{\pm} \in \C,~~	\text{in the (-) case},\\
			&f(x) = \begin{pmatrix} 0 \\[1pt] b_{\pm} \end{pmatrix} + O(\langle x \rangle^{2-m}),~~~ x \to \pm \infty,~ b_{\pm} \in \C,~~	\text{in the (+) case},
		\end{align}
		where both $ a_{\pm} \neq 0 $ (and $ b_{\pm} \neq 0$ respectively).
	\end{Lemma}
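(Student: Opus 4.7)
The plan is to express an $L^\infty \setminus L^2$ solution in a fundamental base of $(\mathcal{H} - z) f = 0$ adapted to the behavior at $+\infty$, read off the asymptotics from the Jost-type lemmas above, and then run the mirror argument at $-\infty$. The linear (or exponential) growth of certain basis elements, which follows from Corollary \ref{Corr} and Lemma \ref{LemmaJost-exponential-growth}, will force their coefficients to vanish, after which the asymptotic form claimed in the lemma is read off directly.

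For $\mu = 0$, the four-dimensional solution space of $\mathcal{H} f = 0$ is spanned by the base \eqref{base}; at $\gamma = 0$, Lemmas \ref{LemmaJost-decay} and \ref{LemmaJost-oscillating} yield $f_1(x, 0) = \binom{1}{0} + O(\langle x \rangle^{2-m})$ and $f_3(x, 0) = \binom{0}{1} + O(\langle x \rangle^{2-m})$, while Corollary \ref{Corr} gives $\partial_\lambda f_1(x, 0) = \binom{ix}{0} + O(1)$ and $\partial_\lambda f_3(x, 0) = -\binom{0}{x} + O(\langle x \rangle^{3-m})$ as $x \to +\infty$. The two growing modes carry leading coefficients in linearly independent coordinate slots, so boundedness near $+\infty$ forces their coefficients to vanish in any decomposition $f = \sum_j \alpha_j \phi_j$; by uniqueness of the initial value problem this decomposition is global. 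Hence $f(x) = \binom{\alpha_1}{\alpha_2} + O(\langle x \rangle^{2-m})$ as $x \to +\infty$, and the mirrored argument at $-\infty$ using $g_j(x) = f_j(-x, 0)$ produces the analogous expansion. Nonvanishing $(\alpha_1, \alpha_2) \neq (0,0)$ follows because otherwise $f \equiv 0$, contradicting $f \in L^\infty \setminus L^2$.

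For $\mu > 0$ with $(\mathcal{H} - \mu) f = 0$, the same scheme applies with base \eqref{base1}. Now $\gamma = \sqrt{2 \mu} > 0$, so $f_3(\cdot, 0)$ decays and $f_4(\cdot, 0)$ grows exponentially by Lemmas \ref{LemmaJost-decay} and \ref{LemmaJost-exponential-growth}, $f_1(\cdot, 0) = \binom{1}{0} + O(\langle x \rangle^{2-m}) + O(e^{-\gamma x})$ is bounded, and $\partial_\lambda f_1(\cdot, 0)$ grows linearly. Boundedness at $+\infty$ forces $f = c_1 f_1(\cdot, 0) + c_3 f_3(\cdot, 0)$ globally, so $f(x) = \binom{c_1}{0} + O(\langle x \rangle^{2-m})$ at $+\infty$; the $(+)$ case follows by applying the intertwining $\sigma_1 \mathcal{H} \sigma_1 = - \mathcal{H}$, which identifies $L^\infty \setminus L^2$ solutions of $(\mathcal{H} + \mu) f = 0$ with those of $(\mathcal{H} - \mu)(\sigma_1 f) = 0$ via component swap.

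The main obstacle is the ``both'' part of the conclusion in the $\mu > 0$ case: if $a_+ = 0$, then $f$ is a nonzero scalar multiple of $f_3(\cdot, 0)$ globally and decays exponentially at $+\infty$; excluding that $f$ also decays at $-\infty$ (which would place $f$ in $L^2$) and simultaneously ruling out the mixed scenario $a_+ = 0$, $a_- \neq 0$ requires a matrix-Wronskian comparison of $F_1(\cdot, 0)$ with the mirrored base $G_2(\cdot, 0)$ in the spirit of Lemma \ref{AantB}, using that at a genuine resonance (as opposed to an $L^2$ eigenvalue) $f_3(\cdot, 0)$ cannot be exponentially integrable at both ends. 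This two-sided rigidity, rather than the extraction of the asymptotic form per se, is the true content of the statement.
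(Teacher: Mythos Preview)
Your approach coincides with the paper's one-line proof: expand $f$ in the fundamental base \eqref{base} (for $\mu = 0$) or \eqref{base1} (for $\mu > 0$), eliminate the unbounded basis elements using $f \in L^\infty$, and read off the asymptotic form from Lemmas \ref{LemmaJost-decay}--\ref{LemmaJost-exponential-growth} and Corollary \ref{Corr}. For $\mu = 0$ your argument is complete; the nonvanishing $(a_+, b_+) \neq (0,0)$ holds because vanishing coefficients on both bounded modes $f_1(\cdot,0),\,f_3(\cdot,0)$ would force $f \equiv 0$. For $\mu > 0$ your derivation of the form $\smallbinom{a_\pm}{0}$ and the $\sigma_1$-intertwining for the $(+)$ case are also correct.

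The point you isolate --- ``both $a_\pm \neq 0$'' --- is a genuine subtlety that the paper's proof does not address either. If $a_+ = 0$ then globally $f = c_3 f_3(\cdot, 0)$ with $c_3 \neq 0$; this decays at $+\infty$, and for $f \in L^\infty \setminus L^2$ one would need $f_3(\cdot, 0)$ to be bounded at $-\infty$ with nonzero first-component limit there --- precisely the mixed scenario you describe. Ruling this out is equivalent to saying that $f_3(\cdot, 0)$ is not itself a resonance function, which is a condition on the specific $V$ rather than a formal consequence of Definition \ref{Def-on} or of the asymptotic inspection. The paper's downstream use (Lemma \ref{A-LEm}) only needs that bounded threshold solutions lie in the span of the columns of $F_1(\cdot, 0)$ on one side and of $G_2(\cdot, 0)$ on the other, which is exactly the asymptotic form you establish; the ``both'' clause is not load-bearing. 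You have located a slight overstatement in the lemma rather than missed an idea.
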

	\begin{proof}
		The statement follows directly by inspecting  the $x \to \pm \infty$ asymptotics of \eqref{base1} and  \eqref{base}.
	\end{proof}
	\begin{Lemma}\label{A-LEm}
		There holds $ 0 = \det(A(\lambda)) $ and $ 0 =\det \mathcal{W}(F_1(\cdot, \lambda), G_1(\cdot, \lambda))$ for any $\lambda \neq 0$ if and only if $\lambda^2 + \mu$ is an eigenvalue for $\mathcal{H}$. Further 
		\begin{align} \label{di}
			0 =\det \mathcal{W}(F_1(\cdot, 0), G_1(\cdot, 0))
		\end{align} 
		if and only if $ \lambda = \mu$ (and thus also $ \lambda = - \mu$) is either  an eigenvalue or a resonance for $ \mathcal{H}$. If $ \mu = 0$, then $ \lambda = 0$ is not an eigenvalue, hence \eqref{di} holds if and only if $ \lambda = 0$ is a resonance.
	\end{Lemma}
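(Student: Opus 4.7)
The plan is to exploit the geometric meaning of the Jost solutions: for $\lambda \neq 0$, the four-dimensional solution space of $(\mathcal{H} - \lambda^2 - \mu)f = 0$ decomposes at $+\infty$ into a bounded oscillating subspace $\mathrm{span}\{f_1, f_2\}$, an exponentially decaying direction $\mathrm{span}\{f_3\}$, and a growing direction $\mathrm{span}\{f_4\}$, with the mirror description in the $g_j$ basis at $-\infty$. Hence $\lambda^2 + \mu$ is an $L^2$-eigenvalue of $\mathcal{H}$ if and only if the one-dimensional $L^2$-decaying subspaces at $\pm\infty$, namely $\mathrm{span}\{f_3\}$ and $\mathrm{span}\{g_3\}$, have a common nontrivial element in the four-dimensional solution space. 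The Wronskian conditions in the statement will encode precisely this coincidence.

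For $\lambda\neq 0$ I would use the connection formula from Lemma \ref{AantB} to expand the second column of $F_1$ as
\[
f_3 \;=\; A_{12}\,g_2 + A_{22}\,g_4 + B_{12}\,g_1 + B_{22}\,g_3 .
\]
Matching asymptotics at $x \to -\infty$ (only $g_3$ decays; $g_2, g_1$ are oscillating, $g_4$ is growing) shows $f_3 \in L^2$ iff $A_{12}=A_{22}=B_{12}=0$. The first two conditions force the second column of $A$ to vanish, giving $\det A(\lambda) = 0$. The companion statement on $\mathcal{W}(F_1,G_1)$ follows from the identity
\[
\mathcal{W}(F_1,G_1) \;=\; -B(\lambda)^t(2i\lambda\,p - 2\gamma\,q),
\]
so that $\det\mathcal{W}(F_1,G_1) = -4i\lambda\gamma\det B$; combining $B_{12}=0$ with the unitarity identities from Lemma \ref{transmi} (working out the $(1,2)$ and $(2,1)$ entries of the first relation there) yields $A_{21}=0$ and then $\det B=0$ follows by forcing the remaining entries to be compatible with a nonzero eigenfunction $f_3 = B_{22}g_3$. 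Conversely, given both $\det A=0$ and $\det\mathcal{W}(F_1,G_1)=0$, I would apply Lemma \ref{KS-Lemma} using the vanishing $\mathcal{W}(F_1,F_1)=0$ and the analogous $\mathcal{W}(G_2,G_2)=0$ (which follows from $G_2(x)=F_1(-x)$) to extract a nontrivial element of $\mathrm{span}\{f_1,f_3\}\cap\mathrm{span}\{g_1,g_3\}$, and then use the extra determinantal condition to promote this bounded solution to one with vanishing $f_1$- and $g_1$- components, hence in $L^2$.

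The threshold statement at $\lambda=0$ splits into two subcases. If $\mu>0$, then $\gamma=\sqrt{2\mu}>0$, so $f_3, g_3$ still decay, but $f_1=f_2$ at $\lambda=0$ by Lemma \ref{Prp}, so that a fundamental system is $\{f_1(\cdot,0),f_3(\cdot,0),\partial_\lambda f_1(\cdot,0),f_4(\cdot,0)\}$. A resonance at $\lambda^2+\mu = \mu$ corresponds to a bounded non-$L^2$ solution; using the refined asymptotics from Lemma \ref{as2} (for $\mu>0$) one sees that such a bounded solution forces $f_3(\cdot,0)$ and $g_3(\cdot,0)$ to become linearly dependent with oscillating-free contributions at both ends, which by Lemma \ref{KS-Lemma} is exactly $\det\mathcal{W}(F_1(\cdot,0),G_1(\cdot,0))=0$. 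If $\mu=0$ then $\gamma=0$ as well, so $f_3$ and $g_3$ no longer decay but are merely bounded; Lemma \ref{as2} gives the sharp $O(\langle x\rangle^{2-m})$ correction to their constant boundary values. Since a strictly $L^2$ solution would require decay which no solution in the fundamental base \eqref{base} achieves, no eigenvalue occurs at $\lambda=0$ when $\mu=0$, and the determinantal condition characterizes precisely the existence of a resonance.

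The main obstacle is the threshold analysis for $\mu=0, \lambda=0$, where the formula $\det\mathcal{W}(F_1,G_1) = -4i\lambda\gamma\det B$ degenerates to $0\cdot\det B$ identically, so the vanishing of the Wronskian cannot be read off from the matrix $B$ alone. One must instead work directly with the Wronskian evaluated at $\lambda=0$ via the base \eqref{base}, using Corollary \ref{Corr} to control the $\partial_\lambda$-dependence near $\lambda=0$, and carefully track whether the zero of $\det\mathcal{W}$ comes from the prefactor $\lambda\gamma$ (an artefact of the parametrization) or from a genuine linear dependence of the Jost solutions $f_3(\cdot,0)$ and $g_3(\cdot,0)$ — the latter being the true resonance condition. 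A subtlety is that $f_1(\cdot,0)=f_2(\cdot,0)$ collapses the oscillating block, so one has to use $\partial_\lambda f_1(\cdot,0)$ (which grows only polynomially, by Corollary \ref{Corr}) to supplement the fundamental base; this polynomial growth means the distinction between ``bounded'' and ``$L^2$'' solutions at threshold becomes very delicate and is where the polynomial decay assumption $m\geq 2$ on $V$ enters crucially.
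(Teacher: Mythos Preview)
Your forward implication (eigenvalue $\Rightarrow$ second column of $A$ vanishes, hence $\det A=0$) is fine, but the companion claim about $\det\mathcal{W}(F_1,G_1)=-4i\lambda\gamma\det B$ does not go through. From $B_{12}=0$ and the third identity of Lemma~\ref{transmi} you correctly extract $A_{21}=0$ and $\Re B_{11}=0$, but nothing forces $B_{11}=0$: working through the $(1,1)$ entry of the first identity gives $|A_{11}|^2=1+|B_{11}|^2$, and re-expanding $g_1$ in the $g$-basis via \eqref{des} gives $|A_{11}|^2+B_{11}^2=1$; together these only say $B_{11}$ is purely imaginary, so $\det B=B_{11}B_{22}$ need not vanish. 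In fact the Wronskian that is genuinely tied to $\det A$ is $\mathcal{W}(F_1,G_2)$ (via \eqref{das}), and this is the object the paper actually uses downstream (in the definition of $D(\lambda)$ and its invertibility). The appearance of $G_1$ in the statement is a typo for $G_2$; once you read it that way, the two determinantal conditions are equivalent for free and the difficulty you set up for yourself disappears.

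For the converse ($\det A=0\Rightarrow$ eigenvalue), the paper's argument is much shorter than your route through Lemma~\ref{KS-Lemma} and a subsequent ``promotion'' step (which you leave unspecified). If $A(\lambda)v=0$ with $v\neq 0$, pair the first identity of Lemma~\ref{transmi} with $v$ on both sides: the $A$-terms drop out and one is left with $-2i\lambda\,v^*pv = 2i\lambda\,v^*B^*pBv$, i.e.\ $-|v_1|^2=|(Bv)_1|^2$. Hence $v_1=0$ and $(Bv)_1=0$, so $F_1v=v_2 f_3$ and $G_1Av+G_2Bv=(Bv)_2\,g_3$, giving $f_3\propto g_3$ directly --- an $L^2$ eigenfunction with no intermediate ``bounded but not yet decaying'' stage. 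Your threshold discussion for $\mu=0$ (work directly with the Wronskian via the base \eqref{base} and Corollary~\ref{Corr}, since the factorization through $A$ degenerates) is on the right track and matches the paper's sketch.
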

	\begin{proof}[Sketch of Proof] We recall from the prior observation in \eqref{das}
		$$   4\lambda^2|\det(A(\lambda))| = |\det\mathcal{W}(F_1(\cdot, \lambda), G_1(\cdot, \lambda))|.$$
		The fact that both sides vanish if and only if $ \lambda \neq 0$ is an eigenvalue follows by contradiction. In particular if $A(\lambda)v = 0$ for some $ v \neq 0$, then testing the first identity of Lemma \ref{transmi} with $v$ implies $ \lambda^2  \neq 0 $ is an eigenvalue. The statement \eqref{as} at $ \lambda = \pm \mu$ follows from  the asymptotics of  fundamental base \eqref{base} stated in Corollary \ref{Corr} and direct calculation of the Wronskian $\mathcal{W}(F_1, G_2)$ at $ \lambda = 0$. For more details we refer to \cite[Lemma 5.17]{K-S-stable}.
	\end{proof}
	~~\\
	{\bf \emph{Scattering solutions and resolvents}}. We define  the smooth matrix valued function
	\begin{align}\label{D}
		D(\lambda) := \mathcal{W}(F_1(\cdot, \lambda), G_2(\cdot, \lambda)),~~ \lambda \in \R \backslash \{ 0\}.
	\end{align}
	Then from the previous observations we have
	\begin{align} \label{D2}
		&D(\lambda) =  A^t(\lambda)(2 i \lambda p - 2 \gamma q) =  A^t(\lambda)\begin{pmatrix}
			2 i \lambda & 0\\
			0 & - 2 \gamma 
		\end{pmatrix},\\ \nonumber
		&D(\lambda)^t = D(\lambda),~~\overline{D(\lambda)} = D(- \lambda).
	\end{align}
	Directly implied by the definition we infer the following. Let $ \mathcal{H}$ in \eqref{op} have no imbedded eigenvalues, then  $D(\lambda)^{-1}$ via  Lemma \ref{AantB} and Lemma \ref{A-LEm} with
	$$  A(\lambda)^{-1} =  D(\lambda)^{-1} ( 2i\lambda p - 2 \gamma q) $$
	is well defined, smooth on  $\R \backslash \{ 0\}$. Further if $ \pm \mu$ is not a resonance, then $ \lambda \mapsto D(\lambda)^{-1}  $ is smooth on  $\R $.  Therefore the \emph{absence of imbedded eigenvalues} is a standing assumption for this Subsection.
	\begin{Def} We set the  function 
		\begin{align}
			k(\lambda) : = 2 i \lambda D(\lambda)^{-1}, ~~ \lambda \in \R\backslash\{0\},
		\end{align}
		In particular $ k(\lambda)^t = k(\lambda)$ and $  k(- \lambda) = \overline{ k(\lambda) }$. In case  the threshold $ \pm \mu$ is not a resonance, $ k(\lambda )$  has a smooth extension to $\lambda = 0$ with $k(\lambda) = O(\lambda)$ as $ \lambda \to 0$.
	\end{Def}
	\begin{Lemma} In any case we have $k(\lambda) = O(1),~k'(\lambda) = O(\lambda^{-1})$ as $\lambda \to \infty$.
	\end{Lemma}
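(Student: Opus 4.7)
The approach is to use the explicit representation of $k(\lambda)$ in terms of the matrix $A(\lambda)$ established in \eqref{D2} and combine it with the large-$\lambda$ asymptotics of $A(\lambda)$ coming from Lemma \ref{AantB} together with the derivative bounds for the Jost solutions at $x=0$ given in Corollary \ref{Corr2}. Specifically, from $D(\lambda) = A^t(\lambda)\,\mathrm{diag}(2i\lambda,-2\gamma)$ one gets
\[
k(\lambda) = 2i\lambda\, D(\lambda)^{-1} = \begin{pmatrix} 1 & 0 \\ 0 & -\tfrac{i\lambda}{\gamma} \end{pmatrix}(A^t(\lambda))^{-1}.
\]
Writing $A(\lambda) = \bigl(\begin{smallmatrix} a_{11} & a_{12} \\ a_{21} & a_{22}\end{smallmatrix}\bigr)$, Cramer's rule then yields the componentwise formula
\[
k(\lambda) = \frac{1}{\det A(\lambda)} \begin{pmatrix} a_{22} & -a_{21} \\ -\tfrac{i\lambda}{\gamma} a_{12} & \tfrac{i\lambda}{\gamma} a_{11} \end{pmatrix}.
\]

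The first step is the boundedness bound $k(\lambda)=O(1)$. By \eqref{ye} of Lemma \ref{AantB} we have $A(\lambda)=I+O(\lambda^{-1})$, so $a_{11},a_{22}=1+O(\lambda^{-1})$, $a_{12},a_{21}=O(\lambda^{-1})$, and $\det A(\lambda)=1+O(\lambda^{-1})$ is bounded below. Combined with $\lambda/\gamma = \lambda/\sqrt{\lambda^2+2\mu}=1+O(\lambda^{-2})$ this gives each entry of $k(\lambda)$ as an $O(1)$ quantity; in fact $k(\lambda)\to \mathrm{diag}(1,i)$ as $\lambda\to \infty$.

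The second step is the derivative bound $k'(\lambda)=O(\lambda^{-1})$. Differentiating the componentwise expression, the contribution from $\partial_\lambda(\lambda/\gamma)= 2\mu/\gamma^3 = O(\lambda^{-2})$ is clearly acceptable, so the issue reduces to showing
\[
\partial_\lambda A(\lambda) = O(\lambda^{-2}) \quad\text{(in the entries where $A-I$ is already $O(\lambda^{-1})$),}
\]
i.e. one order of decay better than $A-I$ itself, plus the corresponding bound $\partial_\lambda \det A(\lambda)=O(\lambda^{-2})$ which is a direct consequence. To obtain this I would evaluate the Wronskian identities \eqref{das}, \eqref{das23} at $x=0$:
\[
A(\lambda)^t\,\mathrm{diag}(2i\lambda,-2\gamma) = \mathcal{W}(F_1(0,\lambda),G_2(0,\lambda)),
\]
and then invoke Corollary \ref{Corr2} (together with its second-order analogue stated immediately afterwards for $\partial_\lambda\partial_x f_j$), which provides $\partial_\lambda f_j(0,\lambda)$ and $\partial_\lambda \partial_x f_j(0,\lambda)$ up to errors of size $O(|\lambda|^{-1})$ against the leading terms (which themselves carry a factor $|\lambda|$ from $f_1$, $f_4$ or from $\gamma$). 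Isolating $A(\lambda)$ via multiplication by $\mathrm{diag}((2i\lambda)^{-1},-(2\gamma)^{-1})$, these $O(|\lambda|^{-1})$ relative errors translate precisely into $\partial_\lambda A(\lambda)=O(\lambda^{-2})$.

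The main obstacle is this last step: bookkeeping the cancellations in $\partial_\lambda \mathcal{W}(F_1(0,\lambda),G_2(0,\lambda))$ so that the leading $O(|\lambda|)$ parts cancel and only an $O(|\lambda|^{-1})$ remainder survives, after which division by $\mathrm{diag}(2i\lambda,-2\gamma)$ yields the desired $O(\lambda^{-2})$ bound on $\partial_\lambda A$. The asymptotics in Corollary \ref{Corr2} (and the one-line extension to $\partial_\lambda\partial_x f_j$ recorded just below it) are precisely tailored for this cancellation, so the computation is routine once organized correctly.
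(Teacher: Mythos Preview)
Your approach is essentially the paper's --- express $k(\lambda)$ through $A(\lambda)$ via $D(\lambda)=A^t(\lambda)\,\mathrm{diag}(2i\lambda,-2\gamma)$ and control the derivative using Corollary~\ref{Corr2} at $x=0$ --- but you have miscounted the order you actually need. You assert that the issue reduces to $\partial_\lambda A(\lambda)=O(\lambda^{-2})$, yet $\partial_\lambda A(\lambda)=O(\lambda^{-1})$ already suffices: differentiating your componentwise formula for $k(\lambda)$, each entry is a product of bounded quantities ($a_{ij}$, $(\det A)^{-1}$, $\lambda/\gamma$) with exactly one factor differentiated, and each such derivative is $O(\lambda^{-1})$ or better.

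This weaker bound follows immediately from $D'(\lambda)=O(1)$, with no cancellation needed. Indeed, every term of $\partial_\lambda\mathcal{W}(F_1(0,\lambda),G_2(0,\lambda))$ is already $O(1)$: schematically $(\partial_\lambda F_1')^t G_2\sim O(1)\cdot O(1)$ and $(F_1')^t\partial_\lambda G_2\sim O(\lambda)\cdot O(\lambda^{-1})$ by Corollary~\ref{Corr2}, so there are no ``leading $O(|\lambda|)$ parts'' to cancel in the first place. Then $\partial_\lambda A^t = D'\cdot\mathrm{diag}((2i\lambda)^{-1},-(2\gamma)^{-1})+D\cdot\partial_\lambda\mathrm{diag}(\dots)=O(1)\cdot O(\lambda^{-1})+O(\lambda)\cdot O(\lambda^{-2})=O(\lambda^{-1})$. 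The paper packages this as the identity $[\lambda D(\lambda)^{-1}]'=D(\lambda)^{-1}-\lambda D(\lambda)^{-1}D'(\lambda)D(\lambda)^{-1}$ together with $D^{-1}=O(\lambda^{-1})$ and $D'=O(1)$, which is the same computation without passing through $A$.
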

	\begin{proof}
		The asymptotics for $ \lambda \gg1 $ is directly implied by \eqref{ye}, \eqref{das} of Lemma \ref{AantB}. Further we calculate $D'(\lambda)$ at $x = 0$ as in the proof of Lemma \ref{AantB} (see \cite{K-S-stable}) and Corollary \ref{Corr2} in the identity
		$
		[\lambda D(\lambda)^{-1}]' = D(\lambda)^{-1} - \lambda D(\lambda)^{-1} D'(\lambda)D(\lambda)^{-1}
		$.
	\end{proof}
	Let us restrict to the case of $m =4$, i.e. $V_j(x) \lesssim \langle x \rangle^{-4}$ from now on. If $\mu = 0$ we infer  (cf. \cite[Section 4.1]{OP}) by Corollary \ref{Corr}, Lemma \ref{LemmaJost-decay} and Lemma \ref{LemmaJost-oscillating} 
	\begin{align} \label{calcul}
		& \partial_{\lambda}D(0) = 2(q - ip),  \\[2pt] \label{calllcul}
		&| \partial_{\lambda}^2 D(\lambda) | \lesssim \log(\lambda^{-1}+1),~~ 0 < \lambda \lesssim1.
	\end{align}
	In particular, for the linearized operator \eqref{op-prob} and its 1-D reduction in Definition \ref{Def-on},  the Lemma \ref{FS-inner-re} in Section \ref{subsec:inner}  provides  an explicit fundamental base of $ \mathcal{H}f = 0$ given by the odd extensions
	$ x \cdot\Theta_{\pm}(x), ~ x \cdot\Phi_{\pm}(x),~x \in \R$. Thus we find
	\begin{align*}
		&\mathcal{W}_0(x) = \f{1}{\sqrt{3}}W(x) \begin{psmallmatrix}
			1\\[1pt]-1
		\end{psmallmatrix},~~~ \mathcal{W}_1(x) = - \f{2}{\sqrt{3}}W_1(x)\begin{psmallmatrix}
			1\\[1pt]1
		\end{psmallmatrix},\\
		&f_1(x, 0) =  x\big(\mathcal{W}_0(x) +  \mathcal{W}_1(x)\big),~~~f_3(x, 0) =  x \big(\mathcal{W}_1(x) -  \mathcal{W}_0(x)\big),
	\end{align*}
	and hence $ D(0) = 0$. From e.g. \eqref{calcul} and  \eqref{calllcul} we obtain for instance
	\begin{align*}
		| D(\lambda)^{-1}| \gtrsim  (\lambda | \log(\lambda)|)^{-1},~~~ 0 < \lambda \ll 1.
	\end{align*}	
	and $D(\lambda)^{-1}$ has no $C^0$ extension to $ \lambda = 0$. More precisely in this situation we obtain the following.
	\begin{Lemma} \label{LemmafD}  Let  $ \mu = 0$ and $ D(0) = 0$. Then the function $ k(\lambda) = 2 i \lambda D(\lambda)^{- 1}  $ is continuously extended to $\lambda = 0$ and 
		\begin{align}
			\lim_{\lambda \to 0^+} k(\lambda)  &=  i q - p = 2i \big[\partial_{\lambda}D(0)\big]^{-1},\\ \label{sen}
			| \partial_{\lambda} k(\lambda) | &\lesssim |\log(\lambda)|,~~ 0 < \lambda \ll1,\\ \label{sen22}
			| \partial^2_{\lambda} k(\lambda) | &\lesssim | \lambda|^{-1}|\log(\lambda)|,~~ 0 < \lambda \ll1.
		\end{align}
	\end{Lemma}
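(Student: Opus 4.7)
The plan is to exploit the hypothesis $D(0)=0$ to factor out the vanishing, reducing the problem to inverting the nondegenerate matrix $\partial_\lambda D(0)$ and controlling logarithmic remainders through \eqref{calcul} and \eqref{calllcul}.

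First, I would write $D(\lambda) = \lambda E(\lambda)$ where $E(\lambda) := \int_0^1 \partial_\lambda D(s\lambda)\,ds$, noting that $E$ is $C^1$ on a neighborhood of the origin with $E(0) = \partial_\lambda D(0) = 2(q - ip)$ by \eqref{calcul}. A direct matrix computation gives $[E(0)]^{-1} = \tfrac{1}{2}\mathrm{diag}(i,1)$, hence $2i[E(0)]^{-1} = \mathrm{diag}(-1,i) = iq - p$, matching the claimed limit. The bound \eqref{calllcul} applied in $E(\lambda) - E(0) = \int_0^1\!\!\int_0^{s\lambda} \partial_\lambda^2 D(\tau)\, d\tau\, ds$ yields $|E(\lambda) - E(0)| \lesssim \lambda(1+|\log\lambda|)$, so $E$ is continuous at $0$. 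Since $E(0)$ is invertible, $E(\lambda)$ remains invertible with $|E(\lambda)^{-1}| \lesssim 1$ for $0 < \lambda \ll 1$, and consequently $k(\lambda) = 2i\lambda D(\lambda)^{-1} = 2iE(\lambda)^{-1}$ extends continuously with the prescribed limit.

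For the derivative bounds, I would differentiate $k = 2iE^{-1}$ directly, giving $k'(\lambda) = -2i E(\lambda)^{-1} E'(\lambda) E(\lambda)^{-1}$. Since $E'(\lambda) = \int_0^1 s\,\partial_\lambda^2 D(s\lambda)\,ds$, \eqref{calllcul} delivers $|E'(\lambda)| \lesssim |\log\lambda|$, so combined with the boundedness of $E(\lambda)^{-1}$ this yields the first derivative estimate. For the second derivative I would use the Leibniz rule to write $k'' = 4i E^{-1} E' E^{-1} E' E^{-1} - 2i E^{-1} E'' E^{-1}$. The first term is $O(|\log\lambda|^2)$. For $E''$, rather than differentiating under the integral (which would require a bound on $\partial_\lambda^3 D$ we do not have), I would exploit the algebraic identity $D''(\lambda) = 2E'(\lambda) + \lambda E''(\lambda)$ obtained by differentiating $D = \lambda E$ twice, giving $E''(\lambda) = \lambda^{-1}\bigl(D''(\lambda) - 2E'(\lambda)\bigr)$ and hence $|E''(\lambda)| \lesssim \lambda^{-1}|\log\lambda|$. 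This produces the second derivative estimate.

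The main technical point is the logarithmic blow-up at the threshold, which is unavoidable because $\lambda=0$ is a resonance of $\mathcal{H}$; the factor $\lambda$ multiplying $D^{-1}$ in the definition of $k$ is precisely designed to cancel the leading singularity so that only logarithmic corrections remain, and these are controllable entirely through the two a priori bounds on $D$ recalled just above the Lemma. I do not expect substantial difficulty beyond careful bookkeeping of the matrix structure and the observation that the algebraic relation $D = \lambda E$ upgrades one order of $\lambda$ of smallness into one less derivative bound needed on $D$.
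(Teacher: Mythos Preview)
Your proposal is correct and follows essentially the same strategy as the paper's proof: both arguments rely solely on \eqref{calcul} and \eqref{calllcul} and reduce to differentiating a matrix inverse. Your factorization $D(\lambda)=\lambda E(\lambda)$ so that $k=2iE^{-1}$ is a cleaner organizational device than the paper's direct computation $\partial_\lambda k = k\,\lambda^{-2}(D-\lambda\partial_\lambda D)\,k$ (up to a constant), but the content is identical; in particular your algebraic identity $D''=2E'+\lambda E''$ for bounding $E''$ without a third-derivative assumption is exactly the kind of Taylor manipulation the paper invokes when it says to ``differentiate the latter twice and use \eqref{sen}''.
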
 
	\begin{proof}
		The continuous extension to $ \lambda = 0$ is directly implied by \eqref{calcul} - \eqref{calllcul} and  a second order Taylor expansion of $\lambda D(\lambda)$. For \eqref{sen} we calculate 
		\[
		\partial_{\lambda} k(\lambda) =k(\lambda) \lambda^{-2}\big( D(\lambda) - \lambda \partial_{\lambda}D(\lambda) \big)k(\lambda) 
		\]
		and use \eqref{calllcul} in a Taylor expansion for the middle part on the right. Alternatively we use $ D(\lambda)^{-1} \sim \lambda^{-1}(iq - p)$ at $ \lambda = 0$ and differentiate $D(\lambda) \cdot D(\lambda)^{-1}$.  For  \eqref{sen22}, we differentiate the latter twice in $ \lambda$ and use \eqref{sen}. 
	\end{proof}
	For the remaining part of this Subsection we make the following \emph{additional assumption}: If $ \mu > 0$, we assume $\pm \mu $ is not a resonance and  if $ \mu = 0$, we let $ \lim_{\lambda \to 0 } \lambda k(\lambda)^{-1} = 0$ (This holds true if $f_1(\cdot, 0), f_3(\cdot, 0) $ are two odd resonance functions).\\[5pt]
	We define
	\begin{align}
		&k(\lambda)\underline{e} =: s(\lambda) \underline{e}+ \tilde{s}(\lambda) \begin{psmallmatrix}
			0\\1
		\end{psmallmatrix},\\
		&B(\lambda)	k(\lambda)\underline{e} =: r(\lambda) \underline{e}+ \tilde{r}(\lambda) \begin{psmallmatrix}
			0\\1
		\end{psmallmatrix}.
	\end{align} 
	Then by a simple calculation using Lemma \ref{AantB} we have
	\begin{align*}
		\mathcal{F}(\cdot, \lambda) =& ~s(\lambda)f_1(\cdot, \lambda) + \tilde{s}(\lambda) f_3(\cdot, \lambda)\\
		=& ~g_2(\cdot, \lambda) + r(\lambda) g_1(\cdot, \lambda) + \tilde{r}(\lambda) g_3(\cdot, \lambda),\\
		\mathcal{G}(\cdot, \lambda) =&~ f_2(\cdot, \lambda) + r(\lambda) f_1(\cdot, \lambda) + \tilde{r}(\lambda) f_3(\cdot, \lambda).
	\end{align*}
	The following is proved as in \cite{K-S-stable} in combination with Corollary  \ref{Corr2}.
	\begin{Lemma} \label{Das-asym-fomu}Let $ \mu > 0$. The functions $ s(\lambda), \tilde{s}(\lambda), r(\lambda), \tilde{r}(\lambda)$ are smooth on $ \R$ with $ r(0) = -1, \tilde{s}(0) =  s(0) = \tilde{r}(0) = 0$ . Further $ s(\lambda), r(\lambda), \lambda s'(\lambda), \lambda r'(\lambda) = O(1)$ as $\lambda \to \infty $ and uniformly in $ \mu \to 0^+$.
	\end{Lemma}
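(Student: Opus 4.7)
The plan is to obtain explicit formulas for $s,\tilde s, r, \tilde r$ in terms of the matrix coefficients $A(\lambda), B(\lambda)$ of Lemma~\ref{AantB} and then read off smoothness, values at $\lambda=0$, and asymptotics at $\lambda=\infty$. Starting from $D(\lambda)=A^{t}(\lambda)(2i\lambda p-2\gamma q)$, one gets $D(\lambda)^{-1}=\Lambda^{-1}(A^{-1})^{t}(\lambda)$ with $\Lambda=\mathrm{diag}(2i\lambda,-2\gamma)$, hence
\[
k(\lambda)\;=\;2i\lambda\,D(\lambda)^{-1}\;=\;\Big(p-\tfrac{i\lambda}{\gamma}q\Big)(A^{-1})^{t}(\lambda).
\]
Using $\lambda pA(\lambda),\,qA(\lambda)\in C^{\infty}$ from Lemma~\ref{AantB}, write $A=\big(\begin{smallmatrix}\tilde a_{11}/\lambda & \tilde a_{12}/\lambda\\ a_{21}& a_{22}\end{smallmatrix}\big)$ with $\tilde a_{1j}, a_{2j}\in C^{\infty}(\mathbb{R})$. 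Then $\det A=\Delta/\lambda$ with $\Delta=\tilde a_{11}a_{22}-\tilde a_{12}a_{21}\in C^{\infty}$, and by $\det D=-4i\gamma\cdot(\lambda\det A)=-4i\gamma\Delta$, the non-resonance hypothesis yields $\Delta(0)\neq 0$. Consequently $A^{-1}=\Delta^{-1}\big(\begin{smallmatrix}\lambda a_{22} & -\tilde a_{12}\\ -\lambda a_{21}& \tilde a_{11}\end{smallmatrix}\big)$ is smooth on all of $\mathbb{R}$ and a direct multiplication gives the entrywise formulas
\[
s(\lambda)=\frac{\lambda a_{22}(\lambda)}{\Delta(\lambda)},\qquad \tilde s(\lambda)=\frac{i\lambda\,\tilde a_{12}(\lambda)}{\gamma(\lambda)\Delta(\lambda)}.
\]
Both are manifestly smooth and vanish at $\lambda=0$ by the prefactor $\lambda$.

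Next, the smoothness and values of $r,\tilde r$ are extracted by the same device applied to $B(\lambda)$. By Lemma~\ref{AantB}, $B$ admits the analogous decomposition with smooth entries $\tilde b_{1j}, b_{2j}$. Inserting $B$ into $Bk\,\underline{e}$ and using the explicit form of $k$ above, the factor of $\lambda$ present in both columns of $k$ exactly cancels the $\lambda^{-1}$ singularity in the top row of $B$, so that
\[
r(\lambda)=\frac{1}{\Delta(\lambda)}\Big(\tilde b_{11}(\lambda)\,a_{22}(\lambda)+\tfrac{i}{\gamma(\lambda)}\tilde b_{12}(\lambda)\tilde a_{12}(\lambda)\Big),\qquad \tilde r(\lambda)=\frac{i\lambda\,(\cdots)}{\gamma(\lambda)\Delta(\lambda)}
\]
and both are smooth with $\tilde r(0)=0$. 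To pin down $r(0)=-1$ without algebraic gymnastics, I would invoke the identity
\[
\mathcal{F}(\cdot,\lambda)=s(\lambda)f_{1}+\tilde s(\lambda)f_{3}=g_{2}+r(\lambda)g_{1}+\tilde r(\lambda)g_{3}.
\]
Evaluating at $\lambda=0$, the left-hand side vanishes since $s(0)=\tilde s(0)=0$, and using $g_{2}(\cdot,0)=g_{1}(\cdot,0)$ (which follows from $f_{1}(\cdot,0)=f_{2}(\cdot,0)$ in Lemma~\ref{Prp}) one obtains $(1+r(0))g_{1}(\cdot,0)+\tilde r(0)g_{3}(\cdot,0)=0$. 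The functions $g_{1}(\cdot,0)$ and $g_{3}(\cdot,0)$ are linearly independent because they have qualitatively distinct behavior as $x\to -\infty$—$g_{1}$ tends to $\binom{1}{0}$ while $g_{3}(x,0)\sim e^{\sqrt{2\mu}x}\binom{0}{1}$ decays exponentially. This forces $r(0)=-1$ and $\tilde r(0)=0$.

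For the large-$\lambda$ asymptotics, the inputs are $A(\lambda)=I+O(\lambda^{-1})$ and $B(\lambda)=O(\lambda^{-1})$ from \eqref{ye}, together with $\lambda/\gamma(\lambda)=\mathrm{sgn}(\lambda)+O(\lambda^{-2})$ (valid uniformly in $\mu\to 0^{+}$ since $\gamma^{2}=\lambda^{2}+2\mu$). Plugging into the formulas above gives $s(\lambda)=1+O(\lambda^{-1})$, $\tilde s(\lambda)=O(1)$, $r(\lambda)=O(\lambda^{-1})$, $\tilde r(\lambda)=O(\lambda^{-1})$, all uniform in $\mu$. The derivative estimates follow the same path: one differentiates the explicit formulas and uses that the $O(\lambda^{-1})$ asymptotics for $A, B$ upgrades to $A'(\lambda)=O(\lambda^{-2})$, $B'(\lambda)=O(\lambda^{-2})$, obtained as in the proof of Lemma~\ref{AantB} by calculating Wronskians at $x=0$ with the help of Corollary~\ref{Corr2}; in particular each differentiation absorbs an extra $\lambda^{-1}$, giving $\lambda s'(\lambda), \lambda r'(\lambda)=O(1)$.

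The main technical obstacle is not any single step but the book-keeping needed to see the cancellation of the $\lambda^{-1}$ singularities of the top rows of $A,B$ against the factors of $\lambda$ in the matrix $p-\tfrac{i\lambda}{\gamma}q$. This is what makes $k(\lambda)$ and $B(\lambda)k(\lambda)$ smooth at zero despite the individual ingredients being singular. The rest is routine once the explicit formulas are in hand and one has Lemma~\ref{AantB}, Lemma~\ref{transmi}, Corollary~\ref{Corr2}, and the basic asymptotic analysis of the Jost solutions from Lemmata~\ref{LemmaJost-decay}--\ref{LemmaJost-exponential-growth}.
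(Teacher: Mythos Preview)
Your proof is correct and follows essentially the same route the paper indicates: you extract $s,\tilde s,r,\tilde r$ from the matrix structure $k(\lambda)=2i\lambda D(\lambda)^{-1}=(p-\tfrac{i\lambda}{\gamma}q)(A^{-1})^t$ together with the $C^\infty$ regularity of $\lambda pA,\,qA,\,\lambda pB,\,qB$ from Lemma~\ref{AantB}, then read off the values at $\lambda=0$ from the non-resonance hypothesis ($\Delta(0)\neq 0$) and the identity $\mathcal F=g_2+rg_1+\tilde r g_3$, and finally obtain the large-$\lambda$ behaviour and the derivative bounds from $A=I+O(\lambda^{-1})$, $B=O(\lambda^{-1})$ and Corollary~\ref{Corr2}. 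This is precisely the argument the paper defers to \cite{K-S-stable}, and your explicit entrywise formulas make the cancellation of the $\lambda^{-1}$ singularity in the top rows of $A,B$ against the factor $\lambda$ in $k$ transparent.
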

	\begin{Lemma} \label{Das-asym} Let $ \mu = 0$. The functions $ s(\lambda), \tilde{s}(\lambda), r(\lambda), \tilde{r}(\lambda)$ are smooth on $ \R\backslash\{0\}$, continuously  extended to $ s(0) = -1, \tilde{s}(0) =  r(0) = \tilde{r}(0) = 0$  and in the space
		$$ X_0 = \{ f \in C^{0}(\R)\cap C^{1}(\R \backslash \{0\})~|~ \partial_{\lambda}^kf(\lambda)  = O(|\lambda|^{1-k}|\log(|\lambda|)|) \text{ as} ~ \lambda \to 0^{\pm},~ k = 1,2\}.$$
		Further $ s(\lambda), r(\lambda), \lambda s'(\lambda), \lambda r'(\lambda) = O(1)$ as $\lambda \to \infty$ and there holds
		\begin{align}\label{ide1}
			2 i \lambda r(\lambda) = w(g_2, f_3)\tilde{s}(\lambda)  + w(g_2, f_1)s(\lambda),\\ \label{ide2}
			2  \lambda \tilde{r}(\lambda) = w(g_4, f_3)\tilde{s}(\lambda)  + w(g_4, f_1)s(\lambda).
		\end{align}
	\end{Lemma}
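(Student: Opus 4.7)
The plan is to derive the claimed regularity of $s, \tilde{s}, r, \tilde{r}$ from three inputs: the structural information on $k(\lambda) = 2i\lambda D(\lambda)^{-1}$ furnished by Lemma \ref{LemmafD}, the smoothness of $\lambda p B(\lambda), q B(\lambda)$ from Lemma \ref{AantB}, and the two Wronskian identities \eqref{ide1}-\eqref{ide2} themselves, which are derived by pairing the two representations of $\mathcal{F}(\cdot, \lambda)$ against $g_2$ and $g_4$.

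First I would dispatch the claims for $s, \tilde{s}$. Since $s(\lambda), \tilde{s}(\lambda)$ are the two components of $k(\lambda)\underline{e}$ with $\underline{e}=(1,0)^t$, smoothness on $\R\setminus\{0\}$ is inherited from that of $k(\lambda)$. The limit $k(0) = iq - p$ from Lemma \ref{LemmafD} immediately gives $k(0)\underline{e} = -\underline{e}$, so $s(0) = -1$ and $\tilde{s}(0) = 0$. The logarithmic derivative bounds $|\partial_\lambda k(\lambda)| \lesssim |\log\lambda|$ and $|\partial_\lambda^2 k(\lambda)| \lesssim |\lambda|^{-1}|\log\lambda|$ from the same lemma then place $s, \tilde{s}$ in $X_0$ by reading off components, and the asymptotics $s(\lambda), \lambda s'(\lambda) = O(1)$ at infinity follow from $k(\lambda), \lambda k'(\lambda) = O(1)$ recorded after Lemma \ref{LemmafD}.

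Next I would derive \eqref{ide1}-\eqref{ide2}. Equating
\begin{equation*}
s(\lambda)f_1(\cdot, \lambda) + \tilde{s}(\lambda)f_3(\cdot, \lambda) = g_2(\cdot, \lambda) + r(\lambda)g_1(\cdot, \lambda) + \tilde{r}(\lambda)g_3(\cdot, \lambda)
\end{equation*}
and applying the scalar Wronskian $w(g_2,\cdot)$ to both sides gives \eqref{ide1}, once one observes that $w(g_j, g_k) = -w(f_j, f_k)$ (a one-line computation at $x = 0$ using $g_j(0) = f_j(0)$ and $g_j'(0) = -f_j'(0)$), so that $w(g_2, g_2) = 0$, $w(g_2, g_3) = 0$, and $w(g_2, g_1) = -w(f_2, f_1) = 2i\lambda$ via Lemma \ref{Wronsk-Lemma}. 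Pairing with $g_4$ and using the normalization $w(f_1, f_4) = w(f_2, f_4) = 0$ from the remark after Lemma \ref{Wronsk-Lemma}, together with $w(g_4, g_3) = 2\gamma$, produces \eqref{ide2}. Continuity of $r, \tilde{r}$ at the origin with $r(0) = \tilde{r}(0) = 0$ then requires the numerators in \eqref{ide1}-\eqref{ide2} to vanish at $\lambda = 0$. At $\lambda = 0$ the numerator of \eqref{ide1} reduces to $-w(g_2, f_1)(0)$; this is where the standing hypothesis $\lim_{\lambda \to 0}\lambda k(\lambda)^{-1} = 0$ is essential, as it forces $f_1(\cdot, 0)$ to be an odd resonance. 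Combined with $g_2(x, 0) = f_2(-x, 0) = f_1(-x, 0)$, oddness gives $g_2(\cdot, 0) = -f_1(\cdot, 0)$ and hence $w(g_2, f_1)(0) = -w(f_1, f_1) = 0$; the analogous argument disposes of $w(g_4, f_1)(0)$.

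The remaining derivative bounds placing $r, \tilde{r}$ in $X_0$ reduce to a Taylor expansion of the numerators in \eqref{ide1}-\eqref{ide2} around the origin. Using Corollary \ref{Corr} together with Lemma \ref{LemmaJost-decay} and Lemma \ref{LemmaJost-oscillating} specialized to $m = 4$, one checks that $w(g_2, f_j)(\lambda)$ and $w(g_4, f_j)(\lambda)$ are $C^1$ with a second derivative of size $O(|\log\lambda|)$ near zero. Combining with the already established bounds on $s, \tilde{s}$ and dividing by $\lambda$ (resp. $2\gamma$) yields the claimed rates, while the large-$\lambda$ bounds on $r, \lambda r'$ follow from the same identities using $s, \lambda s' = O(1)$ and polynomial control of the Wronskians. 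The main obstacle is precisely the last quotient step: two differentiations of a quotient whose denominator has a simple zero at the origin produce three competing terms, and verifying that the leading contributions cancel rather than combining into a worse $|\lambda|^{-1}\log^2|\lambda|$ bound requires simultaneously exploiting (i) the leading-order cancellation at $\lambda = 0$ established above, and (ii) the precise logarithmic strength of $\partial_\lambda^2 s$ provided by Lemma \ref{LemmafD}. An additional technical subtlety for $\tilde{r}$ is that $\gamma = |\lambda|$ at $\mu = 0$ is not smooth through zero, so the identity must be analysed separately on $\lambda > 0$ and $\lambda < 0$, with matching at the origin by the conjugation symmetry $\tilde{r}(-\lambda) = \overline{\tilde{r}(\lambda)}$ inherited from Lemma \ref{Prp}.
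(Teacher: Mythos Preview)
Your approach is essentially the same as the paper's: both extract the properties of $s,\tilde s$ directly from Lemma~\ref{LemmafD}, then derive the Wronskian identities \eqref{ide1}--\eqref{ide2} and read off the behaviour of $r,\tilde r$ from them. The paper obtains \eqref{ide1}--\eqref{ide2} by taking the four $p/q$ components of the matrix identity $\mathcal{W}(G_1,F_1) = (2i\lambda p - 2\gamma q)B(\lambda)$ from Lemma~\ref{AantB}, whereas you pair the scalar representation of $\mathcal F$ against $g_2,g_4$; the two computations are equivalent (your scalar Wronskians are precisely the entries of that matrix Wronskian).

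There is one point where your argument stops short. Vanishing of the numerator in \eqref{ide1} at $\lambda=0$ only gives that $r$ is \emph{bounded} near the origin, with $r(0) = -\tfrac{1}{2i}\,[\partial_\lambda w(g_2,f_1)](0)$; to conclude $r(0)=0$ you need this derivative to vanish as well. The clean way to see this is parity: from $g_2(x,\lambda)=f_1(-x,-\lambda)$ one computes at $x=0$ that
\[
w(g_2,f_1)(\lambda) \;=\; -\bigl[\,f_1'(0,-\lambda)\cdot f_1(0,\lambda) + f_1(0,-\lambda)\cdot f_1'(0,\lambda)\,\bigr],
\]
which is manifestly even in $\lambda$, so $w(g_2,f_1)(0)=0$ forces $[\partial_\lambda w(g_2,f_1)](0)=0$ automatically. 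An analogous parity argument handles $w(g_2,f_3)$ and the $\tilde r$ identity. With this in hand your Taylor-expansion step for the $X_0$ bounds goes through as written. (A minor sign: $w(g_4,g_3) = -w(f_4,f_3) = w(f_3,f_4) = -2\gamma$, not $+2\gamma$; this does not affect the conclusions.)
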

	\begin{proof}  The identities \eqref{ide1} and \eqref{ide2} follow from Lemma \ref{AantB}, i.e.
		\begin{align*}
			&2 \lambda  q B(\lambda)p =  q \mathcal{W}(G_1(\cdot, \lambda), F_1(\cdot, \lambda))p,~~2 \lambda q B(\lambda)q =  q \mathcal{W}(G_1(\cdot, \lambda), F_1(\cdot, \lambda))q,\\
			&2 \lambda i p B(\lambda)p =  p \mathcal{W}(G_1(\cdot, \lambda), F_1(\cdot, \lambda))p,~~2 \lambda i p B(\lambda)q =  p \mathcal{W}(G_1(\cdot, \lambda), F_1(\cdot, \lambda))q.
		\end{align*}
		The smoothness and  continuous extendability follow by Lemma \ref{LemmafD} and Lemma \ref{AantB}.  The limits  $ \lim_{\lambda \to  0}r(\lambda), \lim_{\lambda \to 0}\tilde{r}(\lambda)$ are calculated  by \eqref{ide1}, \eqref{ide2}. Likewise the asymptotics for the derivative is implied by Lemma \ref{LemmafD} and \eqref{ide1}, \eqref{ide2}.
	\end{proof}
	\begin{Lemma}\label{Das-Lem}We set $ \underline{e} = \begin{psmallmatrix}
			1 \\[1pt] 0
		\end{psmallmatrix}$ and further
		\begin{align*}
			\mathcal{F}(x, \lambda) : = F_1(x, \lambda) k(\lambda)\underline{e},\\
			\mathcal{G}(x, \lambda) : = G_1(x, \lambda)  k(\lambda)\underline{e}.
		\end{align*}
		Then   	$\mathcal{F}(\cdot, \lambda), \mathcal{G}(\cdot, \lambda)  $ are solutions of $ (\mathcal{H}- (\lambda^2 + \mu)) f = 0$ and with 
		\[
		s(\lambda)\underline{e}:= p k(\lambda)\underline{e},~~~r(\lambda)\underline{e} := p B(\lambda) k(\lambda)\underline{e},
		\]
		there holds
		\begin{align}
			&\mathcal{F}(x, \lambda) = s(\lambda)\big[e^{i x \lambda} \underline{e} + O(\langle \lambda\rangle^{-1}\langle x\rangle^{-2}) + O(|\lambda|\langle \lambda \rangle^{-1} e^{- \gamma x}) \big] +  O(  e^{- \gamma x})
			,~~ x \to \infty,\\[2pt]
			&\mathcal{F}(x, \lambda) = \big[e^{i x \lambda} + r(\lambda) e^{- i \lambda x}\big]\underline{e}  + O(\langle \lambda \rangle^{-1}\langle x\rangle^{-2}) + O(\langle \lambda \rangle^{-1} e^{ \gamma x}),~~ x \to - \infty,\\[2pt]
			&\mathcal{G}(x, \lambda) = s(\lambda)\big[e^{- i x \lambda} \underline{e} + O(\langle \lambda\rangle^{-1}\langle x\rangle^{-2}) + O(|\lambda|\langle \lambda \rangle^{-1} e^{ \gamma x})  \big] + O(  e^{ \gamma x})
			),~~ x \to - \infty,\\[2pt]
			&\mathcal{G}(x, \lambda) = \big[e^{-i x \lambda} + r(\lambda) e^{i \lambda x}\big]\underline{e} + + O(\langle \lambda \rangle^{-1}\langle x\rangle^{-2}) + O(\langle \lambda \rangle^{-1} e^{ -\gamma x}),~~ x \to  \infty.
		\end{align}
		Further there holds
		\begin{align}
			&|r(\lambda)|^2 + |s(\lambda)|^2 = 1,~~ s(\lambda)\overline{r(\lambda)} + r(\lambda) \overline{s(\lambda)} = 0,\\
			&s(- \lambda) = \overline{s(\lambda)},~r(- \lambda) = \overline{r(\lambda)}
		\end{align}
	\end{Lemma}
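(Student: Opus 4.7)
\emph{Proof plan for Lemma \ref{Das-Lem}.}

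The fact that $\mathcal{F}(\cdot,\lambda)$ and $\mathcal{G}(\cdot,\lambda)$ solve $(\mathcal{H}-(\lambda^2+\mu))f=0$ is immediate from the definition, since the columns of $F_1$ and $G_1$ are solutions and $k(\lambda)\underline{e}$ is constant in $x$. The key preliminary step is to identify the action of $A(\lambda)$ on $k(\lambda)\underline{e}$. From $D(\lambda) = A^t(\lambda)(2i\lambda p - 2\gamma q)$ and $D(\lambda) = D(\lambda)^t$, one checks by direct computation that $A(\lambda) = \mathrm{diag}(\tfrac{1}{2i\lambda},-\tfrac{1}{2\gamma})D(\lambda)$, whence $A(\lambda)k(\lambda)\underline{e} = 2i\lambda \,\mathrm{diag}(\tfrac{1}{2i\lambda},-\tfrac{1}{2\gamma})\underline{e} = \underline{e}$. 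Combining this with the representation $F_1 = G_1 A + G_2 B$ from Lemma \ref{AantB} gives the crucial connection formula
\[
\mathcal{F}(\cdot,\lambda) = G_1(\cdot,\lambda)\underline{e} + G_2(\cdot,\lambda) B(\lambda)k(\lambda)\underline{e} = g_2(\cdot,\lambda) + r(\lambda)g_1(\cdot,\lambda) + \tilde r(\lambda)g_3(\cdot,\lambda),
\]
where $B(\lambda)k(\lambda)\underline{e} = r(\lambda)\underline{e} + \tilde r(\lambda)\binom{0}{1}$, and by definition $k(\lambda)\underline{e} = s(\lambda)\underline{e} + \tilde s(\lambda)\binom{0}{1}$ so that $\mathcal{F} = s(\lambda)f_1 + \tilde s(\lambda)f_3$. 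An analogous identity $\mathcal{G} = f_2 + r f_1 + \tilde r f_3$ (via the companion relation \eqref{des} in Lemma \ref{AantB}) will follow from the same matrix identity $A k \underline{e} = \underline{e}$ applied on the reflected side.

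The asymptotic expansions are then read off by plugging the Jost asymptotics (Lemmas \ref{LemmaJost-decay}, \ref{LemmaJost-oscillating}, \ref{LemmaJost-exponential-growth}) into these two representations. For $x\to +\infty$ the decomposition $\mathcal{F} = s f_1 + \tilde s f_3$ together with $f_1(x,\lambda) = e^{i\lambda x}\underline{e} + O(\langle\lambda\rangle^{-1}\langle x\rangle^{-2}) + O(e^{-\gamma x})$ and $f_3(x,\lambda) = O(e^{-\gamma x})$ gives the first stated asymptotic; for $x\to -\infty$ the representation $\mathcal{F} = g_2 + r g_1 + \tilde r g_3$ combined with $g_j(x,\lambda) = f_j(-x,\lambda)$ produces the reflection picture $e^{i\lambda x}\underline{e} + r(\lambda)e^{-i\lambda x}\underline{e}$ up to the exponentially small and polynomially decaying corrections. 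The $\mathcal{G}$ asymptotics are obtained symmetrically. The bookkeeping for the error sizes is routine once one uses the uniform bounds in Lemmas \ref{LemmaJost-decay}--\ref{LemmaJost-exponential-growth}; the only mildly delicate point is keeping the $\lambda$-dependence uniform down to $\lambda = 0$ in the $\mu = 0$ case, but the threshold behavior of $k(\lambda)$, $s$, $r$ established in Lemmas \ref{LemmafD} and \ref{Das-asym} handles this.

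To obtain the unitarity identities $|r|^2+|s|^2=1$ and $s\bar r + r\bar s = 0$, I would test the algebraic relations of Lemma \ref{transmi} against the vector $k(\lambda)\underline{e}$. For the first: pairing both sides of $A^*(2i\lambda pA + 2\gamma qB) - 2i\lambda p = B^*(2i\lambda pB + 2\gamma qA)$ with $k(\lambda)\underline{e}$ in the standard Hermitian inner product, using $Ak\underline{e}=\underline{e}$ (so $pAk\underline{e}=\underline{e}$ and $qAk\underline{e}=0$) together with $pk\underline{e}=s\underline{e}$ and $pBk\underline{e}=r\underline{e}$, the cross terms collapse and one obtains $2i\lambda - 2i\lambda|s|^2 = 2i\lambda|r|^2$. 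For the orthogonality relation, pair the third identity $2i\lambda(B^*p + pB) = 2\gamma(A^*q - qA)$ with $k(\lambda)\underline{e}$: the right-hand side vanishes because $qAk\underline{e}=0$ and $\langle Ak\underline{e},\binom{0}{1}\rangle=\langle\underline{e},\binom{0}{1}\rangle=0$, while the left-hand side evaluates to $2i\lambda(\bar r s + r\bar s)$.

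Finally, the symmetries $s(-\lambda) = \overline{s(\lambda)}$ and $r(-\lambda) = \overline{r(\lambda)}$ are a direct consequence of $\overline{A(\lambda)} = A(-\lambda)$, $\overline{B(\lambda)} = B(-\lambda)$ (Lemma \ref{AantB}) and $\overline{D(\lambda)} = D(-\lambda)$, which together imply $\overline{k(\lambda)} = k(-\lambda)$; applying this to the defining relations $pk\underline{e} = s\underline{e}$ and $pBk\underline{e} = r\underline{e}$ yields the claim. I expect the main obstacle to be purely bookkeeping: tracking the $\lambda$- and $x$-dependence of the error terms uniformly, especially in the regime $|\lambda|\langle x\rangle \lesssim 1$ where the mixed bounds $\langle\lambda\rangle^{-1}\langle x\rangle^{-2}$ and the exponential corrections $\langle\lambda\rangle^{-1}e^{\pm\gamma x}$ must be disentangled from the leading oscillatory part $s(\lambda)e^{\pm i\lambda x}\underline{e}$.
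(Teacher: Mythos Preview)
Your proposal is correct and follows exactly the approach the paper invokes: the paper simply refers to \cite[Lemma 6.4]{K-S-stable} combined with Lemmas \ref{LemmaJost-decay}, \ref{LemmaJost-oscillating}, \ref{LemmaJost-exponential-growth}, and your argument is precisely that proof spelled out---the key identity $A(\lambda)k(\lambda)\underline{e}=\underline{e}$ yielding the connection formula, the Jost asymptotics read off from the two representations of $\mathcal{F}$, and the unitarity relations extracted from Lemma \ref{transmi} by pairing against $k(\lambda)\underline{e}$. Your derivation of $|r|^2+|s|^2=1$ and $s\bar r + r\bar s=0$ via the first and third identities of Lemma \ref{transmi} is clean and correct.
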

	\begin{Rem}In particular the Scattering matrix  
		\begin{align} \label{Scat}
			S(\lambda ) := \begin{pmatrix}
				s(\lambda) &r(\lambda)\\
				r(\lambda) & s(\lambda)
			\end{pmatrix},~~~ \lambda \in \R,
		\end{align}
		is unitary, i.e.
		$
		S(\lambda)^* = S(\lambda)^{-1} = S(\lambda)
		$.
	\end{Rem}
	\begin{Rem} The $O$-asymptotic in the above expansions are differentiable according to Lemma \ref{LemmaJost-decay}, Lemma \ref{LemmaJost-oscillating} and Lemma \ref{LemmaJost-exponential-growth}. Further: $\bold{(1)}$~ the terms $O(e^{\pm \gamma x}), ~O(\langle \lambda \rangle^{-1} e^{\pm \gamma x})$ are multiplied by $ O(|\lambda|\langle \lambda \rangle^{-1}) $ if $ \mu > 0$,  which holds true since then  $k(\lambda) = 2 i \lambda D(\lambda)^{-1} = O(\lambda)$ as $ \lambda \to 0$. $\bold{(2)}$ ~the terms $O(e^{\pm \gamma x}), ~O(\langle \lambda \rangle^{-1} e^{\pm \gamma x})$ are multiplied by $ O(|\lambda|\langle \lambda \rangle^{-1} |\log(2 |\lambda| \langle \lambda \rangle^{-1})|) $ if $ \mu =0$, which holds true by the asymptotic description of Lemma \ref{Das-asym} essentially implied by Lemma \ref{LemmafD} and the asymptotics of the Jost solutions and their Wronskians.
	\end{Rem}
	For the proof of Lemma \ref{Das-Lem} we refer to the proof of  \cite[Lemma 6.4]{K-S-stable} combined with Lemma \ref{LemmaJost-decay}, Lemma \ref{LemmaJost-exponential-growth}, Lemma \ref{LemmaJost-oscillating}. 
	\begin{Corollary}   All solutions $ f \in L^{\infty}(\R)$ of $ (\mathcal{H} - (\lambda^2 + \mu))f = 0$ are linear combinations of $\mathcal{F}(\cdot, \lambda), \mathcal{G}(\cdot, \lambda)$.
	\end{Corollary}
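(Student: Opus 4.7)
The plan is to show that the solution space of $(\mathcal{H} - (\lambda^2 + \mu))f = 0$ is four-dimensional over $\C$ and that the bounded solutions form an exactly two-dimensional subspace spanned by $\mathcal{F}(\cdot,\lambda)$ and $\mathcal{G}(\cdot,\lambda)$.

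First I would observe that for $\lambda \neq 0$ the Wronskian identities in Lemma \ref{Wronsk-Lemma} imply $\{f_1, f_2, f_3, f_4\}$ is a fundamental system, so every solution admits a unique decomposition $f = c_1 f_1 + c_2 f_2 + c_3 f_3 + c_4 f_4$. By Lemma \ref{LemmaJost-exponential-growth}, $f_4 \sim e^{\gamma x}$ as $x \to +\infty$, while $f_1, f_2$ oscillate and $f_3$ decays exponentially there. A bounded $f$ is then forced to have $c_4 = 0$, so the subspace $V$ of solutions bounded at $+\infty$ equals $\operatorname{span}\{f_1, f_2, f_3\}$ and has dimension three.

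Next, to impose boundedness at $-\infty$, I would use Lemma \ref{AantB} to expand
\[
f_1 = A_{11}\, g_2 + A_{21}\, g_4 + B_{11}\, g_1 + B_{21}\, g_3, \qquad f_3 = A_{12}\, g_2 + A_{22}\, g_4 + B_{12}\, g_1 + B_{22}\, g_3,
\]
together with the conjugate formula for $f_2 = \overline{f_1}$ furnished by Lemma \ref{Prp}. Since $g_4(x) \sim e^{-\gamma x}$ is the only mode that grows as $x \to -\infty$, boundedness of $c_1 f_1 + c_2 f_2 + c_3 f_3$ at $-\infty$ reduces to the single complex condition that the $g_4$-coefficient, a linear combination of $A_{21}, \overline{A_{21}}, A_{22}$, vanishes. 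Under the standing assumption of no embedded eigenvalues, Lemma \ref{A-LEm} yields $\det A(\lambda) \neq 0$, hence the second row $(A_{21}, A_{22})$ of $A(\lambda)$ is nonzero and the constraint is nontrivial. This cuts $V$ down to a two-dimensional subspace $V \cap W$ of solutions bounded on all of $\R$.

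Finally, I would verify that $\mathcal{F}, \mathcal{G}$ lie in $V \cap W$ and are linearly independent. Boundedness is immediate from the asymptotic expansions in Lemma \ref{Das-Lem}. For independence, observe that as $x \to +\infty$ the function $\mathcal{F}$ carries only the outgoing wave $s(\lambda) e^{i\lambda x}\underline{e}$, whereas $\mathcal{G}$ contains the incoming component $e^{-i\lambda x}\underline{e}$; matching coefficients in $\alpha \mathcal{F} + \beta \mathcal{G} = 0$ forces first $\beta = 0$ and then $\alpha = 0$, using $s(\lambda) \neq 0$, which follows from the unitarity $|s|^2 + |r|^2 = 1$ together with the identities of Lemma \ref{transmi}. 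Hence $\{\mathcal{F}, \mathcal{G}\}$ spans the two-dimensional space of bounded solutions. The main obstacle is the nondegeneracy $(A_{21}, A_{22}) \neq (0,0)$ near the threshold $\lambda = 0$ in the $\mu = 0$ case, where $D(\lambda)$ vanishes and $k(\lambda) = 2i\lambda D(\lambda)^{-1}$ has only a continuous extension with logarithmic derivative (Lemma \ref{LemmafD}); here one must combine the nondegeneracy criterion of Lemma \ref{A-LEm}, the symmetry relations of Lemma \ref{transmi}, and the asymptotics \eqref{ye} to treat the threshold carefully.
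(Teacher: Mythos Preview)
Your approach is correct and parallels the paper's, though organized differently. The paper writes a bounded solution as $f = F_1 v + G_2 w$ using the fundamental system $\{f_1, f_3, g_1, g_3\}$ (columns of $F_1$ and $G_2$), then uses the two relations $F_1 v = G_1 Av + G_2 Bv$ and $G_2 w = F_2 Aw + F_1 Bw$ from Lemma~\ref{AantB}. Boundedness at $-\infty$ kills the $g_4$-component of $G_1 Av$, forcing $Av \propto \underline{e}$; boundedness at $+\infty$ forces $Aw \propto \underline{e}$. Since $A^{-1}\underline{e} = D(\lambda)^{-1}(2i\lambda p - 2\gamma q)\underline{e} = k(\lambda)\underline{e}$, this identifies $F_1 v$ and $G_2 w$ directly as multiples of $\mathcal{F}$ and (a reflection of) $\mathcal{G}$, with no separate verification of independence needed. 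Your route via $\{f_1,f_2,f_3,f_4\}$ reaches the same conclusion by explicit dimension counting followed by checking that $\mathcal{F},\mathcal{G}$ span; the paper's choice of basis is slicker because it is tailored to the definition $\mathcal{F} = F_1 k\underline{e}$.

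One small gap: your argument for independence relies on $s(\lambda) \neq 0$, but neither unitarity $|s|^2+|r|^2=1$ nor Lemma~\ref{transmi} rules out $s(\lambda_0)=0$ at some $\lambda_0$. What you actually need is $\mathcal{F} \neq 0$, i.e.\ $(s,\tilde{s}) = k(\lambda)\underline{e} \neq 0$; this follows immediately from $\det A(\lambda) \neq 0$ since $k(\lambda)\underline{e} = A(\lambda)^{-1}\underline{e}$. After $\beta = 0$ (from the $f_2$-coefficient), the relation $\alpha\mathcal{F}=0$ then gives $\alpha=0$ regardless of whether $s$ itself vanishes. Your final remark about threshold difficulties at $\lambda=0$ is unnecessary here: the Corollary is applied only for $\lambda \neq 0$ (Lemma~\ref{AantB} itself requires this), and the standing assumption of no embedded eigenvalues already furnishes $\det A(\lambda)\neq 0$ throughout.
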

	\begin{proof}Clearly by the proof of Lemma \ref{as2} and the remark before this Lemma, all solutions $ f \in L^{\infty}(\R)$ must be linear combinations of  the columns of $ F_1, G_2$.  Further by Lemma \ref{AantB} 
		\begin{align}
			&F_1(x, \lambda ) v  = G_1(x, \lambda) A(\lambda)v + G_2(x, \lambda)B(\lambda )v,\\
			&G_2(x, \lambda)w  = F_1(x, \lambda) B(\lambda)w + F_2(x, \lambda)A(\lambda )w,
		\end{align}
		for all vectors $v,w \in \C^2$. Considering the asymptotic expression as $ x \to - \infty$  in the first line and $ x \to  \infty $ in the second line,  we see that $ \{\underline{e}, A(\lambda)v\}$,~ $ \{\underline{e}, A(\lambda)w\}$ must be proportional respectively (with constants depending on $ \lambda $).
	\end{proof}
	~~\\
	\emph{Resolvents}. We now turn to the (absolute) spectral density  of $\mathcal{H} = \mathcal{H}_0 + V$. We denote by
	\[
	(\mathcal{H} - z I )^{-1},~~ z \in \C \backslash \sigma(\mathcal{H}),
	\]
	the resolvent map of $\mathcal{H}$.  The resolvent identity (see below) and  the kernel $ (\mathcal{H}_0 - z I )^{-1}(x,y)$ then imply
	\begin{align}
		\lim_{\varepsilon \to 0^+}(\mathcal{H} - (\lambda \pm i \varepsilon))^{-1} &= \lim_{\varepsilon  \to 0^+}   (I  + (\mathcal{H}_0 - (\lambda \pm i 0))^{-1}V)^{-1}(\mathcal{H}_0 - (\lambda \pm i 0))^{-1}\\ \nonumber
		& = (\mathcal{H} - (\lambda \pm i 0))^{-1},~ ~\lambda > \mu
	\end{align}
	exists  at least distributionally.  Recall that if $ V = 0$ the kernel of the free resolvent
	\begin{align} \label{kern}
		(\mathcal{H}_0 - (\lambda^2 + \mu \pm i 0))^{-1}(x,y) = \begin{pmatrix}
			\mp	\frac{e^{\pm i \lambda|x-y|}}{2 i \lambda} & 0\\
			0 & \frac{e^{- \gamma |x-y|}}{-2 \gamma }
		\end{pmatrix},~~ \lambda > 0,
	\end{align}
	
	\begin{Lemma}\emph{(\cite[Lemma 6.5]{K-S-stable})}\label{densit}
		For $ \lambda > 0$ there holds
		\begin{align}
			&(\mathcal{H} - (\lambda^2 + \mu + i 0))^{-1}(x,y) = \begin{cases}
				- \frac{1}{  2i\lambda} F_1(x,  \lambda) k(\lambda) G_2(y,  \lambda)^t \sigma_3 & x \geq y\\[4pt]
				- \frac{1}{ 2i\lambda} G_2(x, \lambda) k(\lambda) F_1(y,  \lambda)^t \sigma_3 & x \leq y.
			\end{cases}\\[4pt]
			&(\mathcal{H} - (\lambda^2 + \mu - i 0))^{-1}(x,y) = \begin{cases}
				\frac{1}{  2i\lambda} F_1(x, - \lambda) k(- \lambda) G_2(y, - \lambda)^t \sigma_3 & x \geq y\\[4pt]
				\frac{1}{ 2i\lambda} G_2(x, - \lambda) k(- \lambda) F_1(y, - \lambda)^t \sigma_3 & x \leq y.
			\end{cases}
		\end{align}
	\end{Lemma}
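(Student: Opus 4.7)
The plan is to verify that the proposed kernel is the unique outgoing Green's function for $\mathcal{H}-(\lambda^2+\mu)$, mirroring the argument in \cite[Lemma 6.5]{K-S-stable}. Let $G(x,y;\lambda)$ denote the right-hand side of the first formula. Away from the diagonal $\{x=y\}$, each column of $F_1(\cdot,\lambda)$ and $G_2(\cdot,\lambda)$ is a solution of $(\mathcal{H}-(\lambda^2+\mu))f=0$ by Lemmas \ref{LemmaJost-decay}--\ref{LemmaJost-exponential-growth}, so $(\mathcal{H}_x-(\lambda^2+\mu))G(\cdot,y;\lambda)=0$ for $x\neq y$.

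Next I will verify the matching conditions at $x=y$. Continuity reduces to the identity
\[
F_1(y)\,k(\lambda)\,G_2(y)^t \;=\; G_2(y)\,k(\lambda)\,F_1(y)^t,
\]
which follows from $k(\lambda)^t=k(\lambda)$ combined with the expansion $G_2=F_2A+F_1B$ of \eqref{des} and the symmetry $D(\lambda)^t=D(\lambda)$ from \eqref{D2}. For the jump, since the principal part of $\mathcal{H}$ is $-\sigma_3\partial_x^2$, the condition $(\mathcal{H}-(\lambda^2+\mu))G=I\,\delta(x-y)$ is equivalent to the derivative jump
\[
[\partial_x G]_{x=y^+}-[\partial_x G]_{x=y^-}=-\sigma_3,
\]
which in turn amounts to
\[
F_1'(y)\,k(\lambda)\,G_2(y)^t-G_2'(y)\,k(\lambda)\,F_1(y)^t \;=\; 2i\lambda\, I.
\]
I would establish this algebraic identity by substituting $G_2=F_2A+F_1B$ and $G_2'=F_2'A+F_1'B$, collapsing the diagonal $F_1'kB^tF_1^t-F_1'BkF_1^t$ type combinations via the Wronskian identities of Lemma \ref{Wronsk-Lemma} and the $A,B$-relations of Lemma \ref{transmi}, and invoking the definition $k(\lambda)D(\lambda)=2i\lambda\,I$ together with $D(\lambda)=A^t(2i\lambda p-2\gamma q)$ from \eqref{D2} to cancel the cross terms.

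Third, I check that the kernel satisfies the outgoing radiation condition that selects the $+i0$ boundary value of the resolvent. Using the asymptotics of $f_1\sim e^{i\lambda x}$, $f_3\sim e^{-\gamma x}$ as $x\to+\infty$ from Lemmas \ref{LemmaJost-decay}--\ref{LemmaJost-oscillating}, and the reflection $G_2(x,\lambda)=F_1(-x,\lambda)$ giving $g_2\sim e^{-i\lambda x}$, $g_4\sim e^{\gamma x}$ as $x\to-\infty$, one sees that $G(x,y;\lambda)$ reduces on its diagonal $\sigma_3$-blocks to the outgoing free kernel \eqref{kern}; the precise form is read off from the asymptotic expansion of $\mathcal{F}$, $\mathcal{G}$ in Lemma \ref{Das-Lem}. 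The uniqueness of the outgoing Green's function under the standing absence-of-embedded-eigenvalues assumption (so that $(I+R_0(\lambda^2+\mu+i0)V)^{-1}$ exists by Fredholm theory on weighted $L^2$-spaces) identifies $G$ with $(\mathcal{H}-(\lambda^2+\mu+i0))^{-1}$. The second formula follows from the first by taking complex conjugates and applying Lemma \ref{Prp}, namely $\overline{F_1(\cdot,\lambda)}=F_1(\cdot,-\lambda)$, $\overline{G_2(\cdot,\lambda)}=G_2(\cdot,-\lambda)$ and $\overline{k(\lambda)}=k(-\lambda)$.

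The main obstacle is the derivative-jump identity in the second step: although each side is a $2\times 2$ matrix assembled from oscillating and exponential components, it does not collapse to a single Wronskian, and one has to use simultaneously the three relations of Lemma \ref{transmi} together with the split \eqref{des} to eliminate the $AkA^t$, $BkB^t$, and mixed blocks and extract the diagonal $2i\lambda\,I$. Once this algebraic computation is carried out, the remaining steps (continuity at the diagonal, outgoing asymptotics, uniqueness, and the conjugation argument for the $-i0$ formula) are routine verifications.
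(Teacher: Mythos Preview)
Your plan is correct and follows the standard route that \cite[Lemma~6.5]{K-S-stable} takes; the paper itself does not give a separate proof but simply cites that reference. One simplification: you flag the derivative-jump identity as the ``main obstacle'' and propose to unwind it through the $A,B$-decomposition \eqref{des} and the three relations of Lemma~\ref{transmi}, but in fact both the continuity and the jump follow in one stroke from the vanishing Wronskians $\mathcal{W}(F_1,F_1)=\mathcal{W}(G_2,G_2)=0$ and the definition $D=\mathcal{W}(F_1,G_2)$. Concretely, with $\Phi=\begin{pmatrix}F_1&G_2\\ F_1'&G_2'\end{pmatrix}$ one has $\Phi^t J\Phi=\begin{pmatrix}0&-D\\ D&0\end{pmatrix}$ for $J=\begin{pmatrix}0&I\\-I&0\end{pmatrix}$, so $\Phi^{-1}$ is explicit and the four block entries of $\Phi\Phi^{-1}=I_4$ read off as
\[
F_1 D^{-1}G_2^t-G_2 D^{-1}F_1^t=0,\qquad F_1' D^{-1}G_2^t-G_2' D^{-1}F_1^t=I,
\]
which are exactly your continuity and jump conditions after multiplying by $2i\lambda$. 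The $A,B$ relations are therefore not needed here; they enter later, in Lemma~\ref{densit2}, when one subtracts the two boundary values.
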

	
	\begin{Lemma}\emph{(\cite[Lemma 6,7]{K-S-stable})} \label{densit2}
		For $ \lambda > 0$ there holds
		\begin{align}
			(\mathcal{H} - (\lambda^2 + \mu + i 0))^{-1}(x,y)  - (\mathcal{H} - (\lambda^2 + \mu - i 0))^{-1}(x,y)  = - \frac{1}{2 i \lambda}\mathcal{E}(x, \lambda) \mathcal{E}^{*}(y, \lambda) \sigma_3
		\end{align}
		where we define
		\begin{align}
			\mathcal{E}(x, \lambda) = \big[\mathcal{F}(x, \lambda), \mathcal{G}(x, \lambda) \big],~~ \lambda \in \R.
		\end{align}
	\end{Lemma}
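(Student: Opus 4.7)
The strategy is to substitute the explicit resolvent kernels from Lemma \ref{densit} into the difference on the left, combine using the conjugation symmetries of the Jost solutions and of $k(\lambda)$ in $\lambda$, and reduce the claim to a pointwise algebraic identity in the Jost data.

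Fixing $x \geq y$ (the region $x \leq y$ is treated analogously), the symmetries $F_1(\cdot,-\lambda) = \overline{F_1(\cdot,\lambda)}$, $G_2(\cdot,-\lambda) = \overline{G_2(\cdot,\lambda)}$, and $k(-\lambda) = \overline{k(\lambda)}$ recorded above allow me to rewrite Lemma \ref{densit} as
\[
R_+(x,y) - R_-(x,y) \;=\; - \frac{1}{2 i \lambda} \big[\, M(x,y,\lambda) + \overline{M(x,y,\lambda)}\,\big]\sigma_3,
\]
where $M(x,y,\lambda) := F_1(x,\lambda) k(\lambda) G_2(y,\lambda)^t$. The claim thus reduces to the pointwise identity $2\,\text{Re}\,M(x,y,\lambda) = \mathcal{E}(x,\lambda)\mathcal{E}^*(y,\lambda)$ for $x \geq y$ and $\lambda > 0$.

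To verify this identity I unfold both sides. On the right, using $\underline{e}\,\underline{e}^{\,*} = p$ together with $k^t = k$ and $k(\lambda)^* = k(-\lambda)$, one obtains
\[
\mathcal{E}(x,\lambda)\mathcal{E}^*(y,\lambda) \;=\; F_1(x,\lambda) k(\lambda)\, p\, k(-\lambda) F_1(y,-\lambda)^t + G_1(x,\lambda) k(\lambda)\, p\, k(-\lambda) G_1(y,-\lambda)^t.
\]
On the left, I insert the decomposition $G_2 = F_2 A + F_1 B$ from Lemma \ref{AantB} and use the explicit factorization $k(\lambda)A(\lambda)^t = p - (i\lambda/\gamma)\, q$, which follows at once from $D(\lambda) = A(\lambda)^t\,(2i\lambda\, p - 2\gamma\, q)$ together with $k(\lambda) = 2i\lambda\, D(\lambda)^{-1}$. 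To bring the right-hand side into a comparable form, I eliminate $G_1$ via the dual expansion $F_1 = G_1 A + G_2 B$, also from Lemma \ref{AantB}. Collecting like terms and then applying the unitarity identities of Lemma \ref{transmi} cancels the off-diagonal contributions involving $A$ and $B$ against each other, leaving the required equality.

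The principal obstacle is the algebraic bookkeeping in this last step: the three identities of Lemma \ref{transmi} must be deployed in tandem, and the projections $p, q$ arising through the factorizations of $D$ and $k$ must be tracked with care, particularly because the factor $2i\lambda\, p - 2\gamma\, q$ mixes the resonance and the oscillating sectors. Once the identity is established for $x \geq y$, the opposite region follows from the companion formula in Lemma \ref{densit} together with the consistency relation $M(x,x,\lambda) = M(x,x,\lambda)^t$ on the diagonal---a consequence of $k^t = k$ and the continuity of $R_\pm(x,y)$ across $x=y$---which identifies the same right-hand side $-(2i\lambda)^{-1}\mathcal{E}(x,\lambda)\mathcal{E}^*(y,\lambda)\sigma_3$.
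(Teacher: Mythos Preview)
Your strategy is exactly that of \cite[Lemma~6.7]{K-S-stable}, which the paper cites without reproducing: start from the explicit kernels in Lemma~\ref{densit}, use the conjugation symmetries $F_1(\cdot,-\lambda)=\overline{F_1(\cdot,\lambda)}$, $G_2(\cdot,-\lambda)=\overline{G_2(\cdot,\lambda)}$, $k(-\lambda)=\overline{k(\lambda)}$ to write the jump as $-\tfrac{1}{2i\lambda}(M+\overline M)\sigma_3$, and reduce via the expansions of Lemma~\ref{AantB} and the unitarity relations of Lemma~\ref{transmi}. The factorization $k(\lambda)A(\lambda)^t=p-\tfrac{i\lambda}{\gamma}q$ you isolate is precisely the right device.

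One step needs tightening. Your passage to the region $x\le y$ via the diagonal consistency $M(x,x,\lambda)=M(x,x,\lambda)^t$ does not suffice: agreement of the two kernel formulas at $x=y$ does not propagate the algebraic identity $N+\overline N=\mathcal{E}(x)\mathcal{E}^*(y)$ off the diagonal, where $N(x,y)=G_2(x,\lambda)k(\lambda)F_1(y,\lambda)^t$. The clean fix is either to rerun the same reduction on $N$ directly (note $Ak=(kA^t)^t=p-\tfrac{i\lambda}{\gamma}q$ since $k^t=k$, so the computation is symmetric), or to use $\mathcal{H}^*=\sigma_3\mathcal{H}\sigma_3$, which for the jump $J=R_+-R_-$ gives $J(x,y)^*=-\sigma_3 J(y,x)\sigma_3$; the right-hand side $-\tfrac{1}{2i\lambda}\mathcal{E}(x)\mathcal{E}^*(y)\sigma_3$ obeys the same relation, so the $x\ge y$ identity transfers immediately.
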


	\subsection{Spectral representation and distorted Fourier transform} \label{subsec:ft}~~\\[3pt]
	Based on the previous subsection, we now provide a spectral representation for the (non-unitary) flow $e^{ i t \mathcal{H}}$ of the operator
	$$\mathcal{H} = ( - \partial_x^2 + \mu)\sigma_3 + V$$
	as in Definition \ref{Def-on}. We start with a rather general argument following the lines of \cite{Erd-Schlag}, \cite{Schlag2} and \cite{K-S-stable}.  The desired expansion reads
	
	\begin{align}\label{formular}
		e^{i t \mathcal{H}}u =&~  \frac{1}{2 \pi i}\int_{\{|\lambda| > \mu\}}e^{i t \lambda}\big[ (\mathcal{H}- (\lambda + i 0))^{-1} -(\mathcal{H} - (\lambda - i 0))^{-1} \big]~d \lambda + \sum_{\zeta} e^{i t H}P_{\zeta} u
	\end{align} 
	~~\\
	where $ \zeta \in \C$ are isolated eigenvalues in the discrete spectrum and $ P_{\zeta}$ are corresponding Riesz projections. Note here that for $ \mu = 0$ the threshold $ \lambda = 0$ can not be assumed to be regular due to a resonance. 
	Let us first note the following. 
	\begin{Lemma} We have 
		$ \sigma(\mathcal{H}) = - \sigma(\mathcal{H}) = \overline{\sigma(\mathcal{H})} = \sigma(\mathcal{H}^*)$
		and in fact the essential spectrum $ \sigma_{\text{ess}}(\mathcal{H}) = ( - \infty, -\mu] \cup [ \mu , \infty)$. The discrete spectrum $\sigma_d(\mathcal{H})$ consists of eigenvalues $\{ \zeta_j\}_{j =1}^N$,~$\zeta_j \in \C \backslash \sigma_{\text{ess}}(\mathcal{H})$,~ $ 1 \leq N \leq \infty$ of finite multiplicity with closed range $\Ran(  \mathcal{H} - \zeta_j)$ and which are poles of the meromorphic resolvent $( \mathcal{H} -z)^{-1}$, $z \in \C \backslash \sigma_{\text{ess}}(\mathcal{H})$.  The order of the pole $k_j = k_j(\zeta_j)$ is the the minimal integer with $ \ker(\mathcal{H} - \zeta_j)^{k_j} =  \ker(\mathcal{H} - \zeta_j)^{k_j+1}$.  
	\end{Lemma}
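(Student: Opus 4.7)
The plan is to establish the three symmetry relations, the essential spectrum computation, and the discrete spectrum structure in that order, using standard perturbation theory.

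For the symmetry assertions, I would exploit the identities $\sigma_1 \mathcal{H} \sigma_1 = -\mathcal{H}$ and $\sigma_3 \mathcal{H}^* \sigma_3 = \mathcal{H}$ recorded after Definition \ref{Def-on}. The first identity says that $\mathcal{H}$ is unitarily similar to $-\mathcal{H}$ via the involution $\sigma_1$ (acting pointwise on $L^2(\R,\C^2)$), so $\sigma(\mathcal{H}) = \sigma(-\mathcal{H}) = -\sigma(\mathcal{H})$. The second identity, combined with the standard relation $\sigma(\mathcal{H}^*) = \overline{\sigma(\mathcal{H})}$, yields $\sigma(\mathcal{H}) = \sigma(\mathcal{H}^*) = \overline{\sigma(\mathcal{H})}$; moreover the restriction of these similarities to the discrete and essential parts of the spectrum gives the analogous identities for $\sigma_d$ and $\sigma_{\text{ess}}$.

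Next I would treat the essential spectrum. Since $\mathcal{H}_0$ is block diagonal, $\sigma(\mathcal{H}_0) = [\mu,\infty) \cup (-\infty,-\mu]$ is purely essential by the spectral theorem for the $1$D Laplacian. To apply Weyl's essential spectrum theorem one needs $V$ to be $\mathcal{H}_0$-compact, which I would verify by observing that $(\mathcal{H}_0 - z)^{-1}$ maps $L^2(\R,\C^2)$ boundedly into $H^2(\R,\C^2)$ for $z \notin \sigma(\mathcal{H}_0)$, and then writing $V (\mathcal{H}_0 - z)^{-1} = V \chi_R (\mathcal{H}_0 - z)^{-1} + V (1-\chi_R)(\mathcal{H}_0 - z)^{-1}$, where $\chi_R$ is a smooth cut-off to $\{|x|\leq R\}$. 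The first term is compact by Rellich (bounded $V$ with compactly supported cut-off composed with a map into $H^2$), while the second tends to zero in operator norm as $R \to \infty$ because $\|V(1-\chi_R)\|_{L^\infty} \lesssim R^{-m} \to 0$ by the decay assumed in Definition \ref{Def-on}. Weyl's theorem then gives $\sigma_{\text{ess}}(\mathcal{H}) = \sigma_{\text{ess}}(\mathcal{H}_0) = (-\infty,-\mu] \cup [\mu,\infty)$.

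For the discrete spectrum, I would invoke the analytic Fredholm theorem for the operator-valued family
\[
T(z) := I + (\mathcal{H}_0 - z)^{-1} V, \qquad z \in \C \setminus \sigma_{\text{ess}}(\mathcal{H}_0),
\]
which is holomorphic in $z$ with values in $I + \mathcal{K}(L^2)$, hence a Fredholm family of index zero. Invertibility for $|\operatorname{Im} z|$ large (via a Neumann series, since $\|(\mathcal{H}_0 - z)^{-1} V\|_{op} \to 0$), together with connectedness of the complement of the essential spectrum, yields by the analytic Fredholm alternative that $T(z)^{-1}$ exists and is meromorphic off a discrete set $\{\zeta_j\} \subset \C \setminus \sigma_{\text{ess}}(\mathcal{H})$, with finite-dimensional generalised kernels $\ker(\mathcal{H} - \zeta_j)^k$. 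These kernels form a nondecreasing chain that stabilises at some minimal order $k_j$ coinciding with the order of the pole of $(\mathcal{H} - z)^{-1} = T(z)^{-1} (\mathcal{H}_0 - z)^{-1}$ at $\zeta_j$; closedness of $\Ran(\mathcal{H} - \zeta_j)$ is automatic from Fredholmness of $T(\zeta_j)$. The principal technical point is the $\mathcal{H}_0$-compactness of $V$; everything else is routine abstract perturbation and Fredholm theory. The block structure of $\mathcal{H}_0$ poses no problem because $(\mathcal{H}_0 - z)^{-1}$ decomposes as a direct sum of two scalar resolvents with analytic kernels on $\C \setminus \sigma(\mathcal{H}_0)$, as already written down in the limiting form \eqref{kern}.
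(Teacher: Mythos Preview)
Your argument is correct and follows the same approach as the paper, which simply records that the spectral symmetries come from the Pauli-matrix identities $\sigma_1\mathcal{H}\sigma_1=-\mathcal{H}$, $\sigma_3\mathcal{H}^*\sigma_3=\mathcal{H}$ and that the essential spectrum is computed via the Weyl criterion (referring to \cite{K-S-stable}); you have merely fleshed out the relative-compactness verification and the analytic Fredholm step that the paper leaves implicit. One small remark: when $\mu=0$ the complement $\C\setminus\sigma_{\text{ess}}(\mathcal{H}_0)=\C\setminus\R$ has two connected components, so the analytic Fredholm alternative should be invoked on each half-plane separately, but this causes no difficulty since the Neumann-series invertibility for large $|\operatorname{Im} z|$ holds in both.
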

	The proof follows directly  from the symmetry of $V$ in Def. \ref{Def-on} and the Weyl criterion (for the latter see for instance \cite{K-S-stable}). We further like to remark the discrete spectrum $\sigma_d(\mathcal{H}) = \{\zeta_j\}$ is always assumed to be a finite set under the assumption of  Def. \ref{Def-on}. \footnote{We note this will be a standing assumption for all operators in this Section. When applying this Section to small $ \mu$-perturbations of the linearized NLS  operator, the finiteness of $\sigma_d(\mathcal{H})$ holds true. 
	This will be considered below.} However algebraic and geometric multiplicity might differ for each isolated eigenvalue, that is $k_j > 1$. The following notation of admissibility will be useful.
	\begin{Def} \label{admiss} Let $ \mathcal{H} = (- \partial_x^2 + \mu)\sigma_3 + V(x)$ for some $ \mu > 0$  be as in Definition \ref{Def-on}. We call $\mathcal{H}$ {\bf admissible}, if the following holds.
		\begin{itemize}\setlength\itemsep{.3em}
			\item[(i)] The operator $ \mathcal{H} $ has no eigenvalues in $( - \infty, - \mu] \cup [ \mu, \infty)$  
			\item[(ii)] The  edges $\pm \mu $ are no resonances for $ \mathcal{H} $.
		\end{itemize}
	\end{Def}
	Recall from the previous section we say $\pm \mu$ is a \emph{resonance} for $ \mathcal{H} $ if  there exists a solution  $ f \in L^{\infty}\backslash L^2(\R)$ of 
	$ \mathcal{H}f = \pm \mu f $.
	~~\\[5pt]
	\emph{Limiting absorption principle}.~Let $( \mathcal{H}_0 - z)^{-1}$ be the free resolvent on $\C \backslash \sigma_{\text{ess}}(\mathcal{H}_0)$. Then we have
	\begin{align} \label{kern2-scat}
		(\mathcal{H}_0 -  z  )^{-1} =   \begin{pmatrix}
			R_0(z - \mu) & 0\\
			0 &  - R_0(- z - \mu)
		\end{pmatrix},~~ z  \notin ( - \infty, - \mu] \cup [\mu, \infty),
	\end{align}
	where $ R_0(z) = (- \Delta - z)^{-1}$ with kernel 
	in $d =1$
	\begin{align}
		R_0(z)(x,y) =  \frac{ e^{ i \sqrt{z}|x-y|}}{-2i\sqrt{z}},~~ z \in \C \backslash [0, \infty).
	\end{align}
	For the following limits, as already stated above, we thus obtain
	\begin{align} \label{kern3}
		&(\mathcal{H}_0 -  (\lambda^2 + \mu \pm i 0)  )^{-1}(x,y) =   \begin{pmatrix}
			\frac{ e^{\pm i \lambda|x-y|}}{\mp2i \lambda} & 0\\
			0 &  \frac{e^{- \gamma|x-y|}}{-2\gamma} 
		\end{pmatrix},\\[2pt] \nonumber
		&\gamma = \sqrt{\lambda^2 + 2 \mu},~~ \lambda > 0.
	\end{align}
	Further set $X_{\sigma} := L^{2, \sigma}(\R)  \times L^{2, \sigma}(\R) $ where $L^{2, \sigma}(\R) = \langle x \rangle^{- \sigma} L^2$ with $ \sigma > 0$. For $ \lambda_0 > \mu$ and $ \sigma > \f12 $ we have the limiting absorption principle
	\begin{align}\label{LAP1}
		\sup_{|\lambda| > \lambda_0,~ \varepsilon > 0} |\lambda|^{\f12}\| (\mathcal{H}_0 - (\lambda \pm i \varepsilon))^{-1}   \|_{X_{\sigma} \to X_{- \sigma}} < \infty,
	\end{align}
	implied by \eqref{kern2-scat} and we refer to Agmon's work \cite{Agmon} for general scalar theory in arbitrary dimension.
	Then also the limit $ \lim_{\varepsilon \to 0^+}  (\mathcal{H}_0 - (\lambda \pm i \varepsilon ))^{-1} $ exists in $X_{\sigma}$  and \eqref{LAP1} holds for $(\mathcal{H}_0 - (\lambda \pm i 0))^{-1}$. \\[2pt]
	The following is proved similar to \cite{K-S-stable} (cf also \cite{Erd-Schlag})  based on the limiting absorption principle \eqref{LAP1}. 
	\begin{Lemma} \label{LAP} Let $ \mathcal{H} = (- \partial_x^2 + \mu)\sigma_3 +V$ be as in Def. \ref{Def-on} with no embedded eigenvalue in $(- \infty,  -\mu] \cup [\mu, \infty)$. Then for any $ |\lambda| > \mu$,~$ \sigma > \f12 $  the operator 
		\begin{align}\label{LAP2}
			I  + (\mathcal{H}_0 - (\lambda \pm i 0))^{-1}V : X_{- \sigma} \to X_{ - \sigma}
		\end{align}
		is invertible and for $ \sigma, \tilde{\sigma} > \f12 $ there holds
		\begin{align}\label{LAP3}
			\sup_{|\lambda| > \lambda_0,~ \varepsilon > 0} |\lambda|^{\f12}\| (\mathcal{H} - (\lambda \pm i \varepsilon))^{-1}   \|_{X_{\sigma} \to X_{- \tilde{\sigma}}} < \infty.
		\end{align}
		where $ \lambda_0 > \mu$. Further for any $ \sigma, \tilde{\sigma} > \f12 $  the limit
		\begin{align}\label{LAP22}
			& (\mathcal{H} - (\lambda \pm i 0))^{-1} =   (I  + (\mathcal{H}_0 - (\lambda \pm i 0))^{-1}V)^{-1}(\mathcal{H}_0 - (\lambda \pm i 0))^{-1} 
		\end{align}
		exists in the norm of $ X_{\sigma} \to X_{- \tilde{\sigma}}$ and \eqref{LAP3} holds for $ \varepsilon = 0$. 
	\end{Lemma}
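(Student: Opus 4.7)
The plan is the standard Agmon-type argument, adapted to the matrix setting as in \cite{K-S-stable}, and based on the free bound \eqref{LAP1} together with a Fredholm alternative. The key observation is that the potential $V$ is a short-range perturbation (polynomial decay of order $m \ge 2$), so multiplication by $V$ gains enough weight to render $(\mathcal{H}_0-(\lambda\pm i0))^{-1}V$ a compact perturbation of the identity on $X_{-\sigma}$ once $\sigma>\tfrac12$ and $|\lambda|>\mu$.

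First, I would verify the mapping properties. By \eqref{kern3} and the Agmon bound \eqref{LAP1}, the free resolvent $(\mathcal{H}_0-(\lambda\pm i0))^{-1}$ is bounded from $X_{\sigma}$ to $X_{-\sigma}$ with norm $O(|\lambda|^{-1/2})$, uniformly in $\varepsilon\ge 0$ on any set $|\lambda|>\lambda_0>\mu$. Since $V$ satisfies $|V(x)|\lesssim \langle x\rangle^{-m}$ with $m\ge 2$, multiplication by $V$ is continuous from $X_{-\sigma}$ into $X_{\sigma'}$ for any $\sigma'<\sigma+m$; choosing $\sigma'>\tfrac12$ we gain on both ends. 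Moreover, composing with $(\mathcal{H}_0-(\lambda\pm i0))^{-1}$ and using a standard approximation of $V$ by compactly supported potentials (together with local compactness of the free resolvent, which is built from the kernels \eqref{kern3}), one sees that
\[
K_\pm(\lambda):=(\mathcal{H}_0-(\lambda\pm i0))^{-1}V:\ X_{-\sigma}\longrightarrow X_{-\sigma}
\]
is compact, and depends norm-continuously on $\lambda$ for $|\lambda|>\mu$.

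Next, I would invoke the Fredholm alternative: $I+K_\pm(\lambda)$ is invertible on $X_{-\sigma}$ if and only if it is injective. Suppose $f\in X_{-\sigma}$ solves $(I+K_\pm(\lambda))f=0$. Then $f=-(\mathcal{H}_0-(\lambda\pm i0))^{-1}Vf$, which shows $(\mathcal{H}-\lambda)f=0$ in the distributional sense. The usual bootstrap (plug back into the equation, using $Vf\in X_{\sigma'}$ with $\sigma'$ large) improves the decay of $f$: iterating the identity and using that $(\mathcal{H}_0-(\lambda\pm i0))^{-1}$ maps $X_\tau$ into $X_{-\tau'}$ for any $\tau,\tau'>\tfrac12$, one reaches $f\in L^2(\R,\mathbb{C}^2)$, i.e.\ $\lambda\in\R$, $|\lambda|>\mu$, would be an embedded $L^2$-eigenvalue of $\mathcal{H}$ — contradicting the hypothesis. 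A standard outgoing/incoming radiation argument (the $\pm i0$ prescription forces $f$ to behave like a purely outgoing resp.\ incoming Jost solution at $\pm\infty$, and combining with the boundedness from $X_\sigma$ forces these contributions to vanish) is what actually rules out $L^\infty$ non-$L^2$ solutions; this is the only slightly delicate step, and is the analogue of Agmon's argument in \cite{Agmon} carried over to the matrix setting exactly as in \cite[Sect.~6]{K-S-stable}. Injectivity thus follows, and $I+K_\pm(\lambda)$ is invertible on $X_{-\sigma}$.

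Finally, once invertibility is established, the identity
\[
(\mathcal{H}-(\lambda\pm i0))^{-1}=(I+K_\pm(\lambda))^{-1}(\mathcal{H}_0-(\lambda\pm i0))^{-1}
\]
defines a bounded operator $X_\sigma\to X_{-\tilde\sigma}$ for any $\sigma,\tilde\sigma>\tfrac12$, because $(\mathcal{H}_0-(\lambda\pm i0))^{-1}$ sends $X_\sigma$ into $X_{-\sigma}$ and the inverse factor sends $X_{-\sigma}$ into $X_{-\sigma}\subset X_{-\tilde\sigma}$ (up to adjusting weights through the potential multiplication). The uniform bound \eqref{LAP3} is obtained by combining the uniform free bound \eqref{LAP1} with a compactness/continuity argument in $\lambda$: the map $\lambda\mapsto(I+K_\pm(\lambda))^{-1}$ is norm-continuous on $\{|\lambda|>\lambda_0\}$, and its large-$|\lambda|$ behaviour is controlled by the Neumann series because $\|K_\pm(\lambda)\|_{X_{-\sigma}\to X_{-\sigma}}=O(|\lambda|^{-1/2})\to 0$. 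The existence of the $\varepsilon\to 0^+$ limit in operator norm from $X_\sigma$ to $X_{-\tilde\sigma}$ follows from the analogous fact for $\mathcal{H}_0$ combined with norm-continuity of $(I+K_\pm(\lambda))^{-1}$. The main technical obstacle is the injectivity/ruling-out-resonance step in the middle paragraph, where one has to translate the distributional identity into decay of $f$ and match it against the Jost asymptotics from Lemmas \ref{LemmaJost-decay}--\ref{LemmaJost-oscillating}; everything else is a mechanical adaptation of the scalar Agmon framework.
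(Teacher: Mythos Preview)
Your proposal is correct and follows essentially the same route as the paper's sketch: invertibility of $I+(\mathcal{H}_0-(\lambda\pm i0))^{-1}V$ via compactness/Fredholm alternative combined with Agmon's bootstrap to rule out kernel elements (which would be embedded eigenvalues), then the uniform bound \eqref{LAP3} via a Neumann series for $|\lambda|\gg 1$ and a continuity/contradiction argument on the compact range $[\lambda_0,R]$. The paper is terser and refers to \cite{Erd-Schlag} and \cite{K-S-stable} for the details you spell out, but the structure and all key ingredients coincide.
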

		%
		\begin{proof}[Sketch of Proof] 
			The invertability of 
			$$ 	I  + (\mathcal{H}_0 - (\lambda \pm i 0))^{-1}V $$
			in Lemma \ref{LAP} follows by the absence of embedded eigenvalues as in \cite{Erd-Schlag} by Agmon's bootstrapping Lemma. Further, considering $ \lambda > \lambda_0,~ z = \lambda \pm i \epsilon$ with   $0 < \epsilon \ll1 $ small, we have
			$$ (\mathcal{H} - z)^{-1} = (I  + (\mathcal{H}_0 - z)^{-1}V)^{-1}(\mathcal{H}_0 - z)^{-1}.$$
			The inverse of 
			$ I  + (\mathcal{H}_0 - (\lambda \pm i 0))^{-1}V $ has a Neumann expression if
			$$ 	\sup_{|\lambda| > \tilde{\lambda}}\| \mathcal{H}_0 - (\lambda \pm i 0))^{-1}V\| < \f12 $$
			and for which we take $ \tilde{\lambda}\gg 1 $ large. From \eqref{LAP1} we thus have $ \| (\mathcal{H} - z)^{-1}    \| \lesssim  |z|^{-\f12}$ with $ z = \lambda \pm i \epsilon$  if $| z | \gg1 $ is large, say $ |z| > R$. Since also from \eqref{kern3} (in the norm of $ X_{\sigma}, X_{- \tilde{\sigma}}$)
			\[
			\sup_{\Re(z) > \lambda_0}\|  (\mathcal{H}_0 - z)^{-1} \| < \infty,
			\]
			we show by contradiction the failure of \eqref{LAP3} would imply that 	$I  + (\mathcal{H}_0 - (\lambda \pm i 0))^{-1}V$ has no inverse for some $ \lambda \in [\lambda_0, R]$. We refer to \cite{Erd-Schlag} for more details (though for $3$D operators).
		\end{proof}
		\begin{Rem} Assume $\mathcal{H}$ in Lemma \ref{LAP} is admissible for some $ \mu > 0$, that is $\pm \mu$ is not a resonance.  We then obtain  the limiting absorption \eqref{LAP3} holds for $\lambda_0 = \mu$.
		\end{Rem}
		The Riesz projection $P_d $  onto the discrete spectrum of $\mathcal{H}$ is defined as
		\begin{align}\label{Riesz}
			P_d = \frac{1}{2 \pi i}\oint_{\Gamma} ( \mathcal{H} - z   )^{-1}~dz,
		\end{align} 
		where $\Gamma$ is a simple, closed contour enclosing all eigenvalues in $\C \backslash \sigma_{\text{ess}}(\mathcal{H})$. Let $P_{\Lambda}$ for a finite set $ \Lambda \subset \C \backslash \sigma_{\text{ess}}(\mathcal{H})$ be a sum of operators defined as in \eqref{Riesz} with small closed contours around each eigenvalue in $\Lambda$ and $ P_{\zeta_j} := P_{\{\zeta_j\}}$. Note that
		\begin{align}
			&\Ran(P_{\zeta_j}) = \bigcup_{k \geq 0} \ker(\mathcal{H} - \zeta_j  )^{k} ,\\
			&L^2(\R) \times L^2(\R) = \Ran(P_{\zeta_j}) \oplus \Ran(I - P_{\zeta_j}),
		\end{align}
		where both spaces in the latter direct sum are closed (see \cite{Hisl-Sig}). In case $ \mu = 0$, isolated eigenvalues are separated by the real axis and we for instance sum \eqref{Riesz} over $\Gamma^{\pm}$ enclosing all eigenvalues in $ \{  z ~|~ \pm\Im(z) > 0\}$. 
		We further set the projection
		$$ P := I - P_d $$ 
		to be analogue to the continuous spectrum in the scalar case. (For $ \mu> 0$, this is typically called $P_s$ for \emph{stable spectrum} (see \cite{Schlag2}, \cite{K-S-stable}) and not to be confused with $P_c$, for which we would, on the real axis, only subtract $ z=0$. In case $ \mu =0$ this notation reduces to $P_c$.)\\[8pt]
		The following representation of $ \langle e^{i t \mathcal{H}} \phi,\psi \rangle $ is  analogous to the spectral theorem for s.-a. Schr\"odinger operators with  asymptotic completeness.
		\begin{Lemma}\label{spectral-repr} Let  $\mathcal{H}$  be as in Definition \ref{Def-on} with $ \mu > 0$ and admissible. Then there holds
			\begin{align}\label{final-abs-spect}
				e^{i t \mathcal{H}} =  \frac{1}{2 \pi i}\int_{ |\lambda| \geq \mu}e^{i t \lambda}\big[ ( \mathcal{H} - (\lambda + i 0))^{-1} -(\mathcal{H}- (\lambda - i 0))^{-1} \big]~d \lambda + \sum_{j} e^{i t \mathcal{H}}P_{\zeta_j} 
			\end{align}
			where the sum runs over the finite discrete spectrum $\{ \zeta_j\}$. In particular  \eqref{final-abs-spect} is to be understood in the weak sense, i.e. for all $ \phi, \psi \in  W^{2,2}(\R) \times W^{2,2}(\R)$ there holds
			\begin{align}\label{final-abs-spect2}
				\langle e^{i t \mathcal{H}}\phi, \psi\rangle =  \lim_{R \to \infty}\frac{1}{2 \pi i}\int_{\mu \leq |\lambda| < R}&e^{i t \lambda} \langle \big[ (\mathcal{H} - (\lambda + i 0))^{-1} -(\mathcal{H} - (\lambda - i 0))^{-1}  \big] \phi, \psi\rangle~d \lambda\\ \nonumber
				& + \sum_{j} \langle e^{i t \mathcal{H}}P_{\zeta_j}\phi, \psi \rangle. 
			\end{align} 
			The integral in \eqref{final-abs-spect2} is well-defined by Lemma \ref{LAP}.
		\end{Lemma}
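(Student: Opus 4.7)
The plan is to derive \eqref{final-abs-spect2} from a Cauchy-type contour representation of $e^{it\mathcal{H}}$, using the Riesz decomposition $I = P + P_d$, the limiting absorption principle of Lemma \ref{LAP}, and a regularization of the high-frequency part. First, I would fix $\phi,\psi\in W^{2,2}(\R)\times W^{2,2}(\R)$ and decompose $e^{it\mathcal{H}}\phi = e^{it\mathcal{H}}P\phi + \sum_j e^{it\mathcal{H}}P_{\zeta_j}\phi$, where the finite sum is handled by functional calculus on the finite-dimensional invariant subspaces $\operatorname{Ran}(P_{\zeta_j})$ (Jordan form of $\mathcal{H}|_{\operatorname{Ran}(P_{\zeta_j})}$). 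Thus the heart of the proof is the representation of $e^{it\mathcal{H}}P$ as the boundary-value integral over $\{|\lambda|\geq \mu\}$.

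Next, for $R>0$ large and $\varepsilon>0$ small, I would introduce the closed rectangular contour $\Gamma_{R,\varepsilon}$ that encloses the essential spectrum $[\mu,\infty)\cup(-\infty,-\mu]$ truncated at $|\Re z|\leq R$, with horizontal sides at height $\pm \varepsilon$, and apply the Dunford--Riesz functional calculus
\[
e^{it\mathcal{H}}P = \lim_{R\to\infty}\lim_{\varepsilon\to 0^+}\frac{1}{2\pi i}\oint_{\Gamma_{R,\varepsilon}}e^{it z}(\mathcal{H}-z)^{-1}\,dz,
\]
initially as a bilinear form against $\phi,\psi$. Collapsing the horizontal sides to the real axis and sending the vertical sides to infinity produces the jump of the resolvent across the essential spectrum, which by Lemma \ref{LAP} exists in $X_{\sigma}\to X_{-\tilde\sigma}$ for $\sigma,\tilde\sigma>\tfrac12$; the admissibility hypothesis rules out any contribution from $\lambda=\pm\mu$ (no threshold resonance/eigenvalue). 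The vanishing of the vertical pieces as $R\to\infty$ is the first place where I would regularize: replacing $\phi$ by $(\mathcal{H}^2+1)^{-1}\tilde\phi$ and using $\phi\in W^{2,2}$ yields two additional factors of $\langle z\rangle^{-1}$, which combined with the $|z|^{-1/2}$ bound \eqref{LAP3} renders the contour tails absolutely convergent.

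To exhibit the $|\lambda|\geq \mu$ integral of \eqref{final-abs-spect2} and ensure convergence at infinity, I would integrate by parts in $\lambda$ once against $e^{it\lambda}$, writing
\[
\int_{\mu\leq|\lambda|<R}e^{it\lambda}\langle [(\mathcal{H}-(\lambda+i0))^{-1}-(\mathcal{H}-(\lambda-i0))^{-1}]\phi,\psi\rangle\,d\lambda
\]
with the derivative in $\lambda$ falling on the resolvent jump; the resulting expression is integrable in $\lambda$ since differentiation in $\lambda$ combined with the resolvent identity and Lemma \ref{LAP} yields an $O(|\lambda|^{-3/2})$ bilinear bound against $\phi,\psi \in W^{2,2}$, using the decay of $V$ from Definition \ref{Def-on}. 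Passing $R\to\infty$ then gives the desired identity, and removing the regularization via the density of smooth compactly supported data finishes the weak identity \eqref{final-abs-spect2}.

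The main obstacle is the combined control at the two endpoints of the spectral parameter: at $|\lambda|=\mu$ one must ensure that no pole appears in the integrand, which is exactly the admissibility assumption (no embedded eigenvalue at $\pm\mu$ and no resonance, ruling out a $(\lambda\mp\mu)^{-1/2}$-type singularity via the explicit formulas of Lemma \ref{densit}), while at $|\lambda|\to\infty$ one must gain enough decay in $\lambda$ beyond the bare $|\lambda|^{-1/2}$ from \eqref{LAP3}. Both issues are resolved through the $W^{2,2}$ regularity of $\phi,\psi$ and one integration by parts in $\lambda$, so the proof proceeds cleanly once these two endpoint analyses are in place.
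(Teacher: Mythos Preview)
Your argument assumes at the outset that
\[
e^{it\mathcal{H}}P = \lim_{R\to\infty}\lim_{\varepsilon\to 0^+}\frac{1}{2\pi i}\oint_{\Gamma_{R,\varepsilon}}e^{it z}(\mathcal{H}-z)^{-1}\,dz
\]
via ``Dunford--Riesz functional calculus,'' but this identity is precisely the content of the lemma: $\mathcal{H}$ is non-self-adjoint with unbounded essential spectrum, and $e^{itz}$ neither decays at infinity nor stays bounded off the real axis, so no off-the-shelf holomorphic functional calculus supplies it. For the truncated contour $\Gamma_{R,\varepsilon}$ the integral is well defined, but there is no a~priori reason it should converge to $e^{it\mathcal{H}}P$ as $R\to\infty$; your regularization and integration-by-parts steps address convergence of the right-hand side without identifying its limit. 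A secondary issue: your $O(|\lambda|^{-3/2})$ claim after one $\lambda$-integration by parts rests on weighted resolvent bounds, yet $\phi,\psi\in W^{2,2}$ need not lie in $X_\sigma$ for any $\sigma>\tfrac12$, so \eqref{LAP3} does not apply directly.

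The paper closes this gap by first producing the group via Hille--Yoshida: a Neumann series gives $(0,\infty)\subset\rho(i\mathcal{H}-a)$ with $\|(i\mathcal{H}-a-\lambda)^{-1}\|\le\lambda^{-1}$ for $a\gg1$, hence $\|e^{it\mathcal{H}}\|\lesssim e^{a|t|}$. The resolvent is then the Laplace transform of the group, and the inverse Laplace (Bromwich) integral along $\Re z=b>a$ recovers $\langle e^{it\mathcal{H}}\phi,\psi\rangle$ through a Dirichlet-kernel computation---this is where the principal-value convergence at $R\to\infty$ actually comes from, with no integration by parts in $\lambda$ required. One then deforms the two horizontal lines $\Im z=\pm b$ down to $\Im z=0^\pm$ through a rectangular contour, collecting the Riesz projections $P_{\zeta_j}$ as residues at the discrete eigenvalues; the integrals over $(-\mu,\mu)\setminus\{\zeta_j\}$ cancel between the upper and lower approaches since the resolvent is analytic there, and the short vertical segments at $\Re z=\pm R$ vanish as $R\to\infty$ by \eqref{LAP3}. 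The $W^{2,2}$ assumption enters only to make each weak pairing $\langle(\mathcal{H}-z)^{-1}\phi,\psi\rangle$ sufficiently regular for the Dirichlet-kernel argument, not to manufacture extra $\lambda$-decay.
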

		\begin{proof}  The proof is a standard application of Hille-Yoshida's theorem  as provided in \cite{Erd-Schlag}, \cite{K-S-stable}. We give some details.
			The operator $ i \mathcal{H}$ is (up to a shift) the generator of a contractive semigroup, i.e. for $ a > 0$ large enough there holds
			\begin{align}
				( 0, \infty) \subset \C \backslash \sigma (i \mathcal{H} - a),~~~\sup_{\lambda > 0}\big( \lambda \| ( i \mathcal{H} - a - \lambda)^{-1}   \|_{2 \to 2}\big) \leq 1,
			\end{align}
			This is implied (for large  $ a \gg1$)  by 
			\[
			\|( i \mathcal{H}_0 - a - \lambda)^{-1}\|_{2 \to 2} \leq (\lambda + a)^{-1}
			\]
			and an absolute bound of the  Neumann expansion
			\begin{align}
				( i \mathcal{H} - a - \lambda)^{-1} =&~ ( i \mathcal{H}_0 - a - \lambda)^{-1} (  I + i V(i \mathcal{H}_0 - a - \lambda)^{-1} )^{-1}\\[2pt] \nonumber
				= &~ \sum_{k \geq 0} (-i)^k V^k ( i \mathcal{H}_0 - a - \lambda)^{-k-1}.
			\end{align}
			Thus the Laplace transform 
			\begin{align}
				(i\mathcal{H} - z)^{-1} = - \int_0^{\infty} e^{- t z} e^{i t \mathcal{H}}~dt,~~\Re(z) > a,
			\end{align}
			converges in norm topology since  $\| e^{i t \mathcal{H}} \|_{2 \to 2} \lesssim  e^{a |t|} $ for $ t \in \R $. Let $ \phi, \psi \in W^{2,2}\times W^{2,2}$, then  we have 
			\begin{align}
				- \frac{1}{2 \pi i} \int_{ b - iR}^{b + iR} e^{tz} \langle (i\mathcal{H} - z)^{-1} \phi, \psi \rangle~dz &=  \frac{1}{2 \pi i} \int_{ b - iR}^{b + iR} e^{tz} \int_0^{\infty} e^{- s z} \langle e^{i s \mathcal{H}} \phi, \psi \rangle~ds~dz\\ \nonumber
				&= \frac{1}{ \pi } \int_0^{\infty} e^{(t-s)b} \frac{\sin((t-s)R)}{(t-s)}  \langle e^{i s \mathcal{H}} \phi, \psi \rangle~ds.
			\end{align}
			The latter is a convolution product with the Dirichlet kernel  and hence  converges for $ t > 0$ as $ R \to \infty$ . In fact we have  
			\begin{align}
				- \frac{1}{2 \pi i} 	\lim_{R \to \infty}\int_{ b - iR}^{b + iR} e^{tz} \langle (i\mathcal{H} - z)^{-1} \phi, \psi \rangle~dz  = \begin{cases}
					\langle e^{i t \mathcal{H}}\phi, \psi\rangle & t > 0\\
					0 & t < 0.
				\end{cases}
			\end{align}
			Hence also
			\begin{align} \label{so}
				\langle &e^{i t \mathcal{H}}\phi, \psi\rangle\\ \nonumber
				&= - \frac{1}{2 \pi i}\lim_{R \to \infty} \int_{ b - iR}^{b + iR} e^{tz} \langle (i\mathcal{H} - z)^{-1} \phi, \psi \rangle~dz  +  \frac{1}{2 \pi i}\lim_{R \to \infty} \int_{ - b - iR}^{-b + iR} e^{tz} \langle (i\mathcal{H} - z)^{-1} \phi, \psi \rangle~dz\\ \nonumber
				& =  \frac{1}{2 \pi i}\lim_{R \to \infty} \int_{ -R}^{R} e^{i t\lambda} \langle e^{- bt} \big( (\mathcal{H} - (\lambda + i b))^{-1 } - (\mathcal{H} - (\lambda - i b))^{-1 } \big) \phi, \psi \rangle~d\lambda
			\end{align}
			Now we consider the rectangular contour $ \Gamma^+_{R, \delta}$  shown in figure \ref{fig:cont} below connecting the points $ \pm R + i b,\pm R + i0$ (by the latter we actually mean $\pm R + i\varepsilon$ and take the limit $ \varepsilon \to 0^+$ in the contour integral). Here we have to take out small semi circles of radius say $r_{+, R, \delta}\sim \delta$  centered at each isolated  eigenvalue in $(- \mu, \mu) $ where $ (\mathcal{H} -z)^{-1}$ has pole singularities. Reflecting the contour $ \Gamma^+_{R, \delta}$   at the real axis gives us the analogue $ \Gamma^-_{R, \delta}$   on the lower half plane and thus yields  full circles around each of such eigenvalues (and hence Riesz projections for the contour integral). The residue theorem states
			\[
			\frac{1}{2 \pi i}\oint_{\Gamma_{R, \delta}^{\pm}} e^{tz}(  \mathcal{H}-z)^{-1}~dz  = \sum_j \frac{1}{2 \pi i} \oint_{\gamma_j}e^{tz}( \mathcal{H} - z)^{-1}~dz,
			\]
			where $\gamma_j$ are small closed, simple contours enclosing $\zeta_j$ located in the rectangles. There now holds, see e.g. the argument in the proof of  \cite[Lemma 12]{Erd-Schlag}, 
			\[
			\sum_j\oint_{\gamma_j}e^{tz}(  \mathcal{H} - z)^{-1}~dz = \sum_je^{it \mathcal{H}}P_{\zeta_j}.
			\]
			Then the identity \eqref{final-abs-spect2} follows from letting $ R \to \infty$,i.e. we calculate the limit \eqref{so} and the corresponding limit at $ b = 0$. The latter provides the absolute part on the right side of \eqref{final-abs-spect}, where we integrate the jump cut for $ |\lambda|> \mu$ and the integral exactly cancels in between the circles on $(-\mu, \mu)$.  Finally, the integral over the vertical contours vanishes as $ R \to \infty$ seen from the decay provided by the limiting absorption principle \eqref{LAP3} in Lemma \ref{LAP}. 
			\begin{figure}[h!]
				\centering
				\begin{tikzpicture}[scale=1.2,extended line/.style={shorten >=-#1,shorten <=-#1},]
					\draw [->](0,0)--(0,2) node[right]{$$};
					\draw [<->](-3,0)--(3,0) node[right]{$$};
					\begin{scope}
						\clip (-.5,.05cm) rectangle (.5,.55 cm );
						\draw (0,.05 cm) circle(.1);
					\end{scope}
					\begin{scope}
						\clip (-1,.05cm) rectangle (0,.55 cm );
						\draw (-.5,.05 cm) circle(.1);
					\end{scope}
					\begin{scope}
						\clip (1,.05cm) rectangle (0,.55 cm );
						\draw (.5,.05 cm) circle(.1);
					\end{scope}
					
					\draw (-1.6, .05 cm)-- (-.6, 0. 05cm );
					\draw (-.4, .05 cm)-- (-.1, 0. 05cm );
					\draw (.1, .05 cm)-- (.4, 0. 05cm );
					\draw (.6, .05 cm)-- (1.6, 0. 05cm );
					\draw (-1.6, .05 cm)-- (-1.6, 1.2);
					\draw (-1.6, 1.2)-- (1.6, 1.2);
					\draw (1.6, .05 cm)-- (1.6, 1.2);
					\draw[fill] (1.6, .05 cm) circle(.4pt)node[below]{{ \tiny$i 0 + R$}};
					\draw[fill] (-1.6, .05 cm) circle(.4pt)node[below]{ \tiny $i 0 -R$};
					\draw[fill] (1.6,1.2) circle(.4pt)node[above]{\tiny$i b + R$};
					\draw[fill] (-1.6,1.2) circle(.4pt)node[above]{\tiny $ i b -R$};
					
					\draw[fill] (-.8, 0 cm) circle(.4pt)node[below]{{ \tiny$-\mu$}};
					\draw[fill] (.8, 0 cm) circle(.4pt)node[below]{{ \tiny$\mu$}};
					
				\end{tikzpicture}
				\label{fig:cont}
				\caption{The contour $\Gamma_{R, \delta}^+$}
			\end{figure}
		\end{proof}
		Let us recall the potentials $V$ in Definition \ref{Def-on} satisfy  $\sigma_1 V \sigma_1 = -V$ and in particular $\sigma_1 \mathcal{H} \sigma_1 = - \mathcal{H}$. 
		\begin{Def}[Fourier base] Let $\mathcal{F}_+(x, \lambda), \mathcal{G}_+(x, \lambda)$ be as in Lemma \ref{Das-Lem}. Then we set 
			\begin{align}
				&\mathcal{F}_-(x, \lambda ) := \sigma_1 	\mathcal{F}_+(x, \lambda ) ,\\
				&\mathcal{G}_-(x, \lambda ) := \sigma_1 	\mathcal{G}_+(x,  \lambda ).
			\end{align}
			Moreover we define
			\begin{align}
				e_{\pm}(x, \lambda) = \begin{cases}
					\mathcal{F}_{\pm}(x, \lambda ) & \lambda \geq 0\\
					\mathcal{G}_{\pm}(x, -\lambda ) & \lambda < 0
				\end{cases}.\\ \nonumber
			\end{align}
		\end{Def}
		\begin{Rem}
			We note from the Definition there holds
			\begin{align}\label{inter}
				&e_{\pm}(-x, \lambda) = e_{\pm}(x, - \lambda),~~ x \in \R,~\lambda \in \R\backslash \{0\}.
			\end{align}
			Further in case $V = 0$ we have\\ 
				$$ e_{+}(x, \lambda) = e^{i \lambda x} \underline{e} = \begin{pmatrix} e^{i \lambda x}\\0 \end{pmatrix},~~e_{-}(x, \lambda) = e^{i \lambda x} \sigma_1 \underline{e} = \begin{pmatrix} 0\\ e^{i \lambda x} \end{pmatrix}.$$
			\end{Rem}
			\begin{Lemma} \label{Fourier-inv} Let  $\mathcal{H}$  be as in Definition \ref{Def-on} with $ \mu > 0$ and admissible.  Then we  have for $ \phi, \psi \in  \mathcal{S}(\R) \times \mathcal{S}(\R)$ 
				
				\begin{align}
					\langle P \phi, \psi \rangle  =& ~ \frac{1}{2 \pi}\int_{- \infty}^{\infty} \langle \phi, \sigma_3 e_+(\cdot, \lambda) \rangle \overline{\langle \psi, e_+(\cdot, \lambda) \rangle }~ d \lambda\\ \nonumber
					& + \frac{1}{2 \pi}\int_{- \infty}^{\infty}  \langle \phi, \sigma_3 e_-(\cdot, \lambda) \rangle \overline{\langle \psi, e_-(\cdot, \lambda) \rangle }~ d \lambda.
				\end{align}
				The integrals on the right side converge absolutely.
			\end{Lemma}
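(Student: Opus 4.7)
The plan is to start from the spectral representation of Lemma \ref{spectral-repr} evaluated at $t=0$, insert the explicit kernel formula for the jump of the resolvent across the essential spectrum provided by Lemma \ref{densit2}, and then perform a change of variables together with the symmetry $\sigma_1 \mathcal{H}\sigma_1 = -\mathcal{H}$ to recast the spectral integral as a distorted Fourier expansion in terms of $e_+$ and $e_-$.

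First, at $t = 0$, identity \eqref{final-abs-spect2} with $P = I - P_d$ yields
\begin{align*}
\langle P\phi, \psi\rangle = \frac{1}{2\pi i}\int_{|z| \geq \mu}\langle \bigl[(\mathcal{H}-(z + i0))^{-1} - (\mathcal{H}-(z-i0))^{-1}\bigr]\phi, \psi \rangle \, dz.
\end{align*}
On the branch $z \geq \mu$ substitute $z = \lambda^2 + \mu$ with $\lambda \geq 0$, so $dz = 2\lambda \, d\lambda$. Lemma \ref{densit2} then gives
\begin{align*}
\frac{dz}{2\pi i}\bigl[(\mathcal{H}-(z+i0))^{-1} - (\mathcal{H}-(z-i0))^{-1}\bigr](x,y) = -\frac{1}{2\pi}\mathcal{E}(x,\lambda)\mathcal{E}^*(y,\lambda)\sigma_3\, d\lambda,
\end{align*}
where $\mathcal{E}(x,\lambda) = [\mathcal{F}(x,\lambda),\mathcal{G}(x,\lambda)]$. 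Expanding the outer product of the matrix $\mathcal{E}\mathcal{E}^*$ into its two rank-one summands and using $\overline{\mathcal{F}(\cdot,\lambda)} = \mathcal{G}(\cdot,\lambda)$ (which follows from Lemma \ref{Prp}, since $\mathcal{F} = F_1 k(\lambda)\underline{e}$ and $\mathcal{G} = G_1 k(\lambda)\underline{e}$ with $G_1(x,\lambda) = F_1(-x,\lambda)$ after unfolding the even/odd reflection), one rewrites the $\lambda \geq 0$ contribution as an integral over all $\lambda \in \R$ of the single density $\langle \phi,\sigma_3 e_+(\cdot,\lambda)\rangle\overline{\langle \psi, e_+(\cdot,\lambda)\rangle}$, precisely because $e_+$ was defined as $\mathcal{F}_+$ on $\{\lambda\geq 0\}$ and as $\mathcal{G}_+(\cdot,-\lambda)$ on $\{\lambda<0\}$.

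Second, the branch $z \leq -\mu$ is handled through the conjugation relation $\sigma_1 \mathcal{H}\sigma_1 = -\mathcal{H}$, which gives
\begin{align*}
(\mathcal{H}-(z \pm i0))^{-1} = -\sigma_1 (\mathcal{H}-(-z \mp i0))^{-1}\sigma_1, \qquad z \leq -\mu.
\end{align*}
Substituting $z = -(\lambda^2 + \mu)$ with $\lambda \geq 0$ and applying Lemma \ref{densit2} to the conjugated resolvent produces the density $\frac{1}{2\pi}\sigma_1\mathcal{E}(x,\lambda)\mathcal{E}^*(y,\lambda)\sigma_1\sigma_3$. Since $e_-(\cdot,\lambda) = \sigma_1 e_+(\cdot,\lambda)$ and $\sigma_1\sigma_3\sigma_1 = -\sigma_3$, the same calculation as in the first paragraph produces the $e_-$ integral, with the overall sign arranged so that both contributions add rather than cancel. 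Summing the two branches yields the asserted formula.

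The main technical obstacle is the bookkeeping of the two branches together with the conjugation symmetries: one must simultaneously (i) verify that after the change of variable the Jacobian $2\lambda$ combines with the $\frac{1}{2i\lambda}$ of Lemma \ref{densit2} to give a clean measure, (ii) check that the two columns of $\mathcal{E}(x,\lambda)\mathcal{E}^*(y,\lambda)$ unfold correctly into an integral over $\lambda \in \R$ of a single $e_+$ density via the extension \eqref{inter}, and (iii) justify absolute convergence for $\phi,\psi\in\mathcal{S}\times\mathcal{S}$, which follows from the uniform bounds in Lemma \ref{Das-asym-fomu} combined with the fact that $e_\pm(\cdot,\lambda)$ are bounded in $x$ uniformly in $\lambda$ and behave like plane waves at infinity, so that $\langle \phi,\sigma_3 e_\pm(\cdot,\lambda)\rangle$ decays in $\lambda$ through the Schwartz character of $\phi$ (integration by parts in $x$ against $e^{\pm i\lambda x}$). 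The admissibility hypothesis enters precisely to guarantee that the representation is exhaustive, i.e.\ that no resonance contribution at $\pm\mu$ has to be added.
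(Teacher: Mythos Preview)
Your approach is essentially the same as the paper's: both start from the $t=0$ case of Lemma \ref{spectral-repr}, change variables $z=\pm(\lambda^2+\mu)$ on the two branches, insert the density formula from Lemma \ref{densit2}, and unfold the two columns of $\mathcal{E}\mathcal{E}^*$ into a full-line integral via the definition of $e_\pm$. One small correction: the identity $\overline{\mathcal{F}(\cdot,\lambda)}=\mathcal{G}(\cdot,\lambda)$ you cite is not correct (and $G_1(x,\lambda)=F_2(-x,\lambda)$, not $F_1(-x,\lambda)$); the unfolding does not require any conjugation relation between $\mathcal{F}$ and $\mathcal{G}$ but follows directly, as you yourself note at the end of that sentence, from the definition $e_+(\cdot,\lambda)=\mathcal{F}_+(\cdot,\lambda)$ for $\lambda\geq 0$ and $e_+(\cdot,\lambda)=\mathcal{G}_+(\cdot,-\lambda)$ for $\lambda<0$, so that $\mathcal{E}(x,\lambda)=[e_+(x,\lambda),e_+(x,-\lambda)]$ and the two rank-one summands together give $\int_{\R}$.
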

			\begin{proof} 	The proof is as in \cite[Prop. 6.9]{K-S-stable}. We start with the following expansion of the stable spectrum in the principal value sense.
				\begin{align}\label{final-abs-spect3}
					\langle P \phi, \psi\rangle =&~~  \frac{1}{2 \pi i}\bigg(\int_{\mu}^{\infty} + \int_{ - \infty}^{- \mu}\bigg) \langle \big[ (\mathcal{H} - (\lambda + i 0))^{-1} -(\mathcal{H} - (\lambda - i 0))^{-1}  \big] \phi, \psi\rangle~d \lambda\\ \nonumber
					=&~~\frac{1}{2 \pi i}\int_{0}^{\infty}  2 \lambda \langle \big[ (\mathcal{H} - (\lambda^2 + \mu + i 0))^{-1} -(\mathcal{H} - (\lambda^2 + \mu - i 0))^{-1}  \big] \phi, \psi\rangle~d \lambda\\ \nonumber
					&~+~\frac{1}{2 \pi i}\int_{0}^{\infty}  2 \lambda \langle \big[ (\mathcal{H} - (-\lambda^2 - \mu + i 0))^{-1} -(\mathcal{H} - (-\lambda^2 - \mu - i 0))^{-1}  \big] \phi, \psi\rangle~d \lambda
				\end{align}
				In fact, this is obtained  as in the proof of Lemma \ref{spectral-repr} by calculation of
				\begin{align}
					\f{1}{2 i\pi } \oint_{\Gamma^{\pm}_R} ( \mathcal{H} - z)^{-1}~dz.
				\end{align}
				The only difference is the contour line connecting $ \pm i b -R $ and $ \pm ib +R $ respectively, where for $ b \gg1 $ large ($ b > a $ suffices)  we use
				\begin{align}
					\lim_{R \to \infty}\frac{1}{\pi}\int_0^{\infty} e^{-s b}~ \frac{\sin(s R)}{s} \langle e^{i s \mathcal{H} }\phi, \psi \rangle ~ds = \frac{1}{2}
				\end{align}
				by dominated convergence. Lemma \ref{densit2} implies (simply by definition)
				\begin{align}\nonumber
					&(\mathcal{H} - (\lambda^2 + \mu + i 0))^{-1}(x,y) -(\mathcal{H} - (\lambda^2 + \mu - i 0))^{-1}(x,y)\\ \label{des1}
					&~~= - \frac{1}{2 \lambda i} [e_+(x,\lambda), e_+(x, - \lambda)] \cdot [e_+(y,\lambda), e_+(y, - \lambda)]^* \sigma_3,\\[5pt] \nonumber
					&(\mathcal{H} - (-\lambda^2 - \mu + i 0))^{-1}(x,y) -(\mathcal{H} - (-\lambda^2 - \mu - i 0))^{-1}(x,y)\\ \label{des2}
					&~~= - \frac{1}{2 \lambda i} [e_-(x,\lambda), e_-(x, - \lambda)] \cdot [e_-(y,\lambda), e_-(y, - \lambda)]^* \sigma_3,  
				\end{align}
				where \eqref{des1}, \eqref{des2} are expressed with
				$ \mathcal{E}_+(x,\lambda) = \mathcal{E}(x,\lambda),~\mathcal{E}_-(x,\lambda) = \sigma_1\mathcal{E}_+(x,\lambda)$
				in the notation of Lemma \ref{densit2}. 
				Then the integrals  in  \eqref{final-abs-spect3} are written into
				\begin{align*}
					&\frac{1}{2 \pi} \int_0^{\infty} \langle \mathcal{E}_+^*(y, \lambda)\sigma_3 \phi(y), \mathcal{E}_+^*(x, \lambda) \psi(x) \rangle d \lambda\\
					&~~~~~~~~~~~~~~+ \frac{1}{2 \pi} \int_0^{\infty} \langle \mathcal{E}_-^*(y, \lambda)\sigma_3 \phi(y), \mathcal{E}_-^*(x, \lambda) \psi(x) \rangle~ d \lambda\\
					&= \frac{1}{2 \pi} \int_{- \infty}^{\infty} \langle  \phi, \sigma_3 e_+(\cdot , \lambda)\rangle \overline{ \langle  \psi, e_+(\cdot , \lambda) \rangle } d \lambda + \frac{1}{2 \pi} \int_{- \infty}^{\infty} \langle  \phi, \sigma_3 e_-(\cdot , \lambda)\rangle \overline{ \langle  \psi, e_-(\cdot , \lambda) \rangle }~ d \lambda.
				\end{align*}
				
			\end{proof}	
			~~\\
			By the proof of Lemma \ref{spectral-repr} and Lemma \ref{Fourier-inv} likewise imply the following  for $ e^{i t \mathcal{H}}$. 
			\begin{Corollary}\label{Fourier-inv-cor} Let $ \mathcal{H}$ be as in Lemma \ref{Fourier-inv}. Then there  holds for $ \phi, \psi \in \mathcal{S}(\R)$
				
				\begin{align}
					\langle e^{ i t\mathcal{H}}P \phi, \psi \rangle  = & ~ \frac{e^{i t \mu}}{2 \pi}\int_{- \infty}^{\infty} e^{it\lambda^2} \langle \phi, \sigma_3 e_+(\cdot, \lambda) \rangle \overline{\langle \psi, e_+(\cdot, \lambda) \rangle }~ d \lambda\\ \nonumber
					& + \frac{e^{- i t \mu}}{2 \pi}\int_{- \infty}^{\infty} e^{-it\lambda^2} \langle \phi, \sigma_3 e_-(\cdot, \lambda) \rangle \overline{\langle \psi, e_-(\cdot, \lambda) \rangle }~ d \lambda.\\ \nonumber
				\end{align}
			\end{Corollary}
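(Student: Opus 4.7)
The plan is to carry through the same change of variables that was used in Lemma \ref{Fourier-inv}, but now applied to the representation of $e^{it\mathcal{H}}$ provided by Lemma \ref{spectral-repr} instead of to the spectral projector itself. First I would apply Lemma \ref{spectral-repr} to $P\phi$ and observe that $P_{\zeta_j}P=0$ kills the discrete part, leaving only the absolute part
\begin{align*}
    \langle e^{it\mathcal{H}}P\phi,\psi\rangle = \frac{1}{2\pi i}\int_{|\lambda|\geq\mu} e^{it\lambda}\,\langle[(\mathcal{H}-(\lambda+i0))^{-1}-(\mathcal{H}-(\lambda-i0))^{-1}]\phi,\psi\rangle\,d\lambda,
\end{align*}
interpreted as a principal value limit as in \eqref{final-abs-spect2}.

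Next, I would split the domain $\{|\lambda|\ge\mu\}$ into $[\mu,\infty)$ and $(-\infty,-\mu]$ and perform the substitutions $\lambda\mapsto\lambda^2+\mu$ on the first piece (with $d\lambda=2\lambda\,d\lambda$, $\lambda\ge 0$) and $\lambda\mapsto-\lambda^2-\mu$ on the second. Factoring out $e^{\pm it\mu}$ from $e^{it(\pm\lambda^2\pm\mu)}$, the integral becomes
\begin{align*}
    \frac{e^{it\mu}}{2\pi i}\int_0^\infty 2\lambda e^{it\lambda^2}\langle[\cdots]_+\phi,\psi\rangle\,d\lambda+\frac{e^{-it\mu}}{2\pi i}\int_0^\infty 2\lambda e^{-it\lambda^2}\langle[\cdots]_-\phi,\psi\rangle\,d\lambda,
\end{align*}
where $[\cdots]_\pm$ denote the resolvent jumps at $\pm(\lambda^2+\mu)$. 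At this point I invoke Lemma \ref{densit2} (and its $\mathcal{H}\mapsto-\mathcal{H}$ conjugate version via $\sigma_1$) to rewrite these jumps in the factored form $-\frac{1}{2i\lambda}[e_\pm(x,\lambda),e_\pm(x,-\lambda)][e_\pm(y,\lambda),e_\pm(y,-\lambda)]^*\sigma_3$, exactly as in the proof of Lemma \ref{Fourier-inv}. The factor $2\lambda$ from the change of variables cancels the $1/(2i\lambda)$, and the resulting bilinear expression assembles into two integrals over $\lambda\in[0,\infty)$ in the pairings $\langle\phi,\sigma_3 e_\pm(\cdot,\lambda)\rangle\overline{\langle\psi,e_\pm(\cdot,\lambda)\rangle}$ for both signs of the second argument, which I then merge to integrals over $\lambda\in\R$ using the identification $e_\pm(x,-\lambda)=\mathcal{G}_\pm(x,\lambda)$ from the definition of $e_\pm$.

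The main technical obstacle is the same as in Lemma \ref{Fourier-inv}: the integral in $\lambda$ only converges as a principal value, and the change of variables together with the manipulation of the inner products inside the integral must be justified in the distributional/weak sense. Here the oscillatory factor $e^{\pm it\lambda^2}$ only improves convergence compared to Lemma \ref{Fourier-inv}, so the absolute convergence on the essential spectrum for $\phi,\psi\in\mathcal{S}(\R)$ follows from the polynomial bounds on $e_\pm(\cdot,\lambda)$ provided by Lemma \ref{LemmaJost-oscillating} and Lemma \ref{Das-asym-fomu} (for $\mu>0$), together with the rapid decay of $\phi,\psi$. The argument that the contour integral over the vertical segments at $\pm R+ib$ vanishes as $R\to\infty$ is identical to that in the proof of Lemma \ref{spectral-repr}, relying on the limiting absorption bound \eqref{LAP3} in Lemma \ref{LAP}, which is unaffected by the bounded factor $e^{it\lambda}$. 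Hence everything reduces to bookkeeping, and no new analytic input beyond what was used to prove Lemma \ref{Fourier-inv} and Lemma \ref{spectral-repr} is required.
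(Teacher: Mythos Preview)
Your proposal is correct and matches the paper's approach. The paper simply notes that the corollary follows ``by the proof of Lemma \ref{spectral-repr} and Lemma \ref{Fourier-inv}'' without giving further detail, and what you have written is precisely the unpacking of that remark: apply the spectral representation from Lemma \ref{spectral-repr}, kill the discrete sum via $P_{\zeta_j}P=0$, then repeat the change of variables and the use of Lemma \ref{densit2} from the proof of Lemma \ref{Fourier-inv}, now with the harmless extra factor $e^{it\lambda}$ carried along.
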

			\begin{Rem}
				Clearly the expressions in Lemma \ref{Fourier-inv} and Corollary \ref{Fourier-inv-cor} are well defined. In fact  under reasonable assumptions on $f : \R \to \R$, e.g. $f(x)$ continuous with polynomial upper bounds as $ |x| \to \infty$, we have 
				
				\begin{align*}
					(\phi, \psi) \mapsto \frac{1}{2 \pi}\int_{- \infty}^{\infty} f(\pm\lambda^2 \pm \mu) \langle \phi, \sigma_3 e_{\pm}(\cdot, \lambda) \rangle \overline{\langle \psi, e_{\pm}(\cdot, \lambda) \rangle }~ d \lambda,~ \phi, \psi \in \mathcal{S}(\R)
				\end{align*} 
				are well defined functionals. Recall $ \sigma_3 \mathcal{H} = \mathcal{H}^* \sigma_3 $, then for instance
				~~\\
				\begin{align*}
					\langle \phi, \sigma_3 e_{\pm}(\cdot, \lambda) \rangle (\mu + \lambda^2)^{m} = \langle \phi, \sigma_3 \mathcal{H}^me_{\pm}(\cdot, \lambda) \rangle = \langle \mathcal{H}^m \phi, \sigma_3 e_{\pm}(\cdot, \lambda) \rangle.\\ 
				\end{align*}
			\end{Rem}
			\begin{Rem} \label{Das-Rema}(a)~ Let $ X \subset L^2(\R) $ be a proper closed $ \mathcal{H}$-invariant subspace, that is $ \mathcal{H} (X) \subset X$ with $ \sigma_{\text{ess}}(\mathcal{H}_{\vert X}) = ( - \infty, - \mu] \cap [\mu, \infty)$. Then the statement of Lemma \ref{spectral-repr} holds true for  $ \mathcal{H}_{\vert X}$ via \eqref{final-abs-spect2} with $ \psi, \phi  \in W^{2,2}(\R) \cap X$ if $ \mathcal{H}_{\vert X}$ is admissible. We apply this in the following for $ X= L^2_{\text{odd}}(\R) = \overline{\mathcal{S}_{odd}(\R)}$ the closure of odd Schwartz functions.\\[5pt]
				(b)~If in Lemma \ref{Fourier-inv} and Corollary \ref{Fourier-inv-cor} the operator $ \mathcal{H}_{\vert X}$ with $ X = L^2_{\text{odd}}(\R)$ is admissible, then both expressions hold for all $ \phi, \psi \in \mathcal{S}_{\text{odd}}(\R)$. This is seen from writing
				\begin{align*}
					(\mathcal{H}_{\vert X} - (\lambda^2 + \mu + i 0))^{-1}(x,y)  - (\mathcal{H}_{\vert X} - (\lambda^2 + \mu - i 0))^{-1}(x,y)  = - \frac{1}{2 i \lambda}\tilde{\mathcal{E}}(x, \lambda) \mathcal{E}^{*}(y, \lambda) \sigma_3
				\end{align*}
				for $ \lambda \geq 0$ and where
				\begin{align}
					\tilde{\mathcal{E}}(x, \lambda) = 	\f12(\mathcal{E}(x, \lambda) - 	\mathcal{E}(- x, \lambda)),~ ~x \in \R,~~ \lambda \in \R.
				\end{align}
				Following the proof of Lemma \ref{Fourier-inv} we obtain the desired formula in an absolute sense.
			\end{Rem}
			~~\\
			\emph{The linearized NLS operator}. We now want to extend the Fourier inversion in  Lemma \ref{Fourier-inv} and Corollary \ref{Fourier-inv-cor} to the operator \eqref{system-final-final-R-operator}. Let  $V(x)$ as Definition \ref{Def-on}, i.e. we consider 
			\begin{align} \label{lin-po}
				&H_0 = - \partial_x^2 \sigma_3 =   \begin{pmatrix}
					- \partial_x^2 & 0\\
					0& - \partial_x^2 
				\end{pmatrix},~~~~~ V(x) = \begin{pmatrix}
					V_1(x) & V_2(x)\\
					- V_2(x) & -V_1(x)
				\end{pmatrix},~~~~ 
				\\[8pt] \nonumber
				& \mathcal{H} = \mathcal{H}_0 + V(x),~~~D(\mathcal{H}) = W^{2,2}_{\text{odd}}(\R, \C^2) \subset L^2_{\text{odd}}(\R, \C^2),
			\end{align}
			where
			\begin{align}\label{poli}
				V_1(x) = -3 W^4(x),~~ V_2(x) = -2 W^4(x),~~ W(x) = (1 + \frac{x^2}{3})^{-\f12}.
			\end{align}
			We need to use the the following Lemma, which we prove in the subsequent Section.
			\begin{Lemma} \label{Ass} Let $ V(x)$ be as above. Then  $ \mathcal{H} = - \partial_x^2 \sigma_3 + V(x)$ has no real eigenvalues and for small $ 0 < \mu \ll1 $ the operators $ \mathcal{H}_{\mu} = (- \partial_x^2 + \mu)\sigma_3 + V(x)$ are admissible with finite discrete spectrum. All operators are restricted to the subspace of odd functions $ L^2_{\text{odd}}(\R)$.
			\end{Lemma}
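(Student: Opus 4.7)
The proof decomposes naturally into the $\mu = 0$ case (no real eigenvalues of $\mathcal{H}$) and the small $\mu > 0$ case (admissibility and finiteness of $\sigma_d(\mathcal{H}_\mu)$). The unifying reduction is the unitary conjugation
$$U \;=\; \tfrac{1}{\sqrt{2}}\begin{pmatrix} 1 & 1 \\ 1 & -1 \end{pmatrix},$$
under which $\mathcal{H}$ becomes off-diagonal:
$$U^{-1} \mathcal{H}\, U \;=\; \begin{pmatrix} 0 & \tilde{\mathcal{L}}_-\\ \tilde{\mathcal{L}}_+ & 0\end{pmatrix}, \qquad \tilde{\mathcal{L}}_\pm \;=\; -\partial_x^2 - c_\pm W^4(x),\quad c_+ = 5,\; c_- = 1.$$
Applied to the $1$D reductions $\tilde{\mathcal{L}}_\pm$ on $L^2_{\mathrm{odd}}(\R)$, Lemma \ref{FS-inner-re} yields the following structural facts: $\tilde{\mathcal{L}}_-$ is non-negative with a threshold resonance generated by the odd bounded function $xW(x)$; $\tilde{\mathcal{L}}_+$ has exactly one simple negative eigenvalue and a threshold resonance generated by $x\Lambda W(x)$. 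Both resonance functions tend to a nonzero constant as $|x|\to\infty$ and hence lie in $L^\infty\setminus L^2$.

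For the absence of real eigenvalues of $\mathcal{H}$ (the case $\mu = 0$), a real eigenvalue $\lambda$ with eigenfunction $(u,v)\in L^2_{\mathrm{odd}}\times L^2_{\mathrm{odd}}$ satisfies $\tilde{\mathcal{L}}_- v = \lambda u$ and $\tilde{\mathcal{L}}_+ u = \lambda v$, hence $\tilde{\mathcal{L}}_-\tilde{\mathcal{L}}_+ u = \lambda^2 u$. The case $\lambda = 0$ is excluded because $\ker \tilde{\mathcal{L}}_\pm|_{L^2_{\mathrm{odd}}} = \{0\}$ (the odd bounded solutions $xW$, $x\Lambda W$ fail to be square integrable). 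For $\lambda \neq 0$ the polynomial decay $V_j \sim \langle x\rangle^{-4}$ puts us in the setting of Definition \ref{Def-on} with $m = 4$; the Jost asymptotics of Lemma \ref{LemmaJost-oscillating} then force any $L^2$ eigenfunction to be proportional at $\pm\infty$ to the exponentially decaying datum $f_3(\cdot,\lambda)$, a two-sided matching which by Lemma \ref{A-LEm} is equivalent to $\det A(\lambda) = 0$. This is ruled out by the well-known spectral analysis of the linearized energy-critical NLS around the Aubin--Talenti profile (e.g.\ using that $\tilde{\mathcal{L}}_+$ is strictly positive on the orthogonal complement of its generalized null space, together with a Pohozaev/virial identity for the product $\tilde{\mathcal{L}}_-\tilde{\mathcal{L}}_+$).

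For admissibility at small $\mu > 0$, the Jost base $\{f_j(\cdot,\lambda;\mu)\}$ and hence the matrix Wronskian $D_\mu(\lambda) = \mathcal{W}(F_1,G_2)(\lambda;\mu)$ depend smoothly on $\mu$ by Lemmas \ref{LemmaJost-decay}--\ref{LemmaJost-exponential-growth}. Non-vanishing of $\det A(\lambda;\mu)$ on compact subsets of $\{|\lambda|\geq \mu\}$ bounded away from $\pm\mu$ then follows by continuity from the $\mu = 0$ case above. The genuinely delicate step --- and the expected main obstacle --- is ruling out a resonance at the edges $\pm\mu$: at $\mu = 0$ one has $D_0(0) = 0$ (reflecting the threshold resonances of both $\tilde{\mathcal{L}}_\pm$), so admissibility for $\mu > 0$ requires $\partial_\mu D_\mu(0)\big|_{\mu = 0} \neq 0$. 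The plan is to compute this derivative explicitly in the spirit of Corollary \ref{Corr}, using that the Jost datum $f_3(\cdot,0;\mu)$ decays like $e^{-\sqrt{2\mu}\,x}$ once $\mu > 0$ and tracking how the first-order $\mu$-corrections of $f_1$ and $f_3$ pair with the odd resonance functions $xW$ and $x\Lambda W$ in the Wronskian, producing a nonzero coefficient; this yields $D_\mu(0) \neq 0$ for $0 < \mu \ll 1$.

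Finally, finiteness of $\sigma_d(\mathcal{H}_\mu)$ follows from analyticity of $\det A(\lambda;\mu)$ in $\lambda$ on $\C \setminus \sigma_{\mathrm{ess}}(\mathcal{H}_\mu)$ together with admissibility at the edges, which rules out accumulation at $\pm\mu$; a Fredholm/Birman--Schwinger argument applied to $I + (\mathcal{H}_0 - z)^{-1}V$ (compact on $L^2$ by the $\langle x\rangle^{-4}$ decay of $V$) bounds the total number of zeros uniformly in small $\mu$, yielding a finite discrete spectrum.
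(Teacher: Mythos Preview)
Your outline has the right skeleton but diverges from the paper in the two delicate points, and in one of them there is a real gap. For the $\mu=0$ case the paper does essentially what you suggest: it appeals to the Duyckaerts--Merle coercivity of the bilinear form $\mathcal{B}(f,f)=\langle Hf,\sigma_3 f\rangle$ on $G_\perp$ (Lemma~\ref{mod-Dm-Lem}), projecting an alleged eigenfunction off the resonances $\mathcal{W}_0,\mathcal{W}_1$ and reaching a sign contradiction. Your ``well-known spectral analysis / Pohozaev identity'' is this same input, just not named.

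The substantive difference, and the gap, is at the threshold. Your plan to show $D_\mu(0)\neq 0$ by differentiating in $\mu$ runs into the fact that the Jost function $f_3(\cdot,0;\mu)$ depends on $\mu$ through $\gamma=\sqrt{2\mu}$, so $\partial_\mu D_\mu(0)\big|_{\mu=0}$ is not a smooth derivative; you would at best compute a $\sqrt{\mu}$-expansion, and you have not argued why the leading coefficient is nonzero. The paper avoids this entirely: it uses a min-max counting argument (Proposition~\ref{Thres-is-no-res-dist-op}, adapting \cite{K-S-stable} and \cite{Pere}) in which a hypothetical threshold resonance $\psi$ is used, together with the known negative direction $\tilde\eta$ of $A=PL_+P$ and the truncated resonance $\chi_\epsilon\tilde W_1$, to produce three independent test vectors with $\langle Af,f\rangle<\mu$, contradicting the fact that $A$ has exactly two eigenvalues below $\mu$. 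A closely related gap is the \emph{near}-threshold regime for embedded eigenvalues: your continuity argument only covers $|\lambda|$ bounded away from $\mu$, but a sequence $(\mu_n,\lambda_n)$ with $\lambda_n^2-\mu_n\to 0$ escapes it. The paper (Proposition~\ref{No-embedded-for-dist-op}) handles this by building a perturbative fundamental system for the conjugated problem near $\lambda=\mu=0$ (Lemma~\ref{subsec:des-Lema}) and showing that the decaying Jost datum, matched against this basis on $x\sim\delta\lambda^{-1}$, is forced to pick up a nonzero component along the growing solutions $\psi^{\theta_0},\psi^{\theta_1}$ --- hence cannot vanish at $x=0$ and cannot be odd.
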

			~~\\
			Let us first collect properties of the Fourier base implied by  Lemma \ref{LemmaJost-decay} - \ref{LemmaJost-oscillating} and Lemma \ref{Das-asym}. Therefore let
			\[
			\underline{e}^+ = \underline{e} = \begin{pmatrix}
				1\\[1pt] 0
			\end{pmatrix},~ \underline{e}^- = \sigma_1 \underline{e} = \begin{pmatrix}
				0\\[1pt] 1
			\end{pmatrix}.
			\]
			We observe  (cf. \cite[Section 4.2]{OP})
			\begin{Corollary} \label{Das-Cor}  For $ \lambda \geq 0 $ we have 
				
				\begin{align} e_{\pm}(y, \lambda) = 
					\begin{cases}
						s(\lambda) e^{i \lambda y} \underline{e}^{\pm} + e^{\infty}_{\pm}(y, \lambda), &  y> 0,\\[5pt]
						[r(\lambda) e^{-i \lambda y} + e^{i \lambda y}] \underline{e}^{\pm} + e^{-\infty}_{\pm}(y, \lambda), & y < 0,
					\end{cases}
				\end{align}
				
				such that there holds
				\begin{align}
					& |e^{\pm \infty}(y, \lambda)| \leq C (\langle \lambda \rangle^{-1}\langle y \rangle^{-2} + 
					|\lambda|\langle \lambda \rangle^{-1} |\log(2\lambda \langle \lambda \rangle^{-1})| e^{\mp |\lambda| y }),\\
					&  |  \partial_y e^{\pm \infty}(y \lambda) | \leq C( \langle y \rangle^{-3} +|\lambda|\langle \lambda \rangle^{-1} |\log(2\lambda \langle \lambda \rangle^{-1})| e^{\mp |\lambda| y }(|\lambda| + \langle  y \rangle^{-3})),\\
					&|\partial_{\lambda}e^{\pm \infty}(y, \lambda)| \leq C \langle \lambda \rangle^{-1} |\log(2\lambda \langle \lambda \rangle^{-1})|(\langle y \rangle^{-1} + e^{\mp |\lambda| \frac{y}{2} }),\\[3pt]
					& |\partial_{\lambda}^2e^{\pm \infty}(y, \lambda)| \leq C |\lambda|^{-1}|\log(|\lambda|)|,~~|\lambda|  \lesssim 1.
				\end{align}
				Further 
				\begin{align}
					& \|e^{\pm \infty}(y, \lambda)\|_{L^2_y} \leq C ,~~~
					\|\partial_{\lambda}e^{\pm \infty}(y, \lambda)\|_{L^2_y} \leq C 
					|\lambda|^{- \f12} |\log(|\lambda|)|.
				\end{align}
			\end{Corollary}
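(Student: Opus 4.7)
The plan is to read off the estimates directly from the asymptotic formulas in Lemma \ref{Das-Lem} together with the decomposition $\mathcal{F}(\cdot,\lambda) = s(\lambda)f_1(\cdot,\lambda) + \tilde{s}(\lambda)f_3(\cdot,\lambda)$ and $\mathcal{G}(\cdot,\lambda) = f_2(\cdot,\lambda) + r(\lambda)f_1(\cdot,\lambda) + \tilde{r}(\lambda)f_3(\cdot,\lambda)$, combined with the sharp behaviour of $s,r,\tilde{s},\tilde{r}$ at the resonance threshold from Lemma \ref{Das-asym} and the pointwise bounds on the Jost solutions from Lemma \ref{LemmaJost-decay} and Lemma \ref{LemmaJost-oscillating}. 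Since the potential \eqref{lin-po}, \eqref{poli} satisfies $|\partial_x^l V_j(x)| \lesssim \langle x\rangle^{-4-l}$, we apply these lemmas with $m = 4$ and $\mu = 0$, so that $\gamma = |\lambda|$.

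First I would treat the case $y > 0$. By the definition $e_+(y,\lambda) = \mathcal{F}(y,\lambda)$ for $\lambda \geq 0$ and the decomposition above, the leading oscillatory term $s(\lambda) f_1(y,\lambda)$ contributes $s(\lambda) e^{i\lambda y}\underline{e}$ plus $s(\lambda)\cdot O(\langle\lambda\rangle^{-1}\langle y\rangle^{-2})$ from \eqref{diese-esti} (using $m = 4$, so $\langle y\rangle^{2-m} = \langle y\rangle^{-2}$), together with an exponentially decaying piece of size $O(|\lambda|\langle\lambda\rangle^{-1} e^{-|\lambda|y})$; the $s(\lambda)$ in front is $O(1)$ by Lemma \ref{Das-asym-fomu} and Lemma \ref{Das-asym}. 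The remaining correction $\tilde{s}(\lambda)f_3(y,\lambda)$ is $O(|\lambda|\,|\log|\lambda||\, e^{-|\lambda|y})$ by Lemma \ref{Das-asym} together with \eqref{first-est-decaying-part}, and this is absorbed into the advertised logarithmic exponential remainder. The case $y < 0$ is identical using $\mathcal{F}(y,\lambda) = g_2(y,\lambda) + r(\lambda) g_1(y,\lambda) + \tilde{r}(\lambda) g_3(y,\lambda)$ and $g_j(y,\lambda) = f_j(-y,\lambda)$, so that the two plane waves combine to $e^{i\lambda y} + r(\lambda)e^{-i\lambda y}$ and the remainder acquires an analogous $O(\langle\lambda\rangle^{-1}\langle y\rangle^{-2}) + O(\langle\lambda\rangle^{-1} e^{|\lambda|y})$ structure. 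The estimates for $e_-$ follow at once since $e_-(y,\lambda) = \sigma_1 e_+(y,\lambda)$ preserves all bounds.

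Next I would turn to the derivative estimates. The bound on $\partial_y e^{\pm\infty}$ is obtained by differentiating the Jost expansions termwise: Lemma \ref{LemmaJost-oscillating} with $k=1$, $l=0$ produces the polynomial gain $\langle y\rangle^{-3}$, while the exponentially damped piece gains a factor $|\lambda|$ upon differentiation, giving the claimed form. For $\partial_\lambda e^{\pm\infty}$, the logarithm originates from Lemma \ref{LemmafD} and the class $X_0$ of Lemma \ref{Das-asym}: the derivatives of $s,r,\tilde{s},\tilde{r}$ at small $\lambda$ satisfy $\partial_\lambda = O(|\log\lambda|)$, and this is the source of the $\langle\lambda\rangle^{-1}|\log(2|\lambda|\langle\lambda\rangle^{-1})|$ prefactor. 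The second-derivative bound $|\partial_\lambda^2 e^{\pm\infty}| \lesssim |\lambda|^{-1}|\log|\lambda||$ for $|\lambda|\lesssim 1$ is forced by the identity $\partial_\lambda^2 k(\lambda) = O(|\lambda|^{-1}|\log\lambda|)$ from \eqref{sen22}, applied after differentiating the representation $\mathcal{F}(y,\lambda) = F_1(y,\lambda) k(\lambda)\underline{e}$ twice; the second derivative of $F_1$ in $\lambda$ is controlled by Lemma \ref{LemmaJost-oscillating} with $l = 2 = m - 2$, which contributes only a $\log$ correction. The $L^2_y$ bounds are then obtained by integrating: the $\langle y\rangle^{-2}$ tails are square-integrable uniformly in $\lambda$, while the exponentials contribute $\int_0^\infty e^{-2|\lambda|y}\,dy = (2|\lambda|)^{-1}$, which combined with the $|\lambda|\langle\lambda\rangle^{-1}|\log|\lambda||$ prefactor yields $O(|\lambda|^{1/2}|\log|\lambda||)$ for $e^{\pm\infty}$ itself (in particular $O(1)$), and after differentiating once in $\lambda$ gains one further factor of $|\lambda|^{-1/2}$, giving the final $|\lambda|^{-1/2}|\log|\lambda||$.

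The main subtlety I expect is bookkeeping the competing small-$\lambda$ scales coming from two distinct mechanisms. The decaying Jost solution $f_3$ contributes an $e^{-|\lambda|y}$ whose $L^2_y$ mass concentrates on scale $|\lambda|^{-1}$, while the coefficient $\tilde{s}(\lambda) = O(|\lambda|\,|\log\lambda|)$ is the one that vanishes at the resonance; the product is borderline and precisely produces the stated logarithmic loss. Keeping track of these factors uniformly down to $\lambda = 0$ — where $k(\lambda)$ is only continuous (not $C^1$) by Lemma \ref{LemmafD} — is the main technical point, and it is handled exactly by the identities \eqref{ide1}, \eqref{ide2} together with the Wronskian formulas in Corollary \ref{Corr}. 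Everything else is a direct pointwise or $L^2$ integration of Jost-solution bounds that were already proved in Lemmas \ref{LemmaJost-decay}--\ref{LemmaJost-exponential-growth}.
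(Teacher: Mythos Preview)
Your proposal is correct and follows essentially the same route as the paper: inspect the expansion of $\mathcal{F},\mathcal{G}$ from Lemma~\ref{Das-Lem}, feed in the Jost bounds from Lemmas~\ref{LemmaJost-decay}--\ref{LemmaJost-oscillating} with $m=4$, and read off the logarithmic threshold behaviour from Lemma~\ref{Das-asym} (and \eqref{sen22} for the second $\lambda$-derivative). The one point the paper makes explicit that you leave implicit is that the estimate \eqref{diese-esti} from Lemma~\ref{LemmaJost-oscillating} is only stated for $y\geq x_0(\lambda)$; the paper closes this by noting that for $|\lambda|\geq\lambda_0$ one may take $x_0=0$, so the exceptional region $\{|\lambda|\leq\lambda_0,\ 0\leq y\leq x_0(\lambda)\}$ is compact in $(y,\lambda)$ and is handled by enlarging the constant.
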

			\begin{proof} We inspect the asymptotic expansion in Lemma \ref{Das-Lem} and use Lemma \ref{LemmaJost-decay}, Lemma \ref{LemmaJost-oscillating}. Further we need the asymptotics of $s(\lambda), r(\lambda)$ in Lemma \ref{Das-asym}. Then, in particular for $ |\lambda| \geq \lambda_0 \gtrsim 1 $ large, we obtain the estimates directly for  all $ y \geq 0$ and some constant $C > 0$.  Now for $|\lambda| \leq \lambda_0$ we may need to choose $y \geq x_0(\lambda) \gg1$, however the exceptional set is then compact in $(y, \lambda)$ and we only need to enlarge $C > 0$.
			\end{proof}
			\begin{Rem} \label{subs-Rem}
				In particular for $ \lambda < 0$  we have by \eqref{inter} 
				
				\begin{align} e_{\pm}(y, \lambda) = 
					\begin{cases}
						[\overline{r(\lambda) } e^{-i \lambda y} + e^{i \lambda y}]  \underline{e}^{\pm} + e^{-\infty}_{\pm}(-y,- \lambda), & y > 0,\\[5pt]
						\overline{s(\lambda)} e^{i \lambda y} \underline{e}^{\pm} + e^{\infty}_{\pm}(-y, -\lambda), & y < 0,
					\end{cases}
				\end{align}
				
			\end{Rem}
			~~\\
			\begin{Corollary} \label{Das-Cor2} For $ \lambda \geq 0 $ there holds
				
				\begin{align} &( \lambda \partial_{\lambda} - y \partial_y)e_{\pm}(y, \lambda) = 
					\begin{cases}
						[\lambda \partial_{\lambda}s(\lambda) ]e^{i \lambda y} \underline{e}^{\pm} + ( \lambda \partial_{\lambda} - y \partial_y)e^{\infty}_{\pm}(y, \lambda), &  y> 0,\\[5pt]
						[\lambda \partial_{\lambda}r(\lambda) ]e^{-i \lambda y} \underline{e}^{\pm} + ( \lambda \partial_{\lambda} - y \partial_y)e^{-\infty}_{\pm}(y, \lambda), & y < 0,
					\end{cases}
				\end{align}
				
				and for $\tilde{e}^{\pm\infty} = ( \lambda \partial_{\lambda} - y \partial_y)e^{\pm \infty}$ the estimate 
				\begin{align}
					& |\tilde{e}^{\pm \infty}(y, \lambda)| \leq C (\langle y \rangle^{-1} + |\lambda|\langle \lambda \rangle^{-1} |\log(2\lambda \langle \lambda \rangle^{-1})|e^{\mp |\lambda| \frac{y}{2} }),\\
					& |\partial_y \tilde{e}^{\pm \infty}(y, \lambda)| \leq C (\langle y \rangle^{-2} + |\lambda|\langle \lambda \rangle^{-1} |\log(2\lambda \langle \lambda \rangle^{-1})| e^{\mp |\lambda| \frac{y}{2} }).
				\end{align}
				
			\end{Corollary}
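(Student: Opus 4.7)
The strategy is to apply the scaling operator $L := \lambda\partial_\lambda - y\partial_y$ directly to the asymptotic decomposition of $e_\pm(y,\lambda)$ from Corollary \ref{Das-Cor}, exploit the scale invariance $L(e^{\pm i\lambda y}) = 0$, and then reduce the remaining estimates to the pointwise bounds on $e^{\pm\infty}_\pm$ and its first derivatives already established in Corollary \ref{Das-Cor}.

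First I would observe the key algebraic identity: since $e^{\pm i\lambda y}$ is a function of the product $\lambda y$ alone, one has
\[
L(e^{\pm i \lambda y}) = \lambda(\pm i y)e^{\pm i \lambda y} - y(\pm i\lambda)e^{\pm i \lambda y} = 0.
\]
Applying $L$ to the representations from Corollary \ref{Das-Cor}, the Leibniz rule then gives, for $y>0$,
\[
L\bigl(s(\lambda)e^{i\lambda y}\underline e^\pm\bigr) = \lambda s'(\lambda)\,e^{i\lambda y}\underline e^\pm,
\]
and for $y<0$,
\[
L\bigl(r(\lambda)e^{-i\lambda y}\underline e^\pm + e^{i\lambda y}\underline e^\pm\bigr) = \lambda r'(\lambda)\,e^{-i\lambda y}\underline e^\pm,
\]
since the ``incoming'' term $e^{i\lambda y}\underline e^\pm$ has no $\lambda$-dependent coefficient. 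Defining $\tilde e^{\pm\infty} := L\,e^{\pm\infty}_\pm$ then yields exactly the decomposition claimed in the statement.

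Next I would prove the pointwise bounds on $\tilde e^{\pm\infty}$. From Corollary \ref{Das-Cor} we have
\[
|\partial_\lambda e^{\pm\infty}(y,\lambda)| \lesssim \langle\lambda\rangle^{-1}|\log(2\lambda\langle\lambda\rangle^{-1})|\bigl(\langle y\rangle^{-1} + e^{\mp|\lambda|y/2}\bigr),
\]
so that $|\lambda\,\partial_\lambda e^{\pm\infty}|$ produces the second term in the claimed bound (up to an innocuous factor of $|\lambda|\langle\lambda\rangle^{-1}$) and also a harmless contribution $\lesssim \langle y\rangle^{-1}$. For the $y\partial_y$ piece I would use the $\partial_y$ bound from Corollary \ref{Das-Cor}, giving
\[
|y\,\partial_y e^{\pm\infty}| \lesssim \langle y\rangle^{-2} + |y||\lambda|\langle\lambda\rangle^{-1}|\log(\cdots)| e^{\mp|\lambda|y}\bigl(|\lambda| + \langle y\rangle^{-3}\bigr).
\]
The first summand is $\lesssim \langle y\rangle^{-1}$; the second is absorbed using the elementary inequality $|\lambda y|e^{-|\lambda|y/2} \lesssim 1$, which converts $|y||\lambda|e^{\mp|\lambda|y}$ into $e^{\mp|\lambda|y/2}$ at the cost of a universal constant. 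Summing gives the stated bound on $\tilde e^{\pm\infty}$. For $\partial_y\tilde e^{\pm\infty}$ the same scheme works: one writes $\partial_y L = L\partial_y + \partial_y$ (as operators) and again controls mixed derivatives $\partial_y\partial_\lambda e^{\pm\infty}$ and $\partial_y^2 e^{\pm\infty}$, the latter recovered either from the eigenvalue equation $(-\partial_y^2\sigma_3 + V)e_\pm = \lambda^2 e_\pm$ (which trades one $y$-derivative for $V$ plus $\lambda^2$, and $V = O(\langle y\rangle^{-4})$ is favorable) or by differentiating the Volterra integral equations underlying Lemma \ref{LemmaJost-decay}–\ref{LemmaJost-oscillating}.

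The routine algebra is straightforward; the only delicate point is the exponential bookkeeping, namely ensuring that the factor $e^{\mp|\lambda|y}$ inherited from the $\partial_y$-bound is upgraded to the larger $e^{\mp|\lambda|y/2}$ appearing in the statement. This is handled uniformly by the absorption $|\lambda y|e^{-|\lambda|y/2} \lesssim 1$ noted above, and is the only place where the halved exponent in the conclusion is actually used. No further fundamental issue arises; the constants $C$ depend on $V$ only through the decay rate $m=4$ of the linearized NLS potentials \eqref{poli}.
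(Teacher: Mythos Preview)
Your approach is correct and is exactly the (implicit) argument the paper has in mind: apply $L=\lambda\partial_\lambda-y\partial_y$ directly to the decomposition in Corollary~\ref{Das-Cor}, use $L(e^{\pm i\lambda y})=0$, and read off the estimates from the already-established bounds on $e^{\pm\infty}_\pm$, $\partial_y e^{\pm\infty}_\pm$, $\partial_\lambda e^{\pm\infty}_\pm$ together with the absorption $|\lambda y|e^{-|\lambda|y/2}\lesssim 1$. One small slip: the commutator is $[\partial_y,L]=-\partial_y$, so $\partial_y L = L\partial_y - \partial_y$ rather than $+\partial_y$; this is harmless for the bounds since only $|\partial_y e^{\pm\infty}|$ enters.
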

			\begin{Rem}
				For  $\lambda \geq 0$ there holds for second order derivatives
				
				\begin{align*}
					&( \lambda^2\partial_{\lambda}^2 - y^2\partial_y^2)e_{\pm}(y, \lambda) = 
					\begin{cases}
						e^{i \lambda y} [\lambda^2 \partial_{\lambda}^2+2 \lambda^2 \partial_{\lambda} i y] s(\lambda) \underline{e}^{\pm} + \tilde{e}^{\infty}_{\pm}(y, \lambda), &  y> 0,\\[5pt]
						e^{-i \lambda y}[\lambda^2 \partial_{\lambda}^2- 2 \lambda^2 \partial_{\lambda} i y] r(\lambda)  \underline{e}^{\pm} + \tilde{e}^{-\infty}_{\pm}(y, \lambda), & y < 0,
					\end{cases},
				\end{align*}
				
				where $ \tilde{e}_{\pm} = ( \lambda^2 \partial_{\lambda}^2 - y^2 \partial_y^2)e_{\pm}$ and 
				\begin{align*}
					& | \tilde{e}^{\pm \infty}(y, \lambda)| \leq C |\lambda \log(|\lambda|)|,~~0< |\lambda| \ll1.
				\end{align*}
			\end{Rem}
			Finally we need to state the following Corollary.
			\begin{Corollary} \label{proj-deponm}The operators $ \mathcal{H} = (- \partial_x^2 + \mu)\sigma_3 +V$ from Lemma \ref{Ass}  have discrete spectrum $\sigma_d(\mathcal{H})$ with continuous $\mu$ dependence and the corresponding Riesz projections 
				\[
				P_{\zeta(\mu)} \to P_{\zeta(0)},~~~ \zeta(\mu) \in \sigma_d(\mathcal{H}),~~\text{as}~ \mu \to 0^+
				\] 
				at least in the strong sense. Further $ e^{it\mathcal{H}}$ converges in the strong sense locally uniform in $t$.
			\end{Corollary}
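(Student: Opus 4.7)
The strategy rests on three pillars: norm-resolvent continuity of $\mathcal{H}_\mu$ in $\mu$ (which is immediate since $\mathcal{H}_\mu - \mathcal{H}_0 = \mu\sigma_3$ is bounded with norm $|\mu|$), the contour formula for the Riesz projections, and the distorted Fourier representation from Corollary \ref{Fourier-inv-cor} together with the uniform-in-$\mu$ Jost estimates of Section \ref{subsec: Scat}.

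First, I would treat the discrete spectrum. By Lemma \ref{Ass}, $\sigma_d(\mathcal{H}_0)$ consists of finitely many eigenvalues, all strictly off the real axis. Pick a compact set $K \subset \mathbb{C}\setminus\mathbb{R}$ containing $\sigma_d(\mathcal{H}_0)$ in its interior, together with disjoint small positively oriented contours $\Gamma_j \subset K$ around each $\zeta_j(0)$. For $\mu$ small enough, $K \cap \sigma_{\text{ess}}(\mathcal{H}_\mu) = \emptyset$. Applying the second resolvent identity
\begin{equation*}
(\mathcal{H}_\mu - z)^{-1} - (\mathcal{H}_0 - z)^{-1} = -\mu\,(\mathcal{H}_\mu - z)^{-1}\sigma_3(\mathcal{H}_0 - z)^{-1}
\end{equation*}
together with the fact that $\|(\mathcal{H}_0-z)^{-1}\|$ is uniformly bounded on $\Gamma_j$, yields norm convergence $(\mathcal{H}_\mu-z)^{-1}\to (\mathcal{H}_0-z)^{-1}$ uniformly on $\Gamma_j$. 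Inserting into $P_{\zeta(\mu)} = (2\pi i)^{-1}\oint_{\Gamma_j}(\mathcal{H}_\mu - z)^{-1}\,dz$ gives norm convergence of the Riesz projections (in particular strongly), and $\rank P_{\zeta(\mu)} = \rank P_{\zeta(0)}$ by standard Kato perturbation theory. Continuous dependence of the eigenvalues $\zeta_j(\mu) \in \Gamma_j$ then follows.

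Second, for the propagator, I would split $e^{it\mathcal{H}_\mu} = e^{it\mathcal{H}_\mu}P^\mu + \sum_j e^{it\mathcal{H}_\mu}P_{\zeta_j(\mu)}$. The discrete part reduces to a matrix exponential on the finite-dimensional ranges $\Ran P_{\zeta_j(\mu)}$; the norm convergence of the projections together with continuity of $\zeta_j(\mu)$ gives convergence $e^{it\mathcal{H}_\mu}P_{\zeta_j(\mu)}\to e^{it\mathcal{H}_0}P_{\zeta_j(0)}$ in norm, locally uniformly in $t$. For the essential part, test against Schwartz functions and apply Corollary \ref{Fourier-inv-cor}:
\begin{equation*}
\langle e^{it\mathcal{H}_\mu}P^\mu\phi,\psi\rangle = \frac{e^{it\mu}}{2\pi}\int_\mathbb{R} e^{it\lambda^2}\langle \phi,\sigma_3 e_+^\mu(\cdot,\lambda)\rangle\overline{\langle\psi,e_+^\mu(\cdot,\lambda)\rangle}\,d\lambda + (\text{analogue for }e_-^\mu).
\end{equation*}
The Jost solutions $f_1^\mu, f_3^\mu, f_4^\mu$ from Lemmas \ref{LemmaJost-decay}--\ref{LemmaJost-exponential-growth} were constructed with Volterra iterations whose kernels depend continuously (indeed smoothly) on $\gamma=\sqrt{\lambda^2+2\mu}$, hence on $\mu$; this continuity propagates to the Wronskians $D(\lambda;\mu)$ and to $k(\lambda;\mu)$, $s(\lambda;\mu)$, $r(\lambda;\mu)$. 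Consequently $e_\pm^\mu(x,\lambda)\to e_\pm^0(x,\lambda)$ pointwise on $\mathbb{R}\times\mathbb{R}_*$. Combined with $e^{it\mu}\to 1$ uniformly on compact $t$-sets, dominated convergence yields the strong limit for $\phi,\psi\in\mathcal{S}_{\text{odd}}(\mathbb{R})$, locally uniformly in $t$. A density argument, using that $\sup_\mu\|e^{it\mathcal{H}_\mu}\|_{2\to 2}$ grows at most like $e^{C|t|}$ (bounded-perturbation semigroup bound), then extends the convergence to all of $L^2_{\text{odd}}$.

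\textbf{Main obstacle.} The delicate point is producing a $\mu$-integrable dominating function for the spectral integral uniformly in $\mu\in[0,\mu_0]$. As $\mu\to 0^+$ the threshold $\lambda=0$ degenerates from regular (for $\mu>0$) to a resonance (for $\mu=0$): the quantity $k(\lambda;\mu)$ is smooth at $\lambda=0$ when $\mu>0$ but only continuous with a $\log|\lambda|$ derivative when $\mu=0$ (Lemma \ref{LemmafD}). To get a uniform majorant one must rerun the Wronskian calculation of Lemma \ref{Das-asym-fomu}/\ref{Das-asym} in a joint neighborhood of $(\lambda,\mu)=(0,0)$ and verify that, uniformly in $\mu$, the bounds of Corollary \ref{Das-Cor} hold with the single $(\lambda,\mu)$-independent logarithmic weight $|\log(|\lambda|\langle\lambda\rangle^{-1})|$. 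Once this uniform control is in place, the exponential oscillation $e^{it\lambda^2}$ allows dominated convergence to close the argument.
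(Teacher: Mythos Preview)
Your treatment of the discrete spectrum and Riesz projections matches the paper's: both use the Riesz contour formula together with norm resolvent convergence coming from $\mathcal{H}_\mu-\mathcal{H}_0=\mu\sigma_3$.

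For the propagator, however, you take a genuinely different route. You invoke the distorted Fourier representation of Corollary~\ref{Fourier-inv-cor} and pass to the limit inside the spectral integral via dominated convergence; as you correctly flag, this forces you to control the Jost solutions, Wronskians, and coefficients $s,r,k$ \emph{uniformly} as $(\lambda,\mu)\to(0,0)$, exactly where the threshold degenerates from regular to resonant. The paper sidesteps this issue entirely: it appeals to the inverse Laplace contour from the proof of Lemma~\ref{spectral-repr}, where the resolvent is evaluated along $\{b+is:s\in\R\}$ with $b$ large and fixed, far from the essential spectrum. On that contour the second resolvent identity gives norm convergence $(\mathcal{H}_\mu-z)^{-1}\to(\mathcal{H}_0-z)^{-1}$ uniformly on compact $s$-sets, and the propagator convergence follows without any threshold analysis. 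Your approach is workable (the uniform-in-$\mu$ Jost bounds scattered through Section~\ref{subsec: Scat}, e.g.\ the remark after Lemma~\ref{LemmaJost-oscillating} and Lemma~\ref{Das-asym-fomu}, provide most of the ingredients), but it requires real work to assemble a single dominating function near $(\lambda,\mu)=(0,0)$; the paper's contour argument is shorter and more robust. Note also that the paper uses this very Corollary as input to \emph{establish} the $\mu=0$ Fourier inversion in Proposition~\ref{Fourier-inv-fin}, so keeping the proof independent of the spectral representation is organizationally cleaner.
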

			For the proof we note finite systems of eigenvalues have continuous dependence on $\mu$ in the perturbation $\mathcal{H} = \tilde{\mathcal{H}} + \mu \sigma_3$, see e.g. \cite[Chapter 7, I.3 ]{Kato}. The convergence of $P_{\zeta}$ follows from \eqref{Riesz} and the strong resolvent convergence 
			In fact the isolated eigenvalues are stable if $0 < \mu \ll1 $ is small enough (and thus $P_{\zeta}$ converge in norm). The convergence of $ e^{it\mathcal{H}}$ is likewise obtained from strong resolvent convergence in a contour integral as in the proof of Lemma \ref{spectral-repr}.
	\begin{Prop}\label{Fourier-inv-fin} Let $ \mathcal{H} = - \partial_x^2 \sigma_3 + V(x)$ with $V$ as in \eqref{lin-po}, \eqref{poli}  and $ P_c = I - P_d$ the associated projection. Then for $ \phi, \psi \in \mathcal{S}_{\text{odd}}(\R)$
		\begin{align}\label{FI-fin}
			\langle P_c \phi, \psi \rangle  =& ~ \frac{1}{2 \pi}\int_{- \infty}^{\infty} \langle \phi, \sigma_3 e_+(\cdot, \lambda) \rangle \overline{\langle \psi, e_+(\cdot, \lambda) \rangle }~ d \lambda\\ \nonumber
			& + \frac{1}{2 \pi}\int_{- \infty}^{\infty}  \langle \phi, \sigma_3 e_-(\cdot, \lambda) \rangle \overline{\langle \psi, e_-(\cdot, \lambda) \rangle }~ d \lambda.
		\end{align}
		Further there holds	
		\begin{align}\label{spec-fin}
			\langle e^{ i t\mathcal{H}}P_c \phi, \psi \rangle  = & ~ \frac{1}{2 \pi}\int_{- \infty}^{\infty} e^{it\lambda^2} \langle \phi, \sigma_3 e_+(\cdot, \lambda) \rangle \overline{\langle \psi, e_+(\cdot, \lambda) \rangle }~ d \lambda\\ \nonumber
			& + \frac{1}{2 \pi}\int_{- \infty}^{\infty} e^{-it\lambda^2} \langle \phi, \sigma_3 e_-(\cdot, \lambda) \rangle \overline{\langle \psi, e_-(\cdot, \lambda) \rangle }~ d \lambda. 
		\end{align}
		Here all integrals are well defined and $e_{\pm}(x, \lambda)$ are defined as above for $ \mu = 0$.
	\end{Prop}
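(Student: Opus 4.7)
The strategy is a limiting argument: approximate $\mathcal{H}$ by the family $\mathcal{H}_\mu=(-\partial_x^2+\mu)\sigma_3+V$ for small $\mu>0$ and pass to the limit $\mu\to 0^+$ in the identities already established in Lemma \ref{Fourier-inv} and Corollary \ref{Fourier-inv-cor}. By Lemma \ref{Ass}, $\mathcal{H}_\mu$ restricted to $L^2_{\text{odd}}(\R)$ is admissible for every sufficiently small $\mu>0$ and has finite discrete spectrum. Invoking Remark \ref{Das-Rema}(b), the inversion formula \eqref{FI-fin} and the semigroup formula \eqref{spec-fin} hold at the level $\mu>0$ for every $\phi,\psi\in\mathcal{S}_{\text{odd}}(\R)$, with $e_\pm^\mu(x,\lambda)$ replacing $e_\pm(x,\lambda)$ and $P_c^\mu=I-P_d^\mu$ replacing $P_c$.

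The first step is to handle the left-hand side. By Corollary \ref{proj-deponm}, the Riesz projections $P_d^\mu$ converge strongly to $P_d$ as $\mu\to 0^+$, hence $\langle P_c^\mu\phi,\psi\rangle\to\langle P_c\phi,\psi\rangle$ and $\langle e^{it\mathcal{H}_\mu}P_c^\mu\phi,\psi\rangle\to\langle e^{it\mathcal{H}}P_c\phi,\psi\rangle$ for every fixed $t$. Next, for the Fourier coefficients one observes that the construction of the Jost solutions in Lemma \ref{LemmaJost-decay}, Lemma \ref{LemmaJost-oscillating}, Lemma \ref{LemmaJost-exponential-growth}, together with Lemma \ref{Das-asym}, depends continuously on $\mu\in[0,\mu_0]$ on every compact subset of $\lambda\in\R$, so that $e_\pm^\mu(x,\lambda)\to e_\pm(x,\lambda)$ pointwise in $(x,\lambda)$ and hence $\langle\phi,\sigma_3 e_\pm^\mu(\cdot,\lambda)\rangle\to\langle\phi,\sigma_3 e_\pm(\cdot,\lambda)\rangle$ pointwise.

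The key step is a uniform-in-$\mu$ dominated convergence argument on the spectral integrals. For the behaviour near $\lambda=0$ one uses the refined bounds of Corollary \ref{Das-Cor}, which produce only a logarithmic singularity of the form $|\log|\lambda||$, integrable against $d\lambda$ near zero (the estimates there are stated for $\mu=0$ but follow in the same way uniformly in small $\mu\geq 0$, since the constants in Lemma \ref{LemmafD} and Lemma \ref{Das-asym} depend continuously on $\mu$ through the Wronskian identities \eqref{ide1}, \eqref{ide2}). For large $|\lambda|$, one exploits the Schwartz character of $\phi,\psi\in\mathcal{S}_{\text{odd}}(\R)$ together with $\sigma_3\mathcal{H}_\mu=\mathcal{H}_\mu^*\sigma_3$ and $\mathcal{H}_\mu e_\pm^\mu(\cdot,\lambda)=\pm(\lambda^2+\mu)e_\pm^\mu(\cdot,\lambda)$: for any $m\in\Z_+$,
\[
(\lambda^2+\mu)^{m}\langle\phi,\sigma_3 e_\pm^\mu(\cdot,\lambda)\rangle=\langle\mathcal{H}_\mu^m\phi,\sigma_3 e_\pm^\mu(\cdot,\lambda)\rangle,
\]
and the right-hand side is uniformly bounded in $\mu$ because $\|\mathcal{H}_\mu^m\phi\|_{L^{2,\sigma}}$ is, while $\|e_\pm^\mu(\cdot,\lambda)\|_{L^{2,-\sigma}}$ is bounded for $\sigma>1/2$ by Corollary \ref{Das-Cor}. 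This yields an integrable majorant $C\langle\lambda\rangle^{-2m}(1+|\log|\lambda||)^2$ uniformly in $\mu\in[0,\mu_0]$, allowing dominated convergence on each of the four integrals in \eqref{FI-fin} and \eqref{spec-fin}; for \eqref{spec-fin} the extra oscillatory factor $e^{\pm it(\lambda^2+\mu)}\to e^{\pm it\lambda^2}$ locally uniformly in $\lambda$ presents no additional difficulty.

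The main obstacle is the uniform control of the spectral density precisely at the threshold $\lambda=0$, which for $\mu=0$ is a resonance and produces the logarithmic divergence of $\partial_\lambda k(\lambda)$ recorded in Lemma \ref{LemmafD}. The point is that this singularity is only logarithmic and thus still produces integrable weights; moreover the identities $2i\lambda r(\lambda)=w(g_2,f_3)\tilde{s}(\lambda)+w(g_2,f_1)s(\lambda)$ and $2\lambda\tilde{r}(\lambda)=w(g_4,f_3)\tilde{s}(\lambda)+w(g_4,f_1)s(\lambda)$ of Lemma \ref{Das-asym} give the continuous limits $s(0)=-1$, $r(0)=0$ after the cancellation of the $\lambda^{-1}|\log\lambda|$ factors, so that the expansion of $e_\pm(\cdot,\lambda)$ in Corollary \ref{Das-Cor} remains integrable in a neighbourhood of $\lambda=0$. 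Once this uniform integrable majorant is secured, dominated convergence concludes \eqref{FI-fin} and \eqref{spec-fin} for $\mathcal{H}$ at $\mu=0$, and the remark after Corollary \ref{Fourier-inv-cor} ensures that the resulting integrals are well defined.
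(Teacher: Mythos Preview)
Your proposal is correct and follows essentially the same limiting argument as the paper: approximate by the admissible operators $\mathcal{H}_\mu$ for small $\mu>0$, use Corollary \ref{proj-deponm} for the left-hand side, and pass to the limit on the right by dominated convergence with pointwise convergence of the Jost data. The one minor difference is that for large $|\lambda|$ the paper obtains the $O(\langle\lambda\rangle^{-2})$ bound by explicitly splitting $e_\pm^\mu$ via Corollary \ref{Das-Cor} and integrating by parts in $y$, whereas you invoke the duality identity $(\lambda^2+\mu)^m\langle\phi,\sigma_3 e_\pm^\mu\rangle=\langle\mathcal{H}_\mu^m\phi,\sigma_3 e_\pm^\mu\rangle$ directly; both routes give a uniform integrable majorant. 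Your discussion of the threshold is slightly overcomplicated: near $\lambda=0$ one only needs $|e_\pm^\mu(y,\lambda)|\leq C$ uniformly (which follows from the Jost bounds) so that the Fourier coefficients are uniformly bounded there, and no logarithmic weight is actually required in the majorant.
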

	\begin{proof} 
		By Lemma \ref{Ass}, Lemma \ref{Fourier-inv} and Corollary \ref{Fourier-inv-cor} (note the Remark \ref{Das-Rema}),  we obtain \eqref{FI-fin} and  \eqref{spec-fin} for the respective admissible operators $\mathcal{H}^{\mu} = (- \partial_x^2 + \mu) \sigma_3 + V(x) $ where $ 0 < \mu \ll1 $ is taken small enough. Further,  the $\mu$-dependence of  associated Jost solutions $ f_j(y, \lambda, \mu)$ in the Lemma \ref{LemmaJost-decay}, \ref{LemmaJost-oscillating}, \ref{LemmaJost-exponential-growth}, of $2i \lambda D(\lambda)^{-1}$ and in particular of the coefficients $s(\lambda), r(\lambda)$ is smooth for fixed $ (y, \lambda) \in \R \times \R \backslash \{ 0\}$. 
		The left side of \eqref{FI-fin}, \eqref{spec-fin}  converges as $ \mu \to 0^+$  by Corollary \ref{proj-deponm} and we  evaluate the right side by dominated convergence (first in $ y$, then in $\lambda$). For \eqref{FI-fin}, say,  we split the integrand into
		\begin{align*}
			\overline{\langle \psi, e_{\pm}(\cdot, \lambda) \rangle}_{L^2_y}  =& \int_0^{\infty} e^{ i \lambda y}  \langle \overline{\psi}(y), \underline{e}^{\pm} \rangle_{\C^2} dy~ s(\lambda) + \int_0^{\infty}  \overline{\psi}(y) \cdot e^{\infty}_{\pm}(y, \lambda)~dy\\
			&+  \int_{-\infty}^0  \langle \overline{\psi}(y), \underline{e}^{\pm} \rangle_{\C^2}[e^{- i \lambda y} r(\lambda) + e^{i \lambda y}] dy + \int_{-\infty}^0  \overline{\psi}(y) \cdot e^{-\infty}_{\pm}(y, \lambda)~dy\\
			\langle \phi, \sigma_3e_{\pm}(\cdot, \lambda) \rangle_{L^2_y}  =& \int_0^{\infty} e^{ -i \lambda y}  \langle \phi(y), \sigma_3 \underline{e}^{\pm} \rangle_{\C^2} dy~ \overline{s(\lambda)} + \int_0^{\infty}  \phi(y) \cdot \sigma_3\overline{ e^{\infty}_{\pm}(y, \lambda)}~dy\\
			&+  \int_{-\infty}^0  \langle \phi(y), \sigma_3\underline{e}^{\pm} \rangle_{\C^2}[e^{i \lambda y} \overline{r(\lambda)} + e^{-i \lambda y}] dy + \int_{-\infty}^0  \phi(y) \cdot \sigma_3 \overline{e^{-\infty}_{\pm}(y, \lambda)}.
			~dy
		\end{align*}
		For fixed $ \lambda \in \R \backslash \{ 0 \}$, the integrals of the form
		\[
		\int_0^{\infty} f(y) \cdot e^{\infty}_{\pm}(y, \lambda)~dy,~ \int_{-\infty}^0 f(y) \cdot e^{-\infty}_{\pm}(y, \lambda)~dy,
		\]
		converge as $\mu \to 0^+$ by Corollary \ref{Das-Cor} (and Remark \ref{subs-Rem}). Now  again with Corollary \ref{Das-Cor}, integration by parts and Lemma \ref{Das-asym-fomu} all of the above integrals are in $ O (\langle \lambda \rangle^{-1})$ as $ |\lambda| \to \infty$. Hence we obtain (uniformly in $ \mu$)
		\[
		\overline{\langle \psi, e_{\pm}(\cdot, \lambda) \rangle}_{L^2_y},~ \langle \phi, \sigma_3e_{\pm}(\cdot, \lambda) \rangle_{L^2_y}  \in O(\langle \lambda \rangle^{-2}),~~|\lambda| \to \infty,
		\]
		by which we conclude the result. The proof of \eqref{spec-fin} follows similarly.
	\end{proof}
	\begin{Corollary}[$L^2$ stability]  Let $V$ and $\mathcal{H}$ be as in Proposition \ref{Fourier-inv-fin}. Then there holds
		\begin{align}
			\sup_{t \in \R} \| e^{i t \mathcal{H}}  P_c \|_{2 \to 2} \leq C.
		\end{align} 
	\end{Corollary}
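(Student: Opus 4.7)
I would start from the spectral representation of Proposition \ref{Fourier-inv-fin}: for $\phi,\psi\in\mathcal{S}_{\text{odd}}(\R)$,
\begin{align*}
\langle e^{it\mathcal{H}}P_c\phi,\psi\rangle \;=\; \frac{1}{2\pi}\sum_{\pm}\int_{\R} e^{\pm it\lambda^2}\,\hat\phi_\pm(\lambda)\,\overline{\tilde\psi_\pm(\lambda)}\;d\lambda,
\end{align*}
where $\hat\phi_\pm(\lambda):=\langle \phi,\sigma_3 e_\pm(\cdot,\lambda)\rangle$ and $\tilde\psi_\pm(\lambda):=\langle \psi, e_\pm(\cdot,\lambda)\rangle$. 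Since $|e^{\pm it\lambda^2}|=1$, Cauchy--Schwarz in $\lambda$ gives
\begin{align*}
|\langle e^{it\mathcal{H}}P_c\phi,\psi\rangle|\;\leq\; \frac{1}{2\pi}\sum_{\pm}\|\hat\phi_\pm\|_{L^2_\lambda}\,\|\tilde\psi_\pm\|_{L^2_\lambda},
\end{align*}
uniformly in $t\in\R$. Therefore the Corollary reduces to the Plancherel-type bounds
\begin{align*}
\|\hat\phi_\pm\|_{L^2_\lambda}\lesssim\|\phi\|_{L^2_x},\qquad \|\tilde\psi_\pm\|_{L^2_\lambda}\lesssim\|\psi\|_{L^2_x},
\end{align*}
for the two distorted Fourier transforms. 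Duality and the density of $\mathcal{S}_{\text{odd}}$ in $L^2_{\text{odd}}$ then yield the claimed operator bound. Note that $\sigma_3$ is an $L^2$-isometry, so the estimate for $\hat\phi_\pm$ is equivalent to the one for $\tilde{(\sigma_3\phi)}_\pm$, reducing both bounds to a single statement.

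\textbf{Key step: boundedness of the distorted Fourier transform.} To prove the Plancherel bound $\|\tilde\phi_\pm\|_{L^2_\lambda}\lesssim\|\phi\|_{L^2_x}$, I would use the asymptotic decomposition from Corollary \ref{Das-Cor} and Remark \ref{subs-Rem}: for $y>0$ respectively $y<0$,
\begin{align*}
e_\pm(y,\lambda)\;=\; s(\lambda)\,e^{i\lambda y}\underline{e}^\pm + e_\pm^{\infty}(y,\lambda),\qquad \bigl(r(\lambda)e^{-i\lambda y}+e^{i\lambda y}\bigr)\underline{e}^\pm + e_\pm^{-\infty}(y,\lambda),
\end{align*}
and analogously for $\lambda<0$. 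Splitting $e_\pm=e_\pm^{\text{free}}+e_\pm^{\text{err}}$ accordingly, the free part contributes an integral operator with kernel a linear combination of $e^{\pm i\lambda y}$ weighted by the scattering coefficients $s,r$; since $|s(\lambda)|^2+|r(\lambda)|^2=1$ by Lemma \ref{Das-Lem}, classical Plancherel yields the bound directly. For the remainder $e_\pm^{\text{err}}=e_\pm^{\pm\infty}$, the pointwise estimates of Corollary \ref{Das-Cor} together with the uniform bound $\|e_\pm^{\pm\infty}(\cdot,\lambda)\|_{L^2_y}\leq C$ are fed into a combination of Schur's test on high frequencies $|\lambda|\gtrsim 1$ (using the $\langle\lambda\rangle^{-1}\langle y\rangle^{-2}$ decay) and a Hilbert--Schmidt estimate on low frequencies $|\lambda|\lesssim 1$, giving an $L^2_x\to L^2_\lambda$ bound for the error kernel.

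\textbf{Main obstacle.} The delicate point is the low-frequency regime $\lambda\to 0$, where the threshold resonance at the spectral edge $\mu=0$ is reflected in the logarithmic singularities of $s,r$ and their derivatives (Lemma \ref{Das-asym}), as well as in the exponential-in-$|\lambda|y$ contribution to $e_\pm^{\pm\infty}$. These terms are bounded pointwise in $\lambda$ but decay too slowly at $\lambda=0$ to apply Schur's test uniformly; separating them into a compactly supported low-frequency piece (where a direct Hilbert--Schmidt estimate with the $\|\cdot\|_{L^2_y}$ bounds of Corollary \ref{Das-Cor} gives a finite $L^2_\lambda\,L^2_y$ kernel norm) and a high-frequency piece (handled by Schur with the faster $\langle\lambda\rangle^{-1}\langle y\rangle^{-2}$ decay) is what I expect to be the technical core. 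A cleaner alternative would be to invoke $t=0$ in the representation identity to derive the needed Plancherel bound directly from $\langle P_c\phi,\sigma_3\phi\rangle=\frac{1}{2\pi}\sum_\pm\|\hat\phi_\pm\|_{L^2_\lambda}^2$ (as computed in the proof sketch of Proposition \ref{Fourier-inv-fin}), but this requires controlling the possibly indefinite left-hand side in terms of $\|\phi\|_{L^2}^2$, which is a nontrivial positivity issue arising from the non-self-adjointness of $\mathcal{H}$.
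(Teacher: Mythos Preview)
Your approach is correct and essentially the same as the paper's: both start from the spectral representation in Proposition~\ref{Fourier-inv-fin}, apply Cauchy--Schwarz in $\lambda$ (using $|e^{\pm it\lambda^2}|=1$) to reduce to a Plancherel-type bound $\|\langle\phi,\sigma_3 e_\pm(\cdot,\lambda)\rangle\|_{L^2_\lambda}\lesssim\|\phi\|_{L^2_x}$, and then obtain that bound from the asymptotic decomposition of $e_\pm$ in Corollary~\ref{Das-Cor}. The paper's proof is terse---it just writes down the Cauchy--Schwarz inequality and cites Corollary~\ref{Das-Cor} together with \cite[Lemma~6.11]{K-S-stable}---whereas you spell out the free/error splitting and the Schur/Hilbert--Schmidt arguments for the remainder, which is precisely the content behind those references.
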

	\begin{proof} This follows as in the proof  \cite[Lemma 6.11 ]{K-S-stable} immediately from Proposition \ref{Fourier-inv-fin} by means of Corollary \ref{Das-Cor} and
		\begin{align*}
			| \langle  e^{i t \mathcal{H}}\phi, \psi \rangle | \leq \max_{\pm}\big( \int_{- \infty}^{\infty} | \langle \phi, \sigma_3 e_{\pm}(\cdot, \lambda)\rangle_{L^2}|^2~d \lambda\big)^{\f12} \cdot \max_{\pm} \big( \int_{- \infty}^{\infty} | \langle \psi, \sigma_3 e_{\pm}(\cdot, \lambda)\rangle_{L^2}|^2~d \lambda\big)^{\f12}.
		\end{align*}
	\end{proof}
	\subsection{The transference identity}\label{subsec:tranf}
	We now clarify the explicit framework of the distorted Fourier side for $ \mu = 0$ in the identity  \eqref{FI-fin} of Proposition \ref{Fourier-inv-fin}. In this section we further use the following Lemma, which is directly implied by the spectral properties of the $3$D radial operator discussed in Section \ref{sec:spec}.
	\begin{Lemma}
		The discrete spectrum  $ \sigma_d(\mathcal{H}) = \{ \pm i \kappa \}$  for $ \kappa > 0 $ consists of two simple eigenvalues with odd eigenfunctions $\{ \phi^{\pm}_d(y)\}$, i.e.
		$
		\mathcal{H}\phi^{\pm}_d = \pm i \kappa \phi^{\pm}_d
		$. 
		Note in particular $ \phi_d^{\pm}(y)$ are Schwartz functions and $ \phi^+_d(y) = \overline{\phi^-_d(y)} = \sigma_1 \phi^-_d(y) $ since we require $ \| \phi_d^{\pm} \|_{L^2} =1$.
	\end{Lemma}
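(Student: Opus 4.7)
The plan is to reduce the statement to the spectral analysis of the three-dimensional radial linearized operator $H = -\Delta \sigma_3 + V(|x|)$ on $L^2_{\text{rad}}(\R^3, \C^2)$ that is set up in Section \ref{sec:spec}, and then to read off the $1$D conclusion through the map $J : f(|x|) \mapsto r \cdot f(r)$ extended oddly from $\R_+$ to $\R$. Since $J$ is (up to a constant) an isometric isomorphism $L^2_{\text{rad}}(\R^3, \C^2) \to L^2_{\text{odd}}(\R, \C^2)$ intertwining $H$ with $\mathcal{H}$ (as already used to pass from \eqref{main-eqn-interior} to \eqref{system-final-final-R}), it identifies the discrete spectra, their algebraic/geometric multiplicities, and regularity/decay of eigenfunctions. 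The odd extension is automatic from the definition of $\mathcal{H}$ on $L^2_{\text{odd}}$.

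First I would invoke Section \ref{sec:spec} (i.e., the spectral analysis of the quintic energy-critical linearized operator around $W$ in $d=3$, as in the works of Schlag and Krieger-Schlag and references cited in the introduction) to obtain that $\sigma_d(H_{\vert L^2_{\text{rad}}}) = \{\pm i\kappa\}$ for some $\kappa > 0$, with each eigenvalue simple. The key structural input is the factorization $H^2 = \operatorname{diag}(L_+ L_-, L_- L_+)$ on appropriately conjugated components, with $L_- = -\Delta - W^4 \geq 0$ (ground-state property of $W$) and $L_+ = -\Delta - 5 W^4$ having exactly one simple negative eigenvalue on radial functions. A Birman–Schwinger / min-max count then yields precisely one pair $\pm i\kappa$ and no other non-real discrete eigenvalues; absence of real embedded eigenvalues on the essential spectrum $\R$ is part of Lemma \ref{Ass} in this article.

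Next I would argue Schwartz regularity. Since $V_1, V_2 \in \mathcal{S}(\R)$ (polynomially decaying of any order, indeed $\lesssim \langle x \rangle^{-4}$), elliptic bootstrapping in $\mathcal{H}\phi_d^{\pm} = \pm i \kappa \phi_d^{\pm}$ gives $\phi_d^{\pm} \in C^{\infty}$, and since $\pm i \kappa \notin \sigma_{\text{ess}}(\mathcal{H}) = \R$, an Agmon-type estimate (or direct Jost-solution analysis in Section \ref{subsec: Scat}: only the exponentially decaying fundamental solution at $\pm \infty$ is admissible) yields exponential decay of $\phi_d^{\pm}$ and all its derivatives, hence $\phi_d^{\pm} \in \mathcal{S}(\R, \C^2)$. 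The symmetry relations follow at once from $\sigma_1 \mathcal{H} = -\mathcal{H} \sigma_1$ and $\overline{\mathcal{H}} = \mathcal{H}$ (real $V_1, V_2$): if $\mathcal{H}\phi_d^- = -i\kappa \phi_d^-$, then $\mathcal{H}(\sigma_1 \phi_d^-) = i\kappa (\sigma_1 \phi_d^-)$ and $\mathcal{H}\overline{\phi_d^-} = i\kappa \overline{\phi_d^-}$; by simplicity of the eigenvalue $i\kappa$ both vectors are proportional, and the involution $\phi \mapsto \sigma_1 \overline{\phi}$ allows a unit-norm normalization with $\phi_d^+ = \overline{\phi_d^-} = \sigma_1 \phi_d^-$.

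The main obstacle is the input from Section \ref{sec:spec}, namely the precise count that the discrete spectrum consists of a single pair $\pm i\kappa$, both simple, with no other complex eigenvalues and no embedded real ones. Everything else (regularity, decay, and the conjugation/$\sigma_1$ symmetries) is a soft consequence of the structure of $\mathcal{H}$ and the analysis already carried out in Section \ref{subsec: Scat}.
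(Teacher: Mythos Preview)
Your proposal is correct and follows essentially the same route as the paper: the paper states that the Lemma ``is directly implied by the spectral properties of the $3$D radial operator discussed in Section \ref{sec:spec}'', where the Duyckaerts--Merle result (Lemma \ref{main-coerc3} and the remarks following it) gives $\sigma(H)\cap i\R=\{-i\kappa,0,i\kappa\}$ with simple eigenvalues and Schwartz eigenfunctions $\phi_+=\sigma_1\phi_-=\overline{\phi_-}$, and the conjugation $f\mapsto r f(r)$ transfers this to $\mathcal H$ on $L^2_{\text{odd}}$. Your additional details on Schwartz regularity via Agmon decay and on the $\sigma_1$/conjugation symmetries are correct elaborations of what the paper leaves implicit.
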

	~~\\ Let us now define the Fourier transform associated to the operator $\mathcal{H} $.
	\begin{Def}\label{Lemma-FT}
		We let the distorted Fourier transform on sufficiently decaying functions be defined via the map
		\begin{align}\label{FT}
			&\hat{f}( i \kappa) : = \int_{\R} f(y) \overline{ \phi_d^{+}(y)}~dy,~~\hat{f}(- i \kappa) : = \int_{\R} f(y) \overline{ \phi_d^{-}(y)}~dy\\[1pt] \nonumber
			& \hat{f}(\lambda) : =(\hat{f}^+(\lambda), \hat{f}^-(\lambda)) := \big( \int_{\R} f(y) \sigma_3 \overline{ e_+(y, \lambda)}~dy, \int_{\R} f(y) \sigma_3 \overline{ e_-(y, \lambda)}~dy\big),~\lambda \in \R.
		\end{align}
		Further we set 
		\begin{align} \label{inv-FT}
			&\mathcal{F}^{-1}(g)(y) := ~~\phi^+_d(y) g(i \kappa) +   \phi^-_d(y) g(-i \kappa)\\ \nonumber
			&~~~~~~~~~~~~~~~ + \lim_{R \to \infty} \frac{1}{2 \pi}\int_{|\lambda| \leq R } g^+(\lambda) e_+(y, \lambda) ~d \lambda +  \lim_{R \to \infty} \frac{1}{2 \pi}\int_{|\lambda| \leq R } g^-(\lambda) e_-(y, \lambda) ~d \lambda,
		\end{align}
		where $ g (\lambda) = (g^+(\lambda), g^{-}(\lambda))$ is $\C^2$ valued  and the above limits in \eqref{inv-FT} are to 
		be understood in an absolute sense. 
	\end{Def}
	By Corollary \ref{Das-Cor} we may view  $\hat{f}(\lambda),  \mathcal{F}^{-1}(g)$   up to error terms as a free Fourier multiplier, with symbols depending only  on $ s(\lambda), r(\lambda)$. Further we note $\hat{f}^{\pm}(\lambda)$ are discontinuous at $ \lambda = 0$ unless $ f $ is an odd function.\\[2pt]
	In fact we state  the following simple Lemma on the decay of the Fourier transform.
	\begin{Lemma} \label{Fourier-decay}
		$(a)$ The Fourier transform decays rapidly, i.e. $\hat{f}(\lambda)$ satisfies  
		$$ |\partial_{\lambda}^k\hat{f}^{\pm}(\lambda)| \leq C_N \langle \lambda \rangle^{-N},~~~ |\lambda| \gg1 $$
		for $ k = 0,1$ and all $ N \in \Z_+$,  
		if $ f \in \mathcal{S}(\R, \C^2)$ is in the Schwartz class. Further if $ f$ is an odd function, then $\hat{f}^{\pm}(\lambda)$ is  even and $\hat{f}^{\pm}(0) = 0$ . There holds
		\begin{align}
			& |\partial_{\lambda} \hat{f}^{\pm}(\lambda)| \lesssim |\log(\lambda)|,~~~ 0< \lambda \ll1,\\
			&|\partial_{\lambda}^2 \hat{f}^{\pm}(\lambda)| \lesssim \lambda^{-1} |\log(\lambda)|,~~~ 0< \lambda \ll1.
		\end{align}
		$(b)$ Let $ g \in  C^1_0(\R \backslash \{0\}, \C)$ be sufficiently decaying, say
		\begin{align*}
			&|\partial^k_{\lambda}g(\lambda)| \leq C_N \langle  \lambda \rangle^{-2-k},~~|\lambda | \gg1 ,~ k = 0,1
		\end{align*}
		and such that $ g_{\pm}(0) = \lim_{ \lambda \to 0^{\pm} } g(\lambda) $ exist.
	Then 
	$$ \mathcal{F}^{-1}(g)^{\pm}(y) = \int_{- \infty}^{\infty} g(\lambda) e_{\pm}(y, \lambda)~ d \lambda$$
	satisfies $ \mathcal{F}^{-1}(g)^{\pm}(y)= O(\langle y\rangle^{-1} )$ as $ |y| \to \infty$.
	If in addition $g \in  C^2_c(\R \backslash \{0\}, \C)$ and $g(\lambda) = O_2(\lambda) $ 
	as $ \lambda  \to 0$, then  $ \mathcal{F}^{-1}(g)^{\pm}(y) = O(\langle y\rangle^{-2} )$ as $ |y| \to \infty$. Further if $g(\lambda)$ is an even function, then $ \mathcal{F}^{-1}(g)^{\pm}(y) $ are odd.
\end{Lemma}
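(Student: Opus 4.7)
\medskip

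\emph{Proof proposal.} The plan is to reduce everything to controlled free-Fourier computations by means of the explicit asymptotic description of $e_{\pm}(y,\lambda)$ given in Corollary \ref{Das-Cor}, together with the regularity/decay of the scattering coefficients $s(\lambda),r(\lambda)$ provided by Lemma \ref{Das-asym}. Writing
\[
\hat f^{\pm}(\lambda)=\int_0^{\infty}f(y)\sigma_3\overline{e_{\pm}(y,\lambda)}\,dy+\int_{-\infty}^{0}f(y)\sigma_3\overline{e_{\pm}(y,\lambda)}\,dy,
\]
and substituting the representation from Corollary \ref{Das-Cor} on each half-line, one obtains a decomposition
\[
\hat f^{\pm}(\lambda)=\overline{s(\lambda)}\,I_{>}(\lambda)+\overline{r(\lambda)}\,I_{<}(\lambda)+J_{>}(\lambda)+J_{<}(\lambda)+E^{\pm}(\lambda),
\]
where $I_{>},J_{>}$ (resp.\ $I_{<},J_{<}$) are ordinary (half-line) Fourier-type integrals of $f$ against $e^{\pm i\lambda y}$, and $E^{\pm}(\lambda)$ collects the $\overline{e^{\pm\infty}_{\pm}(y,\lambda)}$ error contributions controlled by Corollary \ref{Das-Cor}.

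For part (a), the rapid decay $|\partial_{\lambda}^{k}\hat f^{\pm}(\lambda)|\lesssim_N\langle\lambda\rangle^{-N}$ for Schwartz $f$ follows by repeated integration by parts in $y$ inside $I_{>},I_{<}$ (gaining $\langle\lambda\rangle^{-1}$ per step from Schwartz decay of $f$), combined with the asymptotics $s,r,\lambda s',\lambda r'=O(1)$ at infinity and the pointwise/weighted $L^2_y$ bounds on $e^{\pm\infty}$ and $\partial_{\lambda}e^{\pm\infty}$ for $|\lambda|\gg 1$ stated in Corollary \ref{Das-Cor}. The parity claim (and $\hat f^{\pm}(0)=0$) for odd $f$ follows from the symmetry $e_{\pm}(-y,\lambda)=e_{\pm}(y,-\lambda)$ by the standard change of variable $y\mapsto -y$. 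For the small-$\lambda$ bounds in the odd case, the leading $s(\lambda),r(\lambda)$ contributions combine across the two half-lines (because $s(0)=-1$, $r(0)=0$) into a single smooth Fourier integral of the odd extension of $f$ on $\mathbb R$; then the bounds $|\partial_{\lambda}s|,|\partial_{\lambda}r|\lesssim|\log\lambda|$ and $|\partial_{\lambda}^{2}s|,|\partial_{\lambda}^{2}r|\lesssim|\lambda|^{-1}|\log\lambda|$ from Lemma \ref{Das-asym}, together with Schwartz decay of $f$, transfer directly to the claimed upper bounds on $\partial_{\lambda}\hat f^{\pm}$ and $\partial_{\lambda}^{2}\hat f^{\pm}$. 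The error terms are absorbed using the $L^{1}$ type bounds $\|\partial_{\lambda}e^{\pm\infty}\|_{L^{2}_y}\lesssim|\lambda|^{-1/2}|\log\lambda|$.

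For part (b), I would insert the same decomposition of $e_{\pm}(y,\lambda)$ into the inverse transform, so that $\mathcal F^{-1}(g)^{\pm}(y)$ splits for $|y|\gg 1$ into oscillatory integrals of the form $\int g(\lambda)s(\lambda)e^{\pm i\lambda y}\,d\lambda$ and $\int g(\lambda)r(\lambda)e^{\mp i\lambda y}\,d\lambda$, plus an error integral against $e^{\pm\infty}$. Under the hypothesis $|\partial_{\lambda}^{k}g|\lesssim\langle\lambda\rangle^{-2-k}$ for $k=0,1$ and existence of one-sided limits $g_{\pm}(0)$, one integration by parts in $\lambda$ gives the $O(\langle y\rangle^{-1})$ decay, the boundary terms at $\lambda=0$ and at infinity being controlled by the one-sided limits and the assumed decay. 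When additionally $g\in C^{2}_{c}(\mathbb R\setminus\{0\})$ with $g(\lambda)=O_{2}(\lambda)$ at $\lambda=0$, one can integrate by parts a second time without boundary contributions, yielding the sharper $O(\langle y\rangle^{-2})$ estimate. The oddness under evenness of $g$ follows again from $e_{\pm}(-y,\lambda)=e_{\pm}(y,-\lambda)$ after $\lambda\mapsto -\lambda$ in the inversion integral.

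The main obstacle I expect is the precise control of the error contributions $E^{\pm}(\lambda)$ and of the $e^{\pm\infty}$-against-$g$ integrals in part (b), particularly near $\lambda=0$, where the logarithmic singularities of $\partial_{\lambda}e^{\pm\infty}$ and of $\partial_{\lambda}s,\partial_{\lambda}r$ must be tracked simultaneously; the $O_{2}(\lambda)$ vanishing of $g$ is precisely what ensures the boundary terms in the second integration by parts in $\lambda$ are harmless. Once these low-frequency bookkeeping issues are arranged, the rest reduces to routine Fourier-integral estimates using the asymptotics already established in Corollary \ref{Das-Cor} and Lemma \ref{Das-asym}.
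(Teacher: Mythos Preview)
Your approach to part (b) is essentially the same as the paper's: split the $\lambda$-integral into $\lambda>0$ and $\lambda<0$, insert the expansion from Corollary \ref{Das-Cor}, and integrate by parts in $\lambda$ once (resp.\ twice under the $O_2(\lambda)$ hypothesis) in the free oscillatory piece while the $e^{\pm\infty}$ contribution is directly $O(\langle y\rangle^{-2})$. The parity statements via $e_{\pm}(-y,\lambda)=e_{\pm}(y,-\lambda)$ are also handled the same way.

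For part (a), however, there is a genuine gap in your plan for the \emph{rapid} decay. Your decomposition into $\overline{s(\lambda)}I_{>}+\overline{r(\lambda)}I_{<}+\cdots+E^{\pm}$ and repeated integration by parts in $y$ gives rapid decay only for the free half-line pieces $I_{>},I_{<},J_{>},J_{<}$. The remainder $E^{\pm}(\lambda)=\int f\,\sigma_3\,\overline{e^{\pm\infty}_\pm}\,dy$ is controlled by Corollary \ref{Das-Cor} only to the extent $|e^{\pm\infty}(y,\lambda)|\lesssim \langle\lambda\rangle^{-1}\langle y\rangle^{-2}+e^{-|\lambda|y}$ for $|\lambda|\gg1$, which after pairing with Schwartz $f$ yields merely $E^{\pm}(\lambda)=O(\langle\lambda\rangle^{-1})$; there is no mechanism in the stated bounds to iterate this to $O(\langle\lambda\rangle^{-N})$. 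The paper avoids this obstruction entirely by using the duality $\sigma_3\mathcal{H}=\mathcal{H}^*\sigma_3$ together with $\mathcal{H}e_{\pm}=\pm\lambda^2 e_{\pm}$, which yields
\[
\hat f^{\pm}(\lambda)=(\pm\lambda^2)^{-k}\,\widehat{\mathcal{H}^k f}^{\,\pm}(\lambda)
\]
and hence rapid decay directly, since $\mathcal{H}^k f$ is again Schwartz. The same trick, combined with the identity for $(\lambda\partial_\lambda-y\partial_y)e_{\pm}$ from Corollary \ref{Das-Cor2}, handles $\partial_\lambda\hat f^{\pm}$. Your low-frequency arguments (using $|\partial_\lambda s|,|\partial_\lambda r|\lesssim|\log\lambda|$ etc.) are fine and match the paper in spirit, but the high-frequency argument needs this missing eigenfunction/duality step.
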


\begin{proof}
	For part $(a)$ we use  the duality  $\sigma_3\mathcal{H} = \mathcal{H}^* \sigma_3$ pointwise, i.e.
	\begin{align}\label{pw}
		f(y) \sigma_3 \overline{ \mathcal{H}e_{\pm}(y, \lambda) } =&~ \mathcal{H}f(y) \sigma_3 \overline{ e_{\pm}(y, \lambda) } - \partial_y \big( f(y) \cdot \sigma_3 \partial_y\overline{ e_{\pm}(y, \lambda) } \big)\\ \nonumber
		& +  \partial_y \big( \partial_yf(y) \cdot \sigma_3 \overline{ e_{\pm}(y, \lambda) } \big).
	\end{align}
	Integrating \eqref{pw} we infer
	\begin{align*}
		\int f(y) \sigma_3 \overline{ e_{\pm}(y, \lambda) } ~dy = \frac{1}{\pm \lambda^2} \int f(y) \sigma_3 \overline{ \mathcal{H}e_{\pm}(y, \lambda) } ~dy   = \frac{1}{\pm \lambda^2} \int  \mathcal{H}f(y) \sigma_3 \overline{e_{\pm}(y, \lambda) } ~dy,
	\end{align*}
	and thus iterating this step,
	\begin{align*}
		\int f(y) \sigma_3 \overline{ e_{\pm}(y, \lambda) } ~dy =& ~ (\pm 1)^k\lambda^{-2k}\int \mathcal{H}^k f(y) \sigma_3 \overline{ e_{\pm}(y, \lambda) } ~dy.
	\end{align*}
	Further for the derivatives, we use
	\begin{align*}
		\lambda^{2k +1}\partial_{\lambda} \hat{f}^{\pm}(\lambda) =&~ (\pm 1)^k \int \mathcal{H}^k f (y) \sigma_3 y \partial_y \overline{ e_{\pm}(y , \lambda)}~ dy + (\pm 1)^k \int \mathcal{H}^k f (y) \sigma_3 ( \lambda \partial_{\lambda}  - y \partial_y ) \overline{ e_{\pm}(y , \lambda)}~ dy\\
		&~ - 2k  (\pm 1)^k \int \mathcal{H}^k f (y) \sigma_3 \overline{ e_{\pm}(y , \lambda)}~ dy.
	\end{align*}
	For the first term, we thus integrate by parts and by Corollary \ref{Das-Cor} in the second integral we infer the claim. 
	In order to verify the claim in $(b)$ we split
	\[
	\int_{-\infty}^{\infty} g(\lambda) e_{\pm}(y, \lambda)~d\lambda = \int_{0}^{\infty} g(\lambda) e_{\pm}(y, \lambda)~d\lambda + \int_{- \infty}^{0} g(\lambda) e_{\pm}(y, \lambda)~d\lambda
	\]
	For $\lambda > 0$, we additionally take wlog  $ y \gg1 $ large (the case of $ - y \gg1 $ is implied analogously using Corollary \ref{Das-Cor}).
	Now by Corollary \ref{Das-Cor}  we  further split
	\[
	\int_{0}^{\infty} g(\lambda) e_{\pm}(y, \lambda)~d\lambda = \underline{e}^{\pm}\int_{-\infty}^{\infty} \chi(\lambda > 0 )g(\lambda) s(\lambda) e^{i \lambda y}~d\lambda + \int_{0}^{\infty} g(\lambda) e^{\infty}_{\pm}(y, \lambda)~d\lambda =: \underline{e}^{\pm}I_1(y) + I_2(y).	
	\]
	and again by Corollary \ref{Das-Cor}, we have  $ I_2(y) = O(y^{-2})$ as $ y \to \infty$. Since $|s'(\lambda)| \lesssim |\log(\lambda)|$ for $ 0 < \lambda \ll1 $ we may integrate by parts using $s(0) = -1$
	\begin{align*}
		I_1(y) =  - (iy)^{-1}\big[ g_+(0) + \int_{0}^{\infty} \big(g s\big)'(\lambda) e^{i \lambda y}d\lambda\big].
	\end{align*}
	Thus in case $ g(\lambda) = O_2(\lambda) $ we iterate this for the latter integral.
	For  $\lambda < 0$ and $ y \gg1 $ we likewise split the integral
	\[
	\int_{-\infty}^{0} g(\lambda) e_{\pm}(y, \lambda)~d\lambda = \underline{e}^{\pm}\int_{-\infty}^{0}g(\lambda) \big[\overline{r(\lambda)} e^{i \lambda y} + e^{- i \lambda y}\big]~d\lambda + \int_{- \infty}^{0} g(\lambda) e^{-\infty}_{\pm}(-y, -\lambda)~d\lambda.	
	\]
	
\end{proof}

\begin{Corollary} \label{Cor-Gourier-decay}Let $ g \in C^2_c(\R, \C)$.  
	Then the functions
	\begin{align}\label{decyy1}
		&\lambda \mapsto  \langle \int_{-\infty}^{\infty} g(\tilde{\lambda}) e_{\pm }(y, \tilde{\lambda})~d \tilde{\lambda}, \sigma_3 e_{+}(y, \lambda) \rangle_{L^2_y},\\ \label{decyy2}
		&\lambda \mapsto  \langle \int_{-\infty}^{\infty} g(\tilde{\lambda}) e_{\pm }(y, \tilde{\lambda})~d \tilde{\lambda}, \sigma_3 e_{-}(y, \lambda) \rangle_{L^2_y},~~\lambda \in \R,
	\end{align}
	are well defined and decay of arbitrary order, i.e.  
	$O(|\lambda|^{-N } )$ with $ N \in \Z_+$ as $ |\lambda| \to \infty$. Moreover 
	$$  \lambda \mapsto \langle \int_{-\infty}^{\infty} g(\tilde{\lambda}) y \partial_ye_{\pm }(y, \tilde{\lambda})~d \tilde{\lambda}, \sigma_3 e_{\pm}(y, \lambda) \rangle_{L^2_y} = O(|\lambda|^{-2}),$$
	as $ |\lambda| \to \infty$.
\end{Corollary}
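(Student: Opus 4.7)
The plan is to exploit the eigenvalue relation $\mathcal{H}\, e_{\pm}(\cdot,\lambda) = \pm\lambda^2 e_{\pm}(\cdot,\lambda)$ together with the symmetry $\sigma_3\mathcal{H} = \mathcal{H}^*\sigma_3$, mirroring the argument in the proof of Lemma \ref{Fourier-decay}(a). Fix signs $\varepsilon_1,\varepsilon_2\in\{+,-\}$ and set $f(y) := \int g(\tilde\lambda)\,e_{\varepsilon_1}(y,\tilde\lambda)\,d\tilde\lambda$. By Lemma \ref{Fourier-decay}(b) and Corollary \ref{Das-Cor}, $f\in C^\infty(\R)$ with $f(y)=O(\langle y\rangle^{-1})$, so the pairing in \eqref{decyy1}--\eqref{decyy2} is defined (at worst) through a mild limiting procedure.

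The core identity is, for any $N\in\Z_+$,
\begin{align*}
(\varepsilon_2\lambda^2)^N \langle f,\sigma_3 e_{\varepsilon_2}(\cdot,\lambda)\rangle
= \langle f,\sigma_3 \mathcal{H}^N e_{\varepsilon_2}(\cdot,\lambda)\rangle
= \langle \mathcal{H}^N f,\sigma_3 e_{\varepsilon_2}(\cdot,\lambda)\rangle,
\end{align*}
the last equality being integration by parts on $\R$ (valid since $\sigma_3 V$ is symmetric and $-\partial_y^2$ is formally self-adjoint). Next I would commute $\mathcal{H}^N$ with the $\tilde\lambda$-integral: since $e_{\varepsilon_1}(\cdot,\tilde\lambda)$ solves $\mathcal{H} e_{\varepsilon_1}=\varepsilon_1\tilde\lambda^2 e_{\varepsilon_1}$,
\begin{align*}
\mathcal{H}^N f(y) \;=\; \int g_N(\tilde\lambda)\,e_{\varepsilon_1}(y,\tilde\lambda)\,d\tilde\lambda, \qquad g_N(\tilde\lambda):= (\varepsilon_1\tilde\lambda^2)^N g(\tilde\lambda).
\end{align*}
For any $N\ge 1$, $g_N\in C^2_c(\R)$ and $g_N(\tilde\lambda)=O_2(\tilde\lambda)$ at $\tilde\lambda=0$, so the second part of Lemma \ref{Fourier-decay}(b) yields $\mathcal{H}^N f(y)=O(\langle y\rangle^{-2})$; in particular $\mathcal{H}^N f\in L^1(\R)$. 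Using the uniform bound $\|e_{\varepsilon_2}(\cdot,\lambda)\|_{L^\infty_y}\lesssim 1$ from Corollary \ref{Das-Cor}, one gets $|\langle \mathcal{H}^N f,\sigma_3 e_{\varepsilon_2}(\cdot,\lambda)\rangle|\lesssim_N 1$, whence $\langle f,\sigma_3 e_{\varepsilon_2}(\cdot,\lambda)\rangle=O(|\lambda|^{-2N})$. Since $N$ is arbitrary, this is rapid decay.

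For the last statement involving $y\partial_y e_{\pm}(y,\tilde\lambda)$ the plan is to use the same scheme with $N=1$. Set $h(y):=\int g(\tilde\lambda)\,y\partial_y e_{\varepsilon_1}(y,\tilde\lambda)\,d\tilde\lambda$ and decompose
\begin{align*}
y\partial_y e_{\varepsilon_1}(y,\tilde\lambda) \;=\; \tilde\lambda\,\partial_{\tilde\lambda} e_{\varepsilon_1}(y,\tilde\lambda) \;-\;\bigl(\tilde\lambda\,\partial_{\tilde\lambda} - y\partial_y\bigr)e_{\varepsilon_1}(y,\tilde\lambda).
\end{align*}
Integrating by parts in $\tilde\lambda$ (no boundary terms as $g\in C^2_c$) converts the first piece into $-\int \partial_{\tilde\lambda}(\tilde\lambda g(\tilde\lambda))\,e_{\varepsilon_1}(y,\tilde\lambda)\,d\tilde\lambda$, which is of exactly the form treated above; applying the $\mathcal{H}$-duality identity once supplies the $O(|\lambda|^{-2})$ bound. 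The second piece is controlled pointwise via Corollary \ref{Das-Cor2}, which provides the decay $O(\langle y\rangle^{-1})$ of the integrand together with integrability of a small oscillatory remainder, and again one power of $\mathcal{H}$ applied through the same duality trick yields the $O(|\lambda|^{-2})$ decay.

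The main obstacle is rigorously justifying the duality step $\langle f,\sigma_3\mathcal{H}^N e\rangle = \langle \mathcal{H}^N f,\sigma_3 e\rangle$ while $f$ only enjoys $O(\langle y\rangle^{-1})$ decay: the boundary terms from integration by parts in $y$ are not a priori negligible. I would handle this by a cut-off: introduce $\chi_R(y)=\chi(y/R)$ with $\chi\in C^\infty_c$ and $\chi\equiv 1$ near $0$, verify the identity with $\chi_R f$ in place of $f$, and estimate the commutator $[\mathcal{H},\chi_R]f$ by splitting $e_{\varepsilon_1}(y,\tilde\lambda)$ into the oscillating plane-wave part (where one extra integration by parts in $\tilde\lambda$ against $g'\in C^1_c$ gains a factor $\langle y\rangle^{-1}$) and the remainder $e^{\pm\infty}$ from Corollary \ref{Das-Cor} (which already carries $O(\langle y\rangle^{-2})$). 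Passing $R\to\infty$ then provides the identity and the proof is complete.
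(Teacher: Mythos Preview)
Your argument for the first claim is exactly the paper's: move $\mathcal{H}$ across via $\sigma_3\mathcal{H}=\mathcal{H}^*\sigma_3$, use $\mathcal{H}e_{\varepsilon_1}=\varepsilon_1\tilde\lambda^2 e_{\varepsilon_1}$ to convert $\mathcal{H}^Nf$ into $\int g_N e_{\varepsilon_1}$, and invoke Lemma~\ref{Fourier-decay}(b) for the $O(\langle y\rangle^{-2})$ decay. Your cut-off justification of the duality step is more explicit than the paper's one-line ``we may actually integrate the pointwise identity''.

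For the $y\partial_y$ statement you take a slightly different route. The paper does not split $y\partial_y e_{\varepsilon_1}=\tilde\lambda\partial_{\tilde\lambda}e_{\varepsilon_1}-(\tilde\lambda\partial_{\tilde\lambda}-y\partial_y)e_{\varepsilon_1}$ but instead applies one power of $\mathcal{H}$ directly and uses the commutator
\[
[\mathcal{H},\,y\partial_y]\;=\;2\mathcal{H}-2V+y\,\partial_yV\;=\;2\mathcal{H}+O(\langle y\rangle^{-4}),
\]
so that $\mathcal{H}h=\varepsilon_1\!\int g\tilde\lambda^2\,y\partial_y e_{\varepsilon_1}\,d\tilde\lambda+2\varepsilon_1\!\int g\tilde\lambda^2 e_{\varepsilon_1}\,d\tilde\lambda+U(y)\!\int g\,e_{\varepsilon_1}\,d\tilde\lambda$, and the last two pieces are immediately $O(\langle y\rangle^{-2})$. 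This avoids the integration by parts in $\tilde\lambda$ and the appeal to Corollary~\ref{Das-Cor2}. Your decomposition works too, but note a regularity bookkeeping point: after the $\tilde\lambda$-integration by parts you land on $\tilde g=\partial_{\tilde\lambda}(\tilde\lambda g)\in C^1_c$ only, so the \emph{second} part of Lemma~\ref{Fourier-decay}(b) (which asks $C^2_c$) does not apply verbatim to $\tilde\lambda^2\tilde g$; one has to reopen that proof and check that $(\tilde\lambda^2\tilde g\cdot s)''$ is still integrable near $0$ (it is, since the worst term $g\tilde\lambda^2 s''=O(|\log\tilde\lambda|)$). The paper's commutator route sidesteps this small detour.
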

\begin{proof} As before, we  use duality
	\begin{align*}
		\langle \int_{\infty}^{\infty} g(\tilde{\lambda}) e_{\pm }(y, \tilde{\lambda})~d \tilde{\lambda}, \sigma_3 e_{+}(y, \lambda) \rangle_{L^2_y} & = \frac{1}{\pm \lambda^2}\langle \int_{\infty}^{\infty} g(\tilde{\lambda}) e_{\pm }(y, \tilde{\lambda})~d \tilde{\lambda}, \sigma_3 \mathcal{H} e_{+}(y, \lambda) \rangle_{L^2_y}\\
		& = \frac{1}{\pm \lambda^2}\langle \int_{\infty}^{\infty} g(\tilde{\lambda}) \tilde{\lambda}^2e_{\pm }(y, \tilde{\lambda})~d \tilde{\lambda}, \sigma_3  e_{+}(y, \lambda) \rangle_{L^2_y},
	\end{align*}
	where in the second line   $\int_{\infty}^{\infty} g(\tilde{\lambda}) \tilde{\lambda}^2e_{\pm }(y, \tilde{\lambda})~d \tilde{\lambda} = O(\langle y\rangle^{-2})$ by Lemma \ref{Fourier-decay} $(b)$ and further
	$$\mathcal{H} \int_{\infty}^{\infty} g(\tilde{\lambda}) e_{\pm }(y, \tilde{\lambda})~d \tilde{\lambda} = \int_{\infty}^{\infty} g(\tilde{\lambda}) \tilde{\lambda}^2e_{\pm }(y, \tilde{\lambda})~d \tilde{\lambda}.$$
	Again we may actually integrate the  pointwise identity  in order to conclude convergence. For higher order decay we iterate this step as long as Lemma \ref{Fourier-decay} $(b)$ applies. For the integrals 
	\[
	\lambda \mapsto \langle \int_{\infty}^{\infty} g(\tilde{\lambda}) y \partial_ye_{\pm }(y, \tilde{\lambda})~d \tilde{\lambda}, \sigma_3 e_{\pm}(y, \lambda) \rangle_{L^2_y}
	\]
	we proceed alike, however note the additional terms using
	$$ [\mathcal{H}, y \partial_y] = 2 \mathcal{H} - 2V + y (\partial_y V) =  2 \mathcal{H}  + O(\langle y \rangle^{-4})$$
\end{proof}
\begin{Rem} The above results for the operators defined in  \eqref{decyy1}, \eqref{decyy2} are preliminary in order to  calculate with these Fourier coefficients rigorously (enough regularity provided). In fact, in the proof of Proposition \ref{Kerne-prop}, we show the operators are $\delta $-functions.
\end{Rem}
~~\\
Let us set the space $L^2(\R \cup \{\pm i \kappa\})$  such that the elements map into $\C^2$ on $ \R$ and into $\C$ on $\pm i \mu$. Further with $ L^2_{\text{e}}$ we denote the $L^2$ closure of even Schwartz functions.
\begin{Lemma}
	The maps $ \mathcal{F}, \mathcal{F}^{-1}$  extend to bounded operators	\begin{align*}
		&\mathcal{F}: L^2_{\text{odd}}(\R) \to L^2_{\text{e}}(\R \cup \{\pm i \kappa\}) ,~\mathcal{F}: f \to \hat{f},\\
		&\mathcal{F}^{-1}: L^2_{\text{e}}(\R \cup \{\pm i \kappa\}) \to L^2_{\text{odd}}(\R),
	\end{align*}
	and there holds
	\begin{align}
		&\mathcal{F}^{-1}(\hat{f}) = f,~~\widehat{\mathcal{H}f}^{\pm}(\lambda) = \pm \lambda^2 \hat{f}^{\pm}(\lambda),~~ \lambda \in \R. \label{ide}
	\end{align}
\end{Lemma}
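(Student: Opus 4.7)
The plan is to deduce the four assertions (boundedness of $\mathcal{F}$ and $\mathcal{F}^{-1}$, the inversion identity, and the intertwining property) from Proposition \ref{Fourier-inv-fin} together with the biorthogonality structure of the discrete eigenfunctions. First I would verify the intertwining property directly on $\mathcal{S}_{\text{odd}}(\R)$ by writing $\widehat{\mathcal{H}f}^{\pm}(\lambda) = \int \mathcal{H}f \cdot \sigma_3 \overline{e_{\pm}(y,\lambda)}\,dy$ and moving $\mathcal{H}$ across the pairing via two integrations by parts, using the pointwise identities $\sigma_3 \mathcal{H} = \mathcal{H}^*\sigma_3$ and $\mathcal{H}e_{\pm} = \pm\lambda^2 e_{\pm}$; the boundary terms at infinity vanish because $f$ is Schwartz while $e_{\pm}$ is at most polynomially bounded by Corollary \ref{Das-Cor}. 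The analogous identity at the discrete eigenvalues is immediate from $\mathcal{H}\phi_d^{\pm} = \pm i\kappa\phi_d^{\pm}$.

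Next, to obtain the Plancherel identity and hence the boundedness of $\mathcal{F}$, I would apply Proposition \ref{Fourier-inv-fin} with the specific choice $\psi = \sigma_3 f$. Since $\sigma_3^* = \sigma_3$ one checks $\overline{\langle \sigma_3 f, e_{\pm}\rangle} = \overline{\hat f^{\pm}(\lambda)}$, so the right hand side collapses to
\[
\frac{1}{2\pi}\bigl(\|\hat f^+\|_{L^2_{\lambda}}^2 + \|\hat f^-\|_{L^2_{\lambda}}^2\bigr).
\]
Because $\|\sigma_3 f\|_{L^2} = \|f\|_{L^2}$ and the Riesz projection $P_c = I - P_d$ is bounded on $L^2$ (the discrete spectrum being finite), Cauchy--Schwarz gives $\|\hat f^{\pm}\|_{L^2_{\lambda}}^2 \lesssim \|f\|_{L^2}^2$; combined with the trivial bounds $|\hat f(\pm i\kappa)| \leq \|f\|_{L^2}$ and the density of $\mathcal{S}_{\text{odd}}$, this extends $\mathcal{F}$ boundedly to $L^2_{\text{odd}}$ via BLT.

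For the pointwise inversion $\mathcal{F}^{-1}(\hat f) = f$, I would fix $f \in \mathcal{S}_{\text{odd}}$ and let $g$ range over $\mathcal{S}_{\text{odd}}$ in the identity of Proposition \ref{Fourier-inv-fin}. A Fubini exchange --- justified by the rapid decay of $\hat f^{\pm}$ in $\lambda$ from Lemma \ref{Fourier-decay}(a) and the pointwise control of $e_{\pm}(y,\lambda)$ in Corollary \ref{Das-Cor} --- transfers the $\lambda$-integral inside the $y$-pairing, yielding
\[
(P_c f)(y) = \frac{1}{2\pi}\int_{-\infty}^{\infty} \hat f^+(\lambda) e_+(y,\lambda)\,d\lambda + \frac{1}{2\pi}\int_{-\infty}^{\infty} \hat f^-(\lambda) e_-(y,\lambda)\,d\lambda
\]
since $g$ was arbitrary. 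The discrete contribution is $P_d f = \phi_d^+ \hat f(i\kappa) + \phi_d^-\hat f(-i\kappa)$, using the simplicity of the eigenvalues $\pm i\kappa$ and the $L^2_{\text{odd}}$-orthonormality of $\{\phi_d^{\pm}\}$, which follows from the symmetry relations $\phi_d^+ = \overline{\phi_d^-} = \sigma_1\phi_d^-$ restricted to the odd subspace and the structure analyzed in Section \ref{sec:spec}. Summing then gives $f = \mathcal{F}^{-1}(\hat f)$, and polarizing the resulting Plancherel identity on the target side shows that $\mathcal{F}^{-1}$ extends boundedly from $L^2_e(\R \cup \{\pm i\kappa\})$ to $L^2_{\text{odd}}$.

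The main obstacle will be justifying the Fubini exchange near the threshold $\lambda = 0$, where the scattering coefficients $s(\lambda), r(\lambda)$ (Lemma \ref{Das-asym}) and the remainder $e_{\pm}^{\pm\infty}$ (Corollary \ref{Das-Cor}) both exhibit logarithmic singularities inherited from the threshold resonance at $\mu = 0$. This should be resolved by combining the uniform $L^1_{\lambda}(\langle\lambda\rangle^{-2}\,d\lambda)$ control on $\hat f^{\pm}(\lambda)$ coming from the second-derivative bound in Lemma \ref{Fourier-decay}(a) with the uniform-in-$y$ bounds on $e_{\pm}$ from Corollary \ref{Das-Cor}; a secondary technicality is the symmetry interpretation of $\hat f^{\pm}$ inherited from the reflection identity $e_{\pm}(-y,\lambda) = e_{\pm}(y,-\lambda)$, which dictates the precise parity on the Fourier side and reconciles the image of $\mathcal{F}$ with the target space $L^2_e$.
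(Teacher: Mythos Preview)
Your plan is essentially the same as the paper's: both deduce everything from Proposition~\ref{Fourier-inv-fin}, obtain the intertwining identity from $\sigma_3\mathcal{H}=\mathcal{H}^*\sigma_3$, and get the inversion formula by testing against arbitrary $\psi\in\mathcal{S}_{\text{odd}}$ and interchanging integrals. Your choice $\psi=\sigma_3 f$ to extract $\|\hat f^+\|_{L^2}^2+\|\hat f^-\|_{L^2}^2$ from the left side $\langle P_c f,\sigma_3 f\rangle$ and then Cauchy--Schwarz is exactly how the paper bounds $\mathcal{F}$ as well.

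The one point where your sketch is imprecise is the boundedness of $\mathcal{F}^{-1}$. ``Polarizing the resulting Plancherel identity'' does not quite work here, because the identity you have is $\langle P_c f,\sigma_3 f\rangle=\frac{1}{2\pi}\|\hat f_c\|_{L^2}^2$, which involves $\sigma_3$ and is therefore not a genuine isometry statement; in particular it gives no direct lower bound $\|f\|\lesssim\|\hat f\|$, nor does it give density of the range of $\mathcal{F}$. The paper handles this instead by a duality bootstrap: for nice $g$ one writes, with $h=\mathcal{F}^{-1}(P_c g)$,
\[
\|h\|_{L^2_y}^2=\langle \mathcal{F}(\sigma_3 h),\,P_c g\rangle_{L^2_\lambda}\le \|\mathcal{F}(\sigma_3 h)\|_{L^2_\lambda}\,\|g\|_{L^2}\lesssim \|h\|_{L^2_y}\,\|g\|_{L^2},
\]
using the already-established bound for $\mathcal{F}$ in the last step, and then divides through. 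This is the missing ingredient to close your argument for $\mathcal{F}^{-1}$; with that replacement your proposal matches the paper.
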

\begin{proof} By Lemma \ref{Fourier-decay}, the operator $ \mathcal{F}, \mathcal{F}^{-1}$ are well defined if we restrict to a dense domain $ \mathcal{D}(\mathcal{F}), \mathcal{D}(\mathcal{F}^{-1})$ of rapidly decaying functions (at least as required in Lemma \ref{Fourier-decay} for instance). Then from the dual identity in Proposition  \ref{Fourier-inv-fin}, we find in $L^2_y(\R)$
	\[
	P_c f = \int_{- \infty}^{\infty} \hat{f}^{+}(\lambda) e_+(\cdot , \lambda) ~d \lambda + \int_{- \infty}^{\infty} \hat{f}^{-}(\lambda) e_-(\cdot , \lambda) ~d \lambda, ~~f \in \mathcal{D}(\mathcal{F}),
	\]
	In particular we extend $\mathcal{F}$ to $ L^2(\R)$ wit $\| \mathcal{F} \|_{2 \to 2} \leq 1$. We set the projection $ P_c$ on the Fourier side $ P_c g(\lambda) = (g^{+}(\lambda), g^{-}(\lambda))$ and thus $ \mathcal{F}^{-1}(P_c g) = P_c \mathcal{F}^{-1}(g)$. Further we obtain  by duality
	\begin{align*}
		\|  \mathcal{F}^{-1}(P_c g ) \|_{L^2_y}^2 &= \langle \mathcal{F}(\sigma_3 \mathcal{F}^{-1}(P_cg)), P_cg \rangle_{L^2_{\lambda}}\\
		&\leq \| \mathcal{F}^{-1}(P_cg)  \|_{L^2_y} \| g \|_{L^2 },
	\end{align*}
	where $ \| g \|_{L^2 }^2 = \| P_c g\|_{L^2_{\lambda}}^2 + | g(i \kappa)|^2 + g(- i \kappa)|^2$. Hence we extend $\mathcal{F}^{-1}$ to $L^2$ with $ \| \mathcal{F}^{-1}\|_{2 \to 2}\leq 1$ and thus $ \| \mathcal{F} \|_{2 \to 2} = 1$. By approximation  \eqref{ide} holds and  follows as a direct consequence of the dual $ \mathcal{H}^* = \sigma_3 \mathcal{H} \sigma_3$ where we recall $ \mathcal{H}e_{\pm} = \pm \lambda^2 e_{\pm}$. 
\end{proof}
~~\\
In Section \ref{sec:linearized}, we consider \emph{Fourier coefficients}, i.e. we use the expansion
\begin{align}\label{F-Coeff}
	\tilde{u}(\tau, y) =&~~ \hat{u}(\tau, i \kappa) \phi^{+}_d(y) + \hat{u}(\tau, -i \kappa) \phi^{-}_d(y)\\ \nonumber
	& + ~\frac{1}{2 \pi}\int_{- \infty}^{\infty} \hat{u}^+(\tau, \lambda) e_+(y, \lambda) ~d \lambda + \frac{1}{2 \pi} \int_{- \infty}^{\infty} \hat{u}^-(\tau, \lambda) e_-(y, \lambda) ~d \lambda
\end{align}
with unknowns $ (\hat{u}(\tau, i \kappa), \hat{u}(\tau,- i \kappa),  \hat{u}^+(\tau, \lambda),  \hat{u}^-(\tau, \lambda))$. In order to expand the perturb system in Section \ref{sec:linearized}, we need to calculate $ \widehat{y \partial_y u}$ which compares to $ - \lambda \partial_{\lambda} \hat{u}$. However in the distorted case, there is an additional transport error caused by off-diagonal interaction in the Fourier representation, i.e. we set
\begin{align}
	\mathcal{K}\hat{u} := &~\widehat{y \partial_y u} + \mathcal{D} \hat{u},\\
	\mathcal{D}\hat{u}(\lambda)^{\pm} := &~\langle  \int_{- \infty}^{\infty}  [ \tilde{\lambda} \partial_{\tilde{\lambda}}\hat{u}^+(\tilde{\lambda}) ] e_+(y , \tilde{\lambda})~ d \tilde{\lambda}, \sigma_3 e_{\pm}(y, \lambda) \rangle_{L^2_y} \\ \nonumber 
	&~+ ~ \langle  \int_{- \infty}^{\infty} [ \tilde{\lambda} \partial_{\tilde{\lambda}}\hat{u}^-(\tilde{\lambda}) ] e_-(y , \tilde{\lambda})~ d \tilde{\lambda}, \sigma_3 e_{\pm}(y, \lambda) \rangle_{L^2_y}.
\end{align}
where the operator $\mathcal{D}$ only acts the continuous spectrum. 
\begin{Rem} It follows from the proof of Proposition \ref{Kerne-prop} that the kernel of $\mathcal{D}$ is  the $\delta$-function applied to $ \lambda \partial_{\lambda}$ and in fact $ \mathcal{D}f^{\pm}(\lambda ) =  \lambda \partial_{\lambda} f^{\pm}(\lambda)$.
\end{Rem}
The calculation of the operator $\mathcal{K}$ is usually referred to as transference identity, see e.g.  \cite[Section 5]{KST-slow}. We first observe using the Fourier inversion in Lemma \ref{Lemma-FT}
\begin{align} \label{first-ca}
	\widehat{y \partial_y u}(\pm i\kappa) =& ~ \hat{u}(i \kappa)\langle y \partial_y \phi^+_d(y), \phi^{\pm}_d(y) \rangle_{L^2_y} \; + \; \hat{u}(- i \kappa) \langle y \partial_y \phi^-_d(y), \phi^{\pm}_d(y) \rangle_{L^2_y}\\ \nonumber
	& ~+ ~\langle  \int_{- \infty}^{\infty} \hat{u}^+(\lambda) y \partial_y e_+(y , \lambda)~ d \lambda, \phi^{\pm}_d(y) \rangle_{L^2_y}\\ \nonumber
	& ~+ ~\langle  \int_{- \infty}^{\infty} \hat{u}^-(\lambda) y \partial_y e_-(y , \lambda)~ d \lambda, \phi^{\pm}_d(y) \rangle_{L^2_y},
\end{align}
and similar for $\lambda \in \R$
\begin{align}\label{seco-ca}
	&~~	\widehat{y \partial_y u}(\lambda)^{\pm} =~~ \hat{u}(i \kappa)\langle y \partial_y \phi^+_d(y), \sigma_3 e_{\pm}(y, \lambda)  \rangle_{L^2_y} \; + \; \hat{u}(- i \kappa) \langle y \partial_y \phi^-_d(y), \sigma_3 e_{\pm}(y, \lambda)\rangle_{L^2_y}\\ \nonumber
	& ~~~~~~~~~~~~~~~~~+ ~\langle  \int_{- \infty}^{\infty} \hat{u}^+(\tilde{\lambda}) y \partial_y e_+(y , \tilde{\lambda})~ d \tilde{\lambda}, \sigma_3 e_{\pm}(y, \lambda) \rangle_{L^2_y}\\ \nonumber 
	&~~~~~~~~~~~~~~~~~+ ~\langle  \int_{- \infty}^{\infty} \hat{u}^-(\tilde{\lambda}) y \partial_y e_-(y , \tilde{\lambda})~ d \tilde{\lambda}, \sigma_3 e_{\pm}(y, \lambda) \rangle_{L^2_y}.
\end{align}
We can interpret the Fourier transform as a map
\begin{align}\label{form-FT}
	u \mapsto \hat{u} = \begin{pmatrix}
		\hat{u}(\pm i \kappa)\\[2pt]
		\hat{u}_c
	\end{pmatrix} = \begin{pmatrix}
		(\hat{u}(i \kappa), \hat{u}(-i \kappa))^t \\[2pt]
		(\hat{u}^+, \hat{u}^-)^t
	\end{pmatrix},
\end{align}
where the lower component represents the continuous part of the spectrum. In the following understand function spaces, i.e. the space $C^{2}_0(\R \cup \{ \pm i \kappa\})$, consisting of $\C^2$ valued elements of the form in \eqref{form-FT}. 
For instance for $f \in C^{2}_0$ we write
\[
f = \begin{pmatrix}
	f(\pm i \kappa)\\[2pt]
	f_c
\end{pmatrix} = \begin{pmatrix}
	(f(i \kappa), f(-i \kappa))^t \\[2pt]
	(f^+, f^-)^t
\end{pmatrix}.
\]
Thus, using \eqref{first-ca} and \eqref{seco-ca}, we rewrite $\mathcal{K}$ acting on the essential$\slash $discrete  spectrum via
\begin{align*}
	&\mathcal{K} = \begin{pmatrix}
		\mathcal{K}_{dd} & \mathcal{K}_{d c}\\
		\mathcal{K}_{cd} & \mathcal{K}_{cc}
	\end{pmatrix} : C^{2}_0(\R \cup \{ \pm i \kappa\}) \to C^{2}_0(\R \cup \{ \pm i \kappa\}),\\[3pt]
	&\mathcal{K}f(\pm i \kappa ) =  \mathcal{K}_{dd} \cdot f(\pm i \kappa) + \mathcal{K}_{dc}f_c,\\
	&[\mathcal{K}f]_c(\lambda) = \mathcal{K}_{cd}(\lambda) \cdot f(\pm i \kappa) + \mathcal{K}_{cc} f_c(\lambda),
\end{align*}
~~\\
where for the domain $\mathcal{D}(\mathcal{K}) \subset C^2_0$ we require the functions to have compact support (as in Corollary \ref{Cor-Gourier-decay}) and 
\begin{align*}
	&\mathcal{K}_{dd} : = \begin{pmatrix}
		\langle y \partial_y \phi^+_d(y), \phi^{+}_d(y) \rangle_{L^2_y} & \langle y \partial_y \phi^-_d(y), \phi^{+}_d(y) \rangle_{L^2_y}\\[5pt]
		\langle y \partial_y \phi^+_d(y), \phi^{-}_d(y) \rangle_{L^2_y} &   \langle y \partial_y \phi^-_d(y), \phi^{-}_d(y) \rangle_{L^2_y}
	\end{pmatrix},\\[8pt]
	&\mathcal{K}_{dc}f_c : = \begin{pmatrix}
		\langle  \int_{- \infty}^{\infty} f^+(\lambda) y \partial_y e_+(y , \lambda)~ d \lambda, \phi^{+}_d(y) \rangle_{L^2_y}\\[5pt]
		\langle  \int_{- \infty}^{\infty} f^+(\lambda) y \partial_y e_+(y , \lambda)~ d \lambda, \phi^{-}_d(y) \rangle_{L^2_y}
	\end{pmatrix}
	+ 
	\begin{pmatrix}
		\langle  \int_{- \infty}^{\infty} f^-(\lambda) y \partial_y e_-(y , \lambda)~ d \lambda, \phi^{+}_d(y) \rangle_{L^2_y}\\[5pt]
		\langle  \int_{- \infty}^{\infty} f^-(\lambda) y \partial_y e_-(y , \lambda)~ d \lambda, \phi^{-}_d(y) \rangle_{L^2_y}
	\end{pmatrix},\\[8pt]
	&\mathcal{K}_{cd}(\lambda) : = \begin{pmatrix}
		\langle y \partial_y \phi^+_d(y), \sigma_3 e_+(y, \lambda)  \rangle_{L^2_y} &  \langle y \partial_y \phi^-_d(y), \sigma_3 e_+(y, \lambda)\rangle_{L^2_y}\\[5pt]
		\langle y \partial_y \phi^+_d(y), \sigma_3 e_-(y, \lambda)  \rangle_{L^2_y} & \langle y \partial_y \phi^-_d(y), \sigma_3 e_-(y, \lambda)\rangle_{L^2_y}
	\end{pmatrix},\\[8pt]
	& \mathcal{K}_{cc} f_c(\lambda) :=  	\begin{pmatrix}
		[\mathcal{K}_{cc}f_c]_{\text{++}}(\lambda)\\[4pt]
		[\mathcal{K}_{cc}f_c]_{+-}(\lambda)
	\end{pmatrix}	+ 
	\begin{pmatrix}
		[\mathcal{K}_{cc}f_c]_{-+}(\lambda)\\[4pt]
		[\mathcal{K}_{cc}f_c]_{--}(\lambda)
	\end{pmatrix},
\end{align*}
for which  we set
\begin{align} \label{c1}
	&[\mathcal{K}_{cc}f_c]_{+\pm}(\lambda) =   	\langle  \int_{- \infty}^{\infty} f^+(\tilde{\lambda}) y \partial_y e_+(y , \tilde{\lambda})~ d \tilde{\lambda}, \sigma_3 e_{\pm}(y, \lambda) \rangle_{L^2_y}\\ \nonumber
	&~~~~~~~~~~~~~~~~~~~~~~~~~~ + \langle  \int_{- \infty}^{\infty}  [\tilde{\lambda} \partial_{\tilde{\lambda}}f^+(\tilde{\lambda}) ] e_+(y , \tilde{\lambda})~ d \tilde{\lambda}, \sigma_3 e_{\pm}(y, \lambda) \rangle_{L^2_y},\\ \label{c23}
	&[\mathcal{K}_{cc}f_c]_{-\pm}(\lambda) =   	\langle  \int_{- \infty}^{\infty} f^-(\tilde{\lambda}) y \partial_y e_-(y , \tilde{\lambda})~ d \tilde{\lambda}, \sigma_3 e_{\pm}(y, \lambda) \rangle_{L^2_y}\\ \nonumber
	&~~~~~~~~~~~~~~~~~~~~~~~~~~ + \langle  \int_{- \infty}^{\infty}  [\tilde{\lambda} \partial_{\tilde{\lambda}}f^-(\tilde{\lambda}) ] e_-(y , \tilde{\lambda})~ d \tilde{\lambda}, \sigma_3 e_{\pm}(y, \lambda) \rangle_{L^2_y}.
\end{align}
Let us remark Corollary \ref{Cor-Gourier-decay}   implies  the terms  \eqref{c1} and \eqref{c23} are well defined and further $ \mathcal{K}_{dd}, \mathcal{K}_{c d}(\lambda)$ act as linear maps on $ \C^2$.\\[2pt]
For  $ \mathcal{K}_{dd}$ we note 
\begin{align}
\langle y \partial_y \phi^{\pm}_d(y), \phi^{\mp}_d(y) \rangle_{L^2_y} &= \overline{\langle y \partial_y \phi^{\mp}_d(y), \phi^{\pm}_d(y) \rangle}_{L^2_y}= - \overline{ \langle \phi^{\mp}_d(y),  y \partial_y\phi^{\pm}_d(y) \rangle}_{L^2_y}\\  \nonumber
&~~~~~~~~~~~~~~~~~~~~~~~~~~~~~~= - \langle  y \partial_y\phi^{\pm}_d(y), \phi^{\mp}_d(y) \rangle_{L^2_y},\\[3pt]
\langle y \partial_y \phi^{\pm}_d(y), \phi^{\pm}_d(y) \rangle_{L^2_y} &= - \underset{= 1}{\underbrace{\| \phi^{\pm}_d \|_{L^2_y}^2}} - \langle y \partial_y \phi^{\mp}_d(y), \phi^{\mp}_d(y) \rangle_{L^2_y}.
\end{align}
Since also 
\begin{align*}
\langle y \partial_y \phi^{\pm}_d(y), \phi^{\pm}_d(y) \rangle_{L^2_y}  = 	\langle y \partial_y \sigma_1 \phi^{\pm}_d(y), \sigma_1 \phi^{\pm}_d(y) \rangle_{L^2_y} = \langle y \partial_y  \phi^{\mp}_d(y),  \phi^{\mp}_d(y) \rangle_{L^2_y},
\end{align*}
we have $ \mathcal{K}_{dd} = - \f12 I_{2 \times 2}$. For $ \mathcal{K}_{cd}(\lambda)$ we note  
\begin{align*}
\langle \phi^{\pm}_d(y), \sigma_3 e_{+}(y, \lambda)\rangle_{L^2_y}& = \lambda^{-2} 	\langle \phi^{\pm}_d(y), \sigma_3 \mathcal{H}e_{+}(y, \lambda)\rangle_{L^2_y}\\
&= \lambda^{-2} 	\langle \mathcal{H} \phi^{\pm}_d(y), \sigma_3 e_{+}(y, \lambda)\rangle_{L^2_y}\\
&= \pm i \lambda^{-2} \kappa	\langle \phi^{\pm}_d(y), \sigma_3 e_{+}(y, \lambda)\rangle_{L^2_y}.
\end{align*}
and the analogue identity if we replace $ \sigma_3 e_{+}(y, \lambda) $ by $ \sigma_3 e_{-}(y, \lambda)$.
Thus we have 
$$\langle \phi^{\pm}_d(y), \sigma_3 e_{+}(y, \lambda)\rangle = \langle \phi^{\pm}_d(y), \sigma_3 e_{-}(y, \lambda)\rangle = 0$$
if $ \lambda \in \R \backslash \{0\}$ and hence
\begin{align}
\mathcal{K}_{cd}(\lambda) = - \begin{pmatrix}
	\langle \phi^+_d(y), \sigma_3  y \partial_y e_+(y, \lambda)  \rangle_{L^2_y} &  \langle  \phi^-_d(y), \sigma_3 y \partial_y e_+(y, \lambda)\rangle_{L^2_y}\\[5pt]
	\langle \phi^+_d(y), \sigma_3 y \partial_y e_-(y, \lambda)  \rangle_{L^2_y} & \langle  \phi^-_d(y), \sigma_3 y \partial_y e_-(y, \lambda)\rangle_{L^2_y}
\end{pmatrix}
\end{align}
Integrating by parts with respect to $\tilde{\lambda}$ in \eqref{c1}, \eqref{c23} yields
\begin{align}
&[\mathcal{K}_{cc}f_c]_{+\pm}(\lambda) =   	\langle  \int_{- \infty}^{\infty} f^+(\tilde{\lambda}) [y \partial_y - \tilde{\lambda}\partial_{\tilde{\lambda}}]e_+(y , \tilde{\lambda})~ d \tilde{\lambda}, \sigma_3 e_{\pm}(y, \lambda) \rangle_{L^2_y}\\ \nonumber
&~~~~~~~~~~~~~~~~~~~~~~~~~~~~~~~- \langle  \int_{- \infty}^{\infty}  f^+(\tilde{\lambda})  e_+(y , \tilde{\lambda})~ d \tilde{\lambda}, \sigma_3 e_{\pm}(y, \lambda) \rangle_{L^2_y},\\ 
&[\mathcal{K}_{cc}f_c]_{-\pm}(\lambda) =   	\langle  \int_{- \infty}^{\infty} f^-(\tilde{\lambda}) [y \partial_y - \tilde{\lambda}\partial_{\tilde{\lambda}}] e_-(y , \tilde{\lambda})~ d \tilde{\lambda}, \sigma_3 e_{\pm}(y, \lambda) \rangle_{L^2_y}\\ \nonumber
&~~~~~~~~~~~~~~~~~~~~~~~~~~~~~~~ - \langle  \int_{- \infty}^{\infty}  f^-(\tilde{\lambda}) e_-(y , \tilde{\lambda})~ d \tilde{\lambda}, \sigma_3 e_{\pm}(y, \lambda) \rangle_{L^2_y}.
\end{align}
\begin{Rem}In the above calculations we omitted the factor $(2\pi)^{-1}$ using $\mathcal{F}^{-1}$.  In the definition of $ \mathcal{K}$, this surely has to be corrected.
\end{Rem}

\begin{Prop}\label{Kerne-prop}
(a)	~We have $ \mathcal{K}_{cc}f = \mathcal{K}_0f +\tilde{\mathcal{K}}_{cc}f$ where
\begin{align*}
	&\tilde{	\mathcal{K}}_{cc} = \begin{pmatrix}
		\tilde{\mathcal{K}}_{++} & \tilde{\mathcal{K}}_{+-} \\
		\tilde{\mathcal{K}}_{-+} & \tilde{\mathcal{K}}_{--} 
	\end{pmatrix},~~\mathcal{K}_0 = \begin{pmatrix}
		[\mathcal{K}_0]_{++} & [\mathcal{K}_0]_{+-} \\
		[\mathcal{K}_0]_{-+} & [\mathcal{K}_0]_{--}
	\end{pmatrix},
\end{align*}
the operator $\tilde{\mathcal{K}}_{cc}$ is a $\delta$-function
\begin{align}
	&\tilde{\mathcal{K}}_{cc}(\lambda, \tilde{\lambda})= a(\lambda) I_{2\times 2} \cdot  \delta(\tilde{\lambda} - \lambda),\\[3pt]
	& a(\lambda) :=	\f12\lambda ( s'(\lambda) \overline{s(\lambda)}+ r'(\lambda) \overline{r(\lambda)}) - 1,~~ \lambda \geq 0,\\[3pt]
	& a(\lambda) := a(- \lambda) = \overline{a(\lambda)},~~ \lambda < 0.
\end{align}
and $\mathcal{K}_0$ has the kernel 
\begin{align}
	&[K_0]_{ \pm + }(\lambda, \tilde{\lambda}) = \frac{1}{(\lambda^2 \mp \tilde{\lambda}^2)} F_{\pm +}(\lambda, \tilde{\lambda}),\\
	&[K_0]_{ \pm - }(\lambda, \tilde{\lambda}) = \frac{1}{(\lambda^2 \pm \tilde{\lambda}^2)} F_{\pm -}(\lambda, \tilde{\lambda}).
\end{align}
Here $  F_{\pm +}(\lambda, \tilde{\lambda}),  F_{\pm -}(\lambda, \tilde{\lambda})$ are $C^2$ functions  with 
\[ F_{\pm +}(\lambda, \tilde{\lambda}) = \overline{F_{+ \pm}(\tilde{\lambda}, \lambda)} ,~~ F_{\pm -}(\lambda, \tilde{\lambda}) = \overline{F_{- \pm}(\tilde{\lambda}, \lambda) }
\]
and there holds (similar for $ F_{\pm, -}(\lambda, \tilde{\lambda}) $ replacing $\mp$ on the right side by $\pm$ ) for any number $ N \in \Z_+$
\begin{align} \label{firsto-ord1}
	&|F_{\pm, +}(\lambda, \tilde{\lambda}) |  \lesssim \begin{cases}
		|\lambda| + |\tilde{\lambda}|  & \lambda^2 + \tilde{\lambda}^2 \lesssim 1\\[3pt]
		(1 + \big| |\lambda| \mp |\tilde{\lambda}| \big|)^{-N} & \lambda^2 + \tilde{\lambda}^2 \gtrsim 1,
	\end{cases}\\[3pt] \label{firsto-ord2}
	& |\partial_{\lambda}F_{\pm, +}(\lambda, \tilde{\lambda})| \lesssim   \begin{cases}
		(|\lambda| + |\tilde{\lambda}|)| \log(|\lambda|)|  + |\lambda|^{\f12} + |\tilde{\lambda}|^{\f12} & \lambda^2+ \tilde{\lambda}^2 \lesssim 1\\[3pt]
		(1 + \big| |\lambda| \mp |\tilde{\lambda}| \big|)^{-N} | \log(2|\lambda|\langle \lambda \rangle^{-1})| 
		& \lambda^2 + \tilde{\lambda}^2 \gtrsim 1,
	\end{cases}\\[3pt]  \label{firsto-ord3}
	& |\partial_{\tilde{\lambda}}F_{\pm, +}(\lambda, \tilde{\lambda})| \lesssim   \begin{cases}
		(|\lambda| + |\tilde{\lambda}|)| \log(|\tilde{\lambda}|)|  + |\lambda|^{\f12} + |\tilde{\lambda}|^{\f12} & \lambda^2+ \tilde{\lambda}^2 \lesssim 1\\[3pt]
		(1 + \big| |\lambda| \mp |\tilde{\lambda}| \big|)^{-N}  \log(2|\tilde{\lambda}|\langle \tilde{\lambda} \rangle^{-1})| 
		& \lambda^2 + \tilde{\lambda}^2 \gtrsim 1,
	\end{cases}\\[3pt] \label{seco-ord1}
	& |\partial_{\lambda}\partial_{\tilde{\lambda}}F_{\pm, +}(\lambda, \tilde{\lambda})| \lesssim   \begin{cases}
		| \log(|\tilde{\lambda}|) \log(|\lambda|)|& \lambda^2+ \tilde{\lambda}^2 \lesssim 1\\[3pt]
		(1 + \big| |\lambda| \mp |\tilde{\lambda}| \big|)^{-N} | \log(2|\tilde{\lambda}|\langle \tilde{\lambda} \rangle^{-1}) \log(2\lambda|\langle \lambda \rangle^{-1})| & \lambda^2 + \tilde{\lambda}^2 \gtrsim 1,
	\end{cases}\\[3pt] \label{seco-ord2}
	& |\partial_{\lambda}^2F_{\pm, +}(\lambda, \tilde{\lambda})| \lesssim   \begin{cases}
		| \lambda^{-1}\log(|\lambda|)| & \lambda^2+ \tilde{\lambda}^2 \lesssim 1\\[3pt]
		|\tilde\lambda|^{-N} |\lambda|^{-1}| \log(|\lambda|) & \lambda^2 \ll1 \lesssim  \tilde{\lambda}^2 
	\end{cases}
\end{align}
(b)~ The discrete part $\mathcal{K}_{cd} $
with 
\begin{align*}
	[\mathcal{K}_{cd}]_{\pm +} (\lambda) = \langle y \partial_y \phi^{\pm}_d(y), \sigma_3 e_{+}(y, \lambda) \rangle_{L^2_y},~~ [\mathcal{K}_{cd}]_{\pm -} (\lambda) = \langle y \partial_y \phi^{\pm}_d(y), \sigma_3 e_{-}(y, \lambda) \rangle_{L^2_y},
\end{align*}
as well as the kernel functions
\begin{align*}
	[K_{dc}]_{\pm +} (\lambda) = \langle y \partial_y  e_{\pm}(y, \lambda), \phi^{+}_d(y) \rangle_{L^2_y},~~ [\mathcal{K}_{dc}]_{\pm -} (\lambda) = \langle y \partial_y  e_{\pm}(y, \lambda), \phi^{-}_d(y) \rangle_{L^2_y},
\end{align*}
for $ \mathcal{K}_{dc}g = \int K_{dc}(\lambda) g(\lambda) ~d \lambda $ decay rapidly as $ |\lambda| \to \infty$.
\end{Prop}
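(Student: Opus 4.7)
The plan is to separate the two inner products appearing in the integration-by-parts formulas for $[\mathcal{K}_{cc}f]_{\pm\pm}$ given just before the statement. The second pairing, $-\langle \int f^{\pm}(\tilde\lambda)\,e_{\pm}(y,\tilde\lambda)\,d\tilde\lambda,\sigma_3 e_{\pm'}(y,\lambda)\rangle_{L^2_y}$, is evaluated directly via Proposition~\ref{Fourier-inv-fin}: it contributes $-\delta_{\pm\pm'}\,f^{\pm}(\lambda)$ (this accounts for the $-1$ in $a(\lambda)$ and simultaneously ensures the off-diagonal blocks of $\tilde{\mathcal{K}}_{cc}$ are free of $\delta$-singularities). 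For the first pairing I would exploit the commutator
\[
[\mathcal{H},y\partial_y] \;=\; -2\partial_y^2\sigma_3 - y(\partial_yV) \;=\; 2\mathcal{H} - 2V - y\partial_y V,
\]
combined with $\mathcal{H}e_{\pm}=\pm\tilde\lambda^2 e_{\pm}$, to derive the key identity
\[
(\mathcal{H}\mp\tilde\lambda^2)\bigl(y\partial_y-\tilde\lambda\partial_{\tilde\lambda}\bigr)e_{\pm}(\cdot,\tilde\lambda) \;=\; -(2V+y\partial_yV)\,e_{\pm}(\cdot,\tilde\lambda).
\]
Testing this against $\sigma_3 e_{\pm'}(\cdot,\lambda)$ and moving $\mathcal{H}$ to the right using $\sigma_3\mathcal{H}=\mathcal{H}^*\sigma_3$ and $\mathcal{H}e_{\pm'}(\cdot,\lambda)=\pm'\lambda^2 e_{\pm'}(\cdot,\lambda)$ formally produces the resolvent-type kernel
\[
\frac{F_{\pm'\pm}(\lambda,\tilde\lambda)}{\pm'\lambda^2\mp\tilde\lambda^2},\qquad F_{\pm'\pm}(\lambda,\tilde\lambda) \;=\; \mp\bigl\langle (2V+y\partial_yV)\,e_{\pm}(\cdot,\tilde\lambda),\sigma_3 e_{\pm'}(\cdot,\lambda)\bigr\rangle_{L^2_y}.
\]
Since $V+y\partial_yV=O(\langle y\rangle^{-4})$, the integral defining $F$ is absolutely convergent and gives the $\mathcal{K}_0$-part of the decomposition.

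To make the formal move of $\mathcal{H}$ rigorous (as the pairing is only distributional in $\tilde\lambda$), I would test against $f\in C^\infty_c$ and introduce a cutoff $\chi_R(y)$ supported in $[-R,R]$. The interior integrals converge to the $\mathcal{K}_0$-part, while the boundary terms at $y=\pm R$ — which only see the leading plane-wave pieces $s(\tilde\lambda)e^{i\tilde\lambda y}\underline{e}^\pm$ (for $y>0$) and $e^{i\tilde\lambda y}+r(\tilde\lambda)e^{-i\tilde\lambda y}$ (for $y<0$) from Corollary~\ref{Das-Cor} — reduce to half-line Fourier integrals such as $\int_0^\infty e^{i(\tilde\lambda-\lambda)y}\,dy$, which as distributions in $\tilde\lambda$ equal $\pi\delta(\tilde\lambda-\lambda)$ plus principal value. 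Acting with $(y\partial_y-\tilde\lambda\partial_{\tilde\lambda})$ on the plane-wave pieces kills the oscillating factor and produces $-\tilde\lambda s'(\tilde\lambda)e^{i\tilde\lambda y}$ on $y>0$ and $-\tilde\lambda r'(\tilde\lambda)e^{-i\tilde\lambda y}$ on $y<0$; pairing with the analogous leading parts of $\sigma_3\overline{e_\pm(\cdot,\lambda)}$ and extracting the $\delta$-contributions yields exactly $\tfrac12\lambda(s'\bar s+r'\bar r)\,\delta(\tilde\lambda-\lambda)$ on the diagonal blocks. Adding the $-1$ from the second pairing produces the stated multiplier $a(\lambda)$. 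For the off-diagonal blocks $(+,-)$ and $(-,+)$ the leading-order vectors satisfy $\langle\underline{e}^+,\sigma_3\underline{e}^-\rangle_{\mathbb{C}^2}=0$, so no boundary $\delta$ arises and these blocks of $\tilde{\mathcal{K}}_{cc}$ vanish.

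The estimates \eqref{firsto-ord1}--\eqref{seco-ord2} on $F_{\pm'\pm}$ I would obtain by plugging the asymptotic decompositions from Corollary~\ref{Das-Cor} (and Remark~\ref{subs-Rem}) into the integral defining $F$ and splitting into $y>0$ and $y<0$. The high-frequency decay in $||\lambda|\mp|\tilde\lambda||$ comes from integration by parts in $y$ exploiting oscillation of $e^{\pm i\lambda y}$ against the smooth potential $V+y\partial_yV\in\mathcal{S}(\mathbb{R})$, gaining arbitrarily many powers $(|\lambda|+|\tilde\lambda|)^{-N}$. The logarithmic factors near $\lambda=0$ or $\tilde\lambda=0$ originate from the $|\log|\lambda||$-type bounds on $s',r'$ (Lemma~\ref{Das-asym}) and on $\partial_\lambda e^{\pm\infty}$; the vanishing factor $|\lambda|+|\tilde\lambda|$ at the origin comes from $s(0)=-1,r(0)=0,\tilde{r}(0)=\tilde s(0)=0$ together with the symmetry $r(\lambda)e^{-i\lambda y}+e^{i\lambda y}\to 0$ as $\lambda\to 0$. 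For second-order derivatives one uses \eqref{sen22} and the $|\lambda|^{-1}|\log|\lambda||$ bound on $\partial^2_\lambda e^{\pm\infty}$ from Corollary~\ref{Das-Cor}.

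Part (b) is comparatively immediate: since $\phi^\pm_d$ are Schwartz (being isolated eigenfunctions of $\mathcal{H}$ with eigenvalues $\pm i\kappa\neq 0$), the functions $y\partial_y\phi^\pm_d$ are again Schwartz, so the rapid decay of $[\mathcal{K}_{cd}]_{\pm\pm'}(\lambda)$ and of the kernels $[K_{dc}]_{\pm\pm'}(\lambda)$ follows directly from Lemma~\ref{Fourier-decay}(a) applied to $y\partial_y\phi^\pm_d$ and to $y\partial_y\,e_{\pm}$ tested against $\phi^\pm_d$. The main technical obstacle in the entire argument is Step~4/5, namely isolating the $\delta$-contribution rigorously: the individual pairing $\langle (y\partial_y-\tilde\lambda\partial_{\tilde\lambda})e_{\pm}(\cdot,\tilde\lambda),\sigma_3 e_{\pm'}(\cdot,\lambda)\rangle$ is not well-defined pointwise in $(\lambda,\tilde\lambda)$, and the cutoff-plus-limit procedure must be carried out with enough regularity on $f$ and careful tracking of the half-line Fourier factors of $\pi$ versus $2\pi$.
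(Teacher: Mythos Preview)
Your strategy is essentially the paper's: the commutator identity $(\mathcal{H}\mp\tilde\lambda^2)(y\partial_y-\tilde\lambda\partial_{\tilde\lambda})e_{\pm}=-(2V+y\partial_yV)e_{\pm}$, the resulting representation $F_{\pm'\pm}=\langle U e_{\pm}(\cdot,\tilde\lambda),\sigma_3 e_{\pm'}(\cdot,\lambda)\rangle$ with $U=O(\langle y\rangle^{-4})$, the cutoff-plus-asymptotics extraction of the $\delta$, and the appeal to Fourier inversion for the $-1$ in $a(\lambda)$ all match. One difference worth noting: for the high-frequency decay the paper iterates commutators $[\mathcal{H},[\mathcal{H},\ldots[\mathcal{H},U]\ldots]]$ rather than integrating by parts in $y$ directly. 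This is cleaner because $\mathcal{H}$ acts on the full $e_{\pm}$ as multiplication by $\pm\tilde\lambda^2$, whereas $\partial_y$ does not, so you avoid tracking the non-oscillatory remainders $e^{\pm\infty}$ through each step. Your naive $y$-integration by parts would produce powers of $\bigl||\lambda|\mp|\tilde\lambda|\bigr|^{-1}$ on the oscillatory main terms but leaves the cross terms with $e^{\pm\infty}$ unaccounted for.

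There is a genuine gap in your low-frequency argument. Your stated reason for the vanishing $|F_{\pm+}(\lambda,\tilde\lambda)|\lesssim|\lambda|+|\tilde\lambda|$ is wrong: since $r(0)=0$ and $s(0)=-1$, one has $r(\lambda)e^{-i\lambda y}+e^{i\lambda y}\to 1$ as $\lambda\to 0$, not $0$, so the plane-wave parts of $e_{\pm}(\cdot,0)$ do \emph{not} cancel and the integrand $U(y)e_{\pm}(y,0)\cdot\sigma_3\overline{e_{+}(y,0)}$ carries no obvious smallness. The paper obtains the low-frequency bound by a substantially more delicate computation: one writes $U=[\mathcal{H},y\partial_y]-2\mathcal{H}$, localizes with a cutoff $\chi_{\lesssim\tilde\lambda^{-1/2}}$ at scale $|y|\sim\tilde\lambda^{-1/2}$, exploits $\mathcal{H}e_{\pm}(\cdot,\tilde\lambda)=\pm\tilde\lambda^2 e_{\pm}$ on the localized piece, and integrates the commutator $[\mathcal{H},y\partial_y]$ by parts with explicit control of the boundary contributions at $|y|\sim\tilde\lambda^{-1/2}$ (which themselves are $O(\tilde\lambda)$). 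The bound $O(|\lambda|+|\tilde\lambda|)$ is not read off from the scattering coefficients $s,r$ alone; it comes from the operator identity defining $U$ and the scale-matched cutoff. The first-derivative low-frequency bounds in \eqref{firsto-ord2}--\eqref{firsto-ord3} are obtained by rerunning the same argument with one $\partial_\lambda$ inserted and then invoking the symmetry $F_{\pm+}(\lambda,\tilde\lambda)=\overline{F_{+\pm}(\tilde\lambda,\lambda)}$.
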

\begin{proof} Part (b) follows directly from duality $ \mathcal{H} = \sigma_3 \mathcal{H}^*\sigma_3 $ as  above and the calculation of $ [ y \partial_y , \mathcal{H}]$ (see below).
For part (a), we start with defining the functions
\begin{align*}
	&u_{+}(y):= \int_{-\infty}^{\infty} f^{+}(\tilde{\lambda})[ y \partial_y - \tilde{\lambda}\partial_{\tilde{\lambda}}] e_{+}(y, \tilde{\lambda})~d \tilde{\lambda},\\
	&u_{-}(y):= \int_{-\infty}^{\infty} f^{-}(\tilde{\lambda})[ y \partial_y - \tilde{\lambda}\partial_{\tilde{\lambda}}] e_{-}(y, \tilde{\lambda})~d \tilde{\lambda}.
\end{align*}
Then, as in the proof of Corollary \ref{Cor-Gourier-decay}, we justify the following integration by parts
\begin{align*}
	&\pm \lambda^2 \langle u_{+}, \sigma_3e_{\pm}(\cdot, \lambda) \rangle_{L^2_y} =  \langle u_{+}, \sigma_3\mathcal{H}e_{\pm}(\cdot, \lambda) \rangle_{L^2_y}  = \langle \mathcal{H} u_{+}, \sigma_3e_{\pm}(\cdot, \lambda) \rangle_{L^2_y}, \\
	&\pm\lambda^2 \langle u_{-}, \sigma_3 e_{\pm}(\cdot, \lambda) \rangle_{L^2_y}  =  \langle u_{-}, \sigma_3\mathcal{H}e_{\pm}(\cdot, \lambda) \rangle_{L^2_y}  = \langle \mathcal{H} u_{-}, \sigma_3e_{\pm}(\cdot, \lambda) \rangle_{L^2_y} .
\end{align*}
Moreover we obtain
\begin{align*}
	\mathcal{H} u_{\pm} =&~ \int_{ - \infty}^{\infty} f^{\pm}(\tilde{\lambda})[\mathcal{H}, y \partial_y] e_{\pm}(y, \tilde{\lambda} )~d \tilde{\lambda} + \int_{ - \infty}^{\infty} f^{\pm}(\tilde{\lambda})[ y \partial_y - \tilde{\lambda}\partial_{\tilde{\lambda}}] \mathcal{H}e_{\pm}(y, \tilde{\lambda} )~d \tilde{\lambda}\\
	=&~ \int_{ - \infty}^{\infty} f^{\pm}(\tilde{\lambda})[\mathcal{H}, y \partial_y] e_{\pm}(y, \tilde{\lambda} )~d \tilde{\lambda} \pm \int_{ - \infty}^{\infty} \tilde{\lambda}^2f^{\pm}(\tilde{\lambda})[ y \partial_y - \tilde{\lambda}\partial_{\tilde{\lambda}}]e_{\pm}(y, \tilde{\lambda} )~d \tilde{\lambda}\\
	& \mp 2\int_{ - \infty}^{\infty} \tilde{\lambda}^2f^{\pm}(\tilde{\lambda}) e_{\pm}(y, \tilde{\lambda} )~d \tilde{\lambda} 
\end{align*}
where, cf. the proof of Corollary \ref{Cor-Gourier-decay}, we have
$$ [\mathcal{H}, y \partial_y] = 2 \mathcal{H} - 2 V + y (\partial_yV) =: 2 \mathcal{H} + U(y).$$
Thus 
\begin{align*}
	\mathcal{H} u_{\pm} = \int_{ - \infty}^{\infty} f^{\pm}(\tilde{\lambda})U(y) e_{\pm}(y, \tilde{\lambda} )~d \tilde{\lambda} \pm \int_{ - \infty}^{\infty} \tilde{\lambda}^2f^{\pm}(\tilde{\lambda})[ y \partial_y - \tilde{\lambda}\partial_{\tilde{\lambda}}]e_{\pm}(y, \tilde{\lambda} )~d \tilde{\lambda},
\end{align*}
and 
\begin{align*}
	&\lambda^2 [\mathcal{K}_{cc}f]_{\pm +}(\lambda) = \pm [\mathcal{K}_{cc}(\lambda^2 \cdot f)]_{\pm +}(\lambda) + \langle   \int_{ - \infty}^{\infty} f^{\pm}(\tilde{\lambda})U(y) e_{\pm}(y, \tilde{\lambda} )~d \tilde{\lambda}, \sigma_3 e_{+}(y, \lambda) \rangle_{L^2_y},\\
	&\lambda^2 [\mathcal{K}_{cc}f]_{\pm -}(\lambda) =  \mp [\mathcal{K}_{cc}(\lambda^2 \cdot f)]_{\pm -}(\lambda) -  \langle   \int_{ - \infty}^{\infty} f^{\pm}(\tilde{\lambda})U(y) e_{\pm}(y, \tilde{\lambda} )~d \tilde{\lambda}, \sigma_3 e_{-}(y, \lambda) \rangle_{L^2_y}.
\end{align*}
We recall $U(y) = O(\langle y \rangle^{-4}) $ and therefore, using absolute convergence on the right,  we read off the following
\begin{align}
	&(\lambda^2 \mp \tilde{\lambda}^2)[K_0]_{\pm, +}(\lambda, \tilde{\lambda}) = \langle U(y) e_{\pm}(y, \tilde{\lambda}), \sigma_3 e_+(y, \lambda) \rangle_{L^2_y},\\
	&(\lambda^2 \pm \tilde{\lambda}^2)[K_0]_{\pm, -}(\lambda, \tilde{\lambda}) = - \langle U(y) e_{\pm}(y, \tilde{\lambda}), \sigma_3 e_-(y, \lambda) \rangle_{L^2_y}.
\end{align}
We hence set 
\begin{align*}
	&F_{\pm, +}(\lambda, \tilde{\lambda}) =  \langle U(y) e_{\pm}(y, \tilde{\lambda}), \sigma_3 e_+(y, \lambda) \rangle_{L^2_y},\\
	&F_{\pm, -}(\lambda, \tilde{\lambda}) =   - \langle U(y) e_{\pm}(y, \tilde{\lambda}), \sigma_3 e_-(y, \lambda) \rangle_{L^2_y}.
\end{align*}
From Corollary \ref{Das-Cor} we recall the following bounds for a constant $ C > 0$
\begin{align}
	&|e_{\pm}(y, \lambda)   | \leq C,\\
	&|\partial_{\lambda}e_{\pm}(y, \lambda)   | \leq C (  \langle y \rangle +  \langle \lambda \rangle^{-1} ),~~  | \lambda| \gg 1,\\
	&|\partial_{\lambda}e_{\pm}(y, \lambda)   | \leq C |\log(|\lambda|)| \langle y \rangle ,~~  | \lambda| \lesssim 1,\\
	&|\partial_{\lambda}^2e_{\pm}(y, \lambda)   | \leq C |\lambda^{-1} \log(|\lambda|)|   \langle y \rangle^{2},~~|\lambda| \lesssim 1.
\end{align}
Therefore we directly obtain the estimates (the same holds for $F_{\pm -}(\lambda, \tilde{\lambda})$)
\begin{align}\label{einn}
	&|F_{\pm +}(\lambda, \tilde{\lambda}) |  \leq C,\\ \label{diesr}
	& |\partial_{\lambda} F_{\pm +}(\lambda, \tilde{\lambda})| \leq C | \log(2|\lambda| \langle \lambda \rangle^{-1})|,~~~|\partial_{\tilde{\lambda}} F_{\pm +}(\lambda, \tilde{\lambda})| \leq C |\log(2|\tilde{\lambda}| \langle \tilde{\lambda} \rangle^{-1})|,\\ \label{diser3}
	& |\partial_{\tilde{\lambda}} \partial_{\lambda} F_{\pm +}(\lambda, \tilde{\lambda})| \leq C  |\log(2|\tilde{\lambda}| \langle \tilde{\lambda} \rangle^{-1})\log(2|\lambda| \langle \lambda \rangle^{-1})|,\\ \label{disr4}
	&  | \partial_{\lambda}^2 F_{\pm +}(\lambda, \tilde{\lambda})| \leq C  |\lambda^{-1}  \log(|\lambda|)|,~~ |\lambda | \lesssim 1.
\end{align}
from the above definition of $ F_{\pm +}, F_{\pm -}$. However, we show how to improve these bounds \eqref{einn} - \eqref{disr4}. Let us first consider $F_{\pm +}(\lambda, \tilde{\lambda}), F_{\pm -}(\lambda, \tilde{\lambda})$.\\[4pt]
\emph{Case: High frequencies}~$\max\{|\lambda|,|\tilde{\lambda}| \} \gtrsim 1$:~ Here we may proceed as follows (c.f.  \cite[Theorem 5.1]{KST-slow}). 
\begin{align}\label{eq1}
	\lambda^2 F_{\pm +}(\lambda, \tilde{\lambda}) &= ~\langle U(y) e_{\pm}(y, \tilde{\lambda}), \sigma_3 \mathcal{H} e_{+}(y, \lambda) \rangle_{L^2_y}\\ \nonumber
	& = \langle [ \mathcal{H}, U(y)] e_{\pm}(y, \tilde{\lambda}), \sigma_3 e_{+}(y, \lambda) \rangle_{L^2_y}  \pm \tilde{\lambda}^2 F_{\pm +}(\lambda, \tilde{\lambda}),\\[4pt] \label{eq12}
	\lambda^2 F_{\pm -}(\lambda, \tilde{\lambda}) &= ~ - \langle U(y) e_{\pm}(y, \tilde{\lambda}), \sigma_3 \mathcal{H} e_{-}(y, \lambda) \rangle_{L^2_y}\\ \nonumber
	& = - \langle [ \mathcal{H}, U(y)] e_{\pm}(y, \tilde{\lambda}), \sigma_3 e_{-}(y, \lambda) \rangle_{L^2_y}  \mp \tilde{\lambda}^2 F_{\pm -}(\lambda, \tilde{\lambda}),
\end{align}
where by definition of $U(y)$ and $V(y)$ we have 
\begin{align} \label{commut}
	[\mathcal{H}, U(y)] =& - \sigma_3 U''(y) - 2 \sigma_3 U'(y) \partial_y + 2 y  \sigma_1 (V_2'(y) V_1(y) - V_2(y) V_1'(y) )\\ \nonumber
	=& - \sigma_3 U''(y) - 2 \sigma_3 U'(y) \partial_y.
\end{align}
We note generally for $V, \tilde{V}$ as in Definition \ref{Def-on} there holds $[V, \tilde{V}] = 2 \sigma_1 (\tilde{V}_2 V_1 - V_2 \tilde{V}_1)$. 
Now we repeat this argument and obtain
\begin{align*}
	&(\lambda^2 \mp \tilde{\lambda}^2)^2 F_{\pm +}(\lambda, \tilde{\lambda}) = \langle [ \mathcal{H}, [ \mathcal{H}, U(y)] ] e_{\pm}(y, \tilde{\lambda}), \sigma_3 e_{+}(y, \lambda) \rangle_{L^2_y} ,\\
	&(\lambda^2 \pm \tilde{\lambda}^2)^2 F_{\pm -}(\lambda, \tilde{\lambda}) = \langle [ \mathcal{H}, [ \mathcal{H}, U(y)] ] e_{\pm}(y, \tilde{\lambda}), \sigma_3 e_{-}(y, \lambda) \rangle_{L^2_y} .
\end{align*}
Here we calculate with \eqref{commut}
\begin{align*}
	[ \mathcal{H}, [ \mathcal{H}, U(y)] ]  =&~ \partial_y^{(4)}U(y) + 4 U'''(y) \partial_y + 4 U''(y) \sigma_3 \mathcal{H}  - [V,  \sigma_3 U''(y)]\\
	&~  - 2 V \sigma_3 U'(y) \partial_y + 2 \sigma_3 U'(y) \partial_y V - 4 U''(y) \sigma_3 V(y)\\
	= & ~U^{\text{even}}_1(y) + U^{\text{odd}}(y) \partial_y + U^{\text{even}}_2(y) \mathcal{H},
\end{align*}
where $U^{\text{even}}_{1,2}, U^{\text{odd}}$ are even and odd functions of order
\[
U^{\text{even}}_{1}(y) = O(\langle  y \rangle^{-8}),~~U^{\text{even}}_{2}(y) = O(\langle  y \rangle^{-6}), ~~U^{\text{odd}}_{1}(y) = O(\langle  y \rangle^{-7}).
\]
By induction we derive the existence of odd and even $ M(2\times 2, \R)$ valued smooth rational   functions with 
\begin{align*}
	&U^{\text{even}}_{k, j}(y) = O(\langle y \rangle^{-4-2k}),~ U^{\text{odd}}_{k,\ell}(y) = O(\langle y \rangle^{-5-2k}),\\
	& j = 1, \dots, k,~~\ell = 1, \dots, k-1,
\end{align*}
such that for $ k \in \Z_+$
\begin{align*}
	(\lambda^2 \mp \tilde{\lambda}^2)^{2k} F_{\pm +}(\lambda, \tilde{\lambda}) =&~\big\langle  \bigg[ \sum_{j = 0}^{k} \tilde{\lambda}^{2j} U^{\text{even}}_{k, j}(y)  + \sum_{\ell = 0}^{k-1} \tilde{\lambda}^{2\ell} U^{\text{odd}}_{k,\ell}(y) \partial_y \bigg] e_{\pm}(y, \tilde{\lambda}), \sigma_3 e_{+}(y, \lambda) \big \rangle_{L^2_y},\\[2pt]
	(\lambda^2 \pm \tilde{\lambda}^2)^2 F_{\pm -}(\lambda, \tilde{\lambda}) =&~\big\langle  \bigg[ \sum_{j = 0}^{k} \tilde{\lambda}^{2j} U^{\text{even}}_{k, j}(y)  + \sum_{\ell = 0}^{k-1} \tilde{\lambda}^{2\ell} U^{\text{odd}}_{k,\ell}(y) \partial_y \bigg] e_{\pm}(y, \tilde{\lambda}), \sigma_3 e_{-}(y, \lambda) \big \rangle_{L^2_y}.
\end{align*}
\begin{Rem}We note that in the above formulas we actually suppress
	\[
	U^{\text{even}}_{k, j} = U^{\text{even}}_{k, j, \pm},~ U^{\text{odd}}_{k,\ell} = U^{\text{odd}}_{k,\ell, \pm}
	\]
	for the sake of notation and since it is not significant. Further the asymtotics holds clearly smoothly, i.e. we may write $ O_m(\langle y\rangle^{4 - 2k})$ for any $ m \in \Z_+$.
\end{Rem}
~\\
In conclusion there holds
\begin{align}
	| F_{\pm +}(\lambda, \tilde{\lambda}) |  \lesssim \langle \tilde{ \lambda} \rangle^{2k} (\lambda^2 \mp \tilde{\lambda}^2)^{-2k},~~| F_{\pm -}(\lambda, \tilde{\lambda}) |  \lesssim \langle \tilde{ \lambda} \rangle^{2k} (\lambda^2 \pm \tilde{\lambda}^2)^{-2k}.
\end{align}
and thus  for $ |\lambda| + |\tilde{\lambda}| \ge 1$ and arbitrary $ N \in \Z_+$
\begin{align}
	&| F_{ ++}(\lambda, \tilde{\lambda}) |  + | F_{ - -}(\lambda, \tilde{\lambda}) | \leq C   ( 1 + \big| |\lambda|  - | \tilde{\lambda}| \big|)^{-N},\\
	&| F_{ -+}(\lambda, \tilde{\lambda}) | + | F_{ + -}(\lambda, \tilde{\lambda})|  \leq C  ( 1 +  |\lambda|  + | \tilde{\lambda}| )^{-N}.
\end{align}
~~\\
\emph{Case: Low frequencies}~$\max\{|\lambda|,|\tilde{\lambda}| \} \lesssim 1$:~ Here we observe  $F_{\pm +}(0,0) = F_{\pm -}(0,0) = 0$ by using the definition and calculating  similar to \eqref{eq1} and \eqref{eq12}.  In fact write $  \chi_{\lesssim \tilde{\lambda}^{-\f12}} , \chi_{\gtrsim \tilde{\lambda}^{-\f12}} $ for $ \chi(\cdot \tilde{\lambda}^{\f12}),  1 -  \chi(\cdot \tilde{\lambda}^{\f12})$, where $ \chi \in C_c^{\infty}(\R)$ is an suitable cut-off with $ \chi = 1$ around zero. Then 
\[
F_{\pm +}(\lambda, \tilde{\lambda}) = \langle U(y) e_{\pm}(y, \tilde{\lambda}) \chi_{\lesssim \tilde{\lambda}^{-\f12}}(y), \sigma_3 e_{+}(y, \lambda) \rangle + \langle U(y) e_{\pm}(y, \tilde{\lambda})  \chi_{ \gtrsim\tilde{\lambda}^{-\f12}}(y), \sigma_3 e_{+}(y, \lambda) \rangle
\]
and  using $ U(y) = O(y^{-4})$ we directly infer
\[
| \langle U(y) e_{\pm}(y, \tilde{\lambda}) \chi_{ \gtrsim\tilde{\lambda}^{-\f12}}(y), \sigma_3 e_{+}(y, \lambda) \rangle| \lesssim |\tilde{\lambda}|^{\frac{3}{2}}.
\]
Further since also $ U(y) = [\mathcal{H}, y \partial_y] - 2 \mathcal{H}$ we write
\begin{align*}
	\langle U(y) e_{\pm}(y, \tilde{\lambda})  \chi_{ \lesssim\tilde{\lambda}^{-\f12}}(y), \sigma_3 e_{+}(y, \lambda) \rangle =& -2  \langle \mathcal{H} e_{\pm}(y, \tilde{\lambda})   \chi_{ \lesssim\tilde{\lambda}^{-\f12}}(y), \sigma_3 e_{+}(y, \lambda) \rangle\\
	&~ +  \langle [\mathcal{H}, y \partial_y] e_{\pm}(y, \tilde{\lambda})   \chi_{ \lesssim\tilde{\lambda}^{-\f12}}(y), \sigma_3 e_{+}(y, \lambda) \rangle.
\end{align*}
Moreover we note  the support of $ \chi_{ \lesssim\tilde{\lambda}^{-\f12}}$ has length $\sim \tilde{\lambda}^{-\f12}$ and 
\begin{align*}
	| \langle \mathcal{H} e_{\pm}(y, \tilde{\lambda})  \chi_{ \lesssim\tilde{\lambda}^{-\f12}}(y) , \sigma_3 e_{+}(y, \lambda) \rangle| &\lesssim |\tilde{\lambda}^2\cdot  \tilde{\lambda}^{- \f12}|,\\[2pt]
	\langle [\mathcal{H}, y \partial_y] e_{\pm}(y, \tilde{\lambda})   \chi_{ \lesssim\tilde{\lambda}^{-\f12}}(y), \sigma_3 e_{+}(y, \lambda) \rangle &=  \langle \mathcal{H} y \partial_ye_{\pm}(y, \tilde{\lambda})   \chi_{ \lesssim\tilde{\lambda}^{-\f12}}(y), \sigma_3 e_{+}(y, \lambda) \rangle\\
	&~~~ -  \langle y \partial_y \mathcal{H}e_{\pm}(y, \tilde{\lambda})   \chi_{ \lesssim\tilde{\lambda}^{-\f12}}(y), \sigma_3 e_{+}(y, \lambda) \rangle.
\end{align*}
For the second term on the right  we integrate  by parts
\begin{align*}
	\langle y \partial_y \mathcal{H}e_{\pm}(y, \tilde{\lambda})   \chi_{ \lesssim\tilde{\lambda}^{-\f12}}(y), \sigma_3 e_{+}(y, \lambda) \rangle &= \mp\tilde{\lambda}^2  \langle  e_{\pm}(y, \tilde{\lambda}) , \sigma_3 y \tilde{\lambda}^{\f12} \chi'( y \tilde{\lambda}^{\f12}) e_{+}(y, \lambda) \rangle\\
	&~~~    \mp\tilde{\lambda}^2 \langle  e_{\pm}(y, \tilde{\lambda}) , \sigma_3 \chi_{ \lesssim\tilde{\lambda}^{-\f12}}(y) ( 1 +  y \partial_y)e_{+}(y, \lambda) \rangle.
\end{align*}
Again for the second term on the right, we split the integral $\mathbb{I}( y > 0) + \mathbb{I}(y < 0)$ and use respectively for $ y \partial_y e_{+}(y, \lambda)$ (say $\lambda \geq 0$ and $ \lambda < 0 $ is treated similarly)
\[
i (y \lambda) s(\lambda)e^{i \lambda y} + y \partial_y e^{\infty}(y, \lambda),~~ i (y \lambda)( r(\lambda)e^{i \lambda y} - e^{- i \lambda y}) + y \partial_y e^{-\infty}(y, \lambda).
\]
There holds $s(\lambda), r(\lambda) = O(1) $ globally and  the error terms are as well globally uniformly bounded. For the  oscillatory parts we write $ y \lambda  \chi_{ \lesssim\tilde{\lambda}^{-\f12}}(y) = \lambda \tilde{\lambda}^{- \f12} (y \tilde{\lambda}^{\f12})  \chi_{ \lesssim\tilde{\lambda}^{-\f12}}(y)$. Summing up we have the upper bound $ O(\tilde{\lambda}^{\f32}) + O(\tilde{\lambda}\lambda) $.  Now finally, we collect the terms
\begin{align*}
	\langle \mathcal{H} y \partial_ye_{\pm}(y, \tilde{\lambda})   \chi_{ \lesssim\tilde{\lambda}^{-\f12}}(y), \sigma_3 e_{+}(y, \lambda) \rangle &= \langle  y \partial_ye_{\pm}(y, \tilde{\lambda}), \sigma_3 \chi_{ \lesssim\tilde{\lambda}^{-\f12}}(y) \mathcal{H}e_{+}(y, \lambda) \rangle\\
	&~~~  -   \tilde{\lambda} \langle  y \partial_ye_{\pm}(y, \tilde{\lambda}), \sigma_3\chi''(\tilde{\lambda}^{\f12}y) e_{+}(y, \lambda) \rangle\\
	&~~  - 2   \tilde{\lambda}^{\f12} \langle  y \partial_ye_{\pm}(y, \tilde{\lambda}), \sigma_3\chi'(\tilde{\lambda}^{\f12}y) \partial_ye_{+}(y, \lambda) \rangle .
\end{align*}
Here, we use the above expansion which again schematically has the form
\[
y \partial_y e_{\pm}(y, \tilde{\lambda}) =   i (y\tilde{\lambda}) \text{osc}(y, \tilde{\lambda})+ y \partial_ye^{\pm \infty}(y, \tilde{\lambda}),~~ |\text{osc}(y, \tilde{\lambda})| \lesssim 1.
\]
Splitting again $ y\tilde{\lambda} = (y\tilde{\lambda}^{\f12}) \cdot \tilde{\lambda}^{\f12}$ for the oscillatory part implies 
\begin{align*}
	&|	\langle y \partial_ye_{\pm}(y, \tilde{\lambda}) , \sigma_3 \chi_{ \lesssim\tilde{\lambda}^{-\f12}}(y) \mathcal{H}e_{+}(y, \lambda) \rangle\|  \lesssim \lambda^2 \tilde{\lambda}^{-\f12},\\
	& | \tilde{\lambda} \langle i (y\tilde{\lambda}) \text{osc}(y, \tilde{\lambda})  , \sigma_3\chi''(\tilde{\lambda}^{\f12}y) e_{+}(y, \lambda) \rangle| \lesssim \tilde{\lambda},\\
	& | \tilde{\lambda}^{\f12} \langle  i (y\tilde{\lambda}) \text{osc}(y, \tilde{\lambda}), \sigma_3\chi'(\tilde{\lambda}^{\f12}y) \partial_ye_{+}(y, \lambda) \rangle| \lesssim \tilde{\lambda} \lambda + \tilde{\lambda}^{\f32},
\end{align*}
For the latter two lines, we further integrate the missing error carefully.  Note we have  $ y \sim \tilde{\lambda}^{- \f12}$ on the support of derivatives of $\chi(\cdot \tilde{\lambda}^{\f12})$ and 
\[
|  \partial_y e^{\pm \infty}(y, \tilde{\lambda}) | \lesssim \langle y \rangle^{-3} + |\tilde{\lambda} \log(\tilde{\lambda})| e^{\mp \tilde{\lambda} y}( |\tilde{\lambda}| + \langle y \rangle^{-3}).
\]
Integrating $ y \partial_y e^{\pm \infty}$ thus contributes at least  $O(\tilde{\lambda}^{\f12})$ , i.e. in both cases
\begin{align*}
	& | \tilde{\lambda} \langle y \partial_y e^{\infty}(y, \tilde{\lambda}) , \sigma_3\chi''(\tilde{\lambda}^{\f12}y) e_{+}(y, \lambda) \rangle| \lesssim \tilde{\lambda}^{\f12} \cdot \tilde{\lambda},\\
	& | \tilde{\lambda}^{\f12} \langle y \partial_y e^{\infty}(y, \tilde{\lambda}), \sigma_3\chi'(\tilde{\lambda}^{\f12}y) \partial_ye_{+}(y, \lambda) \rangle| \lesssim \tilde{\lambda}^{\f12} \lambda \cdot  \tilde{\lambda}^{\f12}.
\end{align*}
Overall we have the  bound $O(\tilde{\lambda})$ for the middle term and $ O(\lambda \tilde{\lambda})$  + $ O(\tilde{\lambda}^{\f32})$ for the last. Now we may select the bound $ O( \max\{ \lambda, \tilde{\lambda}, \lambda^2 \tilde{\lambda}^{-1}\})$ and note that 
$ O( \max\{ \lambda, \tilde{\lambda}, \tilde{\lambda}^2 \lambda^{- 1}\})$ is likewise an upper bound for $F_{\pm +}, F_{\pm -}$. This is seen either by replacing $\chi_{\lesssim \tilde{\lambda}^{-\f12}},~\chi_{\gtrsim \tilde{\lambda}^{-\f12}}$ by $\chi_{\lesssim \lambda^{-\f12}},~\chi_{\gtrsim \lambda^{-\f12}}$  in the above  steps or by symmetry $F_{\pm +}(\lambda, \tilde{\lambda}) = \overline{F_{+ \pm}(\tilde{\lambda}, \lambda)}$.\\[3pt]
Further we may improve the bound for $ \partial_{\lambda}F_{ \pm +}(\lambda, \tilde{\lambda})$. Note 
\begin{align*}
	(\lambda^2 \mp \tilde{\lambda}^2)^{2k} \partial_{\lambda}F_{\pm +}(\lambda, \tilde{\lambda}) =&~\big\langle  \bigg[ \sum_{j = 0}^{k} \tilde{\lambda}^{2j} U^{\text{even}}_{k, j}(y)  + \sum_{\ell = 0}^{k-1} \tilde{\lambda}^{2\ell} U^{\text{odd}}_{k,\ell}(y) \partial_y \bigg] e_{\pm}(y, \tilde{\lambda}), \sigma_3 \partial_{\lambda} e_{+}(y, \lambda) \big \rangle_{L^2_y},\\
	&~ - 4k \lambda (\lambda^2 \mp \tilde{\lambda}^2)^{2k-1}F_{\pm +}(\lambda, \tilde{\lambda}),\\[2pt]
	(\lambda^2 \pm \tilde{\lambda}^2)^{2K} \partial_{\lambda}F_{\pm -}(\lambda, \tilde{\lambda}) =&~\big\langle  \bigg[ \sum_{j = 0}^{k} \tilde{\lambda}^{2j} U^{\text{even}}_{k, j}(y)  + \sum_{\ell = 0}^{k-1} \tilde{\lambda}^{2\ell} U^{\text{odd}}_{k,\ell}(y) \partial_y \bigg] e_{\pm}(y, \tilde{\lambda}), \sigma_3 \partial_{\lambda}e_{-}(y, \lambda) \big \rangle_{L^2_y}\\
	&~ - 4k \lambda (\lambda^2 \pm \tilde{\lambda}^2)^{2k-1}F_{\pm -}(\lambda, \tilde{\lambda}).
\end{align*}
Hence for all $ k \in \Z_+$ there holds
\begin{alignat}{2} \label{ein}
	| \partial_{\lambda}F_{\pm +}(\lambda, \tilde{\lambda}) | &\lesssim ( 1 + \frac{|\lambda|}{(\tilde{\lambda}^2 \mp \lambda^2)})|\tilde{\lambda}|^{2k} ( \tilde{\lambda}^2 \mp \lambda^2  )^{-2k},   &1 \lesssim |\tilde{\lambda}|,  |\lambda|,\\[2pt] \label{zw}
	| \partial_{\lambda}F_{\pm +}(\lambda, \tilde{\lambda}) | &\lesssim ( 1 + |\lambda|^{-1})( \tilde{\lambda}^2 \mp \lambda^2  )^{-2k}, & |\tilde{\lambda}| \ll 1 \lesssim   |\lambda|,\\[2pt] \label{dre}
	| \partial_{\lambda}F_{\pm +}(\lambda, \tilde{\lambda}) | &\lesssim | \log (|\lambda| )| |\tilde{\lambda}|^{2k} ( \tilde{\lambda}^2 \mp \lambda^2  )^{-2k}, & |\lambda| \ll 1 \lesssim   |\tilde{\lambda}|.
\end{alignat}
The case of low frequencies $ \tilde{\lambda}^2+ \lambda^2  \lesssim1$ 
is treated along the above line of arguments. In particular, we need to reconsider the steps with $ \lambda  \lesssim 1$ (schematically say $  \lambda > 0$ ) and
\begin{align}
	y > 0:~~\partial_{\lambda}e_{\pm}(y, \lambda) &= s'(\lambda)e^{i y \lambda} + i y s(\lambda)e^{i \lambda y} + \partial_{\lambda}e^{\infty}(y, \lambda)\\ \nonumber
	& = O(\log(\lambda)) + \tilde{\lambda}^{- \f12} (iy \tilde{\lambda})  O(1) + \partial_{\lambda}e^{\infty}(y, \lambda),\\
	y < 0:~~\partial_{\lambda}e_{\pm}(y, \lambda) &= r'(\lambda)e^{i y \lambda} + iy (r(\lambda)e^{i \lambda y} - e^{- i y \lambda}) +  \partial_{\lambda}e^{-\infty}(y, \lambda)\\ \nonumber
	& = O(\log(\lambda)) + \tilde{\lambda}^{- \f12} (iy \tilde{\lambda})  O(1) + \partial_{\lambda}e^{-\infty}(y, \lambda),
\end{align}
where for $ \lambda \lesssim 1 $ 
\[
|\partial_{\lambda} e^{\pm \infty}(y, \lambda)| \lesssim  | \log(\lambda)| C(\langle y \rangle^{-1} + e^{\mp |\lambda| \frac{y}{2}}).
\]
This implies an bound in the low frequency regime of the form 
\[
G(\lambda, \tilde{\lambda}) O(|\log(\lambda)|) + O(\lambda \tilde{\lambda}^{- \f12}),
\]
where $G(\lambda, \tilde{\lambda}) $ consists of all the upper bounds collected in the calculation above for $ F_{\pm +}, F_{\pm -}$. To be precise we mind the extra term  $O(\lambda \tilde{\lambda}^{-\f12})$ from estimating
\begin{align*}
	\langle  y \partial_ye_{\pm}(y, \tilde{\lambda}), \sigma_3 \chi_{ \lesssim\tilde{\lambda}^{-\f12}}(y) \mathcal{H}\partial_{\lambda}e_{+}(y, \lambda) \rangle =&~ \lambda^2 \langle  y \partial_ye_{\pm}(y, \tilde{\lambda}), \sigma_3 \chi_{ \lesssim\tilde{\lambda}^{-\f12}}(y)\partial_{\lambda}e_{+}(y, \lambda) \rangle\\
	&~ + 2 \lambda  \langle  y \partial_ye_{\pm}(y, \tilde{\lambda}), \sigma_3 \chi_{ \lesssim\tilde{\lambda}^{-\f12}}(y) e_{+}(y, \lambda) \rangle.
\end{align*}
Now we consider $ \partial_{\tilde{\lambda}}F_{\pm +}(\lambda, \tilde{\lambda})$. Calculating this derivatives will imply upper bounds along the above lines with estimates for the two extra terms 
\begin{align*}
	& | \tilde{\lambda}  \langle  e_{\pm}(y, \tilde{\lambda}) , \sigma_3 y \tilde{\lambda}^{\f12} \chi'( y \tilde{\lambda}^{\f12}) e_{+}(y, \lambda) \rangle|,~~| \tilde{\lambda} \langle  e_{\pm}(y, \tilde{\lambda}) , \sigma_3 \chi_{ \lesssim\tilde{\lambda}^{-\f12}}(y) ( 1 +  y \partial_y)e_{+}(y, \lambda) \rangle|.
\end{align*}
By symmetry $ F_{\pm + }(\lambda, \tilde{\lambda}) = \overline{F_{+ \pm}(\tilde{\lambda}, \lambda)}$ we then infer the following upper bound for $\partial_{\lambda}F_{\pm +}(\lambda, \tilde{\lambda})$
\[
G(\tilde{\lambda}, \lambda) O(|\log(\lambda)|) + O(\lambda^{\f12}) + O(\tilde{\lambda}^{\f12}),
\]
which, in combination, implies the desired bound.\\[2pt]
Similarly and by symmetry we derive the bounds for  $ \partial_{\lambda}F_{\pm -},  \partial_{\tilde{\lambda}}F_{\pm +}, \partial_{\tilde{\lambda}}F_{\pm -}$. The estimates for the second order derivatives 
$\partial_{\lambda \tilde{\lambda}}^2 F_{\pm +}(\lambda, \tilde{\lambda}),~ \partial_{\lambda}^2 F_{\pm +}(\lambda, \tilde{\lambda})$ follow  by differentiating  the above identities for the first order derivative, applying prior upper bounds for $F_{\pm +}$, as well as \eqref{ein} - \eqref{dre}.\\[4pt]
Now let us consider the $\delta$ function on the diagonal of $K_{cc}$ kernel.  First we recall from Corollary \ref{Das-Cor} there holds the following differentiable asymptotic expressions, say for $\lambda \geq 0$
\begin{align*}
	&	e_+(y, \lambda) = s(\lambda) \begin{pmatrix}
		e^{i \lambda y}\\ 0
	\end{pmatrix} + O(\langle y \rangle^{-2}) + O( \tfrac{\lambda}{\langle \lambda \rangle} \log(2\tfrac{\lambda}{\langle \lambda \rangle} ) e^{- \lambda y}),~~y \gg1,\\ 
	& e_+(y, \lambda) = r(\lambda) \begin{pmatrix}
		e^{-i \lambda y}\\ 0
	\end{pmatrix} +  \begin{pmatrix}
		e^{i \lambda y}\\ 0
	\end{pmatrix}+ O(\langle y \rangle^{-2}) + O( \tfrac{\lambda}{\langle \lambda \rangle} \log(2\tfrac{\lambda}{\langle \lambda \rangle} ) e^{ \lambda y}),~~ - y \gg1,
	\end{align*}
	and for $ \lambda < 0$ we have
	\begin{align*}
& e_+(y, \lambda) = \overline{r(\lambda)} \begin{pmatrix}
	e^{-i \lambda y}\\ 0
\end{pmatrix} +  \begin{pmatrix}
	e^{i \lambda y}\\ 0
\end{pmatrix}+ O(\langle y \rangle^{-2}) +  O( \tfrac{\lambda}{\langle \lambda \rangle} \log(2\tfrac{\lambda}{\langle \lambda \rangle} ) e^{- \lambda y}),~~  y \gg1,\\
&	e_+(y, \lambda) = \overline{s(\lambda) }\begin{pmatrix}
e^{i \lambda y}\\ 0
\end{pmatrix} + O(\langle y \rangle^{-2}) +  O( \tfrac{\lambda}{\langle \lambda \rangle} \log(2\tfrac{\lambda}{\langle \lambda \rangle} ) e^{ \lambda y}),~~ - y \gg1.
\end{align*}
Likewise it holds
\begin{align*}
&	(y \partial_y - \lambda \partial_{\lambda})e_+(y, \lambda) = \lambda s'(\lambda) \begin{pmatrix}
e^{i \lambda y}\\ 0
\end{pmatrix}  + O(\langle y \rangle^{-1}) +  O( \tfrac{\lambda}{\langle \lambda \rangle} \log(2\tfrac{\lambda}{\langle \lambda \rangle} ) e^{- \lambda \frac{y}{2}}),~~ y \gg1,\\
& (y \partial_y - \lambda \partial_{\lambda})e_+(y, \lambda) =  \lambda r'(\lambda) \begin{pmatrix}
e^{-i \lambda y}\\ 0
\end{pmatrix}  + O(\langle y \rangle^{-1}) +  O( \tfrac{\lambda}{\langle \lambda \rangle} \log(2\tfrac{\lambda}{\langle \lambda \rangle} ) e^{ \lambda \frac{y}{2}}),~~ - y \gg1,
\end{align*}
and again  for $ \lambda < 0$ we have
\begin{align*}
&	(y \partial_y - \lambda \partial_{\lambda})e_+(y, \lambda) = \lambda \overline{r'(\lambda)} \begin{pmatrix}
e^{-i \lambda y}\\ 0
\end{pmatrix}  + O(\langle y \rangle^{-1}) +  O( \tfrac{\lambda}{\langle \lambda \rangle} \log(2\tfrac{\lambda}{\langle \lambda \rangle} ) e^{- \lambda \frac{y}{2}}),~~ y \gg1,\\
& (y \partial_y - \lambda \partial_{\lambda})e_+(y, \lambda) =  \lambda \overline{s'(\lambda)} \begin{pmatrix}
e^{i \lambda y}\\ 0
\end{pmatrix}  + O(\langle y \rangle^{-1}) +  O( \tfrac{\lambda}{\langle \lambda \rangle} \log(2\tfrac{\lambda}{\langle \lambda \rangle} ) e^{ \lambda \frac{y}{2}}),~~ - y \gg1.
\end{align*}
The asymptotics for $ e_-(y, \lambda)$ is equivalent to  considering  $\sigma_1 e_+(y, \lambda) $ above. For a cut-off $\chi \in C^{\infty}_c(\R),~ \chi \geq 0$ with $ \chi = 1 $ on $[-1,1]$ and $\chi = 0$ on $\R \backslash [-2, 2]$ we split
\begin{align*}
&\int_{-\infty}^{\infty}	\int_{-\infty}^{\infty}f^{\pm}(\tilde{\lambda}) [ y \partial_y - \tilde{\lambda}\partial_{\tilde{\lambda}}]e_{\pm}(y, \tilde{\lambda}) d \tilde{\lambda} \sigma_3 \overline{e_+(y, \lambda)}~dy\\
&= \int_{-\infty}^{\infty}	\int_{-\infty}^{\infty}f^{\pm}(\tilde{\lambda}) [ y \partial_y - \tilde{\lambda}\partial_{\tilde{\lambda}}]e_{\pm}(y, \tilde{\lambda}) d \tilde{\lambda} \chi(y) \sigma_3 \overline{e_+(y, \lambda)}~dy\\
& + \int_{-\infty}^{\infty}	\int_{-\infty}^{\infty}f^{\pm}(\tilde{\lambda}) [ y \partial_y - \tilde{\lambda}\partial_{\tilde{\lambda}}]e_{\pm}(y, \tilde{\lambda}) d \tilde{\lambda} (1 -\chi(y)) \sigma_3 \overline{e_+(y, \lambda)}~dy.
\end{align*} The first term on the right can be handled as before using
\[
\langle [\mathcal{H}, \chi][y \partial_y - \tilde{\lambda} \partial_{\tilde{\lambda}}] e_{\pm}(y, \tilde{\lambda}), \sigma_3 e_+(y, \lambda) \rangle,~~\langle \chi(y) [\mathcal{H}, y \partial_y]  e_{\pm}(y, \tilde{\lambda}), \sigma_3 e_+(y, \lambda) \rangle ,
\]
in order to see that the $\delta$ must be generated in the other integral. In this integral, we use the above expansions for $ e_{\pm}(y, \lambda), ~[y \partial_y - \tilde{\lambda} \partial_{\tilde{\lambda}}] e_{\pm}(y, \tilde{\lambda})$. This genuinely generates four integrals, separating the above oscillatory leading terms and the error. All the integrals involving an error term will then contribute to a bounded kernel (cf \cite{KST-slow}). In order to see this for the $ O(\langle y \rangle^{-1}) $ part, we integrate by parts, i.e. exemplarily
\begin{align*}
\int_{\pm y \gtrsim 1}[y \partial_y - \tilde{\lambda} \partial_{\tilde{\lambda}}] e^{\pm \infty}(y)  e^{i \lambda y}~dy = -  \frac{1}{i \lambda}\int_{\pm y \gtrsim 1} \big( \partial_y [y \partial_y - \tilde{\lambda} \partial_{\tilde{\lambda}}] e^{\pm \infty}(y) \big) e^{i \lambda y}~dy +  \frac{1}{i \lambda}C,
\end{align*}
and we use Corollary \ref{Das-Cor2}. Finally we focus on $\delta$-contributions. First, let us consider the $(+,+)$ kernel element 
\[
\int_{\R}f^{+}(\tilde{\lambda})\int_{\R} (1 - \chi(y))[y \partial_y - \tilde{\lambda} \partial_{\tilde{\lambda}}] e_{+}(y, \tilde{\lambda}) \cdot \sigma_3 \overline{ e_+(y, \lambda)}~d \tilde{\lambda} d y.
\]
In particular, we may ignore $(\tilde{\lambda}, \lambda)$ interactions in the above $e_{\pm}$ expansions where $ \lambda \geq 0, \tilde{\lambda} < 0$ and $\lambda < 0, \tilde{\lambda} \geq 0$, since any such $\delta(\tilde{\lambda} - \lambda)$ and $ \delta(\lambda - \tilde{\lambda}  )$ contribution must vanish. 
Then if $ \lambda \geq 0, \tilde\lambda \geq 0$, we have the following leading  integrals
\begin{align} \label{dass-first}
\int_{0}^{\infty} \int_{0}^{\infty}f^{\pm}(\tilde{\lambda})& (1 -\chi(y)) \tilde{\lambda}  s'(\tilde{\lambda}) \overline{s(\lambda)} e^{i(\tilde{\lambda} - \lambda)y}d \tilde{\lambda} ~dy\\ \nonumber
& +  \int_{ - \infty}^{0}	\int_{0}^{\infty}f^{\pm}(\tilde{\lambda}) (1 -\chi(y)) \tilde{\lambda}  r'(\tilde{\lambda}) \overline{r(\lambda)} e^{i(\lambda - \tilde{\lambda} )y}d \tilde{\lambda} ~dy\\ \nonumber
& + \int_{ - \infty}^{0}	\int_{0}^{\infty}f^{\pm}(\tilde{\lambda}) (1 -\chi(y)) \tilde{\lambda}  r'(\tilde{\lambda}) e^{- i(\lambda + \tilde{\lambda} )y}d \tilde{\lambda} ~dy.
\end{align}
Further if $ \lambda <0, \tilde\lambda < 0$ we have
\begin{align}\label{dass-zwei}
\int_{0}^{\infty}	\int_{- \infty}^{0}f^{\pm}(\tilde{\lambda})& (1 -\chi(y)) \tilde{\lambda}  \overline{r'(\tilde{\lambda})} r(\lambda) e^{i(\lambda - \tilde{\lambda} )y}d \tilde{\lambda} ~dy\\ \nonumber
& +  \int_{ - \infty}^{0}	\int_{- \infty}^{0}f^{\pm}(\tilde{\lambda}) (1 -\chi(y)) \tilde{\lambda}  \overline{s'(\tilde{\lambda})} s(\lambda) e^{i(\tilde{\lambda} - \lambda)y}d \tilde{\lambda} ~dy\\ \nonumber
&+  \int_{ 0}^{\infty}	\int_{- \infty}^{0}f^{\pm}(\tilde{\lambda}) (1 -\chi(y)) \tilde{\lambda}  \overline{r'(\tilde{\lambda})}  e^{- i(\tilde{\lambda} + \lambda)y}d \tilde{\lambda} ~dy.
\end{align}
These integrals exist in an approximate sense by means of the classical (free) Fourier inversion, which allows to change the order of integration. Also note we the last integrals in \eqref{dass-first} and \eqref{dass-zwei} involving $ e^{- i (\tilde{\lambda} + \lambda)y}$ contribute a bounded kernel using integration by parts and hence will be ignored. Finally we  obtain the $(+,+)$ kernel from  the remaining  integrals
\begin{align}
\lambda \geq 0:~~\pi(\lambda s'(\lambda) \overline{s(\lambda)} + \lambda r'(\lambda) \overline{r(\lambda)}) \delta(\tilde{\lambda} - \lambda),\\
\lambda < 0: ~~\pi(\lambda \overline{s'(\lambda)}s(\lambda) + \lambda \overline{r'(\lambda)} r(\lambda)) \delta( \lambda - \tilde{\lambda} ).
\end{align}
Now, by expanding the Fourier base,  the leading oscillatory terms in the integrals of the $ (+, - )$ and $(-, +)$  kernel elements are perpendicular in $\C^2 $ and the $(-,-)$ element has the same on-diagonal kernel as the $(+,+)$ element.
~\\ 
We are left with the $\mathcal{K}_{cc}$ contribution of the integrals 
\begin{align}\label{lefk55}
&\tilde{ \mathcal{K}}_{+ \pm}f(\lambda) =- \langle  \int_{- \infty}^{\infty}  f^+(\tilde{\lambda})  e_+(y , \tilde{\lambda})~ d \tilde{\lambda}, \sigma_3 e_{\pm}(y, \lambda) \rangle_{L^2_y},\\ \nonumber
&\tilde{ \mathcal{K}}_{- \pm}f(\lambda) = - \langle  \int_{- \infty}^{\infty}  f^-(\tilde{\lambda}) e_-(y , \tilde{\lambda})~ d \tilde{\lambda}, \sigma_3 e_{\pm}(y, \lambda) \rangle_{L^2_y}.
\end{align}
Concerning off-diagonal frequencies, we may apply the above reasoning for a modified $e_{\pm}$, i.e.  let $ e^{\varepsilon}_{\pm}(y, \lambda) : = e^{- \varepsilon y^2 } e_{\pm}(y, \lambda)$ and calculate
\begin{align}
\mathcal{H}e^{\varepsilon}_{\pm}(y, \lambda) = &~\pm \lambda^2 e^{\varepsilon}_{\pm}(y, \lambda)   - 4 \varepsilon y e^{- \varepsilon y^2 } \sigma_3 \partial_y e_{\pm}(y, \lambda) - 2 \varepsilon  e^{- \varepsilon y^2 } \sigma_3  e_{\pm}(y, \lambda)\\ \nonumber
&~ +  4 \varepsilon^2 y^2 e^{- \varepsilon y^2 } \sigma_3  e_{\pm}(y, \lambda).
\end{align}
Then we conclude
\begin{align}
\langle \int_{- \infty}^{\infty} &f^{\pm}(\tilde{\lambda}) (\lambda^2 \mp \tilde{\lambda}^2) e_{\pm}(y, \tilde{\lambda}) d \tilde{\lambda}, \sigma_3 e_{+}^{\varepsilon}(y, \lambda)\rangle_{L^2_y}\\ \nonumber
& =~ 	 \int_{- \infty}^{\infty}  f^{\pm}(\tilde{\lambda}) \langle   e_{\pm}(y, \tilde{\lambda}) , 4 \varepsilon \sigma_3 e^{- \varepsilon y^2 }\partial_y e_{+}(y, \lambda)\rangle_{L^2_y} d \tilde{\lambda}\\ \nonumber
&+~ \int_{- \infty}^{\infty}  f^{\pm}(\tilde{\lambda}) \langle   e_{\pm}(y, \tilde{\lambda}) , 2 \varepsilon \sigma_3 e^{- \varepsilon y^2 } e_{+}(y, \lambda)\rangle_{L^2_y} d \tilde{\lambda}\\ \nonumber
&-~ \int_{- \infty}^{\infty}  f^{\pm}(\tilde{\lambda}) \langle   e_{\pm}(y, \tilde{\lambda}) , 4 \varepsilon^2 y^2 \sigma_3 e^{- \varepsilon y^2 } e_{+}(y, \lambda)\rangle_{L^2_y} d \tilde{\lambda}.
\end{align}
Letting $\varepsilon \to 0^+$, and noting the convergence to zero of the kernels on the right, this shows the operator given by these terms can not have a bounded off-diagonal contribution.\\[2pt]
Instead we consider the $ \delta$-measure on the diagonal (where we again  use a cut-off $ \chi(y)$ supported at the origin). Let us consider the $ (+,+)$ kernel element
\[
\int_{\R}f^{+}(\tilde{\lambda})\int_{\R} (1 - \chi(y)) e_{+}(y, \tilde{\lambda}) \cdot \sigma_3 \overline{ e_+(y, \lambda)}~d \tilde{\lambda} d y.
\]
As above we neglect the error terms in the expansion of the base functions and if $ \lambda \geq 0$ we may first restrict to $ \tilde{\lambda } \geq 0$ and obtain the contribution
\begin{align*}
&\int_{0}^{\infty}f^{+}(\tilde{\lambda})\int_{0}^{\infty} (1 - \chi(y)) s(\tilde{\lambda}) \overline{s(\lambda)} e^{i (\tilde{\lambda} - \lambda)y}~d \tilde{\lambda} d y\\
& +  \int_{- \infty}^{0}f^{+}(\tilde{\lambda})\int_{0}^{\infty} (1 - \chi(y)) (r(\tilde{\lambda}) \overline{r(\lambda)} + 1) e^{i (\tilde{\lambda} - \lambda)y}~d \tilde{\lambda} d y,
\end{align*}
which allows to change order of integration by the classical (free) Fourier inversion. Hence the kernel reads 
\[
\pi |s(\lambda)|^2 \delta(\tilde{\lambda} - \lambda) + \pi ( |r(\lambda)|^2 + 1) \delta(\tilde{\lambda} - \lambda) = 2 \pi  \delta(\tilde{\lambda} - \lambda).
\]
Note again that products of the oscillatory leading terms also lead to terms involving $ e^{ \pm i (\tilde{\lambda} + \lambda)y}$. These, however,  contribute a bounded kernel from integration by parts and will be readily ignored. Further the $ \lambda \geq 0,~ \tilde{\lambda} < 0$ and $ \lambda < 0,~ \tilde{\lambda} > 0$ interaction has vanishing kernel, respectively, on the diagonal. Lastly, the $ \lambda < 0,~ \tilde{\lambda } \geq 0$ interaction part is seen to contribute as well $ 2 \pi \delta(\tilde{\lambda} - \lambda)$.\\[2pt]
For the $ (+,-)$ and $(-, +)$ kernel element (referring to $ e_{\pm} $), the leading oscillatory terms are perpendicular in $\C^2 $ and the $(-,-)$ kernel element has the same on-diagonal kernel $ 2 \pi \delta(\tilde{\lambda} - \lambda)$. Thus the kernel of \eqref{lefk55} is 
\[
\tilde{\mathcal{K}}(\lambda, \tilde{\lambda}) = \begin{pmatrix}
\tilde{\mathcal{K}}_{++}(\lambda, \tilde{\lambda}) & \tilde{\mathcal{K}}_{+ -}(\lambda, \tilde{\lambda})\\
\tilde{\mathcal{K}}_{-+}(\lambda, \tilde{\lambda})  & \tilde{\mathcal{K}}_{--}(\lambda, \tilde{\lambda}) 
\end{pmatrix} = - 2 \pi I_{2 \times 2} \cdot  \delta(\tilde{\lambda} - \lambda).
\]
\end{proof}
\begin{Rem} (i) We do not claim the estimates \eqref{firsto-ord1} - \eqref{firsto-ord3} to be sharp. However, as seen above, they are better than expected from the Fourier base. (ii)~The bounds \eqref{seco-ord1} - \eqref{seco-ord2}  for the second order derivatives $\partial_{\lambda} \partial_{\tilde{\lambda}}F_{\pm +},  \partial_{\lambda}^2 F_{\pm +}$ may be improved in the low frequency contributions $ |\lambda |\lesssim 1 ,~ |\tilde{\lambda}| \lesssim  1$ along the lines of the remaining estimates. However, we will not make use of second order derivatives and thus spare to give details.
\end{Rem}
We now define the weighted space $ L^{2, \sigma}$ via the completion of the norm
\begin{align}
\|  f \|_{L^{2, \sigma}}^2 =~& |f(i \kappa)|^2 + |f( - i \kappa)|^2 + \int_{-\infty}^{\infty} |f^+(\lambda)|^2 \langle \lambda \rangle^{2 \sigma}~d \lambda\\ \nonumber
&~~+  \int_{-\infty}^{\infty} |f^-(\lambda)|^2 \langle \lambda \rangle^{2 \sigma}~d \lambda,
\end{align}
defined for rapidly decaying functions on $\R \cup \{ \pm i \kappa\}$ (as usual such functions $f$ map into $\C^2 $ on $\R$, where we write $f_c(\lambda) = (f^+(\lambda), f^-(\lambda))$, and into $\C$ on $\{\pm i \kappa\}$).
\begin{Prop} \label{Propo-boundedne-K}The operators $\mathcal{K}_0$ and  $\mathcal{K} = \mathcal{K}_0 + \tilde{\mathcal{K}}$ are bounded as maps
\begin{align}
& \mathcal{K}_0 : L^{2, \sigma} \to L^{2, \sigma }\\
&  \mathcal{K} : L^{2, \sigma} \to L^{2, \sigma }
\end{align}
for all $ \sigma\in \R$.
\end{Prop}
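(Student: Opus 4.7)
The plan is to decompose $\mathcal{K}$ into its discrete/continuous blocks and treat each kernel separately, reducing eventually to a Schur-test argument and (on the problematic diagonal) to a Hilbert-transform-type bound. First, the piece $\tilde{\mathcal{K}}$ is elementary: by Proposition \ref{Kerne-prop}(a) it acts on the continuous spectrum as multiplication by $a(\lambda) I_{2\times 2}$, and the discrete block $\mathcal{K}_{dd}=-\tfrac12 I_{2\times 2}$ is constant. Boundedness of $a$ on $\R$ follows from Lemma \ref{Das-asym-fomu}/\ref{Das-asym}: $s,r\in L^\infty$, $\lambda s'(\lambda),\lambda r'(\lambda)=O(1)$ at infinity, while near $\lambda=0$ we have $s(\lambda)=O(\lambda)$ and $s'(\lambda)=O(|\log\lambda|)$ (and likewise for $r$), so $\lambda s'(\lambda)\overline{s(\lambda)}$ stays bounded. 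Hence $\tilde{\mathcal{K}}$ is bounded on $L^{2,\sigma}$ for every $\sigma\in\R$. The off-diagonal couplings $\mathcal{K}_{cd}$ and $\mathcal{K}_{dc}$ have Schwartz-class kernels by Proposition \ref{Kerne-prop}(b), so their contribution is bounded on any weighted space by Cauchy-Schwarz.

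The core work is on the principal-value operator $\mathcal{K}_0$, which I split into the four sign combinations. The mixed-sign entries $[K_0]_{\pm\mp}$ carry the denominator $\lambda^2+\tilde\lambda^2$ which vanishes only at the origin. There the numerator $F_{\pm\mp}$ vanishes like $|\lambda|+|\tilde\lambda|$ by \eqref{firsto-ord1}, so the kernel is in fact in $L^\infty$ on the low-frequency square. In the remaining region one uses the rapid high-frequency decay $(1+|\lambda|+|\tilde\lambda|)^{-N}$ (for any $N$). The weighted $L^{2,\sigma}$-bound then follows from Schur's test applied to $\langle\lambda\rangle^\sigma|[K_0]_{\pm\mp}(\lambda,\tilde\lambda)|\langle\tilde\lambda\rangle^{-\sigma}$, since the weights are dominated by $(1+|\lambda|+|\tilde\lambda|)^{|\sigma|}$ and are absorbed by the rapid decay.

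The main obstacle is the same-sign pair $[K_0]_{\pm\pm}$ with denominator $\lambda^2-\tilde\lambda^2=(\lambda-\tilde\lambda)(\lambda+\tilde\lambda)$, which is Calderón-Zygmund-singular on the set $|\lambda|=|\tilde\lambda|$. The plan is a two-region split. Away from the diagonal, say on $\{\,|\lambda-\tilde\lambda|\ge \varepsilon(1+|\lambda|+|\tilde\lambda|)\,\}$, the bound \eqref{firsto-ord1} for $F_{\pm+}$ combined with $|(\lambda-\tilde\lambda)(\lambda+\tilde\lambda)|^{-1}$ yields an integrable kernel and Schur applies, with weights absorbed as above. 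In the diagonal strip one writes
\[
F_{++}(\lambda,\tilde\lambda)=F_{++}(\lambda,\lambda)+\bigl(F_{++}(\lambda,\tilde\lambda)-F_{++}(\lambda,\lambda)\bigr),
\]
and uses the derivative bound \eqref{firsto-ord3} to see that the second summand is Lipschitz in $\tilde\lambda$ (with an acceptable logarithm), so that after dividing by $\lambda-\tilde\lambda$ one obtains a smooth kernel again treatable by Schur. The leading piece reduces to multiplication by $F_{++}(\lambda,\lambda)/(\lambda+\tilde\lambda)$ times $(\lambda-\tilde\lambda)^{-1}$, which, after a smooth frequency localization, is a finite sum of (smoothly modulated) Hilbert transforms; these are bounded on $L^2$ and on weighted $L^{2,\sigma}$ (the weights $\langle\lambda\rangle^\sigma$ are in the Muckenhoupt class $A_2$).

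The genuinely technical point is controlling the behaviour of $F_{++}(\lambda,\lambda)$ and of the Lipschitz remainder near $\lambda=0$, where only the logarithmic bound \eqref{firsto-ord2} is available. The cleanest route is a dyadic decomposition in the variable $\log|\lambda|$ on each half-line $\pm\lambda>0$: in each dyadic scale the kernel becomes, up to a smooth bounded multiplier, a standard truncated Hilbert transform, and the logarithmic loss is absorbed into a geometric series in the dyadic index because of the additional $|\lambda|^{1/2}+|\tilde\lambda|^{1/2}$ gain in \eqref{firsto-ord2}\eqref{firsto-ord3}. Summing the dyadic pieces yields the desired $L^{2,\sigma}$-boundedness of $\mathcal{K}_0$, completing the proof for $\mathcal{K}=\mathcal{K}_0+\tilde{\mathcal{K}}$.
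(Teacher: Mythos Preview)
Your overall strategy matches the paper's: split into discrete/continuous blocks, handle $\tilde{\mathcal{K}}$ and the off-diagonal couplings by the explicit formulas and rapid decay, and reduce the same-sign continuous block to a Hilbert-transform core plus a Schur-testable remainder via the splitting $F_{++}(\lambda,\tilde\lambda)=F_{++}(\lambda,\lambda)+(\text{Lipschitz in }\tilde\lambda)$. The paper organizes this with a systematic dyadic covering $Q_j=\{|\lambda|,|\tilde\lambda|\in[2^{j-1},2^{j+1}]\}$ of the diagonal set $\{|\lambda|=|\tilde\lambda|\}$, treating on- and off-diagonal pieces separately inside and outside a fixed low-frequency cube; it also offers the $T1$ theorem as an alternative for the singular core.

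Two points in your outline need correction. First, for the mixed-sign kernels $[K_0]_{\pm\mp}$ you claim the quotient $F_{\pm\mp}/(\lambda^2+\tilde\lambda^2)$ is in $L^\infty$ on the low-frequency square; but the numerator is only $O(|\lambda|+|\tilde\lambda|)$ while the denominator is $\sim(|\lambda|+|\tilde\lambda|)^2$, so the kernel behaves like $(|\lambda|+|\tilde\lambda|)^{-1}$ near the origin and neither $L^\infty$ nor a direct Schur test applies. The paper avoids this by restricting to dyadic blocks $Q_j$ where $|\lambda|\sim|\tilde\lambda|\sim 2^j$, so the kernel is $O(2^{-j})$ on a set of width $2^j$ and Schur gives a bound uniform in $j$. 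Second, your ``off-diagonal'' region $\{|\lambda-\tilde\lambda|\ge\varepsilon(1+|\lambda|+|\tilde\lambda|)\}$ still contains the anti-diagonal $\lambda=-\tilde\lambda$, where $\lambda^2-\tilde\lambda^2$ vanishes and \eqref{firsto-ord1} gives no smallness; you need either a second Hilbert-transform reduction there or (as the paper effectively does) to localize to the full set $|\lambda|\sim|\tilde\lambda|$ via the dyadic blocks, after which $1/(\lambda+\tilde\lambda)$ is handled by Schur on each sign quadrant.

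A final stylistic difference: you invoke the $A_2$ theory for the weighted Hilbert transform, while the paper simply notes that on each $Q_j$ one has $\langle\lambda\rangle^\sigma\langle\tilde\lambda\rangle^{-\sigma}\sim 1$, so the weights drop out of the diagonal pieces and no Muckenhoupt machinery is needed.
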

\begin{proof} We consider the kernel $ \langle \lambda \rangle^{\sigma } [K_0]_{\pm +}(\lambda, \tilde{\lambda}) \langle \tilde{\lambda}\rangle^{- \sigma}$ defining an operator mapping $ L^2(d \lambda) $ into $ L^2(d \lambda) $ (and similarly we proceed for $ [K_0]_{\pm -}(\lambda, \tilde{\lambda}))$. By  Proposition \ref{Kerne-prop} we may inspect the maps
\[
[\tilde{K}_0]_{\pm +} : (\lambda, \tilde{\lambda}) \mapsto \frac{\langle \lambda\rangle^{\sigma } \langle \tilde{\lambda}\rangle^{- \sigma}  }{\lambda^2 \mp \tilde{\lambda}^2} F_{\pm +}(\lambda, \tilde{\lambda}).
\]
Let us define the cube $Q\subset \R^2$ for fixed $ k \in \Z_+$ to be 
$
Q = [-2^k, 2^k] \times [-2^k, 2^k].
$
Further for  $ j \in \Z$ we set 
\begin{align*}
Q_j =&~~ [2^{j-1}, 2^{j+1}] \times [2^{j-1}, 2^{j+1}] \cup [-2^{j+1}, -2^{j-1}]  \times [2^{j-1}, 2^{j+1}] \\
&~~ \cup [-2^{j+1}, -2^{j-1}]  \times [-2^{j+1}, -2^{j-1}]  \cup  [2^{j-1}, 2^{j+1}]  \times [-2^{j+1}, -2^{j-1}]\\
& = \{  (\lambda, \tilde{\lambda}) \in \R^2~|~ |\lambda|, |\tilde{\lambda}| \in [2^{j-1}, 2^{j+1}] \}.
\end{align*}
We may distinguish $ Q_j^{\pm, +} = Q_j \cap \{  \pm \lambda > 0\} \cap \{  \tilde{\lambda} > 0\}$ and $ Q_j^{\pm, -} = Q_j \cap \{  \pm \lambda > 0\} \cap \{  \tilde{\lambda} < 0\}$.\\[3pt]
Let
\begin{align}
B_1 : = \bigcup_{j = - \infty}^{k} Q_j,~~~B_2 := \bigcup_{j = k-1}^{\infty} Q_j.
\end{align}
Then we split the kernel function
\begin{align}
[\tilde{K}_0]_{\pm +}(\lambda, \tilde{\lambda}) =& \sum_{j \leq k} \chi_{Q \cap Q_j} [\tilde{K}_0]_{\pm +}(\lambda, \tilde{\lambda})  + \chi_{Q \cap B_1^C }[\tilde{K}_0]_{\pm +}(\lambda, \tilde{\lambda})\\ \nonumber
& + \sum_{j  \geq k} \chi_{Q^C \cap Q_j} [\tilde{K}_0]_{\pm +}(\lambda, \tilde{\lambda})  + \chi_{Q^C \cap B_2^C }[\tilde{K}_0]_{\pm +}(\lambda, \tilde{\lambda}).
\end{align}
\underline{Step~1}.~Let us consider $ \sum_{j \leq k} \chi_{Q \cap Q_j} [\tilde{K}_0]_{\pm +}(\lambda, \tilde{\lambda}) $ covering diagonal frequencies in  $Q$. In particular we take a look at $ \chi_{Q \cap Q_j}[\tilde{K}_0]_{++}(\lambda, \tilde{\lambda}), \chi_{Q \cap Q_j}[\tilde{K}_0]_{--}(\lambda, \tilde{\lambda})$ for any such $ j \leq k$. Then (we restrict to $ \lambda \geq 0$ for simplicity)
\begin{align}\label{das-int}
\int \chi_{Q \cap Q_j}(\lambda, \tilde{\lambda})[\tilde{K}_0]_{++}(\lambda, \tilde{\lambda}) g(\tilde{\lambda})~d \tilde{\lambda} =&~  \langle \lambda\rangle^{\sigma } \chi_{\sim 2^j}(\lambda) \int \frac{F_{++}(\lambda, \tilde{\lambda})}{\lambda^2 -  \tilde{\lambda}^2} f(\tilde \lambda)~d \tilde{\lambda}\\ \nonumber
=&~  \langle \lambda\rangle^{\sigma } \chi_{\sim 2^j}(\lambda) F_{++}(\lambda, \lambda)  \int \frac{f(\tilde{\lambda})}{\lambda^2 -  \tilde{\lambda}^2} ~d \tilde{\lambda}\\ \nonumber
&+ \langle \lambda\rangle^{\sigma } \chi_{\sim 2^j}(\lambda) \int \frac{F_{++}(\lambda, \tilde{\lambda}) - F_{++}(\lambda, \lambda)}{\lambda^2 -  \tilde{\lambda}^2} f(\tilde \lambda)~d \tilde{\lambda},
\end{align}
where 
$f (\tilde{\lambda}) = \langle \tilde{\lambda}\rangle^{- \sigma}   \chi_{ \sim 2^j}(|\tilde{\lambda}|) g(\tilde{\lambda}) =: \tilde{k}(\tilde{\lambda}) g(\tilde{\lambda})$.
Now for the first integral on the right we write
\begin{align}
F_{++}(\lambda, \lambda)  \int \frac{f(\tilde{\lambda})}{\lambda^2 -  \tilde{\lambda}^2} ~d \tilde{\lambda} =&~ \lambda^{-1}F_{++}(\lambda, \lambda) \pi ~p.v. \int \frac{f(\tilde{\lambda})}{\pi(\lambda-  \tilde{\lambda})} ~d \tilde{\lambda}\\
& - \lambda^{-1}F_{++}(\lambda, \lambda)   \int \frac{ f(\tilde{\lambda})}{\lambda-  \tilde{\lambda}}\cdot \frac{\tilde{\lambda}}{\lambda + \tilde{\lambda}} \ ~d \tilde{\lambda},
\end{align}
where the first integral is a Hilbert transform and $ F_{++}(\lambda, \lambda) = O(\lambda)$ by Proposition \ref{Kerne-prop} .
For the second integral we have 
\begin{align}
\int \frac{ f(\tilde{\lambda})}{\lambda-  \tilde{\lambda}}\cdot \frac{\tilde{\lambda}}{\lambda + \tilde{\lambda}} \ ~d \tilde{\lambda} =  \f12 \int \frac{ f(\tilde{\lambda})}{\lambda-  \tilde{\lambda}} ~d \tilde{\lambda} +   \f12 \int \frac{ f(\tilde{\lambda})}{\lambda +  \tilde{\lambda}} ~d \tilde{\lambda}.
\end{align}
Here  we may use Schur's test in the latter integral, i.e. we note 
\begin{align}
&\sup_{\tilde{\lambda}} \big|\int \frac{ \tilde{k}(\tilde{\lambda})}{\lambda +  \tilde{\lambda}} d\lambda \big| + \sup_{\lambda} \big|\int  \frac{ \tilde{k}(\tilde{\lambda})}{\lambda +  \tilde{\lambda}} d \tilde{\lambda}\big| \lesssim 
\big|\int_{2^{j-1}}^{2^{j+1}} \frac{d\lambda}{\lambda}\big| \sim |\log(\frac{2^{j+1}}{2^{j-1}})| \sim 1.
\end{align}
For the second integral in \eqref{das-int} we use
\begin{align}
\int \frac{F_{++}(\lambda, \tilde{\lambda}) - F_{++}(\lambda, \lambda)}{\lambda^2 -  \tilde{\lambda}^2} f(\tilde \lambda)~d \tilde{\lambda} =  \int \frac{\partial_{\tilde{\lambda}}F_{++}(\lambda, \eta) }{\lambda +  \tilde{\lambda}} f(\tilde \lambda)~d \tilde{\lambda}.
\end{align}
By Proposition \ref{Kerne-prop}, the latter kernel is a  linear combination of
\begin{align}
O(\log(\lambda)),~ ~O(\log(\tilde{\lambda})),~ ~O(\frac{\lambda^{\f12} + \tilde{\lambda}^{\f12}}{\lambda + \tilde{\lambda}}),
\end{align}
where  $ \log(\lambda) = O(1) + \log(\tilde{\lambda})$ since $ (\lambda, \tilde{\lambda}) \in Q_j$. Then we use again  Schur's test (or estimate the Hilbert-Schmidt norm for the logarithmic expression), which works uniform in $ j \leq k$  for the upper bound. 

We  may alternatively  conclude from Proposition \ref{Kerne-prop} that they define a \emph{singular kernel} for some $ 0 <  \sigma \leq 1,~ C_0 > 0$ fixed, i.e. 
\begin{align}
&(1)~~| \tilde{K}_0(\lambda, \tilde{\lambda})| \leq C_0 |\lambda - \tilde{\lambda}_0 |^{-1}\\
&(2)~~| \tilde{K}_0(\lambda, \tilde{\lambda}) - \tilde{K}_0(\lambda, \eta)| \leq C_0 |\tilde{\lambda} - \eta|^{\sigma} |\lambda - \eta |^{-1 - \sigma},~~\text{if}~~|\tilde{\lambda} - \eta | < \f12 |\lambda - \eta|\\
&(3)~~| \tilde{K}_0(\lambda, \tilde{\lambda}) - \tilde{K}_0(\xi, \tilde{\lambda})| \leq C_0 |\lambda - \xi|^{\sigma} | \xi - \tilde{\lambda} |^{-1 - \sigma},~~\text{if}~~| \lambda - \xi | < \f12 |\xi - \tilde{\lambda}|.
\end{align}
Further the operator $ T : \mathcal{S} \to \mathcal{S}'$ with $ Tg(\lambda) = \int \tilde{K}_0(\lambda, \tilde{\lambda}) g(\tilde{\lambda})~d \tilde{\lambda}$ satisfies for any bump function $ \Phi(\lambda)$ on $\R$ \begin{align}
\sup_{\lambda_0 \in \R} \|  T(\Phi((\cdot - \lambda_0)\slash \delta))   \|_{L^2(d \lambda)} \leq C_0 \sqrt{\delta},~~\sup_{\lambda_0 \in \R} \|  T^*(\Phi((\cdot - \lambda_0)\slash \delta))   \|_{L^2(d \lambda)} \leq C_0 \sqrt{\delta},
\end{align}
after taking $C_0> 0$ large enough (independent of $ \Phi$ and $ \delta > 0$). To be precise $\Phi$ is supposed to range over \emph{normalized bump functions}, see \cite[page 293]{Stein}. Hence the operators with kernel $\chi_{Q \cap Q_j} [\tilde{K}_0]_{++},~\chi_{Q \cap Q_j} [\tilde{K}_0]_{--},~ j \leq k$ are bounded  on $L^2(d \lambda)$ by the T1 theorem, see \cite[Chapter 7, Theorem 3]{Stein}. From this, boundedness of the kernel
$$ \sum_{j \leq k} \chi_{Q \cap Q_j} [\tilde{K}_0]_{\pm +}(\lambda, \tilde{\lambda}) $$
follows by orthogonality. Now for the non-singular Matrix entries near the diagonal, i.e.
$$ \chi(Q\cap B_1) [\tilde{K}_0]_{+-},~ \chi(Q \cap B_1) [\tilde{K}_0]_{-+},$$
we essentially need to consider 
\[
\int \frac{F_{+-}(\lambda, \tilde{\lambda})}{\lambda^2 + \tilde{\lambda}^2} f(\tilde{\lambda})~d \tilde{\lambda},
\]
which is directly treated with Schur's test as above using Proposition \ref{Kerne-prop}.\\[2pt]
\underline{Step~2}.~We look at $\chi_{Q \cap B_1^C }[\tilde{K}_0]_{\pm +}(\lambda, \tilde{\lambda})$, i.e. off-diagonal frequencies  in  $Q$. Here we require a more careful expansion of $F_{\pm + }(\lambda, \tilde{\lambda})$ at low frequencies obtained by inspection of the proof of Proposition \ref{Kerne-prop}. In particular we have by symmetry 
\begin{align}\label{expansiooons}
F_{\pm +}(\lambda, \tilde{\lambda}) = h(\lambda, \tilde{\lambda}) + \tilde{\lambda}\Lambda(\tilde{\lambda})( 1 + O(\tilde{\lambda}^{\f12}))\\ \label{expansiooons2}
F_{\pm +}(\lambda, \tilde{\lambda}) = \tilde{h}(\lambda, \tilde{\lambda}) + \lambda \Lambda(\lambda)( 1 + O(\tilde{\lambda}^{\f12}))
\end{align}
where\begin{itemize}\setlength\itemsep{3pt}
\item[$\circ$] $ |\Lambda(\lambda)| \sim  |s(\lambda)| + |r(\lambda)|$, thus $ \Lambda(\lambda) = O(1)$ is the leading oscillatory term in the expansion of $e_{\pm}(y, \lambda )$,
\item[$\circ$] $h(\lambda, \tilde{\lambda}),~ \tilde{h}(\lambda, \tilde{\lambda})$ are of class $ O(\lambda^{\f32}) + O(\tilde{\lambda}^{\f32})$ and well behaved in light of a Schur test.
\end{itemize}
Then for \[
\int \chi_{Q \cap B_1^C }[\tilde{K}_0]_{\pm +}(\lambda, \tilde{\lambda}) g(\tilde{\lambda})~d\tilde{\lambda} \sim \int_{(\lambda, \cdot) \in Q \cap B_1^C} \frac{F_{\pm +}(\lambda, \tilde{\lambda})}{\lambda^2 - \tilde{\lambda}^2} f(\tilde{\lambda})~d \tilde{\lambda},
\]
we focus on $ \tilde{\lambda} \Lambda(\tilde{\lambda}),~ \lambda \Lambda(\lambda)$ and distinguish $|\lambda \slash \tilde{\lambda}| \sim 1,~  |\lambda \slash \tilde{\lambda}| \ll1$ and $|\lambda \slash \tilde{\lambda}| \gg1 $. In the first case we may directly use the Hilbert transform via
$
\int \frac{ \Lambda(\tilde{\lambda})}{\lambda - \tilde{\lambda}} f(\tilde{\lambda})~d \tilde{\lambda} 
$
since   $ \tilde{\lambda}(\tilde{\lambda} + \lambda)^{-1} \sim 1$. In the latter two cases we use $ |\lambda^2 - \tilde{\lambda}^2| \lesssim \lambda^2 + \tilde{\lambda}^2 $ and then proceed as for $ F_{+ -}, F_{-+}$, i.e. we look at 
\[
\int \frac{F_{+-}(\lambda, \tilde{\lambda})}{\lambda^2 + \tilde{\lambda}^2} f(\tilde{\lambda})~d \tilde{\lambda} = \int \frac{F_{+-}(\lambda, \tilde{\lambda})}{\lambda^2 + \tilde{\lambda}^2} f(\tilde{\lambda}) \chi\{ |\tilde{\lambda}| \lesssim |\lambda| \}~d \tilde{\lambda} +  \int \frac{F_{+-}(\lambda, \tilde{\lambda})}{\lambda^2 + \tilde{\lambda}^2} f(\tilde{\lambda}) \chi\{ |\tilde{\lambda}| \gtrsim  |\lambda| \}~d \tilde{\lambda}.
\]
Thus with the leading terms $ \tilde{\lambda} \Lambda(\tilde{\lambda}),~ \lambda \Lambda(\lambda)$ in \eqref{expansiooons}, \eqref{expansiooons2} we use Schur's test 
\begin{align}
& \sup_{\lambda} \int \frac{\tilde{\lambda} \Lambda(\tilde{\lambda})}{\lambda^2 + \tilde{\lambda}^2}\chi(|\tilde{\lambda}| \lesssim |\lambda|) d \tilde{\lambda} \lesssim ~\sup_{\lambda} \frac{1}{\lambda^2}\int_{|\tilde{\lambda}| \lesssim |\lambda|} \tilde{\lambda} d\tilde{\lambda} \lesssim 1,\\
& \sup_{\tilde{\lambda}} \int \frac{\tilde{\lambda} \Lambda(\tilde{\lambda})}{\lambda^2 + \tilde{\lambda}^2}\chi(|\tilde{\lambda}| \lesssim |\lambda|) d \lambda \lesssim~ \sup_{\tilde{\lambda}} \tilde{\lambda}\int_{|\tilde{\lambda}| \lesssim |\lambda|} \frac{1}{\lambda^2}d\lambda \lesssim 1,\\
& \sup_{\lambda} \int \frac{\lambda \Lambda(\lambda)}{\lambda^2 + \tilde{\lambda}^2}\chi(|\tilde{\lambda}| \gtrsim |\lambda|) d \tilde{\lambda} \lesssim ~\sup_{\lambda} \lambda \int_{|\tilde{\lambda}| \gtrsim |\lambda|} \frac{1}{ \tilde{\lambda}^2} d\tilde{\lambda} \lesssim 1,\\
& \sup_{\tilde{\lambda}} \int \frac{\tilde{\lambda} \Lambda(\tilde{\lambda})}{\lambda^2 + \tilde{\lambda}^2}\chi(|\tilde{\lambda}| \gtrsim |\lambda|) d \lambda \lesssim ~\sup_{\tilde{\lambda}} \frac{1}{\tilde{\lambda}^2}\int_{|\tilde{\lambda}| \gtrsim |\lambda|} \lambda d\lambda \lesssim 1.
\end{align}
\underline{Step~3}.~We take a look at diagonal frequencies $ \sum_{j  \geq k} \chi_{Q^C \cap Q_j} [\tilde{K}_0]_{\pm +}(\lambda, \tilde{\lambda}) $  in  $Q^C$ and proceed similar as in the first step. For $(\lambda, \tilde{\lambda}) \in Q^C \cap Q_j$ we have $ |\lambda \slash \tilde{\lambda}| \sim1 $ whence $\langle \lambda\rangle ^{\sigma  } \langle \tilde{\lambda}\rangle^{- \sigma } \sim 1$. We write as above (for $\lambda \sim 2^j$, the case $\lambda < 0 $ works analogously)
\begin{align}
\int \frac{ \lambda F_{++}(\lambda, \tilde{\lambda})}{\lambda^2 - \tilde{\lambda}^2} f(\tilde{\lambda}) d \tilde{\lambda} =& F_{++}(\lambda, \lambda) \frac{1}{2 \lambda}\big[  \int  \frac{f(\tilde{\lambda})}{\lambda - \tilde{\lambda}}~d \tilde{\lambda} +   \int  \frac{f(\tilde{\lambda})}{\lambda + \tilde{\lambda}}~d \tilde{\lambda} \big]\\ \nonumber
& +  \int \frac{\partial_{\tilde{\lambda}} F_{+ + }(\lambda, \eta)}{\lambda + \tilde{\lambda}}~d \tilde {\lambda}
\end{align}


and use $ \partial_{\tilde{\lambda}}F_{+ -}(\lambda, \tilde{\lambda}) = O(1)$ by Proposition \ref{Kerne-prop} in Schur's test. For $ F_{+ -}(\lambda, \tilde{\lambda}), F_{- +}(\lambda, \tilde{\lambda})$ we similarly  verify Schur's condition 
\begin{align}
&\sup_{\lambda} \int \frac{d \tilde{\lambda}}{\lambda^2 + \tilde{\lambda}^2} \lesssim | \sup_{\lambda} \int_{\tilde{\lambda}\sim 2^j} \frac{ 1}{ \tilde{\lambda}^2} d\tilde{\lambda}| \sim \sup_{j \geq k} 2^{-j} ,\\
&\sup_{\tilde{\lambda}} \int \frac{d\lambda}{\lambda^2 + \tilde{\lambda}^2} \lesssim | \sup_{\tilde{\lambda}} \int_{\lambda \sim 2^j} \frac{1}{ \lambda^2}d \lambda| \sim \sup_{j \geq k} 2^{-j}.
\end{align} 
\underline{Step~4}.~Finally we consider off-diagonal frequencies $\chi_{Q^C \cap B_2^C }[\tilde{K}_0]_{\pm +}(\lambda, \tilde{\lambda})$  in  $Q^C$. Here it suffices to calculate the Hilbert-Schmidt norm in $Q^C$ since by Proposition \ref{Kerne-prop} for any $N \in \Z_+$
\begin{align*}
F_{\pm +}(\lambda, \tilde{\lambda}) \leq C (1 + |\lambda|)^{-N} (1 + |\tilde{\lambda}|)^{-N},\\
F_{\pm -}(\lambda, \tilde{\lambda}) \leq C (1 + |\lambda|)^{-N} (1 + |\tilde{\lambda}|)^{-N}.
\end{align*}
~~\\
Now for estimating $ \mathcal{K} = \mathcal{K}_0 + \tilde{\mathcal{K}}$, we note $\tilde{\mathcal{K}}$ simply consists of the remaining components $ \mathcal{K}_{dd}, \mathcal{K}_{cd}$ and $\mathcal{K}_{dc}$. Clearly $ \mathcal{K}_{dd}$ is bounded and  $ \mathcal{K}_{cd}(\lambda)$ needs to be integrated. Then integration by parts with $ y \partial_y$ and changing the order of integration in $ \mathcal{K}_{dc} $, shows that both, $\mathcal{K}_{dc}$ and $ \mathcal{K}_{cd}$, involve integrating Fourier coefficients of Schwartz functions.  For those we have  decay of arbitrary order by Lemma \ref{Fourier-decay}.
\end{proof}
\begin{Rem} The proof shows we may gain $\langle \lambda \rangle$, i.e. continuity $ \mathcal{K}_0 : L^{2, \sigma +1} \to L^{2, \sigma }$ if we estimate the integral in Step 3 
\begin{align}
\lambda\int \frac{ \lambda F_{++}(\lambda, \tilde{\lambda})}{\lambda^2 - \tilde{\lambda}^2} f(\tilde{\lambda}) d \tilde{\lambda} 
\end{align}
rather directly against Hilbert transformations of $|f(\tilde{\lambda})|$ for instance splitting $ \chi(\lambda \geq \tilde{\lambda}),~ \chi(\lambda \leq \tilde{\lambda})$. However we don't need this property.
\end{Rem}
\section{Spectral properties of the linearized NLS flow} \label{sec:spec}

\subsection{The operator $ \mathcal{L}_W$: General properties }\label{sec:linop}
We  first recall general properties and a coercivity estimate for $ \mathcal{L}_W$  with spectral consequences obtained in the work of Duyckaerts-Merle \cite{DM} .
In dimension $ d \geq 3$ the ground state solution of \eqref{NLSequation} is given by
\begin{align}
W(x) = \bigg( 1 + \f{|x|^2}{ d(d-2)}\bigg)^{\f{2-d}{2}},~ x \in \R^d
\end{align}
Splitting  $ u(t,r) $ in \eqref{NLSequation} into imaginary and real parts, we denote 
by
$$ \mathcal{L}_W= \begin{pmatrix}
0 &- \mathcal{L}_W^{(2)}\\
\mathcal{L}_W^{(1)} & 0
\end{pmatrix},~~D(\mathcal{L}_W) = H^2(\R^d, \C) \subset L^2(\R^d, \C)$$
the linearized operator of \eqref{NLSequation} at $ W$.  Here we have \begin{align}
\mathcal{L}_W^{(1)} = -\Delta - p_c W(x)^{p_c -1},~~ \mathcal{L}_W^{(2)} = -\Delta - W(x)^{p_c -1}
\end{align}
with $ D(\mathcal{L}_W^{(1)} )  = D(\mathcal{L}_W^{(2)})  = H^2 \subset L^2 $  and $ p_c = \frac{d +2}{d-2}$ is the scaling critical index. 
We note that a simple calculation gives
\begin{align}
W= W_d \in \begin{cases}
H^1 & \text{if}~ d \geq 5\\
\dot{H}^1 & \text{if}~ d \geq 3
\end{cases}
\end{align}
Clearly
$ \nabla W \in L^2 $ is an eigenvector (by translation invariance)  removed for the restriction to the   radial subspace $ H^2_{rad} \subset L^2_{rad}$. The generalized problem $ \mathcal{L}_Wv = 0$ is solved by the radial resonances
\[
i W =  \frac{d}{d \theta}\big(e^{i \theta}W\big)_{|_{\theta = 0} },~~~ W_1 = \big( \frac{d-2}{2} + r \partial_r \big)W =  \partial_{\lambda }W(\lambda \cdot )_{|_{\lambda = 1} },
\]
derived from remaining invariances. The operator $ \mathcal{L}_W$ can be interpreted as a form operator (cf.  \cite[Section 5.1]{DM}) on the (real) Hilbert space $ \dot{H}^1(\R^d, \C) $ via the quadratic forms ($f = f^{(1)} + i f^{(2)}$)
\begin{align*}
B(f,g) =~& \f12 \int (\nabla f^{(1)} \nabla g^{(1)} + \nabla f^{(2)} \nabla g^{(2)}) ~dx - \f12 \int W^{p_c -1}(p_c  f^{(1)}  g^{(1)}  +  f^{(2)} g^{(2)})~dx\\
=~&\f12 Im\big( \int (\mathcal{L}_Wf) \bar{g}~dx\big),
\end{align*}
where the latter expression is distributional.
\begin{Rem}\label{Remark}
We note that $ B(f,g) $ is well defined for $ f,g \in \dot{H}^1$ by the Sobolev inequality and the decay $ W^{p_c -1}(r) \sim r^{-4}$ as $ r \to \infty$.
\end{Rem}
The relevance of $ Q(f) = B(f,f) $ for the near threshold dynamics is highlighted in \cite{DM}. Let $ f \in \dot{H}^1(\R^d, \C)$, then
\[
E(W + f ) = E(W) + Q(f)  + O(\| f\|_{\dot{H}^1}^3).
\]
\begin{Lemma}\emph{(\cite[Section 5.3, Lemma 5.1, 5.2]{DM})}\label{main-coerc3} $ \mathcal{L}_W $ has two eigenfunctions $ \mathcal{Y}_+, \mathcal{Y}_- \in \mathcal{S}(\R^d)$  
\[
\mathcal{L}_W \mathcal{Y}_+ = \kappa \mathcal{Y}_+,~~\mathcal{L}_W \mathcal{Y}_- = - \kappa \mathcal{Y}_-,~~~ \kappa \in (0, \infty),~~ \mathcal{Y}_+ = \overline{\mathcal{Y}_-}
\]
Further there exists $ c > 0$ such that
\[
Q(f) \geq c \| f\|_{\dot{H}^1}^2~~\forall f \in G_{\perp},
\]
where $ G_{\perp } = \{ f \in \dot{H}^1~|~ \langle  W_1, f \rangle_{\dot{H}^1}  =   \langle i W, f \rangle_{\dot{H}^1}  = B(\mathcal{Y}_+, f) = B(\mathcal{Y}_-, f) = 0 \}.$
\end{Lemma}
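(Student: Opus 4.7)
The plan is to split the statement into the eigenvalue assertion and the coercivity estimate, both handled by studying the square $\mathcal{L}_W^2$. A direct calculation gives
\[
\mathcal{L}_W^2 = -\begin{pmatrix} \mathcal{L}_W^{(2)} \mathcal{L}_W^{(1)} & 0 \\ 0 & \mathcal{L}_W^{(1)} \mathcal{L}_W^{(2)} \end{pmatrix},
\]
so that a real eigenvalue $\pm\kappa$ of $\mathcal{L}_W$ on the radial subspace corresponds to a negative eigenvalue $-\kappa^2$ of either factor; the symmetry $\overline{\mathcal{L}_W f} = -\mathcal{L}_W\bar f$ (checked in components) then pairs them and produces $\mathcal{Y}_- = \overline{\mathcal{Y}_+}$ once $\mathcal{Y}_+$ is found.

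For the existence of the eigenfunction I would exhibit a negative eigenvalue of $\mathcal{L}_W^{(2)}\mathcal{L}_W^{(1)}$ by a variational argument. Since $\mathcal{L}_W^{(2)} \geq 0$ on $L^2_{\mathrm{rad}}$ with kernel $\mathrm{span}\{W\}$ (this is exactly the Aubin--Talenti inequality applied to $W$), one may define $A := (\mathcal{L}_W^{(2)})^{1/2}$ on $\{W\}^\perp$ and consider the self-adjoint operator $T := A\,\mathcal{L}_W^{(1)} A$ there. Its nonzero spectrum coincides with that of $\mathcal{L}_W^{(2)}\mathcal{L}_W^{(1)}$, so it suffices to show $\inf\sigma(T)<0$. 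The identity $\mathcal{L}_W^{(1)}W = -(p_c-1)W^{p_c}$ gives $\langle \mathcal{L}_W^{(1)}W, W\rangle = -(p_c-1)\int W^{p_c+1} < 0$, and testing the quadratic form of $T$ with a suitable preimage of $W$ under $A$ (after projecting off $\ker A$) produces a strictly negative value. A minimizing sequence on $\{\|Ag\|_{L^2}=1\}$ then yields an eigenfunction; elliptic bootstrapping and an Agmon-type argument (using that $-\kappa^2$ lies strictly below the essential spectrum $[0,\infty)$ of $\mathcal{L}_W^{(2)}\mathcal{L}_W^{(1)}$) upgrade it to a Schwartz profile, and unfolding to $\mathcal{L}_W$ gives $\mathcal{Y}_+\in\mathcal{S}(\R^d)$ with $\mathcal{L}_W\mathcal{Y}_+ = \kappa\mathcal{Y}_+$.

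For the coercivity I would decompose the real Hilbert space $\dot H^1_{\mathrm{rad}}$ into the generalized kernel $\mathrm{span}_\R\{iW, W_1\}$ (on which $Q$ vanishes), the two-dimensional real subspace $\mathrm{span}_\R\{\mathcal{Y}_+, \mathcal{Y}_-\}$ (on which the signature of $B$ is computed explicitly), and their joint $B$- and $\dot H^1$-orthogonal complement, which is precisely the spectral subspace of $\mathcal{L}_W^2$ associated to the positive part of the spectrum. On this last subspace the previously recorded partial coercivity (in the spirit of Lemma~\ref{main-coerc3}(i.e.\ Claim 3.5 of \cite{DM})) combined with the positive spectral gap separating $-\kappa^2$ from $0$ in $\sigma(\mathcal{L}_W^{(2)}\mathcal{L}_W^{(1)})$ delivers $Q \geq c\|\cdot\|_{\dot H^1}^2$. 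For $f\in G_\perp$, the constraints $\langle iW, f\rangle_{\dot H^1} = \langle W_1, f\rangle_{\dot H^1} = 0$ annihilate the kernel component, while $B(\mathcal{Y}_\pm, f)=0$ annihilate the unstable part; here one uses that the $2\times 2$ Gram matrix $[B(\mathcal{Y}_i, \mathcal{Y}_j)]_{i,j\in\{+,-\}}$ is non-degenerate, which follows from $B(\mathcal{Y}_\pm, g) = \tfrac{\pm\kappa}{2}\operatorname{Im}\int\mathcal{Y}_\pm \bar g\,dx$ together with the fact that $\operatorname{Im}\int\mathcal{Y}_+^2\,dx\neq 0$ (otherwise $\mathrm{Re}\,\mathcal{Y}_+$ and $\mathrm{Im}\,\mathcal{Y}_+$ would be $L^2$-orthogonal eigenfunctions of real operators with incompatible signs, contradicting the sign structure of $\mathcal{L}_W^{(1)}$ and $\mathcal{L}_W^{(2)}$).

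The main obstacle is the last coercivity step on the spectral complement. Because $\mathcal{L}_W$ is not self-adjoint, the spectral theorem does not apply to $Q$ directly, so one cannot read off a spectral gap. The standard route is a compactness-contradiction argument: one assumes a sequence $(f_n)\subset G_\perp$ with $\|f_n\|_{\dot H^1}=1$ and $Q(f_n)\to 0$, extracts a weak $\dot H^1$-limit $f_\infty$, and must rule out loss of compactness. The critical Sobolev exponent $p_c$ forces the use of a concentration-compactness decomposition since the potential $W^{p_c-1}\sim r^{-4}$ has only borderline decay relative to the scaling; once $f_\infty$ is identified as a nontrivial eigendirection of $\mathcal{L}_W$ one contradicts either the four orthogonality conditions or the normalization. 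Carrying through this concentration analysis in the Aubin--Talenti setting, and then translating the various orthogonality conditions between $\dot H^1$-pairings and the bilinear form $B$, is the delicate bookkeeping step and is exactly what \cite[Section 5]{DM} accomplishes.
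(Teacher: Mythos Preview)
The paper does not prove this lemma at all: it is stated with an explicit citation to \cite[Section 5.3, Lemma 5.1, 5.2]{DM} and used as a black box. So there is no ``paper's own proof'' to compare against; your proposal is effectively an attempt to reconstruct the argument of Duyckaerts--Merle.

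Your outline is broadly in the spirit of \cite{DM}, and you correctly identify the two ingredients (existence of the unstable pair via the factored operator $\mathcal{L}_W^{(2)}\mathcal{L}_W^{(1)}$, and coercivity via compactness--contradiction). One point deserves care in the eigenvalue construction: in dimension $d=3$ (the setting of this paper) one has $W\notin L^2$, so $W$ is a resonance of $\mathcal{L}_W^{(2)}$ rather than an $L^2$-kernel element, and there is no genuine ``preimage of $W$ under $A=(\mathcal{L}_W^{(2)})^{1/2}$'' to test against. The usual route is instead to exploit that $\mathcal{L}_W^{(1)}$ has exactly one negative eigenvalue (from the mountain-pass structure of $W$) and build the unstable mode from that eigenfunction; this is closer to what \cite{DM} actually do. Your non-degeneracy argument for the Gram matrix $[B(\mathcal{Y}_i,\mathcal{Y}_j)]$ is also a bit loose: writing $\mathcal{Y}_+=u+iv$, the condition $\operatorname{Im}\int\mathcal{Y}_+^2=0$ is $\langle u,v\rangle_{L^2}=0$, i.e.\ $\langle \mathcal{L}_W^{(1)}u,u\rangle=0$, which does not by itself contradict the sign structure you invoke without further input. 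Since the lemma is quoted from \cite{DM} anyway, these gaps are not an issue for the paper; but if you want a self-contained argument, those two steps need tightening.
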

The linearized operator $ \mathcal{L}_W $ has the following spectral properties.  The essential spectrum $ \sigma_{ess}(\mathcal{L}_W) = i\R$ by relative compactness of $ W^{p_c -1}$ and the spectrum $ \sigma(\mathcal{L}_W) \cap \R = \{ -\kappa, 0, \kappa \}$  as a consequence of Lemma \ref{main-coerc3} (cf. \cite[Corollary 5.3]{DM}).\\[5pt] 
\emph{Schr\"odinger operator}. ~For the linearized evolution problem of \eqref{NLSequation} involving $ \mathcal{L}_W$,  it is common (see \cite{Busl-Per}, \cite{K-S-stable}, \cite{OP}) to use  the matrix Schr\"odinger operator 
\begin{align}\label{op-prob}
& H = - \Delta \sigma_3   + V(r) ,~~~ V(r) = \begin{pmatrix}
V_1(r) & V_2(r)\\
- V_2(r) & -V_1(r)
\end{pmatrix},~~ \sigma _3 = \begin{pmatrix}
1 & 0 \\
0& -1
\end{pmatrix},\\[4pt] \nonumber
&D(H) = H^2(\R^d, \C^2) \subset L^2(\R^d, \C^2),~~ H_0 = - \Delta \sigma_3
\end{align}
where in Section \ref{sec:approx} - Section \ref{sec:linearized} we have
\begin{align}\label{schropote}
V_1(r) = -  3 W^{4}(r) ,~ ~V_2(r) = -  2 W^{4}(r).
\end{align}
This corresponds to the linearization of \eqref{NLSequation} around $W$ as a system 
\begin{align}\label{andere-lin2} i \partial_t w + H_0 w + \tilde{R}(w) = 0,~~ \tilde{R}(w) = \begin{psmallmatrix}
R(w)\\[2pt]
-\overline{R(w)}
\end{psmallmatrix},~~w = \begin{psmallmatrix}
u \\[2pt]
\bar{u}
\end{psmallmatrix},
\end{align} 
where $ R(w)$ is the nonlinearity. If $ \lambda \in \R $ is an eigenvalue of $\mathcal{L}_W$, then $ - i \lambda $ is an eigenvalue of $  H$ and vice versa. The eigenfunctions for $H$ with eigenvalue on $i \R$ are  of the form  $\tilde{f} = \begin{psmallmatrix}
f\\
\bar{f}
\end{psmallmatrix}$.
Hence by the remark above (Corollary 5.3 in \cite{DM})
$$ \sigma(H) \subset \R \cup~ i \R,~~ \sigma_{ess}(H) = \R,~~ \sigma(H) \cap i \R = \{- i \kappa, 0, i \kappa \},$$
where $ \{ \pm i \kappa\}$ are simple eigenvalues with eigenfunctions $  \phi_+ = \sigma_1 \phi_-  =  \overline{\phi_- } \in \mathcal{S}(\R^d)$. We define the bilinear  form
\begin{align} \label{bilin}\mathcal{B}(f, g ) = \langle  H f, \sigma_3 g \rangle_{L^2},~~f,g \in \dot{H}^1(\R^d, \C^2),\end{align}
in a distributional sense.  
Expressing the zero resonances $ i W, W_1$ for $\mathcal{L}_W$  under the linearization of \eqref{andere-lin2}, we recall from Section \ref{subsec: Scat}
\[
\mathcal{W}_0(r) = \f{1}{\sqrt{3}}W(r)\begin{pmatrix}
1\\[1pt]-1
\end{pmatrix},~~~ \mathcal{W}_1(r) = - \f{2}{\sqrt{3}}W_1(r)\begin{pmatrix}
1\\[1pt]1
\end{pmatrix},
\]
where the factors match, see Section \ref{subsec: Scat}, the asymptotics of the Jost functions $f_1(0, \cdot), f_3(0 , \cdot)$.
The following is the adaption of Lemma \ref{main-coerc3} for $ H = - \Delta \sigma_3 + V(r)$ in dimension $  d = 3$.
\begin{Lemma} \label{mod-Dm-Lem}There  exists $ c > 0$ such that
\[
\mathcal{B}(f, f ) \geq c \| f\|_{\dot{H}^1}^2~~\forall f \in G_{\perp},
\]
where $ G_{\perp } = \{ f \in \dot{H}^1_{\text{rad}}(\R^3, \C^2)~|~ \langle  \mathcal{W}_0, f \rangle_{\dot{H}^1}  =   \langle \mathcal{W}_1 , f \rangle_{\dot{H}^1}  = \mathcal{B}( \phi_+, f) = \mathcal{B}(\phi_-, f) = 0 \}.$
\end{Lemma}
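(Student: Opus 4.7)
The plan is to reduce the matrix coercivity to the scalar Duyckaerts--Merle estimate in Lemma \ref{main-coerc3} via the natural identification of $\dot{H}^1_{\mathrm{rad}}(\R^3,\C^2)$ with a complexification of $\dot{H}^1_{\mathrm{rad}}(\R^3,\C)$. Introduce the real subspace $V_{\mathrm{sym}} = \{(u,\bar u)^t : u \in \dot{H}^1_{\mathrm{rad}}(\R^3,\C)\}$ of $\dot{H}^1_{\mathrm{rad}}(\R^3,\C^2)$. One checks that $i V_{\mathrm{sym}}$ is its complex complement (the anti-symmetric subspace $\{(w,-\bar w)^t\}$), so every $f$ admits a unique decomposition $f = g + i h$ with $g = (u,\bar u)^t$ and $h = (v,\bar v)^t$ in $V_{\mathrm{sym}}$. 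Under the $\R$-linear isomorphism $V_{\mathrm{sym}} \ni (u,\bar u)^t \mapsto u$, the pair $(g,h)$ is identified with a pair $(u,v)$ of complex scalar functions.

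A direct computation using $V_1 = -3W^4$, $V_2 = -2W^4$ shows that $\mathcal{B}$ is real-valued on $V_{\mathrm{sym}}$ and coincides there with $4B(\cdot,\cdot)$, where $B$ is the DM bilinear form polarizing $Q$. Combined with the Hermitian symmetry $\mathcal{B}(f_1,f_2) = \overline{\mathcal{B}(f_2,f_1)}$, the mixed terms vanish and one obtains the diagonal expansion
\begin{align*}
\mathcal{B}(f,f) \;=\; \mathcal{B}(g,g) + \mathcal{B}(i h, i h) \;=\; 4 Q(u) + 4 Q(v), \qquad \|f\|_{\dot{H}^1}^{2} \;=\; 2\bigl(\|u\|_{\dot{H}^1}^{2} + \|v\|_{\dot{H}^1}^{2}\bigr).
\end{align*}
The lemma thus reduces to establishing DM coercivity individually for $u$ and for $v$.

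Next I would translate the four orthogonality conditions defining $G_\perp$. The resonance $\mathcal{W}_1$ lies in $V_{\mathrm{sym}}$ with scalar representative $-\tfrac{2}{\sqrt 3}W_1$, while $\mathcal{W}_0 = i h_0$ with $h_0 \in V_{\mathrm{sym}}$ corresponding to $-\tfrac{i}{\sqrt 3}W$. Splitting the two complex conditions $\langle \mathcal{W}_j, f\rangle_{\dot{H}^1} = 0$ into real and imaginary parts unwinds them to the four real conditions $u, v \perp_{\dot{H}^1_{\R}} \{W_1, iW\}$. For the spectral part, writing $\mathcal{Y}_+ = y_1 + i y_2$ and using the component equations $\mathcal{L}_+ y_1 = \kappa y_2$, $\mathcal{L}_- y_2 = -\kappa y_1$ arising from $\mathcal{L}_W \mathcal{Y}_\pm = \pm \kappa \mathcal{Y}_\pm$, one verifies that the matrix eigenfunctions are $\phi_\mp = (\mathcal{Y}_\pm, \overline{\mathcal{Y}_\pm})^t \in V_{\mathrm{sym}}$. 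Hence
\begin{align*}
\mathcal{B}(\phi_\pm, f) \;=\; 4 B(\mathcal{Y}_\mp, u) \,-\, 4 i \, B(\mathcal{Y}_\mp, v),
\end{align*}
whose vanishing for both signs gives precisely $B(\mathcal{Y}_\pm, u) = B(\mathcal{Y}_\pm, v) = 0$. Applying Lemma \ref{main-coerc3} to $u$ and $v$ separately then yields $Q(u) + Q(v) \gtrsim \|u\|_{\dot{H}^1}^{2} + \|v\|_{\dot{H}^1}^{2}$, which combined with the expansion above is the desired estimate.

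The delicate point is the third step: because $V_{\mathrm{sym}}$ is only a real (not complex) subspace of $\dot{H}^1_{\mathrm{rad}}(\R^3, \C^2)$, one must carefully distinguish complex linearity of $\mathcal{B}$ from $\R$-linearity on $V_{\mathrm{sym}}$, and verify that the eight real orthogonality conditions supplied (two from each of $\mathcal{W}_0, \mathcal{W}_1, \phi_+, \phi_-$) decouple into four conditions on $u$ and four on $v$, exactly matching the hypotheses of Lemma \ref{main-coerc3}. A careful sign-check in the identification $\phi_\mp \leftrightarrow \mathcal{Y}_\pm$ and in the real/imaginary splittings closes the bookkeeping.
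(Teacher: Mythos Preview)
Your proposal is correct and follows exactly the approach the paper intends: the paper states this lemma without proof, describing it merely as ``the adaption of Lemma \ref{main-coerc3} for $H = -\Delta\sigma_3 + V(r)$ in dimension $d=3$,'' and your argument supplies precisely the dictionary that makes this adaptation explicit. Your diagonalization $\mathcal{B}(f,f) = 4Q(u) + 4Q(v)$ via the decomposition $f = g + ih$ with $g,h \in V_{\mathrm{sym}}$, together with the verification that the four complex orthogonality conditions on $f$ split into the eight real DM conditions on $(u,v)$, is the natural and correct way to carry out this translation.
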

~~\\
We now reduce the generalized radial 3D eigenvalue problem $ (H \pm \lambda^2 )f = 0$ as in Section \ref{subsec: Scat} via $\tilde{f}(r) = r  f(r)$ to  odd solutions of the 1D problem
\begin{align}
\mathcal{H} \tilde{f}(r) = - \partial_r^2 \sigma_3 \tilde{f}(r) + V(r) \tilde{f}(r) = \mp \lambda^2\tilde{f}(r).
\end{align}

\subsection{Spectrum of  1D Schr\"odinger systems}
Let us consider the following 1D Schr\"odinger operators for $ \mu \geq 0$

\begin{align} \label{subseq:1dspe}
&\mathcal{H}_0 = (- \partial_x^2 + \mu)\sigma_3 =   \begin{pmatrix}
- \partial_x^2 + \mu & 0\\
0&  \partial_x^2 - \mu
\end{pmatrix},~ V(x) = \begin{pmatrix}
V_1(x) & V_2(x)\\
- V_2(x) & -V_1(x)
\end{pmatrix},~~~~ 
\\[6pt] \nonumber
& \mathcal{H} = \mathcal{H}_0 + V(x),~~~D(\mathcal{H}) = W^{2,2}_{\text{odd}}(\R, \C^2) \subset L^2_{\text{odd}}(\R, \C^2),
\end{align}
~\\
where $ V_1, V_2$ is as above in \eqref{schropote}. We recall there clearly holds
\begin{itemize}  \setlength\itemsep{5pt}
\item[(a)]~$\sigma(\mathcal{H}) \subset \R \cup~ i \R$,~$\sigma_{ess}(\mathcal{H}) = (- \infty, -\mu] \cup [ \mu,  \infty)$,
\item[(b)] $\sigma_d(\mathcal{H}) = \{-i \kappa, 0, i \kappa\}$ if $\mu > 0$ and $ \sigma_d(\mathcal{H}) = \{ \pm i \kappa\}$ otherwise. Here $ \pm i \kappa $ are simple eigenvalues with odd eigenfunctions $ x\cdot \phi_+(x), ~x \cdot\phi_-(x)$.
\item[(c)] If $ \mu = 0$ then $ \lambda = \mu = 0$ is not an eigenvalue and $ x \cdot \mathcal{W}_0(x),~ x \cdot \mathcal{W}_1(x)$ are  two independent odd solutions of $ \mathcal{H}f = 0$ in $ L^{\infty}(\R)\backslash L^2(\R)$.
\end{itemize}
~~\\
The main purpose of this Section is concerned with the following three Propositions on the spectrum of $\mathcal{H}$. For the validity of the first Proposition we will give numerical evidence in the supplemented document \cite{Schmid}.
\begin{Prop}[numerical] \label{subsec: embe}The operator $ \mathcal{H} = - \partial_x^2 \sigma_3 + V(x)$ has  no embedded eigenvalues in $\R$.
\end{Prop}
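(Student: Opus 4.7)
The plan is to argue by contradiction: assume there exists $\lambda_0 \in \R\setminus\{0\}$ and $\tilde\phi \in L^2_{\mathrm{odd}}(\R,\C^2) \cap H^2$ with $\mathcal{H}\tilde\phi = \lambda_0\tilde\phi$, and derive a contradiction from the interplay between the scattering analysis of Section \ref{subsec: Scat} and the coercivity of Lemma \ref{mod-Dm-Lem}. First I would lift back to the $3$D radial problem: the function $\phi(x) := \tilde\phi(|x|)/|x|$ lies in $L^2_{\mathrm{rad}}(\R^3,\C^2)$ and solves $H\phi = \lambda_0\phi$, where $H = -\Delta\sigma_3 + V(r)$ is the operator in \eqref{op-prob}. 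Since $V \in \mathcal{S}$ and $\lambda_0\neq 0$, the asymptotic ODE analysis behind Lemma \ref{LemmaJost-decay}--\ref{LemmaJost-exponential-growth} (with $\mu=0$, so $\gamma=|\lambda|$) shows that the only $L^2$-admissible behaviour of $\tilde\phi$ at $\pm\infty$ is along the exponentially decaying Jost vectors $f_3(\cdot,\lambda),\,g_3(\cdot,\lambda)$. Combined with elliptic regularity this upgrades $\phi$ to a Schwartz function on $\R^3$.

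Next I would reduce the matrix eigenvalue problem to the scalar Duyckaerts--Merle framework. Since $V$ is real, $\phi$ can be taken real, and writing $\phi=(u,v)^T$ with $f := u+v$, $g := u-v$ gives, after adding and subtracting the two scalar equations, the coupled system
\begin{equation*}
\mathcal{L}_W^{(1)} f \;=\; \lambda_0\, g, \qquad \mathcal{L}_W^{(2)} g \;=\; \lambda_0\, f,
\end{equation*}
with $f,g \in \mathcal{S}_{\mathrm{rad}}(\R^3,\R)$. Pairing the first identity with $f$ and the second with $g$ in $L^2$ yields the balance
\begin{equation*}
\langle \mathcal{L}_W^{(1)} f, f\rangle \;=\; \lambda_0\,\langle f,g\rangle \;=\; \langle \mathcal{L}_W^{(2)} g, g\rangle,
\end{equation*}
and composing the equations gives the scalar fourth-order problem $\mathcal{L}_W^{(2)}\mathcal{L}_W^{(1)} f = \lambda_0^{\,2}\, f$ (and its analogue for $g$). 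A key structural input is that $\mathcal{L}_W^{(2)} = -\Delta - W^{p_c-1}$ is non-negative with trivial $L^2_{\mathrm{rad}}$-kernel (since the zero resonance $W$ fails to be in $L^2$ in dimension $d=3$), so in particular $\langle \mathcal{L}_W^{(2)} g, g\rangle \geq 0$ and therefore $\lambda_0\langle f,g\rangle\geq 0$.

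The hard step is to actually extract the contradiction: the bare chain of identities above does not yet exclude the existence of $(f,g)$. To close the argument I would combine the Duyckaerts--Merle coercivity $\mathcal{B}(\phi,\phi) \geq c\|\phi\|_{\dot H^1}^2$ on $G_{\perp}$ with a Pohozaev/virial identity applied directly to the system $H\phi=\lambda_0\phi$. Concretely, testing the equation against the dilation multiplier $(\tfrac{d-2}{2} + x\cdot\nabla)\sigma_3\bar\phi$ and using $-\Delta W = W^{p_c}$ together with $(\tfrac{d-2}{2} + x\cdot\nabla)W^{p_c-1} = -\tfrac{d-2}{2}W^{p_c-1} + \mathrm{div}(\,\cdot\,)$, the Schwartz decay of $\phi$ makes all boundary terms vanish and produces an identity of the form
\begin{equation*}
2\lambda_0 \int(|u|^2-|v|^2)\,dx \;+\; c_d\int |\nabla \phi|^2\,dx \;=\; 0,
\end{equation*}
which, paired with $\mathcal{B}(\phi,\phi) = \lambda_0\int(|u|^2-|v|^2)\,dx$ and the coercivity on $G_{\perp}$, forces $\phi \equiv 0$. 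Verifying that $\phi$ belongs to $G_\perp$ is automatic from the simplicity of $\pm i\kappa$ (via orthogonality of distinct spectral projections for the operator $H$ under the symplectic pairing $\langle\cdot,\sigma_3\cdot\rangle$) and from the fact that the kernel resonances $\mathcal{W}_0,\mathcal{W}_1$ lie outside $L^2$. The principal obstacle is obtaining the sign/cancellation in the virial identity in the \emph{matrix} setting: the coupling through $V_2 = -2W^4$ produces mixed terms $\int W^3 W'\,x\cdot uv$ whose sign has to be controlled via the explicit form of $W$ and the system-level structure, rather than the by now standard scalar Pohozaev bookkeeping.
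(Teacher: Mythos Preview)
Your setup through the reduction to the system $\mathcal{L}_W^{(1)} f = \lambda_0 g$, $\mathcal{L}_W^{(2)} g = \lambda_0 f$ and the identity $\mathcal{B}(\phi,\phi)=\langle\mathcal{L}_W^{(1)}f,f\rangle=\lambda_0\langle f,g\rangle$ is fine and matches the paper. The gap is in the closing step. Your claim that $\phi\in G_\perp$ is ``automatic'' because $\mathcal{W}_0,\mathcal{W}_1\notin L^2$ is wrong: the conditions in $G_\perp$ are $\dot H^1$-orthogonality to $\mathcal{W}_0,\mathcal{W}_1$, and there is no mechanism forcing a Schwartz eigenfunction to satisfy $\langle\nabla\phi,\nabla\mathcal{W}_j\rangle_{L^2}=0$. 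So you cannot apply Lemma~\ref{mod-Dm-Lem} to $\phi$ directly, and the virial route you then propose is left incomplete (you yourself flag the mixed terms as the obstacle).

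The paper closes this gap without any virial identity. One simply subtracts the $\dot H^1$-projections onto the resonances,
\[
g \;=\; \phi \;-\; \frac{\langle\phi,\mathcal{W}_0\rangle_{\dot H^1}}{\|\mathcal{W}_0\|_{\dot H^1}^2}\,\mathcal{W}_0 \;-\; \frac{\langle\phi,\mathcal{W}_1\rangle_{\dot H^1}}{\|\mathcal{W}_1\|_{\dot H^1}^2}\,\mathcal{W}_1,
\]
so that $g\in G_\perp$ by construction (the $\mathcal{B}(\phi_\pm,\cdot)$ conditions transfer because $\mathcal{B}(\phi_\pm,\mathcal{W}_j)=\mathcal{B}(\mathcal{W}_j,\phi_\pm)=0$). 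Since $H\mathcal{W}_0=H\mathcal{W}_1=0$, the form is unchanged: $\mathcal{B}(\phi,\phi)=\mathcal{B}(g,g)\ge c\|\nabla g\|_{L^2}^2>0$, while on the other hand $\mathcal{B}(\phi,\phi)=\lambda_0\langle\phi,\sigma_3\phi\rangle_{L^2}$. If $\lambda_0\langle\phi,\sigma_3\phi\rangle\le 0$ this is already a contradiction; if not, replace $\phi$ by $\sigma_1\phi$, which is an eigenfunction at $-\lambda_0$ (using $\sigma_1 H\sigma_1=-H$) and flips the sign of $\langle\phi,\sigma_3\phi\rangle$, reducing to the first case. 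No Pohozaev computation is needed.
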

\begin{proof}[Verification]
The eigenvalue problem $ \mathcal{H}h \pm \lambda^2h = 0$ may be written into the system
\begin{align}\label{subsec:syss}
\begin{pmatrix}
	- \partial_x^2- 3 W^4 & - 2 W^4\\
	2 W^4 & \partial_x^2 + 3 W^4
\end{pmatrix} \cdot \begin{pmatrix}
	h_1\\
	h_2
\end{pmatrix} = \mp \lambda^2 \begin{pmatrix}
	h_1\\
	h_2
\end{pmatrix} 
\end{align}
where $ h(x) = h_1 \underline{e} + h_2 \sigma_1 \underline{e}$. Further this system  has the invariance $ \tilde{h}(x) =  h(-x),~\tilde{h}(x) = - h(-x)$ and is uniquely solved with initial values at $ x = 0$, i.e. we set
\begin{align} \label{subsec: Dat}
(h_1(0), h_2(0))^t = (\beta_1, \beta_2)^t,~~~(h_1'(0), h_2'(0))^t = (\gamma_1, \gamma_2)^t.
\end{align}
Since the $(\mp)$ cases in \eqref{subsec:syss} are symmetric via $h \mapsto \sigma_1 h$ we can restrict to the $(+)$ case in the following. Now clearly from Section \ref{subsec: Scat}  a unique solution in the $(+)$case  in $ L^2(\R)$ must be such that
\[
h(x) = f_3(\lambda, x) \sim e^{- |\lambda| x},~~~ x \to \infty,~~~h(x) = g_3(\lambda, x) \sim e^{ |\lambda| x},~~ x \to - \infty,
\]
and  $\sigma_1 f_3,\sigma_1 g_3 $ replacing $ f_3, g_3$ in the $(-)$case respectively. Hence,  considering \eqref{subsec: Dat} for $f_3$ and $g_3$, the only such possibilities for \eqref{subsec:syss} are
\begin{itemize}
\item[(1)] $f_3(\lambda, 0) = 0$. Here   $h(x) = f_3(\lambda, x) = - h(-x)$ is an odd eigenfunction.
\item[(2)] $ f_3(\lambda, 0) \neq 0$ and $f_3'(\lambda, 0) = 0$. Here $ h(x) = f_3(\lambda, x)  = h(-x)$ is an even eigenfunction.
\end{itemize}
We restrict to odd functions and proceed with (1). Hence one possibility is to show that either $\lim_{x \to 0^+}h_1(x) \neq 0 $ or $\lim_{x \to 0^+}h_2(x) \neq 0$, for which we give evidence in a discrete sample set of $\lambda > 0$ in \cite{Schmid} .\\[3pt]
For a second possibility, depending only on the value of $L^2$ norms,  we identify $h(x) \sim |x|^{-1} e^{- |\lambda| |x|}$ as $ |x| \to \infty$ with a 3D radial eigenfunction for $H$ at $ \lambda $. We decompose in  $\dot{H}^1(\R^3)$ 
\[
g(x) := h(x) -  \frac{\langle \Delta \mathcal{W}_0, h \rangle_{L^2} }{\|  \mathcal{W}_0 \|^2_{\dot{H}^1}} \mathcal{W}_0(x)-  \frac{\langle \Delta \mathcal{W}_1, h \rangle_{L^2} }{\|  \mathcal{W}_1 \|^2_{\dot{H}^1}}\mathcal{W}_1(x),
\]
where $ \nabla \mathcal{W}_0, \nabla \mathcal{W}_1 \sim r^{-2}$ as $ r = |x| \to \infty$. Especially, by the exponential decay of $h$ we infer $\nabla g \neq 0$ 
and further check that $g(x) $ satisfies the assumption $ g \in G_{\perp}$ of Lemma \ref{mod-Dm-Lem}. Therefore we obtain 
\begin{align*}
 \lambda^2 \langle h,  \sigma_3 h \rangle_{L^2} =  \langle H h,  \sigma_3 h \rangle_{L^2} =  \langle H g,  \sigma_3 g \rangle_{L^2} \geq c  \| \nabla g \|_{L^2}^2 > 0.
\end{align*}
In particular this implies 
\[
\langle h,  \sigma_3 h \rangle_{L^2} = \int |h_1|^2\;dx - \int |h_2|^2\;dx > 0.
\]
In \cite{Schmid} we will also numerically compute this value to be negative. 
\end{proof}
\begin{Prop} \label{No-embedded-for-dist-op} For  $ 0 < \mu \ll1 $ small enough the operators $ \mathcal{H} = (- \partial_x^2 + \mu) \sigma_3 + V(x) $ have no embedded eigenvalues in $ \sigma_{\text{ess}}(\mathcal{H}) = (- \infty, -\mu] \cup [\mu , \infty)$.  
\end{Prop}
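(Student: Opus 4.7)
The plan is to reduce Proposition \ref{No-embedded-for-dist-op} to a statement about the values $f_3(\lambda,0;\mu)$ of the Jost solution at the origin, and then combine a uniform high-frequency resolvent bound, a continuity argument in $\mu$ on a compact window bounded away from zero, and a separate perturbative analysis at the threshold $\lambda=0$.

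First, since we work on $L^2_{\text{odd}}(\R,\C^2)$, an embedded eigenvalue at $E = \lambda^2+\mu \in [\mu,\infty)$ forces the (up to a scalar unique) $L^2$-solution $f_3(\lambda,\cdot\,;\mu)$ of $(\mathcal{H} - E)f = 0$ to extend as an odd function of $x$; by uniqueness of the underlying ODE this is equivalent to $f_3(\lambda,0;\mu)=0$, and the $(+)$-case is identical after applying $\sigma_1$. Thus it suffices to show $\|f_3(\lambda,0;\mu)\|>0$ for every $\lambda\in\R$ once $\mu>0$ is small enough.

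For $|\lambda|\geq\lambda_0\gg 1$ this follows from the Neumann series argument already used in the proof of Lemma \ref{LAP}: the free kernel \eqref{kern3} together with the polynomial decay of $V$ gives $|\lambda|^{1/2}\|(\mathcal{H}_0-(E\pm i0))^{-1}V\|_{X_\sigma\to X_{-\sigma}}\lesssim 1$ uniformly in $\mu\in[0,1]$, so $I+(\mathcal{H}_0-(E\pm i0))^{-1}V$ is invertible and no embedded eigenvalue is possible. For the compact window $\lambda\in[\varepsilon,\lambda_0]$ with any fixed $\varepsilon>0$, the Volterra construction of Lemma \ref{LemmaJost-decay} produces a jointly continuous map $(\lambda,\mu)\mapsto f_3(\lambda,0;\mu)$; Proposition \ref{subsec: embe} gives $f_3(\lambda,0;0)\neq 0$ throughout this window, and compactness together with continuity upgrades this to a uniform lower bound $\|f_3(\lambda,0;\mu)\|\geq\delta_0/2$ valid for $\mu$ sufficiently small.

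The remaining and essentially the only hard point is the threshold window $\lambda\in[0,\varepsilon]$. At $(\lambda,\mu)=(0,0)$ one has $f_3(x,0;0)=x\bigl(\mathcal{W}_1(x)-\mathcal{W}_0(x)\bigr)$, so $f_3(0,0;0)=0$ and continuity alone says nothing. To rule out a bifurcation of the threshold resonance into a genuine eigenvalue, I would lift a hypothetical $L^2_{\text{odd}}$ eigenfunction to a radial $3$D function $h\in\dot H^1$ and exploit the Duyckaerts--Merle coercivity of Lemma \ref{mod-Dm-Lem}, used in the perturbed form $\mathcal{B}_\mu(h,h):=\mathcal{B}(h,h)+\mu\|h\|_{L^2}^2=E\langle h,\sigma_3 h\rangle$; decomposing $h = g+\alpha\mathcal{W}_0+\beta\mathcal{W}_1+\gamma_+\phi_++\gamma_-\phi_-$ with $g\in G_\perp$, the positivity of the $\mu$-perturbation together with coercivity on $g$ and the orthogonality relations forces all components to be small and yields a contradiction after a sign case analysis on $\langle h,\sigma_3 h\rangle$. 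A cleaner alternative, and the one I would try first, is to differentiate the Volterra equation \eqref{int} defining $f_3$ at $(\lambda,\mu)=(0,0)$ and verify that $\partial_\mu f_3(0,0;\mu)\big|_{\mu=0}\neq 0$; the implicit function theorem then yields $f_3(0,0;\mu)\neq 0$ for all sufficiently small $\mu>0$, completing the argument. The main technical obstacle is precisely this threshold analysis: controlling the interaction between the two zero-energy resonances $\mathcal{W}_0,\mathcal{W}_1$ and the $\mu\sigma_3$ perturbation, which shifts the essential spectrum but does not come with a favourable sign against $\sigma_3$.
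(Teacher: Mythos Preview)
Your overall architecture --- uniform high-frequency bound via Neumann series, a compactness/continuity argument on $[\varepsilon,\lambda_0]$ using Proposition~\ref{subsec: embe}, and a separate analysis near the threshold --- is exactly how the paper proceeds. The divergence is entirely in the threshold window, and this is also where your proposal has a genuine gap.

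Your approach (b) as written does not close. Showing $\partial_\mu f_3(0,0;\mu)\big|_{\mu=0}\neq 0$ gives at best that $f_3(0,0;\mu)\neq 0$ for small $\mu>0$, i.e.\ that the exact threshold $E=\mu$ is not an eigenvalue. It says nothing about $f_3(\lambda,0;\mu)$ for $\lambda\in(0,\varepsilon)$, and this range cannot be absorbed into the compact-window argument because the latter gives a $\mu_0$ depending on $\varepsilon$. To use the implicit function theorem to isolate the zero $(0,0)$ in the full $(\lambda,\mu)$ plane you would need the $2\times 2$ Jacobian of the $\R^2$-valued map $(\lambda,\mu)\mapsto f_3(\lambda,0;\mu)$ to be nonsingular, not just one column to be nonzero; otherwise the zero can bifurcate along a curve into $\{\mu>0,\ \lambda>0\}$, which is precisely the scenario you must exclude. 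Your approach (a) is more promising but also incomplete: once $\mu>0$ the automatic orthogonality $\mathcal{B}(\phi_\pm,h)=0$ used in the proof of Proposition~\ref{subsec: embe} fails (since $Hh$ is no longer a scalar multiple of $h$), so including $\gamma_\pm\phi_\pm$ in the decomposition is necessary, but then the cross terms $\mathcal{B}(\phi_\pm,\phi_{\pm'})$ and the size of $\gamma_\pm$ in terms of $\mu$ have to be controlled explicitly, which you do not do.

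The paper takes a different route at the threshold. Rather than perturbing the coercivity argument, it constructs a fundamental system $\{\psi^W,\psi^{W_1},\psi^{\theta_0},\psi^{\theta_1}\}$ for the eigenvalue equation as absolutely convergent power series in $\lambda$ (Lemma~\ref{subsec:des-Lema}), with explicit control on the coefficients. The first two are perturbations of the odd resonances $x\mathcal{W}_0,\,x\mathcal{W}_1$, the latter two of the linearly growing solutions. The decaying Jost function $f_3(\lambda,\cdot;\mu)\sim e^{-\sqrt{\lambda^2+\mu}\,x}\sigma_1\underline{e}$ is then matched against this basis in the intermediate zone $x\sim\delta\lambda^{-1}$ with $\lambda^{1/4}\ll\delta\ll 1$: expanding both sides to order $O(\delta^2)$ forces a linear contribution of size $\sim\delta$ that cannot be produced by $\psi^W,\psi^{W_1}$ alone, so any odd eigenfunction would have to involve $\psi^{\theta_0}$ or $\psi^{\theta_1}$, which are nonvanishing at $x=0$ --- a contradiction. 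This matching argument directly handles the joint limit $(\lambda,\mu)\to(0,0)$ without separating the two parameters.
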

\begin{proof}  We likewise write the  eigenvalue problem $ \mathcal{H}h - \lambda^2h = 0$  into the system

\begin{align}\label{subsec:syss2}
\begin{pmatrix}
	- \partial_x^2+ \mu	- 3 W^4 & - 2 W^4\\
	2 W^4 & \partial_x^2 - \mu + 3 W^4
\end{pmatrix} \cdot \begin{pmatrix}
	h_1\\
	h_2
\end{pmatrix} =  \lambda^2  \begin{pmatrix}
	h_1\\
	h_2
\end{pmatrix} 
\end{align}
Then, if prove by contradiction, there exist pairs $(\mu_n, \lambda_n)$ with $ \mu_n \to 0^+$ and $\mathcal{H}h_n = \lambda_n^2h_n$ where $ \mu_n \leq \lambda_n$. Note here $h_n = f_3(\lambda_n, \mu_n)$ is the odd eigenfunction.\\[2pt]
If we further assume $ \lambda_n^2 \geq c > 0$ the continuous dependence of $f_3(\lambda_n, \mu_n)$ gives an odd eigenfunction $f_3(\lambda, 0)$ at $\mu = 0$  with $\lambda^2 \geq c$ accumulation point of $(\lambda_n^2)_n$ and this contradicts Proposition \ref{subsec: embe}.\\[2pt]
Hence we assume $\lambda_n^2 \to 0^+$ and let $(\mu, \lambda)$ be any such pair with $ 0< \mu < \lambda^2 \ll1$. The system

\begin{align}\label{subsec:syss3}
\begin{pmatrix}
	- \partial_x^2	- 3 W^4 & - 2 W^4\\
	2 W^4 &  \partial_x^2  + 3 W^4
\end{pmatrix} \cdot \begin{pmatrix}
	h_1\\
	h_2
\end{pmatrix} =  \begin{pmatrix}
	\lambda^2 - \mu & 0 \\
	0 & \lambda^2 + \mu
\end{pmatrix} \cdot   \begin{pmatrix}
	h_1\\
	h_2
\end{pmatrix} 
\end{align}
may now be understood as a perturbation of the  operator on the left side. In particular we note
$$ \mathcal{W}(x) = x \cdot \mathcal{W}_0(x) = \begin{pmatrix} \tilde{W}(x)\\
- \tilde{W}(x)
\end{pmatrix}, ~~\Lambda\mathcal{W}(x) = x \cdot \mathcal{W}_1(x) = \begin{pmatrix}
\tilde{W}_1(x)\\
\tilde{W}_1(x)
\end{pmatrix}, $$  are independent odd solutions of	\begin{align}\label{subsec:syss4}
\begin{pmatrix}
	- \partial_x^2	- 3 W^4 & - 2 W^4\\
	2 W^4 &  \partial_x^2  + 3 W^4
\end{pmatrix} \cdot	\begin{pmatrix}
	h_1\\
	h_2
\end{pmatrix} = \begin{pmatrix}
	0\\
	0
\end{pmatrix},
\end{align}
and can be completed to a fundamental base by functions
$$ \Theta_1 = \begin{pmatrix}
\theta_1\\
\theta_1
\end{pmatrix} , \Theta_2  =  \begin{pmatrix}
\theta_0\\
-\theta_0
\end{pmatrix} ,~~\theta_0(x) = O(x), \theta_1(x) = O(x),~~|x| \to \infty.$$
We now consider the conjugated system to \eqref{subsec:syss3} 
\begin{align}\label{subsec:syss5}
\begin{pmatrix}
	0 & L_-\\
	-	 L_+ &  0 
\end{pmatrix} \cdot \begin{pmatrix}
	\tilde{h}_1\\
	\tilde{h}_2
\end{pmatrix} =  \begin{pmatrix}
	\lambda^2  & \mu\\
	\mu &  \lambda^2
\end{pmatrix} \cdot   \begin{pmatrix}
	\tilde{h}_1\\
	\tilde{h}_2
\end{pmatrix} ,
\end{align}
where $ L_- = - \partial_x^2 - W^4,~ L_+ = - \partial_x^2 - 5W^4$ with the fundamental base  at $ \lambda = \mu = 0$
\[\big\{
\begin{pmatrix}
0\\
\tilde{W}
\end{pmatrix},~\begin{pmatrix}
\tilde{W_1}\\
0
\end{pmatrix},~\begin{pmatrix}
0\\
\theta_0
\end{pmatrix},~ \begin{pmatrix}
\theta_1\\
0
\end{pmatrix}\big\}.
\]
\begin{Lemma} \label{subsec:des-Lema}There exist fundamental solutions $ \{ \tilde{h}^{W}(\lambda, \cdot),\tilde{h}^{W_1}(\lambda, \cdot), \tilde{h}^{\theta_0}(\lambda, \cdot), \tilde{h}^{\theta_1}(\lambda, \cdot)\}$ of \eqref{subsec:syss5} such that in an absolute sense
\begin{align}
	&	\tilde{h}^{W}(\lambda,x) = \tilde{W}(x) \sigma_1\underline{e} + x^{-1}\sum_{j = 1}^{\infty} (\lambda x)^{2j} \zeta_j(x),~~ \zeta_j = (\zeta_j^{(1)}, \zeta_j^{(2)})^t,\\
	&	\tilde{h}^{\theta_0}(\lambda,x) = \theta_0(x) \sigma_1\underline{e} + \sum_{j = 1}^{\infty} (\lambda x)^{2j} \eta_j(x),~~ \eta_j = (\eta_j^{(1)}, \eta_j^{(2)})^t.
\end{align}
with similar expansions  for  for $\tilde{h}^{W_1}(\lambda, \cdot)$ as in the first line and  $\tilde{h}^{\theta_1}(\lambda, \cdot) $ as in the second, replacing $\sigma_1 \underline{e}$ by $\underline{e}$. Further 
\begin{align}
	&	| \zeta_j(u) |  \leq \frac{C^j}{(j-1)!} |u|^2 \langle u \rangle^{-1},\\
	&	| \eta_j(u) |  \leq \frac{C^j}{(j-1)!}  \langle u \rangle^{-1},
\end{align}
with the same estimates for the coefficients of $\tilde{h}^{W_1},\tilde{h}^{\theta_1}$.
\end{Lemma}
For the proof of the first two $\tilde{h}^{W}, \tilde{h}^{W_1}$ we make the formal ansatz
\[
\tilde{h}^{W}(\lambda, x) =x^{-1} \sum_{j = 0}^{\infty}\lambda^{2j} \tilde{f}_j(x),~~\tilde{h}^{W_1}(\lambda, x) =x^{-1} \sum_{j = 0}^{\infty}\lambda^{2j} \tilde{g}_j(x),
\]
and justify convergence. We focus on $\tilde{h}^{W}(\lambda, x)$ since the argument for both is the same. Applying the system \eqref{subsec:syss5}, we have
\begin{align*}
&L(x^{-1}\tilde{f}_j) = A_{\mu \lambda} x^{-1}\tilde{f}_{j-1},~~\tilde{f}_{-1} = 0,~~ \tilde{f}_{0}(x) = x \cdot \tilde{W}(x) \sigma_1 \underline{e},\\
& L = 	\begin{pmatrix}
	0 & L_-\\
	-	 L_+ &  0 
\end{pmatrix},~~ A_{\mu \lambda} = 	\begin{pmatrix}
	1 & \frac{\mu}{\lambda^2}\\
	\frac{\mu}{\lambda^2} &  1 
\end{pmatrix},
\end{align*}
where the iteration is solved with boundary values $ \tilde{f}_j(0) = \tilde{f}_j'(0) = 0$. This leads to  
\begin{align}
L_-(x^{-1} \tilde{f}^{(2)}_j) = x^{-1} \tilde{f}^{(1)}_{j-1} + \frac{\mu}{\lambda^2 x} \tilde{f}^{(2)}_{j-1},~~~~L_+(x^{-1} \tilde{f}_j^{(1)}) = - \frac{\mu}{\lambda^2 x} \tilde{f}^{(1)}_{j-1} - x^{-1} \tilde{f}^{(2)}_{j-1}.
\end{align}
Thus 
\begin{align}
&\tilde{f}^{(2)}_j(x) = \int_0^x \frac{x}{s}[\tilde{W}(x) \theta_0(s) - \tilde{W}(s) \theta_0(x)  ](\tilde{f}^{(1)}_{j-1}(s) + \frac{\mu}{\lambda^2 } \tilde{f}^{(2)}_{j-1}(s))~ds,\\
& \tilde{f}_j^{(1)}(x) = - \int_0^x \frac{x}{s}[\tilde{W_1}(x) \theta_1(s) - \tilde{W_1}(s) \theta_1(x)  ] (\frac{\mu}{\lambda^2 } \tilde{f}^{(1)}_{j-1}(s) +  \tilde{f}^{(2)}_{j-1}(s))~ds.
\end{align}
Now we write the Greens functions more explicitly via
\begin{align*}
&[\tilde{W}(x) \theta_0(s) - \tilde{W}(s) \theta_0(x)  ] =  \frac{x(a_0 + a_2s^2) - s(a_0 + a_2 x^2)}{(1 + \frac{x^2}{3})^{\f12} (1 + \frac{s^2}{3})^{\f12}}\\
&[\tilde{W_1}(x) \theta_1(s) - \tilde{W_1}(s) \theta_1(x)  ] = \frac{1}{2} \frac{x(1- \frac{x^2}{3})(b_0 + b_2s^2 + b_4 s^4) - s(1- \frac{s^2}{3})(b_0 + b_2x^2 + b_4 x^4) }{(1 + \frac{x^2}{3})^{\f32} (1 + \frac{s^2}{3})^{\f32}},
\end{align*}
where we use
\[
\theta_0(x) = (a_0 + a_2 x^2) W(x),~~ \theta_1(x) = W^3(x)(b_0 + b_2 x^2 + b_4 x^4).
\]
Since $\tilde{f}_0(x) = O( x^2)$ if $ x \ll1 $ and  $\tilde{f}_0(x) = O( x)$ if $ x \gg1$, we infer inductively
\[
\tilde{f}_j(x)  = O(x^{2j + 2}),~~ x \ll1,~~\tilde{f}_j(x)  = O( x^{2j + 1}),~~ x \gg1,
\]
and $\tilde{f}_j$ is an absolute series  if $ x > -1$. Now for $\tilde{h}^{\theta_0}, \tilde{h}^{\theta_1}$ we make the formal ansatz
\[
\tilde{h}^{\theta_0}(\lambda, x) =  \sum_{j = 0}^{\infty}\lambda^{2j} \tilde{f}_j(x),~~\tilde{h}^{\theta_1}(\lambda, x) = \sum_{j = 0}^{\infty}\lambda^{2j} \tilde{g}_j(x),~~ \tilde{f}_0(x) =  \theta_0(x),~\tilde{g}_0(x) = \theta_1(x).
\]
and justify convergence. As above we infer for higher iterates
\begin{align}
&\tilde{f}^{(2)}_j(x) = \int_0^x [\tilde{W}(x) \theta_0(s) - \tilde{W}(s) \theta_0(x)  ](\tilde{f}^{(1)}_{j-1}(s) + \frac{\mu}{\lambda^2 } \tilde{f}^{(2)}_{j-1}(s))~ds,\\
& \tilde{f}_j^{(1)}(x) = - \int_0^x [\tilde{W_1}(x) \theta_1(s) - \tilde{W_1}(s) \theta_1(x)  ] (\frac{\mu}{\lambda^2 } \tilde{f}^{(1)}_{j-1}(s) +  \tilde{f}^{(2)}_{j-1}(s))~ds.
\end{align}
Hence $\tilde{f}_j$ decays like $x^{2j}$ at $ x =0$ and grows like $ x^{2j + 1}$ as $ x \to \infty$. This establishes the Lemma.\\[3pt]
The solutions of Lemma \ref{subsec:des-Lema} can be transitioned to the system  \eqref{subsec:syss3} via the usual conjugation and we call these solutions
\[
\big\{ \psi^{W}(\lambda, \cdot), \psi^{W_1}(\lambda, \cdot), \psi^{\theta_0}(\lambda, \cdot), \psi^{\theta_1}\big\}.
\]
We now make the the claim that odd eigenfunctions can not belong to the hull
\[
\left< \begin{pmatrix}
\psi_1^{W}(\lambda, \cdot)\\
\psi_2^{W}(\lambda, \cdot)
\end{pmatrix} , \begin{pmatrix}
\psi_1^{W_1}(\lambda, \cdot)\\
\psi_2^{W_1}(\lambda, \cdot)
\end{pmatrix}  \right>.
\]
Assume otherwise, then by the asymptotics of Jost function in Section \ref{subsec: Scat} we  infer
\[
e^{- \sqrt{\mu + \lambda^2}x} \sigma_1 \underline{e} + O(\langle x\rangle^{-2}) = \alpha \psi^{W}(x) + \beta \psi^{W_1}(x),
\]
where the $O(\langle x\rangle^{-2}) $ is uniform in $ \mu, \lambda$. Therefore, restricting to the region where  $ x \sim \delta \lambda^{-1}$ for some $ \lambda^{\f14} \ll \delta \ll1$, we have 
\begin{align}\label{subsec:dde2}
\alpha \begin{pmatrix}
	\tilde{W_1}\\
	\tilde{W_1}
\end{pmatrix} + \beta \begin{pmatrix}
	\tilde{W}\\
	- \tilde{W}
\end{pmatrix} = \begin{pmatrix}
	0\\1
\end{pmatrix} + O(\delta^2).
\end{align}
From the asymptotics on the right as $ x \gg1 $ we have 
\begin{align} \label{subsec:dde}
\alpha + \beta = O(\delta^2),~~ \alpha - \beta = 1 + O(\delta^2).
\end{align}
Expanding the exponential eigenfunction we use
\[
e^{- \sqrt{\mu + \lambda^2}x} \begin{pmatrix}
0\\
1
\end{pmatrix} + O(\langle x\rangle^{-2})  = \begin{pmatrix}
0\\
1
\end{pmatrix} - x \sqrt{\mu + \lambda^2} + O(\delta^2 + \frac{\delta^2 \mu}{\lambda^2}),
\]
and hence from above \eqref{subsec:dde2} and \eqref{subsec:dde} we have
\[
\begin{pmatrix}
0\\
1
\end{pmatrix}+ O(\delta^2) = \begin{pmatrix}
0\\
1
\end{pmatrix} - x \sqrt{\mu + \lambda^2}         + O(\delta^2),
\]
where $  x \sqrt{\mu + \lambda^2}  = \delta \sqrt{\frac{\mu}{\lambda^2} +1} \sim \delta $ leads to a contradiction choosing $(\mu, \lambda)$ and $ \delta \ll1 $ small. Hence the eigenfunction involves linear combinations of $ \psi^{\theta_0}, \psi^{\theta_1}$ which are not vanishing as $ x \to 0$. 
\end{proof} 
\begin{Prop}\label{Thres-is-no-res-dist-op} For $ 0 < \mu \ll1 $ small enough the operators $ (- \partial_x^2 + \mu) \sigma_3 + V(x)$ satisfy $ ( - \mu, \mu) \cap \sigma_d(\mathcal{H}) = \{0\}$ and  the threshold $\pm \mu$ is not a resonance.
\end{Prop}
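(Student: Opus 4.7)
The plan is to argue by contradiction on both claims, leveraging the $\mu=0$ structure established earlier: Proposition \ref{subsec: embe} (no embedded eigenvalues), the coercivity Lemma \ref{mod-Dm-Lem}, and the fact that $x\mathcal{W}_0(x), x\mathcal{W}_1(x)$ span the odd bounded solutions of $\mathcal{H}_0 f = 0$. The two isolated eigenvalues $\pm i\kappa$ of $\mathcal{H}_0$ are simple and separated from the imaginary axis interval around $0$, so by standard analytic perturbation theory they deform to simple eigenvalues of $\mathcal{H}_\mu$ in a neighborhood of $\pm i\kappa$ for $\mu$ small, well outside $(-\mu,\mu)$. Hence only the origin and the thresholds $\pm \mu$ need attention.

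For the exclusion of a real eigenvalue $z_n \in (-\mu_n,\mu_n)\setminus\{0\}$ along some $\mu_n \to 0^+$, I would write $z_n = \lambda_n^2 + \mu_n$ with $\lambda_n \in i\R$, $|\lambda_n|<\sqrt{2\mu_n}$, and let $h_n \in L^2_{\mathrm{odd}}$ denote the normalized eigenfunction. Projecting $h_n$ onto the orthogonal complement of $\{\mathcal{W}_0, \mathcal{W}_1, \phi_+, \phi_-\}$ gives $g_n \in G_\perp$ with $h_n - g_n$ small in the limit. Testing $\mathcal{H}_\mu h_n = z_n h_n$ against $\sigma_3 h_n$ and using Lemma \ref{mod-Dm-Lem} yields $c\,\|g_n\|_{\dot H^1}^2 \le \mathcal{B}(g_n,g_n) = z_n\langle h_n,\sigma_3 h_n\rangle + O(\mu_n)$, so $\|g_n\|_{\dot H^1}\to 0$. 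The mass of $h_n$ therefore concentrates on the span of $\mathcal{W}_0,\mathcal{W}_1,\phi_\pm$; but $\phi_\pm$ cannot contribute (they are eigenfunctions for $\pm i\kappa$, not $0$), and $\mathcal{W}_0,\mathcal{W}_1$ are not $L^2$, so the $L^2$ mass collapses, contradicting $\|h_n\|_{L^2}=1$.

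The harder part is ruling out the threshold resonance at $+\mu$ (the case $-\mu$ follows from $\sigma_1\mathcal{H}\sigma_1 = -\mathcal{H}$). A resonance would be a bounded non-$L^2$ odd solution $h$ of $\mathcal{H}_\mu h = \mu h$, corresponding to $\lambda=0$ in the Jost parametrization. The natural tool is Lemma \ref{subsec:des-Lema}, which at $\lambda = 0$ produces four linearly independent solutions $\psi^{W}, \psi^{W_1}, \psi^{\theta_0}, \psi^{\theta_1}$ of the perturbed static system, reducing to $\mathcal{W}, \Lambda\mathcal{W}, \Theta_1, \Theta_2$ as $\mu \to 0^+$. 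I would follow the scheme in the proof of Proposition \ref{No-embedded-for-dist-op}: the Jost solution $f_3(\cdot, 0, \mu) \sim e^{-\sqrt{2\mu}\,x}\sigma_1\underline{e}$ must coincide on $x>0$ with some linear combination $\alpha \psi^{W} + \beta \psi^{W_1} + \gamma \psi^{\theta_0} + \delta \psi^{\theta_1}$; boundedness at $+\infty$ forces $\gamma, \delta$ to be tied to $\alpha,\beta$ via Wronskian identities, and an identical reduction at $-\infty$ with $g_3(\cdot,0,\mu)$ gives analogous constraints. Matching on the intermediate window $|x|\sim \delta/\sqrt{\mu}$ with $\mu^{1/4}\ll\delta\ll 1$, expanding $e^{-\sqrt{2\mu}x} = 1 - x\sqrt{2\mu} + O(\delta^2)$ and using $\alpha+\beta = O(\delta^2)$, $\alpha-\beta = 1 + O(\delta^2)$, yields a contradiction of the form $0 = -\delta\sqrt{2} + O(\delta^2)$, so the $\psi^{\theta_0}, \psi^{\theta_1}$ components must be nonzero; but these grow linearly at infinity, breaking boundedness.

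The main obstacle I anticipate is the delicate bookkeeping of the $(\lambda,\mu)$-dependence when $\lambda\to 0$ and $\mu\to 0^+$ simultaneously: Lemma \ref{subsec:des-Lema} is stated in powers of $(\lambda x)^{2}$ with coefficients depending on $\mu/\lambda^2$, while the threshold is precisely $\lambda=0$, so a separate expansion purely in $\mu$ at $\lambda=0$ must be carried out, tracking error sizes $\sqrt{\mu}, \mu, \mu|\log\mu|$ in the matching region. Once this is in place, the rigidity of the asymptotic matching will enforce that no consistent pair $(\alpha,\beta,\gamma,\delta)$ exists for $\mu>0$ small, completing the exclusion of the threshold resonance and hence the proposition.
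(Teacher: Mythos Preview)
Your proposal diverges substantially from the paper's argument, and both parts have genuine gaps.

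For the eigenvalue claim, the paper does \emph{not} use the coercivity Lemma~\ref{mod-Dm-Lem}. Instead it passes to the scalar factorization $L_-L_+\psi = e\psi$ with $e\in(0,\mu^2)$, introduces the auxiliary operator $A = PL_+P$ where $P$ projects off $\chi_\epsilon\tilde W_1$, and shows by a direct analysis of the function $h(\lambda)=\langle (L_+-\lambda)^{-1}\chi_\epsilon\tilde W_1,\chi_\epsilon\tilde W_1\rangle$ that $A$ has exactly two eigenvalues below $\mu$. The contradiction comes from exhibiting a three-dimensional test space $\{\psi,\tilde\eta,\chi_\epsilon\tilde W_1\}$ on which $\langle Af,f\rangle<\mu\|f\|^2$, forcing a third eigenvalue by min-max. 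This is the Krieger--Schlag / Perelman mechanism referenced at the start of the proof. Your route via $\mathcal B(g_n,g_n)\to 0$ gives $\|g_n\|_{\dot H^1}\to 0$, but the conclusion ``mass collapses'' does not follow: in three dimensions $\dot H^1$ embeds into $L^6$, not $L^2$, and the decomposition $h_n = g_n + a_n\mathcal W_0 + b_n\mathcal W_1 + c_n\phi_+ + d_n\phi_-$ is a $\dot H^1$ decomposition in which $g_n$ necessarily carries the non-$L^2$ tail cancelling $a_n\mathcal W_0+b_n\mathcal W_1$. Nothing prevents the $L^2$ mass of $h_n$ from spreading out as $\mu_n\to 0$ with $\|h_n\|_{L^6}\to 0$ but $\|h_n\|_{L^2}=1$.

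For the resonance claim, the adaptation of Proposition~\ref{No-embedded-for-dist-op} fails more seriously. That argument is tailored to an $L^2$ eigenfunction, which at $+\infty$ coincides with $f_3\sim e^{-\sqrt{\mu+\lambda^2}x}\sigma_1\underline e$; the matching on $x\sim\delta\lambda^{-1}$ uses precisely that the leading profile is $\binom{0}{1}$. A threshold resonance is by definition bounded but not $L^2$: at $+\infty$ it must contain a nonzero $f_1(\cdot,0)\sim\binom{1}{0}$ component, so the leading profile is $c_+\binom{1}{0}$, not $\binom{0}{1}$. The constraints $\alpha+\beta=O(\delta^2)$, $\alpha-\beta=1+O(\delta^2)$ you write down come from matching against $\binom{0}{1}$ and are simply wrong for a resonance. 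Moreover, as you note yourself, Lemma~\ref{subsec:des-Lema} is an expansion in $(\lambda x)^2$ with coefficients depending on $\mu/\lambda^2$ and gives no information at $\lambda=0$; building a new expansion in $\mu$ alone is essentially a fresh problem. The paper instead reuses the same min-max machinery: it sets $\psi^\epsilon=\chi_\epsilon\psi-(\text{projection})$, computes $\langle(L_+-\mu)\psi,\psi\rangle+\langle f,f\rangle<0$ via the identity $\langle f+\psi,f\rangle=-\mu^{-1}\|(L_+-\mu)\psi\|^2+\langle\psi,(L_+-\mu)\psi\rangle$, and again obtains three test directions below $\mu$ for $A$.
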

\begin{proof} We present a modified version of the argument in \cite[Section 9, properties (i) \& (ii)]{K-S-stable}, which  first appeared in similar form in the work \cite{Pere}.\\[2pt] Let $ \chi \in C^{\infty}(\R)$ be a real, even cut-off function  with $ 0 \leq \chi \leq 1$, i.e. we set $ \chi(x) = 1 $ if $ |x| \leq1 $,~ $\chi(x) = 0$ if $ |x| \geq 2$ and  $ \chi_{\epsilon}(x) = \chi(\epsilon x)$. Then assume the intersection is not zero. We  find an eigenvalue $ e \in (0, \alpha^2)$ of $ \mathcal{H}^2$  and hence from the conjugated form 
\[  L_- L_+ \psi = e \psi,~ \psi \neq 0,~ \psi \in L^2(\R),~\text{odd},\]
where $ L_+ = - \partial_x^2 + \mu - 5W^4,~L_- = - \partial_x^2 + \mu - W^4$.
Let us define $P $ to be the orthogonal projection
\[
Pf = f - \frac{\langle f, \chi_{\epsilon}\tilde{W_1} \rangle }{\|  \chi_{\epsilon}\tilde{W_1} \|_{L^2}^2}\chi_{\epsilon}\tilde{W_1} ,~~ f \in L^2_{\text{odd}}(\R)
\]
and the operator $A = P L_+ P $. Clearly we have $ \chi_{\epsilon } \tilde{W_1 } \in \ker(A)$ and $ \lambda = 0$ is an eigenvalue of $A$. For  $ 0 < \mu \ll1$ small we let $ e_0 \in (-\infty, 0)$ be the ground state of $L_+$, i.e.  $ L_+ g = e_0 g$ for some $ g \in L^2_{\text{odd}}$.
Now we define the function 
\[
h(\lambda) : = \langle (L_+ - \lambda )^{-1} \chi_{\epsilon} \tilde{W_1}, \chi_{\epsilon} \tilde{W_1} \rangle_{L^2},~~ \lambda \in (e_0, \mu).
\]
\begin{Lemma} We can choose $\chi$ and take $ 0 < \epsilon \ll1 $ small enough such that there holds 
\begin{align*}
	& h(0 ) > 0,~~ h'(\lambda) = \langle (L_+ - \lambda)^{-2}\chi_{\epsilon} \tilde{W_1},\chi_{\epsilon} \tilde{W_1} \rangle > 0,~ \forall \lambda,~~ \lim_{\lambda \to e_0}h(\lambda) = - \infty.
\end{align*}
\end{Lemma}
For the proof we note $L_+ \tilde{W_1} = 0$ and  calculate
\begin{align*}
&(L_+ - \lambda)(\chi_{\epsilon}\tilde{W_1}) = (\mu - \lambda)\chi_{\epsilon}\tilde{W_1} - \epsilon^2 \chi''(\cdot \epsilon)\tilde{W_1}  - 2 \epsilon \chi'(\cdot \epsilon) \tilde{W_1}'\\
&(L_+ - \lambda)^2(\chi_{\epsilon}\tilde{W_1}) = (\mu - \lambda)^2 \chi_{\epsilon}\tilde{W_1} - \epsilon^2 (\mu - \lambda)  \chi''(\cdot \epsilon)\tilde{W_1} + \epsilon^4  (\chi''(\cdot \epsilon))^2\tilde{W_1}\\
&~~~~~~~~~~~~~~~~~~~~~~~~~ + 2 \epsilon^3  \chi''(\cdot \epsilon)  \chi'(\cdot \epsilon)\tilde{W_1}' - 2 \epsilon (\mu - \lambda)  )  \chi'(\cdot \epsilon) \tilde{W_1}'  + 10 \epsilon W^4   \chi'(\cdot \epsilon)\tilde{W_1}'\\
&~~~~~~~~~~~~~~~~~~~~~~~~~ + 2 \epsilon  ( \chi'(\cdot \epsilon)\tilde{W_1}' )''.
\end{align*}
Solving these equations with appropriate boundary condition at $ x = 0$ we infer
\begin{align*}
h(0) = \langle L_+^{-1}(  \chi_{\epsilon}\tilde{W_1} ), \chi_{\epsilon}\tilde{W_1} \rangle = & \frac{1}{\mu} \|  \chi_{\epsilon}\tilde{W_1}  \|_{L^2}^2 + \frac{\epsilon^2}{\mu} \langle L_+^{-1}(  \chi''(\cdot \epsilon)\tilde{W_1} ), \chi_{\epsilon}\tilde{W_1} \rangle\\
& + \frac{2 \epsilon}{\mu}   \langle L_+^{-1}(  \chi'(\cdot \epsilon)\tilde{W_1}' ), \chi_{\epsilon}\tilde{W_1} \rangle,
\end{align*}
where the latter two terms are shown to be bounded as $ \epsilon \to 0^+$.
Moreover we similarly have
\begin{align*}
h' (\lambda) = \frac{1}{(\mu - \lambda)^2} \big(\|  \chi_{\epsilon}\tilde{W_1}  \|_{L^2}^2 + O(1) \langle \mu - \lambda \rangle \big).
\end{align*}
For the last property in the Lemma we note 
\[
\langle g,  \chi(\cdot \epsilon)\tilde{W_1}\rangle = \frac{1}{e_0 - \mu} \big(\epsilon^2 \langle g,  \chi''(\cdot \epsilon)\tilde{W_1} \rangle  + 2 \epsilon \langle g, \chi'(\cdot \epsilon) \tilde{W_1}'\rangle \big),
\]
which we want to be non-vanishing by choice of $\chi$. Thus, since then
\[
\chi_{\epsilon}\tilde{W_1} =  P_{g^{\perp}} \chi_{\epsilon}\tilde{W_1} + \frac{\langle g,  \chi_{\epsilon}\tilde{W_1}\rangle g}{\|  g \|_{L^2}^2} 
\]
gives a nontrivial contribution, we are done  since $L_+g = e_0g$ and $h$ is strictly increasing.\\[2pt]
Now, especially, by the above Lemma $h$ has a unique zero $\lambda_1 \in (e_0, 0)$. We set $ \tilde{\eta} := (L_+ - \lambda)^{-1}(\chi_{\epsilon}\tilde{W_1})$, then clearly $ \lambda_1$ is a simple eigenvalue for $ A$, i.e.
\[
A \tilde{\eta}  = P L_+P \tilde{\eta} = \lambda_1 \tilde{\eta},~~ \langle \tilde{\eta},\chi_{\epsilon}\tilde{W_1}\rangle = 0.
\]
Moreover any $f \in \ker(A)$ independent of $ \chi_{\epsilon } \tilde{W_1} $ satisfies $	P L_+ Pf = 0 $ and thus $ 	 L_+ Pf = \tilde{\lambda} \chi_{\epsilon} \tilde{W_1} $. Solving this with appropriate boundary values at $ x = 0$ gives 
\begin{align*}
0 =  \langle Pf, \chi_{\epsilon}\tilde{W_1} \rangle = \tilde{\lambda}\langle L_+^{-1}(\chi_{\epsilon}\tilde{W_1}), \chi_{\epsilon}\tilde{W_1} \rangle = \tilde{\lambda} h(0),
\end{align*}
whence $ Pf \in \ker(L_+)$ and thus  $ \tilde{L}_+Pf = (L_+ - \mu) Pf = - \mu Pf$. However $\lambda = - \mu$ with $ 0 < \mu \ll1$ can not be an eigenvalue below zero and above the ground state of $\tilde{L}_+$. Therefore $A$  has exactly two simple eigenvalues  $\lambda_1$ and $0$ in $(-\infty, \mu)$. Similar to \cite{K-S-stable}, we check that $\{ \psi, \tilde{\eta}, \chi_{\epsilon}\tilde{W_1} \}$ are independent. Take
\[
c_1 \psi + c_2 \tilde{\eta} + c_3 \chi_{\epsilon}\tilde{W_1} = 0.
\]
Applying $L_+$ and multiplying by $ \overline{\psi}$ and $\overline{ \tilde{\eta}}$ on the right, we infer the system
\begin{align}\label{syyyyw}
\begin{pmatrix}
	e \langle L_-^{-1} \psi, \psi\rangle  & \lambda_1 \langle \tilde{\eta}, \psi \rangle \\
	\lambda_1 \langle  \psi,  \tilde{\eta} \rangle & \lambda_1 \langle \tilde{\eta}, \tilde{\eta}\rangle 
\end{pmatrix} \cdot \begin{pmatrix}
	c_1\\ c_2
\end{pmatrix} = - c_3  \epsilon \begin{pmatrix} \epsilon \langle  \chi''(\cdot \epsilon)\tilde{W_1}  + 2  \chi'(\cdot \epsilon) \tilde{W_1}', \psi \rangle\\
	\epsilon \langle  \chi''(\cdot \epsilon)\tilde{W_1}  + 2  \chi'(\cdot \epsilon) \tilde{W_1}', \tilde{\eta}\rangle
\end{pmatrix}.
\end{align}
We check $\det(\cdot)< 0$ and  there are $a = O(1), b= O(1)$ as $\epsilon \to 0^+$ with
$
\begin{psmallmatrix}
c_1\\c_2
\end{psmallmatrix} = c_3 \epsilon \begin{psmallmatrix}
a\\b
\end{psmallmatrix}.
$ Thus 
\[
c_3( a \epsilon \psi + b \epsilon \tilde{\eta} + \chi_{\epsilon}\tilde{W_1}) = 0.
\]
Now suppose $ c_3 \neq 0$, then multiplying by $ \chi_{\epsilon}\tilde{W_1}$ gives 
\begin{align} \label{we} a \epsilon  \langle \psi, \chi_{\epsilon}\tilde{W_1} \rangle = - \|  \chi_{\epsilon}\tilde{W_1}\|_{L^2}^2. 
\end{align} Then we may see 
\begin{align*} e \langle \psi, \chi_{\epsilon}\tilde{W_1} \rangle =& \langle L_+\psi, L_-(\chi_{\epsilon}\tilde{W_1}) \rangle\\
=& \mu \langle L_+\psi, \chi_{\epsilon}\tilde{W_1} \rangle -  \langle L_+\psi, (\chi_{\epsilon}\tilde{W_1})''\rangle  -  \langle L_+\psi, W^4\chi_{\epsilon}\tilde{W_1} \rangle. \\
\mu \langle L_+\psi, \chi_{\epsilon}\tilde{W_1} \rangle =& \mu \langle \psi, L_+( \chi_{\epsilon}\tilde{W_1}) \rangle.
\end{align*}
Calculating these terms, we infer the left side of \eqref{we} to be well controlled as $ \epsilon \to 0^+$ whereas the right side grows. Hence $ c_3 = 0$, and by \eqref{syyyyw} also $c_1 = c_2 = 0$.
As in \cite{K-S-stable}, we check (we omit further details) 
\[
\sup_{\| f\|_{L^2} =1,~ f \in \big \langle \psi, \tilde{\eta}, \chi_{\epsilon}\tilde{W}_1 \big \rangle  } \langle Af, f\rangle < \mu,
\]
which, by the min-max method, implies the existence of three eigenvalues of $A$  below $\mu$.\\[2pt]
Now concerning the resonance, we assume the existence by contradiction and infer
\[
L_-L_+\psi = \mu^2 \psi,~~ \psi  \in L^{\infty} \backslash L^2,~~ \psi(x) = C_{\pm} + O(\langle x\rangle^{-2}),~~x \to \pm \infty,
\]
with differentiable asymptotics according to Section \ref{subsec: Scat}.
Let $ 0 < \epsilon_0 \ll1$ as required above and $ \tilde{W}_0 : = \chi_{\epsilon_0}\tilde{W}_1$. For $ 0 < \epsilon \ll1 $ we set
\[
\psi^{\epsilon} = \psi \chi_{\epsilon} - \frac{\langle \psi \chi_{\epsilon}, \tilde{W}_0 \rangle}{\|  \tilde{W_0} \|_{L^2}^2} \tilde{W}_0,
\]
and hence $\langle \psi^{\epsilon},\tilde{W}_0 \rangle = 0$. For all $\delta > 0$ we may choose $ 0 < \epsilon_0 \ll1$ small depending only on $ \tilde{W}_1$.  Then let $ \epsilon = \epsilon(\epsilon_0) \ll1 $ such that
\[
\tilde{h}(\epsilon, \epsilon_0): = \frac{\langle \psi \chi_{\epsilon}, \tilde{W}_0 \rangle}{\|  \tilde{W}_0 \|_{L^2}^2} < \delta.
\]
This is seen as above when replacing $ \psi$ by $\frac{1}{\mu} L_-L_+\psi$.  We now calculate
\begin{align*}
\langle L_+ \psi^{\epsilon}, \psi^{\epsilon} \rangle &= \mu \|  \psi^{\epsilon} \|_{L^2}^2  + \langle (L_+ - \mu) \psi^{\epsilon}, \psi^{\epsilon} \rangle\\
&= \mu \|  \psi^{\epsilon} \|_{L^2}^2 + \|   \partial_x \psi^{\epsilon}  \|_{L^2}^2 - 5 \int \chi_{\epsilon}(x) W^4(x) |\psi^{\epsilon}(x)|^2~dx\\
&=  \mu \|  \psi^{\epsilon} \|_{L^2}^2 + \|   \partial_x \psi \|_{L^2}^2 + o(1)  - 5 \int W^4(x) |\psi^{\epsilon}(x)|^2~dx + 5 \int (1 -\chi_{\epsilon} (x))W^4(x) |\psi^{\epsilon}(x)|^2~dx\\
&= \mu \|  \psi^{\epsilon} \|_{L^2}^2 + \|   \partial_x \psi \|_{L^2}^2  - 5 \int W^4(x) |\psi(x)|^2~dx + o(1). 
\end{align*}
Following once more integration by parts  with $\psi$ instead of $\psi^{\epsilon}$, we infer
\[
\langle L_+ \psi^{\epsilon}, \psi^{\epsilon} \rangle = \mu \|  \psi^{\epsilon} \|_{L^2}^2  + \langle (L_+ - \mu) \psi, \psi \rangle + o(1),~\epsilon \to 0^+.
\]
Define $ f : = (L_+ - \mu )\psi$. Then $ f + \psi  =  - (L_- - \mu)^{-1}f$ and 
\[
\langle f + \psi, f \rangle = \langle (L_- - \mu)^{-1}f, f\rangle  = \langle (L_- - \mu)^{-1}(L_+ - \mu)\psi, (L_+ - \mu)\psi \rangle.
\]
We calculate $ (L_- - \mu)^{-1}(L_+ - \mu) \psi = - \frac{1}{\mu}(L_+ - \mu)\psi + \psi$ and hence
\begin{align} \label{die-in}
\langle f + \psi, f \rangle  = - \frac{1}{\mu} \| (L_+ - \mu) \psi \|_{L^2}^2 + \langle \psi, (L_+ - \mu)\psi \rangle < 0,
\end{align}
since $(L_+ - \mu)\psi = \psi''  - 5W^4\psi = O(x^{-4})$ and we choose $ 0 < \mu \ll1$. We now check as above that the set
$
\big\{ \psi^{\epsilon}, \tilde{\eta}, \tilde{W}_0\big\}
$ is linearly independent if we choose $ 0 < \epsilon  \ll1 $ small. Therefore, we need to use $\langle L_+ \psi^{\epsilon}, \psi^{\epsilon} \rangle \to \infty $ as $ \epsilon \to 0^+$ in order to obtain $\det(\cdot) < 0$.  Then the above argument works immediately as above without re-choosing $\epsilon_0 > 0$  since $\psi^{\epsilon} \perp \tilde{W}_0$. Similarly we need to see for perhaps even smaller  $ 0 < \epsilon  \ll1 $  there holds
\[
\sup_{\| f \|_{L^2}=1,~ f \in \big< \psi^{\epsilon}, \tilde{\eta}, \tilde{W}_0  \big>} \langle Af, f \rangle < \mu,
\]
where $P = P_{\tilde{W}_0^{\perp}}$ and $ A = P L_+ P$ as above. Then $A$ has three eigenvalues in $( - \infty, \mu)$ which is wrong.\\[2pt]
Now let $f $ in $L^2(\R)$ be such a linear combination and since $ \psi^{\epsilon}, \tilde{\eta} \perp \tilde{W}_0$ we may assume $ f \perp \tilde{W}_0$, so $f = c_1 \psi^{\epsilon } + c_2 \tilde{\eta}$. Then 
\begin{align}
&\frac{\langle L_{+}(c_1 \psi^{\epsilon } + c_2 \tilde{\eta} ), c_1 \psi^{\epsilon } + c_2 \tilde{\eta} \rangle}{\| c_1 \psi^{\epsilon } + c_2 \tilde{\eta} \|_{L^2}^2}\\
&= \frac{|c_1|^2(\mu \|  \psi^{\epsilon}\|_{L^2}^2 +\langle (L_+ - \mu) \psi, \psi \rangle + o(1)) + 2 \lambda_1 \Re(\langle c_1 \psi^{\epsilon}, c_2 \tilde{\eta}\rangle ) +  \lambda_1 \|c_2 \tilde{\eta}\|_{L^2}^2 }{|c_1|^2 \|\psi^{\epsilon}\|_{L^2}^2 + 2 \Re(\langle c_1 \psi^{\epsilon}, c_2 \tilde{\eta}\rangle ) + \| c_2 \tilde{\eta}\|_{L^2}^2}\\
& \leq \max_{x \in \C^2}  \frac{|x_1|^2(\mu + \delta^2 \langle (L_+ - \mu) \psi, \psi \rangle + o(\delta^2)) + 2 \lambda_1 \delta \Re(\langle x_1 \psi^{\epsilon}, x_2 e \rangle ) +  \lambda_1|x_2|^2 }{|x_1|^2  + 2 \delta \Re(\langle x_1 \psi^{\epsilon}, x_2 \tilde{\eta}\rangle ) + |x_2|^2}
\end{align}
where $ \delta^2 : =  \|  \psi^{\epsilon}\|_{L^2}^{-2}$ and $e : = \frac{\tilde{\eta}}{\| \tilde{\eta} \|_{L^2}}$. We set $b^{\epsilon}:= \langle \psi^{\epsilon}, e \rangle \to \langle \psi, e \rangle =: b $ as $\epsilon \to 0^+$ by sufficient decay of  $e$. Further we identify the two Hermitian $2 \times 2$ matrices $B^{\epsilon},C^{\epsilon}$ via 
\begin{align*}
&C_{11}^{\epsilon} : = \mu + \delta^2\langle (L_+ - \mu) \psi, \psi \rangle + o(\delta^2),\\ &C^{\epsilon}_{12} = C^{\epsilon}_{2 1} := \lambda_1 \delta b^{\epsilon},~~ C_{22}^{\epsilon} : = \lambda_1\\[2pt]
&B^{\epsilon}_{12} = B^{\epsilon}_{2 1}  : = \delta b^{\epsilon},\\
&B^{\epsilon}_{11} = B^{\epsilon}_{2 2}  : = 0.
\end{align*}
Then we obtain for the maximum above and by definition of $B^{\epsilon},C^{\epsilon}$
\begin{align*}
\max_{f} \frac{\langle Af, f \rangle}{\langle f , f \rangle} \leq \max_{x \in \C^2} \frac{\langle C^{\epsilon}x, x \rangle }{\langle (I + B^{\epsilon})x,x\rangle} = \max_{x \in \C^2} \frac{\langle (I + B^{\epsilon})^{- \f12}C^{\epsilon}(I + B^{\epsilon})^{- \f12}x, x \rangle }{\langle x,x\rangle},
\end{align*}
for which we use the following expansion
\[
(I + B^{\epsilon})^{- \f12}C^{\epsilon}(I + B^{\epsilon})^{- \f12}  = C - \f12(CB + BC) + \frac{3}{8}(CB^2 + B^2 C) + \f14 BCB + O(\delta^3).
\]
In particular, by direct calculation  the right side has the form (dropping $\epsilon$)
\begin{align*}
&\begin{pmatrix}
	(1 + \f34(b \delta)^2) C_{11} - \f34 \lambda_1 (b \delta)^2 & \lambda_1 b \delta(\f12 + (b \delta)^2) - \f12 (b \delta) C_{11}\\
	\lambda_1 b \delta(\f12 + (b \delta)^2) - \f12 (b \delta) C_{11} & \lambda(1 - \f14 (b \delta)^2) + (b \delta)^2 C_{11}
\end{pmatrix} + O(\delta^3)\\
&~~~~~~~~~~~\hspace{3cm}~~ = \begin{pmatrix}
	\mu + \delta^2 M & \delta b( \frac{\lambda_1}{2} - \frac{\mu}{2})\\[2pt]
	\delta b( \frac{\lambda_1}{2} - \frac{\mu}{2}) & \lambda_1 + (b\delta)^2 ( \f14\mu - \f14 \lambda_1)
\end{pmatrix}  + o(\delta^2) =: D,
\end{align*}
where $ M = \langle (L_+ - \mu) \psi, \psi \rangle + \f34 b^2( \mu - \lambda_1) $. Here, the right side $D$ has simple  eigenvalues $\mu, \lambda_1$ at $ \delta = 0$.  In fact, we may observe this setting $b_1 = 0,~ b_2 = b$ in  the proof in \cite{K-S-stable}. Now for $ 0 < \delta \ll1 $ the maximal eigenvalue $\lambda \sim \mu + x$ where $ 0 < x \ll1 $ is small.  By the calculation of
\[
\det(D - (1+x) I_{2 \times 2}) = 0,
\]
we have  (we only estimate the first term $ < 0$)  since $\lambda_1 < 0$
\begin{align*}
(\mu - \lambda_1)x =~& - x^2 (b \delta)^2 \f14(\mu - \lambda_1) + x(b \delta)^2 \f14 (\mu - \lambda_1)(\delta^2 M + o(\delta^2)) \\
&~~~~+ (\mu - \lambda_1) (\delta^2M + o(\delta^2)) + \big(\delta b \f12 (\mu - \lambda_1) + o(\delta^2) \big)^2\\
& \leq  \delta^2 [ \langle (L_+ - \mu) \psi, \psi \rangle( \mu - \lambda_1) + (\mu - \lambda_1)^2 b^2] + o(\delta^2)\\
&=   \delta^2 [ \langle (L_+ - \mu) \psi, \psi \rangle( \mu - \lambda_1) + (\mu - \lambda_1)^2 (b^0)^2] + o(\delta^2)
\end{align*}
Finally we use $ b^0 = - (\mu - \lambda_1)^{-1} \langle f, e \rangle$ and hence  by definition of $ f$
\begin{align*}
(\mu - \lambda_1)x  &\leq  \delta^2( \mu - \lambda_1) [ \langle (L_+ - \mu) \psi, \psi \rangle + \langle f, e \rangle^2 ] + o(\delta^2)\\
& \leq   \delta^2( \mu - \lambda_1) [ \langle (L_+ - \mu) \psi, \psi \rangle + \langle f, f \rangle ] + o(\delta^2)\\
& \leq  \delta^2( \mu - \lambda_1) \langle f + \psi, f \rangle  + o(\delta).
\end{align*}
We know that this upper bound is $ < 0$ if $ 0 < \delta \ll1 $ is small, which it is for $ 0 < \epsilon \ll1 $ small.
\end{proof}

\vspace{1cm}

\appendix

\section{Asymptotics for singular ODE} \label{sec:SingODE}
We start with a differential equation on the complex plane with a singularity at $ z = \infty$, i.e. we consider
\begin{align}\label{ode}
	&\partial_z^2 w + b(z) \partial_z w + c(z) w = 0,~~ z \in \C,
\end{align}
where $ b,c$ are non-constant analytic functions such that
\begin{align}
	\label{exp-now}
	&b(z) = \sum_{k \geq 0} b_k z^{-k},~~c(z) = \sum_{k \geq 0} c_k z^{-k},~~~ |z| > R,
\end{align}
holds in an absolute sense for some $ R > 0$. Substitution and using \eqref{exp-now}  leads to the formal ansatz
\begin{align}\label{as-here}
	w(z) = e^{\lambda \cdot z} z^{\gamma} ( a_0 + a_1 z^{-1} + a_2 z^{-2} + \dots),~~ \lambda, \gamma \in \C,
\end{align}
and  hence we infer, evaluating partial sums,
\begin{align}\label{first-here}
	& \lambda^2 + b_0 \lambda + c_0 = 0\\[2pt]\label{second-here}
	& (b_0 + 2 \lambda) \gamma  + b_1 \lambda + c_1 = 0\\[2pt] \label{third-here}
	& ( b_0 + 2 \lambda) k a_k = (k - \gamma)(k -1 - \gamma) a_{k-1} + (\lambda b_2 + c_2 - (k -1 - \gamma)b_1) a_{k-1}\\ \nonumber
	&\hspace{2.4cm} + ( \lambda b_3 + c_3 - (k-2- \gamma)b_2) a_{k-2} + \dots + (\lambda b_{k+1 } + c_{k+1 } + \gamma c_k)a_0.
\end{align}
By  \eqref{first-here}  we have $ \lambda_{\pm} = - \f12 b_0 \pm (\f14 b_0^2 - c_0)^{\f12}$, where $ b_0^2 =4 c_0$ is excluded as an exceptional case (see \cite[chapter 7.1.3]{Olver} for a discussion) and $ \gamma_{\pm},~ a^{\pm}_k$ are defined through \eqref{second-here},~ \eqref{third-here}.\\[5pt]
Clearly from the first term on the right of \eqref{third-here}, we might expect $(a_k)$ to grow too fast to make sense of \eqref{as-here}. 
The following is a Lemma stated in \cite[Theorem 2.1]{Olver}. 
\begin{Lemma} \label{Olvers-Lemma} Let $b(z),c(z)$ be as above for some $ R > 0$ and $ b_0^2 \neq  4 c_0$. 
	Then \eqref{ode} has holomorphic solutions $ w_{\pm}(z)$ on the intersection of $\{z ~|~ |z| > R\}$ with the sector
	\[
	|\arg(\pm(\lambda_+ - \lambda_-) z)| \leq \pi.
	\]
	Further in the above region, for all $ n \in \N_0$ 
	there exists a representation
	\begin{align}\label{representation}
		&w_{\pm}(z) = e^{\lambda_{\pm}z} z^{\gamma_{\pm}} \bigg(\sum_{k = 0}^n a^{\pm}_k z^{-k} + R_n(z) \bigg),~~~ |z| \gg1,\\[2pt] \nonumber
		&\partial^l_zR_n(z)  = O( z^{-n-1 - l}),~~ l \in \N_{0}.
	\end{align}
\end{Lemma}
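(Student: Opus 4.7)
My plan is to follow the classical Olver-type argument, reducing the problem to a Volterra integral equation for the remainder after factoring out the formal exponential/polynomial prefactor. The work is localized at infinity, and the sector condition will emerge naturally from the requirement that the integration contour delivers exponential decay.

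First I would perform the substitution $w(z) = e^{\lambda_\pm z} z^{\gamma_\pm} v_\pm(z)$, with $\lambda_\pm, \gamma_\pm$ chosen as in \eqref{first-here}–\eqref{second-here}. A direct computation shows that $v = v_\pm$ satisfies an ODE of the form
\begin{equation*}
v'' + B_\pm(z) v' + C_\pm(z) v = 0,
\end{equation*}
where $B_\pm(z) = 2\lambda_\pm + b_0 + O(z^{-1})$ (so the leading constant is $\pm(\lambda_+-\lambda_-) \neq 0$ by the hypothesis $b_0^2 \neq 4c_0$) and $C_\pm(z) = O(z^{-2})$. The recursion \eqref{third-here} produces a formal power series $\widetilde v(z) = \sum_{k\geq 0} a_k^\pm z^{-k}$ that solves this equation in the formal sense (this part is just algebra and matches the recursion in the excerpt).

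Next, fix $n$ and set $S_n(z) = \sum_{k=0}^n a_k^\pm z^{-k}$. A direct substitution shows that
\begin{equation*}
S_n'' + B_\pm S_n' + C_\pm S_n = g_n(z), \qquad g_n(z) = O(z^{-n-2}),
\end{equation*}
where $g_n$ is analytic at infinity. Writing $v = S_n + R_n$, the remainder $R_n$ satisfies $R_n'' + B_\pm R_n' + C_\pm R_n = -g_n$. The plan is to convert this into a Volterra integral equation on a suitable contour $\Gamma$ ending at $z$: using the two approximate solutions $1$ and $\int e^{-\int B_\pm}$ of the homogeneous equation (or equivalently the $e^{0\cdot z}$ and $e^{-(2\lambda_\pm + b_0)z}$ solutions of the dominant part), construct a Green's kernel $K(z,t)$ satisfying the diagonal condition and then write
\begin{equation*}
R_n(z) = -\int_\Gamma K(z,t)\, g_n(t)\, dt.
\end{equation*}
The path $\Gamma$ must be chosen so that the kernel factor $e^{-(2\lambda_\pm + b_0)(z - t)} = e^{\pm(\lambda_+-\lambda_-)(t-z)}$ appearing in $K$ is bounded (in fact decaying) along $\Gamma$; this is exactly the sector requirement $|\arg(\pm(\lambda_+ - \lambda_-)z)| \leq \pi$, since then one may take $\Gamma$ to be the ray from $\infty$ along the direction where $\Re(\pm(\lambda_+-\lambda_-)(t-z))\leq 0$.

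Once the integral equation is in place, a standard Volterra/Picard iteration gives the bound $|R_n(z)| \leq C_n |z|^{-n-1}$ uniformly on the closed subsectors $|\arg(\pm(\lambda_+-\lambda_-) z)| \leq \pi - \varepsilon$, and then an elementary contour-shift plus compactness/analytic-continuation argument extends this to the full closed sector. Derivative bounds $\partial_z^l R_n(z) = O(z^{-n-1-l})$ follow either by differentiating the integral representation directly (the kernel tolerates one derivative) or, more cleanly, by using the estimate at level $n+l$ together with the ODE itself to express $R_n^{(l)}$ as a linear combination of $R_{n+l}$ and of the tail $\sum_{k=n+1}^{n+l} a_k^\pm z^{-k}$, which again contributes $O(z^{-n-1-l})$.

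The main obstacle is the geometric/contour step: one must verify that for every $z$ in the claimed sector there exists an admissible contour $\Gamma_z$ from $\infty$ to $z$ along which the exponential factor in $K(z,t)$ stays bounded and $\int_{\Gamma_z} |t|^{-n-2}\,|dt| \lesssim |z|^{-n-1}$. The standard trick is to parametrize $\Gamma_z$ as $t = z + s\,\tau$ with $\tau$ a unit vector chosen in the admissible half-plane determined by $\pm(\lambda_+ - \lambda_-)$ and $s \in [0,\infty)$; the decay of the kernel along this ray, combined with $|t| \gtrsim |z|$ for $s$ bounded and $|t|\sim s$ for $s$ large, yields the required estimate. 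Once this geometric setup is fixed, the remaining analytic work is routine.
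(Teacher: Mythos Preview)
Your sketch is a correct outline of the classical Olver argument; in fact the paper does not give its own proof of this lemma at all, but simply refers the reader to \cite[Chapter~7.2.1, Theorem~2.1]{Olver} and notes that the derivative bounds on $R_n$ follow from that same proof. Your write-up is therefore essentially a faithful summary of the very argument the paper cites.
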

For a proof, we refer to \cite[chapter 7.2.1]{Olver}. The asymptotics for differentials of $R_n(z)$ is not stated explicitly, however follows directly from the proof in \cite[ Theorem 2.1]{Olver}.
\begin{Rem}
	In case $ b(\mu, z), c(\mu,z) $ depend on a parameter $ \mu \in U \subset \C$ in an open subset, we have $w_{\pm}(\mu, z)$  as in Lemma \ref{Olvers-Lemma} depends analytic on $\mu \in U$ if 
	\begin{itemize}
		\item[(i)] $b_0,c_0$ are independent of $\mu$. Further $a_0^{\pm}(\mu)$ and $ b_k(\mu),~c_k(\mu)$ for $ k \geq 1$ are holomorphic in $\mu \in U$.
		\item[(ii)] For $ K \subset U$ compact we have that $\sum_{k} \| b_k\|_{L^{\infty}(\mu,K)} < \infty$ and $\sum_{k} \| c_k\|_{L^{\infty}(\mu,K)} <  \infty$.
	\end{itemize}
	See \cite[Theorem 3.1]{Olver} for a statement and a discussion of the proof.
\end{Rem}


\vspace{3cm}

\begin{center}
\textbf{\large Acknowledgments}
\end{center}
~\\
The author likes to express his sincere gratitude to Joachim Krieger for the many helpful comments and discussions during the making of this manuscript. He further likes to thank him for suggesting the argument in the proof of Proposition \ref{No-embedded-for-dist-op} and Proposition \ref{Thres-is-no-res-dist-op}.

\vspace{1cm}

\bibliographystyle{alpha}

\end{document}